\documentclass[reqno]{amsart}

\usepackage{comment}
\numberwithin{subsection}{section}
\numberwithin{equation}{section}
\counterwithin*{footnote}{section}

\usepackage{a4wide}

\newcommand{\cB}{\mathcal{B}}
\newcommand{\cC}{\mathcal{C}}
\usepackage{cite,mathtools,mathrsfs,amscd,amsmath,amssymb}
\usepackage{yhmath}
\usepackage{hyperref}                   
\hypersetup{colorlinks,
    linkcolor=blue,%
    citecolor=blue}
\usepackage{graphicx}
\newcommand{\norm}[1]{\left\lVert#1\right\rVert}

\usepackage{tikz}
\usetikzlibrary{matrix}

\newtheorem{theorem}{\bf Theorem}[section]
\newtheorem{lemma}[theorem]{\bf Lemma}
\newtheorem{fact}[theorem]{\bf Fact}
\newtheorem{corollary}[theorem]{\bf Corollary}
\newtheorem{proposition}[theorem]{\bf Proposition}
\newtheorem{example}[theorem]{\bf Example}

\newtheorem{notation}[theorem]{\bf Notation}

\newtheorem{conjecture}[theorem]{\bf Conjecture}

\theoremstyle{definition}
\newtheorem{definition}[theorem]{\bf Definition}

\newtheorem{remark}[theorem]{\bf Remark}

\newcommand{\cM}{\mathcal{M}}

 \usepackage{float}

\usepackage{exscale}
\usepackage{fix-cm}
 
\begin{document}
\title[Arazy-type decomposition theorem and commutators on $\mathcal C_1$]{Arazy-type decomposition theorem for bounded linear operators and commutators on the trace class}

\author[J. Huang]{Jinghao Huang}
\address{Institute for  Advanced Study in  Mathematics of HIT, Harbin Institute of Technology, Harbin, 150001, China}
\email{jinghao.huang@hit.edu.cn}
 
\author[F. Sukochev]{Fedor Sukochev}
\address{School of Mathematics and Statistics, University of NSW, Sydney,  2052, Australia}
\email{f.sukochev@unsw.edu.au}

\author[Z. Yu]{Zhizheng Yu}
\address{Institute for  Advanced Study in  Mathematics of HIT, Harbin Institute of Technology, Harbin, 150001, China}
\email{zhizheng.yu@hit.edu.cn}

\thanks{J. Huang was supported the NNSF of China (No.12031004, 12301160 and 12471134). F. Sukochev was supported by the ARC}
\keywords{operator ideal; commutator; strictly singular operator.}

\subjclass[2010]{47B47, 46L52, 47B37.  }

\begin{abstract}  
The classical Arazy's decomposition theorem provides a powerful tool in the study of  sequences in (and isomorphisms on) a separable operator ideal $\cC_E$ of the algebra $\cB(H)$ of all bounded linear operators on the separable infinite-dimensional Hilbert space $H$. 
In this paper, we extend and strengthen  Arazy's decomposition theorem to the setting  of general bounded linear operators on  a separable (quasi-Banach)  operator ideal  $\cC_E$ of $\cB(H)$. 
Several applications are given to the study of   $\cC_E$-strictly singular operators, largest proper ideals in the algebra $\cB(\cC_E)$ of all  bounded linear operators on $\cC_E$ and complementably homogeneous Banach spaces among others. 
Our versions of  decomposition theorems supply tools for a noncommutative generalization of deep commutator theorems for operators on $\ell_p$ and $L_p$, $1\le p <\infty $, due to Brown and Pearcy, Apostol, and  Dosev, Johnson and Schechtman. 
We are able to characterize commutators on   the Schatten--von Neumann class $\cC_p$, $1\le p<\infty $. 
For the crucial case, $p=1$, we establish  that any operator $T\in\mathcal B(\cC_1)$ is a commutator if and only if $T$ is not of the form $\lambda I+K$  for some  $\lambda\ne 0$ and  $\cC_1$-strictly singular operator $K $. 
\end{abstract}

\maketitle

\tableofcontents

\section[Introduction]{Introduction}

\subsection{Commutators on Banach spaces}

This paper takes as its starting point a well-known result of Wintner~\cite{Wintner}:
\begin{theorem}\label{win}
The identity in a unital Banach algebra is not a commutator. 
\end{theorem}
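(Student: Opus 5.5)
We argue by contradiction, using the spectrum and the holomorphic functional calculus. Suppose $\mathcal A$ is a unital Banach algebra with unit $\mathbf 1$ and there are $a,b\in\mathcal A$ with $ab-ba=\mathbf 1$. The plan is to show that $\sigma(ab)$ is invariant under translation by $1$. Since the spectrum of an element of a unital Banach algebra is nonempty and compact (Gelfand's theorem), translation invariance is impossible, and this gives the contradiction.

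The first step is the algebraic fact that $\sigma(xy)\cup\{0\}=\sigma(yx)\cup\{0\}$ for all $x,y\in\mathcal A$. For $\lambda\neq0$, if $\lambda-xy$ has inverse $u$, then a direct computation shows that $\lambda^{-1}(\mathbf 1+yux)$ is a two-sided inverse of $\lambda-yx$. The argument is symmetric in $x$ and $y$.

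The second step applies this with $x=a$, $y=b$. We get $\sigma(ab)\cup\{0\}=\sigma(ba)\cup\{0\}$. Since $ab=\mathbf 1+ba$, we also have $\sigma(ab)=1+\sigma(ba)$. Set $S=\sigma(ab)$, which is nonempty and compact. Then
\[
S\cup\{0\}=(S-1)\cup\{0\}.
\]
Let $\mu\in S$ be a point of maximal real part. If $\mu\neq 0$, then $\mu\in S-1$, so $\mu+1\in S$, which contradicts maximality.

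The main obstacle is the case $\mu=0$, where the union with $\{0\}$ hides information. Here the identity forces $1\in S$, because $0\in S=1+\sigma(ba)$ gives $-1\in\sigma(ba)$. Then $\operatorname{Re}1>0=\operatorname{Re}\mu$, which again contradicts maximality.

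So in every case $S$ cannot be nonempty, contradicting Gelfand's theorem. Hence $\mathbf 1$ is not a commutator.

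An alternative route, due to Wielandt, avoids the spectrum. By induction one shows $ab^{n}-b^{n}a=nb^{n-1}$. This gives $n\|b^{n-1}\|\le 2\|a\|\|b\|\|b^{n-1}\|$ for all $n$. Because $b^{n-1}\neq0$ for all $n$ (otherwise downward induction would give $\mathbf 1=0$), this is impossible once $n>2\|a\|\|b\|$. Either argument suffices; I would present the spectral one as the main proof.
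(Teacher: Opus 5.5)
The paper gives no proof of this statement. It is quoted from Wintner~\cite{Wintner} and used only as a black box, applied to the quotient $\mathcal B(\mathcal C_1)/\mathcal M_{\mathcal C_1}$ in Theorem~\ref{thmd}. Your Wielandt argument is complete and correct. The identity $ab^{n}-b^{n}a=nb^{n-1}$, the estimate $n\|b^{n-1}\|\le 2\|a\|\|b\|\|b^{n-1}\|$, and the downward induction showing $b^{n-1}\neq 0$ (which uses $\mathbf 1\neq 0$) all check out. This route needs neither completeness nor complex scalars.

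The spectral argument, which you say you would present as the main proof, breaks in the case $\mu=0$. From $0\in S=1+\sigma(ba)$ you correctly get $-1\in\sigma(ba)$. But the identity $\sigma(ba)\cup\{0\}=\sigma(ab)\cup\{0\}$ then gives $-1\in S$, not $1\in S$. Since $\operatorname{Re}(-1)<0=\operatorname{Re}\mu$, this does not contradict the maximality of $\mu$. In general, $S\cup\{0\}=(S-1)\cup\{0\}$ yields only two implications: $s\in S\setminus\{0\}\Rightarrow s+1\in S$, and $s\in S\setminus\{1\}\Rightarrow s-1\in S$. Nothing forces $1\in S$.

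The case is easy to repair in either of two ways. You can iterate the second implication to get $0,-1,-2,\dots\in S$, which contradicts boundedness of $S$. Alternatively, also take $\nu\in S$ of minimal real part. Then the first implication forces $\mu=0$, the second forces $\nu=1$, and $\operatorname{Re}\nu=1>0=\operatorname{Re}\mu$ is absurd.

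This route also depends on Gelfand's theorem $\sigma(ab)\neq\emptyset$. For real Banach algebras, which the paper allows, you would first have to complexify. The holomorphic functional calculus mentioned in your first sentence is not used anywhere.
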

As an immediate consequence, no element of the form $\lambda I +K$ is a commutator in a Banach algebra $\mathcal{A}$, where $K$ belongs to a norm-closed proper ideal $\mathcal I$ of $\mathcal A$ and $\lambda\ne 0$ is a scalar. 

We denote by $\cB(X)$ the Banach algebra of all bounded linear operators on a Banach space $X$. 
When $\dim(X)=n<\infty$, $\cB(X)$ has no nontrivial ideal  but    Shoda  \cite{Shoda} proved that for any $n$ by $n$ matrix $A$ is a commutator if and only if its trace is zero.
On the other hand, the situation in $\dim(X)=\infty$ is quite different: $\cB(X)$ possesses at least one nontrivial closed ideal, such as $\mathcal K(X)$, the ideal of compact operators on $X$.
Brown and Pearcy \cite{Brown Pearcy} proved that an operator $T\in \cB(\ell_2)$ is not a commutator if and only if 
$T$ can be represented as $\lambda I + K$ for some compact operators $K$ with $0\ne \lambda \in \mathbb{C}$. 
Later, Apostol\cite{Apostol_lp,Apostol_c0} obtained similar characterizations for $\ell_p$, $1<p<\infty$, and $c_0$. 
The remaining case for $\ell_1$ was resolved $35$ years later by Dosev\cite{Dosev_l1}, see also \cite{Dosev Johnson} for results concerning commutators on various Banach spaces.
Note that the situation for commutators on $\ell_\infty$ changes dramatically. 
Precisely, Apostol showed that if $T\in \cB(\ell_\infty)$ is compact, then it is a commutator\cite{Apostol_lp}. 
This result was later extended by Dosev and Johnson\cite{Dosev Johnson}, who showed that 
  {\it $T\in \cB(\ell_\infty)$ is not a commutator if and only if $T$ is of the from $\lambda I+S$ for some strictly singular operator\footnote{An operator $T:X\to X$ is called strictly singular if the restriction of $T$ to any infinite-dimensional subspace of $X$ is not an isomorphism\cite[Definition 2.c.2]{LT}. } $S$ and $\lambda\ne 0$.} Indeed, Dosev et al.  \cite{Dosev_l1,Dosev Johnson} refined key ideas from earlier works and obtained a series of important tools (e.g., \cite[Corollary 12, Theorem 16]{Dosev_l1} and \cite[Theorem 3.2, Corollary 5.4]{Dosev Johnson}). These tools enable the consideration of commutator representations on Banach spaces that extend beyond classical sequence spaces such as $\ell_p$ (for $1 < p < \infty$) and $c_0$.

We say that a Banach space $X$ is a Wintner space if 
every
non-commutator in $\cB(X)$  is of the form $\lambda I+ K$, where $\lambda  \ne  0$ and $K$ lies
in a proper ideal of $\cB(X)$. 
It is natural to consider the following wild conjecture \cite{Chen Johnson Bentuo Zheng}: 
\begin{quote}\label{wintner}
    {\it every infinite-dimensional Banach space is a Wintner space.}
\end{quote}
However, 
this conjecture has been disproved by Dosev, Johnson and Schechtman~\cite{Dosev Johnson Schechtman} with the help of the example constructed by Tarbard\cite{Tarbard}, which fails the  Pe{\l}czy\'nski decomposition\footnote{A Banach space $X$ is said to satisfy the Pe{\l}czy\'nski decomposition if $X$ is linearly isomorphic to $(\sum X)_0:=\big\{(x_k)_{k=1}^\infty\in X^{\mathbb  Z^+}:\left\Vert (x_k)_{k=1}^\infty\right\Vert:=\norm{( \norm{x_k})_{k=1}^\infty }_{c_0}<\infty \big\}$ or to 
$(\sum X)_p:=\big\{(x_k)_{k=1}^\infty\in X^{\mathbb  Z^+}:\Vert (x_k)_{k=1}^\infty\Vert:=(\sum_{k=1}^\infty\Vert x_k\Vert^p)^{1/p}<\infty\big\}$, 
for some  $p\in   [1,\infty)$.}. They also recognized that the Pe{\l}czy\'nski decomposition plays a crucial role in the study of Wintner spaces, 
 which leads to the following conjecture.
\begin{conjecture}\label{conjecture}
    Let $X$ be a Banach space such that $X\approx \left(\sum X \right)_p$, $1\le p\le \infty$, or $p=0$. 
    Assume that $B(X)$ has a largest nontrivial ideal $\cM$. Then, $T\in \cB(X)$ is not a commutator if and only if $T=\lambda I+K$ for some $K\in \cM$ and $\lambda\ne 0$\cite{Dosev Johnson} (see also \cite{Dosev Johnson Schechtman,CY, Chen Johnson Bentuo Zheng, Bentuo Zheng}). 
\end{conjecture}

 Dosev, Johnson, and Schechtman showed that $T\in \cB(L_p(0,1))$, $1\le p<\infty $, is not a commutator if and only if it can be written as   $T= \lambda I+K$, where $K$ is $L_p(0,1)$-strictly singular and $\lambda\ne 0$\cite{Dosev Johnson Schechtman}. In particular, this shows that Conjecture~\ref{conjecture} holds for $L_p(0,1)$, $1\le p<\infty$. 
Similar results for $$X=\left( \sum \ell_q \right)_p, ~1\le q,p<\infty $$
were established in 
 \cite{Chen Johnson Bentuo Zheng, Bentuo Zheng}.

  A noteworthy class of Banach spaces having  the  Pe{\l}czy\'nski decomposition is the Schatten--von Neumann $p$-class $\cC_p$, $1\leq p<\infty$\cite{Arazy2}.   In \cite{CY}, a characterization of commutators in the algebra $\cB(\cC_p)$, $1<p<\infty$, was given.  
In particular, Conjecture~\ref{conjecture} holds for $\cC_p$ when $1<p<\infty $ and the lower triangle part $\mathcal T_p$ when $1\le p<\infty $. 
However, the proof in \cite{CY} relies on the existence of linear isomorphisms between $\cC_p$ and   $\mathcal T_p$ when $1<p<\infty$ \cite[Theorem 4.7]{Arazy1}, which fails in the setting when $p=1$ (that is,  $\cC_1$ is not isomorphic to $\mathcal{T}_1$).
The main motivation of the present paper is to establish a representation theorem for  commutators on  $\cC_1$, which was left untreated in \cite{CY}. In particular, this shows that Conjecture \ref{conjecture} holds for the trace class.

\begin{theorem}\label{main theorem}
The algebra  of all $\cC_1$-strictly singular operators (see Definition~\ref{stri-sing} below) on $\cC_1$ is the largest nontrivial ideal in $\cB(\cC_1)$. 
    An operator $T\in \cB(\cC_1)$ is a commutator if and only if {$T-\lambda I$ is not a $\cC_1$-strictly singular operator for all scalars $\lambda \ne 0$}.
\end{theorem}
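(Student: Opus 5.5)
The proof splits into two parts: first the ideal structure, then the characterization of commutators.

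\emph{Ideal part.} Write $\mathcal S_{\cC_1}$ for the set of $\cC_1$-strictly singular operators, i.e.\ those $T$ such that no restriction of $T$ to a subspace isomorphic to $\cC_1$ is an isomorphism. I will show that $\mathcal S_{\cC_1}$ is a proper two-sided ideal and that every $T\notin\mathcal S_{\cC_1}$ generates all of $\cB(\cC_1)$.

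The second claim uses the Arazy-type decomposition theorem for bounded operators. If $T$ is an isomorphism on some copy $Y\approx\cC_1$, then after passing to a block subspace, $Y$ contains a further copy $Z$ of $\cC_1$ that is complemented in $\cC_1$. Moreover, $T(Z)$ is also complemented, since $\cC_1$ is complementably homogeneous (the paper lists this application). Choose isomorphisms $A:\cC_1\to Z$ and $B:T(Z)\to\cC_1$, and let $P$ be a projection onto $T(Z)$. Then $I=BPTA$, so the ideal generated by $T$ is everything.

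Closure of $\mathcal S_{\cC_1}$ under addition is the delicate point. Suppose $T_1+T_2$ is an isomorphism on a copy of $\cC_1$. Apply the decomposition theorem simultaneously to $T_1$ and $T_2$ on a block basis of matrix units. This yields a copy of $\cC_1$ on which each $T_i$ is, up to a small perturbation, of the form ``diagonal-type multiplier plus strictly singular piece''. One can then extract a copy on which one of the $T_i$ is an isomorphism. Given this, $\mathcal S_{\cC_1}$ is the unique largest proper ideal. In particular it is closed, and it contains the compact operators.

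\emph{Necessity.} If $T=\lambda I+K$ with $\lambda\ne0$ and $K\in\mathcal S_{\cC_1}$, then the image of $T$ in $\cB(\cC_1)/\mathcal S_{\cC_1}$ is $\lambda$ times the unit. By Theorem~\ref{win} applied to this quotient Banach algebra, $T$ is not a commutator.

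\emph{Sufficiency.} Suppose $T-\lambda I\notin\mathcal S_{\cC_1}$ for every $\lambda\ne0$. I follow the Dosev--Johnson scheme. The key ingredient, \cite[Theorem 3.2]{Dosev Johnson} or its refinement, says the following: if $X\approx(\sum X)_1$ and there is a complemented subspace $Y\approx X$ such that $Y$ and $TY$ are ``disjoint'' (that is, $Y+TY$ is direct and complemented and $T|_Y$ is an isomorphism), then $T$ is a commutator. Since $\cC_1\approx(\sum\cC_1)_1$ by Arazy, it suffices to produce such a $Y$.

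\emph{Case split.} There are two cases.
\begin{itemize}
\item[(a)] $T\notin\mathcal S_{\cC_1}$ and $T-\lambda I\notin\mathcal S_{\cC_1}$ for all scalars $\lambda$.
\item[(b)] $T\in\mathcal S_{\cC_1}$.
\end{itemize}

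In case (b), $I+T$ is invertible modulo the ideal. Writing $T=(I+T)-I$ and using the reduction of Dosev (the strictly singular case, \cite[Corollary 12]{Dosev_l1}), it suffices to find a copy $Y$ of $\cC_1$ on which $T$ is small. This again comes from the decomposition theorem: a strictly singular operator restricted to suitable block subspaces has arbitrarily small norm.

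In case (a), the decomposition theorem applied to $T$ on a block sequence of matrix-unit blocks yields, on a copy of $\cC_1$, a representation of $T$ as a Schur-type multiplier plus a small error. Since $T-\lambda I$ fails strict singularity for every $\lambda$, a diagonal argument shows that $T$ is not asymptotically a scalar on the blocks. We can then select blocks $e_n$ such that $Te_n$ is almost disjoint from all of the $e_m$. This gives the required $Y$.

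\emph{Main obstacle.} The hardest step is this extraction in case (a), together with the additivity of $\mathcal S_{\cC_1}$. The difficulty is that $\cC_1$ is not isomorphic to $\mathcal T_1$, so the triangular-truncation tricks used for $1<p<\infty$ fail. My plan is to work only with blocks supported on disjoint square corners $Q_nHQ_n$. Restricted to these corners, both the decomposition and the projections are bounded in $\cC_1$, and a gliding-hump argument gives complementation.
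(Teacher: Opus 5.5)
Necessity is fine. The factorization half of your ideal argument is also fine, once you apply complementable homogeneity to $T(Y)$ first, pull back, and take $B=(TA)^{-1}$. The sufficiency direction, however, has two genuine gaps.

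\textbf{Case (b).} The claim ``it suffices to find one copy $Y\approx\cC_1$ on which $T$ is small'' is not a criterion you can invoke; for operators in the largest ideal it is essentially the whole difficulty. The tool that actually handles $T\in\mathcal M_{\cC_1}$ is Dosev's Lemma~\ref{y}. It needs an $\ell_1$-decomposition $\{X_k\}_{k\ge0}$ with \emph{both} $\|T(I-\widetilde P_n)\|\to0$ and $\|(I-\widetilde P_n)T(I-P_{\mathcal D,0})\|\to0$. Making $T$ summably small on the $X_k$, $k\ge1$, gives only the first condition. The second requires control of where $T$ sends these pieces. The paper gets this control from Lemma~\ref{P(T-aI)P}, which rests on Lemma~\ref{cei} (unboundedness of the triangular projection) and on Lemma~\ref{l2cp}. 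It provides a block corner $P=P_{[\xi_{i_{2k+1}}]_{k\ge1},[\eta_{i_{2k}}]_{k\ge1}}$ and a scalar $\alpha$ with $\|P_{[\xi_{i_{2k+1}}]_{k\ge N},[\eta_{i_{2k}}]_{k\ge N}}(T-\alpha I)P\|\to0$. Then $\alpha=0$, because $T\in\mathcal M_{\cC_1}$ while $P\notin\mathcal M_{\cC_1}$. The $X_k$ are chosen inside disjoint sub-corners of $P\cC_1$ whose indices start further and further out, so that $\|(I-\widetilde P_n)T(I-P_{\mathcal D,0})\|\le4\|P_{[\xi_{i_{2k+1}}]_{k\ge2^n},[\eta_{i_{2k}}]_{k\ge2^n}}TP\|\to0$ (Theorem~\ref{mc1c}).

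\textbf{Case (a).} The inference ``$T-\lambda I$ is not $\cC_1$-strictly singular for every $\lambda$, hence $T$ is not asymptotically scalar on the blocks'' is false. By Lemma~\ref{cei}, every operator has asymptotically scalar compression on some block corner. Concretely, take $T=\alpha Q+\beta(I-Q)$ with $\alpha\ne\beta$ and $Q$ a projection whose range and kernel are isomorphic to $\cC_1$. This $T$ satisfies your hypothesis, yet equals $\alpha I$ on a copy of $\cC_1$. So the global hypothesis does not pass to the blocks the decomposition theorem hands you. What is missing is the dichotomy of Theorem~\ref{6}, with $P,\alpha$ as above:
\begin{itemize}
\item Either $(I-P)TP$ is not $\cC_1$-strictly singular. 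This yields $Y\subset P\cC_1$ with $d(S_Y,TY)>0$, and Theorem~\ref{4} applies.
\item Or it is, in which case $(T-\alpha I)|_{P\cC_1}$ is $\cC_1$-strictly singular. Here you need the separate criterion Theorem~\ref{5}, which exploits a \emph{different} copy on which $T-\alpha I$ is an isomorphism. No disjointness construction on these blocks can work.
\end{itemize}

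\textbf{Additivity.} Your additivity sketch is circular as written: concluding from ``multiplier plus strictly singular piece'' that one $T_i$ is an isomorphism would itself require additivity of strictly singular operators. It works if you use the sharper output of Theorem~\ref{p=1'}: on a suitable copy, $T$ is within $\varepsilon$ of $\beta$ times an isometry. Apply this successively to $T_1$ and then $T_2$, as in Theorem~\ref{c1s}(2) and Remark~\ref{add-opincp}.
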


The study of the largest nontrivial ideal in $\cB(X)$ for a Banach space $X$ is of interest in its own right;
see, e.g., \cite{Leung1,Leung2,Lin Sari Zheng,Kania Laustsen1,Kania Laustsen2,BKL2020}.

 \subsection{Arazy's decomposition theorem and its applications}
Recall the so-called Calkin Correspondence  between (not necessarily separable) (quasi-)Banach symmetric sequence spaces $E$ and (quasi-)Banach operator ideals $\mathcal C_E$ in $\cB(H) $,
the $*$-algebra of all bounded linear operators on a separable Hilbert space $H$.
Precisely, 
$$E:= \{ x\in \ell_\infty :\mu(x)=s(a) \mbox{ for some } a\in \mathcal C_E\},~\norm{x}_E=\norm{a}_{\mathcal C_E}, $$
and 
$$\mathcal C_E:= \{ a\in \cB(H) : s(a) \in E\},  ~\norm{a}_{\mathcal C_E}= \norm{s(a)}_E,$$
is a bijective correspondence $(E,\norm{\cdot}_E) \Leftrightarrow (\mathcal C_E,\norm{\cdot}_{\mathcal C_E})$ between a (quasi-)Banach symmetric sequence space and a (quasi-)Banach symmetric operator ideal $ \mathcal C_E$ (or $\mathcal C_E(H)$) in $\cB(H)$\cite{LSZ, KS,Schatten,S14}, where $\mu(x)$ (respectively, $s(a)$) stands for the decreasing rearrangement of $|x|$, $x \in \ell_\infty$ (respectively, the singular values of $a\in \cB(H)$).

The classical theorem of  Arazy\cite{Arazy4} asserts that each basic sequence in a separable operator ideal $\cC_E$ contains  a subsequence which embeds into $\ell_2\oplus E$. 
The idea used in proving this deep result can be traced back to earlier work due to Friedman\cite{Friedman}, Holub\cite{Holub}, 
Arazy and Lindentrauss\cite{Arazy Lindenstrauss}, and Arazy\cite{Arazy2} concerning specific operator ideals in $\cB(H)$. 
Arazy's decomposition theorem serves as a useful tool in the study of geometry of operator ideals  (see, e.g., \cite{AHS,Arazy83,Arazy81b,HSS} and references therein). 
Furthermore, a decomposition theorem for linear isomorphisms between separable operator ideals was also established in \cite{Arazy4}.

In the study of operator ideals $\cC_E$ of $\cB(H)$, 
 it is natural to fix two orthonormal bases $\{\xi_{i}\}_{i=1}^\infty$ and $\{\eta_i\}_{i=1}^\infty$ in $H$, and consider the matrix representations of elements in $\mathcal C_E$ with respect to these bases via the correspondence
\[
x\in \mathcal C_E\to \big((x\eta_j|\xi_i)\big)_{i,j=1}^\infty.
\] 
 However, since
we shall frequently change the basis of a Hilbert space in the present paper, 
 we introduce another terminology (see Section \ref{blocksequences}). 
 For the study of problems considered in the present paper and
 for the readability and self-containedness, we recast several results due to  Arazy \cite{Arazy4} (for separable  quasi-Banach operator ideals)
 in terms of our terminology (see Appendix \ref{appendix}).

In the present paper, we extend Arazy's decomposition theorem to the setting of bounded linear operators on separable quasi-Banach operator ideals (see,  e.g.,  the main results of 
Section~\ref{sec:decomposition CE operator}: Theorems \ref{thma'}, \ref{thmb} and \ref{thmc}). 
Informally speaking, 
let $E$ and $F$ be two separable quasi-Banach symmetric sequence spaces with $c_0,\ell_2\not \hookrightarrow E$, and let  $T$ be  a bounded linear operator from   the lower triangular part $\mathcal T_F$ of $\mathcal C_F$ (see Notation \ref{tenotation} below)   to $\mathcal C_E$.
There exist two closed subspaces $K$ and $L$ of $H$ such that  the lower triangular part $\mathcal T_F(L,K)$ of $\mathcal C_F(L,K)$  is contained in $\mathcal T_F$, and the restriction $T|_{\mathcal T_F(L,K)}$ can be extended to an operator $\tilde{T}$ from $\mathcal C_F(L,K)$ to $\mathcal C_E$ such that $\tilde{T}$ admits a ``nice" representation (in the sense of small perturbations).
Note that the boundedness of   the triangular projection on $\cC_E$ (e.g. when $E$ is a Banach symmetric sequence space such that  $\cC_E\approx \mathcal{T}_E\oplus \mathcal{T}_E$)  allows one to consider perturbations on the lower triangular part and upper triangular part separately. 
However, when  the triangular projection on $\cC_E$ is unbounded,
establishing a `global' result for $\cC_E$
is always problematic, and its proof is of interest in its own right. 
On the other hand,  since  one cannot guarantee the boundedness of diagonal projections (see \eqref{diag-proj}) 
and triangular projections (see \eqref{tria-proj})
in general quasi-Banach operator ideals, 
 we have to avoid the usage of  diagonal/triangular projections in some proofs below (see, e.g., Section \ref{subsection:operator}), which are different from the proofs in \cite{Arazy4}. 
These results are particularly useful  in the subsequent study of isomorphic embedding between quasi-Banach operator ideals, see \cite{GHSY} (see \cite{Arazy4,HSS} for the case of Banach operator ideals). 

Below, we present a few 
  important consequences of this result, and important ingredients in the study of commutators on $\mathcal C_1$, which are of independent interest:
  \begin{itemize}
      \item  {\bf 
      Isomorphisms between $\mathcal T_E$ and $\mathcal C_E$.} 
      Suppose that $E$ is a separable Banach symmetric sequence space.  
It is well-known that  
$\mathcal T_E$ is not isomorphic to $\mathcal C_E$ if and only if $\mathcal T_E$ is not complemented in $\mathcal C_E$ (see, e.g., \cite{Arazy1}). In the setting when $E$ separable (quasi-)Banach space which does not  contain copies $c_0$ and $\ell_2$, we   extend  this result by showing  that $\mathcal T_E$ is not isomorphic to $\mathcal C_E$ if and only if $\mathcal T_E$ is not isomorphic to a complemented subspace of $\mathcal C_E$ (see Theorem \ref{teicce} below). 
In particular,  the lower triangle part $\mathcal T_1$ of $\mathcal{C}_1$ is not isomorphic to a complemented subspace of $\mathcal C_1$, 
which strengthens a classical result due to Kwapien and Pelczynski\cite{KwapienPelczynski}, and Arazy   \cite{Arazy1}. 
 This   serves as a crucial step towards  proving that $\mathcal T_1\oplus \mathcal C_1$ is a  Wintner space (see Theorem \ref{t1+c1}).
\item  {\bf Local representation of operators on $\cC_E$}.
Up to  small perturbations, we obtain a local presentation of bounded  operators from $\mathcal T_E$ (respectively, $\mathcal C_E$) to $\mathcal C_E$ when $c_0\not\hookrightarrow E$ (respectively, $c_0,\ell_2\not\hookrightarrow E$),  see Theorems \ref{thma'}, \ref{thmb} and \ref{thmc}.
In particular,  for any bounded operator $T$ from $\mathcal C_p$ to $\mathcal C_p$,
$0<p\leq1$, there exist two increasing sequences $\{i_k\}_{k=1}^\infty$ and $\{j_k\}_{k=1}^\infty$ of positive integers, and  two operators $a,b\in\mathcal C_p$ with $\norm{a}_{\mathcal C_p}=1$, such that---up to  small perturbations and via an appropriate tensor product representation---one has 
\[
T(a\otimes e_{i_{2k+1},i_{2l}})=b\otimes e_{j_{2k+1},j_{2l}}\;\;\;\;\;\;\;\;{\rm for\;every\;}k,l\geq1,
\]
where $\{e_{i,j}\}_{i,j=1}^\infty$ denotes the standard matrix units in $\mathcal C_p$, see Theorem~\ref{p=1'} below  (see also Theorems \ref{1p2}, \ref{2<p} and \ref{pneq2} for related results with respect to  $\mathcal T_p$, $0<p<\infty$).
Similar results for  the special case when $T$ is an isomorphic embedding of $\mathcal C_E$ were established earlier by Arazy \cite{Arazy4}. 
 \item {\bf  $\cC_E$-strictly singular operators and $\mathcal{T}_E$-strictly singular operators}. 
 It is well-known that $\mathcal C_E $ is not isomorphic to its lower triangular part $\mathcal T_E$ when $E$ has trivial Boyd indices (in particular,  $\mathcal C_1 $ is not isomorphic to its lower triangular part $\mathcal T_1$, 
 see, e.g., \cite{Arazy1}), which is an  obstacle in studying the structure of commutators on $\mathcal C_1 $. 
One   consequence of our 
representation theorem  for  operators on $\cC_E$  is given in Theorem \ref{c1s} below, which states that,  whenever  $E$   does not contain copies of $c_0$ and $\ell_2$,   
 an operator $T$ on $\mathcal C_E $ is $\mathcal C_E$-strictly singular if and only if  it  is  $\mathcal T_E$-strictly singular (see Theorem~\ref{cs=ts}).
This together with Theorem \ref{main theorem} implies that 
an operator $T\in \cB(\cC_1)$ is a commutator if and only if the operator $T-\lambda I$ is not $\mathcal T_1$-strictly singular  for every  scalar $\lambda \ne 0$.

 \item  {\bf A characterization of $\cC_1$-strictly singular operators. }
 Let $1\leq p<\infty$. It is well-known that $\mathcal K(\ell_p)$, the set of all  compact operators on $\ell_p$, is the unique nontrivial closed ideal, hence it is automatically the largest proper ideal, in $\cB(\ell_p)$ (see \cite{FGM1960}), which coincides with the 
 ideal of strictly singular operators (see e.g. \cite{Kato} and \cite[Theorem 6.2]{Whitley}).
Such Banach spaces ($\ell_p$, $1\le p<\infty$) are said to have   the  Kato property (i.e., every strictly singular
endomorphism on them  is necessarily compact\cite{HST}).
 However, 
 $\mathcal K(L_p)$ is not the largest ideal in $\mathcal B(L_p)$, which is  the  ideal consisting  of all $L_p$-strictly singular operators (see, e.g., \cite[Theorem 1.3]{Dosev Johnson Schechtman}).
 Dosev, Johnson, and Schechtman showed
 for $1\le p<2$, the following holds:
 {\it Given any 
   $L_p$-strictly singular operator $T$ and any subspace $X$ of $L_p$ which is isomorphic to $L_p$, 
 there exists a further subspace $Y\subset X$, also isomorphic to $L_p$,  such that the restriction  $T|_Y$ is compact and has a norm which is  small enough (see \cite[Theorem 1.3 and Lemma 3.7]{Dosev Johnson Schechtman}).}
 This property  serves as the key tool for proving  that every  $L_p$-strictly singular operators is a commutator  in $B(L_p)$. 
In the present paper, we show that for $0<p<\infty$, all $\mathcal C_p$-strictly singular (resp. $\mathcal T_p$-strictly singular) operators constitute the largest proper ideal in $\mathcal B(\mathcal C_p)$ (resp. $\mathcal B(\mathcal T_p)$) (see Remark \ref{c1srema} and Section~\ref{Sec:largest proper ideal}). Moreover,
\begin{itemize}
    \item [(1)]
Let $0<p\leq1$. Then,  $T\in\mathcal B(\mathcal C_p)$ is $\cC_p$-strictly singular if and only if $T$ is $\mathcal T_p$-strictly singular (see Theorem \ref{c1s} below).
    \item [(2)]
Let $0<p<\infty$. Then $T\in\mathcal B(\mathcal C_p)$ is $\mathcal C_p$-strictly singular if and only if for every subspace $X$ of $\mathcal C_p$ with $X\approx\mathcal C_p$ and for any $\varepsilon>0$, there exists a subspace $Y\subset X$ such that $Y\approx \mathcal C_p$ and $\Vert T|_Y\Vert<\varepsilon$ \footnote{Note that, in contrast with the result of  Doesv et. al.,   $T|_Y$ may not be compact, see Remark~\ref{c1srema}.} (see Corollary \ref{cps0<p<} below).
    \item [(3)]
Let $0<p<\infty$. Then $T\in\mathcal B(\mathcal T_p)$ is $\mathcal T_p$-strictly singular if and only if
for every subspace $X$ of $\mathcal T_p$ with $X\approx\mathcal T_p$ and for any $\varepsilon>0$, there exists a subspace $Y\subset X$ such that $Y\approx \mathcal T_p$ and $\Vert T|_Y\Vert<\varepsilon$ (see Theorem \ref{tps} below).
\end{itemize}

 \item {\bf Complementably homogeneous Banach spaces.} The notion of a complementably homogeneous Banach space  plays a crucial role  in the study of commutators on a Banach space. 
 It is shown in \cite{Arazy4,Arazy2} that every Banach symmetric operator ideal $\cC_E$   is complementably homogeneous (see Definition~\ref{ch} below) when  $E$ does not contain copies of  $c_0$ and $\ell_2$. 
 Moreover,  it is known that the lower triangular part $\mathcal{T}_p$ of  the Schatten $p$-class, $1\leq p<\infty $,  is $2^+$-complementably homogeneous (see Definition~\ref{+ch}). 
We improve these results by showing that every quasi-Banach symmetric operator ideal $\cC_E$ is complementably homogeneous (see Theorem \ref{ce-hc}) when $E$ does not contain copies of $c_0$ and $\ell_2$. Also, 
 in Section \ref{C1homogeneous},  $\mathcal{T}_p$ ($1\le p <\infty $) and $\mathcal{C}_1$ are shown to be  $1^+$-complementably homogeneous.
It is worth noting that there is no   criterion for a symmetric sequence space to be complementably homogeneous.

\end{itemize}

\section{Preliminaries}

 We denote by $[x_n]_{n=1}^\infty$ the closure of ${\rm span}(\{x_n\}_{n=1}^\infty)$ in a quasi-Banach space ${X}$. A basic sequence $\{x_n\}_{n=1}^\infty$ in some quasi-Banach space is said to be \emph{equivalent to} another basic sequence $\{y_n\}_{n=1}^\infty$ in another quasi-Banach space (denoted by $\{x_n\}_{n=1}^\infty\sim \{y_n\}_{n=1}^\infty$) provided that there exists $1\leq\lambda<\infty$ such that for every scalar sequence  $\{t_n\}_{n=1}^\infty$,
\[
{\lambda}^{-1}\Big\Vert\sum_n t_ny_n\Big\Vert\leq\left\Vert\sum_n t_nx_n\right\Vert\leq\lambda\left\Vert\sum_n t_ny_n\right\Vert.
\]
If $\lambda=1$, then the basic sequences $\{x_n\}_{n=1}^\infty$ and $\{y_n\}_{n=1}^\infty$ are said to be \emph{isometrically equivalent} (denoted by $\{x_n\}_{n=1}^\infty\simeq \{y_n\}_{n=1}^\infty$).

Let $\lambda\geq1$ be a positive number. We say that a subspace $Y$ of the (quasi-)Banach space $X$ is \emph{$\lambda$-complemented} in $X$ if there exists a projection $P$ from $X$ onto $Y$ with $\Vert P\Vert\leq\lambda$\cite[p.131]{Pietsch}. For two quasi-Banach spaces $X$ and $Y$, we said \emph{$X$ is $\lambda$-isomorphic to $Y$} provided that  there exists an linear isomorphism $T$ from $X$ onto $Y$ such that $\Vert T^{-1}\Vert\cdot\Vert T\Vert\leq\lambda$. The notation  ${X}\approx{Y}$ denotes  that ${X}$ is linearly isomorphic to ${Y}$, and ${X}\cong{Y}$ means that ${X}$ is linearly isometric to ${Y}$.
 If $X$ is linearly isomorphic (respectively, is not linearly isomorphic) to a subspace of $Y$, then we write $X\hookrightarrow Y$  (respectively, $X\not \hookrightarrow Y$). 
 We denote by $\mathcal B(X,Y)$ the space of bounded linear operators from $X$ to $Y$. Throughout this paper, an ``operator" means a bounded linear operator, and an ``isomorphism" means a linear isomorphism.
 Throughout this paper, 
 if $Z$ is a subspace of  $Y$ and  $T:X\to  Y$ and $S:X\to Z$ are two operators satisfying   $T(x)=S(x)$ for all $x\in X$, then we identify $T$ and $S$.

\begin{definition}
A quasi-norm on vector space $X$ (over real numbers field $
\mathbb R$ or complex numbers field $\mathbb C$) is a function $x\in X\mapsto\Vert x\Vert\in[0,\infty)$ which satisfies
\begin{itemize}
    \item [(i)]
    $\Vert x\Vert>0$ for every $x\in X\setminus\{0\}$,
    \item [(ii)]
    $\Vert\alpha x\Vert=\vert\alpha\vert\Vert x\Vert$ for every scalar $\alpha$ and $x\in X$,
    \item [(iii)]
    there is a $\kappa(\geq 1)$ such that $\Vert x+y\Vert\leq\kappa(\Vert x\Vert+\Vert y\Vert)$ for every
$x,y\in X$. The smallest possible $\kappa$ is called modulus of concavity of $\Vert\cdot\Vert$.
\end{itemize}
Set  $\mathscr U=\{U_n:U_n=\{x\in X:\Vert x\Vert<1/n\}\;{\rm and\;}n\in\mathbb Z^+\}$. 
$\mathscr U$ induces on $X$ a metrizable vector topology $\tau$ such that $\mathscr U$ is a local base for $\tau$. We said $X$ is \emph{quasi-Banach space} if $(X,\tau)$ is complete.
\end{definition}

\begin{definition}\cite[p.7]{KPR}
Let $r(\leq1)$ be a positive number. A quasi-norm on vector space $X$ is said to be \emph{$r$-subadditive} if
\[
\norm{x+y}^r\leq \norm{x}^r+\norm{y}^r\;\;\;\;\;\;\mbox{for any }x,y\in X.
\]
\end{definition}

\begin{remark}\label{rem-lim-norm}
In general, a quasi-norm $\Vert\cdot\Vert$ on vector space $X$ is not continuous. For a sequence $\{x_n\}_{n=1}^\infty$ converging to $x\in X$, we only have
\begin{equation}\label{lim-norm}
    \kappa^{-1}\Vert x\Vert\leq\varliminf_{n\to\infty}\Vert x_n\Vert\leq\varlimsup_{n\to\infty}\Vert x_n\Vert\leq\kappa\Vert x\Vert,
\end{equation}
where $\kappa$ is the modulus of concavity of $\left\Vert\cdot\right\Vert$. However, an $r$-subadditive  quasi-norm is continuous.
\end{remark}

\begin{theorem}\cite[Lemma 1.1 and Theorem 1.3]{KPR}\label{qbs-norm}
  Let $\left\Vert\cdot \right\Vert$ be a quasi-norm on $X$ with modulus $\leq2^{\frac{1}{r}-1}$. Then for any $x_1,\dots,x_n\in X$, we have
      \begin{equation}\label{qt-eq}
      \Vert x_1+\cdots+x_n\Vert\leq4^{\frac{1}{r}}(\Vert x_1\Vert^r+\cdots+\Vert x_n\Vert^r)^{\frac{1}{r}}.
  \end{equation}
  In particular, there is a $r$-subadditive quasi-norm define by
  \begin{equation}
      \vert\vert\vert x\vert\vert\vert=\inf\left\{\left({\sum}_{i=1}^n\Vert x_i\Vert^r\right)^{\frac{1}{r}}:x={\sum}_{i=1}^nx_i\right\}\;\;\;\;\;\;\mbox{for any }x\in X,
  \end{equation}
  which $\left\vert\left\vert\left\vert\cdot\right\vert\right\vert\right\vert\leq \left\Vert\cdot \right\Vert\leq4^{\frac{1}{r}}\left\vert\left\vert\left\vert\cdot\right\vert\right\vert\right\vert $.
\end{theorem}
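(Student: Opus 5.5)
We argue via the classical Aoki--Rolewicz dyadic splitting argument. The key observation is that the hypothesis on the modulus gives, for all $x,y\in X$,
\[
\Vert x+y\Vert\leq 2^{\frac1r-1}(\Vert x\Vert+\Vert y\Vert)\leq 2^{\frac1r}\max\{\Vert x\Vert,\Vert y\Vert\}.
\]
We first prove a dyadic version of \eqref{qt-eq}: if $k_1,\dots,k_n\ge 0$ are integers with $\sum_i 2^{-k_i}\le 1$ and $\Vert x_i\Vert\le 2^{-k_i/r}$, then $\Vert x_1+\cdots+x_n\Vert\le 1$. The proof is by induction on $n$.
\begin{itemize}
\item If $n=1$, or if some $k_i=0$ (which forces $n=1$ after discarding zero vectors), the bound is immediate.
\item Otherwise all $k_i\ge1$. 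Order the indices so that $k_1\le k_2\le\cdots\le k_n$ and take the largest prefix whose dyadic sum is at most $1/2$. Since the terms are dyadic and nonincreasing, the prefix sums move on the grid $2^{-k_j}\mathbb Z$, which contains $1/2$. Hence either this prefix sum equals $1/2$, or the whole family already sums to at most $1/2$. In both cases the index set splits into two groups $A,B$ (possibly with $B=\emptyset$), each having dyadic sum at most $1/2$.
\item Apply the induction hypothesis to the rescaled vectors $2^{1/r}x_i$ with exponents $k_i-1$, separately on $A$ and on $B$. This gives $\Vert\sum_{A}x_i\Vert\le 2^{-1/r}$ and $\Vert\sum_{B}x_i\Vert\le 2^{-1/r}$.
\item The displayed two-term inequality then yields $\Vert\sum_i x_i\Vert\le 2^{1/r}\cdot2^{-1/r}=1$.
\end{itemize}
The only delicate point is the splitting. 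It relies on the exactness of dyadic arithmetic, which is why we round to powers of $2$.

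Next we deduce \eqref{qt-eq}. By homogeneity, and after discarding zero vectors, we may assume $\sum_i\Vert x_i\Vert^r=1$. For each $i$ choose an integer $k_i\ge0$ with $2^{-k_i-1}<\Vert x_i\Vert^r\le 2^{-k_i}$. Then $\sum_i 2^{-k_i}<2$, so $\sum_i 2^{-(k_i+1)}<1$. Apply the dyadic claim to the vectors $2^{-1/r}x_i$ with exponents $k_i+1$. This gives $\Vert\sum_i x_i\Vert\le 2^{1/r}\le 4^{1/r}$, which is \eqref{qt-eq}. (We in fact obtain the better constant $2^{1/r}$.)

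Finally we treat $\vert\vert\vert\cdot\vert\vert\vert$.
\begin{itemize}
\item Taking the trivial decomposition $x=x$ shows $\vert\vert\vert x\vert\vert\vert\le\Vert x\Vert$.
\item Applying \eqref{qt-eq} to an arbitrary decomposition $x=\sum_i x_i$ and taking the infimum gives $\Vert x\Vert\le 4^{1/r}\vert\vert\vert x\vert\vert\vert$. In particular $\vert\vert\vert x\vert\vert\vert>0$ for $x\neq0$.
\item Homogeneity is clear, since decompositions of $x$ and of $\alpha x$ correspond under scaling.
\item For $r$-subadditivity, take near-optimal decompositions of $x$ and of $y$ and concatenate them into a decomposition of $x+y$. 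This gives $\vert\vert\vert x+y\vert\vert\vert^r\le \vert\vert\vert x\vert\vert\vert^r+\vert\vert\vert y\vert\vert\vert^r+\varepsilon$ for every $\varepsilon>0$.
\end{itemize}
Hence $\vert\vert\vert\cdot\vert\vert\vert$ is an $r$-subadditive quasi-norm equivalent to $\Vert\cdot\Vert$, as claimed.
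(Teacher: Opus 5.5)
The paper gives no proof of this statement; it is quoted from \cite{KPR}, so the only comparison is with the standard Aoki--Rolewicz argument, which is the dyadic-splitting route you take. Your argument is essentially correct, and it even gives the sharper constant $2^{1/r}$ in \eqref{qt-eq}, which of course implies the stated $4^{1/r}$. This works because you normalize $\sum_i\Vert x_i\Vert^r=1$ \emph{before} rounding, so the only loss is the factor $2$ (in $r$-th powers) from rounding each $\Vert x_i\Vert^r$ up to a power of $2$. Your treatment of $\vert\vert\vert\cdot\vert\vert\vert$ is the usual deduction and is fine; note that $r$-subadditivity gives the quasi-triangle inequality with constant $2^{\frac1r-1}$.

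There is one bookkeeping flaw in the dyadic claim. You declare an induction on $n$. However, when the whole family has dyadic sum at most $1/2$ (including the case where the maximal prefix is the entire family), you take $B=\emptyset$ and $A=\{1,\dots,n\}$. The rescaled family on $A$ then still has $n$ members, so the induction hypothesis does not apply to it.

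The repair takes one line, in either of two ways:
\begin{itemize}
\item Induct on $K=\max_i k_i$ instead. The rescaled families on $A$ and $B$ have exponents $k_i-1\le K-1$, and $K=0$ forces $n=1$ because then $\sum_i2^{-k_i}=n\le 1$.
\item Keep induction on $n$, but when the total is at most $1/2$ and $n\ge 2$, use any split into two nonempty groups, e.g.\ $A=\{1,\dots,n-1\}$ and $B=\{n\}$. Each trivially has dyadic sum at most $1/2$.
\end{itemize}
Separately, the parenthetical about discarding zero vectors is unnecessary: $k_i=0$ forces $n=1$ directly from $\sum_i2^{-k_i}\le 1$. With either fix the proof is complete.
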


\subsection{Quasi-Banach symmetric operator ideals in $\cB(H)$}\label{prel-qbsoi}

We denote by $\{e_k\}_{k=1}^\infty$ the standard unit vectors of a  sequence space over the real or complex field (defined by $e_k(i)=\delta_{k,i}$).
Throughout this paper, the notations $H$, $H'$ and $H''$  always denote the infinite-dimensional separable Hilbert spaces, i.e. $H\cong\ell_2$, over the real or complex field. Let $(\cdot|\cdot)$ be the inner product of $H$. 
For any  $a\in\mathcal B(H)$ and $b\in\mathcal B(H')$, we  define $a\otimes b\in\mathcal B(H\otimes_{_2}H')$ by
\[
(a\otimes b)(\xi\otimes\eta)=a(\xi)\otimes b(\eta),\;\;\;\;\;\;\xi\in H\;{\rm and}\;\eta\in H'.
\]
where $H\otimes_{_2}H'$ is  Hilbert–Schmidt tensor product of $H$ and $H'$.

For any index set $A$,
the space 
$\ell_2(A)$ consists of functions $f$ from   $A$ into the corresponding real or complex field, such that $\sum_{a\in A} \vert f(a)\vert^2 < \infty$. In this case, $e_i^H = \chi_{\{i\}}$ is the characteristic function of the set $\{i\}$, where $i \in A$.

\begin{definition}\label{hilertbasis}
Let $\{e^H_i\}_{i\in \mathbb{A}}$ stand for the natural basis of the Hilbert space $\ell_2(\mathbb{A})$ over an index set $\mathbb{A}$.
\end{definition}

Let $P(\cB(H))$ be the lattice of all projections in $\cB(H)$.
For any operator $a$ in $\cB(H)$, the sequence $s(a)$ of singular values of $a$ is given by  
$$ s_k(a)= \inf \{ \norm{a({\bf 1} - p )}_\infty: p\in P(\cB(H)),~{\rm Tr (p)} <  k     \},   $$
where ${\rm Tr}$ stands for the standard trace of $\mathcal B(H)$. 

Let $E$ be a quasi-Banach symmetric sequence space, i.e., $E$ is a subspace of $\ell_\infty$ and constitutes a quasi-Banach space under the equipped quasi-norm $\norm{\cdot}_E$, and satisfying that 
$y\in E, \mu(x)\le \mu(y) \Rightarrow  x\in E,~\norm{x}_E\le \norm{y}_E,
$
where by $\mu(z)$   we denote the usual decreasing rearrangement of the sequence $|z|$.
Without loss of generality, we always assume that $\norm{e_k}_E=1$ for all $k\ge 1$. 
We can define the associated Schatten ideal $\mathcal C_E\subset\cB(H)$ by setting 
$$a\in \mathcal C_E  \Leftrightarrow  s(a)\in E$$
with the quasi-norm
$$\norm{a}_{\mathcal C_E} =\norm{s(a)}_E,$$
see \cite{LSZ,KS,LPSZ,S14,HLS}. 
The space $\mathcal C_E$ is called the 
quasi-Banach symmetric operator ideal associated with $E$.
When $E=\ell_p$, $0<p<\infty$, or $c_0$, we write $\mathcal C_{\ell_p}$ as $\mathcal C_p$, $\mathcal C_{c_0}$ as $\mathcal C_0$.
For detailed information on symmetric operator ideals, we refer to \cite{Gohberg Krein,DPS,LSZ,S14,Simon}.

Below, we collect some facts concerning quasi-Banach symmetric operator ideals. Throughout this paper, we denote by $e_{\xi,\eta}$ the operator $(\cdot|\eta)\xi$,  $\xi,\eta\in H$. 
\begin{proposition}\label{ie}
Let $E$ be a quasi-Banach symmetric sequence space, $\{\xi_k\}_{k=1}^\infty$, $\{\eta_k\}_{k=1}^\infty$, $\{\xi'_k\}_{k=1}^\infty$ and $\{\eta'_k\}_{k=1}^\infty$ be four orthonormal sequences in $H$, and let $a,b\in \mathcal C_E$. Then
\[
\big\{a\otimes e_{\xi_{2i+1},\eta_{2j}}+b\otimes e_{\xi_{2j},\eta_{2i+1}}\big\}_{i,j=1}^\infty\simeq\left\{
\left(
\begin{smallmatrix}
0&b\\a&0
\end{smallmatrix}\right)\otimes e_{\xi'_i,\eta'_j}
\right\}_{i,j=1}^\infty.
\]
\end{proposition}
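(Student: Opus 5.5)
The plan is to show that for every finitely supported scalar array $(t_{ij})$ the two operators
\[
X=\sum_{i,j} t_{ij}\bigl(a\otimes e_{\xi_{2i+1},\eta_{2j}}+b\otimes e_{\xi_{2j},\eta_{2i+1}}\bigr),\qquad
Y=\sum_{i,j} t_{ij}\begin{pmatrix}0&b\\a&0\end{pmatrix}\otimes e_{\xi'_i,\eta'_j}
\]
have the same singular value sequence. Then $\norm{X}_{\mathcal C_E}=\norm{s(X)}_E=\norm{s(Y)}_E=\norm{Y}_{\mathcal C_E}$, which is isometric equivalence with $\lambda=1$. Singular values are invariant under left and right multiplication by partial isometries that act isometrically on the relevant ranges and supports. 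So it suffices to find partial isometries $U,V$ with $Y=UXV^*$ and $X=U^*YV$. The most transparent way to get these is to rewrite $X$ as a single block operator.

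First I would identify $X$ with a tensor expression. Put $A=\sum t_{ij}\, e_{\xi_{2i+1},\eta_{2j}}$ and $B=\sum t_{ij}\, e_{\xi_{2j},\eta_{2i+1}}$. With respect to the orthonormal systems $\{\xi_{2i+1}\}_i$, $\{\xi_{2j}\}_j$ and $\{\eta_{2j}\}_j$, $\{\eta_{2i+1}\}_i$, the array of $A$ is $(t_{ij})$. The array of $B$ is the transpose, which corresponds to $t^{T}$ placed from the $\eta_{\rm odd}$ coordinates to the $\xi_{\rm even}$ coordinates. The key step is to pair index $i$ of $\xi_{2i+1}$ with index $i$ of $\eta_{2i+1}$, and index $j$ of $\xi_{2j}$ with index $j$ of $\eta_{2j}$. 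Concretely, define isometries $W:\ell_2\otimes\ell_2\otimes\mathbb C^2\to H\otimes H$ by sending $(\text{first copy},\, e_i)$ to $\xi_{2i+1}$ and $(\text{second copy},\, e_j)$ to $\xi_{2j}$, and $W'$ by sending $(\text{first copy},\, e_j)$ to $\eta_{2j}$ and $(\text{second copy},\, e_i)$ to $\eta_{2i+1}$. Both are tensored with the identity on the $a,b$ factor. Pulling back along these, $X$ becomes
\[
\begin{pmatrix}0 & a\otimes t\\ b\otimes t^{T} & 0\end{pmatrix}
\]
up to the order of factors. Here $t$ denotes the matrix $(t_{ij})$ acting on $\ell_2$.

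The subtle point is the transpose on $b$. The second summand is not $b\otimes t$ but $b\otimes t^{T}$, so the block matrix must be arranged so that $t^{T}$ becomes $t$. I would handle this by viewing the off-diagonal block $b\otimes t^{T}$ as the adjoint-direction entry of the same block structure. Relabel so that the row space of the second block uses the $\eta_{\rm odd}$ basis and the column space uses the $\xi_{\rm even}$ basis. Then $X$ acts as $a\otimes e_{i,j}$ from slot ``$j$ even'' to slot ``$i$ odd'', and as $b\otimes e_{j,i}$ from slot ``$i$ odd'' to slot ``$j$ even''.

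Now consider the $2\times 2$ operator $\begin{pmatrix}0&b\\a&0\end{pmatrix}\otimes e_{\xi'_i,\eta'_j}$. It maps the first component at $\eta'_j$ to the second component at $\xi'_i$ via $a$, and the second component at $\eta'_j$ to the first component at $\xi'_i$ via $b$. To match $X$, I would set up the unitary correspondence between $H\otimes\mathbb C^2$ and the relevant subspaces of $H\otimes H$ directly on basis vectors, choosing:
\begin{itemize}
\item $(\text{first component},\, \eta'_j)\mapsto \eta_{2j}$,
\item $(\text{second component},\, \xi'_i)\mapsto \xi_{2i+1}$,
\item $(\text{second component},\, \eta'_j)$ and $(\text{first component},\, \xi'_i)$ mapped to the $\eta_{\rm odd}$ and $\xi_{\rm even}$ vectors with indices exchanged.
\end{itemize}
The main obstacle is that a single pair of partial isometries must simultaneously realize the map with $i$ and $j$ swapped for the $b$ block. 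I expect this to work because the two blocks act on mutually orthogonal domain pieces and land in mutually orthogonal range pieces. The domain and range isometries can therefore be chosen independently on each piece, as long as each is an isometry onto an orthogonal direct summand.

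Once $U$ and $V$ are verified to be isometries on the supports with $Y=UXV^*$, the equality of singular values follows, and with it the claimed isometric equivalence.
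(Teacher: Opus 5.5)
Your goal, equality of the singular value sequences of the finite sums $X_t$ and $Y_t$, is the right one. However, your route fails at exactly the step you flag as the main obstacle, and it cannot be repaired within that route. Your $U,V$ are prescribed on basis vectors, so they do not depend on the coefficient array $t=(t_{ij})$. Orthogonality of the supports of the $a$-part and the $b$-part does let you treat the two parts separately. But inside the $b$-part you would then need fixed partial isometries carrying $b\otimes t^{T}$ to $b\otimes t$ for \emph{every} $t$, and these do not exist. For a concrete obstruction, take $a=0$ and $b=(\cdot|\eta_0)\xi_0$ of rank one, and put $f_j=U(\xi_0\otimes\xi_{2j})$ and $g_i=V(\eta_0\otimes\eta_{2i+1})$. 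Then $UX_tV^*=\sum_{i,j}t_{ij}(\cdot|g_i)f_j$, while $Y_t=\sum_{i,j}t_{ij}(\cdot|q_j)p_i$ with $p_i=(\xi_0,0)\otimes\xi'_i$ and $q_j=(0,\eta_0)\otimes\eta'_j$ orthonormal. Testing on the matrix units at $(1,1)$ and $(2,1)$ gives $(\cdot|g_1)f_1=(\cdot|q_1)p_1$ and $(\cdot|g_2)f_1=(\cdot|q_1)p_2$. This forces $0\neq f_1\in\mathbb C p_1\cap\mathbb C p_2=\{0\}$, a contradiction. No relabelling of indices, ``exchanged'' or otherwise, can fix this: transposition is an anti-automorphism and is not of the form $x\mapsto UxV^*$ with fixed $U,V$.

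The missing idea is that you only need $\|X_t\|_{\mathcal C_E}=\|Y_t\|_{\mathcal C_E}$ for each single $t$. So you should compare singular values directly (or let $U,V$ depend on $t$), which is what the paper does. The $a$-part and the $b$-part of $X_t$ have mutually orthogonal left supports and mutually orthogonal right supports. Hence $s(X_t)$ is the decreasing rearrangement of the union of $s(a\otimes t)$ and $s(b\otimes t^{T})$, with $t$ and $t^{T}$ regarded as finite matrices on $\ell_2$. Likewise, $s(Y_t)$ is the decreasing rearrangement of the union of $s(a\otimes t)$ and $s(b\otimes t)$. These coincide because $s(c\otimes d)$ is the decreasing rearrangement of $\{s_k(c)s_l(d)\}_{k,l}$ and $s(t^{T})=s(t)$. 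Indeed, if $t=\sum_l s_l(\cdot|v_l)u_l$ is a Schmidt decomposition, then $t^{T}=\sum_l s_l(\cdot|\bar u_l)\bar v_l$, where the bar denotes coordinatewise complex conjugation. Partial isometries built from this $t$-dependent decomposition would also work. Finally, since $\mathcal C_E$ is only quasi-normed, equality of norms on finite sums still has to be transferred to the closed span. The paper does this via continuity of $s(\cdot)$ in the operator norm.
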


\begin{proof}
For any finitely non-zero sequence $\{\alpha_{i,j}\}_{i,j=1}^\infty$ of scalars, since $\sum_{i,j=1}^\infty \alpha_{i,j}(a\otimes e_{\xi_{2i+1},\eta_{2j}})$ and $\sum_{i,j=1}^\infty \alpha_{i,j}(b\otimes e_{\xi_{2j},\eta_{2i+1}})$ have disjoint left and right supports, it follows that

\begin{align*} 
&s\left(\sum_{i,j=1}^\infty\alpha_{i,j}\big(a\otimes e_{\xi_{2i+1},\eta_{2j}}+b\otimes e_{\xi_{2j},\eta_{2i+1}}\big)\right)\\
=&\mu\left(s\Bigg( \sum_{i,j=1}^\infty\alpha_{i,j}\big(a\otimes e_{\xi_{2i+1},\eta_{2j}}\big)\Bigg)\oplus s\Bigg(\sum_{i,j=1}^\infty\alpha_{i,j}\big(b\otimes e_{\xi_{2j},\eta_{2i+1}}\big)\Bigg)\right),
\end{align*}
where ``$\oplus$" means   the disjoint union of two decreasing positive sequences. Consequently,
\begin{align}\label{singular value of mapping}
&s\left(\sum_{i,j=1}^\infty\alpha_{i,j}\big(a\otimes e_{\xi_{2i+1},\eta_{2j}}+b\otimes e_{\xi_{2j},\eta_{2i+1}}\big)\right)\nonumber\\
=&\mu\left( 
s\Bigg(a\otimes  \bigg(\sum_{i,j=1}^\infty\alpha_{i,j}e_{\xi_{2i+1},\eta_{2j}}\bigg)\Bigg)\oplus s\Bigg(b\otimes  \bigg(\sum_{i,j=1}^\infty\alpha_{i,j}e_{\xi_{2j},\eta_{2i+1}}\bigg)\Bigg)  \right)\nonumber\\
=&\mu\left( 
s\Bigg(a\otimes  \bigg(\sum_{i,j=1}^\infty\alpha_{i,j}e_{\xi'_i,\eta'_j}\bigg)\Bigg)\oplus s\Bigg(b\otimes  \bigg(\sum_{i,j=1}^\infty\alpha_{i,j}e_{\xi'_i,\eta'_j}\bigg)\Bigg)\right)\nonumber\\
=&s\left(\left(
\begin{smallmatrix}
0&b\\a&0
\end{smallmatrix}\right)\otimes  \bigg(\sum_{i,j=1}^\infty\alpha_{i,j}e_{\xi'_i,\eta'_j}\bigg) \right)\nonumber\\
=&s\left(\sum_{i,j=1}^\infty\alpha_{i,j}\left(\left(
\begin{smallmatrix}
0&b\\a&0
\end{smallmatrix}\right)\otimes e_{\xi'_i,\eta'_j}\right)\right).
\end{align}

Hence, $\sum_{i,j=1}^\infty\alpha_{i,j}\big(a\otimes e_{\xi_{2i+1},\eta_{2j}}+b\otimes e_{\xi_{2j},\eta_{2i+1}}\big) \mapsto  \sum_{i,j=1}^\infty\alpha_{i,j}\left(\left(
\begin{smallmatrix}
0&b\\a&0
\end{smallmatrix}\right)\otimes e_{\xi'_i,\eta'_j}\right)
$
extends to an  isomorphic  mapping $S$ from  $  
\left[a\otimes e_{\xi_{2i+1},\eta_{2j}}+b\otimes e_{\xi_{2j},\eta_{2i+1}}\right]_{i,j=1}^\infty$ onto $\left[
\left(
\begin{smallmatrix}
0&b\\a&0
\end{smallmatrix}\right)\otimes e_{\xi'_i,\eta'_j}
\right]_{i,j=1}^\infty.
$ 
It follows that \eqref{singular value of mapping} holds for every element in  $  
\left[a\otimes e_{\xi_{2i+1},\eta_{2j}}+b\otimes e_{\xi_{2j},\eta_{2i+1}}\right]_{i,j=1}^\infty$, and thus $S$ is an isometry\footnote{
For general quasi-Banach spaces, 
since the quasi-norm is not necessarily continuous, 
 a mapping  is an isometry on a dense subspace of a quasi-Banach space does not  guarantee that   
  it extends to  an isometry on the whole space. 
  }.  
\end{proof}

Let $\{K_k\}_{k=1}^n$ and $\{L_k\}_{k=1}^n$ be two sequences of mutually orthogonal closed subspaces of $H$, where $n\in\mathbb N^+\cup\{\infty\}$. If $n<\infty$, then for any $x\in \mathcal C_E$, we have 
\begin{align}\label{diag-proj}
\left\Vert\sum_{k=1}^np_{_{K_k}}xp_{_{L_k}}\right\Vert_{\mathcal C_E}=&\left\Vert \frac{1}{2^n }\sum_{\{\delta_k\}_{k=1}^n=\{-1,1\}^n}\sum_{1\leq i,j\leq n} \delta_i\delta_jp_{_{K_i}}xp_{_{L_j}}\right\Vert_{\mathcal C_E}  \\
=&  \frac{1}{2^n } \left\Vert\sum_{\{\delta_k\}_{k=1}^n=\{-1,1\}^n} 
\left({\sum}_{i=1}^n\delta_ip_{_{K_i}}\right)x\left({\sum}_{j=1}^n\delta_jp_{_{L_j}}\right)\right\Vert_{\mathcal C_E}\nonumber\\
\leq&\frac{1}{2^n } \kappa\sum_{\delta_1=\pm1}\kappa\sum_{\delta_2=\pm1}\dots\kappa\sum_{\delta_n=\pm1}\left\Vert
\left({\sum}_{i=1}^n\delta_ip_{_{K_i}}\right)x\left({\sum}_{j=1}^n\delta_jp_{_{L_j}}\right)\right\Vert_{\mathcal C_E}\nonumber\\
\leq&\kappa^n \left\Vert x\right\Vert_{\mathcal C_E}\nonumber,
\end{align}
where $\kappa$ is the modulus of $\left\Vert\cdot\right\Vert_{\mathcal C_E}$.
If $E$ is a Banach symmetric sequence space, then we have
\begin{align}\label{projection inequality}
\left\Vert\sum_{k=1}^np_{_{K_k}}xp_{_{L_k}}\right\Vert_{\mathcal C_E}\leq \left\Vert x \right\Vert_{\mathcal C_E}.
\end{align}
This is a generalization of  \cite[Lemma 5.9.1]{DPS} (see also \cite{CKS92},  \cite[Theorem 3.1 and Corollary 3.2]{BS21} for a more general result). 
If $E$ is separable, then 
for any $x\in\mathcal C_E$,   the series $\sum_{i=1}^\infty p_{_{K_i}}xp_{_{L_i}}$ is (unconditionally) convergent in $\mathcal C_E$, and 
\begin{equation}\label{bs-diag-proj}
\left\Vert\sum_{k=1}^\infty p_{_{K_k}}xp_{_{L_k}}\right\Vert_{\mathcal C_E}\leq \left\Vert x \right\Vert_{\mathcal C_E},
\end{equation}
see e.g. \cite[Corollary 3.4]{CKS92}. 
Therefore, 
for any $1\le n\le \infty $, we may define a contractive projection $P_{\{K_k,L_k\}_{k=1}^n}$ by 
\begin{align}\label{diga-proj}
P_{\{K_k,L_k\}_{k=1}^n}:x\longmapsto\sum_{k=1}^np_{_{K_k}}xp_{_{L_k}}.
\end{align}
When $n=1$, we simply denote $P_{\{K_k,L_k\}_{k=1}^n}$ by $P_{K_1,L_1}$.

If $E$ is not a Banach space,
then one can not guarantee that  $P_{\{K_k,L_k\}_{k=1}^\infty}$ is a contractive projection, see e.g. \cite{BS21}. Indeed, 
  $P_{\{K_k,L_k\}_{k=1}^\infty}$ is even  unbounded on  $\cC_p$, $0<p<1$. For any two orthonormal bases $\{\xi_i\}_{i=1}^\infty$ and $\{\eta_i\}_{i=1}^\infty$ in $H$, we have
\[
\bigg\Vert\sum_{1\leq i,j\leq n}e_{\xi_i,\eta_j}\bigg\Vert_p=n
\]
and
\[
\bigg\Vert\sum_{i=1}^ ne_{\xi_i,\eta_i}\bigg\Vert_p=n^{1/p}.
\]
It follows that 
\begin{equation}\label{diag-proj-unb}
\frac{\Vert\sum_{i=1}^ ne_{\xi_i,\eta_i}\Vert_p}{\Vert\sum_{1\leq i,j\leq n}e_{\xi_i,\eta_j}\Vert_p}\to\infty\;\;\;\;{\rm as}\;n\to\infty,
\end{equation}
and thus $P_{\{[\xi_k],[\eta_k]\}_{k=1}^\infty}$ is unbounded on $\mathcal C_p$. 
However, the projection $P_{\{[\xi_k],[\eta_k]\}_{k=1}^\infty}$ is bounded on the 
lower triangular part of $\cC_p$ (with respect to  bases $\{
\xi_i\}_{i=1}^\infty$ and $\{\eta_i\}_{i=1}^\infty$) by the Weyl inequality\cite[Theorem 3.3]{Hiai}.
More generally, since every quasi-Banach symmetric operator ideal $\cC_E$ has an equivalent quasi-norm which is monotone with respect to Weyl submajorization (or logarithmic submajorization), see, e.g., \cite[Proposition 3.2]{Kalton98} or \cite[Theorem 7]{SZ},
it follows from the Weyl inequality
that  $P_{\{[\xi_k],[\eta_k]\}_{k=1}^\infty}$ is bounded on    the 
lower triangular part of $\cC_E$ (with respect to  bases $\{
\xi_i\}_{i=1}^\infty$ and $\{\eta_i\}_{i=1}^\infty$).

\begin{notation}\label{tenotation}
Let $\{\xi_i\}_{i=1}^\infty$ and $\{\eta_i\}_{i=1}^\infty$ be two orthonormal sequences in $H$. We denote by $\mathcal T_{E,\{\xi_i,\eta_i\}_{i=1}^\infty}$ the subspace of $\mathcal C_E$ given by
\begin{equation}\label{ts}
\mathcal T_{E,\{\xi_i,\eta_i\}_{i=1}^\infty}=[e_{\xi_i,\eta_j}]_{1\leq j\leq i<\infty},
\end{equation}
where   $e_{\xi,\eta}=(\cdot|\eta)\xi$ and $[e_{\xi_i,\eta_j}]_{1\leq j\leq i<\infty}$ stands for the closure of the linear span of $\{ e_{\xi_i,\eta_j}\} _{1\leq j\leq i<\infty}$ in $\mathcal{C}_{E}$.
Note that  $\mathcal T_{E,\{\xi_i,\eta_i\}_{i=1}^\infty}$ is isometric to 
 $\mathcal T_{E,\{\xi'_i,\eta'_i\}_{i=1}^\infty}$  for any two  orthonormal sequences 
  $\{\xi_i'\}_{i=1}^\infty$ and $\{\eta_i'\}_{i=1}^\infty$ in $H$.
  Therefore, we denote by $\mathcal T_E$ a quasi-Banach space which is isometrically isomorphic to any of these spaces.  
\end{notation}
An  important projection is the \emph{(lower) triangular projection $T^{E,\{\xi_i,\eta_i\}_{i=1}^\infty}$} on span$\big(\{e_{\xi_i,\eta_j}\}_{i,j=1}^\infty\big)$ defined by
\begin{equation}\label{tria-proj}
T^{E,\{\xi_k,\eta_k\}_{k=1}^\infty}\big(e_{\xi_i,\eta_j}\big)=\left\{
\begin{array}{rcl}
e_{\xi_i,\eta_j}, &  & {1\leq j\leq i<\infty}\\
0\;\;\;, &  & \text{otherwise,}\\
\end{array}
\right.
\end{equation}

The following theorem can be found in \cite[Theorem 4.7]{Arazy1}. 
\begin{theorem}\label{tria}
Suppose that $E$ is a separable Banach symmetric sequence space, and $\{\xi_i\}_{i=1}^\infty$ and $\{\eta_i\}_{i=1}^\infty$ are two orthonormal sequences in $H$. The following are equivalent:
\begin{itemize}
\item [(a)]
$T^{E,\{\xi_i,\eta_i\}_{i=1}^\infty}$ is bounded on the span$\big(\{e_{\xi_i,\eta_j}\}_{i,j=1}^\infty\big)$;
\item [(b)]
$\mathcal T_{E,\{\xi_i,\eta_i\}_{k=1}^\infty}$ is complemented in $\mathcal C_E$;
\item [(c)]
$\mathcal T_E\approx \mathcal C_E$.
\end{itemize}
\end{theorem}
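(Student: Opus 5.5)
The plan is to prove the cycle (a)$\Rightarrow$(b)$\Rightarrow$(c)$\Rightarrow$(a). Throughout, $E$ is a separable Banach symmetric sequence space, so the block-diagonal projections \eqref{diga-proj} are contractive by \eqref{bs-diag-proj}.

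\emph{(a)$\Rightarrow$(b).} Since $E$ is separable, finitely supported matrices with respect to $\{\xi_i\},\{\eta_i\}$ are dense in $p\,\mathcal C_E\, q$, where $p,q$ are the projections onto $[\xi_i]$ and $[\eta_i]$. The map $x\mapsto pxq$ is contractive, and $T^{E,\{\xi_i,\eta_i\}}$ extends by continuity from the dense span. The composition $x\mapsto T^{E,\{\xi_i,\eta_i\}}(pxq)$ is then a bounded projection of $\mathcal C_E$ onto $\mathcal T_{E,\{\xi_i,\eta_i\}}$.

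\emph{(b)$\Rightarrow$(c).} I would use Pe{\l}czy\'nski's decomposition method. First, $\mathcal C_E$ is isomorphic to a complemented subspace of $\mathcal T_E$. To see this, split $\mathbb N$ into consecutive blocks $I_1<I_2<\cdots$ of sizes $1,2,3,\dots$ and take every other block, so that each kept block is strictly below the diagonal of the next. Concretely, pick index sets $A_n$ for the rows and $B_n$ for the columns with $|A_n|=|B_n|=n$ and $\max B_n<\min A_n$. Then the full $n\times n$ blocks $\mathcal C_E^{(n)}$ sit inside $\mathcal T_E$. Using $A_n$ for rows and $B_m$ for columns with $\max B_m<\min A_n$ whenever $m\le n$, the lower block-triangular part is contained in $\mathcal T_E$. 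Choosing the indices more cleverly, with all row indices larger than all column indices (rows taken from odd positions far out, columns from earlier positions), one obtains a copy of the full $\mathcal C_E(\ell_2)$ inside $\mathcal T_E$. This copy is the compression $x\mapsto p_Kxp_L$ with $K\perp$ (span of earlier $\xi$'s) arranged so that every $e_{\xi_i,\eta_j}$ with $\xi_i\in K$, $\eta_j\in L$ satisfies $j<i$. It is then $1$-complemented via $x\mapsto p_Kxp_L$. This needs an infinite interleaving: take rows $\xi_{i_k}$ and columns $\eta_{j_l}$ with all $j_l<i_k$, which is impossible for infinitely many of each. So instead I would get a complemented copy of $(\sum \mathcal C_E^{(n)})_E$, which is a block diagonal and hence complemented by \eqref{bs-diag-proj}, and combine this with the known isomorphisms $\mathcal C_E\approx(\sum\mathcal C_E)_E$-type identities. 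This step is the main obstacle. The cleanest route I would try is to note $\mathcal T_E\approx\mathcal T_E\oplus\mathcal C_E$ directly: decompose the index set into two disjoint infinite sets via a bijection of $\mathbb N$ with $\mathbb N\times\mathbb N$, and exhibit $\mathcal T_E$ as the $1$-complemented (block-diagonal) sum $\mathcal T_E\approx(\sum\mathcal T_E^{(n)})\oplus(\text{off-diagonal blocks})$, where the strictly-lower off-diagonal blocks $p_{K_n}\cdot p_{L_m}$, $m<n$, assemble into a copy of $\mathcal C_E$. With $\mathcal T_E\approx\mathcal T_E\oplus\mathcal C_E$ in hand, hypothesis (b) gives $\mathcal C_E\approx\mathcal T_E\oplus Z$. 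Also $\mathcal C_E\approx\mathcal C_E\oplus\mathcal C_E$, since $\mathcal C_E(H\oplus H)$ contains $\mathcal C_E\oplus\mathcal C_E$ as a diagonal part. Then
\[
\mathcal C_E\approx \mathcal T_E\oplus Z\approx \mathcal T_E\oplus\mathcal C_E\oplus Z\approx \mathcal T_E\oplus\mathcal C_E\approx\mathcal T_E .
\]

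\emph{(c)$\Rightarrow$(a).} This is the hard, classical direction from \cite{Arazy1}. Let $U:\mathcal C_E\to\mathcal T_E$ be an isomorphism. Since $\mathcal T_E\approx \mathcal C_E$, $\mathcal T_E$ is complemented in some copy of itself; I need, however, the specific projection $T^{E}$. I would use an averaging argument: if $\mathcal T_E$ is complemented, say by $P$, then averaging $P$ over the compact group of diagonal unitaries $\mathrm{diag}(z_i)$, $|z_i|=1$, acting by $x\mapsto u x v^*$ and preserving $\mathcal T_E$, produces a bounded projection commuting with all Schur multipliers by the torus. Such a projection must be the triangular truncation. This averaging is legitimate by separability and the strong continuity of the action on $\mathcal C_E$. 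So (c) reduces to showing $\mathcal T_E$ is complemented in $\mathcal C_E$, which follows from the isomorphism $\mathcal T_E\approx\mathcal C_E$ combined with the fact that $\mathcal T_E$ contains a complemented copy of $\mathcal C_E$ (from the previous step) and a Pe{\l}czy\'nski-type argument.

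I expect genuine difficulty in deriving complementation from a bare isomorphism. I would follow Arazy's route: show that (a) fails iff $E$ has trivial Boyd indices, and that in that case $\mathcal C_E\not\approx\mathcal T_E$, since $\mathcal C_E$ then has a property, such as failing some unconditional structure, that $\mathcal T_E$ has or vice versa.
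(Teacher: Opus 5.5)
Your (a)$\Rightarrow$(b) is fine. The torus averaging you describe is in fact a correct proof of (b)$\Rightarrow$(a): the averaged projection $Q$ still maps onto $\mathcal T_{E,\{\xi_i,\eta_i\}_{i=1}^\infty}$, fixes it pointwise, and commutes with $x\mapsto u_zxv_z^*$. Hence for $i<j$ the element $Q(e_{\xi_i,\eta_j})$ lies in $\mathcal T_{E,\{\xi_i,\eta_i\}_{i=1}^\infty}\cap\mathbb C e_{\xi_i,\eta_j}=\{0\}$, i.e.\ $Q$ is the triangular projection.

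Your (b)$\Rightarrow$(c), however, rests on the claim, made without using (b), that $\mathcal T_E\approx\mathcal T_E\oplus\mathcal C_E$. This is false in general. For $E=\ell_1$ the space $\mathcal C_1$ does not even embed into $\mathcal T_1$ (the paper uses this in the proof of Corollary~\ref{t1+c1}). The strictly lower off-diagonal blocks $p_{K_n}xp_{L_m}$, $m<n$, assemble into a block lower-triangular space, not a copy of $\mathcal C_E$; this is exactly the interleaving obstruction you noticed a few lines earlier. The repair is easy: go (b)$\Rightarrow$(a) by averaging, then (a)$\Rightarrow$(c) via $\mathcal C_E\cong[e_{\xi_i,\eta_j}]_{i,j=1}^\infty\approx[e_{\xi_i,\eta_j}]_{1\le j\le i}\oplus[e_{\xi_i,\eta_j}]_{1\le i<j}\approx\mathcal T_E\oplus\mathcal T_E\approx\mathcal T_E$, as in Lemma~\ref{qb-tenotce} and Proposition~\ref{te=te2}.

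The genuine gap is (c)$\Rightarrow$(a), which is the whole content of the theorem. The paper does not reprove it; it quotes \cite[Theorem 4.7]{Arazy1}. Averaging only reduces it to (c)$\Rightarrow$(b). An abstract isomorphism $\mathcal T_E\approx\mathcal C_E$ says nothing about whether the particular copy $\mathcal T_{E,\{\xi_i,\eta_i\}_{i=1}^\infty}$ is complemented. Pe{\l}czy\'nski's method produces isomorphisms, never complementation of a prescribed subspace, and ``$\mathcal T_E$ contains a complemented copy of $\mathcal C_E$'' is just (c) restated. Your closing paragraph names no isomorphic invariant separating the two spaces, and knowing that (a) fails exactly for trivial Boyd indices does not supply one.

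The missing idea (Kwapie\'n--Pe{\l}czy\'nski, Arazy) is an unconditional finite-dimensional Schauder decomposition. $\mathcal T_E$ always has one:
\begin{itemize}
\item its rows $X_n=[e_{\xi_n,\eta_j}]_{j=1}^n$ are finite-dimensional precisely because of triangularity;
\item the partial sums are the contractive compressions $x\mapsto p_{[\xi_s]_{s=1}^n}x$;
\item sign changes are left multiplications by diagonal sign unitaries, hence isometries.
\end{itemize}
Conversely, suppose $\mathcal C_E$ has (or merely embeds into a space with) such a decomposition. Run a gliding hump: choose rows $\xi_{i_k}$ at odd stages and columns $\eta_{j_l}$ at even stages against the blocks, using that $\{e_{\xi_i,\eta_j}\}_i$ and $\{e_{\xi_i,\eta_j}\}_j$ are weakly null. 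Then $e_{\xi_{i_k},\eta_{j_l}}$ essentially lives on an odd block-range iff $k>l$. So the unconditional projection onto the odd block-ranges restricts to triangular truncation on $\{e_{\xi_{i_k},\eta_{j_l}}\}_{k,l}$, which is isometrically equivalent to $\{e_{\xi_k,\eta_l}\}_{k,l}$. Hence the triangular projection is bounded (cf.\ \cite[Lemma 4.5]{Arazy1}, \cite{KwapienPelczynski}). Since such a decomposition is transported by any isomorphism, (c)$\Rightarrow$(a) follows.

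For its quasi-Banach analogue, Theorem~\ref{teicce}, the paper obtains this direction instead from the decomposition Theorem~\ref{thmc}, under the extra hypothesis $c_0,\ell_2\not\hookrightarrow E$.
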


\subsection{Perturbations of bounded operators on quasi-Banach spaces}

\begin{definition}
Let $X$ be a quasi-Banach space. A sequence $\{X_k\}_{k=1}^\infty$ of nonzero closed subspaces of $X$ is called a \emph{Schauder decomposition} of $X$ if every $x\in X$ has a unique representation of the form $x=\sum_{k=1}^\infty x_k$ with $x_k\in X_k$ for every $k$. If $\dim(X_k)=1$ for every $k$, and $x_k\in X_k$ such that $X_k={\rm span}\{x_k\}$ for every $k$, then the sequence $\{x_k\}_{k=1}^\infty$ is called a \emph{Schauder basis} of $X$.
\end{definition}

Every Schauder decomposition $\{X_k\}_{k=1}^\infty$ of a quasi-Banach space $X$ determines a sequence of projections $\{Q_n\}_{n=1}^\infty$ on $X$ by putting $Q_n:\sum_{k=1}^\infty x_k\mapsto x_n$. It is known that these projections are uniformly bounded (see e.g. Theorem \ref{qbssd}). We  denote 
\begin{equation}
K_c:=\sup_n\Vert Q_n\Vert<\infty.
\end{equation}

For example, 
if 
$\{K_i\}_i^\infty$ and $\{L_i\}_i^\infty$ are two sequences of mutually orthogonal closed subspaces of $H$,  
\[
X_k=\Big\{x\in \mathcal C_E:p_{[\bigcup_{i=1}^kK_i]}xp_{[\bigcup_{i=1}^kL_i]}-p_{[\bigcup_{i=1}^{k-1}K_i]}xp_{[\bigcup_{i=1}^{k-1}L_i]}=x\Big\}\;\;\;\;\;\;{\rm for\;every}\;k\geq2
\]
and
\[
X_1=\Big\{x\in\mathcal C_E:p_{_{K_1}}xp_{_{L_1}}=x\Big\},
\]
then $\{X_k\}_{k=1}^\infty$ is   a Schauder decomposition of $[\bigcup_{k=1}^\infty X_k]$.

\begin{lemma}\label{sdper}
Suppose that $X$ is a quasi-Banach space with a Schauder decomposition $\{X_k\}_{k=1}^\infty$, and $Y$ is a quasi-Banach space whose quasi-norm is $r$-subadditive. If $T,S\in\mathcal{B}(X,Y)$, then
\[
\Vert T-S\Vert\leq K_c \bigg(\sum_{k=1}^\infty\big\Vert T|_{X_k}-S|_{X_k}\big\Vert^r\bigg)^{\frac{1}{r}}.   
\]
\end{lemma}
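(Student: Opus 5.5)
The plan is to decompose an arbitrary $x\in X$ along the Schauder decomposition and use $r$-subadditivity of the quasi-norm on $Y$ to sum the errors. Fix $x\in X$ and write $x=\sum_{k=1}^\infty x_k$ with $x_k=Q_kx\in X_k$, the series converging in $X$. Since $\Vert x_k\Vert=\Vert Q_k x\Vert\le K_c\Vert x\Vert$ for every $k$, this is the only place where the constant $K_c$ enters.

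Set $R=T-S\in\mathcal B(X,Y)$. Because $R$ is bounded, hence continuous, on the quasi-Banach space $X$, the partial sums give $Rx=\lim_{n\to\infty}\sum_{k=1}^n Rx_k$ in $Y$. For each $n$, $r$-subadditivity yields
\[
\Big\Vert \sum_{k=1}^n Rx_k\Big\Vert^r\le \sum_{k=1}^n \Vert R|_{X_k}\Vert^r\Vert x_k\Vert^r\le K_c^r\Vert x\Vert^r\sum_{k=1}^\infty \Vert T|_{X_k}-S|_{X_k}\Vert^r .
\]

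To pass to the limit we use that an $r$-subadditive quasi-norm is continuous (Remark~\ref{rem-lim-norm}): indeed $\big|\Vert u\Vert^r-\Vert v\Vert^r\big|\le\Vert u-v\Vert^r$. Hence $\Vert Rx\Vert^r=\lim_n\Vert\sum_{k\le n}Rx_k\Vert^r$, and this limit is bounded by the right-hand side above. Taking $r$-th roots and the supremum over $\Vert x\Vert\le1$ gives the claim.

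The only delicate point is this limit passage. Without $r$-subadditivity one would lose a factor of the modulus of concavity, as \eqref{lim-norm} shows. Continuity of the $r$-subadditive quasi-norm on $Y$ avoids that loss entirely. If the sum on the right is infinite, the inequality is trivial.
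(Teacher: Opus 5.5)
Your argument is correct. The paper states this lemma without proof, and your route is the standard argument: expand $x=\sum_k Q_kx$ with $\Vert Q_kx\Vert\le K_c\Vert x\Vert$, apply $r$-subadditivity to the partial sums $\sum_{k\le n}(T-S)Q_kx$, and pass to the limit using continuity of the $r$-subadditive quasi-norm on $Y$. This is the same estimate the paper invokes in the proof of Theorem~\ref{2d}, and you are right that continuity of the quasi-norm on $Y$ is what avoids losing a factor of the modulus of concavity.
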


\begin{definition}\label{per-oper}
Let $X$ and $Y$ be two quasi-Banach spaces, and $\{X_k\}_{k=1}^\infty$ be a Schauder decomposition of $X$. Let $T,S\in\mathcal B(X,Y)$, and $\{\varepsilon_k\}_{k=1}^\infty$ be a sequence of nonnegative numbers with $\lim_{k\to\infty}\varepsilon_k=0$. If
\[
\big\Vert T|_{X_k}-S|_{X_k}\big\Vert\leq\varepsilon_k
\]
for every $k$, then   \emph{$S$ is said to be a perturbation of $T$ associated with $\{X_k\}_{k=1}^\infty$ for $\{\varepsilon_k\}_{k=1}^\infty$}.
\end{definition}

\begin{theorem}\label{2d}
	Suppose that $X$ is a quasi-Banach space with a Schauder decomposition $\{X_k\}_{k=1}^\infty$, and $Y$ is a quasi-Banach space whose quasi-norm is $r$-subadditive. If $T\in\mathcal{B}(X,Y)$, $R_k\in\mathcal{B}(X_k,Y)$, $k=1,2,...$ such that
	\begin{equation}\label{2.4}
	K_c \bigg(\sum_{k=1}^\infty\big\Vert T|_{X_k}-R_k\big\Vert^r\bigg)^{\frac{1}{r}}\leq\varepsilon,
	\end{equation}
for some $\varepsilon>0$, then 
  there exists $T_0\in\mathcal{B}(X,Y)$ such that
\begin{equation}\label{2.5} T_0|_{X_k}=R_k\;\;{\rm and\;}\; \Vert T-T_0\Vert\leq\varepsilon.\end{equation}
\end{theorem}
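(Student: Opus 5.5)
Fix $x\in X$ and write it through the Schauder decomposition as $x=\sum_{k=1}^\infty x_k$ with $x_k=Q_kx\in X_k$. The natural candidate is
\[
T_0x:=Tx+\sum_{k=1}^\infty \big(R_k-T|_{X_k}\big)(x_k).
\]
Everything then reduces to showing that this series converges in $Y$ and that the resulting map is a bounded linear operator with $\Vert T-T_0\Vert\le\varepsilon$.

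For convergence, put $D_k:=R_k-T|_{X_k}\in\mathcal B(X_k,Y)$. Since $\Vert x_k\Vert=\Vert Q_kx\Vert\le K_c\Vert x\Vert$, we get $\Vert D_kx_k\Vert\le \Vert D_k\Vert K_c\Vert x\Vert$. Because the quasi-norm of $Y$ is $r$-subadditive, for $m<n$
\[
\Big\Vert\sum_{k=m}^n D_kx_k\Big\Vert^r\le \sum_{k=m}^n\Vert D_kx_k\Vert^r\le K_c^r\Vert x\Vert^r\sum_{k=m}^n\Vert D_k\Vert^r .
\]
By \eqref{2.4} the tail of $\sum_k\Vert D_k\Vert^r$ tends to zero, so the partial sums are Cauchy, and by completeness the series converges to some $Dx\in Y$. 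An $r$-subadditive quasi-norm is continuous (Remark~\ref{rem-lim-norm}), so letting $n\to\infty$ with $m=1$ gives
\[
\Vert Dx\Vert\le K_c\Big(\sum_k\Vert D_k\Vert^r\Big)^{1/r}\Vert x\Vert\le\varepsilon\Vert x\Vert .
\]
The map $D$ is linear because each $Q_k$ and each $D_k$ is linear and limits are linear. Hence $T_0:=T+D$ is bounded and satisfies $\Vert T-T_0\Vert=\Vert D\Vert\le\varepsilon$.

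Finally, take $x\in X_k$. By uniqueness of the representation, $x_k=x$ and $x_j=0$ for $j\ne k$. Therefore $T_0x=Tx+(R_k-T)x=R_kx$, which is exactly $T_0|_{X_k}=R_k$.

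The only delicate point is the use of continuity of the quasi-norm when passing to the limit. This is where $r$-subadditivity is essential, since for a general quasi-norm one would lose a factor of $\kappa$ (compare \eqref{lim-norm}). With an $r$-subadditive quasi-norm, the bound passes to the limit with no constant lost.
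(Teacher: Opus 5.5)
Your proof is correct and uses the same operator as the paper, $T_0=T-\sum_{k}(T|_{X_k}-R_k)Q_k$, and it gets convergence of the series from $r$-subadditivity exactly as the paper does. The one difference is in how boundedness is obtained: you get boundedness of $T_0$ and the estimate $\Vert T-T_0\Vert\le\varepsilon$ at once from the pointwise bound $\Vert\sum_k D_kQ_kx\Vert\le\varepsilon\Vert x\Vert$, using continuity of the $r$-subadditive quasi-norm, whereas the paper first invokes the Banach--Steinhaus-type result \cite[2.8 Theorem]{Rudin3} for boundedness and then Lemma~\ref{sdper} for the norm bound, so your version is slightly more self-contained.
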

\begin{proof}
	For each $k\in\mathbb N^+$, let $Q_k$ be the natural projection from $X=[  \bigcup _{i=1}^\infty X_i ]$ onto $X_k$ for every $k$.
It suffices to observe that 
$$
\sum_{k=1}^\infty \Vert (T|_{X_k}-R_k)Q_k(x)\Vert^r\stackrel{\eqref{2.4}}{\leq}\varepsilon\Vert x\Vert^r<\infty\;\;\;\;\;\;\mbox{for every }x\in X.
$$
Since $T(x) = T \big(\sum_{k=1}^\infty Q_k(x)\big)=\sum_{k=1}^\infty TQ_k(x) $ for every $x\in X$ and $TQ_k = T|_{X_k} Q_k $, it follows  that the series
\[
\sum_{k=1}^\infty R_kQ_k(x)=\sum_{k=1}^\infty TQ_k(x)-\sum_{k=1}^\infty (T|_{X_k}-R_k)Q_k(x)
\]
is convergent. Hence, the operator
\[
T_0:=\sum_{k=1}^\infty R_kQ_k=T-\sum_{k=1}^\infty (T|_{X_k}-R_k)Q_k
\]
is strongly convergent. 
By \cite[2.8 Theorem]{Rudin3}, $T_0$ is a bounded from $X$ to $Y$, and satisfies \eqref{2.5}. By Lemma \ref{sdper}, we complete the proof.
\end{proof}

Let $\{X_k\}_{k=1}^\infty$ be a Schauder decomposition of the quasi-Banach space $X$.
It is readily verified  that 
if $Z_k$ is a nonzero closed subspace of $X_k$ for every $k$, then   $\{Z_k\}_{k=1}^\infty$ is a Schauder decomposition of $[\bigcup_k^\infty Z_k]$.
Throughout this paper, we always denote by $I_X$ the identity operator on a quasi-Banach space $X$.

\begin{lemma}\label{2d''}
Suppose that
\begin{itemize}
    \item [(1)] $X$ is a quasi-Banach space with a Schauder decomposition $\{X_k\}_{k=1}^\infty$,
        \item [(2)] $Z_k$ is a nonzero complemented subspace of $X_k$ with the projection operator $Q_k:X_k\to Z_k $ for every $k$, and $\sup_{k}\Vert Q_k\Vert\leq M<\infty$,
    \item [(3)] $Y$ is a quasi-Banach space whose quasi-norm is $r$-subadditive, and
    \item [(4)] $T\in\mathcal B(X,Y)$ and $S\in\mathcal B([\cup_{k=1}^\infty Z_k],Y)$ such that $T|_{[\bigcup_k^\infty Z_k]}$ is a perturbation of $S$ associated with $\{Z_k\}_{k=1}^\infty$ for $\{\varepsilon_k\}_{k=1}^\infty$, where $\sum_{k=1}^\infty{\varepsilon_k}^r<\infty$.
\end{itemize}
Then $S$  extends to an operator $\tilde{S}\in\mathcal B(X,Y)$ {\color{red}} such that $\tilde{S}(I_{X_k}-Q_k)=T(I_{X_k}-Q_k)$ for every $k$, and $T$ is the perturbation of $\tilde{S}$ associated with $\{X_k\}_{k=1}^\infty$ for $\{M\varepsilon_k\}_{k=1}^\infty$.
\end{lemma}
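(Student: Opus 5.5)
We define $\tilde S$ blockwise and then invoke Theorem~\ref{2d}. For each $k$, set
\[
R_k:=S|_{Z_k}Q_k+T(I_{X_k}-Q_k)\in\mathcal B(X_k,Y).
\]
Here $Q_k:X_k\to Z_k$ is the given projection, and $S|_{Z_k}$ makes sense because $Z_k\subset[\bigcup_j Z_j]$. By construction $R_k|_{Z_k}=S|_{Z_k}$ and $R_k(I_{X_k}-Q_k)=T(I_{X_k}-Q_k)$.

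On $X_k$ we have $T|_{X_k}=TQ_k+T(I_{X_k}-Q_k)$. Hence
\[
T|_{X_k}-R_k=(T|_{Z_k}-S|_{Z_k})Q_k,
\]
and therefore $\Vert T|_{X_k}-R_k\Vert\le \Vert T|_{Z_k}-S|_{Z_k}\Vert\,\Vert Q_k\Vert\le M\varepsilon_k$.

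Since $\sum_k\varepsilon_k^r<\infty$, the quantity $K_c\big(\sum_k (M\varepsilon_k)^r\big)^{1/r}$ is finite, where $K_c$ is the constant of the decomposition $\{X_k\}$. We then apply Theorem~\ref{2d}, with $\varepsilon$ equal to that finite number, to $T$ and the family $\{R_k\}$. This yields $\tilde S\in\mathcal B(X,Y)$ with $\tilde S|_{X_k}=R_k$ for every $k$. Summability of the errors is all that Theorem~\ref{2d} needs in order to produce a bounded operator; smallness of $\varepsilon$ plays no role.

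It remains to check the three conclusions.

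First, $\tilde S(I_{X_k}-Q_k)=R_k(I_{X_k}-Q_k)=T(I_{X_k}-Q_k)$, since $Q_k$ is idempotent.

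Second, $T$ is a perturbation of $\tilde S$ associated with $\{X_k\}$ for $\{M\varepsilon_k\}$, by the estimate above. Also $M\varepsilon_k\to0$, as Definition~\ref{per-oper} requires.

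Third, $\tilde S$ extends $S$. The two operators agree on each $Z_k$, so by linearity they agree on $\mathrm{span}\bigcup_k Z_k$, which is dense in $[\bigcup_k Z_k]$. Both $\tilde S|_{[\bigcup Z_k]}$ and $S$ are bounded linear maps into the Hausdorff topological vector space $Y$, so they are continuous and coincide on the closure. This step uses only continuity of the operators, not continuity of the quasi-norm.

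We expect the main obstacle to be bookkeeping rather than any genuine estimate. One must make sure $R_k$ is well defined as an operator on $X_k$. One must also confirm that the natural projections onto $X_k$ are controlled by $K_c$, which is supplied by Theorem~\ref{qbssd}. Then the $r$-subadditive summation of Theorem~\ref{2d} applies verbatim.
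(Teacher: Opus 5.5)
Your proposal is correct and follows the paper's proof essentially verbatim. It uses the same blockwise operators $R_k=S|_{Z_k}Q_k+T(I_{X_k}-Q_k)$, the same estimate $\Vert T|_{X_k}-R_k\Vert\le M\varepsilon_k$, and the same appeal to Theorem~\ref{2d}, while your density/continuity check that $\tilde S$ actually extends $S$ on $[\bigcup_k Z_k]$ fills in a step the paper leaves implicit.
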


\begin{proof}
Put $R_k=S|_{Z_k}Q_k+T(I_{X_k}-Q_k)$ for every $k$. We have
\[
\big\Vert T|_{X_k}-R_k\big\Vert=\big\Vert TQ_k+T(I_{X_k}-Q_k)-R_k\big\Vert=\big\Vert (T|_{Z_k}-S|_{Z_K})Q_k\big\Vert\leq M\varepsilon_k.  
\]
By Theorem \ref{2d},
there exists a bounded operator $\tilde{S}$ such that $\tilde{S}|_{X_k} = R_k$ for every $k$. 
Moreover,
\begin{align*}
\tilde{S}(I_{X_k}-Q_k)&=\tilde{S}|_{X_k}(I_{X_k}-Q_k)=R_k(I_{X_k}-Q_k)\\
&=S|_{Z_k}Q_k(I_{X_k}-Q_k)+T(I_{X_k}-Q_k)(I_{X_k}-Q_k)=T(I_{X_k}-Q_k)
\end{align*}
for every $k$.
\end{proof}

When $Y\subseteq X$,  we denote by $ \iota_{Y\hookrightarrow X}$  the natural embedding from $Y$ to $X$.  

\begin{lemma}\label{comp}
Let $X$ be a quasi-Banach space whose quasi-norm is $r$-subadditive. Suppose that $Y$ is a $\lambda$-complemented ($\lambda \ge 1$)  subspace of $X$ and there is an operator $T:Y\to X$ satisfying $\Vert T-\iota_{Y\hookrightarrow X}\Vert\leq\varepsilon<\lambda^{-1}$. Then, $T$ is an isomorphic embedding and $T(Y)$ is $\lambda(1+(\varepsilon\lambda)^r)^{\frac{1}{r}}(1-(\varepsilon\lambda)^r)^{-\frac{1}{r}}$-complemented in $X$.
\end{lemma}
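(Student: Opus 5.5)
We argue by a Neumann-series perturbation, in the spirit of the classical Banach-space argument, using $r$-subadditivity in place of the triangle inequality. Let $P:X\to Y$ be a projection onto $Y$ with $\Vert P\Vert\le\lambda$. Set $\delta:=\varepsilon\lambda<1$.

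First, $T$ is an isomorphic embedding. For $y\in Y$, $r$-subadditivity gives
\[
\Vert Ty\Vert^r\ge\Vert y\Vert^r-\Vert (T-\iota_{Y\hookrightarrow X})y\Vert^r\ge(1-\varepsilon^r)\Vert y\Vert^r .
\]
Since $\varepsilon<\lambda^{-1}\le1$, $T$ is bounded below, so $T(Y)$ is closed and $T$ is an isomorphism onto it.

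Next, we build a projection onto $T(Y)$. Consider the operator $PT:Y\to Y$. Since $Py=y$ for $y\in Y$, we have $PT-I_Y=P(T-\iota_{Y\hookrightarrow X})$, so $\Vert PT-I_Y\Vert\le\lambda\varepsilon=\delta<1$.

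In an $r$-normed complete space the Neumann series $\sum_n (I_Y-PT)^n$ converges. Indeed, $\Vert (I_Y-PT)^n\Vert\le\delta^n$, and $r$-subadditivity of the operator quasi-norm (inherited from $X$) gives $\sum_n\delta^{nr}<\infty$. Hence $PT$ is invertible on $Y$ with
\[
\Vert (PT)^{-1}\Vert\le\Big(\sum_{n\ge0}\delta^{nr}\Big)^{1/r}=(1-\delta^r)^{-1/r}.
\]
Define $Q:=T(PT)^{-1}P:X\to X$. Then $Q^2=T(PT)^{-1}(PT)(PT)^{-1}P=Q$, and the range of $Q$ is $T(Y)$ because $(PT)^{-1}P$ maps onto $Y$. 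So $Q$ is a projection onto $T(Y)$.

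Finally, we estimate the norm of $Q$:
\[
\Vert Q\Vert\le\Vert T\Vert\,\Vert (PT)^{-1}\Vert\,\Vert P\Vert\le(1+\varepsilon^r)^{1/r}(1-\delta^r)^{-1/r}\lambda ,
\]
using $\Vert T\Vert^r\le 1+\varepsilon^r$. This is slightly off from the stated constant $\lambda(1+(\varepsilon\lambda)^r)^{1/r}(1-(\varepsilon\lambda)^r)^{-1/r}$. It suffices, though, because $\lambda\ge1$ gives $\varepsilon^r\le(\varepsilon\lambda)^r$.

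The only delicate point is justifying the Neumann series in the quasi-Banach setting. This needs the operator quasi-norm on $\mathcal B(Y)$ to be $r$-subadditive and complete. Both follow directly from the $r$-subadditivity of $\Vert\cdot\Vert_X$, which is also continuous by Remark~\ref{rem-lim-norm}, so limits of partial sums behave well.
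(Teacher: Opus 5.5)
Your proof is correct, and it is a close variant of the paper's argument rather than a different strategy.

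The paper perturbs the identity on all of $X$. It sets $S=TP+I_X-P$ and notes $\Vert S-I_X\Vert=\Vert (T-\iota_{Y\hookrightarrow X})P\Vert\le\varepsilon\lambda<1$. It then inverts $S$ by a Neumann series in $\mathcal B(X)$ and takes the conjugate projection $SPS^{-1}$, with the estimates $\Vert S\Vert\le(1+(\varepsilon\lambda)^r)^{1/r}$ and $\Vert S^{-1}\Vert\le(1-(\varepsilon\lambda)^r)^{-1/r}$.

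You instead invert the compression $PT$ on $Y$ and use $T(PT)^{-1}P$. In fact the two projections coincide. The operator $S$ acts as $T$ on $Y$ and as the identity on $\ker P$, so $SPS^{-1}$ is the projection onto $T(Y)$ along $\ker P$. Your $Q$ is exactly this projection: its range is $T(Y)$, and its kernel is $\ker P$ because $T(PT)^{-1}$ is injective.

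The only real difference is bookkeeping. Your factorization bounds the outer factor by $\Vert T\Vert\le(1+\varepsilon^r)^{1/r}$ rather than $\Vert S\Vert\le(1+(\varepsilon\lambda)^r)^{1/r}$, so you get a marginally sharper constant. The small cost is that you work in $\mathcal B(Y)$ instead of $\mathcal B(X)$. This is harmless, since $Y=\ker(I_X-P)$ is closed and hence complete.
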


\begin{proof}
Since $\Vert T-\iota_{Y\hookrightarrow X}\Vert\leq\varepsilon<\lambda^{-1}\leq1$, it follows that
$$\norm{Tx}^r \ge \norm{x}^r - \norm{(T- \iota_{Y\hookrightarrow X}) x }^r \ge (1- \varepsilon^r)\norm{x}^r,~\forall x\in Y.$$
In other words, 
$T$ is an isomorphic embedding from $Y$ into $X$.
Let $P$ be the projection from $X$ onto $Y$ with $\Vert P\Vert\leq\lambda$. Define 
\begin{align}\label{defS}
S=TP+I_{X}-P.
\end{align}
We have 
\[
\Vert S-I_{X} \Vert=\Vert(T-\iota_{Y\hookrightarrow X})P\Vert\leq\varepsilon\lambda<1,
\]
which implies that $S$ is an invertible operator with $\left\Vert S \right\Vert \leq(\Vert I_X\Vert^r+\Vert S-I_X\Vert^r)^{\frac{1}{r}}\leq(1+(\varepsilon\lambda)^r)^{\frac{1}{r}}$ and $\left\Vert S^{-1}\right\Vert=\left\Vert \sum_{k=0}^\infty(I_X-S)^k\right\Vert\leq
\left(\sum_{k=0}^\infty\Vert(I_X-S)^k\Vert^r\right)^{\frac{1}{r}}
\le \left(\sum_{k=0}^\infty (\varepsilon \lambda )^{rk} \right)^{\frac{1}{r}}
= (1-(\varepsilon\lambda)^r)^{-\frac{1}{r}}$. Noting that 
\[
(SPS^{-1})^2=SPS^{-1}
\]
and \[ 
 {\rm im}(SPS^{-1})={\rm im}(SP)\stackrel{\eqref{defS}}{=} {\rm im}((TP+(I_X-P))P)  = {\rm im}(TP)={\rm im}(T),
\]
where im($T$) denotes the range of the operator $T$, 
 we obtain that  $SPS^{-1}$ is the projection from $X$ onto $T(Y)$ with $\Vert SPS^{-1}\Vert\leq\lambda(1+(\varepsilon\lambda)^r)^{\frac{1}{r}}(1-(\varepsilon\lambda)^r)^{-\frac{1}{r}}$.
\end{proof}

\section{Block sequences in $\mathcal B(H)$}\label{blocksequences}

Let $\{K_i\}_i$ be a finite or infinite sequence of mutually orthogonal closed subspaces of $H$. Suppose that $\big\{u_i:K_i\to H'\big\}_i$ is a sequence of isometries. 
For each $i$,
we define an isometry from $K_i$ into $H'\otimes_{_2}\ell_2$ by
\begin{equation}\label{ui}
\wideparen{u_i}:\xi\in K_i\mapsto u_i(\xi)\otimes e_i\in H'\otimes_{_2}\ell_2,
\end{equation}
where $\{e_i\}_{i=1}^\infty$ stands for the natural basis of $\ell_2$.
 Then there is a unique isometry $\wideparen{u}$ from $[\bigcup_iK_i]$ into $H'\otimes_{_2}\ell_2$ given by
\begin{equation}\label{u}
\wideparen{u}:\sum_i\xi_i\mapsto\sum_iu_i(\xi_i)\otimes e_i,
\end{equation}
where $\xi_i\in K_i$ for every $i$, such that $\wideparen{u}|_{K_i}=\wideparen{u_i}$ for every $i$.

Let $\{K_i\}_i$ and $\{L_j\}_j$ be two  finite or infinite sequences of mutually orthogonal closed subspaces of $H$. Let $G$ be a nonempty subset of $\mathbb N\times\mathbb N$.

\begin{definition}
A sequence 
$\{x_{i,j}\}_{(i,j)\in G}$   in $\mathcal B(H)$ is called a \emph{block sequence associated with $\{K_i\}_i$ and $\{L_j\}_j$} if $x_{i,j}=p_{_{K_i}}x_{i,j}p_{_{L_j}}$ for all $(i,j)\in G$, and is denoted by
\[
\{x_{i,j}\}_{(i,j)\in G}\stackrel{\mbox{\tiny b.s}}{\boxplus}\{K_i\}_i\otimes\{L_j\}_j.
\]
\end{definition}
In particular, when $\{K_i\}_i$ \big(resp. $\{L_j\}_j$\big) contains only one element $K'$ \big(resp. $L'$\big), we may write $\{x_{i,j}\}_{(i,j)\in G}$ as $\{x_j\}_j$  \big(resp. $\{x_i\}_i$\big) and
\[
\{x_j\}_j\stackrel{\mbox{\tiny b.s}}{\boxplus}K'\otimes\{L_j\}_j\;\;\;\;\big({\rm resp.}\;\{x_i\}_i\stackrel{\mbox{\tiny b.s}}{\boxplus}\{K_i\}_i\otimes L'\big).
\]

\begin{definition}
Let $\{x_{i,j}\}_{(i,j)\in G}$ be a block sequence associated with $\{K_i\}_i$ and $\{L_j\}_j$ in $\mathcal B(H)$, and $\big\{u_i:K_i\to H'\big\}_i$ and $\big\{v_j:L_j\to H'\big\}_j$ be the two sequences of isometries. The sequence $\{x_{i,j}\}_{(i,j)\in G}$ is said to be  \emph{generated by the  system $\{u_i:K_i\to H'\}_i\otimes\{v_j:L_j\to, H'\}_j$ of isometries  and an operator} if there exist an operator $a\in\mathcal B(H')$ such that
\[
u_ix_{i,j}{v_j}^\ast=a
\]
for every $(i,j)\in G$, and is denoted by
\[
\{x_{i,j}\}_{(i,j)\in G}\stackrel{\mbox{\tiny b.s}}{\boxplus}\{u_i:K_i\to H'\}_i\otimes\{v_j:L_j\to H'\}_j\curvearrowleft a.
\] 
If one only   emphasizes the existence of such an isometries system, then the sequence $\{x_{i,j}\}_{(i,j)\in G}$ is said to be  \emph{generated by an operator} $a\in \cB(H')$, and is denoted by
\[
\{x_{i,j}\}_{(i,j)\in G}\stackrel{\mbox{\tiny b.s}}{\boxplus}\{K_i\}_i\otimes\{L_j\}_j\curvearrowleft a
\]
or
\[
\{x_{i,j}\}_{(i,j)\in G}\stackrel{\mbox{\tiny b.s}}{\boxplus}\{K_i\}_i\otimes\{L_j\}_j\curvearrowleft\;
\]
if the  choice of $a$ is irrelevant. 
\end{definition}

\begin{example}
    For the case $G=\{(i,j)\in\mathbb N\times\mathbb N:j\leq i\}$, the following figure is helpful in understanding  the role  of $a\in \cB(H')$ in the  representation of the form   
    \[
\{x_{i,j}\}_{(i,j)\in G}\stackrel{\mbox{\tiny b.s}}{\boxplus}\{K_i\}_i\otimes\{L_j\}_j\curvearrowleft a.
\]

\begin{center}

\scalebox{0.8}{

\tikzset{every picture/.style={line width=0.75pt}} 

\begin{tikzpicture}[x=0.75pt,y=0.75pt,yscale=-1,xscale=1]

\draw  [line width=1.5]  (480.22,50.06) -- (80.17,50.06) -- (80.17,450.55) ;
\draw  [dash pattern={on 4.5pt off 4.5pt}] (80.17,110.06) -- (140.33,110.06) -- (140.33,50.06) ;
\draw  [dash pattern={on 4.5pt off 4.5pt}] (80.17,180.27) -- (210.21,180.27) -- (210.21,50.06) ;
\draw  [dash pattern={on 4.5pt off 4.5pt}] (80.17,269.61) -- (300.21,269.61) -- (300.21,50.06) ;
\draw  [dash pattern={on 4.5pt off 4.5pt}] (80.17,390.28) -- (419.89,390.28) -- (419.89,50.06) ;
\draw  [dash pattern={on 4.5pt off 4.5pt}]  (140.33,110.06) -- (140.42,390.08) ;
\draw  [dash pattern={on 4.5pt off 4.5pt}]  (210.21,180.27) -- (210.3,390.29) ;
\draw  [dash pattern={on 4.5pt off 4.5pt}]  (300.21,269.61) -- (300.3,390.62) ;
\draw   (80.11,50.06) .. controls (75.44,50.06) and (73.11,52.39) .. (73.11,57.06) -- (73.11,70.06) .. controls (73.11,76.73) and (70.78,80.06) .. (66.11,80.06) .. controls (70.78,80.06) and (73.11,83.39) .. (73.11,90.06)(73.11,87.06) -- (73.11,103.06) .. controls (73.11,107.73) and (75.44,110.06) .. (80.11,110.06) ;
\draw   (80.11,110.06) .. controls (75.44,110.06) and (73.11,112.39) .. (73.11,117.06) -- (73.11,135.06) .. controls (73.11,141.73) and (70.78,145.06) .. (66.11,145.06) .. controls (70.78,145.06) and (73.11,148.39) .. (73.11,155.06)(73.11,152.06) -- (73.11,173.06) .. controls (73.11,177.73) and (75.44,180.06) .. (80.11,180.06) ;
\draw   (80.11,180.06) .. controls (75.44,180.06) and (73.11,182.39) .. (73.11,187.06) -- (73.11,214.95) .. controls (73.11,221.62) and (70.78,224.95) .. (66.11,224.95) .. controls (70.78,224.95) and (73.11,228.28) .. (73.11,234.95)(73.11,231.95) -- (73.11,262.84) .. controls (73.11,267.51) and (75.44,269.84) .. (80.11,269.84) ;
\draw   (79.89,269.62) .. controls (75.22,269.62) and (72.89,271.95) .. (72.89,276.62) -- (72.89,319.95) .. controls (72.89,326.62) and (70.56,329.95) .. (65.89,329.95) .. controls (70.56,329.95) and (72.89,333.28) .. (72.89,339.95)(72.89,336.95) -- (72.89,383.28) .. controls (72.89,387.95) and (75.22,390.28) .. (79.89,390.28) ;
\draw   (140.27,49.73) .. controls (140.27,45.06) and (137.94,42.73) .. (133.27,42.73) -- (120.3,42.73) .. controls (113.63,42.73) and (110.3,40.4) .. (110.3,35.73) .. controls (110.3,40.4) and (106.97,42.73) .. (100.3,42.73)(103.3,42.73) -- (87.33,42.73) .. controls (82.66,42.73) and (80.33,45.06) .. (80.33,49.73) ;
\draw   (210.2,49.73) .. controls (210.2,45.06) and (207.87,42.73) .. (203.2,42.73) -- (185.24,42.73) .. controls (178.57,42.73) and (175.24,40.4) .. (175.24,35.73) .. controls (175.24,40.4) and (171.91,42.73) .. (165.24,42.73)(168.24,42.73) -- (147.27,42.73) .. controls (142.6,42.73) and (140.27,45.06) .. (140.27,49.73) ;
\draw   (299.89,49.73) .. controls (299.89,45.06) and (297.56,42.73) .. (292.89,42.73) -- (265.05,42.73) .. controls (258.38,42.73) and (255.05,40.4) .. (255.05,35.73) .. controls (255.05,40.4) and (251.72,42.73) .. (245.05,42.73)(248.05,42.73) -- (217.2,42.73) .. controls (212.53,42.73) and (210.2,45.06) .. (210.2,49.73) ;
\draw   (420.21,49.51) .. controls (420.21,44.84) and (417.88,42.51) .. (413.21,42.51) -- (369.94,42.51) .. controls (363.27,42.51) and (359.94,40.18) .. (359.94,35.51) .. controls (359.94,40.18) and (356.61,42.51) .. (349.94,42.51)(352.94,42.51) -- (306.67,42.51) .. controls (302,42.51) and (299.67,44.84) .. (299.67,49.51) ;
\draw  [fill={rgb, 255:red, 248; green, 231; blue, 28 }  ,fill opacity=1 ] (81.11,51.01) -- (131.16,51.01) -- (131.16,101.06) -- (81.11,101.06) -- cycle ;
\draw  [dash pattern={on 4.5pt off 4.5pt}]  (140.33,110.06) -- (210.33,110.51) ;
\draw  [dash pattern={on 4.5pt off 4.5pt}]  (210.21,180.27) -- (299.89,180.28) ;
\draw  [dash pattern={on 4.5pt off 4.5pt}]  (300.21,269.61) -- (419.89,269.62) ;
\draw  [fill={rgb, 255:red, 248; green, 231; blue, 28 }  ,fill opacity=1 ] (81.11,120.06) -- (131.16,120.06) -- (131.16,170.11) -- (81.11,170.11) -- cycle ;
\draw  [fill={rgb, 255:red, 248; green, 231; blue, 28 }  ,fill opacity=1 ] (81.11,200.11) -- (131.16,200.11) -- (131.16,250.16) -- (81.11,250.16) -- cycle ;
\draw  [fill={rgb, 255:red, 248; green, 231; blue, 28 }  ,fill opacity=1 ] (81.11,305.16) -- (131.16,305.16) -- (131.16,355.21) -- (81.11,355.21) -- cycle ;
\draw  [fill={rgb, 255:red, 248; green, 231; blue, 28 }  ,fill opacity=1 ] (150.16,120.06) -- (200.21,120.06) -- (200.21,170.11) -- (150.16,170.11) -- cycle ;
\draw  [fill={rgb, 255:red, 248; green, 231; blue, 28 }  ,fill opacity=1 ] (150.16,200.11) -- (200.21,200.11) -- (200.21,250.16) -- (150.16,250.16) -- cycle ;
\draw  [fill={rgb, 255:red, 248; green, 231; blue, 28 }  ,fill opacity=1 ] (230.21,200.11) -- (280.26,200.11) -- (280.26,250.16) -- (230.21,250.16) -- cycle ;
\draw  [fill={rgb, 255:red, 248; green, 231; blue, 28 }  ,fill opacity=1 ] (150.16,305.16) -- (200.21,305.16) -- (200.21,355.21) -- (150.16,355.21) -- cycle ;
\draw  [fill={rgb, 255:red, 248; green, 231; blue, 28 }  ,fill opacity=1 ] (230.21,305.16) -- (280.26,305.16) -- (280.26,355.21) -- (230.21,355.21) -- cycle ;
\draw  [fill={rgb, 255:red, 248; green, 231; blue, 28 }  ,fill opacity=1 ] (335.26,305.16) -- (385.31,305.16) -- (385.31,355.21) -- (335.26,355.21) -- cycle ;
\draw  [fill={rgb, 255:red, 208; green, 2; blue, 27 }  ,fill opacity=1 ] (450.46,100.99) .. controls (450.46,100.71) and (450.69,100.49) .. (450.96,100.49) -- (500.01,100.49) .. controls (500.29,100.49) and (500.51,100.71) .. (500.51,100.99) -- (500.51,150.04) .. controls (500.51,150.32) and (500.29,150.54) .. (500.01,150.54) -- (450.96,150.54) .. controls (450.69,150.54) and (450.46,150.32) .. (450.46,150.04) -- cycle ;
\draw  [line width=1.5]  (579.81,100.49) -- (450.46,100.49) -- (450.46,230.37) ;
\draw  [draw opacity=0][fill={rgb, 255:red, 208; green, 2; blue, 27 }  ,fill opacity=0.25 ] (460.38,111.06) .. controls (460.38,110.78) and (460.61,110.56) .. (460.88,110.56) -- (509.93,110.56) .. controls (510.21,110.56) and (510.43,110.78) .. (510.43,111.06) -- (510.43,160.11) .. controls (510.43,160.38) and (510.21,160.61) .. (509.93,160.61) -- (460.88,160.61) .. controls (460.61,160.61) and (460.38,160.38) .. (460.38,160.11) -- cycle ;
\draw [color={rgb, 255:red, 155; green, 155; blue, 155 }  ,draw opacity=1 ]   (473,94.49) .. controls (380.23,44.12) and (219.38,46.84) .. (121.14,71.45) ;
\draw [shift={(119.67,71.82)}, rotate = 345.73] [color={rgb, 255:red, 155; green, 155; blue, 155 }  ,draw opacity=1 ][line width=0.75]    (10.93,-3.29) .. controls (6.95,-1.4) and (3.31,-0.3) .. (0,0) .. controls (3.31,0.3) and (6.95,1.4) .. (10.93,3.29)   ;
\draw [color={rgb, 255:red, 155; green, 155; blue, 155 }  ,draw opacity=1 ]   (443.54,101.4) .. controls (366.06,78.84) and (245.55,89.95) .. (178.67,129.46) ;
\draw [shift={(177.67,130.06)}, rotate = 329.04] [color={rgb, 255:red, 155; green, 155; blue, 155 }  ,draw opacity=1 ][line width=0.75]    (10.93,-3.29) .. controls (6.95,-1.4) and (3.31,-0.3) .. (0,0) .. controls (3.31,0.3) and (6.95,1.4) .. (10.93,3.29)   ;
\draw [color={rgb, 255:red, 155; green, 155; blue, 155 }  ,draw opacity=1 ]   (453.66,95.1) .. controls (374.07,56.92) and (187.54,67.29) .. (114.76,130.2) ;
\draw [shift={(113.67,131.16)}, rotate = 318.47] [color={rgb, 255:red, 155; green, 155; blue, 155 }  ,draw opacity=1 ][line width=0.75]    (10.93,-3.29) .. controls (6.95,-1.4) and (3.31,-0.3) .. (0,0) .. controls (3.31,0.3) and (6.95,1.4) .. (10.93,3.29)   ;
\draw [color={rgb, 255:red, 155; green, 155; blue, 155 }  ,draw opacity=1 ]   (440.54,127.73) .. controls (301.7,134.69) and (173.62,214.59) .. (116.52,315.87) ;
\draw [shift={(115.67,317.4)}, rotate = 299.05] [color={rgb, 255:red, 155; green, 155; blue, 155 }  ,draw opacity=1 ][line width=0.75]    (10.93,-3.29) .. controls (6.95,-1.4) and (3.31,-0.3) .. (0,0) .. controls (3.31,0.3) and (6.95,1.4) .. (10.93,3.29)   ;
\draw [color={rgb, 255:red, 155; green, 155; blue, 155 }  ,draw opacity=1 ]   (439.43,144.48) .. controls (316.05,178.97) and (231.06,254.87) .. (188.96,316.47) ;
\draw [shift={(188.33,317.4)}, rotate = 304.14] [color={rgb, 255:red, 155; green, 155; blue, 155 }  ,draw opacity=1 ][line width=0.75]    (10.93,-3.29) .. controls (6.95,-1.4) and (3.31,-0.3) .. (0,0) .. controls (3.31,0.3) and (6.95,1.4) .. (10.93,3.29)   ;
\draw [color={rgb, 255:red, 155; green, 155; blue, 155 }  ,draw opacity=1 ]   (440.33,111.16) .. controls (324.91,91.49) and (192.33,125.72) .. (114.05,211.43) ;
\draw [shift={(112.87,212.73)}, rotate = 312.03] [color={rgb, 255:red, 155; green, 155; blue, 155 }  ,draw opacity=1 ][line width=0.75]    (10.93,-3.29) .. controls (6.95,-1.4) and (3.31,-0.3) .. (0,0) .. controls (3.31,0.3) and (6.95,1.4) .. (10.93,3.29)   ;
\draw [color={rgb, 255:red, 155; green, 155; blue, 155 }  ,draw opacity=1 ]   (441,159.4) .. controls (411.96,185.8) and (373.77,254.25) .. (362.66,316.19) ;
\draw [shift={(362.33,318.06)}, rotate = 279.7] [color={rgb, 255:red, 155; green, 155; blue, 155 }  ,draw opacity=1 ][line width=0.75]    (10.93,-3.29) .. controls (6.95,-1.4) and (3.31,-0.3) .. (0,0) .. controls (3.31,0.3) and (6.95,1.4) .. (10.93,3.29)   ;
\draw [color={rgb, 255:red, 155; green, 155; blue, 155 }  ,draw opacity=1 ]   (439.67,152.06) .. controls (410.48,163.34) and (320.57,221.9) .. (265.17,316.4) ;
\draw [shift={(264.33,317.82)}, rotate = 300.13] [color={rgb, 255:red, 155; green, 155; blue, 155 }  ,draw opacity=1 ][line width=0.75]    (10.93,-3.29) .. controls (6.95,-1.4) and (3.31,-0.3) .. (0,0) .. controls (3.31,0.3) and (6.95,1.4) .. (10.93,3.29)   ;
\draw [color={rgb, 255:red, 155; green, 155; blue, 155 }  ,draw opacity=1 ]   (440.21,119.4) .. controls (343.49,115.42) and (238.06,146.42) .. (175.28,210.43) ;
\draw [shift={(174.33,211.4)}, rotate = 314.1] [color={rgb, 255:red, 155; green, 155; blue, 155 }  ,draw opacity=1 ][line width=0.75]    (10.93,-3.29) .. controls (6.95,-1.4) and (3.31,-0.3) .. (0,0) .. controls (3.31,0.3) and (6.95,1.4) .. (10.93,3.29)   ;
\draw [color={rgb, 255:red, 155; green, 155; blue, 155 }  ,draw opacity=1 ]   (438.76,136.48) .. controls (372.63,143.98) and (298.39,179.75) .. (261.44,209.82) ;
\draw [shift={(260.33,210.73)}, rotate = 320.39] [color={rgb, 255:red, 155; green, 155; blue, 155 }  ,draw opacity=1 ][line width=0.75]    (10.93,-3.29) .. controls (6.95,-1.4) and (3.31,-0.3) .. (0,0) .. controls (3.31,0.3) and (6.95,1.4) .. (10.93,3.29)   ;

\draw (57.08,79.67) node    {$K_{1}$};
\draw (56.41,145) node    {$K_{2}$};
\draw (56.41,225.33) node    {$K_{3}$};
\draw (56.75,330) node    {$K_{4}$};
\draw (111.75,29.67) node    {$L_{1}$};
\draw (176.41,29.33) node    {$L_{2}$};
\draw (256.08,29.33) node    {$L_{3}$};
\draw (361.08,29.67) node    {$L_{4}$};
\draw (105.14,75.03) node    {$x_{1,1}$};
\draw (105.14,145.08) node    {$x_{2,1}$};
\draw (175.19,145.08) node    {$x_{2,2}$};
\draw (105.14,330.18) node    {$x_{4,1}$};
\draw (105.14,225.13) node    {$x_{3,1}$};
\draw (175.19,225.13) node    {$x_{3,2}$};
\draw (175.19,330.18) node    {$x_{4,2}$};
\draw (255.24,225.13) node    {$x_{3,3}$};
\draw (255.24,330.18) node    {$x_{4,3}$};
\draw (360.29,330.18) node    {$x_{4,4}$};
\draw (475.49,125.52) node    {$a$};
\draw (444.59,411.33) node  [font=\LARGE,rotate=-45]  {$\cdots $};
\draw (112.26,414.67) node  [font=\Large,rotate=-90]  {$\cdots $};
\draw (177.26,414.67) node  [font=\Large,rotate=-90]  {$\cdots $};
\draw (257.26,414.33) node  [font=\Large,rotate=-90]  {$\cdots $};
\draw (362.26,414.33) node  [font=\Large,rotate=-90]  {$\cdots $};

\end{tikzpicture}
}

\end{center}
\vspace{0.1cm}
 In the figure above, the yellow squares correspond to the restriction of the operator $x_{i,j}$ to its support, while the red square correspond to the restriction of the operator $a$ to its support.

\end{example}

\begin{remark}\label{rem}
\noindent
\begin{itemize}
\item [(1)]
Suppose that $\{x_{i,j}\}_{(i,j)\in G}$ is a sequence in $\mathcal B(H)$, and $\{\xi'_i\}_{i=1}^\infty$ and $\{\eta'_i\}_{i=1}^\infty$ are two sequences of orthonormal vectors of $H'$. It is easy to see that
\[
\big\{x_{i,j}\otimes e_{\xi'_i,\eta'_j}\big\}_{(i,j)\in G}\stackrel{\mbox{\tiny b.s}}{\boxplus}\{H\otimes_{_2}\xi'_i\}_i\otimes\{H\otimes_{_2}\eta'_j\}_j.
\]
Suppose that $b\in\mathcal B(H')$ and
\[
\{x_{i,j}\}_{(i,j)\in G}\stackrel{\mbox{\tiny b.s}}{\boxplus}\{K_i\}_i\otimes\{L_j\}_j.
\]
We have 
\[
\{x_{i,j}\otimes b\}_{(i,j)\in G}\stackrel{\mbox{\tiny b.s}}{\boxplus}\{K_i\otimes_{_2} H'\}_i\otimes\{L_j\otimes_{_2} H'\}_j
\]
and
\[
\{b\otimes x_{i,j}\}_{(i,j)\in G}\stackrel{\mbox{\tiny b.s}}{\boxplus}\{H'\otimes_{_{2}} K_i\}_i\otimes\{H'\otimes_{_{2}} L_j\}_j.
\]

\item [(2)]
Assume that
\[
\{x_{i,j}\}_{(i,j)\in G}\stackrel{\mbox{\tiny b.s}}{\boxplus}\{K_i\}_i\otimes\{L_j\}_j.
\]
Suppose that $\{u_i\}_i$ is a sequence of isometries from $\{K_i\}_i$ into $H'$ and $\{v_j\}_j$ is a sequence of isometries from $\{L_j\}_j$ into $H'$,
and $\wideparen{u}$ is the  isometry  given in \eqref{u}, and similarly, $\wideparen{v_j}$ and $\wideparen{v}$ are given by
\begin{equation}\label{vj}
\wideparen{v_j}:\eta\in L_j\mapsto v_j(\eta)\otimes e_j\in H'\otimes_{_2}\ell_2,
\end{equation}
and
\[
\wideparen{v}:\sum_j\eta_j\mapsto\sum_jv_j(\eta_j)\otimes e_j,
\]
where $\eta_j\in L_i$ for every $j$, such that $\wideparen{v}|_{L_j}=\wideparen{v_j}$ for every $j$.
There exists
a linear embedding of  span$\big(\{x_{i,j}\}_{(i,j)\in G}\big)$ into $\mathcal B(H'\otimes_{_2}\ell_2)$ given by
\[
x\in[x_{i,j}]_{(i,j)\in G}\mapsto \wideparen{u}x{\wideparen{v}}^\ast.
\]
Note that $\wideparen{u}\ell(x)=\wideparen{u}$ and $r(x){\wideparen{v}}^\ast={\wideparen{v}}^\ast$, where $\ell(x)$ (resp. $r(x)$) stands for 
  the left (resp. right) support of $x$.

For any sequence $\{\alpha_{i,j}\}_{(i,j)\in G}$ of scalars with finitely many nonzero elements, we have
\begin{align*}
\wideparen{u}\bigg(\sum_{(i,j)\in G}\alpha_{i,j}x_{i,j}\bigg){\wideparen{v}}^\ast&\quad =\quad \sum_{(i,j)\in G}\alpha_{i,j}\wideparen{u}x_{i,j}{\wideparen{v}}^\ast\\
&\quad =\quad\sum_{(i,j)\in G}\alpha_{i,j}\wideparen{u_i}x_{i,j}{\wideparen{v_j}}^\ast\\
&\stackrel{\eqref{ui},\eqref{vj}}{=}\sum_{(i,j)\in G}\alpha_{i,j}(u_ix_{i,j}{v_j}^\ast)\otimes(\cdot|e_j)e_i.
\end{align*}

In particular, if
\[
\{x_{i,j}\}_{(i,j)\in G}\stackrel{\mbox{\tiny b.s}}{\boxplus}\{u_i:K_i\to H'\}_i\otimes\{v_j:L_j\to H'\}_j\curvearrowleft a,
\]
then
\[
\wideparen{u}\bigg(\sum_{(i,j)\in G}\alpha_{i,j}x_{i,j}\bigg){\wideparen{v}}^\ast=\sum_{(i,j)\in G}\alpha_{i,j}\;a\otimes(\cdot|e_j)e_i=a\otimes c,
\]
where $c=\sum_{(i,j)\in G}\alpha_{i,j}(\cdot|e_j)e_i\in\mathcal B(\ell_2)$. Since $c$ is finite-rank, it follows that
there exist two orthonormal sequences $\{f_l\}_l$ and $\{g_l\}_l$ in $\ell_2$ such that $c=\sum_ls_l(c)(\cdot|g_l)f_l$. Then
\[
\wideparen{u}\bigg(\sum_{(i,j)\in G}\alpha_{i,j}x_{i,j}\bigg){\wideparen{v}}^\ast=a\otimes c=\sum_ls_l(c)\cdot a\otimes(\cdot|g_l)f_l.
\]
Consequently, if $a$ is a compact operator, then there are two orthonormal sequences $\{\xi'_k\}_k$ and $\{\eta'_k\}_k$ in $H'$ such that $a=\sum_ks_k(a)(\cdot|\eta'_k)\xi'_k$. Hence,
\[
\wideparen{u}\bigg(\sum_{(i,j)\in G}\alpha_{i,j}x_{i,j}\bigg){\wideparen{v}}^\ast=a\otimes c=\sum_{k,l} s_k(a)s_l(c)\cdot\big( \cdot |\eta_k\otimes g_l\big)\xi_k\otimes f_l.
\]
This  implies that the singular value sequence of $\sum_{(i,j)\in G}\alpha_{i,j}x_{i,j}$ is the decreasing rearrangement of 
$\big\{s_k(a)s_l(c)\big\}_{k,l}$.
\item [(3)]
 Assume that 
\[
\{x_{i,j}\}_{(i,j)\in G}\stackrel{\mbox{\tiny b.s}}{\boxplus}\{K_i\}_i\otimes\{L_j\}_j\curvearrowleft a.
\]
If there are other sequences  $\{K'_i\}_i$ and $\{L'_j\}_j$ of mutually orthogonal closed subspaces of $H$ such that
\[
\{x_{i,j}\}_{(i,j)\in G}\stackrel{\mbox{\tiny b.s}}{\boxplus}\{K'_i\}_i\otimes\{L'_j\}_j,
\]
then
\[
\{x_{i,j}\}_{(i,j)\in G}\stackrel{\mbox{\tiny b.s}}{\boxplus}\{K'_i\}_i\otimes\{L'_j\}_j\curvearrowleft a.
\]
Indeed, note that $\ell(x_{i,j})\subset K_i\cap K'_i$ and $r(x_{i,j})\subset L_j\cap L'_j$ for every $(i,j)\in G$, and thus, we have  
\[
\{x_{i,j}\}_{(i,j)\in G}\stackrel{\mbox{\tiny b.s}}{\boxplus}\{K_i\cap K'_i\}_i\otimes\{L_j\cap L'_j\}_j\curvearrowleft a.
\]
One sees that  
$\{x_{i,j}\}_{(i,j)\in G}$ is the block sequence which is generated by  $a$ associated with $\{K'_i\}_i$ and $\{L'_j\}_j$.
\end{itemize}
\end{remark}

Below,   for a  given  block sequence generated by a single operator, we provide a criteria for its ``block subsequence'' to be generated by another operator.

\begin{theorem}\label{3a'}
Let $G$ and $U$ be two nonempty sets of $\mathbb N\times\mathbb N$ such that  there are two sequences $\{I_i\}_i$ and $\{J_j\}_j$ of nonempty disjoint finite sets of $\mathbb N$ such that
\[
I_i\times J_j\subset G\;\;\;\;\;\;\mbox{for\;every}\;(i,j)\in U.
\]
Assume that $\{x_{i',j'}\}_{(i',j')\in G}$ is a sequence in $\mathcal B(H)$ with
\[
\{x_{i',j'}\}_{(i',j')\in G}\stackrel{\mbox{\tiny{\rm b.s}}}{\boxplus}\{K_{i'}\}_{i'}\otimes\{L_{j'}\}_{j'}\curvearrowleft a \in \cB({H'}).
\]
If there is a sequence $\left\{{\sum}_{(i',j')\in I_i\times J_j}\alpha^{(i,j)}_{i',j'}(\cdot|e_{j'})e_{i'}\right\}_{(i,j)\in U}$ in $\mathcal B(\ell_2)$ with
\[
\left\{{\sum}_{(i',j')\in I_i\times J_j}\alpha^{(i,j)}_{i',j'}(\cdot|e_{j'})e_{i'}\right\}_{(i,j)\in U}\stackrel{\mbox{\tiny{\rm  b.s}}}{\boxplus}\big\{[e_{i'}]_{i'\in I_i}\big\}_i\otimes\big\{[e_{j'}]_{j'\in J_j}\big\}_j\curvearrowleft c \in \cB(H'') ,
\]
then
\[
\left\{{\sum}_{(i',j')\in I_i\times J_j}\alpha^{(i,j)}_{i',j'}x_{i',j'}\right\}_{(i,j)\in U}\stackrel{\mbox{\tiny{\rm b.s}}}{\boxplus}\big\{[\cup_{i'\in I_i}K_{i'}]\big\}_i\otimes\big\{[\cup_{j'\in J_j}L_{j'}]\big\}_j\curvearrowleft a\otimes c.
\]

\end{theorem}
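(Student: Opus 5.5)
The proof is a direct construction of the required isometry system. Set $y_{i,j}:=\sum_{(i',j')\in I_i\times J_j}\alpha^{(i,j)}_{i',j'}x_{i',j'}$, $\tilde K_i:=[\cup_{i'\in I_i}K_{i'}]$ and $\tilde L_j:=[\cup_{j'\in J_j}L_{j'}]$.

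First, the subspaces $\{\tilde K_i\}_i$ are mutually orthogonal because the $I_i$ are disjoint and the $K_{i'}$ are mutually orthogonal; the same holds for $\{\tilde L_j\}_j$. Each $x_{i',j'}$ with $(i',j')\in I_i\times J_j$ satisfies $x_{i',j'}=p_{K_{i'}}x_{i',j'}p_{L_{j'}}$. Since $p_{K_{i'}}\le p_{\tilde K_i}$ and $p_{L_{j'}}\le p_{\tilde L_j}$, we get $y_{i,j}=p_{\tilde K_i}y_{i,j}p_{\tilde L_j}$, so $\{y_{i,j}\}_{(i,j)\in U}$ is a block sequence associated with $\{\tilde K_i\}_i$ and $\{\tilde L_j\}_j$.

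Next, fix the data. Let $\{u_{i'}:K_{i'}\to H'\}$ and $\{v_{j'}:L_{j'}\to H'\}$ be isometries with $u_{i'}x_{i',j'}v_{j'}^\ast=a$ for $(i',j')\in G$. Let $\{w_i:[e_{i'}]_{i'\in I_i}\to H''\}$ and $\{z_j:[e_{j'}]_{j'\in J_j}\to H''\}$ be isometries with $w_i c_{i,j} z_j^\ast=c$, where $c_{i,j}=\sum\alpha^{(i,j)}_{i',j'}(\cdot|e_{j'})e_{i'}$.

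Define $U_i:\tilde K_i\to H'\otimes_2 H''$ by $U_i(\xi)=u_{i'}(\xi)\otimes w_i(e_{i'})$ for $\xi\in K_{i'}$, $i'\in I_i$, extended linearly. Define $V_j:\tilde L_j\to H'\otimes_2H''$ analogously with $v_{j'}$ and $z_j$. Each $U_i$ is an isometry: the pieces $U_i(K_{i'})$ lie in $H'\otimes w_i(e_{i'})$, and these are mutually orthogonal for distinct $i'\in I_i$ because $w_i$ is an isometry. In the notation of \eqref{u}, $U_i=(1\otimes w_i)\circ\wideparen{u}|_{\tilde K_i}$ with $\wideparen u$ built from $\{u_{i'}\}_{i'\in I_i}$. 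Since $H'\otimes_2H''$ is separable and infinite-dimensional, we may regard these as isometries into a fixed Hilbert space.

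The core computation is the identity $U_iy_{i,j}V_j^\ast=a\otimes c$. Using Remark~\ref{rem}(2), we have
\[
\wideparen u\, y_{i,j}\,\wideparen v^\ast=\sum_{(i',j')\in I_i\times J_j}\alpha^{(i,j)}_{i',j'}\,(u_{i'}x_{i',j'}v_{j'}^\ast)\otimes (\cdot|e_{j'})e_{i'}=a\otimes c_{i,j}.
\]
Applying $1\otimes w_i$ on the left and $1\otimes z_j^\ast$ on the right then gives $a\otimes w_ic_{i,j}z_j^\ast=a\otimes c$. Here $V_j^\ast=\wideparen v^\ast(1\otimes z_j^\ast)$ on $\tilde L_j$, which is where the restrictions of $\wideparen u,\wideparen v$ to $\tilde K_i,\tilde L_j$ are used. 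Together with the block-sequence property, this yields the claimed relation
\[
\{y_{i,j}\}_{(i,j)\in U}\stackrel{\mbox{\tiny b.s}}{\boxplus}\{\tilde K_i\}_i\otimes\{\tilde L_j\}_j\curvearrowleft a\otimes c.
\]

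The only delicate point is bookkeeping of supports and adjoints. One must check that $V_j^\ast$ vanishes off $\operatorname{im}V_j$ consistently, so that no cross terms appear. One must also check that the $c_{i,j}$ are supported in $[e_{i'}]_{i'\in I_i}\otimes[e_{j'}]_{j'\in J_j}$, which is exactly the hypothesis on $c$. I expect this support and adjoint bookkeeping to be the main obstacle, and I would handle it by verifying the identity on the elementary tensors $\xi\in L_{j'}$.
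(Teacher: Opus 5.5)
Your proposal is correct and is essentially the paper's proof. Your $U_i$ and $V_j$ are exactly the paper's isometries $u''_i:\sum_{i'\in I_i}\xi_{i'}\mapsto\sum_{i'\in I_i}u_{i'}(\xi_{i'})\otimes u'_i(e_{i'})$ and $v''_j$, and the identity $U_iy_{i,j}V_j^\ast=a\otimes w_ic_{i,j}z_j^\ast=a\otimes c$ is the same one-line computation. Routing it through $\wideparen{u}$ and Remark~\ref{rem}(2), and checking explicitly that the $\tilde K_i$ are orthogonal, that $y_{i,j}=p_{\tilde K_i}y_{i,j}p_{\tilde L_j}$, and that $U_i$ is isometric, only spells out what the paper leaves implicit; the adjoint bookkeeping you flag is automatic.
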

\begin{proof}
Assume that  $a\in\mathcal B(H')$ and $c\in\mathcal B(H'')$, and assume that
\[
\{x_{i',j'}\}_{(i',j')\in G}\stackrel{\mbox{\tiny b.s}}{\boxplus}\{u_{i'}:K_{i'}\to H'\}_{i'}\otimes\{v_{j'}:L_{j'}\to H'\}_{j'}\curvearrowleft a
\]
and
\begin{align*}
&\left\{{\sum}_{(i',j')\in I_i\times J_j}\alpha^{(i,j)}_{i',j'}(\cdot|e_{j'})e_{i'}\right\}_{(i,j)\in U}\\
\stackrel{\mbox{\tiny b.s}}{\boxplus}&\big\{u'_i:{\sum}_{i'\in I_i}\mathbb C e_{i'}\to H''\big\}_i\otimes\big\{v'_j:\sum_{j'\in J_j}\mathbb Ce_{j'}\to H''\big\}_j\curvearrowleft c.
\end{align*}

Put
\[
K'_i=[\cup_{i'\in I_i}K_{i'}]\;\;\;\;\;\;{\rm and}\;\;\;\;\;\;L'_j=[\cup_{j'\in I_j}L_{j'}],
\]
We set
\[
u''_i:\sum_{i'\in I_i}\xi_{i'}\in K'_i=\sum_{i'\in I_i}K_{i'}\longmapsto\sum_{i'\in I_i}u_{i'}(\xi_{i'})\otimes u'_i(e_{i'})
\]
and
\[
v''_j:\sum_{j'\in I_j}\xi_{j'}\in L'_j=\sum_{j'\in J_j}L_{j'}\longmapsto\sum_{i'\in I_i} v_{j'}(\xi_{j'})\otimes v'_j(e_{j'}).
\]
Thus, 
\begin{align*}
u''_i\bigg(\sum_{(i',j')\in I_i\times J_j}\alpha^{(i,j)}_{i',j'}x_{i',j'}\bigg){v''_j}^\ast&=\sum_{(i',j')\in I_i\times J_j}\alpha^{(i,j)}_{i',j'}\big(u_{i'}x_{i',j'}{v_{j'}}^\ast\big)\otimes\big({v'_j}^\ast(\cdot)\big|e_{j'}\big)u'_i(e_{i'})\\\notag
&=a\otimes\sum_{(i',j')\in I_i\times J_j}\alpha^{(i,j)}_{i',j'}({v'_j}^\ast(\cdot)\big|e_{j'}\big)u'_i(e_{i'})\\\notag
&=a\otimes u'_i\bigg(\sum_{(i',j')\in I_i\times J_j}\alpha^{(i,j)}_{i',j'}(\cdot|e_{j'})e_{i'}\bigg){v_j'}^\ast\\\notag
&=a\otimes c.\notag
\end{align*}
Therefore,
\begin{align*}
&\left\{{\sum}_{(i',j')\in I_i\times J_j}\alpha^{(i,j)}_{i',j'}x_{i',j'}\right\}_{(i,j)\in U}\\
\stackrel{\mbox{\tiny b.s}}{\boxplus}&\big\{u''_i:[\cup_{i'\in I_i}K_{i'}]\to H'\otimes_{_2} H''\big\}_i\otimes\big\{v''_j:[\cup_{j'\in J_j}L_{j'}]\to H'\otimes_{_2} H''\big\}_j\curvearrowleft a\otimes c.
\end{align*}
\end{proof}

\begin{remark}\label{rembl}
The following consequence of 
  Theorem \ref{3a'} will be frequently used  throughout this paper.
If there are two sequences $\{(\alpha^{(i)}_{i'})_{i'\in I_i}\big\}_i$ and $\big\{(\beta^{(j)}_{j'})_{j'\in J_j}\big\}_j$ of sequences of scalars such that
\[
\norm{(\alpha^{(i)}_{i'})_{i'\in I_i}}_{\ell_2}=\alpha\;\;\;\;\mbox{and}\;\;\;\;\norm{(\beta^{(j)}_{j'})_{j'\in J_j}}_{\ell_2}=\beta\;\;\;\;\;\;\mbox{for\;every}\;(i,j)\in U.
\]
and the matrix
\[
\left(\alpha^{(i,j)}_{i',j'}\right)_{(i',j')\in I_i\times J_j}=\left(\alpha^{(i)}_{i'}\beta^{(j)}_{j'}\right)_{(i',j')\in I_i\times J_j}
\]
for every $(i,j)\in U$, then we have
\[
\left\{{\sum}_{(i',j')\in I_i\times J_j}\alpha^{(i,j)}_{i',j'}(\cdot|e_{j'})e_{i'}\right\}_{(i,j)\in U}\stackrel{\mbox{\tiny b.s}}{\boxplus}\big\{[e_{i'}]_{i'\in I_i}\big\}_i\otimes\big\{[e_{j'}]_{j'\in J_j}\big\}_j\curvearrowleft\alpha\beta(\cdot|e_1)e_1
\]
Hence,
\[
\left\{{\sum}_{(i',j')\in I_i\times J_j}\alpha^{(i,j)}_{i',j'}x_{i',j'}\right\}_{(i,j)\in U}\stackrel{\mbox{\tiny b.s}}{\boxplus}\big\{[\cup_{i'\in I_i}K_{i'}]\big\}_i\otimes\big\{[\cup_{j'\in J_j}L_{j'}]\big\}_j\curvearrowleft\alpha\beta a\otimes (\cdot|e_1)e_1,
\]
where $a$ is the operator given in   Theorem \ref{3a'} above.
\end{remark}

\begin{definition}\label{consistent}
Let $\{G_n\}_{n\geq 1}$ be a sequence (finite or infinite) of disjoint nonempty subsets of $\mathbb N\times\mathbb N$.  
For    $  \left\{  \{x_{i,j}\}_{(i,j)\in  G_n} \right\}_{n\geq 1}$,  
in $\mathcal B(H)$ such that $\{x_{i,j}\}_{(i,j)\in G_n}\stackrel{\mbox{\tiny b.s}}{\boxplus}\{K_i\}_i\otimes\{L_j\}_j\curvearrowleft$ for all $n\ge 1$, we say that 
\[
\{x_{i,j}\}_{(i,j)\in G_n}\stackrel{\mbox{\tiny b.s}}{\boxplus}\{K_i\}_i\otimes\{L_j\}_j\curvearrowleft 
\]
are \emph{consistent}  for all  $n\ge 1$, if they are generated by an operator and by a common isometries system, i.e. there exist a sequence of isometries $\{u_i:K_i\to H'\}_i$  (independent of $n$), a sequence of isometries $\{v_j:L_j\to H'\}_j$  (independent of $n$), and a sequence $\{a_n\}_{n\geq1}$ in $\mathcal B(H')$ such that for each $n$, we have 
\[
\{x_{i,j}\}_{(i,j)\in G_n}\stackrel{\mbox{\tiny b.s}}{\boxplus}\{u_i:K_i\to H'\}_i\otimes\{v_j:L_j\to H'\}_j\curvearrowleft a_n.
\]
\end{definition}

From now on, assume that  $E$ is  a separable  quasi-Banach symmetric sequence space.

The following result is a well-known fact, see e.g. \cite[Proposition 2.1]{Arazy4}. 
\begin{theorem}\label{hi} 
Let $a,b\in \mathcal C_E$ be such that $\left\Vert\left(
\begin{smallmatrix}
0&b\\a&0
\end{smallmatrix}\right)
\right\Vert_{\mathcal C_E(\ell_2 \oplus \ell_2 )}=1$\;\footnote{Let $p$ be the identity operator on the first $\ell_2$ and $q$ be the identity operator on the first $\ell_2$. We denote $x:= \left(
\begin{smallmatrix}
pxp & pxq  \\
qxp &  qxq
\end{smallmatrix}\right)$ for any $x\in B(\ell_2\oplus \ell_2)$. }. Assume that there are two sequences $\{x_k\}_{k=1}^\infty$ and $\{y_k\}_{k=1}^\infty$ in $\mathcal C_E$ satisfying
\[
\{x_k\}_{k=1}^\infty\stackrel{\mbox{{\rm\tiny b.s}}}{\boxplus}\{K_k\}_{k=1}^\infty\otimes L_0\curvearrowleft a
\]
and
\[
\{y_k\}_{k=1}^\infty\stackrel{\mbox{{\rm\tiny b.s}}}{\boxplus}K_0\otimes \{L_k\}_{k=1}^\infty\curvearrowleft b,
\]
where $\{K_k\}_{k=0}^\infty$ and $\{L_k\}_{k=0}^\infty$ are two sequences of mutually orthogonal closed subspaces of $H$. Then, we have 
\[
\{x_k+y_k\}_{k=1}^\infty\simeq\{e^{H}_k\}_{k=1}^\infty
\]
(recall that ``$\simeq$" means that  the two basic sequences are isometrically equivalent),
where $\{e^{H}_k\}_{k=1}^\infty$ is the natural basis of $\ell_2$ (see Definition \ref{hilertbasis}).
\end{theorem}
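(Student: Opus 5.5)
The plan is to compute the singular values of a finite linear combination $z=\sum_{k=1}^n t_k(x_k+y_k)$ explicitly, as in the proof of Proposition~\ref{ie}. We will show that $s(z)$ is exactly $\|(t_k)\|_{\ell_2}$ times the singular value sequence of $\left(\begin{smallmatrix}0&b\\a&0\end{smallmatrix}\right)$. The normalisation then gives $\|z\|_{\mathcal C_E}=\|(t_k)\|_{\ell_2}=\|\sum t_k e^H_k\|$, and a density argument (as in Proposition~\ref{ie}) extends the isometry to closed spans.

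First, split $z=X+Y$ with $X=\sum_k t_kx_k$ and $Y=\sum_k t_ky_k$. Then $X=p_{[\cup_{k\ge1}K_k]}Xp_{L_0}$ and $Y=p_{K_0}Yp_{[\cup_{k\ge1}L_k]}$. Since $K_0\perp K_k$ and $L_0\perp L_k$ for $k\ge1$, the operators $X$ and $Y$ have mutually orthogonal left supports and mutually orthogonal right supports. Hence $s(z)=\mu(s(X)\oplus s(Y))$, exactly as in the first step of the proof of Proposition~\ref{ie}.

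Next, compute $s(X)$ using the isometries $u_k:K_k\to H'$ and $v:L_0\to H'$ with $u_kx_kv^*=a$. By Remark~\ref{rem}(2), with the single column space $L_0$ (so $c=\sum_k t_k(\cdot|e_1)e_k$, a rank-one operator with $s_1(c)=\|(t_k)\|_{\ell_2}$), we get $\wideparen{u}X\wideparen{v}^*=a\otimes c$. Since $\wideparen u$ and $\wideparen v$ are isometries on the supports, $s(X)=\|(t_k)\|_{\ell_2}\,s(a)$. Similarly, $s(Y)=\|(t_k)\|_{\ell_2}\,s(b)$, with $c$ now the row vector $\sum_k t_k(\cdot|e_k)e_1$, whose only nonzero singular value is again $\|(t_k)\|_{\ell_2}$.

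Then $s(z)=\|(t_k)\|_{\ell_2}\,\mu(s(a)\oplus s(b))=\|(t_k)\|_{\ell_2}\, s\left(\left(\begin{smallmatrix}0&b\\a&0\end{smallmatrix}\right)\right)$. Therefore $\|z\|_{\mathcal C_E}=\|(t_k)\|_{\ell_2}$, which is the claim on finitely supported combinations.

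The only delicate point is passing to the closure, since the quasi-norm need not be continuous (see the footnote in Proposition~\ref{ie}). The map $\sum t_k e^H_k\mapsto\sum t_k(x_k+y_k)$ is bounded with bounded inverse on dense subspaces, so it extends to an isomorphism of the closed spans. For a general element, the identity for $s(z)$ persists: the partial sums converge in $\mathcal C_E$, hence in operator norm, and singular values are operator-norm continuous. This gives $s(z)=\|(t_k)\|_{\ell_2}\,s\left(\left(\begin{smallmatrix}0&b\\a&0\end{smallmatrix}\right)\right)$ for every element, and so the isometry holds on the whole closed span.
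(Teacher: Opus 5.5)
Your argument is correct. You split $z=X+Y$ into pieces with mutually orthogonal left and right supports, read off $s(X)=\|(t_k)\|_{\ell_2}\,s(a)$ and $s(Y)=\|(t_k)\|_{\ell_2}\,s(b)$ from the tensor representation of Remark~\ref{rem}(2), and pass to the closed span via operator-norm continuity of singular values; this is the technique the paper uses in the proof of Proposition~\ref{ie}, and since the paper gives no proof of this statement (it quotes it from \cite[Proposition 2.1]{Arazy4}), your write-up supplies the omitted argument along those lines.
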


 The idea used in the proof of the following lemma can be  found in \cite{Arazy4,Arazy2,Holub,Friedman}. 

\begin{lemma}\label{3b}
Let $\{K_i\}_{i=1}^\infty$ be a sequence of mutually orthogonal closed subspaces of $H$, and $\{L_i\}_{i=1}^\infty$ be a sequence of (not necessarily mutually orthogonal) finite dimensional subspaces of $H$. Assume that $\{x_{i,j}\}_{1\leq j\leq i<\infty}$ is a bounded sequence in $\mathcal C_E$ with
\[
\{x_{i,j}\}_{i=j}^{\infty}\stackrel{\mbox{{\rm\tiny b.s}}}{\boxplus}\{K_i\}_{i=1}^\infty\otimes L_j\;\;\;\;\;\;{\rm for\;every\;}j,
\]
For an  arbitrary  sequence $\{\varepsilon_{k,j}\}_{k,j=1}^\infty$ of positive numbers\footnote{In the sequel, we often assume that $\varepsilon_{k,j} \downarrow 0$  as $k\to \infty $ for each fixed $j$
and $\varepsilon_{k,j} \downarrow  0$  as $j\to \infty $ for each fixed $k$.  },  there exist an increasing\footnote{Throughout this paper, \emph{``increasing''} always means strictly increasing.}  sequence $\{i_k\}_{k=1}^\infty$ of positive integers and a sequence $\{y_{i_k,j}\}_{1\leq j\leq i_{k-1}<\infty}$ in $\mathcal C_E$ satisfying
\begin{itemize}
\item [(1)]
$\{y_{i_k,j}\}_{j\leq i_{k-1}<\infty}\stackrel{\mbox{{\rm\tiny b.s}}}{\boxplus}\{K_{i_k}\}_{k=1}^\infty\otimes L_j\curvearrowleft\;$ for every $j$,
\item [(2)] 
$\{y_{i_k,j}\}_{j\leq i_{k-1}<\infty}\stackrel{\mbox{{\rm\tiny b.s}}}{\boxplus}\{K_{i_k}\}_{k=1}^\infty\otimes L\curvearrowleft\;$ are consistent for all $j$,
where $L=[\bigcup_{j=1}^\infty L_j]$, and
\item [(3)]
$\Vert x_{i_k,j}-y_{i_k,j}\Vert_{\mathcal C_E}\leq\varepsilon_{k,j}$ for every $k\geq2$ and $1\leq j\leq i_{k-1}$
\end{itemize}
If, in addition that,  $\{L_i\}_{i=1}^\infty$ is mutually orthogonal, then $\{y_{i_k,j}\}_{1\leq j\leq i_{k-1}<\infty}$ can be chosen to be
\[
\{y_{i_k,j}\}_{j\leq i_{k-1}<\infty}\stackrel{\mbox{{\rm\tiny b.s}}}{\boxplus}\{K_{i_k}\}_{k=1}^\infty\otimes \{L_j\}_{j=1}^\infty\curvearrowleft\;
\]
are consistent for all $j\geq1$.
\end{lemma}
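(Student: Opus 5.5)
The plan is a diagonal/compactness argument that rests on the finite dimensionality of the $L_j$. First, fix for each $j$ a finite-dimensional space $L_j$ and put $d_j=\dim[\bigcup_{m\le j}L_m]$. For each fixed $j$, every $x_{i,j}$ ($i\ge j$) satisfies $x_{i,j}=p_{K_i}x_{i,j}p_{L_j}$, so it has rank at most $\dim L_j$ and is determined by the finitely many vectors $x_{i,j}\zeta$, where $\zeta$ runs over an orthonormal basis of $L$ restricted to $L_j$. The natural normalization is the polar-type one. Choose an isometry $u_i:K_i\to H'$ (with $H'$ a fixed infinite-dimensional space) that maps the finite-dimensional subspace $\big[\,x_{i,j}(L)\,\big]_{j\le N}$ of $K_i$ onto a fixed initial segment of a basis of $H'$. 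With these isometries, $u_ix_{i,j}$ becomes a finite matrix of bounded size, namely an operator from $L_j$ into a fixed finite-dimensional subspace of $H'$, whose entries are bounded by the uniform bound on $\|x_{i,j}\|_{\mathcal C_E}$ (in $\mathcal C_E$ all norms on a fixed finite rank/dimension are equivalent to the operator norm).

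The construction is inductive in $k$. At stage $k$ we already have $i_1<\dots<i_{k-1}$, and only the finitely many indices $j\le i_{k-1}$ matter for the next block. Running the usual diagonal procedure over all $j$ beforehand, we extract an infinite set $M_k\subset M_{k-1}$ such that, for every $j\le i_{k-1}$, the Gram data of $x_{i,j}$ converge as $i\to\infty$ in $M_k$. By Gram data we mean the inner products $(x_{i,j}\zeta\,|\,x_{i,j'}\zeta')$ for $j,j'\le i_{k-1}$ and $\zeta,\zeta'$ basis vectors of $L$. We then pick $i_k\in M_k$ so large that these Gram data are within $\delta_k$ of their limits. The limits are positive semidefinite Gram matrices, so they determine limiting vectors $w_{j,\zeta}$ in a fixed finite-dimensional Hilbert space $H'_k\subset H'$; moreover these are compatible as $k$ grows, since a larger set of indices only adds rows and columns to the matrix. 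Define $a_j\in\mathcal B(H')$ by $a_j\zeta=w_{j,\zeta}$ on $L$. Each $a_j$ depends only on $j$, since it is the limit along the nested subsequences.

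Next we construct $y_{i_k,j}$. Because the Gram matrix of $\{x_{i_k,j}\zeta\}_{j\le i_{k-1},\zeta}$ is $\delta_k$-close to that of $\{w_{j,\zeta}\}$, there is a finite-dimensional subspace of $K_{i_k}$ and an isometry $u_{i_k}:K_{i_k}\to H'$ such that $\|u_{i_k}x_{i_k,j}\zeta-w_{j,\zeta}\|$ is small for all these $j,\zeta$. This follows from the standard fact that two finite families with nearby Gram matrices are nearly unitarily equivalent. Since $K_{i_k}$ is infinite-dimensional or at least large enough, we may enlarge $K_i$ if needed, or use the fact that $x$ lives in a finite-dimensional part of it. Set $y_{i_k,j}=u_{i_k}^*a_jp_L$ restricted to $L_j$, i.e. $y_{i_k,j}=u_{i_k}^*a_jv^*$ where $v=\iota_{L\hookrightarrow H'}$ is one fixed isometry independent of $k$. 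Then (1) and (2) hold with the common system $\{u_{i_k}\}\otimes\{v\}$ and the operators $a_j$; this is exactly consistency in the sense of Definition~\ref{consistent}. For (3), the difference $x_{i_k,j}-y_{i_k,j}$ has rank at most $2\dim L_j$ and small operator norm. Since on operators of rank $\le r$ we have $\|\cdot\|_{\mathcal C_E}\le \|\cdot\|_\infty\|\sum_{m\le r}e_m\|_E$, choosing $\delta_k$ small relative to $\min_{j\le i_{k-1}}\varepsilon_{k,j}/\|\sum_{m\le 2d_{i_{k-1}}}e_m\|_E$ gives $\|x_{i_k,j}-y_{i_k,j}\|_{\mathcal C_E}\le\varepsilon_{k,j}$.

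In the orthogonal case, when the $L_j$ are mutually orthogonal, we replace the single $v$ by isometries $v_j:L_j\to H'$ whose ranges are chosen consistently, or simply by $v$ restricted to $L_j$. Then $a:=\sum_j$ (the appropriate pieces) shows that the block sequence is generated over $\{K_{i_k}\}\otimes\{L_j\}$, still with a common system. The main obstacle I expect is dimension bookkeeping: $K_{i_k}$ might have dimension smaller than the number of vectors $w_{j,\zeta}$ needed. The remedy is to note that only the span of the $x_{i_k,j}\zeta$ has to be matched, and to choose $H'_k$ as the span of the limit vectors, whose dimension does not exceed $\liminf$ of the ranks.
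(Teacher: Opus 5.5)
Your route differs from the paper's, and as written it has a gap at the step ``then (1) and (2) hold''.

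\textbf{What the paper does.} For each $i$ it fixes a nested orthonormal family $\xi^i_1,\xi^i_2,\dots$ in $K_i$ spanning $\mathrm{im}(x_{i,1})\subseteq\mathrm{im}(x_{i,1})+\mathrm{im}(x_{i,2})\subseteq\cdots$, and an orthonormal basis $\eta_1,\eta_2,\dots$ adapted to $L_1\subseteq L_1+L_2\subseteq\cdots$. A diagonal compactness argument makes the uniformly bounded, fixed-size coefficient matrices of the $x_{i,j}$ converge. It then defines $y_{i_k,j}$ by the limiting coefficients \emph{in the same frame} $\{\xi^{i_k}_{i'}\}$. The isometries $u_k:\xi^{i_k}_{i'}\mapsto e_{i'}$ and $v:\eta_{j'}\mapsto f_{j'}$ give $u_ky_{i_k,j}v^*=a_j$ exactly, so (1) and (2) come for free.

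\textbf{Where your argument breaks.} You replace this with Gram-data convergence plus a near-unitary-equivalence step. With $y_{i_k,j}=u_{i_k}^*a_jv$ you only get $u_{i_k}y_{i_k,j}v^*=p_{u_{i_k}(K_{i_k})}a_j$. This equals $a_j$ only if $\mathrm{im}(a_j)\subseteq u_{i_k}(K_{i_k})$. Otherwise $s(y_{i_k,j})=s(p_{u_{i_k}(K_{i_k})}a_j)$ varies with $k$, while a sequence generated by a single operator has constant singular values. Smallness of $\Vert u_{i_k}x_{i_k,j}\zeta-w_{j,\zeta}\Vert$ does not force the containment. For example, take $K_i=[\xi_i]$, $L_j=[\eta]$, $x_{i,1}=e_{\xi_i,\eta}$, limit vector $f$, and $u_i\xi_i=\cos\theta_i\,f+\sin\theta_i\,g$ with $g\perp f$ and $\theta_i$ small and pairwise distinct. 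Your recipe gives $y_{i,1}=\cos\theta_i\,e_{\xi_i,\eta}$, which satisfies (3) but is not generated by any single operator.

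\textbf{How to repair it.} Your last paragraph notes the relevant fact: the rank of the limit Gram matrix is at most $\liminf$ of the ranks. But the condition to enforce is $\mathrm{span}\{w_{j,\zeta}:j\le i_{k-1}\}\subseteq u_{i_k}(K_{i_k})$, simultaneously with the approximation. ``Matching only the span of the $x_{i_k,j}\zeta$'' does not achieve this. There are two fixes.

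The simplest is to keep the frame normalization from your first paragraph, made nested in $j$, and take limits of coefficient matrices in that frame. This is exactly the paper's proof.

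Alternatively, stay in the Gram picture and choose $u_{i_k}$ more carefully.
\begin{enumerate}
\item Let $X$ and $W$ be the synthesis maps sending the coordinate vector indexed by $(j,\zeta)$, $j\le i_{k-1}$, to $x_{i_k,j}\zeta$ and $w_{j,\zeta}$ respectively. Let $G_{i_k}=X^*X$ and let $G=W^*W$ be the limit, with range projection $P$ and least nonzero eigenvalue $\lambda$.
\item Take $\Vert G_{i_k}-G\Vert<\lambda/2$ and set $Q=\chi_{[\lambda/2,\infty)}(G_{i_k})$. Then $Q$ has the rank of $P$ and is close to it, so there is a unitary $V$ close to $I$ with $VQV^*=P$.
\item Write $X=UG_{i_k}^{1/2}$ and $W=U_WG^{1/2}$, with $U_W$ an isometry. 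Put $u_{i_k}=U_WVU^*$ on $\mathrm{im}(X)$ and extend it isometrically to $K_{i_k}$.
\end{enumerate}
Then $u_{i_k}(K_{i_k})\supseteq U_WV(\mathrm{im}\,Q)=\mathrm{im}(W)$, and
$\Vert u_{i_k}X-W\Vert\le\Vert V-I\Vert\,\Vert G_{i_k}\Vert^{1/2}+\Vert G_{i_k}-G\Vert^{1/2}$.
After this, your estimate for (3) and your consistency claims go through.
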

\begin{proof}
Without loss of generality, we may assume that $\{\varepsilon_{i,j}\}_{i=j}^\infty$ is a decreasing sequence and $L_j\neq\{0\}$ for every $j$. Setting  $m_j: =\dim(L_1+\cdots+L_j)$, we may choose an orthonormal sequence $\{\eta_{j'}\}_{1\leq j'\leq m_j}$ with $L_1+\cdots+L_j={\rm span}\big(\{\eta_1,\dots,\eta_{m_j}\}\big)$. Noting  that $\dim({\rm im}(x_{i,j}))\leq\dim(L_j)$ for every $j$,  we may  choose a sequences of orthogonal sequences
\[
\Big\{\xi^i_1,\cdots,\xi^i_{n^i_1},\cdots,\xi^i_{n^i_{i-1}+1},\cdots,\xi^i_{n^i_i}\Big\}\subseteq K_i\;\;\;\;\;\;i=1,2,\cdots
\]
 of orthonormal vectors,
where $n^i_0=0$ and $n^i_j\leq m_j$ for every $1\leq j\leq i$, such that
\[
x_{i,j}=\sum_{i'=1}^{n^i_j}\sum_{j'=1}^{m_j}\alpha^{(i,j)}_{i',j'}(\cdot|\eta_{j'})\xi^i_{i'},
\]
where $\Big\{\alpha^{(i,j)}_{i',j'}\Big\}_{1\leq i'\leq n^i_j, 1\leq j'\leq m_j}$ is a sequence of scalars. If $  L_j$'s, $ 1\le j < \infty$, are mutually orthogonal, then we can write
\[
x_{i,j}=\sum_{i'=1}^{n^i_j}\sum_{j'=m_{j-1}+1}^{m_j}\alpha^{(i,j)}_{i',j'}(\cdot|\eta_{j'})\xi^i_{i'}
\]
for any $1\leq j\leq i$ (put $m_0=0$), i.e. $\alpha^{(i,j)}_{i',j'}=0$ for all $1\leq j'\leq m_{j-1}$. 

By the compactness of the unit ball of a  finite-dimensional Banach space, $\mathbb M_{n_i^j,m_j}$, 
and by  induction on passing to   subsequences, there exist
\begin{itemize}
\item [(1)]
increasing sequences $\{i^{(j)}_k\}_{k=1}^\infty$ of positive integers, $j=1,2, \cdots$, with $\{i^{(j+1)}_k\}_{k=1}^\infty$ is the subsequence of $\{i^{(j)}_k\}_{k=1}^\infty$ and $i^{(j)}_1<i^{(j+1)}_1$,
\item [(2)]
a nondecreasing sequence $\{n_j\}_{j=1}^\infty$ with $n_j\leq n^{i^{(j)}_k}_j$ for all $k$, and 
\item [(3)]
the sequences $\Big\{\alpha^j_{i',j',}\Big\}_{1\leq i'\leq n_j, 1\leq j'\leq m_j}$ of of scalars,
\end{itemize}
such that for every $l\ge 2 $ and $1\leq j\leq i^{(l-1)}_1$,
and for all $k\ge 1$, we have 
\[
\bigg\Vert x_{i^{(j)}_k,j}-\sum_{i'=1}^{n_j}\sum_{j'=1}^{m_j}\alpha^j_{i',j'}(\cdot|\eta_{j'})\xi^{i^{(l)}_k}_{i'}\bigg\Vert_{\mathcal C_E}\leq\varepsilon_{l-1+k,j}.
\]
The proof is complete by setting 
  $i_k=i^{(k)}_1$ and
\[
y_{i_k,j}=\sum_{i'=1}^{n_j}\sum_{j'=1}^{m_j}\alpha^j_{i',j'}(\cdot|\eta_{j'})\xi^{i_k}_{i'}
\]
for every $k\geq2$ and $1\leq j\leq i_{k-1}$.  
 \end{proof}

\begin{remark}\label{3b'}
The argument used in the proof of Theorem \ref{3b} applies to 
  upper  triangular sequences $\{x_{i,j}\}_{1\leq i\leq j<\infty}$ of elements in $\mathcal C_E$. 
\end{remark}

\begin{corollary}
\label{3a}
Let $L$ be a finite-dimensional subspace of $H$, and $\{K_k\}_{k=1}^\infty$ be a sequence of mutually orthogonal closed subspaces of $H$. Assume that $\{x_k\}_{k=1}^\infty$ is a bounded sequence in $\mathcal C_E$ with
\[
\{x_k\}_{k=1}^\infty\stackrel{\mbox{{\rm\tiny b.s}}}{\boxplus}\{K_k\}_{k=1}^\infty\otimes L.
\]
For a given sequence $\{\varepsilon_i\}_{i=1}^\infty$ of positive numbers, there exist an increasing sequence $\{k_i\}_{i=1}^\infty$ of positive integer, and a sequence $\{y_i\}_{i=1}^\infty$ in $\mathcal C_E$ with
\[
\{y_i\}_{i=1}^\infty\stackrel{\mbox{{\rm\tiny b.s}}}{\boxplus}\{K_{k_i}\}_{i=1}^\infty\otimes L\curvearrowleft\;
\]
such that $\Vert x_{k_i}-y_i\Vert_{\mathcal C_E}\leq\varepsilon_i$ for all $i$.
\end{corollary}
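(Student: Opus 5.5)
The plan is to read Corollary~\ref{3a} off Lemma~\ref{3b} by arranging a one-column triangular array, or to repeat the short compactness argument directly. The direct route is cleaner, since the statement involves only a single subspace $L$.

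Set $m=\dim L$ and fix an orthonormal basis $\eta_1,\dots,\eta_m$ of $L$. Each $x_k=p_{K_k}x_kp_L$ has rank at most $m$. Polar decomposition of the finite-rank operator $x_k$ gives orthonormal vectors $\xi^k_1,\dots,\xi^k_m\in K_k$ (padding with extra orthonormal vectors of $K_k$ if the rank is smaller; if $\dim K_k<m$, only $\dim K_k$ vectors are available) and scalars $\alpha^{(k)}_{i',j'}$ with
\[
x_k=\sum_{i'=1}^{n_k}\sum_{j'=1}^{m}\alpha^{(k)}_{i',j'}(\cdot|\eta_{j'})\xi^k_{i'},\qquad n_k\le m .
\]
Passing to a subsequence, I may assume $n_k=n$ is constant. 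The key point is that the $n\times m$ coefficient matrix $A_k=(\alpha^{(k)}_{i',j'})$ has singular values exactly $s(x_k)$, because the $\xi^k_{i'}$ and $\eta_{j'}$ are orthonormal. The sequence $\{x_k\}$ is bounded in $\mathcal C_E$, and $\norm{e_1}_E=1$ gives $\norm{x_k}_\infty=s_1(x_k)\le\norm{x_k}_{\mathcal C_E}$, so $\{A_k\}$ is bounded in the finite-dimensional space $\mathbb M_{n,m}$.

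By compactness I extract a subsequence $A_{k_i}\to A$. I then refine it so that $\norm{A_{k_i}-A}$ is small enough to give $\norm{x_{k_i}-y_i}_{\mathcal C_E}\le\varepsilon_i$, where
\[
y_i=\sum_{i',j'}\alpha_{i',j'}(\cdot|\eta_{j'})\xi^{k_i}_{i'} .
\]
To compare norms, note that $x_{k_i}-y_i$ is unitarily equivalent, via the isometries $\xi^{k_i}_{i'}\mapsto e_{i'}$ and $\eta_{j'}\mapsto e_{j'}$, to the matrix $A_{k_i}-A$ acting between $\mathbb C^m$ spaces. Its $\mathcal C_E$ quasi-norm is at most $\norm{\mu(s(A_{k_i}-A))}_E\le m\,\norm{A_{k_i}-A}_\infty\norm{e_1}_E$, using the quasi-triangle inequality with constant $\kappa^{m}$ in the quasi-Banach case. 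All norms on the finite-dimensional space $\mathbb M_{n,m}$ are equivalent, so this can be made smaller than $\varepsilon_i$ by choosing $k_i$ large.

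Finally I check that $\{y_i\}$ is generated by a single operator. Define isometries $u_i:K_{k_i}\to H'$ that send $\xi^{k_i}_{i'}\mapsto e_{i'}$; the remaining part of $K_{k_i}$ must also go somewhere isometrically, so I take $H'=\ell_2\oplus\ell_2\oplus\cdots$, large enough, or simply map the orthogonal complement elsewhere. Since $y_i$ vanishes there, this does not matter. Let $v:L\to H'$ send $\eta_{j'}\mapsto e_{j'}$. Then $u_iy_iv^*=a:=\sum\alpha_{i',j'}(\cdot|e_{j'})e_{i'}$ for all $i$, which gives $\{y_i\}\stackrel{\mbox{\tiny b.s}}{\boxplus}\{K_{k_i}\}_i\otimes L\curvearrowleft a$.

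The only mildly delicate step is the norm comparison in finite dimensions for a quasi-norm. It is handled by equivalence of quasi-norms on the finite-dimensional space $\mathbb M_{n,m}$ (with $E$ restricted to sequences supported on $m$ coordinates), which is the same compactness argument used in the proof of Lemma~\ref{3b}.
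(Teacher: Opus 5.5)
Your proposal is correct and is essentially the paper's own argument: Corollary~\ref{3a} is the single-column case of Lemma~\ref{3b}, and the paper proves that lemma with exactly this compactness scheme. One fixes an orthonormal basis of $L$, writes each $x_k$ as a bounded finite coefficient matrix against orthonormal vectors in $K_k$, extracts a convergent subsequence in $\mathbb M_{n,m}$, and lets $y_i$ carry the limiting coefficients on the same vectors $\xi^{k_i}_{i'}$. Your explicit quasi-norm estimate, with a constant depending only on $m$ and $\kappa$, and your construction of the isometries $u_i$, $v$ fill in details that the paper leaves implicit.
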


\section{Decompositions of operators on $\mathcal C_E$}\label{sec:decomposition CE operator}

From now on, unless otherwise stated, we always assume that
\begin{itemize}
    \item 
    $\{\xi_i\}_{i=1}^\infty$ and $\{\eta_i\}_{i=1}^\infty$ are two sequences of orthonormal vectors of the Hilbert space $H$,
    \item
    $\{K_i\}_{i=1}^\infty$ and $\{L_i\}_{i=1}^\infty$ are two sequences of mutually orthogonal finite dimensional subspaces of $H$,
    \item 
    $E$ is a separable  quasi-Banach symmetric sequence space  $E$  with $c_0\not\hookrightarrow E$ (equivalently,  $E$ has the Fatou property and order continuous norm, see Proposition \ref{KB c0} below), and
    \item
    the modulus of concavity of $\left\Vert\cdot \right\Vert_{\mathcal C_E}$ is $2^{\frac{1}{r}-1}$, where $r$ is positive number with $0<r\leq1$. In particular, for any $\sum_{k=1}^\infty x_k$ converging in $\mathcal C_E$, we have
  \begin{equation}\label{sum-infty}
      \norm{\sum_{k=1}^\infty x_k}_{\mathcal C_E}\stackrel{\eqref{qt-eq}}{\leq}4^{\frac{1}{r}}\left(\sum_{k=1}^\infty\Vert x_k\Vert^r\right)^{\frac{1}{r}}.
  \end{equation}
\end{itemize}

\subsection{Arazy-type decomposition theorem for operators on $\mathcal C_E$}\label{subsection:operator}
 
Below, $\hat{H},\hat{H}'$, $\tilde{H}$ and $\tilde{H}'$ stand for separable infinite-dimensional Hilbert spaces. Recall that $\mathcal T_{E,\{\xi_i,\eta_i\}_{i=1}^\infty}$ denotes a lower triangular subspace of $\mathcal C_E$ by \eqref{ts}, i.e.
\[
\mathcal T_{E,\{\xi_i,\eta_i\}_{i=1}^\infty}=[e_{\xi_i,\eta_j}]_{1\leq j\leq i<\infty}.
\]

Arazy established an important decomposition theorem for isomorphic embeddings between separable Banach operator ideals\cite[Theorem 4.1]{Arazy4}:

Suppose that $E$ and $F$ are two Banach symmetric sequence spaces, and $c_0\not\hookrightarrow E$. If $\mathcal T_{F,\{\xi_i,\eta_i\}_{i=1}^\infty}$ is isomorphic to a subspace of $\mathcal C_E$, then there exists a linear  isomorphism $V$ from $\mathcal T_{F,\{\xi_i,\eta_i\}_{i=1}^\infty}$ into $\mathcal C_E$ such that $V$ can be decomposed into the sum of three operators $V_1$, $V_2$ and $V_3$, i.e.
\[
V=V_1+V_2+V_3.
\]
Roughly speaking, in the representations on the tensor product of $3$ Hilbert spaces, 
 they possess the following properties, respectively:
\begin{itemize}
    \item 
    $V_1$ has diagonal form, i.e.,  there is a sequence $\{c_{i,j}\}_{1\leq j\leq i<\infty}$ such that
    \[
    V_1(e_{\xi_i,\eta_j})=c_{i,j}\otimes e_{\xi_i,\eta_i},\;\;\;\;\;\;\;\;1\leq j\leq i<\infty,
    \]
    \item
    There is an element $b\in\mathcal C_E$ such that
    \[
    V_2(e_{\xi_i,\eta_j})=b\otimes e_{\xi_{\varphi(i,j)},\eta_1},\;\;\;\;\;\;\;\;1\leq j\leq i<\infty,
    \]
    where $\varphi$ is the mapping that orders $\{(i,j)\in\mathbb N\times\mathbb N:j\leq i\}$ lexicographically into $\mathbb N$. In particular, if $b\neq0$, then $\{V_2(e_{\xi_i,\eta_j})\}_{1\leq j\leq i<\infty}$ is equivalent to an orthonormal basis of the infinite-dimensional separable  Hilbert.
    \item
    There is an element $a\in\mathcal C_E$ such that
    \[
    V_3(e_{\xi_i,\eta_j})=a\otimes e_{\xi_i,\eta_j},\;\;\;\;\;\;\;\;1\leq j\leq i<\infty.
    \] 
\end{itemize}
Moreover, there exist three projection operators $P_i$, $i=1,2,3$, on $\mathcal C_E$ such that
\[
V_i=P_iV,\;\;\;\;\;\;\;\;i=1,2,3.
\]
The following figures are helpful for understanding the concrete forms of  $V_1$, $V_2$ and $V_3$:

\noindent
\\
\\
\scalebox{0.65}{

\tikzset{every picture/.style={line width=0.75pt}} 

\begin{tikzpicture}[x=0.75pt,y=0.75pt,yscale=-1,xscale=1]

\draw  [fill={rgb, 255:red, 248; green, 231; blue, 28 }  ,fill opacity=1 ] (147.33,70.11) -- (189.33,70.11) -- (189.33,112.11) -- (147.33,112.11) -- cycle ;
\draw   (231.33,70.11) -- (273.33,70.11) -- (273.33,112.11) -- (231.33,112.11) -- cycle ;
\draw   (189.33,112.11) -- (231.33,112.11) -- (231.33,154.11) -- (189.33,154.11) -- cycle ;
\draw   (189.33,70.11) -- (231.33,70.11) -- (231.33,112.11) -- (189.33,112.11) -- cycle ;
\draw  [fill={rgb, 255:red, 248; green, 231; blue, 28 }  ,fill opacity=1 ] (147.33,154.11) -- (189.33,154.11) -- (189.33,196.11) -- (147.33,196.11) -- cycle ;
\draw  [fill={rgb, 255:red, 248; green, 231; blue, 28 }  ,fill opacity=1 ] (147.33,112.11) -- (189.33,112.11) -- (189.33,154.11) -- (147.33,154.11) -- cycle ;
\draw   (231.33,196.11) -- (273.33,196.11) -- (273.33,238.11) -- (231.33,238.11) -- cycle ;
\draw   (189.33,196.11) -- (231.33,196.11) -- (231.33,238.11) -- (189.33,238.11) -- cycle ;
\draw  [fill={rgb, 255:red, 248; green, 231; blue, 28 }  ,fill opacity=1 ] (147.33,196.11) -- (189.33,196.11) -- (189.33,238.11) -- (147.33,238.11) -- cycle ;
\draw   (231.33,154.11) -- (273.33,154.11) -- (273.33,196.11) -- (231.33,196.11) -- cycle ;
\draw   (189.33,154.11) -- (231.33,154.11) -- (231.33,196.11) -- (189.33,196.11) -- cycle ;
\draw   (231.33,112.11) -- (273.33,112.11) -- (273.33,154.11) -- (231.33,154.11) -- cycle ;
\draw   (231.33,238.11) -- (273.33,238.11) -- (273.33,280.11) -- (231.33,280.11) -- cycle ;
\draw   (189.33,238.11) -- (231.33,238.11) -- (231.33,280.11) -- (189.33,280.11) -- cycle ;
\draw  [fill={rgb, 255:red, 248; green, 231; blue, 28 }  ,fill opacity=1 ] (147.33,238.11) -- (189.33,238.11) -- (189.33,280.11) -- (147.33,280.11) -- cycle ;
\draw    (147.33,280.11) -- (147.33,309.84) ;
\draw    (189.33,280.11) -- (189.33,309.84) ;
\draw    (273.33,280.11) -- (273.33,309.84) ;
\draw    (231.33,280.11) -- (231.33,309.84) ;
\draw    (304.33,70.09) -- (273.33,70.11) ;
\draw    (303.33,237.84) -- (273.33,238.11) ;
\draw    (303.33,195.84) -- (273.33,196.11) ;
\draw    (303.33,154.17) -- (273.33,154.44) ;
\draw    (303.33,111.84) -- (291.66,111.94) -- (273.33,112.11) ;
\draw  [fill={rgb, 255:red, 208; green, 2; blue, 27 }  ,fill opacity=1 ] (504,69.93) -- (546,69.93) -- (546,111.93) -- (504,111.93) -- cycle ;
\draw   (588,69.93) -- (630,69.93) -- (630,111.93) -- (588,111.93) -- cycle ;
\draw  [fill={rgb, 255:red, 208; green, 2; blue, 27 }  ,fill opacity=1 ] (546,111.93) -- (588,111.93) -- (588,153.93) -- (546,153.93) -- cycle ;
\draw   (546,69.93) -- (588,69.93) -- (588,111.93) -- (546,111.93) -- cycle ;
\draw  [fill={rgb, 255:red, 208; green, 2; blue, 27 }  ,fill opacity=1 ] (504,153.93) -- (546,153.93) -- (546,195.93) -- (504,195.93) -- cycle ;
\draw  [fill={rgb, 255:red, 208; green, 2; blue, 27 }  ,fill opacity=1 ] (504,111.93) -- (546,111.93) -- (546,153.93) -- (504,153.93) -- cycle ;
\draw  [fill={rgb, 255:red, 208; green, 2; blue, 27 }  ,fill opacity=1 ] (588,195.93) -- (630,195.93) -- (630,237.93) -- (588,237.93) -- cycle ;
\draw   (630,153.93) -- (672,153.93) -- (672,195.93) -- (630,195.93) -- cycle ;
\draw  [fill={rgb, 255:red, 208; green, 2; blue, 27 }  ,fill opacity=1 ] (546,195.93) -- (588,195.93) -- (588,237.93) -- (546,237.93) -- cycle ;
\draw  [fill={rgb, 255:red, 208; green, 2; blue, 27 }  ,fill opacity=1 ] (504,195.93) -- (546,195.93) -- (546,237.93) -- (504,237.93) -- cycle ;
\draw  [fill={rgb, 255:red, 208; green, 2; blue, 27 }  ,fill opacity=1 ] (588,153.93) -- (630,153.93) -- (630,195.93) -- (588,195.93) -- cycle ;
\draw  [fill={rgb, 255:red, 208; green, 2; blue, 27 }  ,fill opacity=1 ] (546,153.93) -- (588,153.93) -- (588,195.93) -- (546,195.93) -- cycle ;
\draw   (630,111.93) -- (672,111.93) -- (672,153.93) -- (630,153.93) -- cycle ;
\draw   (588,111.93) -- (630,111.93) -- (630,153.93) -- (588,153.93) -- cycle ;
\draw   (630,69.93) -- (672,69.93) -- (672,111.93) -- (630,111.93) -- cycle ;
\draw  [fill={rgb, 255:red, 208; green, 2; blue, 27 }  ,fill opacity=1 ] (630,195.93) -- (672,195.93) -- (672,237.93) -- (630,237.93) -- cycle ;
\draw   (672,195.93) -- (714,195.93) -- (714,237.93) -- (672,237.93) -- cycle ;
\draw   (672,153.93) -- (714,153.93) -- (714,195.93) -- (672,195.93) -- cycle ;
\draw   (672,111.93) -- (714,111.93) -- (714,153.93) -- (672,153.93) -- cycle ;
\draw   (672,69.93) -- (714,69.93) -- (714,111.93) -- (672,111.93) -- cycle ;
\draw    (504,237.93) -- (504,267.66) ;
\draw    (546,237.93) -- (546,267.66) ;
\draw    (672,237.93) -- (672,267.66) ;
\draw    (630,237.93) -- (630,267.66) ;
\draw    (588,237.93) -- (588,267.66) ;
\draw    (744,69.66) -- (714,69.93) ;
\draw    (714,237.93) -- (714,267.66) ;
\draw    (744,237.66) -- (714,237.93) ;
\draw    (744,195.66) -- (714,195.93) ;
\draw    (744,153.66) -- (714,153.93) ;
\draw    (744,111.66) -- (714,111.93) ;
\draw    (303.33,279.84) -- (273.33,280.11) ;
\draw    (93,90.17) -- (159.59,89.98) ;
\draw [shift={(161.59,89.98)}, rotate = 179.84] [color={rgb, 255:red, 0; green, 0; blue, 0 }  ][line width=0.75]    (10.93,-3.29) .. controls (6.95,-1.4) and (3.31,-0.3) .. (0,0) .. controls (3.31,0.3) and (6.95,1.4) .. (10.93,3.29)   ;
\draw    (410.13,159.47) -- (515.41,130.04) ;
\draw [shift={(517.33,129.51)}, rotate = 164.38] [color={rgb, 255:red, 0; green, 0; blue, 0 }  ][line width=0.75]    (10.93,-3.29) .. controls (6.95,-1.4) and (3.31,-0.3) .. (0,0) .. controls (3.31,0.3) and (6.95,1.4) .. (10.93,3.29)   ;
\draw    (482.8,158.47) -- (556.85,140.61) ;
\draw [shift={(558.8,140.14)}, rotate = 166.44] [color={rgb, 255:red, 0; green, 0; blue, 0 }  ][line width=0.75]    (10.93,-3.29) .. controls (6.95,-1.4) and (3.31,-0.3) .. (0,0) .. controls (3.31,0.3) and (6.95,1.4) .. (10.93,3.29)   ;
\draw    (92.67,131.84) -- (159.26,131.65) ;
\draw [shift={(161.26,131.65)}, rotate = 179.84] [color={rgb, 255:red, 0; green, 0; blue, 0 }  ][line width=0.75]    (10.93,-3.29) .. controls (6.95,-1.4) and (3.31,-0.3) .. (0,0) .. controls (3.31,0.3) and (6.95,1.4) .. (10.93,3.29)   ;
\draw    (92.67,174.84) -- (159.26,174.65) ;
\draw [shift={(161.26,174.65)}, rotate = 179.84] [color={rgb, 255:red, 0; green, 0; blue, 0 }  ][line width=0.75]    (10.93,-3.29) .. controls (6.95,-1.4) and (3.31,-0.3) .. (0,0) .. controls (3.31,0.3) and (6.95,1.4) .. (10.93,3.29)   ;
\draw    (92.67,216.84) -- (159.26,216.65) ;
\draw [shift={(161.26,216.65)}, rotate = 179.84] [color={rgb, 255:red, 0; green, 0; blue, 0 }  ][line width=0.75]    (10.93,-3.29) .. controls (6.95,-1.4) and (3.31,-0.3) .. (0,0) .. controls (3.31,0.3) and (6.95,1.4) .. (10.93,3.29)   ;
\draw    (93.33,260.17) -- (159.93,259.98) ;
\draw [shift={(161.93,259.98)}, rotate = 179.84] [color={rgb, 255:red, 0; green, 0; blue, 0 }  ][line width=0.75]    (10.93,-3.29) .. controls (6.95,-1.4) and (3.31,-0.3) .. (0,0) .. controls (3.31,0.3) and (6.95,1.4) .. (10.93,3.29)   ;
\draw    (409.13,125.47) -- (514.41,96.04) ;
\draw [shift={(516.33,95.51)}, rotate = 164.38] [color={rgb, 255:red, 0; green, 0; blue, 0 }  ][line width=0.75]    (10.93,-3.29) .. controls (6.95,-1.4) and (3.31,-0.3) .. (0,0) .. controls (3.31,0.3) and (6.95,1.4) .. (10.93,3.29)   ;

\draw (60,174.17) node    {$V_{2}( e_{\xi _{2} ,\eta _{2}})$};
\draw (168.33,91.11) node    {$b$};
\draw (168.33,133.11) node    {$b$};
\draw (168.33,175.11) node    {$b$};
\draw (168.33,217.11) node    {$b$};
\draw (168.33,259.11) node    {$b$};
\draw (168.83,304.6) node  [font=\Large,rotate=-90]  {$\cdots $};
\draw (65.83,303.6) node  [font=\Large,rotate=-90]  {$\cdots $};
\draw (525,90.93) node    {$a$};
\draw (525,132.93) node    {$a$};
\draw (525,175.93) node    {$a$};
\draw (525,216.93) node    {$a$};
\draw (567,132.93) node    {$a$};
\draw (526.5,258.42) node  [font=\Large,rotate=-90]  {$\cdots $};
\draw (384.5,209.42) node  [font=\Large,rotate=-90]  {$\cdots $};
\draw (567,175.93) node    {$a$};
\draw (567,216.93) node    {$a$};
\draw (609,216.93) node    {$a$};
\draw (609,175.93) node    {$a$};
\draw (651,216.93) node    {$a$};
\draw (568,258.93) node  [font=\Large,rotate=-90]  {$\cdots $};
\draw (691,256.93) node  [font=\LARGE,rotate=-45]  {$\cdots $};
\draw (651,258.93) node  [font=\Large,rotate=-90]  {$\cdots $};
\draw (609,258.93) node  [font=\Large,rotate=-90]  {$\cdots $};
\draw (59.67,130.84) node    {$V_{2}( e_{\xi _{2} ,\eta _{1}})$};
\draw (60.33,88.84) node    {$V_{2}( e_{\xi _{1} ,\eta _{1}})$};
\draw (60.33,216.17) node    {$V_{2}( e_{\xi _{3} ,\eta _{1}})$};
\draw (60.33,259.84) node    {$V_{2}( e_{\xi _{3} ,\eta _{2}})$};
\draw (377.67,164.51) node    {$V_{3}( e_{\xi _{2} ,\eta _{1}})$};
\draw (451.33,165.17) node    {$V_{3}( e_{\xi _{2} ,\eta _{2}})$};
\draw (377.33,130.84) node    {$V_{3}( e_{\xi _{1} ,\eta _{1}})$};
\draw (456,206.6) node  [font=\LARGE,rotate=-45]  {$\cdots $};

\end{tikzpicture}
}

\noindent
\\

\scalebox{0.6}{
\tikzset{every picture/.style={line width=0.75pt}} 

\begin{tikzpicture}[x=0.75pt,y=0.75pt,yscale=-1,xscale=1]

\draw  [fill={rgb, 255:red, 74; green, 144; blue, 226 }  ,fill opacity=1 ] (270.67,30.67) -- (321,30.67) -- (321,81) -- (270.67,81) -- cycle ;
\draw  [fill={rgb, 255:red, 74; green, 144; blue, 226 }  ,fill opacity=1 ] (321,81) -- (381,81) -- (381,141) -- (321,141) -- cycle ;
\draw  [fill={rgb, 255:red, 74; green, 144; blue, 226 }  ,fill opacity=1 ] (381,141) -- (459.67,141) -- (459.67,219.67) -- (381,219.67) -- cycle ;
\draw  [fill={rgb, 255:red, 74; green, 144; blue, 226 }  ,fill opacity=1 ] (459.67,219.67) -- (560.33,219.67) -- (560.33,320.33) -- (459.67,320.33) -- cycle ;
\draw    (321,30.67) -- (590.67,31.33) ;
\draw    (270.67,81) -- (271,350.55) ;
\draw    (590.33,320.06) -- (560.33,320.33) ;
\draw    (560.33,320.33) -- (560.33,350.06) ;

\draw (295.83,54.83) node  [font=\scriptsize]  {$V_{1}( e_{\xi _{1} ,\eta _{1}})$};
\draw (356.17,99.17) node  [font=\scriptsize]  {$V_{1}( e_{\xi _{2} ,\eta _{1}})$};
\draw (347.17,120.5) node  [font=\scriptsize]  {$V_{1}( e_{\xi _{2} ,\eta _{2}})$};
\draw (408.5,158.83) node  [font=\scriptsize]  {$V_{1}( e_{\xi _{3} ,\eta _{1}})$};
\draw (409.17,198.83) node  [font=\scriptsize]  {$V_{1}( e_{\xi _{3} ,\eta _{3}})$};
\draw (434.5,180.17) node  [font=\scriptsize]  {$V_{1}( e_{\xi _{3} ,\eta _{2}})$};
\draw (487.5,258.17) node  [font=\scriptsize]  {$V_{1}( e_{\xi _{4} ,\eta _{1}})$};
\draw (488.5,296.83) node  [font=\scriptsize]  {$V_{1}( e_{\xi _{4} ,\eta _{3}})$};
\draw (533.5,240.83) node  [font=\scriptsize]  {$V_{1}( e_{\xi _{4} ,\eta _{2}})$};
\draw (533.5,280.17) node  [font=\scriptsize]  {$V_{1}( e_{\xi _{4} ,\eta _{4}})$};
\draw (578.33,339) node  [font=\Large,rotate=-45]  {$\cdots $};
\draw (181,173.07) node [anchor=north west][inner sep=0.75pt]  [font=\Large]  {\scalebox{1.5}{$V_{1} :$}};

\end{tikzpicture}
}
\noindent
\\

Below, We extend \cite[Theorem 4.1]{Arazy4} to the setting of general bounded linear operators on  quasi-Banach operator ideals (see Theorem \ref{thma'}). In Theorem \ref{thma} below, $T_1$ is analogous to $V_1$ above, $T_2+T_3$ is analogous to $V_2$ above (see Remark \ref{t2+t3-rem}), and $T_4$ is analogous to $V_3$ above (see Lemma~\ref{t4} and Remark \ref{rem-t4}).
The following theorem   presents  a representation with more details, and 
 demonstrates how  operators admitting  ``good" decompositions  are obtained through perturbations of the original operators---this point is also key to the subsequent proofs of Theorems~\ref{c1s} and \ref{c1ch}.

\begin{theorem}\label{thma}
Suppose that $T:\mathcal T_{E,\{\xi_i,\eta_i\}_{i=1}^\infty}\to \mathcal C_E$ is a bounded operator with $p_{_{[\bigcup_{i=1}^\infty K_i]}}T(\cdot)p_{_{[\bigcup_{i=1}^\infty L_i]}}=T$.
For a given a sequence $\{\varepsilon_l\}_{l=1}^\infty$ of positive numbers,
there exist two increasing sequences $\{i_k\}_{k=1}^\infty$ and $\{n_k\}_{k=1}^\infty$ of positive integers, and $4$ operators $T_1,T_2,T_3,T_4\in\mathcal B\big(\mathcal T_{E,\{\xi_{i_{k+1}},\eta_{i_k}\}_{k=2}^\infty},\mathcal C_E\big)$ of  the forms
\begin{equation}\label{T1}
T_1(e_{\xi_{i_k},\eta_{i_l}})=p_{_{K'_k}}T(e_{\xi_{i_k},\eta_{i_l}})p_{_{L'_k}},\;\;\;\;\;\;\;\;\textcolor[RGB]{74,144,226}{ 2\leq l<k<\infty},
\end{equation}
\begin{equation}\label{T2}
T_2(e_{\xi_{i_k},\eta_{i_l}})=x_{_{(k,l),l}}+y_{_{l,(l,k)}},\;\;\;\;\;\;\;\;\textcolor[RGB]{126,211,33}{ 2\leq l<k<\infty},
\end{equation}
\begin{equation}\label{T3}
T_3(e_{\xi_{i_k},\eta_{i_l}})=\sum_{m=1}^{l-1}\big(x_{_{(k,l),m}}+y_{_{m,(l,k)}}\big),\;\;\;\;\;\;\;\;\textcolor[RGB]{248,231,28}{2\leq l<k<\infty},
\end{equation}
\begin{equation}\label{T4}
T_4(e_{\xi_{i_k},\eta_{i_l}})=\sum_{m=1}^{l-1}\big(x_{_{(k,m),l}}+y_{_{l,(m,k)}}\big),\;\;\;\;\;\;\;\;\textcolor[RGB]{208,2,27}{2\leq l<k<\infty},
\end{equation}
 such that $T_1+T_2+T_3+T_4$ is a perturbation of $T|_{\mathcal T_{E,\{\xi_{i_{k+1}},\eta_{i_k}\}_{k=2}^\infty}}$ associated with the Schauder decomposition $\big\{[e_{\xi_{i_k},\eta_{i_l}}]_{k=l+1}^\infty\big\}_{l=2}^\infty$ for $\{\varepsilon_l\}_{l=2}^\infty$, 
where

\noindent
\\
\scalebox{0.8}{
\tikzset{every picture/.style={line width=0.75pt}} 

\begin{tikzpicture}[x=0.75pt,y=0.75pt,yscale=-1,xscale=1]

\draw  [fill={rgb, 255:red, 74; green, 144; blue, 226 }  ,fill opacity=1 ][line width=2.25]  (126.19,131.86) -- (189.12,131.86) -- (189.12,194.79) -- (126.19,194.79) -- cycle ;
\draw  [fill={rgb, 255:red, 74; green, 144; blue, 226 }  ,fill opacity=1 ][line width=2.25]  (189.07,194.79) -- (326.17,194.79) -- (326.17,331.88) -- (189.07,331.88) -- cycle ;
\draw  [fill={rgb, 255:red, 74; green, 144; blue, 226 }  ,fill opacity=1 ][line width=2.25]  (325.93,331.88) -- (618.52,331.88) -- (618.52,624.47) -- (325.93,624.47) -- cycle ;
\draw  [fill={rgb, 255:red, 208; green, 2; blue, 27 }  ,fill opacity=1 ] (99.17,135.44) -- (125.85,135.44) -- (125.85,162.12) -- (99.17,162.12) -- cycle ;
\draw  [fill={rgb, 255:red, 126; green, 211; blue, 33 }  ,fill opacity=1 ][line width=1.5]  (96.87,162.12) -- (125.85,162.12) -- (125.85,191.1) -- (96.87,191.1) -- cycle ;
\draw  [fill={rgb, 255:red, 248; green, 231; blue, 28 }  ,fill opacity=1 ] (70.19,164.42) -- (96.87,164.42) -- (96.87,191.1) -- (70.19,191.1) -- cycle ;
\draw  [fill={rgb, 255:red, 248; green, 231; blue, 28 }  ,fill opacity=1 ] (159.17,76.44) -- (185.85,76.44) -- (185.85,103.12) -- (159.17,103.12) -- cycle ;
\draw  [fill={rgb, 255:red, 126; green, 211; blue, 33 }  ,fill opacity=1 ][line width=1.5]  (156.87,102.12) -- (185.85,102.12) -- (185.85,131.1) -- (156.87,131.1) -- cycle ;
\draw  [fill={rgb, 255:red, 208; green, 2; blue, 27 }  ,fill opacity=1 ] (130.19,104.42) -- (156.87,104.42) -- (156.87,131.1) -- (130.19,131.1) -- cycle ;
\draw  [fill={rgb, 255:red, 126; green, 211; blue, 33 }  ,fill opacity=1 ][line width=1.5]  (254.85,132.1) -- (317.78,132.1) -- (317.78,195.03) -- (254.85,195.03) -- cycle ;
\draw  [fill={rgb, 255:red, 248; green, 231; blue, 28 }  ,fill opacity=1 ] (228.17,76.44) -- (254.85,76.44) -- (254.85,103.12) -- (228.17,103.12) -- cycle ;
\draw  [fill={rgb, 255:red, 126; green, 211; blue, 33 }  ,fill opacity=1 ][line width=1.5]  (225.87,103.12) -- (254.85,103.12) -- (254.85,132.1) -- (225.87,132.1) -- cycle ;
\draw  [fill={rgb, 255:red, 208; green, 2; blue, 27 }  ,fill opacity=1 ] (199.19,105.42) -- (225.87,105.42) -- (225.87,132.1) -- (199.19,132.1) -- cycle ;
\draw  [fill={rgb, 255:red, 126; green, 211; blue, 33 }  ,fill opacity=1 ][line width=1.5]  (463.07,194.79) -- (600.17,194.79) -- (600.17,331.88) -- (463.07,331.88) -- cycle ;
\draw  [fill={rgb, 255:red, 126; green, 211; blue, 33 }  ,fill opacity=1 ][line width=1.5]  (399.85,132.1) -- (462.78,132.1) -- (462.78,195.03) -- (399.85,195.03) -- cycle ;
\draw  [fill={rgb, 255:red, 248; green, 231; blue, 28 }  ,fill opacity=1 ] (373.17,77.44) -- (399.85,77.44) -- (399.85,104.12) -- (373.17,104.12) -- cycle ;
\draw  [fill={rgb, 255:red, 126; green, 211; blue, 33 }  ,fill opacity=1 ][line width=1.5]  (370.87,103.12) -- (399.85,103.12) -- (399.85,132.1) -- (370.87,132.1) -- cycle ;
\draw  [fill={rgb, 255:red, 208; green, 2; blue, 27 }  ,fill opacity=1 ] (344.19,105.42) -- (370.87,105.42) -- (370.87,132.1) -- (344.19,132.1) -- cycle ;
\draw  [fill={rgb, 255:red, 208; green, 2; blue, 27 }  ,fill opacity=1 ] (199.08,153.79) -- (225.77,153.79) -- (225.77,180.47) -- (199.08,180.47) -- cycle ;
\draw  [fill={rgb, 255:red, 208; green, 2; blue, 27 }  ,fill opacity=1 ] (225.77,153.79) -- (254.85,153.79) -- (254.85,195.03) -- (225.77,195.03) -- cycle ;
\draw  [fill={rgb, 255:red, 208; green, 2; blue, 27 }  ,fill opacity=1 ] (400.49,246.8) -- (463.07,246.8) -- (463.07,331.88) -- (400.49,331.88) -- cycle ;
\draw  [fill={rgb, 255:red, 208; green, 2; blue, 27 }  ,fill opacity=1 ] (344.08,153.79) -- (370.77,153.79) -- (370.77,180.47) -- (344.08,180.47) -- cycle ;
\draw  [fill={rgb, 255:red, 208; green, 2; blue, 27 }  ,fill opacity=1 ] (370.77,153.79) -- (399.85,153.79) -- (399.85,195.03) -- (370.77,195.03) -- cycle ;
\draw  [fill={rgb, 255:red, 248; green, 231; blue, 28 }  ,fill opacity=1 ] (462.61,103.21) -- (462.61,132.29) -- (421.37,132.29) -- (421.37,103.21) -- cycle ;
\draw  [fill={rgb, 255:red, 248; green, 231; blue, 28 }  ,fill opacity=1 ] (421.37,76.52) -- (448.05,76.52) -- (448.05,103.21) -- (421.37,103.21) -- cycle ;
\draw  [fill={rgb, 255:red, 208; green, 2; blue, 27 }  ,fill opacity=1 ] (345.08,246.79) -- (371.77,246.79) -- (371.77,273.47) -- (345.08,273.47) -- cycle ;
\draw  [fill={rgb, 255:red, 208; green, 2; blue, 27 }  ,fill opacity=1 ] (371.77,246.79) -- (400.85,246.79) -- (400.85,288.03) -- (371.77,288.03) -- cycle ;
\draw  [fill={rgb, 255:red, 248; green, 231; blue, 28 }  ,fill opacity=1 ] (600.17,132.2) -- (600.17,194.79) -- (515.67,194.79) -- (515.67,132.2) -- cycle ;
\draw  [fill={rgb, 255:red, 248; green, 231; blue, 28 }  ,fill opacity=1 ] (556.61,103.21) -- (556.61,132.29) -- (515.37,132.29) -- (515.37,103.21) -- cycle ;
\draw  [fill={rgb, 255:red, 248; green, 231; blue, 28 }  ,fill opacity=1 ] (515.37,76.52) -- (542.05,76.52) -- (542.05,103.21) -- (515.37,103.21) -- cycle ;
\draw  [fill={rgb, 255:red, 248; green, 231; blue, 28 }  ,fill opacity=1 ] (317.61,102.21) -- (317.61,131.29) -- (276.37,131.29) -- (276.37,102.21) -- cycle ;
\draw  [fill={rgb, 255:red, 248; green, 231; blue, 28 }  ,fill opacity=1 ] (276.37,75.52) -- (303.05,75.52) -- (303.05,102.21) -- (276.37,102.21) -- cycle ;
\draw  [fill={rgb, 255:red, 126; green, 211; blue, 33 }  ,fill opacity=1 ][line width=1.5]  (125.85,260.1) -- (188.78,260.1) -- (188.78,323.03) -- (125.85,323.03) -- cycle ;
\draw  [fill={rgb, 255:red, 208; green, 2; blue, 27 }  ,fill opacity=1 ] (99.17,205.44) -- (125.85,205.44) -- (125.85,232.12) -- (99.17,232.12) -- cycle ;
\draw  [fill={rgb, 255:red, 126; green, 211; blue, 33 }  ,fill opacity=1 ][line width=1.5]  (96.87,231.12) -- (125.85,231.12) -- (125.85,260.1) -- (96.87,260.1) -- cycle ;
\draw  [fill={rgb, 255:red, 248; green, 231; blue, 28 }  ,fill opacity=1 ] (70.19,233.42) -- (96.87,233.42) -- (96.87,260.1) -- (70.19,260.1) -- cycle ;
\draw  [fill={rgb, 255:red, 248; green, 231; blue, 28 }  ,fill opacity=1 ] (70.08,281.79) -- (96.77,281.79) -- (96.77,308.47) -- (70.08,308.47) -- cycle ;
\draw  [fill={rgb, 255:red, 248; green, 231; blue, 28 }  ,fill opacity=1 ] (96.77,281.79) -- (125.85,281.79) -- (125.85,323.03) -- (96.77,323.03) -- cycle ;
\draw  [fill={rgb, 255:red, 208; green, 2; blue, 27 }  ,fill opacity=1 ] (188.78,231.02) -- (188.78,260.1) -- (147.54,260.1) -- (147.54,231.02) -- cycle ;
\draw  [fill={rgb, 255:red, 208; green, 2; blue, 27 }  ,fill opacity=1 ] (147.37,204.52) -- (174.05,204.52) -- (174.05,231.21) -- (147.37,231.21) -- cycle ;
\draw  [fill={rgb, 255:red, 126; green, 211; blue, 33 }  ,fill opacity=1 ][line width=1.5]  (188.78,468.03) -- (325.88,468.03) -- (325.88,605.13) -- (188.78,605.13) -- cycle ;
\draw  [fill={rgb, 255:red, 126; green, 211; blue, 33 }  ,fill opacity=1 ][line width=1.5]  (125.85,405.1) -- (188.78,405.1) -- (188.78,468.03) -- (125.85,468.03) -- cycle ;
\draw  [fill={rgb, 255:red, 208; green, 2; blue, 27 }  ,fill opacity=1 ] (99.17,350.44) -- (125.85,350.44) -- (125.85,377.12) -- (99.17,377.12) -- cycle ;
\draw  [fill={rgb, 255:red, 126; green, 211; blue, 33 }  ,fill opacity=1 ][line width=1.5]  (96.87,376.12) -- (125.85,376.12) -- (125.85,405.1) -- (96.87,405.1) -- cycle ;
\draw  [fill={rgb, 255:red, 248; green, 231; blue, 28 }  ,fill opacity=1 ] (70.19,378.42) -- (96.87,378.42) -- (96.87,405.1) -- (70.19,405.1) -- cycle ;
\draw  [fill={rgb, 255:red, 248; green, 231; blue, 28 }  ,fill opacity=1 ] (126.49,520.47) -- (189.07,520.47) -- (189.07,604.88) -- (126.49,604.88) -- cycle ;
\draw  [fill={rgb, 255:red, 248; green, 231; blue, 28 }  ,fill opacity=1 ] (70.08,426.79) -- (96.77,426.79) -- (96.77,453.47) -- (70.08,453.47) -- cycle ;
\draw  [fill={rgb, 255:red, 248; green, 231; blue, 28 }  ,fill opacity=1 ] (96.77,426.79) -- (125.85,426.79) -- (125.85,468.03) -- (96.77,468.03) -- cycle ;
\draw  [fill={rgb, 255:red, 208; green, 2; blue, 27 }  ,fill opacity=1 ] (188.61,376.21) -- (188.61,405.29) -- (147.37,405.29) -- (147.37,376.21) -- cycle ;
\draw  [fill={rgb, 255:red, 208; green, 2; blue, 27 }  ,fill opacity=1 ] (147.37,349.52) -- (174.05,349.52) -- (174.05,376.21) -- (147.37,376.21) -- cycle ;
\draw  [fill={rgb, 255:red, 248; green, 231; blue, 28 }  ,fill opacity=1 ] (70.67,520.37) -- (97.77,520.37) -- (97.77,547.47) -- (70.67,547.47) -- cycle ;
\draw  [fill={rgb, 255:red, 248; green, 231; blue, 28 }  ,fill opacity=1 ] (97.41,520.47) -- (126.49,520.47) -- (126.49,561.03) -- (97.41,561.03) -- cycle ;
\draw  [fill={rgb, 255:red, 208; green, 2; blue, 27 }  ,fill opacity=1 ] (326.17,405.2) -- (326.17,467.79) -- (241.67,467.79) -- (241.67,405.2) -- cycle ;
\draw  [fill={rgb, 255:red, 208; green, 2; blue, 27 }  ,fill opacity=1 ] (282.61,376.21) -- (282.61,405.29) -- (241.67,405.29) -- (241.67,376.21) -- cycle ;
\draw  [fill={rgb, 255:red, 208; green, 2; blue, 27 }  ,fill opacity=1 ] (241.67,349.82) -- (268.05,349.82) -- (268.05,376.21) -- (241.67,376.21) -- cycle ;
\draw [line width=1.5]    (70.49,76.25) -- (70.5,663.32) ;
\draw [line width=1.5]    (71.49,76.25) -- (290.22,76.19) -- (654.19,76.45) ;
\draw  [dash pattern={on 4.5pt off 4.5pt}]  (70.12,194.62) -- (126.19,194.79) ;
\draw  [dash pattern={on 4.5pt off 4.5pt}]  (69.8,161.95) -- (96.87,162.12) ;
\draw  [dash pattern={on 4.5pt off 4.5pt}]  (70.09,135.27) -- (99.17,135.44) ;
\draw  [dash pattern={on 4.5pt off 4.5pt}]  (70.12,130.69) -- (126.19,130.86) ;
\draw  [dash pattern={on 4.5pt off 4.5pt}]  (70.8,101.95) -- (156.87,102.12) ;
\draw  [dash pattern={on 4.5pt off 4.5pt}]  (70.09,205.27) -- (99.17,205.44) ;
\draw  [dash pattern={on 4.5pt off 4.5pt}]  (69.8,230.95) -- (96.87,231.12) ;
\draw  [dash pattern={on 4.5pt off 4.5pt}]  (69.69,322.86) -- (96.77,323.03) ;
\draw  [dash pattern={on 4.5pt off 4.5pt}]  (70.09,350.27) -- (99.17,350.44) ;
\draw  [dash pattern={on 4.5pt off 4.5pt}]  (69.8,375.95) -- (96.87,376.12) ;
\draw  [dash pattern={on 4.5pt off 4.5pt}]  (69.69,467.86) -- (96.77,468.03) ;
\draw  [dash pattern={on 4.5pt off 4.5pt}]  (70.42,604.72) -- (126.49,604.88) ;
\draw  [dash pattern={on 4.5pt off 4.5pt}]  (96.59,76.45) -- (96.87,162.12) ;
\draw  [dash pattern={on 4.5pt off 4.5pt}]  (125.57,76.77) -- (125.85,135.44) ;
\draw  [dash pattern={on 4.5pt off 4.5pt}]  (189.84,75.19) -- (190.12,131.86) ;
\draw  [dash pattern={on 4.5pt off 4.5pt}]  (129.91,75.75) -- (130.19,104.42) ;
\draw  [dash pattern={on 4.5pt off 4.5pt}]  (198.91,76.75) -- (199.19,105.42) ;
\draw  [dash pattern={on 4.5pt off 4.5pt}]  (156.59,75.75) -- (156.87,104.42) ;
\draw  [dash pattern={on 4.5pt off 4.5pt}]  (225.59,76.75) -- (225.87,105.42) ;
\draw  [dash pattern={on 4.5pt off 4.5pt}]  (317.33,76.54) -- (317.61,102.21) ;
\draw  [dash pattern={on 4.5pt off 4.5pt}]  (343.91,76.75) -- (344.19,105.42) ;
\draw  [dash pattern={on 4.5pt off 4.5pt}]  (462.33,76.54) -- (462.61,103.21) ;
\draw  [dash pattern={on 4.5pt off 4.5pt}]  (370.59,76.75) -- (370.87,105.42) ;
\draw  [dash pattern={on 4.5pt off 4.5pt}]  (599.93,76.07) -- (600.17,132.2) ;
\draw   (39.74,75.92) .. controls (36.22,75.9) and (34.45,77.65) .. (34.43,81.18) -- (34.43,81.18) .. controls (34.41,86.21) and (32.64,88.72) .. (29.11,88.7) .. controls (32.64,88.72) and (34.39,91.25) .. (34.36,96.28)(34.37,94.01) -- (34.36,96.28) .. controls (34.35,99.8) and (36.1,101.57) .. (39.62,101.58) ;
\draw   (40.74,101.92) .. controls (36.81,101.9) and (34.83,103.86) .. (34.81,107.79) -- (34.81,107.79) .. controls (34.79,113.42) and (32.81,116.22) .. (28.88,116.2) .. controls (32.81,116.22) and (34.77,119.04) .. (34.75,124.66)(34.76,122.13) -- (34.75,124.66) .. controls (34.73,128.59) and (36.69,130.57) .. (40.63,130.58) ;
\draw   (71.74,161.92) .. controls (67.78,161.86) and (65.77,163.81) .. (65.71,167.78) -- (65.71,167.78) .. controls (65.63,173.44) and (63.61,176.24) .. (59.65,176.18) .. controls (63.61,176.24) and (65.55,179.1) .. (65.47,184.76)(65.51,182.21) -- (65.47,184.76) .. controls (65.42,188.72) and (67.37,190.73) .. (71.33,190.79) ;
\draw   (71.33,135.79) .. controls (67.79,135.83) and (66.04,137.62) .. (66.08,141.16) -- (66.08,141.16) .. controls (66.14,146.22) and (64.4,148.77) .. (60.86,148.81) .. controls (64.4,148.77) and (66.2,151.28) .. (66.25,156.33)(66.23,154.06) -- (66.25,156.33) .. controls (66.29,159.88) and (68.08,161.63) .. (71.63,161.58) ;
\draw   (71.33,204.79) .. controls (67.79,204.83) and (66.04,206.62) .. (66.08,210.16) -- (66.08,210.16) .. controls (66.14,215.22) and (64.4,217.77) .. (60.86,217.81) .. controls (64.4,217.77) and (66.2,220.28) .. (66.25,225.33)(66.23,223.06) -- (66.25,225.33) .. controls (66.29,228.88) and (68.08,230.63) .. (71.63,230.58) ;
\draw   (71.74,230.92) .. controls (67.78,230.86) and (65.77,232.81) .. (65.71,236.78) -- (65.71,236.78) .. controls (65.63,242.44) and (63.61,245.24) .. (59.65,245.18) .. controls (63.61,245.24) and (65.55,248.1) .. (65.47,253.76)(65.51,251.21) -- (65.47,253.76) .. controls (65.42,257.72) and (67.37,259.73) .. (71.33,259.79) ;
\draw   (71.74,259.92) .. controls (67.07,259.95) and (64.76,262.3) .. (64.79,266.97) -- (64.91,281.39) .. controls (64.96,288.06) and (62.65,291.41) .. (57.98,291.45) .. controls (62.65,291.41) and (65.01,294.72) .. (65.06,301.39)(65.04,298.39) -- (65.17,315.82) .. controls (65.2,320.49) and (67.55,322.8) .. (72.22,322.76) ;
\draw   (71.33,349.79) .. controls (67.79,349.83) and (66.04,351.62) .. (66.08,355.16) -- (66.08,355.16) .. controls (66.14,360.22) and (64.4,362.77) .. (60.86,362.81) .. controls (64.4,362.77) and (66.2,365.28) .. (66.25,370.33)(66.23,368.06) -- (66.25,370.33) .. controls (66.29,373.88) and (68.08,375.63) .. (71.63,375.58) ;
\draw   (71.74,375.92) .. controls (67.78,375.86) and (65.77,377.81) .. (65.71,381.78) -- (65.71,381.78) .. controls (65.63,387.44) and (63.61,390.24) .. (59.65,390.18) .. controls (63.61,390.24) and (65.55,393.1) .. (65.47,398.76)(65.51,396.21) -- (65.47,398.76) .. controls (65.42,402.72) and (67.37,404.73) .. (71.33,404.79) ;
\draw   (71.74,404.92) .. controls (67.07,404.95) and (64.76,407.3) .. (64.79,411.97) -- (64.91,426.39) .. controls (64.96,433.06) and (62.65,436.41) .. (57.98,436.45) .. controls (62.65,436.41) and (65.01,439.72) .. (65.06,446.39)(65.04,443.39) -- (65.17,460.82) .. controls (65.2,465.49) and (67.55,467.8) .. (72.22,467.76) ;
\draw   (71.74,467.92) .. controls (67.07,467.91) and (64.73,470.23) .. (64.72,474.9) -- (64.61,526.1) .. controls (64.6,532.77) and (62.26,536.1) .. (57.59,536.09) .. controls (62.26,536.1) and (64.58,539.43) .. (64.56,546.1)(64.57,543.1) -- (64.45,597.3) .. controls (64.44,601.97) and (66.76,604.31) .. (71.43,604.32) ;
\draw   (156.85,76.63) .. controls (156.85,72.92) and (155,71.07) .. (151.29,71.07) -- (151.29,71.07) .. controls (146,71.07) and (143.35,69.22) .. (143.35,65.52) .. controls (143.35,69.22) and (140.7,71.07) .. (135.41,71.07)(137.79,71.07) -- (135.41,71.07) .. controls (131.7,71.07) and (129.85,72.92) .. (129.85,76.63) ;
\draw   (185.85,76.63) .. controls (185.88,72.58) and (183.86,70.55) .. (179.81,70.52) -- (179.81,70.52) .. controls (174.03,70.49) and (171.15,68.45) .. (171.18,64.4) .. controls (171.15,68.45) and (168.25,70.45) .. (162.47,70.42)(165.07,70.43) -- (162.47,70.42) .. controls (158.42,70.39) and (156.39,72.4) .. (156.36,76.45) ;
\draw   (316.85,77.63) .. controls (316.86,72.96) and (314.54,70.62) .. (309.87,70.61) -- (295.62,70.57) .. controls (288.95,70.55) and (285.63,68.21) .. (285.65,63.54) .. controls (285.63,68.21) and (282.29,70.53) .. (275.62,70.51)(278.62,70.52) -- (261.38,70.47) .. controls (256.71,70.46) and (254.37,72.78) .. (254.36,77.45) ;
\draw   (225.85,76.63) .. controls (225.85,72.92) and (224,71.07) .. (220.29,71.07) -- (220.29,71.07) .. controls (215,71.07) and (212.35,69.22) .. (212.35,65.52) .. controls (212.35,69.22) and (209.7,71.07) .. (204.41,71.07)(206.79,71.07) -- (204.41,71.07) .. controls (200.7,71.07) and (198.85,72.92) .. (198.85,76.63) ;
\draw   (254.85,76.63) .. controls (254.88,72.58) and (252.86,70.55) .. (248.81,70.52) -- (248.81,70.52) .. controls (243.03,70.49) and (240.15,68.45) .. (240.18,64.4) .. controls (240.15,68.45) and (237.25,70.45) .. (231.47,70.42)(234.07,70.43) -- (231.47,70.42) .. controls (227.42,70.39) and (225.39,72.4) .. (225.36,76.45) ;
\draw   (461.85,76.63) .. controls (461.86,71.96) and (459.54,69.62) .. (454.87,69.61) -- (440.62,69.57) .. controls (433.95,69.55) and (430.63,67.21) .. (430.65,62.54) .. controls (430.63,67.21) and (427.29,69.53) .. (420.62,69.51)(423.62,69.52) -- (406.38,69.47) .. controls (401.71,69.46) and (399.37,71.78) .. (399.36,76.45) ;
\draw   (370.85,75.63) .. controls (370.85,71.92) and (369,70.07) .. (365.29,70.07) -- (365.29,70.07) .. controls (360,70.07) and (357.35,68.22) .. (357.35,64.52) .. controls (357.35,68.22) and (354.7,70.07) .. (349.41,70.07)(351.79,70.07) -- (349.41,70.07) .. controls (345.7,70.07) and (343.85,71.92) .. (343.85,75.63) ;
\draw   (399.85,76.63) .. controls (399.88,72.58) and (397.86,70.55) .. (393.81,70.52) -- (393.81,70.52) .. controls (388.03,70.49) and (385.15,68.45) .. (385.18,64.4) .. controls (385.15,68.45) and (382.25,70.45) .. (376.47,70.42)(379.07,70.43) -- (376.47,70.42) .. controls (372.42,70.39) and (370.39,72.4) .. (370.36,76.45) ;
\draw   (599.37,76.3) .. controls (599.36,71.63) and (597.03,69.3) .. (592.36,69.31) -- (540.86,69.36) .. controls (534.19,69.37) and (530.86,67.04) .. (530.85,62.37) .. controls (530.86,67.04) and (527.53,69.37) .. (520.86,69.38)(523.86,69.38) -- (469.35,69.44) .. controls (464.68,69.44) and (462.35,71.77) .. (462.36,76.44) ;
\draw    (39,76) -- (71.49,76.25) ;
\draw    (39.31,101.7) -- (70.8,101.95) ;
\draw    (39.63,130.44) -- (70.12,130.69) ;
\draw    (39.63,194.37) -- (70.12,194.62) ;
\draw   (39.33,130.8) .. controls (34.66,130.83) and (32.34,133.17) .. (32.37,137.84) -- (32.45,152.51) .. controls (32.48,159.18) and (30.17,162.52) .. (25.5,162.54) .. controls (30.17,162.52) and (32.52,165.84) .. (32.55,172.51)(32.54,169.51) -- (32.63,187.17) .. controls (32.66,191.84) and (35,194.16) .. (39.66,194.14) ;
\draw    (39.2,331.61) -- (69.69,331.86) ;
\draw   (39.67,194.14) .. controls (35,194.15) and (32.67,196.48) .. (32.68,201.15) -- (32.75,252.63) .. controls (32.76,259.3) and (30.43,262.63) .. (25.76,262.64) .. controls (30.43,262.63) and (32.77,265.96) .. (32.78,272.63)(32.77,269.63) -- (32.85,324.12) .. controls (32.86,328.79) and (35.19,331.12) .. (39.86,331.11) ;
\draw    (39.27,623.24) -- (70.42,623.72) ;
\draw   (39.86,331.47) .. controls (35.19,331.47) and (32.86,333.8) .. (32.86,338.47) -- (32.86,467.29) .. controls (32.86,473.96) and (30.53,477.29) .. (25.86,477.29) .. controls (30.53,477.29) and (32.86,480.62) .. (32.86,487.29)(32.86,484.29) -- (32.86,616.11) .. controls (32.86,620.78) and (35.19,623.11) .. (39.86,623.11) ;
\draw  [dash pattern={on 4.5pt off 4.5pt}]  (70.42,623.72) -- (325.28,623.62) ;
\draw  [dash pattern={on 4.5pt off 4.5pt}]  (71.3,331.97) -- (189.17,331.88) ;
\draw  [dash pattern={on 4.5pt off 4.5pt}]  (326,76.47) -- (327.17,194.79) ;
\draw  [dash pattern={on 4.5pt off 4.5pt}]  (617.35,76.56) -- (618.52,331.88) ;
\draw   (95.74,51.72) .. controls (95.71,48.19) and (93.93,46.45) .. (90.41,46.48) -- (90.41,46.48) .. controls (85.38,46.53) and (82.84,44.79) .. (82.81,41.27) .. controls (82.84,44.79) and (80.34,46.57) .. (75.31,46.62)(77.57,46.6) -- (75.31,46.62) .. controls (71.78,46.65) and (70.04,48.43) .. (70.07,51.95) ;
\draw   (124.76,52.07) .. controls (124.72,48.14) and (122.73,46.19) .. (118.8,46.23) -- (118.8,46.23) .. controls (113.18,46.28) and (110.35,44.34) .. (110.31,40.41) .. controls (110.35,44.34) and (107.56,46.34) .. (101.93,46.39)(104.46,46.37) -- (101.93,46.39) .. controls (98,46.43) and (96.05,48.42) .. (96.09,52.35) ;
\draw    (96.04,52.15) -- (96.59,76.45) ;
\draw    (124.97,51.64) -- (125.13,76.13) ;
\draw    (70.32,51.76) -- (70.49,76.25) ;
\draw    (189.67,50.7) -- (189.84,76.19) ;
\draw   (189.33,52.8) .. controls (189.31,48.13) and (186.97,45.81) .. (182.3,45.84) -- (167.15,45.92) .. controls (160.48,45.96) and (157.14,43.65) .. (157.11,38.98) .. controls (157.14,43.65) and (153.82,46) .. (147.15,46.03)(150.15,46.01) -- (132,46.11) .. controls (127.33,46.14) and (125.01,48.48) .. (125.04,53.15) ;
\draw    (325.83,49.98) -- (326,76.47) ;
\draw   (325.33,52.8) .. controls (325.35,48.13) and (323.03,45.79) .. (318.36,45.78) -- (267.73,45.61) .. controls (261.06,45.59) and (257.74,43.25) .. (257.76,38.58) .. controls (257.74,43.25) and (254.4,45.57) .. (247.73,45.54)(250.73,45.55) -- (197.11,45.38) .. controls (192.44,45.36) and (190.1,47.68) .. (190.09,52.35) ;
\draw    (617.18,50.07) -- (617.35,76.56) ;
\draw   (617.16,52.09) .. controls (617.15,47.42) and (614.82,45.09) .. (610.15,45.1) -- (481.62,45.21) .. controls (474.95,45.22) and (471.62,42.89) .. (471.61,38.22) .. controls (471.62,42.89) and (468.29,45.22) .. (461.62,45.23)(464.62,45.23) -- (333.08,45.34) .. controls (328.41,45.35) and (326.08,47.68) .. (326.09,52.35) ;

\draw (158.65,163.32) node    {$T_1( e_{\xi _{i_{3}} ,\eta _{i_{2}}})$};
\draw (225.07,263.31) node    {$T_1( e_{\xi _{i_{4}} ,\eta _{i_{2}}})$};
\draw (290.07,264.31) node    {$T_1( e_{\xi _{i_{4}} ,\eta _{i_{3}}})$};
\draw (385.07,478.31) node    {$T_1( e_{\xi _{i_{5}} ,\eta _{i_{2}}})$};
\draw (470.34,478.06) node    {$T_1( e_{\xi _{i_{5}} ,\eta _{i_{3}}})$};
\draw (559.07,478.31) node    {$T_1( e_{\xi _{i_{5}} ,\eta _{i_{4}}})$};
\draw (113.36,149.44) node  [font=\normalsize]  {\scalebox{0.7}{$x_{ _{(3,1),2}}$}};
\draw (637.88,642.79) node  [font=\huge,rotate=-45]  {$\cdots $};
\draw (112.36,176.61) node  [font=\normalsize]  {\scalebox{0.7}{$x_{_{( 3,2) ,2}}$}};
\draw (111.06,246.42) node  [font=\normalsize]  {\scalebox{0.7}{$x_{_{( 4,2) ,2}}$}};
\draw (156.06,290.42) node  [font=\normalsize]  {\scalebox{0.7}{$x_{_{( 4,3) ,3}}$}};
\draw (255.06,533.42) node  [font=\normalsize]  {\scalebox{0.7}{$x_{_{( 5,4) ,4}}$}};
\draw (157.32,436.57) node  [font=\normalsize]  {\scalebox{0.7}{$x_{_{( 5,3) ,3}}$}};
\draw (111.06,390.42) node  [font=\normalsize]  {\scalebox{0.7}{$x_{_{( 5,2) ,2}}$}};
\draw (160.71,217.87) node  [font=\normalsize]  {\scalebox{0.7}{$x_{_{( 4,1) ,3}}$}};
\draw (112.06,219.42) node  [font=\normalsize]  {\scalebox{0.7}{$x_{_{( 4,1) ,2}}$}};
\draw (168.56,246.21) node  [font=\normalsize]  {\scalebox{0.7}{$x_{_{( 4,2) ,3}}$}};
\draw (84.53,177.76) node  [font=\normalsize,xslant=-0.06]  {\scalebox{0.7}{$x_{_{( 3,2) ,1}}$}};
\draw (83.53,246.76) node  [font=\normalsize]  {\scalebox{0.7}{$x_{_{( 4,2) ,1}}$}};
\draw (84.43,295.13) node  [font=\normalsize]  {\scalebox{0.7}{$x_{_{( 4,3) ,1}}$}};
\draw (111.31,302.41) node  [font=\normalsize]  {\scalebox{0.7}{$x_{_{( 4,3) ,2}}$}};
\draw (113.06,363.42) node  [font=\normalsize]  {\scalebox{0.7}{$x_{_{( 5,1) ,2}}$}};
\draw (161.06,363.42) node  [font=\normalsize]  {\scalebox{0.7}{$x_{_{( 5,1) ,3}}$}};
\draw (167.99,390.75) node  [font=\normalsize]  {\scalebox{0.7}{$x_{_{( 5,2) ,3}}$}};
\draw (261.56,391.21) node  [font=\normalsize]  {\scalebox{0.7}{$x_{_{( 5,2) ,4}}$}};
\draw (254.71,362.87) node  [font=\normalsize]  {\scalebox{0.7}{$x_{_{( 5,1) ,4}}$}};
\draw (289.56,436.21) node  [font=\normalsize]  {\scalebox{0.7}{$x_{_{( 5,3) ,4}}$}};
\draw (111.31,447.41) node  [font=\normalsize]  {\scalebox{0.7}{$x_{_{( 5,3) ,2}}$}};
\draw (83.06,440.42) node  [font=\normalsize]  {\scalebox{0.7}{$x_{_{( 5,3) ,1}}$}};
\draw (83.53,391.76) node  [font=\normalsize]  {\scalebox{0.7}{$x_{_{( 5,2) ,1}}$}};
\draw (85.06,533.42) node  [font=\normalsize]  {\scalebox{0.7}{$x_{_{( 5,4) ,1}}$}};
\draw (112.06,540.42) node  [font=\normalsize]  {\scalebox{0.7}{$x_{_{( 5,4) ,2}}$}};
\draw (157.78,568.46) node  [font=\normalsize]  {\scalebox{0.7}{$x_{_{( 5,4) ,3}}$}};
\draw (143.91,119.1) node  [font=\normalsize]  {\scalebox{0.7}{$y_{_{2,( 1,3)}}$}};
\draw (171.36,116.61) node  [font=\normalsize]  {\scalebox{0.7}{$y_{_{2,( 2,3)}}$}};
\draw (239.91,118.1) node  [font=\normalsize]  {\scalebox{0.7}{$y_{_{2,( 2,4)}}$}};
\draw (287.32,163.57) node  [font=\normalsize]  {\scalebox{0.7}{$y_{_{3,( 3,4)}}$}};
\draw (385.36,117.61) node  [font=\normalsize]  {\scalebox{0.7}{$y_{_{2,( 2,5)}}$}};
\draw (432.06,164.12) node  [font=\normalsize]  {\scalebox{0.7}{$y_{_{3,( 3,5)}}$}};
\draw (531.06,261.12) node  [font=\normalsize]  {\scalebox{0.7}{$y_{_{4,( 4,5)}}$}};
\draw (172.91,90.1) node  [font=\normalsize]  {\scalebox{0.7}{$y_{_{1,( 2,3)}}$}};
\draw (241.79,89.75) node  [font=\normalsize]  {\scalebox{0.7}{$y_{_{1,( 2,4)}}$}};
\draw (290.56,89.52) node  [font=\normalsize]  {\scalebox{0.7}{$y_{_{1,( 3,4)}}$}};
\draw (434.71,89.87) node  [font=\normalsize]  {\scalebox{0.7}{$y_{_{1,( 3,5)}}$}};
\draw (386.79,89.75) node  [font=\normalsize]  {\scalebox{0.7}{$y_{_{1,( 2,5)}}$}};
\draw (528.71,89.87) node  [font=\normalsize]  {\scalebox{0.7}{$y_{_{1,( 4,5)}}$}};
\draw (536.56,118.21) node  [font=\normalsize]  {\scalebox{0.7}{$y_{_{2,( 4,5)}}$}};
\draw (563.56,163.21) node  [font=\normalsize]  {\scalebox{0.7}{$y_{_{3,( 4,5)}}$}};
\draw (442.56,118.21) node  [font=\normalsize]  {\scalebox{0.7}{$y_{_{2,( 3,5)}}$}};
\draw (357.53,118.76) node  [font=\normalsize]  {\scalebox{0.7}{$y_{_{2,( 1,5)}}$}};
\draw (358.28,167.79) node  [font=\normalsize]  {\scalebox{0.7}{$y_{_{3,( 1,5)}}$}};
\draw (385.28,173.79) node  [font=\normalsize]  {\scalebox{0.7}{$y_{_{3,( 2,5)}}$}};
\draw (358.28,260.79) node  [font=\normalsize]  {\scalebox{0.7}{$y_{_{4,( 1,5)}}$}};
\draw (386.28,267.79) node  [font=\normalsize]  {\scalebox{0.7}{$y_{_{4,( 2,5)}}$}};
\draw (432.28,296.79) node  [font=\normalsize]  {\scalebox{0.7}{$y_{_{4,( 3,5)}}$}};
\draw (240.91,175.1) node  [font=\normalsize]  {\scalebox{0.7}{$y_{_{3,( 2,4)}}$}};
\draw (213.28,167.79) node  [font=\normalsize]  {\scalebox{0.7}{$y_{_{3,( 1,4)}}$}};
\draw (212.91,119.1) node  [font=\normalsize]  {\scalebox{0.7}{$y_{_{2,( 1,4)}}$}};
\draw (296.99,116.75) node  [font=\normalsize]  {\scalebox{0.7}{$y_{_{2,( 3,4)}}$}};
\draw (47.98,149.45) node    {$F_{3,1}$};
\draw (47.98,176.45) node    {$F_{3,2}$};
\draw (47.98,217.45) node    {$F_{4,1}$};
\draw (47.98,244.45) node    {$F_{4,2}$};
\draw (47.98,291.45) node    {$F_{4,3}$};
\draw (47.98,362.45) node    {$F_{5,1}$};
\draw (47.98,389.45) node    {$F_{5,2}$};
\draw (47.98,436.45) node    {$F_{5,3}$};
\draw (47.98,537.45) node    {$F_{5,4}$};
\draw (16.5,262.45) node    {$K'_{4}$};
\draw (16.5,162.45) node    {$K'_{3}$};
\draw (18.5,115.45) node    {$K'_{2}$};
\draw (19,88.45) node    {$K'_{1}$};
\draw (16.5,477.45) node    {$K'_{5}$};
\draw (142.98,56.45) node    {$G_{1,3}$};
\draw (171.98,56.45) node    {$G_{2,3}$};
\draw (284.98,55.45) node    {$G_{3,4}$};
\draw (211.98,56.45) node    {$G_{1,4}$};
\draw (240.98,56.45) node    {$G_{2,4}$};
\draw (427.98,54.45) node    {$G_{3,5}$};
\draw (355.98,55.45) node    {$G_{1,5}$};
\draw (384.98,55.45) node    {$G_{2,5}$};
\draw (529.98,54.45) node    {$G_{3,5}$};
\draw (157.47,31) node    {$L'_{3}$};
\draw (110.47,31) node    {$L'_{2}$};
\draw (82.47,31) node    {$L'_{1}$};
\draw (257.47,31) node    {$L'_{4}$};
\draw (471.47,31) node    {$L'_{4}$};

\draw (0,678.87) node [anchor=north west][inner sep=0.75pt]    {The operator $T_1$ (respectively, $T_2$, $T_3$, $T_4$) corresponds to the blue part  (respectively, green part, yellow part, red part).};

\end{tikzpicture}

}

\begin{itemize}
\item [(1)]
$K'_k=[\bigcup_{i=n_{k-1}+1}^{n_k}K_i]$ and $L'_k=[\bigcup_{i=n_{k-1}+1}^{n_k}L_i]$ for every $k\geq1$ (put $n_0=0$),
\item [(2)]
$\{F_{k,l}\}_{1\leq l<k}$ is a finite sequence of mutually orthogonal subspaces of $K'_k$ for every $k$,
\item [(3)]
$\{G_{l,k}\}_{1\leq l<k}$ is a finite sequence of mutually orthogonal subspaces of $L'_k$ for every $k$,
\item [(4)]
$\big\{x_{_{(k,l),m}}\big\}_{
\substack{ _{1\leq l<k<\infty}  \\ _{1\leq m<k<\infty}}}
\stackrel{\mbox{{\rm\tiny b.s}}}{\boxplus}\{F_{k,l}\}_{1\leq l<k<\infty}\otimes\{L'_m\}_{m=1}^\infty$ such that
$x_{_{(2,1),1}}=0$,
\begin{equation}\label{hatc}
\big\{x_{_{(k,l),m}}\big\}_{2\leq m=l<k<\infty}
\stackrel{\mbox{{\rm\tiny b.s}}}{\boxplus}\big\{F_{k,l}\big\}_{2\leq l<k<\infty}\otimes\big\{L'_m\big\}_{m=2}^\infty\curvearrowleft\hat{c}_l\;\;\;\;
\end{equation}
  are consistent\footnote{Here, $\hat{c}_l$ is an operator on a Hilbert space $\hat{H}$, and there are two sequences of isometries $\big\{\hat{u}_{k,l}:F_{k,l}\to\hat{H}\big\}_{2\leq l<k<\infty}$ and $\big\{\hat{v}_l:L'_m\to\hat{H}\big\}_{m=2}^\infty$ such that for all $2\leq m=l<k<\infty$,
$$\hat{u}_{k,m}x_{_{(k,l),m}}\hat{v}_m=\hat{u}_{k,l}x_{_{(k,l),l}}\hat{v}_l=\hat{c}_l.$$} for all $l\geq2$, 
\begin{equation}\label{hatb}
\big\{x_{_{(k,l),m}}\big\}_{1\leq m< l<k<\infty}\stackrel{\mbox{{\rm\tiny b.s}}}{\boxplus}\big\{F_{k,l}\big\}_{2\leq l<k<\infty}\otimes\big\{L'_m\big\}_{m=1}^\infty\curvearrowleft\hat{b}_m\;\;\;\;
\end{equation}
are consistent for all $m\geq1$,
\begin{equation}\label{hata}
\big\{x_{_{(k,m),l}}\big\}_{1\leq m< l<k<\infty}
\stackrel{\mbox{{\rm\tiny b.s}}}{\boxplus}\big\{F_{k,m}\big\}_{1\leq m<k-1<\infty}\otimes\big\{L'_l\big\}_{l=2}^\infty\curvearrowleft\hat{a}_m\;\;\;\;
\end{equation}
are consistent for all $m\geq1$,
and $\Vert\hat{b}_m\Vert_{\mathcal C_E}\leq\varepsilon_m\Vert\hat{b}_1\Vert_{\mathcal C_E}$ and $\Vert\hat{a}_m\Vert_{\mathcal C_E}\leq\varepsilon_m\Vert\hat{a}_1\Vert_{\mathcal C_E}$ for every $m\geq 2$,
\item [(5)]
$\big\{y_{_{m,(l,k)}}\big\}_{
\substack{ _{1\leq m<k<\infty}  \\ _{1\leq l<k<\infty} } }
\stackrel{\mbox{{\rm\tiny b.s}}}{\boxplus}\{K'_m\}_{m=1}^\infty\otimes\{G_{l,k}\}_{1\leq l<k<\infty}$ such that $y_{_{1,(1,2)}}=0$,
\begin{equation}\label{tildec}
\big\{y_{_{m,(l,k)}}\big\}_{2\leq m=l<k<\infty}
\stackrel{\mbox{{\rm\tiny b.s}}}{\boxplus}\big\{K'_m\big\}_{m=2}^\infty\otimes\big\{G_{l,k}\big\}_{2\leq l<k<\infty}\curvearrowleft\tilde{c}_l\;\;\;\;
\end{equation}
are consistent for all $l\geq2$,
\begin{equation}\label{tildeb}
\big\{y_{_{m,(l,k)}}\big\}_{1\leq m<l<k<\infty}
\stackrel{\mbox{{\rm\tiny b.s}}}{\boxplus}\big\{K'_m\big\}_{m=1}^\infty\otimes\big\{G_{l,k}\big\}_{2\leq l<k<\infty}\curvearrowleft\tilde{b}_m\;\;\;\;
\end{equation}
are consistent for all $m\geq1$,
\begin{equation}\label{tildea}
\big\{y_{_{l,(m,k)}}\big\}_{1\leq m<l<k<\infty}
\stackrel{\mbox{{\rm\tiny b.s}}}{\boxplus}\big\{K'_l\big\}_{l=2}^\infty\otimes\big\{G_{m,k}\big\}_{1\leq m<k-1<\infty}\curvearrowleft\tilde{a}_m\;\;\;\;
\end{equation}
are consistent for all $m\geq1$,
and $\Vert\tilde{b}_m\Vert_{\mathcal C_E}\leq\varepsilon_m\Vert\tilde{b}_1\Vert_{\mathcal C_E}$ and $\Vert\tilde{a}_m\Vert_{\mathcal C_E}\leq\varepsilon_m\Vert\tilde{a}_1\Vert_{\mathcal C_E}$ for every $m\geq 2$.

\end{itemize}
\end{theorem}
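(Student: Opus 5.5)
The plan is a gliding-hump construction combined with repeated use of Lemma \ref{3b} and Corollary \ref{3a}, followed by an application of Theorem \ref{2d}.

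\textbf{Truncation and choice of blocks.} Since $E$ is separable, finite-rank operators are dense in $\mathcal C_E$. Every $T(e_{\xi_i,\eta_j})$ can therefore be approximated by $p_{K}T(e_{\xi_i,\eta_j})p_{L}$ with $K,L$ finite unions of the $K_i$, $L_i$. In addition, because $c_0\not\hookrightarrow E$, we have $\|p_{[\bigcup_{i>n}K_i]}\,x\| \to 0$ and $\|x\,p_{[\bigcup_{i>n}L_i]}\|\to 0$ for each fixed $x$. We choose $i_k$ and $n_k$ inductively. At stage $k$ we take $n_k$ so large that, for all previously chosen pairs $(k',l')$ with $k'<k$, the images $T(e_{\xi_{i_{k'}},\eta_{i_{l'}}})$ are within tiny errors of their compressions to $K'_1\oplus\dots\oplus K'_{k-1}$ and $L'_1\oplus\dots\oplus L'_{k-1}$.

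Each new image $T(e_{\xi_{i_k},\eta_{i_l}})$ is then split by the orthogonal blocks $K'_m\times L'_{m'}$. The diagonal block $(k,k)$ is kept and defines $T_1$ in \eqref{T1}. The pieces in column block $L'_m$, $m<k$, rows in $K'_k$, give the $x$'s. The pieces in row block $K'_m$, $m<k$, columns in $L'_k$, give the $y$'s. The remaining off-diagonal pieces in rows/columns $\le k-1$ are made small by choosing $i_k$ appropriately. For this we use that, for fixed $l$, the sequence $\{T(e_{\xi_i,\eta_{i_l}})\}_i$ is a weakly null row sequence: it is bounded, and $\{e_{\xi_i,\eta_j}\}_i$ is equivalent to a basis of a space with no $c_0$. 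We pass to $i_k$ so that its compressions to the previously fixed finite-dimensional blocks are negligible. The dual argument, with upper-triangular bookkeeping as in Remark \ref{3b'}, handles the $y$'s.

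\textbf{Making the pieces generated by single operators.} Inside $K'_k$ the rows of the $x_{(k,l),m}$ for different $l$ must occupy mutually orthogonal subspaces $F_{k,l}$, and similarly $G_{l,k}\subset L'_k$. We obtain this by a further gliding hump over $l$ within the finite-dimensional $K'_k$. Once that is done, for each fixed $m$ the family $\{x_{(k,l),m}\}_{k,l}$ is a block sequence associated with $\{F_{k,l}\}\otimes L'_m$ with $L'_m$ finite-dimensional. Lemma \ref{3b}, applied through a diagonal induction on $m$ together with passage to subsequences of $\{i_k\}$, replaces these pieces by ones that are consistent and generated by single operators, giving $\hat c_l$, $\hat b_m$, $\hat a_m$, up to errors $\varepsilon_{k,j}$. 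The $y$'s are treated symmetrically, giving $\tilde c_l$, $\tilde b_m$, $\tilde a_m$.

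\textbf{Decay of $\hat b_m$, $\hat a_m$.} This is the step I expect to be hardest. The required bound $\|\hat b_m\|\le\varepsilon_m\|\hat b_1\|$ for $m\ge 2$ does not come from compactness. Consider, for fixed $l$, the row $\sum_{m<l}x_{(k,l),m}$ of the images. Since $T$ is bounded and $c_0\not\hookrightarrow E$, sums over $m$ of pieces generated by $\hat b_m$ with disjoint column blocks behave like $\ell_2\oplus E$-type sums (Theorem \ref{hi}), so $\sum_m\|\hat b_m\|^{s}<\infty$ for some $s$. Relabelling by thinning the index set, and passing to a subsequence of $\{i_k\}$, then makes $\|\hat b_m\|$ decay as fast as desired relative to $\hat b_1$. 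I would first try to prove this summability by testing $T$ on normalized averages $\sum_{l}e_{\xi_{i_k},\eta_{i_l}}$ of diagonal-type blocks, as in Remark \ref{rembl}.

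\textbf{Assembling the perturbation.} The Schauder decomposition $\{[e_{\xi_{i_k},\eta_{i_l}}]_{k>l}\}_l$ consists of column blocks. On each block the difference between $T$ and $T_1+T_2+T_3+T_4$ is a sum of summable errors chosen above. These errors can be made $r$-summable below $\varepsilon_l/(4^{1/r}K_c)$, by \eqref{sum-infty}. Theorem \ref{2d} then yields bounded $T_1+\dots+T_4$ as a perturbation of $T$ for $\{\varepsilon_l\}$. Boundedness of each $T_i$ separately follows from the block structure, since $T_1$ is a block-diagonal compression of a bounded operator, and by Proposition \ref{ie} and Remark \ref{rem}(2) applied to the generated forms of the others.
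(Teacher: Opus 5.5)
Your opening reduction (putting each image in hook form by weak nullness and truncation) matches the paper's. The argument breaks at your second step. You take the component of $T(e_{\xi_{i_k},\eta_{i_l}})$ in $K'_k\times L'_m$ to be $x_{(k,l),m}$, and you propose to make the row spaces $F_{k,l}\subset K'_k$ mutually orthogonal in $l$ by a gliding hump inside the finite-dimensional space $K'_k$.

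That cannot work in general. Take the formal inclusion $T(e_{\xi_i,\eta_j})=e_{\xi_i,\eta_j}$ with $K_i=[\xi_i]$, $L_i=[\eta_i]$. Then every $T(e_{\xi_{i_k},\eta_{i_l}})$, $l<k$, lies in the same one-dimensional row space $[\xi_{i_k}]$. In the theorem's decomposition this operator is carried by $T_4$, that is, by pieces $x_{(k,m),l}$ with \emph{old} rows $F_{k,m}$, $m<l$, and \emph{new} column block $L'_l$. Your scheme never produces such pieces, and you never say where $\hat a_m$ comes from. For the same reason Lemma \ref{3b} cannot give you $\hat b_m$ and $\hat a_m$. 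It makes each column sequence consistent in $k$, but the statement needs a single operator $\hat b_m$ (resp.\ $\hat a_m$) generating the pieces for all $l>m$ and all $k>l$ at once. Nothing in one application of Lemma \ref{3b} makes the outputs of different columns $l$ share a generator.

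The missing idea is a second gliding hump, performed on the generating operators rather than on $T$.
\begin{itemize}
\item The paper first uses Corollary \ref{3e'} and Lemma \ref{3b} to write $T(e_{\xi_{i_s},\eta_{i_t}})=x_{s,t}+y_{t,s}+z_{s,t}$. Here $x_{s,t}=u_s^\ast a_t v$ is supported in $K'_s\times(L'_1\oplus\cdots\oplus L'_t)$, and $y_{t,s}$ is generated by $b_t$ symmetrically.
\item It proves the $x{+}y$ part $S_2$ bounded by a shift argument: consistency plus a single contractive compression $P_{K,L}$. Then $T_1=T-S_2$ is bounded. Your justification of $T_1$ as a block-diagonal compression of $T$ fails here, because block-diagonal projections need not be bounded on a quasi-Banach $\mathcal C_E$.
\item The estimate $\|\sum_t\alpha_ta_t\|\le\|S_2\|\,\|\alpha\|_{\ell_2}$ shows that $\{a_t\}$ and $\{b_t\}$ are weakly null.
\item Theorem \ref{3e} is then applied to $\{a_t\}$ with the nested row spaces $F_t={\rm im}(a_1)+\cdots+{\rm im}(a_t)$. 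The resulting hook decomposition $a_{t_l}\approx\sum_{m<l}(a'_{l,m}+a'_{m,l})+a'_{l,l}$ is what produces $F_{k,l}=u_{t_k}^\ast(F_{t_{l-1}}^\perp\cap F_{t_l})$. It also produces the split into $T_3$ (new rows, old columns, generated by $c_m=\hat b_m$), $T_4$ (old rows, new column, generated by $c'_m=\hat a_m$) and $T_2$ (new rows, new column).
\item The decay $\|c_m\|\le\delta_m\|c_1\|$ comes from Lemma \ref{3d}, which is where $c_0\not\hookrightarrow E$ is used, by \emph{grouping} consecutive column blocks. It does not come from an $\ell^s$-summability of the norms; Theorem \ref{hi} gives no such thing.
\item Finally, $T_3$ and $T_4$ need their own boundedness proofs. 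For $T_3$ the paper uses \eqref{ineq tri S3}, which yields $\|\cdot\|_{\mathcal C_{\ell_2}}\lesssim\|\cdot\|_{\mathcal C_E}$ whenever $c_1$ or $d_1$ is nonzero. For $T_4$ it uses a shift-and-compress estimate. Neither follows from the generated forms alone.
\end{itemize}
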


\begin{remark}
Let $\{x_{i,j}\}_{1\leq j\leq i<\infty}$ be a  triangle sequence of a (quasi-)Banach space $X$ and $M$ be a positive number such that
\begin{itemize}
\item [(a)]
$\big\{[x_{i,j}]_{i=j}^\infty\big\}_{j=1}^\infty$ is a Schauder decomposition of $X$,
\item [(b)]
$\{x_{i,j}\}_{i=j}^\infty$ is $M$-equivalent to the unit basis of $\ell_2$ for every $j$, and
\item [(c)]
$\big\Vert\sum_{j=1}^i \alpha_jx_{i,j}\big\Vert\leq M\big(\sum_{j=1}^i\vert\alpha_j\vert^2\big)^{1/2}$  for every $i$ and finite sequence $\{\alpha_j\}_{j=1}^i$ of scalars.
\end{itemize}
Assume that $T$ is a  bounded operator from $X$ into $\mathcal C_E$. The conclusion of Theorem~\ref{thma} remains valid. 
\end{remark}

\begin{remark}\label{tftoce}
Let $F$ be another separable 
quasi-Banach symmetric sequence space. 
If we consider 
a bounded operator $T:\mathcal T_{F,\{\xi_i,\eta_i\}_{i=1}^\infty}\to \mathcal C_E$ such that $p_{_{[\bigcup_{i=1}^\infty K_i]}}T(\cdot)p_{_{[\bigcup_{i=1}^\infty L_i]}}=T$, then 
\begin{itemize}
\item [(1)]
The conclusion of  the above theorem remains true. 
\item [(2)]
If $\{K_i\}_{i=1}^\infty$ and $\{L_i\}_{i=1}^\infty$ are chosen such that $p_{K_i}({\rm im}(T))p_{L_j}=\{0\}$ for every $1\leq i<j<\infty$ (for example,  $K_i=[\xi_i]$, $L_i=[\eta_i]$ and im$(T)\subset[e_{\xi_i,\eta_j}]_{1\leq j\leq i<\infty}$), then those ``$y_{m,(l,k)}$" in \eqref{T2}, \eqref{T3} and \eqref{T4}  are $0$.
\end{itemize}

For convenience, we only consider the case when $E=F$ below.
\end{remark}

\begin{proof}
For convenience, we denote  $e_{i,j}: =(\cdot|\eta_j)\xi_i$ for all $i$ and $j$. Let $\{\delta_j\}_{j=1}^\infty$ and $\{\delta_{k,j}\}_{k,j=1}^{\infty}$ be fixed  sequences of small enough positive numbers. 

Since $\{ e_{i,j} \}_{i=j}^\infty$ is isometric to the standard basis of $\ell_2$ for each $j$, it follows that $\{T(e_{i,j})\}_{i=j}^\infty$ is weakly null for every $j$. By standard perturbation arguments, there are two increasing sequences $\{i_k\}_{k=1}^\infty$ and $\{n_k\}_{k=1}^\infty$ of positive integers such that
\[
\left\Vert T(e_{i_{k+1},j})-\Big(p_{_{[\bigcup_{l=1}^{n_{k+1}}K_l]}}T(e_{i_{k+1},j})p_{_{[\bigcup_{l=1}^{n_{k+1}}L_l]}}-p_{_{[\bigcup_{l=1}^{n_k}K_l]}}T(e_{i_{k+1},j})p_{_{[\bigcup_{l=1}^{n_k}L_l]}}\Big)\right\Vert_{\mathcal C_E}\leq\delta_{k,j},  
\]
for $j=1,2\dots,i_k$ and 
for every $k$.
Therefore, without loss of generality, we may assume that
\begin{align}\label{s2}
T(e_{i_{k+1},j})=p_{_{[\bigcup_{l=1}^{n_{k+1}}K_l]}}T(e_{i_{k+1},j})p_{_{[\bigcup_{l=1}^{n_{k+1}}L_l]}}-p_{_{[\bigcup_{l=1}^{n_k}K_l]}}T(e_{i_{k+1},j})p_{_{[\bigcup_{l=1}^{n_k}L_l]}},
\end{align}
$j=1,2,\dots,i_k$, for every $k$. We put $K'_k=[\bigcup_{l=n_{k-1}+1}^{n_k}K_l]$ and $L'_k=[\bigcup_{l=n_{k-1}+1}^{n_k}L_l]$ for every $k\geq1$ (put $n_0=0$).

The proof of Theorem \ref{thma} is divided into the following 3 lemmas.

\begin{lemma}\label{thma1}
Under the assumption of Theorem \ref{thma}, there exist an increasing sequence $\{i_s\}_{s=1}^\infty$ of positive integers, 
a bounded operator $T_1:[e_{i_s,i_t}]_{1\leq t<s<\infty}\to \mathcal C_E$ with $$T_1(e_{i_s,i_t})=p_{_{K'_s}}T(e_{i_s,i_t})p_{_{L'_s}}$$ for every $1\leq t<s<\infty$ and a bounded operator $S_2:[e_{i_s,i_t}]_{1\leq t<s<\infty}\to \mathcal C_E$  (see  \eqref{s1} below) such that $T_1+S_2$ is a small perturbation of $T|_{[e_{i_s,i_t}]_{1\leq t<s<\infty}}$ associated with the  Schauder decomposition $\big\{[e_{i_s,i_t}]_{s=t+1}^\infty\big\}_{t=1}^\infty$.
\end{lemma}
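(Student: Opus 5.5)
The plan is to exploit the reduction \eqref{s2}. After that reduction, each $T(e_{\xi_{i_{s}},\eta_{i_t}})$ (for $t<s$, written $T(e_{i_s,i_t})$ in the notation of the proof) is supported in the ``annulus''
\[
[\textstyle\bigcup_{l\le n_s}K_l]\times[\bigcup_{l\le n_s}L_l]\ \setminus\ [\bigcup_{l\le n_{s-1}}K_l]\times[\bigcup_{l\le n_{s-1}}L_l].
\]
We first pass to the subsequence of $\{i_k\}$ so that this holds along the indices used. Each such annulus splits into three pieces: the diagonal block $K'_s\times L'_s$, the vertical piece $K'_s\times[\bigcup_{l\le n_{s-1}}L_l]$, and the horizontal piece $[\bigcup_{l\le n_{s-1}}K_l]\times L'_s$. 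On matrix units we therefore define
\[
T_1(e_{i_s,i_t})=p_{_{K'_s}}T(e_{i_s,i_t})p_{_{L'_s}},
\]
and we let $S_2(e_{i_s,i_t})$ be the sum of the vertical and horizontal pieces. This gives $T_1+S_2=T$ on the span of the $e_{i_s,i_t}$, up to the perturbations $\delta_{k,j}$ discarded in \eqref{s2}. The perturbation claim with respect to the decomposition $\big\{[e_{i_s,i_t}]_{s=t+1}^\infty\big\}_{t=1}^\infty$ then follows from Theorem \ref{2d} and Lemma \ref{sdper}, once the $\delta_{k,j}$ are chosen summable in the $r$-th power with weight $K_c$. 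The real content is that $T_1$ and $S_2$ extend to \emph{bounded} operators; it suffices to prove this for one of them, since the other is then $T$ minus it.

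A key observation is that, for $x$ in the span, the annulus pieces of different rows do not interact in the diagonal blocks. Annuli $r>s$ vanish on $[\bigcup_{l\le n_s}K_l]\times[\bigcup_{l\le n_s}L_l]$, and annuli $r<s$ live inside $[\bigcup_{l\le n_{s-1}}K_l]\times[\bigcup_{l\le n_{s-1}}L_l]$. Hence
\[
T_1(x)=\sum_s p_{_{K'_s}}T(x)p_{_{L'_s}}=P_{\{K'_s,L'_s\}_{s}}(T(x)).
\]
When $E$ is Banach, \eqref{bs-diag-proj} gives $\|T_1\|\le\|T\|$ immediately, and $S_2=T-T_1$ is bounded too.

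The main obstacle is the genuinely quasi-Banach case, where $P_{\{K'_s,L'_s\}}$ may be unbounded (see \eqref{diag-proj-unb}). There I would not try to bound the block-diagonal compression globally. Instead I would build $T_1$ column by column through Theorem \ref{2d}. On each column $X_t=[e_{i_s,i_t}]_{s>t}$, which is isometric to $\ell_2$, I first apply Lemma \ref{3b} (and Corollary \ref{3a}) with a diagonal extraction. This makes the three pieces of $\{T(e_{i_s,i_t})\}_s$ block sequences: the vertical pieces generated by a single operator over $\{K'_s\}\otimes[\bigcup_{l\le n_{t'}}L_l]$ with a fixed finite-dimensional right support, the horizontal pieces generated similarly, and the diagonal pieces consistent. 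All of this holds up to errors $\varepsilon_{s,t}$ with $\sum\varepsilon_{s,t}^r$ as small as we like. Theorem \ref{hi} together with Remark \ref{rem}(2) then shows that on each column each piece is isometrically a multiple of the $\ell_2$ basis, and that the norm of each piece is dominated by the norm of $T(e_{i_s,i_t})$, because the pieces have disjoint left or right supports. So the restrictions $T_1|_{X_t}$ are uniformly bounded by a constant multiple of $\|T\|$.

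It then remains to glue the columns together with a uniform bound. Here I expect the difficulty: Theorem \ref{2d} only controls perturbations, not sums of unrelated column operators. I would handle this by showing that $S_2$ is bounded. After the extraction, the vertical part of $S_2$ on the whole triangle is, by consistency and Theorem \ref{3a'}, a block sequence generated by one operator $a$ (with small tails). This makes it isometric to a tensor $a\otimes(\text{triangular matrix})$ that is controlled by $T$ itself. The horizontal part is treated symmetrically. With $S_2$ bounded, $T_1=T-S_2$ is bounded, and we are done.
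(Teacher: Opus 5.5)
Your Banach-case argument is correct and is the paper's footnote: after \eqref{s2}, $T_1=P_{\{K'_s,L'_s\}_{s}}T$ on the span, which is bounded by \eqref{bs-diag-proj}. The gap is the quasi-Banach case, which is the real content of the lemma, and your last paragraph does not close it.

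First, the vertical pieces over the whole triangle are not generated by one operator. Lemma \ref{3b} with Remark \ref{3b'} gives common isometries $u_s$ and $v$, but the generators $a_t$ in \eqref{at} depend on the column; in general they differ, since they form a weakly null sequence. Theorem \ref{3a'} only concerns block combinations of a single generated array, so it cannot merge them.

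Second, and decisively, ``controlled by $T$ itself'' is exactly the step that requires extracting the vertical part from $T(x)$. In a quasi-Banach ideal neither the block-diagonal nor the triangular projection is available. The paper's device is a row shift. Since $x_{s,t}=u_s^\ast a_tv$ with $v$ independent of $s$, one has $\Vert\sum_{1\le t<s\le n+1}\alpha_{s,t}x_{s,t}\Vert_{\mathcal C_E}=\Vert\sum\alpha_{s,t}x_{n-1+s,t}\Vert_{\mathcal C_E}$. For the shifted array, the $x$-parts are precisely $P_{[\bigcup_{l=1}^nK'_{n+l}],[\bigcup_{l=1}^nL'_l]}T\big(\sum\alpha_{s,t}e_{i_{n-1+s},i_t}\big)$. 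This holds because the $y$- and $z$-parts are killed by this single corner compression, which is contractive in every symmetric quasi-Banach ideal. The shift does not change $\Vert\sum\alpha_{s,t}e_{i_s,i_t}\Vert_{\mathcal C_E}$, so $\Vert\sum\alpha_{s,t}x_{s,t}\Vert_{\mathcal C_E}\le\Vert T\Vert\,\Vert\sum\alpha_{s,t}e_{i_s,i_t}\Vert_{\mathcal C_E}$. The same holds for the $y$'s, so $S_2$ is bounded and $T_1=T-S_2$.

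This trick needs the vertical piece of $T(e_{i_s,i_t})$ to lie in $K'_s\times[\bigcup_{l\le t}L'_l]$, as in \eqref{x-y-z-st}. That is, its right support must be independent of $s$ and contained in the first $t$ column blocks. Your raw annulus pieces have right support $[\bigcup_{l\le n_{s-1}}L_l]$, which grows with $s$. After a shift they would meet the diagonal blocks of other rows. Lemma \ref{3b} and Corollary \ref{3a} cannot give you the ``fixed finite-dimensional right support'' you claim, since they presuppose it. Theorem \ref{3c} copes with growing supports, but it still leaves the vertical piece of each row spread over all earlier column blocks.

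Collapsing these pieces onto a fixed first block is exactly what Corollary \ref{3e'} does, via Theorem \ref{3e} and Lemma \ref{3d}. This uses $c_0\not\hookrightarrow E$, a hypothesis your proposal never invokes. Along a subsequence, each column is, in the strong sense \eqref{st1}, of the form $a_i+b_i+c_i$, where the $a_i$ are generated by one operator on $\{K'_i\}\otimes L'_1$ and the intermediate pieces are negligible. One then diagonalizes over the columns and applies Lemma \ref{3b} to reach \eqref{at} and \eqref{bt}.

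For the same reason, $S_2$ must be the structured operator \eqref{s1}, not the raw off-diagonal part of the annulus. That structure is also what Lemma \ref{thma2} uses afterwards.
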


\begin{proof}
By induction and by Corollary \ref{3e'},   there exist
\begin{itemize}
\item [(1)]
increasing sequences $\{k^{_{(j)}}_s\}_{s=1}^\infty$ of positive integers for $j=1,2,\dots$ and $k^{_{(0)}}_s=s$, such that  $\{k^{_{(j)}}_s\}_{s=1}^\infty$ is a subsequence of $\{k^{_{(j-1)}}_s\}_{s=1}^\infty$ with $k^{_{(j)}}_1>k^{_{(j-1)}}_1$,
\item [(2)]
$R_{j}\in\mathcal B\bigg(\Big[e_{i_{k^{(j)}_s},i_{k^{_{(j-1)}}_2}}\Big]_{s=2}^\infty,\mathcal C_E\bigg)$ for every $j\geq1$,   
\item [(3)]
$x^{(j)}_s,y^{(j)}_s,z^{(j)}_s\in\mathcal C_E$ with $x^{(j)}_s=p_{_{K^{_{(j)}}_s}}x^{(j)}_sp_{_{L^{(j)}_{1}}}$, $y^{(j)}_{s}=p_{_{K^{(j)}_1}}y^{(j)}_{j}p_{_{L^{(j)}_{s}}}$ and $z^{(j)}_s=p_{_{K^{(j)}_s}}T\big(e_{i_{k^{(j)}_s},i_{k^{(j-1)}_2}}\big)p_{_{L^{(j)}_{s}}}$ for every $s\geq2$ and $j\geq1$, where $K^{(j)}_s=\Big[\bigcup_{i=n_{k^{(j)}_{s-1}}+1}^{n_{k^{(j)}_s}}K_i\Big]$ and $L^{(j)}_s=\Big[\bigcup_{i=n_{k^{(j)}_{s-1}}+1}^{n_{k^{(j)}_s}}L_i\Big]$ for every $s\geq1$ and $j\geq1$ (put $k^{(j)}_0=0$)
\end{itemize}
such that 
\begin{align}\label{Rj=}
R_{j}\big(e_{i_{k^{(j)}_s},i_{k^{(j-1)}_2}}\big)=x^{(j)}_{s}+y^{(j)}_s+z^{(j)}_s,\;\;\;\;\;\;s=2,3,\dots,
\end{align}
and $\left\Vert T|_{\big[e_{i_{k^{(j)}_s},i_{k^{(j-1)}_2}}\big]_{s=2}^\infty}-R_{j}\right\Vert\leq\delta_{j}$ for every $j\geq1$ 
\Big(note that if $\Big\{T\big(e_{i_{k^{(j-1)}_s},i_{k^{(j-1)}_2}}\big)\Big\}_{s=2}^\infty$ is semi-normalized, then,  by Corollary \ref{3e'}, for every  sequence $\{t_s\}_{s=2}^\infty$ with finitely nonzero scalars, we may choose a suitable sequence  $\{k^{_{(j)}}_s\}_{s=1}^\infty$ such that  
\begin{align*}
\left\Vert(T-R_j)\bigg(\sum_{s=2}^\infty t_se_{i_{k^{(j)}_s},i_{k^{(j-1)}_2}}\bigg)\right\Vert_{\mathcal C_E}&\leq\frac{\delta_j}{2^{\frac{1}{r}-1}(\Vert T\Vert+1)}\left\Vert T\bigg(\sum_{s=2}^\infty t_se_{i_{k^{(j)}_s},i_{k^{(j-1)}_2}}\bigg)\right\Vert_{\mathcal C_E}\\
&\leq\frac{\delta_j}{2^{\frac{1}{r}-1}}\left\Vert\sum_{s=2}^\infty t_se_{i_{k^{(j)}_s},i_{k^{(j-1)}_2}}\right\Vert_{\mathcal C_E}.
\end{align*}
Otherwise,  we may choose suitable  $\{k^{_{(j)}}_s\}_{s=1}^\infty$ such that $\Big\{T\big(e_{i_{k^{(j)}_s},i_{k^{(j-1)}_2}}\big)\Big\}_{s=2}^\infty$ goes to $0$   in $\cC_E$ (as fast as we need). Hence,  we may assume that  $x^{(j)}_s$'s and $y^{(j)}_s$'s are  $0$.\Big). 


Next, we consider the sequence $\{k^{(s)}_2\}_{s=1}^\infty$. For any $1\leq t< s$, 
there is a  unique positive integer $s'$ such that $k^{(t+1)}_{s'}=k^{(s)}_2$ (obviously, $s'\geq2$), and thus
\[
R_{t+1}\Big(e_{i_{k^{(s)}_2},i_{k^{(t)}_2}}\Big)=R_{t+1}\Big(e_{i_{k^{(t+1)}_{s'}},i_{k^{(t)}_2}}\Big)\stackrel{\eqref{Rj=}}{=}x^{(t+1)}_{s'}+y^{(t+1)}_{s'}+z^{(t+1)}_{s'},
\]
and we write
\[
x_{s,t}=x^{(t+1)}_{s'},~\qquad y_{t,s}=y^{(t+1)}_{s'}\qquad  \mbox{and}\qquad z_{s,t}=z^{(t+1)}_{s'}.
\]
Since $k^{(j)}_1>k^{(j-1)}_1$ for every $j$, it follows that $k^{(s-1)}_2\leq k^{(s)}_1\leq k^{(t+1)}_{s'-1}<k^{(t+1)}_{s'}=k^{(s)}_2\leq k^{(s+1)}_1$. For  convenience, 
without loss of generality, we still write $\{i_{k^{(s)}_2}\}_{s=1}^\infty$ as $\{i_s\}_{s=1}^\infty$ and $\{n_{i_{k^{(s+1)}_1}}\}_{s=0}^\infty$ as $\{n_s\}_{s=0}^\infty$ respectively. 
We have 
\begin{equation}\label{x-y-z-st}
x_{s,t}=p_{_{K'_s}}x_{s,t}p_{_{[\bigcup_{l=1}^{t}L'_l]}}\;\;\;\;\;y_{t,s}=p_{_{[\bigcup_{l=1}^{t}K'_l]}}y_{t,s}p_{_{L'_s}}\;\;\;\;\;z_{s,t}=p_{_{K'_s}}T\big(e_{i_s,i_t}\big)p_{_{L'_s}}.
\end{equation}
By Theorem \ref{2d}, without loss of generality, we may assume that
\begin{equation}\label{T=x+y+z}
T(e_{i_s,i_t})=x_{s,t}+y_{t,s}+z_{s,t},
\end{equation}
for every $1\leq t<s<\infty$.

By Theorem \ref{3b} and Remark \ref{3b'},  passing to a subsequence of $\{i_s\}_{s=1}^\infty$ if necessary,   we may assume, without loss of generality, that 
\begin{itemize}
\item [(1)]
there is a sequence of isometries $\big\{u_s:K'_s\to H'\big\}_{s=2}^\infty$, an isometry $v:L'\to H'$, where $L'=[\bigcup_{l=1}^\infty L'_l]$, and a sequence $\{a_t\}_{t=1}^\infty$ of in $\mathcal C_E(H')$ such that
\begin{align}\label{at}
\{x_{s,t}\}_{s=t+1}^\infty\stackrel{\mbox{{\rm\tiny b.s}}}{\boxplus}\{u_s:K'_s\to H'\}_{s=2}^\infty\otimes\;\{v: L'\to H'\}\curvearrowleft a_t\;
\end{align}
for every $t\geq1$, and
\item [(2)]
there is an isometry $u':K'\to H''$, where $K'=[\bigcup_{l=1}^\infty K'_l]$, a sequence of isometries $\big\{v'_s:L'_s\to H''\big\}_{s=2}^\infty$, and a sequence $\{b_t\}_{t=1}^\infty$ of in $\mathcal C_E(H'')$ such that
\begin{align}\label{bt}
\{y_{t,s}\}_{s=t+1}^\infty\stackrel{\mbox{{\rm\tiny b.s}}}{\boxplus}\{u':K'\to H''\}\otimes\;\{v'_s: L'_{s}\to H''\}_{s=2}^\infty\curvearrowleft b_t\;
\end{align}
for every $t\geq1$.
\end{itemize}

The positions of $x_{s,t}$, $y_{s,t}$ and $z_{s,t}$'s are demonstrated as follows.

\scalebox{0.8}{

\tikzset{every picture/.style={line width=0.75pt}} 

\begin{tikzpicture}[x=0.75pt,y=0.75pt,yscale=-1,xscale=1]

\draw [color={rgb, 255:red, 74; green, 144; blue, 226 }  ,draw opacity=1 ][line width=1.5]    (51.49,76.25) -- (51.5,663.32) ;
\draw [color={rgb, 255:red, 74; green, 144; blue, 226 }  ,draw opacity=1 ][line width=1.5]    (51.49,76.25) -- (270.22,76.19) -- (634.19,76.45) ;
\draw  [color={rgb, 255:red, 0; green, 0; blue, 0 }  ,draw opacity=1 ] (49.74,75.92) .. controls (46.22,75.9) and (44.45,77.65) .. (44.43,81.18) -- (44.43,81.18) .. controls (44.41,86.21) and (42.64,88.72) .. (39.11,88.7) .. controls (42.64,88.72) and (44.39,91.25) .. (44.36,96.28)(44.37,94.01) -- (44.36,96.28) .. controls (44.35,99.8) and (46.1,101.57) .. (49.62,101.58) ;
\draw  [color={rgb, 255:red, 0; green, 0; blue, 0 }  ,draw opacity=1 ] (49.74,101.92) .. controls (45.81,101.9) and (43.83,103.86) .. (43.81,107.79) -- (43.81,107.79) .. controls (43.79,113.42) and (41.81,116.22) .. (37.88,116.2) .. controls (41.81,116.22) and (43.77,119.04) .. (43.75,124.66)(43.76,122.13) -- (43.75,124.66) .. controls (43.73,128.59) and (45.69,130.57) .. (49.63,130.58) ;
\draw  [color={rgb, 255:red, 0; green, 0; blue, 0 }  ,draw opacity=1 ] (50.33,130.8) .. controls (45.66,130.83) and (43.34,133.17) .. (43.37,137.84) -- (43.45,152.51) .. controls (43.48,159.18) and (41.17,162.52) .. (36.5,162.54) .. controls (41.17,162.52) and (43.52,165.84) .. (43.55,172.51)(43.54,169.51) -- (43.63,187.17) .. controls (43.66,191.84) and (46,194.16) .. (50.66,194.14) ;
\draw  [color={rgb, 255:red, 0; green, 0; blue, 0 }  ,draw opacity=1 ] (49.67,194.14) .. controls (45,194.15) and (42.67,196.48) .. (42.68,201.15) -- (42.75,252.63) .. controls (42.76,259.3) and (40.43,262.63) .. (35.76,262.64) .. controls (40.43,262.63) and (42.77,265.96) .. (42.78,272.63)(42.77,269.63) -- (42.85,324.12) .. controls (42.86,328.79) and (45.19,331.12) .. (49.86,331.11) ;
\draw  [color={rgb, 255:red, 0; green, 0; blue, 0 }  ,draw opacity=1 ] (49.86,331.47) .. controls (45.19,331.47) and (42.86,333.8) .. (42.86,338.47) -- (42.86,467.29) .. controls (42.86,473.96) and (40.53,477.29) .. (35.86,477.29) .. controls (40.53,477.29) and (42.86,480.62) .. (42.86,487.29)(42.86,484.29) -- (42.86,616.11) .. controls (42.86,620.78) and (45.19,623.11) .. (49.86,623.11) ;
\draw  [color={rgb, 255:red, 0; green, 0; blue, 0 }  ,draw opacity=1 ] (75.74,75.72) .. controls (75.71,72.19) and (73.93,70.45) .. (70.41,70.48) -- (70.41,70.48) .. controls (65.38,70.53) and (62.84,68.79) .. (62.81,65.27) .. controls (62.84,68.79) and (60.34,70.57) .. (55.31,70.62)(57.57,70.6) -- (55.31,70.62) .. controls (51.78,70.65) and (50.04,72.43) .. (50.07,75.95) ;
\draw  [color={rgb, 255:red, 0; green, 0; blue, 0 }  ,draw opacity=1 ] (104.76,75.07) .. controls (104.72,71.14) and (102.73,69.19) .. (98.8,69.23) -- (98.8,69.23) .. controls (93.18,69.28) and (90.35,67.34) .. (90.31,63.41) .. controls (90.35,67.34) and (87.56,69.34) .. (81.93,69.39)(84.46,69.37) -- (81.93,69.39) .. controls (78,69.43) and (76.05,71.42) .. (76.09,75.35) ;
\draw  [color={rgb, 255:red, 0; green, 0; blue, 0 }  ,draw opacity=1 ] (169.33,75.8) .. controls (169.31,71.13) and (166.97,68.81) .. (162.3,68.84) -- (147.15,68.92) .. controls (140.48,68.96) and (137.14,66.65) .. (137.11,61.98) .. controls (137.14,66.65) and (133.82,69) .. (127.15,69.03)(130.15,69.01) -- (112,69.11) .. controls (107.33,69.14) and (105.01,71.48) .. (105.04,76.15) ;
\draw  [color={rgb, 255:red, 0; green, 0; blue, 0 }  ,draw opacity=1 ] (305.33,75.8) .. controls (305.35,71.13) and (303.03,68.79) .. (298.36,68.78) -- (247.73,68.61) .. controls (241.06,68.59) and (237.74,66.25) .. (237.76,61.58) .. controls (237.74,66.25) and (234.4,68.57) .. (227.73,68.54)(230.73,68.55) -- (177.11,68.38) .. controls (172.44,68.36) and (170.1,70.68) .. (170.09,75.35) ;
\draw  [color={rgb, 255:red, 0; green, 0; blue, 0 }  ,draw opacity=1 ] (597.16,75.09) .. controls (597.15,70.42) and (594.82,68.09) .. (590.15,68.1) -- (461.62,68.21) .. controls (454.95,68.22) and (451.62,65.89) .. (451.61,61.22) .. controls (451.62,65.89) and (448.29,68.22) .. (441.62,68.23)(444.62,68.23) -- (313.08,68.34) .. controls (308.41,68.35) and (306.08,70.68) .. (306.09,75.35) ;
\draw  [color={rgb, 255:red, 74; green, 144; blue, 226 }  ,draw opacity=1 ][line width=1.5]  (51.66,102.48) -- (77.33,102.48) -- (77.33,131.48) -- (51.66,131.48) -- cycle ;
\draw  [color={rgb, 255:red, 74; green, 144; blue, 226 }  ,draw opacity=1 ][line width=1.5]  (51.66,131.48) -- (77.33,131.48) -- (77.33,160.48) -- (51.66,160.48) -- cycle ;
\draw  [color={rgb, 255:red, 74; green, 144; blue, 226 }  ,draw opacity=1 ][line width=1.5]  (51.66,194.48) -- (77.33,194.48) -- (77.33,223.48) -- (51.66,223.48) -- cycle ;
\draw  [color={rgb, 255:red, 74; green, 144; blue, 226 }  ,draw opacity=1 ][line width=1.5]  (106.78,76.63) -- (106.78,102.3) -- (77.78,102.3) -- (77.78,76.63) -- cycle ;
\draw  [color={rgb, 255:red, 74; green, 144; blue, 226 }  ,draw opacity=1 ][line width=3]  (77.33,102.48) -- (106.33,102.48) -- (106.33,131.48) -- (77.33,131.48) -- cycle ;
\draw  [color={rgb, 255:red, 74; green, 144; blue, 226 }  ,draw opacity=1 ][line width=3]  (106.14,130.86) -- (169.07,130.86) -- (169.07,193.79) -- (106.14,193.79) -- cycle ;
\draw  [color={rgb, 255:red, 74; green, 144; blue, 226 }  ,draw opacity=1 ][line width=3]  (169.07,193.79) -- (306.17,193.79) -- (306.17,330.88) -- (169.07,330.88) -- cycle ;
\draw  [color={rgb, 255:red, 74; green, 144; blue, 226 }  ,draw opacity=1 ][line width=3]  (306.17,330.88) -- (598.76,330.88) -- (598.76,623.47) -- (306.17,623.47) -- cycle ;
\draw  [color={rgb, 255:red, 74; green, 144; blue, 226 }  ,draw opacity=1 ] (51.48,130.86) -- (106.14,130.86) -- (106.14,194.34) -- (51.48,194.34) -- cycle ;
\draw  [color={rgb, 255:red, 74; green, 144; blue, 226 }  ,draw opacity=1 ][line width=1.5]  (169.26,76.63) -- (169.26,131.3) -- (105.78,131.3) -- (105.78,76.63) -- cycle ;
\draw  [color={rgb, 255:red, 74; green, 144; blue, 226 }  ,draw opacity=1 ][line width=1.5]  (135.78,76.63) -- (135.78,102.3) -- (106.78,102.3) -- (106.78,76.63) -- cycle ;
\draw  [color={rgb, 255:red, 74; green, 144; blue, 226 }  ,draw opacity=1 ][line width=1.5]  (51.48,193.86) -- (106.14,193.86) -- (106.14,257.34) -- (51.48,257.34) -- cycle ;
\draw  [color={rgb, 255:red, 74; green, 144; blue, 226 }  ,draw opacity=1 ][line width=1.5]  (51.66,194.48) -- (169.26,194.48) -- (169.26,331.02) -- (51.66,331.02) -- cycle ;
\draw  [color={rgb, 255:red, 74; green, 144; blue, 226 }  ,draw opacity=1 ][line width=1.5]  (305.61,76.19) -- (305.61,193.79) -- (169.07,193.79) -- (169.07,76.19) -- cycle ;
\draw  [color={rgb, 255:red, 74; green, 144; blue, 226 }  ,draw opacity=1 ][line width=1.5]  (51.66,331.48) -- (77.33,331.48) -- (77.33,360.48) -- (51.66,360.48) -- cycle ;
\draw  [color={rgb, 255:red, 74; green, 144; blue, 226 }  ,draw opacity=1 ][line width=1.5]  (51.48,330.86) -- (106.14,330.86) -- (106.14,394.34) -- (51.48,394.34) -- cycle ;
\draw  [color={rgb, 255:red, 74; green, 144; blue, 226 }  ,draw opacity=1 ][line width=1.5]  (51.48,331.34) -- (169.07,331.34) -- (169.07,467.88) -- (51.48,467.88) -- cycle ;
\draw  [color={rgb, 255:red, 74; green, 144; blue, 226 }  ,draw opacity=1 ][line width=1.5]  (51.48,330.88) -- (306.17,330.88) -- (306.17,623.47) -- (51.48,623.47) -- cycle ;
\draw  [color={rgb, 255:red, 74; green, 144; blue, 226 }  ,draw opacity=1 ][line width=1.5]  (232.26,76.63) -- (232.26,131.3) -- (168.78,131.3) -- (168.78,76.63) -- cycle ;
\draw  [color={rgb, 255:red, 74; green, 144; blue, 226 }  ,draw opacity=1 ][line width=1.5]  (198.26,76.63) -- (198.26,102.3) -- (169.26,102.3) -- (169.26,76.63) -- cycle ;
\draw  [color={rgb, 255:red, 74; green, 144; blue, 226 }  ,draw opacity=1 ][line width=1.5]  (442.71,76.19) -- (442.71,193.79) -- (306.17,193.79) -- (306.17,76.19) -- cycle ;
\draw  [color={rgb, 255:red, 74; green, 144; blue, 226 }  ,draw opacity=1 ][line width=1.5]  (369.26,76.63) -- (369.26,131.3) -- (305.78,131.3) -- (305.78,76.63) -- cycle ;
\draw  [color={rgb, 255:red, 74; green, 144; blue, 226 }  ,draw opacity=1 ][line width=1.5]  (335.78,76.63) -- (335.78,102.3) -- (306.78,102.3) -- (306.78,76.63) -- cycle ;
\draw  [color={rgb, 255:red, 74; green, 144; blue, 226 }  ,draw opacity=1 ][line width=1.5]  (598.37,76.63) -- (598.37,331.32) -- (305.78,331.32) -- (305.78,76.63) -- cycle ;

\draw (25.5,262.45) node    {$K'_{4}$};
\draw (25.5,162.45) node    {$K'_{3}$};
\draw (27.5,115.45) node    {$K'_{2}$};
\draw (28,88.45) node    {$K'_{1}$};
\draw (25.5,477.45) node    {$K'_{5}$};
\draw (137.47,54) node    {$L'_{3}$};
\draw (90.47,54) node    {$L'_{2}$};
\draw (62.47,54) node    {$L'_{1}$};
\draw (237.47,54) node    {$L'_{4}$};
\draw (451.47,54) node    {$L'_{4}$};
\draw (618,642.79) node  [font=\huge,color={rgb, 255:red, 74; green, 144; blue, 226 }  ,opacity=1 ,rotate=-45]  {$\cdots $};
\draw (64.5,116.98) node  [color={rgb, 255:red, 0; green, 0; blue, 0 }  ,opacity=1 ]  {$x_{2,1}$};
\draw (64.5,145.98) node  [color={rgb, 255:red, 0; green, 0; blue, 0 }  ,opacity=1 ]  {$x_{3,1}$};
\draw (80.5,161) node  [color={rgb, 255:red, 0; green, 0; blue, 0 }  ,opacity=1 ]  {$x_{3,2}$};
\draw (81.33,224.48) node  [color={rgb, 255:red, 0; green, 0; blue, 0 }  ,opacity=1 ]  {$x_{4,2}$};
\draw (64.5,208.98) node  [color={rgb, 255:red, 0; green, 0; blue, 0 }  ,opacity=1 ]  {$x_{4,1}$};
\draw (111.46,256.75) node  [color={rgb, 255:red, 0; green, 0; blue, 0 }  ,opacity=1 ]  {$x_{4,3}$};
\draw (64.5,345.98) node  [color={rgb, 255:red, 0; green, 0; blue, 0 }  ,opacity=1 ]  {$x_{5,1}$};
\draw (80.81,360.6) node  [color={rgb, 255:red, 0; green, 0; blue, 0 }  ,opacity=1 ]  {$x_{5,2}$};
\draw (112.28,394.61) node  [color={rgb, 255:red, 0; green, 0; blue, 0 }  ,opacity=1 ]  {$x_{5,3}$};
\draw (174.28,468.61) node  [color={rgb, 255:red, 0; green, 0; blue, 0 }  ,opacity=1 ]  {$x_{5,4}$};
\draw (92.28,89.47) node  [color={rgb, 255:red, 0; green, 0; blue, 0 }  ,opacity=1 ]  {$y_{1,2}$};
\draw (121.28,89.47) node  [color={rgb, 255:red, 0; green, 0; blue, 0 }  ,opacity=1 ]  {$y_{1,3}$};
\draw (184.28,89.47) node  [color={rgb, 255:red, 0; green, 0; blue, 0 }  ,opacity=1 ]  {$y_{1,4}$};
\draw (321.28,89.47) node  [color={rgb, 255:red, 0; green, 0; blue, 0 }  ,opacity=1 ]  {$y_{1,5}$};
\draw (140.52,102.97) node  [color={rgb, 255:red, 0; green, 0; blue, 0 }  ,opacity=1 ]  {$y_{2,3}$};
\draw (202.52,101.97) node  [color={rgb, 255:red, 0; green, 0; blue, 0 }  ,opacity=1 ]  {$y_{2,4}$};
\draw (237.34,130.99) node  [color={rgb, 255:red, 0; green, 0; blue, 0 }  ,opacity=1 ]  {$y_{3,4}$};
\draw (341.28,101.47) node  [color={rgb, 255:red, 0; green, 0; blue, 0 }  ,opacity=1 ]  {$y_{2,5}$};
\draw (374.28,131.47) node  [color={rgb, 255:red, 0; green, 0; blue, 0 }  ,opacity=1 ]  {$y_{3,5}$};
\draw (447.46,193.54) node  [color={rgb, 255:red, 0; green, 0; blue, 0 }  ,opacity=1 ]  {$y_{4,5}$};
\draw (92.28,117.47) node  [color={rgb, 255:red, 0; green, 0; blue, 0 }  ,opacity=1 ]  {$z_{2,1}$};
\draw (122.28,160.47) node  [color={rgb, 255:red, 0; green, 0; blue, 0 }  ,opacity=1 ]  {$z_{3,1}$};
\draw (152.28,160.47) node  [color={rgb, 255:red, 0; green, 0; blue, 0 }  ,opacity=1 ]  {$z_{3,2}$};
\draw (198.28,259.47) node  [color={rgb, 255:red, 0; green, 0; blue, 0 }  ,opacity=1 ]  {$z_{4,1}$};
\draw (237.62,259.33) node  [color={rgb, 255:red, 0; green, 0; blue, 0 }  ,opacity=1 ]  {$z_{4,2}$};
\draw (277.28,260.47) node  [color={rgb, 255:red, 0; green, 0; blue, 0 }  ,opacity=1 ]  {$z_{4,3}$};
\draw (362.28,473.47) node  [color={rgb, 255:red, 0; green, 0; blue, 0 }  ,opacity=1 ]  {$z_{5,1}$};
\draw (423.62,473.33) node  [color={rgb, 255:red, 0; green, 0; blue, 0 }  ,opacity=1 ]  {$z_{5,2}$};
\draw (487.28,473.47) node  [color={rgb, 255:red, 0; green, 0; blue, 0 }  ,opacity=1 ]  {$z_{5,3}$};
\draw (544.28,473.47) node  [color={rgb, 255:red, 0; green, 0; blue, 0 }  ,opacity=1 ]  {$z_{5,4}$};

\end{tikzpicture}

}

Next, we show that the operator $T_1$ given by
\[
T_1(e_{i_s,i_t})=p_{_{K'_s}}T(e_{i_s,i_t})p_{_{L'_s}} =z _{s,t},
\]
for every $1\leq t<s<\infty$ which is bounded on $[e_{i_s,i_t}]_{1\leq t<s<\infty}$.\footnote{
If $E$ is a separable Banach symmetric sequence space, then $P_{\{K'_s,L'_s\}_{s=2}^\infty}$ is a diagonal projection, which is bounded. 
Hence, 
 $T_1=P_{\{K'_s,L'_s\}_{s=2}^\infty}T|_{[e_{i_s,i_t}]_{1\leq t<s<\infty}}$   is automatically bounded.
However,  if $E$ is just a separable  quasi-Banach symmetric sequence space which is not normed, then one cannot guarantee   the boundedness of a diagonal projection   on $\mathcal C_E$. 
}

For any finite sequence $\{\alpha_{s,t}\}_{1\leq t<s\leq n+1}$ of scalars, we have
\begin{eqnarray*}
&&\left\Vert\sum_{1\leq t<s\leq n+1}\alpha_{s,t}x_{s,t}\right\Vert_{\mathcal C_E}\\
&\stackrel{\eqref{at}}{=}&
\left\Vert\sum_{1\leq t<s\leq n+1}\alpha_{s,t}x_{n-1+s,t}\right\Vert_{\mathcal C_E}\\
&\stackrel{\eqref{x-y-z-st}\eqref{T=x+y+z}}{=}&\left\Vert P_{[\bigcup_{l=1}^nK'_{n+l}],[\bigcup_{l=1}^nL'_l]}T\bigg(\sum_{1\leq t<s\leq n+1}\alpha_{s,t}e_{i_{n-1+s},i_t}\bigg)\right\Vert_{\mathcal C_E}\\
&\leq& \Vert T\Vert\left\Vert \sum_{1\leq t<s\leq n+1}\alpha_{s,t}e_{i_{n-1+s},i_t}\right\Vert_{\mathcal C_E}\\
&=&\Vert T\Vert \left\Vert \sum_{1\leq t<s\leq n+1}\alpha_{s,t}e_{i_s,i_t}\right\Vert_{\mathcal C_E},
\end{eqnarray*}
and similarly, by \eqref{bt}, we have 
\[
\left\Vert\sum_{1\leq t<s\leq n+1}\alpha_{s,t}y_{t,s}\right\Vert_{\mathcal C_E}\leq\Vert T\Vert\left\Vert\sum_{1\leq t<s\leq n+1}\alpha_{s,t}e_{i_s,i_t}\right\Vert_{\mathcal C_E}.
\]
Consequently, 
there exists a bounded operator $S_2:[e_{i_s,i_t}]_{1\leq t<s<\infty}\to\mathcal C_E$ with
\begin{align}\label{s1}
S_2(e_{i_s,i_t})=x_{s,t}+y_{t,s}
\end{align}
for every $1\leq t<s<\infty$, and hence,  we have $T_1= T|_{[e_{i_s,i_t}]_{1\leq t<s<\infty}}-S_2$\;\footnote{Note that $ (T|_{[e_{i_s,i_t}]_{1\leq t<s<\infty}}-S_2 ) (e_{i_s,i_t})  =x_{s,t}+y_{t,s}+z_{s,t}-x_{s,t}-y_{t,s} = z_{s,t} =T_1 (e_{i_s,i_t}) $.}, which is bounded.  
\end{proof}

\begin{lemma}\label{thma2}
Under the assumptions of Lemma \ref{thma1}, 
there is an increasing sequence $\{t_k\}_{k=2}^\infty$ of positive integers such that there is a bounded operator $S_3:[e_{i_{t_k},i_{t_l}}]_{2\leq l<k<\infty}\to\mathcal C_E$ (see  \eqref{s3} below) such that $S_3$ is a small perturbation of $S_2|_{[[e_{i_{t_k},i_{t_l}}]_{2\leq l<k<\infty}}$ associated with the Schauder decomposition $\big\{[e_{i_{t_k},i_{t_l}}]_{k=l+1}^\infty\big\}_{l=2}^\infty$.
\end{lemma}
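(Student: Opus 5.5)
Starting from $S_2(e_{i_s,i_t})=x_{s,t}+y_{t,s}$ in Lemma \ref{thma1}, I will refine the pieces $x_{s,t}$ (supported in $K'_s\times[\bigcup_{l\le t}L'_l]$) and $y_{t,s}$ (supported in $[\bigcup_{l\le t}K'_l]\times L'_s$) so that they become block sequences generated by single operators, with respect to finer decompositions of the columns $L'_m$ and rows $K'_m$. Write $x_{s,t}=\sum_{m=1}^{t}x_{s,t}p_{L'_m}$. For each fixed $t$, \eqref{at} already says that $\{x_{s,t}\}_{s>t}$ is generated by $a_t$ through isometries $u_s$ that do not depend on $t$. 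Hence, for fixed $m\le t$, the pieces $\{x_{s,t}p_{L'_m}\}_{s>t}$ are generated by $a_tv p_{L'_m}v^\ast$. This gives the column-$m$ data needed for \eqref{hatc}, \eqref{hatb} and \eqref{hata}.

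The key construction passes to a sparse subsequence $\{t_k\}$ and splits each row block $K'_{t_k}$ into mutually orthogonal pieces $F_{k,l}$, $l<k$. Here $F_{k,l}$ carries the range of $x_{t_k,t_l}$, up to a small error. This is possible because the ranges $u_s^\ast(\mathrm{im}\,a_t)$ are finite dimensional and the $a_t$ are fixed. By a diagonal argument in the style of Lemma \ref{3b}, I can choose the $t_k$ so that the leftover overlaps between the ranges for different $l$ are as small as we like. Running the same procedure on the columns gives $G_{l,k}\subset L'_{t_k}$ for the $y$'s.

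I then rename the pieces as $x_{(k,l),m}:=p_{F_{k,l}}x_{t_k,t_l}p_{L'_{t_m}}$ (after regrouping the $L'$ blocks between consecutive $t$'s). The sums over $m=l$, $m<l$, and the transposed family produce exactly the patterns in \eqref{T2}, \eqref{T3} and \eqref{T4}. Consistency in each family comes from the common isometries $u_s$ and $v$. The decay $\|\hat b_m\|\le\varepsilon_m\|\hat b_1\|$ I obtain as follows: since $c_0\not\hookrightarrow E$ and $E$ is separable, $a_tp_{L'_m}$ has summable tails in $m$. Passing to a sparse $\{t_k\}$ makes the contributions of the intermediate columns arbitrarily small, and I absorb these into the perturbation.

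I define $S_3$ on $[e_{i_{t_k},i_{t_l}}]$ by the resulting formula. Boundedness I prove exactly as for $S_2$ in Lemma \ref{thma1}: by consistency, the norm of a finite combination equals the norm of a compressed $P_{\cdot,\cdot}T(\cdot)$ evaluated on a shifted copy. For the perturbation estimate I use Lemma \ref{sdper} and Theorem \ref{2d} with the Schauder decomposition $\{[e_{i_{t_k},i_{t_l}}]_{k>l}\}_{l}$. The errors I choose to be $r$-summable, using \eqref{sum-infty}.

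The main obstacle is the orthogonal splitting of $K'_{t_k}$ into the $F_{k,l}$ without diagonal projections, which may be unbounded in the quasi-Banach case. To get around this, I will compare only on finite-dimensional pieces, where all quasi-norms are equivalent. I will then sum with $\delta$'s that decay fast enough to beat the constants $\kappa^n$ in \eqref{diag-proj}.
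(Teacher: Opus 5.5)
\paragraph{Gap: the overlapping ranges are not small.} The central claim of your construction is false. You assert that after sparsifying $\{t_k\}$ the block $K'_{t_k}$ splits into mutually orthogonal pieces $F_{k,l}$, with $F_{k,l}$ carrying the range of $x_{t_k,t_l}$ up to a small error. In other words, you claim the overlaps between the ranges of the $x_{t_k,t_l}$ for different $l$ can be made small.

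Test this on the identity of $\mathcal T_E$, with $K_i=[\xi_i]$ and $L_i=[\eta_i]$. Then $x_{s,t}=e_{\xi_{i_s},\eta_{i_t}}$ and $y=z=0$. Consistency forces $a_t=e_{f,g_t}$, with one fixed unit vector $f=u_s\xi_{i_s}$ and orthonormal $g_t$. For fixed $k$, all $x_{t_k,t_l}$ with $l<k$ have norm one and the same one-dimensional range $[\xi_{i_{t_k}}]$. So no orthogonal splitting carries them with small error, however sparse $\{t_k\}$ is.

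Finite dimensionality of the ranges does not rescue this. Weak nullness of $\{a_t\}$ only gives $p_Fa_tp_G\to0$ for finite-dimensional $F,G$, not $p_Fa_t\to0$; here $p_{[f]}a_t=a_t$ for every $t$. The overlapping part is the piece of $a_{t_l}$ on the old rows $F_{t_{l-1}}=\mathrm{im}(a_1)+\cdots+\mathrm{im}(a_{t_{l-1}})$ but on new columns. It is not an error term: it produces exactly the terms $x_{(k,m),l}$, $m<l$, of \eqref{T4}. In the example it is all of $S_2$, and it becomes $T_4$ through the $m=1$ terms. Your ``transposed family'' cannot supply these terms once you have declared them negligible.

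\paragraph{What is missing: decompose the generators.} Work with the generators $a_t$, $b_t$, not with the $x_{s,t}$ directly.

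First show the generators are weakly null. Compressing $S_2(\sum_{t=1}^n\alpha_te_{i_{n+1},i_t})$ by $p_{K'_{n+1}}$ gives $\Vert\sum_{t=1}^n\alpha_ta_t\Vert_{\mathcal C_E}=\Vert\sum_{t=1}^n\alpha_tx_{n+1,t}\Vert_{\mathcal C_E}\le\Vert S_2\Vert\,(\sum_{t=1}^n|\alpha_t|^2)^{1/2}$. Hence $\{a_t\}$ is weakly null, and likewise $\{b_t\}$. Moreover $a_t=p_{F_t}a_tp_{v([\bigcup_{j\le t}L'_j])}$.

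Next, after the usual perturbation, apply Theorem \ref{3e} to $\{a_t\}$ in $\mathcal C_E(H')$. This yields $t_l$ with $a_{t_l}$ $\delta_l$-close to $\sum_{m<l}(a'_{l,m}+a'_{m,l})+a'_{l,l}$, where:
\begin{itemize}
\item the column pieces $a'_{l,m}$ are consistently generated by $c_m$;
\item the row pieces $a'_{m,l}$ are consistently generated by $c'_m$;
\item $\Vert c_m\Vert\le\delta_m\Vert c_1\Vert$ and $\Vert c'_m\Vert\le\delta_m\Vert c'_1\Vert$, via Lemma \ref{3d}.
\end{itemize}
The last point is where $c_0\not\hookrightarrow E$ enters, for the row family as well as the column family. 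Your tail argument treats only columns. Moreover, for fixed $t$ the column expansion of $a_t$ is finite, so what is actually needed is a uniform statement after the consistency step.

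Finally, pull back with $x'_{k,l}=u_{t_k}^\ast a'_lv$. The differences $\{x_{t_k,t_l}-x'_{k,l}\}_k$ are generated by $a_{t_l}-a'_l$ (Remark \ref{rem}). So on the $l$-th piece of the Schauder decomposition the error is at most $\delta_l$ times the $\ell_2$-norm of the coefficients. No diagonal projections or finite-dimensional norm comparisons are needed. Theorem \ref{2d} then gives boundedness of $S_3$ directly, so your separate ``shifted copy'' boundedness argument is unnecessary.
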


\begin{proof}
For any finite sequence $\{\alpha_t\}_{t=1}^n$ of scalars we have    
\begin{eqnarray*}
\left\Vert\sum_{t=1}^n\alpha_ta_t\right\Vert_{\mathcal C_E(H')}&\stackrel{\eqref{at}}{=}&\left\Vert\sum_{t=1}^n\alpha_tx_{n+1,t}\right\Vert_{\mathcal C_E}\\
&\leq&\left\Vert  \sum_{t=1}^n\alpha_t (x_{n+1,t} +y_{t,n+1})  \Big)\right\Vert_{\mathcal C_E}\\
&= &\left\Vert S_2\Big(\sum_{t=1}^n\alpha_te_{i_{n+1},i_t}\Big)\right\Vert_{\mathcal C_E}\leq\Vert S_2\Vert\left(\sum_{t=1}^n\vert\alpha_t\vert^2\right)^{1/2},
\end{eqnarray*}
and similarly, 
\[
\left\Vert\sum_{t=1}^n\alpha_tb_t\right\Vert_{\mathcal C_E(H'')}\leq\Vert S_2\Vert\left(\sum_{t=1}^n\vert\alpha_t\vert^2\right)^{1/2}.
\]
This implies that the maps $(\alpha_t)_{t=1}^\infty\in\ell_2\mapsto\sum_{t=1}^\infty\alpha_ta_t$ and $(\alpha_t)_{t=1}^\infty\in\ell_2\mapsto\sum_{t=1}^\infty\alpha_tb_t$ are bounded, and thus $\{a_t\}_{t=1}^\infty$ is a weakly null sequence in $\mathcal C_E(H')$ and $\{b_t\}_{t=1}^\infty$ is a weakly null sequence in $\mathcal C_E(H'')$.

Note that $\big\{v(L'_t)\big\}_{t=1}^\infty$ and $\big\{u'(K'_t)\big\}_{t=1}^\infty$ are sequences of mutually orthogonal finite dimensional subspaces of $H'$ (resp. $H''$). For every $t\geq1$, we denote\footnote{
For subspaces $ X_1,X_2$ of a linear space $X$,  $X_1+X_2$ stands for 
\[
 \{x_1+x_2:x_1\in X_1\;{\rm and}\;x_2\in X_2\},
\]
which  is a subspace of $X$. Since ${\rm im }(a_i)$ is a finite dimensional subspace for every $i$, it follows that
\[
{\rm im}(a_1)+\cdots+{\rm im}(a_t)
\]
is finite dimensional. In particular,  ${\rm im}(a_1)+\cdots+{\rm im}(a_t)$ is closed for every $t$.}  
\[
F_t={\rm im}(a_1)+\cdots+{\rm im}(a_t)\stackrel{\eqref{at}}{\subseteq}{\rm im}(u_{t+1}),
\]
\[
\quad F'_t={\ker(b_1)}^\perp+\cdots+{\ker(b_t)}^\perp\stackrel{\eqref{bt}}{\subseteq}{\ker ({v'_{t+1}}^\ast)}^\perp={\rm im}(v'_{t+1}).
\]
Then, 
\[
a_t=p_{_{F_t}}a_tp_{_{v([\bigcup_{j=1}^tL'_j])}}\;\;\;\;{\rm and}\;\;\;\;b_t=p_{_{u'([\bigcup_{j=1}^tK'_j])}}b_tp_{_{F'_t}}\;\;\;\;\;\mbox{for every}\;t\geq1.
\]
By standard perturbation argument and using Theorem \ref{3e}, there is a sequence $\{t_l\}_{l=2}^\infty$ of positive integers, a sequence $\{a'_l\}_{l=2}^\infty$ in $\mathcal C_E(H')$ and a sequence $\{b'_l\}_{l=2}^\infty$ in $\mathcal C_E(H'')$ such that 
\begin{align}\label{atbt}
\Vert a_{t_l}-a'_l\Vert\;\;\;\;{\rm and}\;\;\;\;\Vert b_{t_l}-b'_l\Vert\leq\delta_l\;\;\;\;\;\;\mbox{for every }l\geq2,
\end{align}
where $\{a'_l\}_{l=2}^\infty$ and $\{b'_l\}_{l=2}^\infty$ are in the form of
\begin{align}\begin{split}\label{a'lb'l}
a'_l=\sum_{m=1}^{l-1}\big(a'_{l,m}+a'_{m,l}\big)+a'_{l,l}\;\;\;\;\;\;l=2,3,\dots,\\
b'_l=\sum_{m=1}^{l-1}\big(b'_{l,m}+b'_{m,l}\big)+b'_{l,l}\;\;\;\;\;\;l=2,3,\dots,
\end{split}
\end{align} respectively,  
\begin{enumerate}
\item [(1)]
$\hat{F}_l={F_{t_{l-1}}}^\perp\cap F_{t_l}$ (put $F_0=\{0\}$) and $\tilde{F}_l={F'_{t_{l-1}}}^\perp\cap F'_{t_l}$ (put $F'_0=\{0\}$), and $\hat{L}_l=[\bigcup_{j=t_{l-1}+1}^{t_l}v(L'_j)]=v\big([\bigcup_{j=t_{l-1}+1}^{t_l}L'_j]\big)$ and $\tilde{K}_l=[\bigcup_{j=t_{l-1}+1}^{t_l}u'(K'_j)]=u'\big([\bigcup_{j=t_{l-1}+1}^{t_l}(K'_j)]\big)$ for every $l\geq1$ (put $t_0=0$),
\item [(2)]
$\{a'_{l,m}\}_{(l,m)\neq (1,1)}\stackrel{\mbox{\tiny b.s}}{\boxplus}\{\hat{F}_l\}_{l=1}^\infty\otimes\{\hat{L}_m\}_{m=1}^\infty$ such that
\begin{itemize}
    \item 
    $\{a'_{l,m}\}_{l=m+1}^\infty\stackrel{\mbox{\tiny b.s}}{\boxplus}\big\{\hat{u}_l:\hat{F}_l\to\hat{H}\big\}_{l=2}^\infty\otimes\big\{\hat{v}_m:\hat{L}_m\to\hat{H}\big\}_{m=1}^\infty\curvearrowleft c_m$ are consistent for all $m\geq1$, and
\begin{align}\label{cmc1}
\Vert c_m\Vert_{\mathcal C_E}\leq\delta_m\Vert c_1\Vert_{\mathcal C_E}\;\;\;\;\;\;\;\;\mbox{for every}\;m\geq2,
\end{align}
 \item 
$\{a'_{m,l}\}_{l=m+1}^\infty\stackrel{\mbox{\tiny b.s}}{\boxplus}\big\{\hat{u}'_m:\hat{F}_m\to\hat{H}\big\}_{m=1}^\infty\otimes\big\{\hat{v}'_l:\hat{L}_l\to\hat{H}\big\}_{l=2}^\infty\curvearrowleft c'_m$ are consistent for all $m\geq1$, and
\begin{align}\label{cm'c1}
\Vert c'_m\Vert_{\mathcal C_E}\leq\delta_m\Vert c'_1\Vert_{\mathcal C_E}\;\;\;\;\;\;\;\;\mbox{for every}\;m\geq2,
\end{align}
 \item 
 $a'_{l,l}=p_{_{\hat{F}_l}}a_{t_l}p_{_{\hat{L}_l}}$ for every $l\geq2$,
\end{itemize}

\item [(3)]
$\{b'_{m,l}\}_{(m,l)\neq (1,1)}\stackrel{\mbox{\tiny b.s}}{\boxplus}\{\tilde{K}_m\}_{m=1}^\infty\otimes\{\tilde{F}_l\}_{l=1}^\infty$ such that 
\begin{itemize}
    \item
$\{b'_{m,l}\}_{l=m+1}^\infty\stackrel{\mbox{\tiny b.s}}{\boxplus}\big\{\tilde{u}_m:\tilde{K}_m\to\tilde{H}\big\}_{m=1}^\infty\otimes\big\{\tilde{v}_l:\tilde{F}_l\to\tilde{H}\big\}_{l=2}^\infty\curvearrowleft d_m$ are consistent for all $m\geq1$, and
\begin{align}\label{dmd1}
\Vert d_m\Vert_{\mathcal C_E}\leq\delta_m\Vert d_1\Vert_{\mathcal C_E}\;\;\;\;\;\;\mbox{for every}\;m\geq2,
\end{align}
    \item
$\{b'_{l,m}\}_{l=m+1}^\infty\stackrel{\mbox{\tiny b.s}}{\boxplus}\big\{\tilde{u}'_l:\tilde{K}_l\to\tilde{H}\big\}_{l=2}^\infty\otimes\big\{\tilde{v}'_m:\tilde{L}_m\to\tilde{H}\big\}_{m=1}^\infty\curvearrowleft d'_m$ are consistent for all $m\geq1$, and
\begin{align}\label{dm'd1}
\Vert d'_m\Vert_{\mathcal C_E}\leq\delta_m\Vert d'_1\Vert_{\mathcal C_E}\;\;\;\;\;\;\;\;\mbox{for every}\;m\geq2,
\end{align}
    \item
$b'_{l,l}=p_{_{\tilde{K}_l}}b_{t_l}p_{_{\tilde{F}_l}}$ for every $l\geq2$.
\end{itemize}

\end{enumerate}

Put 
\begin{equation}\label{xkl'ylk'}
x'_{k,l}:=u_{t_{k}}^\ast a'_lv\;\;\;\;{\rm and}\;\;\;\;y'_{l,k}:={u'}^\ast b'_lv'_{t_k}
\end{equation}
for every $2\leq l<k<\infty$.

For any $l\geq2$ and a  finite sequence $\{\alpha_k\}_{k=l+1}^n$ of scalars,
we have 
\begin{eqnarray*}
&&\Bigg\Vert\sum_{k=l+1}^n\alpha_kS_2(e_{i_{t_k},i_{t_l}})-\sum_{k=l+1}^n\alpha_k(x'_{k,l}+y'_{l,k})\Bigg\Vert_{\mathcal \mathcal C_E}\\
&\stackrel{\eqref{s1}}{\leq} &2^{\frac{1}{r}-1}\Bigg(\Big\Vert\sum_{k=l+1}^n\alpha_k(x_{t_k,t_l}-x'_{k,l})\Big\Vert_{\mathcal C_E}+\Bigg\Vert\sum_{k=l+1}^n\alpha_k(y_{t_l,t_k}-y'_{l,k})\Bigg\Vert_{\mathcal C_E}\bigg)\\
&\stackrel{\eqref{at},\eqref{bt},\eqref{xkl'ylk'}}{=}&2^{\frac{1}{r}-1}\Bigg(\bigg(\sum_{k=l+1}^n\vert\alpha_k\vert^2\bigg)^{1/2}\left\Vert a_{t_l}-a'_l \right\Vert_{\mathcal C_E}+\bigg(\sum_{k=l+1}^n\vert\alpha_k\vert^2\bigg)^{1/2} \left\Vert b_{t_l}-b'_l\right\Vert_{\mathcal C_E}\Bigg)\\
&\stackrel{\eqref{atbt}}{\leq}&2^{\frac{1}{r}}\delta_l\Bigg\Vert\sum_{k=l+1}^n\alpha_ke_{i_{t_k},i_{t_l}}\Bigg\Vert_{\mathcal C_E}.
\end{eqnarray*}


Note that 
\begin{eqnarray*}
x'_{k,l}&=&u_{t_{k}}^\ast a'_lv\\
&=&\sum_{m=1}^{l-1}\Big(u_{t_k}^\ast a'_{l,m}v+u_{t_k}^\ast a'_{m,l}v\Big)+u_{t_k}^\ast p_{_{\hat{F}_l}}a_{t_l}p_{_{\hat{L}_l}}v\\
&\stackrel{\eqref{cmc1},\eqref{cm'c1}}{=}& \sum_{m=1}^{l-1}\Big(({\hat{u}_l}u_{t_k})^\ast c_m(\hat{v}_m v)+(\hat{u}'_mu_{t_k})^\ast c'_m({\hat{v}_l}'v)\Big)+u_{t_k}^\ast p_{_{\hat{F}_l}}a_{t_l}p_{_{\hat{L}_l}}v,
\end{eqnarray*}
and,  similarly, 
\begin{align*}
y'_{l,k}={u'}^\ast b'_lv'_{t_k}=\sum_{m=1}^{l-1}\Big(({\tilde{u}'_l}u')^\ast d'_m(\tilde{v}'_m v'_{t_k})+(\tilde{u}_mu')^\ast d_m({\tilde{v}_l}v'_{t_k})\Big)+{u'}^\ast p_{_{\tilde{K}_l}}b_{t_l}p_{_{\tilde{F}_l}}v'_{t_k}.
\end{align*}
Put
\[
\hat{F}^{_{-1}}_{k,l}=(u_{t_k})^\ast(\hat{F}_l)\subset{K'_{t_k}}\;\;\;\;{\rm and}\;\;\;\;\tilde{F}^{_{-1}}_{l,k}=(v'_{t_k})^\ast(\tilde{F}_l)\subset{L'_{t_k}}
\]
for every $1\leq l<k<\infty$, and
\[
\hat{L}^{_{-1}}_l=\Big[{\bigcup}_{j=t_{l-1}+1}^{t_l}L'_j\Big]\;\;\;\;{\rm and}\;\;\;\;\tilde{K}^{_{-1}}_l=\Big[{\bigcup}_{j=t_{l-1}+1}^{t_l}K'_j\Big]
\]
for every $l\geq1$. Clearly, $\big\{\hat{F}^{_{-1}}_{k,l}\big\}_{1\leq l<k<\infty}$ and $\big\{\tilde{F}^{_{-1}}_{l,k}\big\}_{1\leq l<k<\infty}$ are sequences of mutually orthogonal finite dimensional subspace of Hilbert spaces respectively. For any $1\leq l<k<\infty$ and $1\leq m<k<\infty$, we put
\begin{align}\label{xuv}
x_{_{(k,l),m}}=
\begin{cases}
0~,&  m=l=1,\;k=2\\
u_{t_k}^\ast p_{_{\hat{F}_l}}a_{t_l}p_{_{\hat{L}_l}}v~,& 2\leq m=l<k<\infty\\
({\hat{u}_l}u_{t_k})^\ast c_m(\hat{v}_m v)~,& 1\leq m<l<k<\infty\\
(\hat{u}'_lu_{t_k})^\ast c'_l({\hat{v}_m}'v)~,& 1\leq l<m<k<\infty\\
\end{cases}
\end{align}
and
\begin{align}\label{yuv}
y_{_{m,(l,k)}}=
\begin{cases}
0~,&  m=l=1,\;k=2\\
{u'}^\ast p_{_{\tilde{K}_l}}b_{t_l}p_{_{\tilde{F}_l}}v'_{t_k}~,& 2\leq m=l<k<\infty\\
({\tilde{u}_m}u')^\ast d_m(\tilde{v}_l v'_{t_k})~,& 1\leq m<l<k<\infty\\
(\tilde{u}'_mu')^\ast d'_l({\tilde{v}_l}'v'_{t_k})~,& 1\leq l<m<k<\infty\\
\end{cases}
\end{align}
In particular, 
\begin{itemize}
    \item [(1)]
$\big\{x_{_{(k,l),m}}\big\}_{
\substack{ _{1\leq l<k<\infty}  \\ _{1\leq m<k<\infty} } }
\stackrel{\mbox{{\rm\tiny b.s}}}{\boxplus}\{\hat{F}^{_{-1}}_{k,l}\}_{1\leq l<k<\infty}\otimes\{\hat{L}^{_{-1}}_m\}_{m=1}^\infty$ such that
$x_{_{(2,1),1}}=0$,
\begin{align*}
\big\{x_{_{(k,l),m}}\big\}_{2\leq m=l<k<\infty}
\stackrel{\mbox{{\rm\tiny b.s}}}{\boxplus}&\big\{u_{t_k}|_{\hat{F}^{_{-1}}_{k,l}}:\hat{F}^{_{-1}}_{k,l}\to H'\big\}_{2\leq l<k<\infty}\otimes\big\{v|_{\hat{L}^{_{-1}}_m}:\hat{L}^{_{-1}}_m\to H'\big\}_{l=2}^\infty\\
&\curvearrowleft p_{_{\hat{F}_l}}a_{t_l}p_{_{\hat{L}_l}}\;\;\;\;\;\;{\rm are\;consistent\;for\;all}\;l\geq2,
\end{align*}
\begin{align}\label{'xklm}
\big\{x_{_{(k,l),m}}\big\}_{1\leq m<l<k<\infty}
\stackrel{\mbox{{\rm\tiny b.s}}}{\boxplus}&\big\{\hat{u}_lu_{t_k}|_{\hat{F}^{_{-1}}_{k,l}}:\hat{F}^{_{-1}}_{k,l}\to\hat{H}\big\}_{2\leq l<k<\infty}\otimes\big\{\hat{v}_m v|_{\hat{L}^{_{-1}}_m}:\hat{L}^{_{-1}}_m\to\hat{H}\big\}_{m=1}^\infty\\
&\curvearrowleft c_m\;\;\;\;\;\;{\rm are\;consistent\;for\;all}\;m\geq1,\notag
\end{align}
\begin{align}\label{'xkml}
\big\{x_{_{(k,m),l}}\big\}_{1\leq m<l<k<\infty}
\stackrel{\mbox{{\rm\tiny b.s}}}{\boxplus}&\big\{\hat{u}'_mu_{t_k}|_{\hat{F}^{_{-1}}_{k,m}}:\hat{F}^{_{-1}}_{k,m}\to\hat{H}'\big\}_{1\leq m<k-1<\infty}\otimes\big\{{\hat{v}_l}'v|_{\hat{L}^{_{-1}}_l}:\hat{L}^{_{-1}}_l\to\hat{H}\big\}_{l=2}^\infty\\&\curvearrowleft c'_m\;\;\;\;\;\;{\rm are\;consistent\;for\;all}\;m\geq1,\notag
\end{align}

\item [(2)]
$\big\{y_{_{m,(l,k)}}\big\}_{
\substack{ _{1\leq m<k<\infty}  \\ _{1\leq l<k<\infty} } }
\stackrel{\mbox{{\rm\tiny b.s}}}{\boxplus}\{\tilde{K}^{_{-1}}_m\}_{m=1}^\infty\otimes\{\tilde{F}^{_{-1}}_{l,k}\}_{1\leq l<k<\infty}$ such that $y_{_{1,(1,2)}}=0$,
\begin{align*}
\big\{y_{_{m,(l,k)}}\big\}_{2\leq m=l<k<\infty}
\stackrel{\mbox{{\rm\tiny b.s}}}{\boxplus}&\big\{u'|_{\tilde{K}^{_{-1}}_m}:\tilde{K}^{_{-1}}_m\to H''\big\}_{l=2}^\infty\otimes\big\{v'_{t_k}|_{\tilde{F}^{_{-1}}_{l,k}}:\tilde{F}^{_{-1}}_{l,k}\to H''\big\}_{2\leq l<k<\infty}\\
&\curvearrowleft p_{_{\tilde{K}_l}}b_{t_l}p_{_{\tilde{F}_l}}\;\;\;\;\;\;{\rm are\;consistent\;for\;all}\;l\geq2, 
\end{align*}
\begin{align}\label{'ymlk}
\big\{y_{_{m,(l,k)}}\big\}_{1\leq m<l<k<\infty}
\stackrel{\mbox{{\rm\tiny b.s}}}{\boxplus}&\big\{\tilde{u}_m u'|_{\tilde{K}^{_{-1}}_m}:\tilde{K}^{_{-1}}_m\to\tilde{H}\big\}_{m=1}^\infty\otimes\big\{\tilde{v}_lv'_{t_k}|_{\tilde{F}^{_{-1}}_{l,k}}:\tilde{F}^{_{-1}}_{l,k}\to\tilde{H}\big\}_{2\leq l<k<\infty}\\
&\curvearrowleft d_m\;\;\;\;\;\;{\rm are\;consistent\;for\;all}\;m\geq1,\notag
\end{align}
\begin{align}\label{'ylmk}
\big\{y_{_{l,(m,k)}}\big\}_{1\leq m<l<k<\infty}
\stackrel{\mbox{{\rm\tiny b.s}}}{\boxplus}&\big\{{\tilde{u}_l}'u'|_{\tilde{K}^{_{-1}}_l}:\tilde{K}^{_{-1}}_l\to\tilde{H}\big\}_{l=2}^\infty\otimes\big\{\tilde{v}'_mv'_{t_k}|_{\tilde{F}^{_{-1}}_{m,k}}:\tilde{F}^{_{-1}}_{m,k}\to\tilde{H}'\big\}_{1\leq m<k-1<\infty}\\
&\curvearrowleft d'_m\;\;\;\;\;\;{\rm are\;consistent\;for\;all}\;m\geq1,\notag
\end{align}
\item [(3)]
\[
x'_{k,l}=\sum_{m=1}^{l-1}\big(x_{_{(k,l),m}}+x_{_{(k,m),l}}\big)+x_{_{(k,l),l}}\quad 
\mbox{ and }\quad 
y'_{l,k}=\sum_{m=1}^{l-1}\big(y_{_{l,(m,k)}}+y_{_{m,(l,k)}}\big)+y_{_{l,(l,k)}}
\]
for every $2\leq l<k<\infty$. 
\end{itemize}
Consequently, by Theorem \ref{2d}, there is a bounded operator $S_3:[e_{i_{t_k},i_{t_l}}]_{2\leq l<k<\infty}\to\mathcal C_E$ such that
\begin{align}\label{s3}\begin{split}
 S_3(e_{i_{t_k},i_{t_l}}) &=x'_{k,l}+y'_{l,k}\\
&=\sum_{m=1}^{l-1}\big(x_{_{(k,l),m}}+x_{_{(k,m),l}}\big)+\sum_{m=1}^{l-1}\big(y_{_{l,(m,k)}}+y_{_{m,(l,k)}}\big)+x_{_{(k,l),l}}+y_{_{l,(l,k)}}
\end{split}
\end{align}
for every $2\leq l<k<\infty$, which is a small perturbation of $S_2|_{[[e_{i_{t_k},i_{t_l}}]_{2\leq l<k<\infty}}$ associated with the Schauder decomposition $\big\{[e_{i_{t_k},i_{t_l}}]_{k=l+1}^\infty\big\}_{l=2}^\infty$.  
\end{proof}

\begin{lemma}\label{thma3}
The operator  $S_3$ in Lemma \ref{thma2} can be written as $S_3=T_2+T_3+T_4$, where $T_2$,  $T_3$ and $T_4$ are bounded operators from $[e_{i_{t_k},i_{t_l}}]_{2\leq l<k<\infty}$ to $\mathcal C_E$ such that
\[
T_2(e_{i_{t_k},i_{t_l}})=x_{_{(k,l),l}}+y_{_{l,(l,k)}},\;\;\;\;\;\;\;\;2\leq l<k<\infty,
\]
\[
T_3(e_{i_{t_k},i_{t_l}})=\sum_{m=1}^{l-1}\big(x_{_{(k,l),m}}+y_{_{m,(l,k)}}\big),\;\;\;\;\;\;\;\;2\leq l<k<\infty,
\]
and
\[
T_4(e_{i_{t_k},i_{t_l}})=\sum_{m=1}^{l-1}\big(x_{_{(k,m),l}}+y_{_{l,(m,k)}}\big),\;\;\;\;\;\;\;\;2\leq l<k<\infty.
\]
\end{lemma}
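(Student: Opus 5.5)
The identity $S_3=T_2+T_3+T_4$ on the generators $e_{i_{t_k},i_{t_l}}$, $2\le l<k<\infty$, is just a regrouping of the right-hand side of \eqref{s3}. So the whole content of the lemma is that the three operators defined on the linear span of these generators extend boundedly to $[e_{i_{t_k},i_{t_l}}]_{2\le l<k<\infty}$. Since $S_3$ is already bounded and the quasi-norm of $\mathcal C_E$ is $r$-subadditive up to the constant in \eqref{sum-infty}, it suffices to prove that $T_3$ and $T_4$ are bounded. Then $T_2=S_3-T_3-T_4$ is bounded automatically. I treat $T_4$; $T_3$ is symmetric, using \eqref{'xklm}, \eqref{'ymlk} and the constants $c_m,d_m$ instead of \eqref{'xkml}, \eqref{'ylmk} and $c'_m,d'_m$.

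\emph{Step 1: split $T_4$ by $m$.} Write $T_4=\sum_{m\ge1}(T_4^{x,m}+T_4^{y,m})$, where
\[
T_4^{x,m}(e_{i_{t_k},i_{t_l}})=x_{_{(k,m),l}}\quad\text{and}\quad T_4^{y,m}(e_{i_{t_k},i_{t_l}})=y_{_{l,(m,k)}}
\]
for $m<l<k$, and both vanish otherwise. By \eqref{'xkml}, for fixed $m$ the family $\{x_{_{(k,m),l}}\}_{m<l<k}$ is a block sequence generated by the single operator $c'_m$, with rows $\{\hat F^{-1}_{k,m}\}_k$ and columns $\{\hat L^{-1}_l\}_l$ mutually orthogonal. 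By Remark \ref{rem}(2), for finitely supported scalars $(\alpha_{k,l})$,
\[
s\Big(\sum_{m<l<k}\alpha_{k,l}x_{_{(k,m),l}}\Big)=\mu\Big(s\big(c'_m\otimes\textstyle\sum\alpha_{k,l}e_{k,l}\big)\Big).
\]
Hence $\Vert T_4^{x,m}\Vert=\Vert T_4^{x,1}\Vert\cdot\Vert c'_m\Vert/\Vert c'_1\Vert$ in the sense that both equal the norm of $\alpha\mapsto c'_m\otimes\alpha$ on the lower-triangular matrices, and the same holds for $y$ with $d'_m$.

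\emph{Step 2: reduce to $m=1$.} Once the map $\alpha\mapsto c'_1\otimes\alpha$ (and $\alpha\mapsto d'_1\otimes\alpha$) is bounded on lower-triangular $\alpha$, the bounds \eqref{cm'c1} and \eqref{dm'd1} give
\[
\Vert T_4\Vert^r\lesssim\sum_m\delta_m^r\big(\Vert T_4^{x,1}\Vert^r+\Vert T_4^{y,1}\Vert^r\big)<\infty,
\]
provided the $\delta_m$ were chosen summable in the $r$-th power. Here the quantity $\Vert c'_m\otimes\cdot\Vert$ must be read as proportional to $\Vert c'_m\Vert$ via the norm of the generated block operator, which is where the consistency of the isometry systems is used.

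\emph{Step 3: the main obstacle, boundedness for $m=1$.} In a quasi-Banach ideal, $\Vert a\otimes\alpha\Vert$ is not controlled by $\Vert a\Vert\,\Vert\alpha\Vert$, and diagonal or corner projections may be unbounded, as noted after \eqref{diag-proj-unb}. My first attempt is to trace $c'_1$ back to $T$. By construction $c'_1$ is, up to the isometries $\hat u'_1,\hat v'_l$ and a $\delta$-perturbation, the corner $p_{\hat F_1}a_{t_l}p_{\hat L_l}$. In turn $a_{t_l}$ generates $\{x_{s,t_l}\}_s$, which are compressions $p_{K'_s}T(e_{i_s,i_t})p_{[\cup_{j\le t}L'_j]}$.

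So a sum $\sum\alpha_{k,l}x_{_{(k,1),l}}$ is realised, up to summable errors, as the restriction of $P_{[\bigcup_s K'_{s}],[\bigcup_j L'_j]}\,T(\cdot)$ to the span of $e_{i_{n+k},i_{t_l}}$. The rows are shifted as in the proof of Lemma \ref{thma1}, so that the output of $T$ lies in a single off-diagonal corner $p_Kxp_L$, which is contractive even in the quasi-Banach case. This mirrors the estimate for $\sum\alpha_{s,t}x_{s,t}$ there.

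The remaining difficulty is that one needs only the $F_{k,1}\times L_l$ blocks and not the full $x_{s,t}$. The shifting trick places these blocks in a fixed rectangular corner only after re-indexing rows, using the freedom in Theorem \ref{3b}. If that fails, I would fall back on the Weyl-type boundedness of the diagonal projection on triangular parts, mentioned before Notation \ref{tenotation}, to extract the required blocks from $S_2$.
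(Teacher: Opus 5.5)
Your reduction to $T_3$ and $T_4$, with $T_2=S_3-T_3-T_4$, is right. The argument for $T_4$ breaks at Steps 1--2. Consistency identifies $T_4^{x,m}$ isometrically with $\alpha\mapsto c'_m\otimes\alpha$ on the triangle $\{m<l<k\}$, but says nothing about how the norm of that map depends on $c'_m$. In a general $\mathcal C_E$ that norm is not controlled by $\|c'_m\|_{\mathcal C_E}$: tensor products are multiplicative in $\mathcal C_p$, not in general. For example, take $E=\ell_{p,q}$ with $1<p<q<\infty$ (then $c_0\not\hookrightarrow E$) and $x=(j^{-1/p})_{j\le n}$. One has $\|x\|_E\approx(\log n)^{1/q}$ but $\|x\otimes x\|_E\gtrsim(\log n)^{1/p+1/q}$. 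So a finite-rank $a$ of tiny norm can make $\alpha\mapsto a\otimes\alpha$ have arbitrarily large norm, and \eqref{cm'c1}, \eqref{dm'd1} give no decay of $\|T_4^{x,m}\|$ or $\|T_4^{y,m}\|$.

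What boundedness of $S_3$ does give is a uniform bound $\|T_4^{x,m}\|\le\|S_3\|$: shift indices by consistency, then compress $S_3$ to the corner with rows $\bigcup_k\hat F^{-1}_{k,m}$ and columns $\bigcup_l\hat L^{-1}_l$. This already settles your Step 3 for $m=1$, with no need to go back to $T$ or to Weyl's inequality, but a uniform bound is not summable in $m$.

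The missing idea is to not split the full matrix by $m$. Take $\alpha$ supported in $2\le l<k\le n+2$, shift $(k,l)\mapsto(2n+k,n+l)$, and write $\sum_{m=1}^{l-1}=\sum_{m=1}^{n}-\sum_{m=l}^{n}$.
\begin{itemize}
\item The first sum is exactly $P_{K,L}S_3\bigl(\sum\alpha_{k,l}e_{i_{t_{2n+k}},i_{t_{n+l}}}\bigr)$ for the single corner $K=[\bigcup_{i,j\le n}\hat F^{-1}_{2n+2+i,j}]$, $L=[\bigcup_{j\le n}\hat L^{-1}_{n+1+j}]$, so it is bounded by $\|S_3\|$.
\item Each term of the second sum has fixed $(l,m)$ and is a single column. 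Its norm is exactly $\|c'_m\|_{\mathcal C_E}(\sum_k|\alpha_{k,l}|^2)^{1/2}\le\|c'_m\|_{\mathcal C_E}\|\sum\alpha_{k,l}e_{i_{t_k},i_{t_l}}\|_{\mathcal C_E}$.
\end{itemize}
The smallness $\|c'_m\|\le\delta_m\|c'_1\|$ can be used only against such rank-one factors, and $\sum_m m\delta_m^r<\infty$ finishes.

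$T_3$ is not symmetric to $T_4$. For fixed $m$, $\{x_{(k,l),m}\}$ lies in the single column block $\hat L^{-1}_m$ and $\{y_{m,(l,k)}\}$ in the single row block $\tilde K^{-1}_m$. So by Theorem \ref{hi} the $m$-th piece has norm exactly $\bigl\|\left(\begin{smallmatrix}0&d_m\\c_m&0\end{smallmatrix}\right)\bigr\|_{\mathcal C_E}(\sum|\alpha_{k,l}|^2)^{1/2}$. Here per-$m$ summation does work, but it yields the $\ell_2$-norm of the coefficients. You still need $(\sum|\alpha_{k,l}|^2)^{1/2}\lesssim\|\sum\alpha_{k,l}e_{i_{t_k},i_{t_l}}\|_{\mathcal C_E}$, i.e.\ a continuous inclusion $E\subset\ell_2$, which is not a hypothesis. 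The paper extracts it from $S_3$ when $c_1$ or $d_1$ is nonzero:
\begin{itemize}
\item Apply $P_{[\bigcup_{j\ge2}\tilde K^{-1}_j],\hat L^{-1}_1}+P_{\tilde K^{-1}_1,[\bigcup_{j\ge2}\hat L^{-1}_j]}$, which is bounded by \eqref{diag-proj} with $n=2$, to $S_3\bigl(\sum_k\alpha_k e_{i_{t_k},i_{t_{k-1}}}\bigr)$.
\item This isolates $\sum_k\alpha_k(x_{(k,k-1),1}+y_{1,(k-1,k)})$, whose norm is $\bigl\|\left(\begin{smallmatrix}0&d_1\\c_1&0\end{smallmatrix}\right)\bigr\|_{\mathcal C_E}\|\alpha\|_{\ell_2}$.
\item The input $\sum_k\alpha_k e_{i_{t_k},i_{t_{k-1}}}$ has norm $\|\sum_k\alpha_k e_k\|_E$, so the inequality follows.
\end{itemize}
Your proposal never supplies this inequality. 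As written, neither the boundedness of $T_4$ nor that of $T_3$ is established.
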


\begin{proof}
First, we show that $T_3$ is bounded on $[e_{i_{t_k},i_{t_l}}]_{2\leq l<k<\infty}$. If $c_1=0$ and $d_1=0$, 
then $c_m,c_m',d_m,d_m' =0$ (see \eqref{cmc1}, \eqref{cm'c1}, \eqref{dm'd1}, \eqref{dmd1} above). Hence, by the definitions of $x_{(k,l),m},x_{_{(k,m),l}}, y_{m,(l,k)}$ and $y_{_{l,(m,k)}}$, we obtain  
$T_3=0$.

If $c_1$ or $d_1$ is nonzero, then for any finite sequence $\{\alpha_k\}_{k=3}^n$ of scalars, we have 
\begin{eqnarray*}
&&\Bigg(\sum_{k=3}^n\vert\alpha_k\vert^2\Bigg)^{1/2}\cdot\left\Vert\left(
\begin{smallmatrix}
0&d_1\\c_1&0
\end{smallmatrix}\right)\right\Vert_{\mathcal C_E}\\
&\stackrel{\eqref{'xklm},\eqref{'ymlk},{\rm Th.}\;\ref{hi}}{=}&\Bigg\Vert\sum_{k=3}^n\alpha_k\big(x_{_{(k,2),1}}+y_{_{1,(2,k)}}\big)\Bigg\Vert_{\mathcal C_E}\\ 
&=&\Bigg\Vert\big(P_{[\bigcup_{j=2}^\infty \tilde{K}^{_{_{-1}}}_j],\hat{L}^{_{_{-1}}}_1}+P_{\tilde{K}^{_{_{-1}}}_1,[\bigcup_{j=2}^\infty \hat{L}^{_{_{-1}}}_j]}\big)S_3\Big(\sum_{k=1}^n\alpha_ke_{i_{t_k},i_{t_{k-1}}}\Big)\Bigg\Vert_{\mathcal C_E} \\
&\stackrel{\eqref{diag-proj}}{\leq}&
4^{\frac{1}{r}-1}\Vert S_3\Vert\cdot\Bigg\Vert\sum_{k=3}^n\alpha_ke_k\Bigg\Vert_E,
\end{eqnarray*}
where the second equality follows from the facts that
\[
\big\{x_{_{(k,l),m}}\big\}_{
\substack{ _{1\leq l<k<\infty}  \\ _{1\leq m<k<\infty }}}
\stackrel{\mbox{{\rm\tiny b.s}}}{\boxplus}\{\hat{F}^{_{-1}}_{k,l}\}_{1\leq l<k<\infty}\otimes\{\hat{L}^{_{-1}}_m\}_{m=1}^\infty\]
and
\[\big\{y_{_{m,(l,k)}}\big\}_{
\substack{ _{1\leq m<k<\infty}  \\ _{1\leq l<k<\infty} } }
\stackrel{\mbox{{\rm\tiny b.s}}}{\boxplus}\{\tilde{K}^{_{-1}}_m\}_{m=1}^\infty\otimes\{\tilde{F}^{_{-1}}_{l,k}\}_{1\leq l<k<\infty}.
\]
In particular, we have \[
\left\Vert\left(
\begin{smallmatrix}
0&d_1\\c_1&0
\end{smallmatrix}\right)\right\Vert_{\mathcal C_E}\cdot \left \Vert\cdot\right\Vert_{\ell_2}\leq4^{\frac{1}{r}-1}\Vert S_3\Vert\cdot \left\Vert\cdot \right \Vert_E.
\]
Hence, for any sequence $\{\alpha_{k,l}\}_{2\leq l<k<\infty}$ of scalars with finitely many  nonzero elements, we have 
\begin{align}\label{ineq tri S3}\begin{split}
\Bigg(\sum_{2\leq l<k<\infty}\vert\alpha_{k,l}\vert^2\Bigg)^{1/2}\cdot\left\Vert\left(
\begin{smallmatrix}
0&d_1\\c_1&0
\end{smallmatrix}\right)\right\Vert_{\mathcal C_E}&=\left\Vert\left(
\begin{smallmatrix}
0&d_1\\c_1&0
\end{smallmatrix}\right)\right\Vert_{\mathcal C_E}\cdot\Bigg\Vert\sum_{2\leq l<k<\infty}\alpha_{k,l}e_{i_{t_k},i_{t_l}}\Bigg\Vert_{C_{\ell_2}} \\
&\leq4^{\frac{1}{r}-1}\Vert S_3\Vert\cdot\Bigg\Vert\sum_{2\leq l<k<\infty}\alpha_{k,l}e_{i_{t_k},i_{t_l}}\Bigg\Vert_{\mathcal C_E}.
\end{split}
\end{align}
Consequently, for any sequence $\{\alpha_{k,l}\}_{2\leq l<k<\infty}$ of scalars with finitely many nonzero elements, we have
\begin{eqnarray}\label{hil}
&& \Bigg\Vert T_3\bigg(\sum_{2\leq l<k<\infty}\alpha_{k,l}e_{i_{t_k},i_{t_l}}\bigg)\Bigg\Vert_{\mathcal C_E}\\
&=&\Bigg\Vert\sum_{2\leq l<k<\infty}\alpha_{k,l}\sum_{m=1}^{l-1}\big(x_{_{(k,l),m}}+y_{_{m,(l,k)}}\big)\Bigg\Vert_{\mathcal C_E}\nonumber\\
&\stackrel{\eqref{qt-eq}}{\leq}&4^{\frac{1}{r}}\Bigg(\sum_{m=1}^\infty\Bigg\Vert\sum_{m+1\leq l<k<\infty}\alpha_{k,l}\big(x_{_{(k,l),m}}+y_{_{m,(l,k)}}\big)\Bigg\Vert_{\mathcal C_E}^r\Bigg)^{1/r}\nonumber\\
&\stackrel{\eqref{'xklm},\eqref{'ymlk},{\rm Th}\;\ref{hi}}{=}&4^{\frac{1}{r}}\Bigg(\sum_{m=1}^\infty\left\Vert\left(
\begin{smallmatrix}
0&d_m\\c_m&0
\end{smallmatrix}\right)\right\Vert_{\mathcal C_E}^r\Bigg)^{1/r}\Bigg(\sum_{m+1\leq l<k<\infty}\vert\alpha_{k,l}\vert^2\Bigg)^{1/2}\nonumber\\
&\stackrel{\eqref{cmc1},\eqref{dmd1}}{\leq}&4^{\frac{1}{r}}\Bigg(1+2^{1-r}\sum_{m=2}^\infty{\delta_m}^r\Bigg)^{1/r}\left\Vert\left(
\begin{smallmatrix}
0&d_1\\c_1&0
\end{smallmatrix}\right)\right\Vert_{\mathcal C_E}\Bigg(\sum_{2\leq l<k<\infty}\vert\alpha_{k,l}\vert^2\Bigg)^{1/2}\nonumber\\
&\stackrel{\eqref{ineq tri S3}}{\leq}&4^{\frac{2}{r}-1}\Bigg(1+2^{1-r}\sum_{m=2}^\infty{\delta_m}^r\Bigg)^{1/r}\Vert S_3\Vert\cdot\Bigg\Vert\sum_{2\leq l<k<\infty}\alpha_{k,l}e_{i_{t_k},i_{t_l}}\Bigg\Vert_{\mathcal C_E}\nonumber. 
\end{eqnarray}
Hence, $T_3$ is bounded.

Next, we show $T_4$ is bounded on $[e_{i_{t_k},i_{t_l}}]_{2\leq l<k<\infty}$. For any finite sequence $\{\alpha_{k,l}\}_{2\leq l<k\leq n+2}$ of scalars,
\begin{eqnarray*}
&&\bigg\Vert\sum_{2\leq l<k\leq n+2}\alpha_{k,l}\sum_{m=1}^{l-1}x_{_{(k,m),l}}\bigg\Vert_{\mathcal C_E}^r\\
&\stackrel{\eqref{'xkml}}{=}&\bigg\Vert\sum_{2\leq l<k\leq n+2}\alpha_{k,l}\sum_{m=1}^{l-1}x_{_{(2n+k,m),n+l}}\bigg\Vert_{\mathcal C_E}^r\\
&=&\bigg\Vert\sum_{2\leq l<k\leq n+2}\alpha_{k,l}\sum_{m=1}^{n}x_{_{(2n+k,m),n+l}}-\sum_{2\leq l<k\leq n+2}\alpha_{k,l}\sum_{m=l}^nx_{_{(2n+k,m),n+l}}\bigg\Vert_{\mathcal C_E}^r\\
&=&\bigg\Vert\sum_{2\leq l<k\leq n+2}\alpha_{k,l}\sum_{m=1}^{n}x_{_{(2n+k,m),n+l}}-\sum_{l=2}^n\sum_{m=l}^n\sum_{k=l+1}^{n+2}\alpha_{k,l}x_{_{(2n+k,m),n+l}}\bigg\Vert_{\mathcal C_E}^r\\
&\stackrel{\eqref{qt-eq}}{\leq}&4\bigg\Vert\sum_{2\leq l<k\leq n+2}\alpha_{k,l}\sum_{m=1}^{n}x_{_{(2n+k,m),n+l}}\bigg\Vert_{\mathcal C_E}^r+4\sum_{l=2}^n\sum_{m=l}^n\bigg\Vert\sum_{k=l+1}^{n+2}\alpha_{k,l}x_{_{(2n+k,m),n+l}}\bigg\Vert_{\mathcal C_E}^r\\
&\stackrel{\eqref{'xkml}}{=}&4\bigg\Vert P_{[\bigcup_{i,j=1}^n\hat{F}^{_{-1}}_{2n+2+i,j}],[\bigcup_{j=1}^n \hat{L}^{_{-1}}_{n+1+j}]}S_3\Big(\sum_{2\leq l<k\leq n+2}\alpha_{k,l}e_{i_{t_{2n+k}},i_{t_{n+l}}}\Big)\bigg\Vert_{\mathcal C_E}^r\\
&&+4\sum_{l=2}^n\sum_{m=l}^n\Vert c'_m\Vert_{\mathcal C_E}^r\bigg(\sum_{k=l+1}^{n+2}\vert\alpha_{k,l}\vert^2\bigg)^{r/2}\\
&\stackrel{\eqref{qt-eq}}{\leq}&4\Vert S_3\Vert^r\bigg\Vert\sum_{2\leq l<k\leq n+2}\alpha_{k,l}e_{i_{t_{2n+k}},i_{t_{n+l}}}\bigg\Vert_{\mathcal C_E}^r+4\sum_{l=2}^n\sum_{m=l}^n\Vert c'_m\Vert_{\mathcal C_E}^r\bigg(\sum_{k=l+1}^{n+2}\vert\alpha_{k,l}\vert^2\bigg)^{r/2}\\
&=&4\Vert S_3\Vert^r\Bigg\Vert\sum_{2\leq l<k\leq n+2}\alpha_{k,l}e_{i_{t_k},i_{t_l}}\Bigg\Vert_{\mathcal C_E}^r+4\sum_{l=2}^n\sum_{m=l}^n\Vert c'_m\Vert_{\mathcal C_E}^r\bigg(\sum_{k=l+1}^{n+2}\vert\alpha_{k,l}\vert^2\bigg)^{r/2}\\
&\leq&4\bigg(\Vert S_3\Vert^r+\sum_{l=2}^n\sum_{m=l}^n{\delta_m}^r\Vert c'_1\Vert_{\mathcal C_E}^r\bigg)\Bigg\Vert\sum_{2\leq l<k\leq n+2}\alpha_{k,l}e_{i_{t_k},i_{t_l}}\Bigg\Vert_{\mathcal C_E}^r\\
&\leq&4\Bigg(\Vert S_3\Vert^r+\Vert c'_1\Vert_{\mathcal C_E}^r\sum_{m=2}^\infty m{\delta_m}^r\Bigg)\Bigg\Vert\sum_{2\leq l<k\leq n+2}\alpha_{k,l}e_{i_{t_k},i_{t_l}}\Bigg\Vert_{\mathcal C_E}^r,
\end{eqnarray*}
and similarly,
\begin{align*}
&\quad \Bigg\Vert\sum_{2\leq l<k\leq n+2}\alpha_{k,l}\sum_{m=1}^{l-1}y_{_{l,(m,k)}}\Bigg\Vert_{\mathcal C_E}^r\\
&\leq4\bigg(\Vert S_3\Vert^r+\Vert d'_1\Vert_{\mathcal C_E}^r\sum_{m=2}^\infty m{\delta_m}^r\bigg)\Bigg\Vert\sum_{2\leq l<k\leq n+2}\alpha_{k,l}e_{i_{t_k},i_{t_l}}\Bigg\Vert_{\mathcal C_E}^r.
\end{align*}
Since $\{\delta_m\}_{m=1}^\infty$ is small enough, without loss generality, we may assume that $\sum_{m=2}^\infty m{\delta_m}^r<\infty$. Consequently, $T_4$ is bounded. 

Since $T_2=S_3-T_3-T_4$, it follows that $T_2$ is bounded on 
$[e_{i_{t_k},i_{t_l}}]_{2\leq l<k<\infty}$\footnote{If $E$ is a Banach space, then the diagonal projection on $\mathcal C_E$ is automatically bounded (see \eqref{projection inequality}). Noting 
\[
T_2=\big(P_{\big\{[\bigcup_{k'=l'+1}^\infty\hat{F}^{_{-1}}_{k',l'}],\hat{L}^{_{_{-1}}}_{l'}\big\}_{l'=2}^\infty}+P_{\big\{\tilde{K}^{_{_{-1}}}_{l'},[\bigcup_{k'=l'+1}^\infty\tilde{F}^{_{-1}}_{l',k'}]\big\}_{l'=2}^\infty}\big)S_3,
\]
we obtain that  $T_2$ is bounded. By $T_4=S_3-T_2-T_3$, we obtain   the boundedness of $T_4$.}.
\end{proof}

The proof of Theorem \ref{thma} is complete.   
\end{proof}

Below, we study properties of 
 operators $T_1$, $T_2$, $T_3$, and $T_4$ given in  Theorem \ref{thma}. 
 In Theorem \ref{thmb} (resp. Theorem \ref{thma'}) below, we show that if $c_0, \ell_2\not\hookrightarrow E$ (resp. $c_0\not\hookrightarrow E$), then there exist two operators from $\mathcal B([e_{\xi'_k,\eta'_l}]_{k,l=1}^\infty,\mathcal C_E)$ (resp. three operators from $\mathcal B\big(\mathcal T_{E,\{\xi'_k,\eta'_k\}_{k=1}^\infty},\mathcal C_E$\big)), where $\{\xi_k'\}_{k=1}^\infty$ and $\{\eta_k'\}_{k=1}^\infty$ are two orthonormal sequences in $H$, with more concrete representations,  whose sum restricted on  $\mathcal T_{E,\{\xi'_k,\eta'_k\}_{k=1}^\infty}(=[e_{\xi'_k,\eta'_l}]_{1\leq l\leq k<\infty})$   is a small perturbation of 
   $\left(T_1+T_2+T_3+T_4\right)|_{\mathcal T_{E,\{\xi'_k,\eta'_k\}_{k=1}^\infty}}$. 

From now on, we always assume that the sequence $\{\varepsilon_l\}_{l=1}^\infty$ in Theorem \ref{thma} consists of sufficiently small positive numbers.

\begin{lemma}\label{t2t3}
Let $T_2$ and $T_3$ be the bounded operators given in Theorem \ref{thma}. If $\hat{b}_1$ or $\tilde{b}_1$ is nonzero, then
\begin{itemize}
\item [(1)]
$\big\{T_3(e_{\xi_{i_k},\eta_{i_l}})\big\}_{2\leq l<k<\infty}\sim\big\{(T_2+T_3)(e_{\xi_{i_k},\eta_{i_l}})\big\}_{2\leq l<k<\infty}\sim\big\{e^H_{(k,l)}\big\}_{2\leq l<k<\infty}$, and for any  sequence $\{\alpha_{k,l}\}_{2\leq l<k<\infty}$ with $\norm{\{\alpha_{k,l}\}_{2\leq l<k<\infty}}_2<\infty$, we have 
\begin{eqnarray}
\begin{split}\label{t3l2}&\quad 
2^{1-\frac{1}{r}}\left(1-2^{\frac{4}{r}-1}\bigg(\sum_{m=2}^\infty{\varepsilon_m}^r\bigg)^{1/r}\right)\left\Vert\left(
\begin{smallmatrix}
0&\tilde{b}_1\\\hat{b}_1&0
\end{smallmatrix}\right)
\right\Vert_{\mathcal C_E}\bigg(\sum_{2\leq l<k<\infty}\vert\alpha_{k,l}\vert^2\bigg)^{1/2}\\
&\leq\Bigg\Vert\sum_{2\leq l<k<\infty}\alpha_{k,l}T_3 (e_{\xi_{i_k},\eta_{i_l}})\Bigg\Vert_{\mathcal C_E}\\
&\leq2^{\frac{1}{r}-1}\left(1+2^{\frac{3}{r}}\bigg(\sum_{m=2}^\infty{\varepsilon_m}^r\bigg)^{1/r}\right)\left\Vert\left(
\begin{smallmatrix}
0&\tilde{b}_1\\\hat{b}_1&0
\end{smallmatrix}\right)
\right\Vert_{\mathcal C_E}\bigg(\sum_{2\leq l<k<\infty}\vert\alpha_{k,l}\vert^2\bigg)^{1/2}.
\end{split}
\end{eqnarray}
\item [(2)]
Putting 
\begin{equation}
\hat{b}:=\sum_{m=1}^\infty \hat{b}_m\otimes (\cdot|e_1)e_m\;\;\;\;\mbox{and}\;\;\;\;\tilde{b}: =\sum_{m=1}^\infty \tilde{b}_m\otimes(\cdot|e_m)e_1,
\end{equation}
we have
\[
\left\Vert\cdot\right\Vert_{\ell_2}\leq\frac{16^{\frac{1}{r}-1}\Vert T\Vert}{\left\Vert\left(
\begin{smallmatrix}
0&\tilde{b}\\\hat{b}&0
\end{smallmatrix}\right)
\right\Vert_{\mathcal C_E}}\left\Vert\cdot\right\Vert_E,
\]
i.e., $E\subset\ell_2$.
In particular,
$  
\left\Vert\cdot\right\Vert_{\mathcal C_{\ell_2}}\leq\frac{16^{\frac{1}{r}-1}\Vert T\Vert}{\left\Vert\left(
\begin{smallmatrix}
0&\tilde{b}\\\hat{b}&0
\end{smallmatrix}\right)
\right\Vert_{\mathcal C_E}}\left\Vert\cdot\right\Vert_{\mathcal C_E}.
$  
\end{itemize}
\end{lemma}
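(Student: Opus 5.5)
The plan is to split $T_3(e_{\xi_{i_k},\eta_{i_l}})=\sum_{m=1}^{l-1}(x_{(k,l),m}+y_{m,(l,k)})$ into the leading term $m=1$ and a tail $m\ge 2$, and to control both with Theorem \ref{hi}. For $m=1$, \eqref{hatb} and \eqref{tildeb} give $\{x_{(k,l),1}\}_{2\le l<k}\stackrel{\mbox{\tiny b.s}}{\boxplus}\{F_{k,l}\}\otimes L'_1\curvearrowleft\hat b_1$ and $\{y_{1,(l,k)}\}_{2\le l<k}\stackrel{\mbox{\tiny b.s}}{\boxplus}K'_1\otimes\{G_{l,k}\}\curvearrowleft\tilde b_1$. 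The row blocks $F_{k,l}$ are mutually orthogonal and sit in $K'_k$ with $k\ge3$, hence are orthogonal to $K'_1$; likewise the column blocks $G_{l,k}\subset L'_k$ are orthogonal to $L'_1$. So Theorem \ref{hi}, after normalizing, shows that $\{x_{(k,l),1}+y_{1,(l,k)}\}$ is isometrically equivalent to the $\ell_2$ basis scaled by $\bigl\Vert\bigl(\begin{smallmatrix}0&\tilde b_1\\ \hat b_1&0\end{smallmatrix}\bigr)\bigr\Vert_{\mathcal C_E}$. For each $m\ge2$ the same argument applies to $\{x_{(k,l),m}+y_{m,(l,k)}\}_{m<l<k}$, giving an $\ell_2$ sequence with constant $\bigl\Vert\bigl(\begin{smallmatrix}0&\tilde b_m\\ \hat b_m&0\end{smallmatrix}\bigr)\bigr\Vert\le 2^{1/r-1}\varepsilon_m\bigl\Vert\bigl(\begin{smallmatrix}0&\tilde b_1\\ \hat b_1&0\end{smallmatrix}\bigr)\bigr\Vert$, using $\Vert\hat b_m\Vert\le\varepsilon_m\Vert\hat b_1\Vert$ and $\Vert\hat b_1\Vert\le\bigl\Vert\bigl(\begin{smallmatrix}0&\tilde b_1\\ \hat b_1&0\end{smallmatrix}\bigr)\bigr\Vert$. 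The tail is then summed with \eqref{qt-eq}, yielding a norm at most $4^{1/r}2^{1/r-1}(\sum_{m\ge2}\varepsilon_m^r)^{1/r}$ times the leading constant times $\Vert\alpha\Vert_2$. The quasi-triangle inequality with modulus $2^{1/r-1}$ then gives both bounds in \eqref{t3l2}; the constants $2^{4/r-1}$ and $2^{3/r}$ are bookkeeping.

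To get $\{(T_2+T_3)(e_{\xi_{i_k},\eta_{i_l}})\}\sim\ell_2$, bound $T_2$ from above and extract the $\ell_2$ lower bound by compressing. The upper bound comes from $T_2$ being bounded from $\mathcal T_E$, together with $E\subset\ell_2$ from part (2). For the lower bound I would hit $(T_2+T_3)(\sum\alpha_{k,l}e_{\xi_{i_k},\eta_{i_l}})$ with the two-block compression $x\mapsto p_{[\bigcup F_{k,l}]}xp_{L'_1}+p_{K'_1}xp_{[\bigcup G_{l,k}]}$, which is bounded by \eqref{diag-proj} with constant $\kappa^2$. Since the $T_2$ terms have $m=l\ge2$, they are supported on $L'_l$ and $K'_l$ with $l\ge2$, so they are killed. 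The $m\ge2$ parts of $T_3$ are killed as well, and only the $m=1$ piece survives, whose norm is exactly $\bigl\Vert\bigl(\begin{smallmatrix}0&\tilde b_1\\ \hat b_1&0\end{smallmatrix}\bigr)\bigr\Vert\,\Vert\alpha\Vert_2$.

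Part (2) follows the computation of \eqref{ineq tri S3}. Take the sequence along a subdiagonal, $\sum_k\alpha_ke_{\xi_{i_{k+1}},\eta_{i_k}}$, whose $\mathcal C_E$ norm is $\Vert\sum\alpha_ke_k\Vert_E$. Rather than using only the $m=1$ compression, keep the whole family $m\ge1$ by compressing onto $\bigcup_m[\text{row }F\text{-blocks}]\times L'_m$ and $K'_m\times[\text{column }G\text{-blocks}]$. By Theorem \ref{3a'} and Proposition \ref{ie}, the image under $T_3$ (and hence under $T_1+\dots+T_4\approx T$ after compression) is isometrically $\bigl(\begin{smallmatrix}0&\tilde b\\ \hat b&0\end{smallmatrix}\bigr)\otimes c$ with $c$ of Hilbert--Schmidt type of norm $\Vert\alpha\Vert_2$, while the compression costs at most $\kappa^{O(1)}\le 16^{1/r-1}$ overall, absorbing also the perturbation $\varepsilon$. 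This gives $\Vert\alpha\Vert_2\bigl\Vert\bigl(\begin{smallmatrix}0&\tilde b\\ \hat b&0\end{smallmatrix}\bigr)\bigr\Vert\le16^{1/r-1}\Vert T\Vert\,\Vert\alpha\Vert_E$. The operator-ideal statement follows by applying this to singular value sequences.

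The main obstacle is the index bookkeeping: one must check that the chosen compressions really separate the $m$-th pieces from the $T_1$, $T_2$ and $T_4$ parts (the supports $F_{k,m}\times L'_l$ of $T_4$ versus $F_{k,l}\times L'_m$ of $T_3$). One must also be careful that in the quasi-Banach case no infinite diagonal projection is used, only finitely many block compressions via \eqref{diag-proj}.
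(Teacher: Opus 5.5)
Your two-sided estimate for $T_3$ (leading term $m=1$ by Theorem~\ref{hi}, tail $m\ge2$ by \eqref{qt-eq}) matches the paper's argument. So does your lower bound for $T_2+T_3$ via the two-block compression. There are, however, two genuine gaps.

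\textbf{The upper bound for $T_2$.} The inclusion $E\subset\ell_2$ means $\Vert\cdot\Vert_{\ell_2}\lesssim\Vert\cdot\Vert_E$, which is the wrong direction. Boundedness of $T_2$ only gives $\Vert\sum\alpha_{k,l}T_2(e_{\xi_{i_k},\eta_{i_l}})\Vert_{\mathcal C_E}\le\Vert T_2\Vert\,\Vert\sum\alpha_{k,l}e_{\xi_{i_k},\eta_{i_l}}\Vert_{\mathcal C_E}$. The right-hand side is not dominated by $\Vert\alpha\Vert_2$: for $E=\ell_1$ and $\alpha$ on a subdiagonal it equals $\Vert\alpha\Vert_1$. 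What you need is the structure of $T_2$. Since the blocks $x_{(k,l),l}$ are consistent by \eqref{hatc}, Remark~\ref{rem}(2) gives
\[
\Big\Vert\sum_{2\le l<k\le n}\alpha_{k,l}x_{(k,l),l}\Big\Vert_{\mathcal C_E}
=\Big\Vert\sum_{l=2}^{n}\beta_l\,\hat c_l\otimes(\cdot|e_l)e_l\Big\Vert_{\mathcal C_E}
=\Big\Vert\sum_{l=2}^{n}\beta_l\,x_{(n+1,l),l}\Big\Vert_{\mathcal C_E},
\qquad \beta_l=\Big(\sum_{k=l+1}^{n}|\alpha_{k,l}|^2\Big)^{1/2},
\]
so each column of coefficients collapses onto the single row $k=n+1$. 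The last norm equals $\Vert P_{K'_{n+1},[\bigcup_{j\le n}L'_j]}T_2(\sum_l\beta_l e_{\xi_{i_{n+1}},\eta_{i_l}})\Vert_{\mathcal C_E}$, which is at most $\Vert T_2\Vert\,\Vert\beta\Vert_2=\Vert T_2\Vert\,\Vert\alpha\Vert_2$. This holds because a single compression is contractive (and removes $y_{l,(l,n+1)}$), and a single row is rank one, so its $\mathcal C_E$-norm is the $\ell_2$-norm of its coefficients. The $y_{l,(l,k)}$ part is symmetric. Without this step, the upper half of $\{(T_2+T_3)(e_{\xi_{i_k},\eta_{i_l}})\}\sim\{e^H_{(k,l)}\}$ is unproved.

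\textbf{Part (2).} Compressing simultaneously onto all (row $F$-blocks)$\times L'_m$ and $K'_m\times$(column $G$-blocks), $m\ge1$, does not isolate the $T_3$ part.
\begin{itemize}
\item Since $F_{k+1,k}\subset K'_{k+1}$ and $G_{k,k+1}\subset L'_{k+1}$, the rows of the $x$-blocks meet the $K'_m$ carrying the $y$-blocks. For instance, $x_{(k+1,k),m}$ and $y_{k+1,(k',k'+1)}$ with $k'>k+1$ both sit in rows $K'_{k+1}$. The two families are therefore not disjointly supported, and Theorem~\ref{hi} and Proposition~\ref{ie} do not apply.
\item The $T_2$ pieces $x_{(k+1,k),k}$ and $y_{k,(k,k+1)}$ survive this compression.
\item $T_3(e_{\xi_{i_{k+1}},\eta_{i_k}})$ contains only the terms $m<k$, so even the $T_3$ part is not $\left(\begin{smallmatrix}0&\tilde b\\ \hat b&0\end{smallmatrix}\right)\otimes c$.
\item The error $T-(T_1+\cdots+T_4)$ is additive and cannot be absorbed into a multiplicative constant.
\end{itemize}
The paper handles all of this by shifting and truncating. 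It applies $P_{[\bigcup_{j\ge N+2}F_{j+1,j}],[\bigcup_{j\le N}L'_j]}+P_{[\bigcup_{j\le N}K'_j],[\bigcup_{j\ge N+2}G_{j,j+1}]}$ to $(T_1+\cdots+T_4)\big(\sum_k\alpha_k e_{\xi_{i_{N+k+2}},\eta_{i_{N+k+1}}}\big)$. This compression runs along orthogonal families and annihilates $T_1$, $T_2$, $T_4$ and the $m>N$ part of $T_3$. By consistency and Theorem~\ref{hi}, what remains is an $\ell_2$-sequence with constant $\Vert\left(\begin{smallmatrix}0&\tilde\beta_N\\ \hat\beta_N&0\end{smallmatrix}\right)\Vert_{\mathcal C_E}$, where $\hat\beta_N,\tilde\beta_N$ are the truncations of $\hat b,\tilde b$ to $m\le N$. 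The perturbation error is $O\big((\sum_{k\ge N}\varepsilon_k^r)^{1/r}\big)$. Letting $N\to\infty$, and using \eqref{lim-norm} because the quasi-norm need not be continuous, gives the stated inequality with constant $16^{1/r-1}$.
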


\begin{proof}
\begin{itemize}
\item [(1).]
By Theorem \ref{hi}, for any  sequence $\{\alpha_{k,l}\}_{2\leq l<k<\infty}$ with $\norm{\{\alpha_{k,l}\}_{2\leq l<k<\infty}}_2<\infty$, we  have 
\begin{equation}\label{eq:l2 57}
\left\Vert\sum_{2\leq l<k<\infty}\alpha_{k,l}\big(x_{_{(k,l),1}}+y_{_{1,(l,k)}}\big)\right\Vert_{\mathcal C_E}=
\left\Vert\left(
\begin{smallmatrix}
0&\tilde{b}_1\\\hat{b}_1&0
\end{smallmatrix}\right)
\right\Vert_{\mathcal C_E}\bigg(\sum_{2\leq l<k<\infty}\vert\alpha_{k,l}\vert^2\bigg)^{1/2}.
\end{equation}
Arguing similarly as  \eqref{hil}, we have 
\begin{align*}
 &\Bigg\Vert\sum_{2\leq l<k<\infty}\alpha_{k,l}\bigg(T_3(e_{\xi_{i_k},\eta_{i_l}})-\big(x_{_{(k,l),1}}+y_{_{1,(l,k)}}\big)\bigg)\Bigg\Vert_{\mathcal C_E}\\
 \leq&4^{\frac{1}{r}}\bigg(\sum_{m=2}^\infty\left\Vert\left(
\begin{smallmatrix}
0&\tilde{b}_m\\\hat{b}_m&0
\end{smallmatrix}\right)\right\Vert_{\mathcal C_E}^r\bigg)^{1/r}\Bigg(\sum_{2\leq l<k<\infty}\vert\alpha_{k,l}\vert^2\Bigg)^{1/2}\\
\leq&2^{\frac{3}{r}}\Bigg(\sum_{m=2}^\infty{\varepsilon_m}^r\Bigg)^{1/r}\left\Vert\left(
\begin{smallmatrix}
0&\tilde{b}_1\\\hat{b}_1&0
\end{smallmatrix}\right)
\right\Vert_{\mathcal C_E}\Bigg(\sum_{2\leq l<k<\infty}\vert\alpha_{k,l}\vert^2\Bigg)^{1/2}.
\end{align*}
This together with \eqref{eq:l2 57} proves  \eqref{t3l2}, and thus  $\big\{T_3(e_{\xi_{i_k},\eta_{i_l}})\big\}_{2\leq l<k<\infty}\sim\big\{e^H_{(k,l)}\big\}_{2\leq l<k<\infty}$. By  \eqref{hatc} and Remark \ref{rem}, $\{x_{_{(k,l),l}}\}_{2\leq l<k<\infty}\simeq\{\hat{c}_l\otimes(\cdot|e_l)e_{_{(k,l)}}\}_{2\leq l<k<\infty}$, where $\{e_{_{(k,l)}}\}_{k,l}$ is unit basis of $\ell_2(\mathbb N\times\mathbb N)$. For any finite sequence of scalars $\{\alpha_{k,l}\}_{2\leq l<k\leq n}$, we have 
\begin{eqnarray*}
&&\left\Vert\sum_{2\leq l<k\leq n}\alpha_{k,l}x_{_{(k,l),l}}\right\Vert_{\mathcal C_E}\\
&=&\left\Vert\sum_{2\leq l<k\leq n}\alpha_{k,l}\big(\hat{c}_l\otimes(\cdot|e_l)e_{_{(k,l)}}\big)\right\Vert_{\mathcal C_E}\\
&=&\left\Vert\sum_{l=2}^n\bigg(\hat{c}_l\otimes\bigg((\cdot|e_l)\sum_{k=l+1}^n\alpha_{k,l}e_{_{(k,l)}}\bigg)\bigg)\right\Vert_{\mathcal C_E}\\
&=&\left\Vert\sum_{l=2}^n\bigg(\sum_{k=l+1}^n\vert\alpha_{k,l}\vert^2\bigg)^{1/2}\hat{c}_l\otimes(\cdot|e_l)e_l\right\Vert_{\mathcal C_E}   \\
&\stackrel{\eqref{hatc}}{=}&\left\Vert\sum_{l=2}^n\bigg(\sum_{k=l+1}^n\vert\alpha_{k,l}\vert^2\bigg)^{1/2}x_{_{(n+1,l),l}}\right\Vert_{\mathcal C_E}\\
&\stackrel{\eqref{T2}}{=}& \left\Vert P_{K'_{n+1},[\bigcup_{j=1}^nL'_j]}T_2\left(\sum_{l=2}^n\bigg(\sum_{k=l+1}^n\vert\alpha_{k,l}\vert^2\bigg)^{1/2}e_{\xi_{i_{n+1}},\eta_{i_l}}\right)\right\Vert_{\mathcal C_E}\\
&\leq& \Vert T_2\Vert\bigg(\sum_{2\leq l<k\leq n}\vert\alpha_{k,l}\vert^2\bigg)^{1/2}.
\end{eqnarray*}
Arguing similarly, we have 
\[
\left\Vert\sum_{2\leq l<k\leq n}\alpha_{k,l}y_{_{l,(l,k)}}\right\Vert_{\mathcal C_E}\leq\Vert T_2\Vert\bigg(\sum_{2\leq l<k\leq n}\vert\alpha_{k,l}\vert^2\bigg)^{1/2}.
\]
Furthermore, since 
\[
\Big(P_{[\bigcup_{j=2}^\infty K'_j],L'_1}+P_{K'_1,[\bigcup_{j=2}^\infty L'_j]}\Big)(T_2+T_3)(e_{\xi_{i_k},\eta_{i_l}})\stackrel{\eqref{T2},\eqref{T3}}{=}x_{_{(k,l),1}}+y_{_{1,(l,k)}}
\]
for every $2\leq l<k<\infty$, it follows from    the fact that $\big\{T_3(e_{\xi_{i_k},\eta_{i_l}})\big\}_{2\leq l<k<\infty}\sim\big\{e^H_{(k,l)}\big\}_{2\leq l<k<\infty}$ and \eqref{eq:l2 57}  that there exists  some positive number $C>1$ such that for any  sequence $\{\alpha_{k,l}\}_{2\leq l<k<\infty}$ of finitely many non-zero scalars, we have 
\begin{align*}
C^{-1}\Bigg(\sum_{2\leq l<k<\infty}\vert\alpha_{k,l}\vert^2\Bigg)^{1/2}&\leq\Bigg\Vert\sum_{2\leq l<k<\infty}\alpha_{k,l}(T_2+T_3)(e_{\xi_{i_k},\eta_{i_l}})\Bigg\Vert_{\cC_E}\\
&=\Bigg\Vert\sum_{2\leq l<k<\infty}\alpha_{k,l}\Big(x_{_{(k,l),l}}+y_{_{l,(l,k)}}+T_3(e_{\xi_{i_k},\eta_{i_l}})\Big)\Bigg\Vert_{\cC_E} \\
&\leq C\Bigg(\sum_{2\leq l<k<\infty}\vert\alpha_{k,l}\vert^2\Bigg)^{1/2}.
\end{align*}
Therefore, $\big\{(T_2+T_3)(e_{\xi_{i_k},\eta_{i_l}})\big\}_{2\leq l<k<\infty}\sim\big\{e^H_{(k,l)}\big\}_{2\leq l<k<\infty}$.

\item [(2).]
For any positive integer $N$,  set 
\[
\hat{\beta}_N:=\sum_{m=1}^N\hat{b}_m\otimes (\cdot|e_1)e_m\;\;\;\;{\rm and}\;\;\;\;\tilde{\beta}_N:=\sum_{m=1}^N\tilde{b}_m\otimes(\cdot|e_m)e_1.
\]
For any sequence $\{\alpha_k\}_{k=1}^\infty\in E$, we have 
\begin{eqnarray} 
&&\notag \left\Vert\left(
\begin{smallmatrix}
0&\tilde{\beta}_N\\\hat{\beta}_N&0
\end{smallmatrix}\right)
\right\Vert_{\mathcal C_E} \left\Vert\sum_{k=1}^\infty\alpha_ke_k\right\Vert_{\ell_2}\\\notag
&\stackrel{\eqref{hatb},\eqref{tildec}}{=}&\left\Vert\sum_{k=1}^\infty\alpha_k\sum_{m=1}^N\big(x_{_{(N+k+2,N+k+1),m}}+y_{_{m,(N+k+1,N+k+2)}}\big)\right\Vert_{\mathcal C_E}\\\notag
&=&\Bigg\Vert\Big(P_{[\bigcup_{j=N+2}^\infty F_{j+1,j}],[\bigcup_{j=1}^NL'_j]}+P_{[\bigcup_{j=1}^{N}K'_j],[\bigcup_{j=N+2}^\infty G_{j,j+1}]}\Big)\\\label{proj+proj}
&&\;\;\;(T_1+T_2+T_3+T_4)\bigg(\sum_{k=1}^\infty\alpha_ke_{\xi_{i_{N+k+2}},\eta_{i_{N+k+1}}}\bigg)\Bigg\Vert_{\mathcal C_E}\\\notag
&\stackrel{\eqref{diag-proj}}{\leq}&4^{\frac{1}{r}-1}\Bigg(2^{\frac{1}{r}-1}\Vert T\Vert+2^{\frac{1}{r}-1}4^{\frac{1}{r}}\bigg(\sum_{k=N}^\infty{\varepsilon_k}^r\bigg)^{\frac{1}{r}}\Bigg)\left\Vert\sum_{k=1}^\infty\alpha_ke_{\xi_{i_{N+k+2}},\eta_{i_{N+k+1}}}\right\Vert_{\mathcal C_E}\\
&=&8^{\frac{1}{r}-1}\Bigg(\Vert T\Vert+4^{\frac{1}{r}}\bigg(\sum_{k=N}^\infty{\varepsilon_k}^r\bigg)^{\frac{1}{r}}\Bigg)\left\Vert\sum_{k=1}^\infty\alpha_ke_k\right\Vert_E.\notag
\end{eqnarray}
Noting that $\hat{\beta}_N\to \hat{b}$ and $\tilde{\beta}_N\to \tilde{b}$ in $\mathcal C_E$ as $N\to \infty $, 
we have 
\[
\left\Vert\cdot \right\Vert_{\ell_2}\leq\frac{16^{\frac{1}{r}-1}\Vert T\Vert}{\left\Vert\left(
\begin{smallmatrix}
0&\tilde{b}\\\hat{b}&0
\end{smallmatrix}\right)
\right\Vert_{\mathcal C_E}}\left\Vert\cdot\right\Vert_E.
\]

\end{itemize}
The proof is complete.
\end{proof}

\begin{notation}\label{notationAk}
For any nonempty subset $A\subset\mathbb N$, we define
\[
K_A=\left[{\bigcup}_{i\in A}K_i\right],\;\;\;\;\;\;\;L_A=\left[{\bigcup}_{i\in A}L_i\right].
\]
Put
\[
K=\left[{\bigcup}_{i=1}^\infty K_i\right]\;\;\;\;{\rm and}\;\;\;\;L=\left[{\bigcup}_{i=1}^\infty L_i\right].
\]

Passing to a  subsequence of $\{i_k\}_{k=1}^\infty$ in the Theorem \ref{thma} if necessary, without loss of generality,  we may  assume that there exists a sequence $\{B_k\}_{k=1}^\infty$ of nonempty subsets of $\mathbb N$ with $B_k\subset\{n_{k-1}+1,\dots,n_k\}$ such that
\[
K'_k=K_{B_k},\;\;\;\;\;\;\;L'_k=L_{B_k},
\]
and $\mathbb N\setminus\bigcup_{k=1}^\infty B_k$ is an infinite set. 
Moreover, without loss of generality, we may  assume
\[
\sum_{i=1}^\infty\dim(K_i)=\sum_{i=1}^\infty\dim(L_i)=\infty.
\]
Hence, we may  assume that there exists  a sequence $\{A_k\}_{k=1}^\infty$ of mutually disjoint (infinite) subsets of $\mathbb N$ with $B_k\subset A_k$ such that
\[
\dim(K_{A_k})=\dim(L_{A_k})=\infty
\]
for every $k$. It is easy to construct two sequences $\{F_{(k,l)}\}_{2\leq l<k<\infty}$ and $\{G_{(l,k)}\}_{2\leq l<k<\infty}$ of mutually orthogonal closed subspaces of $H$ such that 
\[
F_{k,l}\subset F_{(k,l)}\subset K_{A_k},\;\;\;\;\;\;\;\;\dim(F_{(k,l)})=\infty,
\]
and
\[
G_{l,k}\subset G_{(l,k)}\subset L_{A_k},\;\;\;\;\;\;\;\;\dim(G_{(l,k)})=\infty.
\]
\end{notation}

\begin{lemma}\label{t3}
Let $T_3$ be the bounded operators given in Theorem \ref{thma}. Then there is a bounded operator $\tilde{R}_3:[e_{\xi_{i_k},\eta_{i_l}}]_{k,l=2}^\infty\to\mathcal C_E$ having the form
\[
\tilde{R}_3(e_{\xi_{i_k},\eta_{i_l}})=\left\{
\begin{array}{rcl}
x_{_{(k,l)}}+y_{_{(l,k)}}, &  & {2\leq l<k<\infty,}\\
0\;\;\;\;\;\;\;\;, &  & \text{otherwise,}\\
\end{array}
\right.
\]
such that $\tilde{R}_3|_{[e_{\xi_{i_k},\eta_{i_l}}]_{2\leq l<k<\infty}}$ is a small perturbation of $T_3$ associated with the Schauder decomposition $\big\{[e_{\xi_{i_k},\eta_{i_l}}]_{k=l+1}^\infty\big\}_{l=1}^\infty$, where
\[
\{x_{_{(k,l)}}\}_{2\leq l<k<\infty}\stackrel{\mbox{{\rm\tiny b.s}}}{\boxplus}\{F_{(k,l)}\}_{2\leq l<k<\infty}\otimes L\curvearrowleft\hat{b},
\]
\[
\{y_{_{(l,k)}}\}_{2\leq l<k<\infty}\stackrel{\mbox{{\rm\tiny b.s}}}{\boxplus}K\otimes\{G_{(l,k)}\}_{2\leq l<k<\infty}\curvearrowleft\tilde{b},
\]
and
$\left\Vert\left(
\begin{smallmatrix}
0& \tilde{b} \\ \hat{b}&0
\end{smallmatrix}\right)
\right\Vert_{\mathcal C_E} \left\Vert\cdot\right\Vert_{\ell_2}\leq16^{\frac{1}{r}-1}\Vert T\Vert\cdot \left\Vert\cdot \right\Vert_E$.
Moreover, we have 
\[
\lim_{N\to\infty}\left\Vert P_{[\bigcup_{k=N}^\infty K_k],[\bigcup_{k=N}^\infty L_k]}\tilde{R}_3\right\Vert=0.
\]
\end{lemma}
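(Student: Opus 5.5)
Our plan is to replace, for each pair $(k,l)$ with $2\le l<k$, the finite sum $\sum_{m=1}^{l-1}x_{_{(k,l),m}}$ appearing in $T_3(e_{\xi_{i_k},\eta_{i_l}})$ by a single block $x_{_{(k,l)}}$ that is generated by the full column operator $\hat b=\sum_{m}\hat b_m\otimes(\cdot|e_1)e_m$. The tail $\sum_{m\ge l}$ is then added as a small correction, and the $y$-part is treated symmetrically with $\tilde b$ and $G_{(l,k)}$.

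By \eqref{hatb}, the blocks $x_{_{(k,l),m}}$, $1\le m<l$, live in $F_{k,l}\otimes L'_m$ and are consistently generated by $\hat b_m$ through isometries $\hat u_{k,l}:F_{k,l}\to\hat H$ and $\hat v_m:L'_m\to\hat H$. First, we extend $\hat u_{k,l}$ to an isometry of the infinite-dimensional space $F_{(k,l)}\supset F_{k,l}$ onto $\hat H\otimes\ell_2$, sending $F_{k,l}$ into $\hat H\otimes e_1$. Next, we assemble the $\hat v_m$ into one isometry $v:L\to\hat H\otimes\ell_2$ with $v|_{L'_m}=\hat v_m(\cdot)\otimes e_m$; the remaining pieces $L_i$, $i\notin\bigcup_m B_m$, are mapped onto the complement, which is possible by Notation~\ref{notationAk}. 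We then define $x_{_{(k,l)}}$ as the pullback of $\hat b$ under these isometries. By construction $x_{_{(k,l)}}=\sum_{m\ge1}x'_{(k,l),m}$, where $x'_{(k,l),m}=x_{_{(k,l),m}}$ for $m<l$ and the remaining terms are pullbacks of $\hat b_m$ for $m\ge l$. The same construction in $K\otimes G_{(l,k)}$ produces $y_{_{(l,k)}}$ generated by $\tilde b$. We set $\tilde R_3(e_{\xi_{i_k},\eta_{i_l}})=x_{_{(k,l)}}+y_{_{(l,k)}}$ for $l<k$ and $0$ otherwise. Boundedness of $\tilde R_3$ on all of $[e_{\xi_{i_k},\eta_{i_l}}]_{k,l=2}^\infty$ then follows from Theorem~\ref{hi}: the family $\{x_{_{(k,l)}}+y_{_{(l,k)}}\}$ is isometrically equivalent to the $\ell_2$ basis, with constant $\|(\begin{smallmatrix}0&\tilde b\\ \hat b&0\end{smallmatrix})\|_{\mathcal C_E}$. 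Lemma~\ref{t2t3}(2) gives $\|\cdot\|_{\ell_2}\le C\|\cdot\|_{\mathcal C_E}$, so $\tilde R_3$ factors through $\mathcal C_E\to\mathcal C_{\ell_2}\cong\ell_2$. This yields the stated inequality as well.

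For the perturbation estimate, on the $l$-th piece $[e_{\xi_{i_k},\eta_{i_l}}]_{k>l}$ we have
\[
(\tilde R_3-T_3)(e_{\xi_{i_k},\eta_{i_l}})=\sum_{m\ge l}\big(x'_{(k,l),m}+y'_{m,(l,k)}\big).
\]
The blocks in $m$ are disjoint, and for fixed $m$ they again form a Hilbertian sequence by Theorem~\ref{hi}. Hence, via \eqref{qt-eq}, the norm of the restriction is at most $4^{1/r}\big(\sum_{m\ge l}\varepsilon_m^r\big)^{1/r}\|(\begin{smallmatrix}0&\tilde b_1\\ \hat b_1&0\end{smallmatrix})\|$, using $\|\hat b_m\|\le\varepsilon_m\|\hat b_1\|$. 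This tends to $0$ as $l\to\infty$ and can be made as small as needed. The main obstacle is keeping the enlarged blocks mutually orthogonal and inside $K_{A_k}$ and $L$. This is exactly why we pass to the spaces $F_{(k,l)}$ and $G_{(l,k)}$ and to the unused indices $\mathbb N\setminus\bigcup B_k$ of Notation~\ref{notationAk}. We also need the $\hat v_m$ to be restricted to $L'_m$ with disjoint images, so that $v$ is a genuine isometry.

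Finally, for the limit statement we observe that $P_{K_{[N,\infty)},L_{[N,\infty)}}\tilde R_3$ involves only the pullbacks of $\hat b$ and $\tilde b$ compressed to tails, namely $\hat b\,p$ with $p$ the projection onto $\bigoplus_{m\ge N'}\hat H\otimes e_m$ (plus the unused parts), and symmetrically for $\tilde b$. By Theorem~\ref{hi} and the $\ell_2$ factorization, this norm is controlled by $\|(\begin{smallmatrix}0&\tilde b(1-q_{N'})\\ (1-p_{N'})\hat b&0\end{smallmatrix})\|_{\mathcal C_E}$. Separability of $E$ (order continuity) forces this to tend to $0$. To make this work we must order the unused pieces of $L$ and $K$ so that they are eventually absorbed into the tails, that is, choose $v$ so that $L_i$ with large $i$ goes to large $m$.
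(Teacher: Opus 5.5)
Your construction is the paper's. You extend $\hat u_{k,l}$ and $\tilde v_{l,k}$ to the infinite-dimensional spaces $F_{(k,l)}$, $G_{(l,k)}$ of Notation~\ref{notationAk}, glue the $\hat v_m$ (resp.\ $\tilde u_m$) into one isometry, and pull back the full operators. You then bound $\tilde R_3-T_3$ on the $l$-th column by the tail $\sum_{m\ge l}\varepsilon_m^r$, and get boundedness on the whole square from Lemma~\ref{t2t3}(2). (With your isometries the operator to pull back is the row $\hat b=\sum_m\hat b_m\otimes(\cdot|e_m)e_1$, as in the paper's proof of this lemma, not the column written in Lemma~\ref{t2t3}. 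The proof of Lemma~\ref{t2t3}(2) does give its bound for this row operator.)

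One step is misjustified, however. Theorem~\ref{hi} requires every $x$-block and every $y$-block to have orthogonal supports on \emph{both} sides, and for the enlarged blocks this is not available. The block $x_{(k,l)}$ contains the pull-back of $\hat b_{k'}$, whose right support lies in $L'_{k'}$, and $L'_{k'}$ also contains $G_{l',k'}\subseteq G_{(l',k')}$. Likewise $y_{(l',k')}$ contains the pull-back of $\tilde b_k$, whose left support lies in $K'_k\supseteq F_{k,l}$. Theorem~\ref{thma} gives no orthogonality inside $K'_k$ or $L'_{k'}$. The paper applies Theorem~\ref{hi} only to blocks with one fixed index $m$, where $k\neq m$ forces orthogonality on both sides, and combines different $m$ by \eqref{qt-eq}. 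In your perturbation estimate, even the fixed-$m$ family fails this at $k=m$.

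So your claim that $\{x_{(k,l)}+y_{(l,k)}\}$ is isometrically equivalent to the $\ell_2$-basis is not justified. Only upper bounds are needed, though. Separate the $x$- and $y$-parts with \eqref{qt-eq} and apply Remark~\ref{rem}(2) to each part; this gives $\Vert\sum\alpha_{k,l}x_{(k,l)}\Vert_{\mathcal C_E}=\Vert\hat b\Vert_{\mathcal C_E}\Vert\alpha\Vert_{\ell_2}$, and likewise for the $y$'s. This is exactly \eqref{t3s}, and the same splitting done for each $m\ge l$ gives the perturbation bound.

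For the last assertion you need the same splitting plus one more step. The projection $p_{[\bigcup_{i\ge N}K_i]}$ need not preserve $F_{(k,l)}$, so on the $x$-part it is not a compression of $\hat b$; drop it using $\Vert pz\Vert_{\mathcal C_E}\le\Vert z\Vert_{\mathcal C_E}$. Then $x_{(k,l)}\,p_{[\bigcup_{i\ge N}L_i]}$ is the pull-back of $\hat b$ restricted to the coordinates $m$ with $B_m\cap[N,\infty)\neq\emptyset$, at most one of them cut. Its norm tends to $0$ since $\sum_m\Vert\hat b_m\Vert_{\mathcal C_E}^r<\infty$, and the $y$-part is symmetric after dropping the right projection.

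Your concern about ordering the unused pieces $L_i$ is unnecessary. The coimage of $\hat b$ lies in $v\big(\bigoplus_m L'_m\big)$, so those pieces never enter any support.
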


\begin{proof}
If both $\hat{b}_1$ and $\tilde{b}_1$ are zero, then we define  $\tilde{R}_3=0$. 

Assume that 
 $\hat{b}_1$ or $\tilde{b}_1$ is nonzero, and
\[
\big\{x_{_{(k,l),m}}\big\}_{1\leq m<l<k<\infty}\stackrel{\mbox{{\rm\tiny b.s}}}{\boxplus}\big\{\hat{u}'_{k,l}:F_{k,l}\to\hat{H}'\big\}_{2\leq l<k<\infty}\otimes\big\{\hat{v}'_m:L'_m\to\hat{H}'\big\}_{m=1}^\infty\curvearrowleft\hat{b}_m
\]
and
\[
\big\{y_{_{m,(l,k)}}\big\}_{1\leq m<l<k<\infty}
\stackrel{\mbox{{\rm\tiny b.s}}}{\boxplus}\big\{\tilde{u}'_m:K'_m\to\tilde{H}'\big\}_{m=1}^\infty\otimes\big\{\tilde{v}'_{l,k}:G_{l,k}\to\tilde{H}'\big\}_{2\leq l<k<\infty}\curvearrowleft\tilde{b}_m
\]
for every $m\geq1$.

We may construct a sequence of isometries $\big\{\hat{u}_{(k,l)}':F_{(k,l)}\to\hat{H}'\otimes_{_2}\ell_2\big\}_{2\leq l<k<\infty}$ and an isometry $\hat{v}':L\to \hat{H}'\otimes_{_2}\ell_2$ such that $\hat{u}_{(k,l)}'$ is an extension of $\hat{u}'_{k,l}(\cdot)\otimes_{_2} e_1$, i.e.
\[
\hat{u}'_{(k,l)}(f)=\hat{u}'_{k,l}(f)\otimes_{_2} e_1\;\;\;\;\;\;\;\;{\rm for\;any}\;f\in F_{k,l}
\]
and
\[
\hat{u}'_{(k,l)}\big(F_{(k,l)}\big)=\hat{H}'\otimes_{_2}e_1,
\]
and
\[
\hat{v}':\sum_{m=1}^\infty f_m\longmapsto\sum_{m=1}^\infty\hat{v}'_m(f_m)\otimes_{_2} e_m,\;\;\;\;\;\;\;\;{\rm where}\;f_m\in L'_m,\;m\geq1.
\]
Similarly, we may construct an isometry $\tilde{u}':K\to \tilde{H}'\otimes_{_2}\ell_2$ and a sequence of isometries $\big\{\tilde{v}_{(l,k)}':G_{(l,k)}\to\tilde{H}'\otimes_{_2}\ell_2\big\}$ such that
\[
\tilde{u}':\sum_{m=1}^\infty g_m\longmapsto\sum_{m=1}^\infty\tilde{u}'_m(g_m)\otimes_{_2} e_m,\;\;\;\;\;\;\;\;{\rm where}\;g_m\in  K'_m,\;m\geq1,
\]
and $\tilde{v}_{(l,k)}'$ is an extension of $\tilde{v}'_{l,k}(\cdot)\otimes_{_2} e_1$, i.e.
\[
\tilde{v}'_{(l,k)}(g)=\tilde{v}'_{l,k}(g)\otimes_{_2} e_1\;\;\;\;\;\;\;\;{\rm for\;any}\;g\in G_{k,l}
\]
and 
\[
\tilde{v}'_{(l,k)}\big(G_{(l,k)}\big)=\tilde{H}'\otimes_{_2}e_1.
\]
It follows that (see Remark \ref{rem} (3))
\begin{align}\label{xklm}
\big\{x_{_{(k,l),m}}\big\}_{1\leq m<l<k<\infty}\stackrel{\mbox{{\rm\tiny b.s}}}{\boxplus}\big\{\hat{u}_{(k,l)}':F_{(k,l)}\to\hat{H}'\otimes_{_2}\ell_2\big\}_{2\leq l<k<\infty}\otimes\big\{\hat{v}':L\to \hat{H}'\otimes_{_2}\ell_2\big\}\curvearrowleft\hat{b}_m\otimes e_{1,m}
\end{align}
and
\begin{align}\label{ymlk}
\big\{y_{_{m,(l,k)}}\big\}_{1\leq m<l<k<\infty}\stackrel{\mbox{{\rm\tiny b.s}}}{\boxplus}\big\{\tilde{u}':K\to \tilde{H}'\otimes_{_2}\ell_2\big\}\otimes\big\{\tilde{v}_{(l,k)}':G_{(l,k)}\to\tilde{H}'\otimes_{_2}\ell_2\big\}_{2\leq l<k<\infty}\curvearrowleft\tilde{b}_m\otimes e_{m,1}.
\end{align}
for every $m\geq1$, where $e_{1,m}=(\cdot|e_m)e_1$ and $e_{m,1}=(\cdot|e_1)e_m$.
Note that
\[
\sum_{m=1}^\infty\Vert\hat{b}_m\otimes e_{1,m}\Vert_{\mathcal C_E}^r\stackrel{{\rm Th}\;\ref{thma}}{\leq}\Bigg(1+\sum_{m=2}^\infty{\varepsilon_m}^r\Bigg)\Vert\hat{b}_1\Vert_{\mathcal C_E}^r<\infty
\]
and
\[
\sum_{m=1}^\infty\Vert\tilde{b}_m\otimes e_{m,1}\Vert_{\mathcal C_E}^r \stackrel{{\rm Th}\;\ref{thma}}{\leq}\bigg(1+\sum_{m=2}^\infty{\varepsilon_m}^r\bigg)\Vert\tilde{b}_1\Vert_{\mathcal C_E}^r<\infty.
\]
Therefore, the series $\sum_{m=1}^\infty\hat{b}_m\otimes e_{1,m}$ converges to $\hat{b}$, and the series $\sum_{m=1}^\infty\tilde{b}_m\otimes e_{m,1}$ converges to $\tilde{b}$. Define
\begin{equation}\label{t3ex}
x_{_{(k,l)}}:=(\hat{u}_{(k,l)}')^{\ast}\hat{b}\hat{v}'\;\;\;\;\;\;{\rm and}\;\;\;\;\;\;y_{_{(l,k)}}:=\tilde{u}'^\ast\tilde{b}\tilde{v}'_{(l,k)},
\end{equation}
for every $2\leq l<k<\infty$. We obtain
\begin{equation}\label{x(kl)}
\{x_{(k,l)}\}_{2\leq l<k<\infty}\stackrel{\mbox{{\rm\tiny b.s}}}{\boxplus}\big\{\hat{u}_{(k,l)}':F_{(k,l)}\to\hat{H}'\otimes_{_2}\ell_2\big\}_{2\leq l<k<\infty}\otimes\big\{\hat{v}':L\to \hat{H}'\otimes_{_2}\ell_2\big\}\curvearrowleft \hat{b}
\end{equation}
and
\begin{equation}\label{y(lk)}
\{y_{(l,k)}\}_{2\leq l<k<\infty}
\stackrel{\mbox{{\rm\tiny b.s}}}{\boxplus}\big\{\tilde{u}':K\to \tilde{H}'\otimes_{_2}\ell_2\big\}\otimes\big\{\tilde{v}_{(l,k)}':G_{(l,k)}\to\tilde{H}'\otimes_{_2}\ell_2\big\}_{2\leq l<k<\infty}\curvearrowleft\tilde{b}.
\end{equation}
Let  $\{e_{(k,l)}\}_{k,l}$ be the  unit basis of $\ell_2(\mathbb N\times\mathbb N)\cong\ell_2$. 
For any fixed 
  $l\geq2$ and any  sequence $\{\alpha_k\}_{k=l+1}^\infty$ of scalars with $\norm{ \{\alpha_k\}_{k=l+1}^\infty}_2<\infty$,
\begin{eqnarray*}
&&\left\Vert T_3\Big(\sum_{k=l+1}^\infty \alpha_ke_{\xi_{i_k},\eta_{i_l}}\Big)-\sum_{k=l+1}^\infty
\alpha_k\Big(x_{_{(k,l)}}+y_{_{(l,k)}}\Big)\right\Vert_{\mathcal C_E}^r\\
&\stackrel{\eqref{T3}}{=}& \Bigg\Vert\sum_{k=l+1}^\infty\alpha_k\bigg(\sum_{m=1}^{l-1}x_{_{(k,l),m}}-x_{_{(k,l)}}\bigg)+\sum_{k=l+1}^\infty\alpha_k\bigg(\sum_{m=1}^{l-1}y_{_{m,(l,k)}}-y_{_{(l,k)}}\bigg)\Bigg\Vert_{\mathcal C_E}^r\\
&\stackrel{\eqref{xklm},\eqref{ymlk},\eqref{x(kl)}\eqref{y(lk)}}{=}&\Bigg\Vert\sum_{k=l+1}^\infty\alpha_k\left({\sum}_{m=l}^\infty\hat{b}_m\otimes e_{1,m}\right)\otimes(\cdot|e_1)e_{_{(k,l)}}\\
&&\qquad \qquad  +\sum_{k=l+1}^\infty\alpha_k\left({\sum}_{m=l}^\infty\tilde{b}_m\otimes e_{m,1}\right)\otimes(\cdot|e_{_{(k,l)}})e_1\Bigg\Vert_{\mathcal C_E}^r\\
&\stackrel{\eqref{sum-infty}}{\leq}&4\sum_{m=l}^\infty\big(\Vert\hat{b}_m\Vert_{\mathcal C_E}^r+\Vert\tilde{b}_m\Vert_{\mathcal C_E}^r\big)\bigg(\sum_{k=l+1}^\infty\vert\alpha_k\vert^2\bigg)^{r/2}\\
&\leq&4\bigg(\big(\Vert\hat{b}_1\Vert_{\mathcal C_E}^r+\Vert\tilde{b}_1\Vert_{\mathcal C_E}^r\big)\sum_{m=l}^\infty{\varepsilon_m}^r\bigg)\left\Vert\sum_{k=l+1}^\infty \alpha_ke_{\xi_{i_k},\eta_{i_l}}\right\Vert_{\mathcal C_E}^r.
\end{eqnarray*}
Without loss of generality, we may assume that the sequence
  $\big\{\sum_{m=l}^\infty{\varepsilon_m}^r\big\}_{l=2}^\infty$ of positive numbers is small enough.
By Theorem \ref{2d}, there is an operator $R_3\in\mathcal B\big([e_{\xi_{i_k},\eta_{i_l}}]_{2\leq l<k<\infty},\mathcal C_E\big)$ with 
\[
R_3(e_{\xi_{i_k},\eta_{i_l}})=x_{_{(k,l)}}+y_{_{(l,k)}}
\]
for every $2\leq l<k<\infty$ such that $R_3$ is a perturbation of $T_3$ associated with the Schauder decomposition $\big\{[e_{\xi_{i_k},\eta_{i_l}}]_{k=l+1}^\infty\big\}_{l=2}^\infty$ for $\big\{4^{\frac{1}{r}}\big(\Vert\hat{b}_1\Vert_{\mathcal C_E}^r+\Vert\tilde{b}_1\Vert_{\mathcal C_E}^r\big)^{1/r}\big(\sum_{m=l}^\infty{\varepsilon_m}^r\big)^{1/r}\big\}_{l=2}^\infty$.

For any finitely non-zero sequence $\{\alpha_{k,l}\}_{k,l=2}^\infty$ of scalars,  we have 
\begin{eqnarray}\label{t3s}
&&\bigg\Vert\sum_{2\leq l<k<\infty}\alpha_{k,l}\big(x_{_{(k,l)}}+y_{_{(l,k)}}\big)\bigg\Vert_{\mathcal C_E}^r\\\notag
&\leq&4\bigg\Vert\sum_{2\leq l<k<\infty}\alpha_{k,l}x_{_{(k,l)}}\bigg\Vert_{\mathcal C_E}^r+4\bigg\Vert\sum_{2\leq l<k<\infty}\alpha_{k,l}y_{_{(l,k)}}\bigg\Vert_{\mathcal C_E}^r\\\notag
&\stackrel{\eqref{x(kl)},\eqref{y(lk)}}{=}&4\big(\Vert\hat{b}\Vert_{\mathcal C_E}^r+\Vert\tilde{b}\Vert_{\mathcal C_E}^r\big)\bigg(\sum_{2\leq l<k<\infty}\vert\alpha_{k,l}\vert^2\bigg)^{r/2}\\\notag
&\leq&4\big(\Vert\hat{b}\Vert_{\mathcal C_E}^r+\Vert\tilde{b}\Vert_{\mathcal C_E}^r\big)\bigg\Vert\sum_{l,k=2}^\infty\alpha_{k,l}e_{\xi_{i_k},\eta_{i_l}}\bigg\Vert_{\mathcal C_{\ell_2}}^r\\\notag
&\stackrel{{\rm Lem}\;\ref{t2t3}}{\leq}&2^{7-4r}\Vert T\Vert^r\bigg\Vert\sum_{l,k=2}^\infty\alpha_{k,l}e_{\xi_{i_k},\eta_{i_l}}\bigg\Vert_{\mathcal C_E}^r\notag.
\end{eqnarray}

Therefore, $R_3$ can be extended to a bounded operator $\tilde{R}_3:[e_{\xi_{i_k},\eta_{i_l}}]_{k,l=2}^\infty\to\mathcal C_E$ with
\[
\tilde{R}_3(e_{\xi_{i_k},\eta_{i_l}})=\left\{
\begin{array}{rcl}
x_{_{(k,l)}}+y_{_{(k,l)}}, &  & {2\leq l<k<\infty,}\\
0\;\;\;\;\;\;\;\;, &  & \text{otherwise.}\\
\end{array}
\right.
\]
By Lemma \ref{t2t3}, we have  $\left\Vert\left(
\begin{smallmatrix}
0&\hat{b}\\\tilde{b}&0
\end{smallmatrix}\right)
\right\Vert_{\mathcal C_E}\Vert\cdot\Vert_{\ell_2}\leq16^{\frac{1}{r}-1}\Vert T\Vert\cdot\Vert\cdot\Vert_E$.

Using that facts \eqref{hatb} and \eqref{tildeb}, and arguing similarly as \eqref{t3s}, we obtain that 
\begin{equation}
\lim_{N\to\infty}\big\Vert P_{[\bigcup_{k=N}^\infty K_k],[\bigcup_{k=N}^\infty L_k]}\tilde{R}_3\big\Vert=0.
\end{equation}
\end{proof}

\begin{lemma}\label{t4}
Let $T_4$ be the  bounded operator given in Theorem \ref{thma}. 
There is a bounded operator $\tilde{R}_4:[e_{\xi_{i_{2k+1}},\eta_{i_{2l}}}]_{k,l=1}^\infty\to\mathcal C_E$ having the form
\[
\tilde{R}_4(e_{\xi_{i_{2k+1}},\eta_{i_{2l}}})=x_{2k+1,2l}+y_{2l,2k+1}\;\;\;\;\;\;\;\;\mbox{for every}\;k,l\geq1,
\]
such that $\tilde{R}_4|_{[e_{\xi_{i_{2k+1}},\eta_{i_{2l}}}]_{1\leq l\leq k<\infty}}$ is a small perturbation of $T_4|_{[e_{\xi_{i_{2k+1}},\eta_{i_{2l}}}]_{1\leq l\leq k<\infty}}$ associated with the Schauder decomposition $\big\{[e_{\xi_{i_{2k+1}},\eta_{i_{2l}}}]_{k=l}^\infty\big\}_{l=1}^\infty$, where
\[
\{x_{2k+1,2l}\}_{k,l=1}^\infty\stackrel{\mbox{{\rm\tiny b.s}}}{\boxplus}\{K_{A_{2k+1}}\}_{k=1}^\infty\otimes\{L_{A_{2l}}\}_{l=1}^\infty\curvearrowleft\;,
\]
and
\[
\{y_{2l,2k+1}\}_{k,l=1}^\infty\stackrel{\mbox{{\rm\tiny b.s}}}{\boxplus}\{K_{A_{2l}}\}_{l=1}^\infty\otimes\{L_{A_{2k+1}}\}_{k=1}^\infty\curvearrowleft\;.
\]

Moreover, if $T_4\neq0$ (i.e. $\hat{a}_1$ or $\tilde{a}_1$ is nonzero), then $\tilde{R}_4$ is an isomorphic embedding.
\end{lemma}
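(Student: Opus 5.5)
The plan is to mirror the proof of Lemma \ref{t3}, working now with the ``column-side'' families \eqref{hata} and \eqref{tildea} instead of \eqref{hatb} and \eqref{tildeb}. Recall that
\[
T_4(e_{\xi_{i_k},\eta_{i_l}})=\sum_{m=1}^{l-1}\big(x_{(k,m),l}+y_{l,(m,k)}\big),
\]
where $\{x_{(k,m),l}\}_{m<l<k}$ is generated consistently by $\hat a_m$, and $\{y_{l,(m,k)}\}$ by $\tilde a_m$, with $\Vert\hat a_m\Vert\le\varepsilon_m\Vert\hat a_1\Vert$ and $\Vert\tilde a_m\Vert\le\varepsilon_m\Vert\tilde a_1\Vert$.

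\textbf{Step 1: glue the $\hat a_m$ and $\tilde a_m$ into single generators.} Restrict to the odd/even subsequence $k\mapsto 2k+1$, $l\mapsto 2l$. For each $k$, the subspaces $F_{2k+1,m}$, $m<2k+1$, are mutually orthogonal in $K'_{2k+1}\subset K_{A_{2k+1}}$. Using the isometries $\hat u'_{k,m}$ and $\hat v'_l$ from \eqref{hata}, build an isometry $\hat u_{(2k+1)}:K_{A_{2k+1}}\to\hat H'\otimes_2\ell_2$ sending $F_{2k+1,m}$ into $\hat H'\otimes e_m$, and extend it isometrically; this is possible because $\dim K_{A_{2k+1}}=\infty$ by Notation~\ref{notationAk}. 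Build the analogous map on the $L_{A_{2l}}$ side into $\hat H'\otimes e_1$, extending $\hat v'_{2l}\otimes e_1$. Set
\[
\hat a:=\sum_{m\ge1}\hat a_m\otimes(\cdot|e_1)e_m,
\]
which converges because $\sum_m\varepsilon_m^r<\infty$. Then define
\[
x_{2k+1,2l}:=\hat u_{(2k+1)}^\ast\,\hat a\,\hat v_{(2l)}.
\]
Define $\tilde a$ and $y_{2l,2k+1}$ symmetrically. By construction these are block sequences over $\{K_{A_{2k+1}}\}\otimes\{L_{A_{2l}}\}$ (respectively $\{K_{A_{2l}}\}\otimes\{L_{A_{2k+1}}\}$) generated by $\hat a$ (respectively $\tilde a$), and now the index set is all $k,l\ge1$, not just $l\le k$.

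\textbf{Step 2: the perturbation estimate.} On the column $[e_{\xi_{i_{2k+1}},\eta_{i_{2l}}}]_{k\ge l}$, the difference $T_4-\tilde R_4$ consists of the tails $\sum_{m\ge 2l}\hat a_m\otimes\cdots$ together with the $\tilde a$ analogue. Here the parity shift matters: block $(2k+1,m)$ occurs in $T_4$ exactly for $m<2l$. These tails have norm at most $4^{1/r}(\Vert\hat a_1\Vert^r+\Vert\tilde a_1\Vert^r)^{1/r}(\sum_{m\ge 2l}\varepsilon_m^r)^{1/r}$ times the $\ell_2$-norm of the coefficients. For a fixed column $l$, the coefficient $\ell_2$-norm equals the $\mathcal C_E$-norm of $\sum_k\alpha_k e_{\xi_{i_{2k+1}},\eta_{i_{2l}}}$. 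Theorem~\ref{2d} then upgrades these columnwise bounds to a small perturbation.

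\textbf{Step 3: boundedness of $\tilde R_4$ on the full square, and the embedding.} This is the main obstacle. For boundedness, compute the singular values via Remark~\ref{rem}(2): the images are disjointly supported blocks, so
\[
s\Big(\tilde R_4\big(\textstyle\sum\alpha_{k,l}e\big)\Big)=\mu\big(s(\hat a\otimes\alpha)\oplus s(\tilde a\otimes\alpha^{t})\big),
\]
which is controlled by $\Vert(\begin{smallmatrix}0&\tilde a\\ \hat a&0\end{smallmatrix})\otimes\alpha\Vert_{\mathcal C_E}$, as in Proposition~\ref{ie}. It therefore suffices to show that this tensor quantity is bounded by $C\Vert T\Vert\,\Vert\alpha\Vert_{\mathcal C_E}$. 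To get this, I would reproduce the translation trick used for $T_4$ in Lemma~\ref{thma3}. Given a finite square array $\{\alpha_{k,l}\}_{k,l\le n}$, shift the row indices by $2n$ so that every entry lies strictly below the diagonal, and apply $T_4$. Then compress to the blocks with $m\le n$ using a finite diagonal projection \eqref{diag-proj}, which is bounded by $\kappa^{n}$-free constants since only two families of pieces are involved. The $m=1$ term reproduces $\hat a_1\otimes\alpha$ exactly, by consistency, and the remaining $m$ terms are small by the $\varepsilon_m$ bound. This yields $\Vert\hat a_1\otimes\alpha\Vert\lesssim\Vert T\Vert\,\Vert\alpha\Vert$, and hence the same bound for $\hat a$ up to $(1+\sum\varepsilon_m^r)^{1/r}$. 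For the ``moreover'' part, if $\hat a_1\ne0$ or $\tilde a_1\ne0$, compressing $\tilde R_4(x)$ to its $m=1$ blocks gives $\Vert(\begin{smallmatrix}0&\tilde a_1\\ \hat a_1&0\end{smallmatrix})\otimes x\Vert\le C\Vert\tilde R_4x\Vert$. I expect to bound this below by $c\Vert x\Vert_{\mathcal C_E}$ using the fact that the matrix $a\otimes x$ has singular values dominating those of $s_1(a)x$, which gives the lower bound $s_1\Vert x\Vert$.
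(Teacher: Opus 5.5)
Your Steps 1 and 2, and the lower bound in the ``moreover'' part, follow the paper's proof. Step 3 (boundedness of $\tilde R_4$ on the whole square), however, has a genuine gap.

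Compressing the $T_4$-image of the shifted array onto the $m=1$ blocks does give $\|\hat a_1\otimes\alpha\|_{\mathcal C_E}\le\|T_4\|\,\|\alpha\|_{\mathcal C_E}$; this is exact and needs no smallness. The trouble is the rest of the step. Both the claim that the $m\ge2$ terms are ``small by the $\varepsilon_m$ bound'' and the passage ``hence the same bound for $\hat a$ up to $(1+\sum\varepsilon_m^r)^{1/r}$'' need an estimate like $\|\hat a_m\otimes\alpha\|_{\mathcal C_E}\lesssim\|\hat a_m\|_{\mathcal C_E}\|\alpha\|_{\mathcal C_E}$, or $\lesssim\varepsilon_m\|\hat a_1\otimes\alpha\|_{\mathcal C_E}$. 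All you actually have is the scalar bound $\|\hat a_m\|_{\mathcal C_E}\le\varepsilon_m\|\hat a_1\|_{\mathcal C_E}$.

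In a general symmetric ideal, $\|a\otimes\alpha\|_{\mathcal C_E}$ is not controlled by $\|a\|_{\mathcal C_E}\|\alpha\|_{\mathcal C_E}$. Take $a=\alpha=\phi_E(N)^{-1}\sum_{i\le N}(\cdot|e_i)e_i$ with $\phi_E(n)=\|\sum_{i\le n}e_i\|_E$. The ratio is $\phi_E(N^2)/\phi_E(N)^2$, which is $1$ for $\ell_p$ but tends to $\infty$ when, for example, $\phi_E(n)\asymp n/\log n$. Only for rank-one $\alpha$ (a single column) is $\|a\otimes\alpha\|=\|a\|\,\|\alpha\|$, which is exactly why your Step 2 works. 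Compressing $T_4$ onto the $m$-th blocks separately gives $\|\hat a_m\otimes\alpha\|\le\|T_4\|\,\|\alpha\|$ for each $m$. But these bounds do not decay in $m$, so they cannot be summed.

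The fix is to use the operator $R_4$ from Step 2. It is already bounded on the lower triangle, and its values already involve the full $\hat a$ and $\tilde a$.

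The paper restricts $R_4$ to the diagonal entries $(k,k)$. This gives $\bigl\|\left(\begin{smallmatrix}0&\tilde a\\ \hat a&0\end{smallmatrix}\right)\otimes\sum_k\alpha_k(\cdot|e_k)e_k\bigr\|_{\mathcal C_E}\le\|R_4\|\,\|\sum_k\alpha_ke_k\|_E$. For an arbitrary square array $x$, it then uses Proposition~\ref{ie} and the singular value decomposition of $x$ to replace $x$ by $\sum_k s_k(x)(\cdot|e_k)e_k$.

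Your translation idea also works if you apply it to $R_4$ instead of $T_4$. For $\alpha=\{\alpha_{k,l}\}_{k,l\le n}$, the element $\sum\alpha_{k,l}e_{\xi_{i_{2(k+n)+1}},\eta_{i_{2l}}}$ is lower triangular and has $\mathcal C_E$-norm $\|\alpha\|_{\mathcal C_E}$. Its $R_4$-image has the same singular values as $\tilde R_4(\alpha)$. If you insist on $T_4$, you must compress onto all blocks $m\le M$ after shifting the columns beyond $M$. That yields exactly $\hat a^{(M)}\otimes\alpha$ with $\hat a^{(M)}=\sum_{m\le M}\hat a_m\otimes(\cdot|e_1)e_m$, and you then let $M\to\infty$.

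A smaller point in Step 1: the isometric extension $\hat u_{(2k+1)}$ must be chosen so that its range contains $\operatorname{im}(\hat a_m)\otimes e_m$ for every $m\ge 2k$; the paper imposes this explicitly. Without it, $\hat u_{(2k+1)}x_{2k+1,2l}\hat v_{(2l)}^\ast\neq\hat a$, and the singular value identities you rely on in Step 3 fail.
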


\begin{proof}
Without loss of generality, we may assume that $\hat{a}_m\in\mathcal B(H)$ and $\tilde{a}_m\in\mathcal B(H)$, and
\[
\big\{x_{_{(k,m),l}}\big\}_{1\leq m<l<k<\infty}\stackrel{\mbox{{\rm\tiny b.s}}}{\boxplus}\big\{\hat{u}_{k,m}:F_{k,m}\to H\big\}_{1\leq m<k-1<\infty}\otimes\big\{\hat{v}_l:L'_l\to H\big\}_{l=2}^\infty\curvearrowleft\hat{a}_m
\]
and
\[
\big\{y_{_{l,(m,k)}}\big\}_{1\leq m<l<k<\infty}
\stackrel{\mbox{{\rm\tiny b.s}}}{\boxplus}\big\{\tilde{u}_l:K'_l\to H\big\}_{l=2}^\infty\otimes\big\{\tilde{v}_{m,k}:G_{m,k}\to H\big\}_{1\leq m<k-1<\infty}\curvearrowleft\tilde{a}_m
\]
for every $m\geq1$.

There is  a sequence of isometries
$\big\{u_i:K_{A_i}\to H\otimes_{_2}\ell_2\big\}_{i=2}^\infty$ satisfying
\[
u_{2k}(f)=\tilde{u}_{2k}(f)\otimes_{_2} e_1\;\;\;\;\;\;\;\;{\rm for\;any}\;f\in K'_{2k}=K_{B_{2k}},
\]
\[
u_{2k}(K_{A_{2k}})=H\otimes_{_2}e_1,
\]
and 
\[
u_{2k+1}\left({\sum}_{m=1}^{2k-1}f_m\right)=\hat{u}_{2k+1,m}(f_m)\otimes_{_2} e_m,\;\;\;\;\;\;\;\;{\rm where}\;f_m\in F_{2k+1,m},\;1\leq m<2k,
\]
\[
{\rm im}(\hat{a}_m)\otimes_{_2} e_m\subset u_{2k+1}(K_{A_{2k+1}})\;\;\;\;\;\;\;\;{\rm for\;every}\;m\geq2k,
\]
and a sequence of isometries $\big\{v_i:L_{A_i}\to H\otimes_{_2}\ell_2\big\}_{i=2}^\infty$ satisfying
\[
v_{2k}(g)=\hat{v}_{2k}(g)\otimes_{_2} e_1\;\;\;\;\;\;\;\;{\rm for\;any}\;g\in L'_{2k}=L_{B_{2k}},
\]
\[
v_{2k}(L_{A_{2k}})=H\otimes_{_2}e_1,
\]
and
\[
v_{2k+1}\left({\sum}_{m=1}^{2k-1}g_m\right)=\tilde{v}_{m,2k+1}(g_m)\otimes_{_2} e_m,\;\;\;\;\;\;\;\;{\rm where}\;g_m\in G_{m,2k+1},\;1\leq m<2k, 
\]
\[
{\rm ker}(\tilde{a}_m)^{\perp}\otimes_{_2} e_m\subset v_{2k+1}(L_{A_{2k+1}})\;\;\;\;\;\;\;\;{\rm for\;every}\;m\geq2k.
\]
It follows that (see Remark \ref{rem} (2))
\begin{align*}
&\big\{x_{_{(2k+1,m),2l}}\big\}_{1\leq m<2l<2k+1<\infty}\\
\stackrel{\mbox{{\rm\tiny b.s}}}{\boxplus}&\big\{u_{2k+1}:K_{A_{2k+1}}\to H\otimes_{_2}\ell_2\big\}_{k=1}^\infty\otimes\big\{v_{2l}:L_{A_{2l}}\to H\otimes_{_2}\ell_2\big\}_{l=1}^\infty\curvearrowleft\hat{a}_m\otimes e_{m,1}
\end{align*}
and
\begin{align*}
&\big\{y_{_{2l,(m,2k+1)}}\big\}_{1\leq m<2l<2k+1<\infty}\\
\stackrel{\mbox{{\rm\tiny b.s}}}{\boxplus}&
\big\{u_{2l}:K_{A_{2l}}\to H\otimes_
{_2}\ell_2\big\}_{l=1}^\infty\otimes\big\{v_{2k+1}:L_{2k+1}\to H\otimes_{_2}\ell_2\big\}_{k=1}^\infty\curvearrowleft\tilde{a}_m\otimes e_{1,m}
\end{align*}
for every $m\geq1$, where $e_{m,1}=(\cdot|e_1)e_m$ and $e_{1,m}=(\cdot|e_m)e_1$. Note that 
\[
\sum_{m=1}^\infty\Vert\hat{a}_m\otimes e_{m,1}\Vert_{\mathcal C_E}^r\stackrel{{\rm Th}\;\ref{thma}}{\leq}\bigg(1+\sum_{m=2}^\infty{\varepsilon_m}^r\bigg)\Vert\hat{a}_1\Vert_{\mathcal C_E}^r<\infty
\]
and
\[
\sum_{m=1}^\infty\Vert\tilde{a}_m\otimes e_{1,m}\Vert_{\mathcal C_E}^r\stackrel{{\rm Th}\;\ref{thma}}{\leq}\bigg(1+\sum_{m=2}^\infty{\varepsilon_m}^r\bigg)\Vert\tilde{a}_1\Vert_{\mathcal C_E}^r<\infty.
\]
Suppose that the series $\sum_{m=1}^\infty\hat{a}_m\otimes e_{m,1}$ converges to $\hat{a}$, and the series $\sum_{m=1}^\infty\tilde{a}_m\otimes e_{1,m}$ converges to $\tilde{a}$. Denote 
\[
x_{2k+1,2l}:=(u_{2k+1})^\ast\hat{a}v_{2l}\;\;\;\;\;\;{\rm and}\;\;\;\;\;\;y_{2l,2k+1}= (u_{2l})^\ast\tilde{a}v_{2k+1}
\]
for every $k,l\geq1$.  
We obtain
\begin{equation}\label{x2k+12l}
\{x_{2k+1,2l}\}_{k,l=1}^\infty\stackrel{\mbox{{\rm\tiny b.s}}}{\boxplus}\big\{u_{2k+1}:K_{A_{2k+1}}\to H\otimes_{_2}\ell_2\big\}_{k=1}^\infty\otimes\big\{v_{2l}:L_{A_{2l}}\to H\otimes_{_2}\ell_2\big\}_{l=1}^\infty\curvearrowleft\hat{a}
\end{equation}
and
\begin{equation}\label{y2l2k+1}
\{y_{2l,2k+1}\}_{k,l=1}^\infty
\stackrel{\mbox{{\rm\tiny b.s}}}{\boxplus}\big\{u_{2l}:K_{A_{2l}}\to H\otimes_
{_2}\ell_2\big\}_{l=1}^\infty\otimes\big\{v_{2k+1}:L_{2k+1}\to H\otimes_{_2}\ell_2\big\}_{k=1}^\infty\curvearrowleft\tilde{a}.
\end{equation}
Fix $l\geq1$, for any sequence $\{\alpha_k\}_{k=l}^\infty$ of scalars with $\norm{ \{\alpha_k\}_{k=l}^\infty}_2<\infty$, we have
\begin{eqnarray}\label{t4s}
 &&\left\Vert T_4\Big(\sum_{k=l}^\infty \alpha_ke_{\xi_{i_{2k+1}},\eta_{i_{2l}}}\Big)-\sum_{k=l}^\infty
\alpha_k\Big(x_{2k+1,2l}+y_{2l,2k+1}\Big)\right\Vert_{\mathcal C_E}^r\\\notag 
&\stackrel{\eqref{hata},\eqref{tildea}}{=}&\bigg\Vert\sum_{k=l}^\infty\alpha_k\bigg(\Big({\sum}_{m=2l}^\infty\hat{a}_m\otimes e_{m,1}\Big)\otimes(\cdot|e_{2l})e_{2k+1}\\\notag
&&\;\;\;\;\;\;\;\;\;\;\;\;\;\;\;\;+\Big({\sum}_{m=2l}^\infty\tilde{a}_m\otimes e_{1,m}\Big)\otimes(\cdot|e_{2k+1})e_{2l}\bigg)\bigg\Vert_{\mathcal C_E}^r\\\notag
&\leq&4\sum_{m=2l}^\infty\bigg(\bigg\Vert\sum_{k=l}^\infty\alpha_k\big(\hat{a}_m\otimes e_{m,1}\big)\otimes(\cdot|e_{2l})e_{2k+1}\bigg\Vert_{\mathcal C_E}^r\\\notag
&&\;\;\;\;\;\;\;\;\;\;+\bigg\Vert\sum_{k=l}^\infty\alpha_k\big(\tilde{a}_m\otimes e_{1,m}\big)\otimes(\cdot|e_{2k+1})e_{2l}\bigg\Vert_{\mathcal C_E}^r\bigg)\\\notag
&\leq&4\sum_{m=2l}^\infty\big(\Vert\hat{a}_m\Vert_{\mathcal C_E}^r+\Vert\tilde{a}_m\Vert_{C_E}^r\big)\bigg(\sum_{k=l}^\infty\vert\alpha_k\vert^2\bigg)^{r/2}\\\notag
&\stackrel{{\rm Th}\;\ref{thma}}{\leq}&4\bigg(\big(\Vert\hat{a}_1\Vert_{\mathcal C_E}^r+\Vert\tilde{a}_1\Vert_{C_E}^r\big)\sum_{m=2l}^\infty{\varepsilon_m}^r\bigg)\left\Vert\sum_{k=l}^\infty \alpha_ke_{\xi_{i_{2k+1}},\eta_{i_{2l}}}\right\Vert_{\mathcal C_E}^r\notag.
\end{eqnarray}
Without loss of generality, we may assume that the sequence
  $\big\{\sum_{m=l}^\infty{\varepsilon_m}^r\big\}_{l=2}^\infty$ of positive numbers is small enough.
By Theorem \ref{2d}, there is an operator $R_4\in\mathcal B\big([e_{\xi_{i_{2k+1}},\eta_{i_{2l}}}]_{1\leq l\leq k<\infty},\mathcal C_E\big)$ with 
\[
R_4(e_{\xi_{i_{2k+1}},\eta_{i_{2l}}})=x_{2k+1,2l}+y_{2l,2k+1}
\]
for every $1\leq l\leq k<\infty$ such that $R_4$ is a perturbation of $T_4$ associated with the Schauder decomposition $\big\{[e_{\xi_{i_{2k+1}},\eta_{i_{2l}}}]_{k=l}^\infty\big\}_{l=1}^\infty$ for $\big\{4^{\frac{1}{r}}\big(\Vert\hat{a}_1\Vert_{\mathcal C_E}^r+\Vert\tilde{a}_1\Vert_{\mathcal C_E}^r\big)^{1/r}\big(\sum_{m=2l}^\infty{\varepsilon_m}^r\big)^{1/r}\big\}_{l=1}^\infty$.

We define   isometry $\wideparen{u}$ (respectively, $\wideparen{v}$)   by formula \eqref{u},  which is induced by the sequence $\{u_i\}_{i=1}^\infty$ (respectively, $\{v_i\}_{i=1}^\infty$) of isometries above.
For any finitely non-zero sequence $\{\alpha_k\}_{k=1}^\infty$ of scalars, by Remark~\ref{rem} (2), we have 
\begin{eqnarray}\label{ineqCEtoE}
&&
\left\Vert\sum_{k=1}^\infty\alpha_k\big(\hat{a}\otimes(\cdot|e_{2k})e_{2k+1}+\tilde{a}\otimes(\cdot|e_{2k+1})e_{2k}\big)\right\Vert_{\mathcal C_E}\\
&=&
\left\Vert\wideparen{u}\bigg(\sum_{k=1}^\infty\alpha_k (x_{2k+1,2l}+y_{2l,2k+1} )\bigg)\wideparen{v} \right\Vert_{\mathcal C_E}\nonumber \\
&\stackrel{\eqref{x2k+12l},\eqref{y2l2k+1}}{\leq} &
  \Vert R_4\Vert\left\Vert\sum_{k=1}^\infty\alpha_ke_{\xi_{i_{2k+1}},\eta_{i_{2k}}}\right\Vert_{\mathcal C_E}\nonumber \\
  &=& \Vert R_4\Vert\left\Vert\sum_{k=1}^\infty\alpha_ke_k\right\Vert_E\nonumber.
\end{eqnarray}
Using Proposition \ref{ie}, for any finitely non-zero sequence $\{\alpha_{k,l}\}_{k,l=1}^\infty$ of scalars, and setting   $x=\sum_{k,l=1}^\infty\alpha_{k,l}e_{\xi_{i_{2k+1}},\eta_{i_{2l}}}$, we have 
\begin{eqnarray*}
&&\left\Vert\sum_{k,l=1}^\infty\alpha_{k,l}\big(x_{2k+1,2l}+y_{2l,2k+1}\big)\right\Vert_{\mathcal C_E}\\
&\stackrel{\eqref{x2k+12l},\eqref{y2l2k+1}}{=}& \left\Vert\sum_{k,l=1}^\infty\alpha_{k,l}\big(\hat{a}\otimes(\cdot|e_{2l})e_{2k+1}+\tilde{a}\otimes(\cdot|e_{2k+1})e_{2l}\big)\right\Vert_{\mathcal C_E}\\
&\stackrel{{\rm Prop}\;\ref{ie}}{=}&\left\Vert\sum_{k=1}^\infty\alpha_{k,l}\left(
\begin{smallmatrix}
0&\tilde{a}\\\hat{a}&0
\end{smallmatrix}\right)\otimes(\cdot|e_l)e_k\right\Vert_{\mathcal C_E}\\
&=&\left\Vert\left(
\begin{smallmatrix}
0&\tilde{a}\\\hat{a}&0
\end{smallmatrix}\right)\otimes\left({\sum}_{k,l=1}^\infty\alpha_{k,l}(\cdot|e_l)e_k\right)\right\Vert_{\mathcal C_E}\\
&=&\left\Vert\left(
\begin{smallmatrix}
0&\tilde{a}\\\hat{a}&0
\end{smallmatrix}\right)\otimes x\right\Vert_{\mathcal C_E}\\
&=&\left\Vert\left(
\begin{smallmatrix}
0&\tilde{a}\\\hat{a}&0
\end{smallmatrix}\right)\otimes{\sum}_{k=1}^\infty s_k(x)(\cdot|e_k)e_k\right\Vert_{\mathcal C_E}\\
&\stackrel{{\rm Prop}\;\ref{ie}}{=}&\left\Vert\sum_{k=1}^\infty s_k(x)\big(\hat{a}\otimes(\cdot|e_{2k})e_{2k+1}+\tilde{a}\otimes(\cdot|e_{2k+1})e_{2k}\big)\right\Vert_{\mathcal C_E}\\
&\stackrel{\eqref{ineqCEtoE}}{\leq}&\Vert R_4\Vert\left\Vert\sum_{k=1}^\infty s_k(x)e_k\right\Vert_E=\Vert R_4\Vert\cdot\Vert x\Vert_{\mathcal C_E}.
\end{eqnarray*}
Therefore, $R_4$ can be extended to a bounded operator $\tilde{R}_4:[e_{\xi_{i_{2k+1}},\eta_{i_{2l}}}]_{k,l=1}^\infty\to \mathcal C_E$ with
\[
\tilde{R}_4(e_{\xi_{i_{2k+1}},\eta_{i_{2l}}})=x_{2k+1,2l}+y_{2l,2k+1}
\]
for all $k,l\geq1$.

By Theorem \ref{thma} and the definitions of $\hat{a}$ and $\tilde {a}$, the operator  $\hat{a}_1$ (respectively,  $\tilde{a}_1$) is nonzero if and only if $\hat{a}$ (respectively, $\tilde{a}$) is nonzero. Without loss of generality,  we may assume $\hat{a}\neq0$. In particular, $\norm{\hat{a}}_\infty\neq0$.   For any finitely non-zero sequence $\{\alpha_{k,l}\}_{k,l=1}^\infty$ of scalars, we have 
\begin{eqnarray*}
&&\bigg\Vert\sum_{k,l=1}\tilde{R}_4\big(\alpha_{k,l}e_{\xi_{i_{2k+1}},\eta_{i_{2l}}}\big)\bigg\Vert_{\mathcal C_E}\\
&\stackrel{\eqref{x2k+12l},\eqref{y2l2k+1}}{=}&\left\Vert\sum_{k,l=1}^\infty\alpha_{k,l}\big(\hat{a}\otimes(\cdot|e_{2l})e_{2k+1}+\tilde{a}\otimes(\cdot|e_{2k+1})e_{2l}\big)\right\Vert_{\mathcal C_E}\\
&\geq&\left\Vert\sum_{k,l=1}^\infty\alpha_{k,l}\hat{a}\otimes(\cdot|e_{2l})e_{2k+1}\right\Vert_{\mathcal C_E}  \\ 
&\geq&  \norm{\hat{a}}_\infty   \bigg\Vert\sum_{k,l=1}\alpha_{k,l}(\cdot|e_{2l})e_{2k+1}\bigg\Vert_{\mathcal C_E} \\ 
&=&\norm{\hat{a}}_\infty  \bigg\Vert\sum_{k,l=1}\alpha_{k,l}e_{\xi_{i_{2k+1}},\eta_{i_{2l}}}\bigg\Vert_{\mathcal C_E}.
\end{eqnarray*}
In other words,  if $T_4\neq0$, then $\tilde{R}_4$ is an isomorphic embeddings.
\end{proof}

\begin{remark}\label{rem-t4}
It can be seen that
\[
\{x_{2k+1,2l}+y_{2l,2k+1}\}_{k,l=1}^\infty\simeq\left\{
\left(
\begin{smallmatrix}
0&\tilde{a}\\\hat{a}&0
\end{smallmatrix}\right)\otimes e_{k,l}
\right\}_{k,l=1}^\infty,
\]
where $\hat{a}=\sum_{m=1}^\infty\hat{a}_m\otimes e_{m,1}$ and $\tilde{a}=\sum_{m=1}^\infty\tilde{a}_m\otimes e_{1,m}$, in which $e_{k,l}$ is given by $(\cdot|e_l^H)e_k^H$.
\end{remark}

\begin{lemma}\label{t2}
Let $T_2$ be the bounded operator given in Theorem \ref{thma}. Then
either
\begin{itemize}
\item [(1)]
there is a positive integer $N$ such that $\big\{T_2(e_{\xi_{i_{2k+1}},\eta_{i_{2l}}})\big\}_{N\leq l\leq k<\infty }\sim\big\{e^H_{(k,l)}\big\}_{N\leq l\leq k<\infty}$, or
\item [(2)]
there is an increasing subsequence $\{i'_l\}_{l=1}^\infty$ of $\{i_l\}_{l=2}^\infty$ and a normalized sequence $\{\eta'_l\}_{l=1}^\infty$ with $\eta'_l\in[\eta_{i_{2s}}]_{i'_{2l}\leq i_{2s}<i'_{2l+1}}$ such that $T_2|_{[e_{\xi_{i'_{2k+1}},\eta'_l}]_{1\leq l\leq k<\infty}}$ is a small perturbation of the zero operator associated with the Schauder decomposition $\big\{[e_{\xi_{i'_{2k+1}},\eta'_l}]_{k=l}^\infty\big\}_{l=1}^\infty$.
\end{itemize}
\end{lemma}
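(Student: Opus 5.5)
The plan is to reduce everything to a single family of ``coefficient operators''. By \eqref{hatc} and \eqref{tildec}, for each fixed $l\ge 2$ the sequence $\{x_{_{(k,l),l}}\}_{k>l}$ is generated by $\hat c_l$ and the sequence $\{y_{_{l,(l,k)}}\}_{k>l}$ is generated by $\tilde c_l$, and both are consistent in $l$. Put
\[
d_l:=\left(\begin{smallmatrix}0&\tilde c_l\\ \hat c_l&0\end{smallmatrix}\right).
\]

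\emph{Step 1: a norm formula for $T_2$.} I would first record the supports of the pieces. The piece $x_{_{(k,l),l}}$ lives in $F_{k,l}\otimes L'_l$, and the piece $y_{_{l,(l,k)}}$ lives in $K'_l\otimes G_{l,k}$. The $F_{k,l}\subset K'_k$ are mutually orthogonal, and so are the $G_{l,k}\subset L'_k$. Hence any two of these pieces with different indices have disjoint left and right supports, except that pieces in the same column $l$ share $L'_l$ (respectively $K'_l$), and this sharing is exactly the situation handled by Remark \ref{rem}(2) and Theorem \ref{hi}. Combining that remark with the disjointness argument of Proposition \ref{ie}, I expect to obtain, for every finitely supported $\{\alpha_{k,l}\}$,
\[
\Big\Vert\sum_{2\le l<k}\alpha_{k,l}T_2(e_{\xi_{i_k},\eta_{i_l}})\Big\Vert_{\mathcal C_E}=\Big\Vert\bigoplus_l\beta_l d_l\Big\Vert_{\mathcal C_E},\qquad \beta_l=\Big(\sum_k|\alpha_{k,l}|^2\Big)^{1/2}.
\]
The upper bound then comes from applying $T_2$ to a single row $\sum_l\beta_le_{\xi_{i_{n+1}},\eta_{i_l}}$, compressed by $P_{K'_{n+1},[\cup L'_j]}$ and its $y$-analogue, exactly as in the proof of Lemma \ref{t2t3}(1). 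This gives
\[
\Big\Vert\bigoplus_l\beta_ld_l\Big\Vert_{\mathcal C_E}\le C\Vert T_2\Vert\,\Vert\beta\Vert_{\ell_2}.
\]

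\emph{Step 2: the dichotomy.} Either there exist $N$ and $c>0$ such that $\Vert\bigoplus_{l\ge 2N}\beta_ld_l\Vert\ge c\Vert\beta\Vert_2$ for all finitely supported $\beta$ (I will restrict to even $l$), or there does not. In the first case the norm formula, together with the $\ell_2$ upper bound, gives directly that $\{T_2(e_{\xi_{i_{2k+1}},\eta_{i_{2l}}})\}_{N\le l\le k}$ is equivalent to the unit basis of $\ell_2(\mathbb N\times\mathbb N)$. This is alternative (1).

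In the second case I will build the objects of alternative (2) inductively. For given small $\delta_l$ and any threshold, there is a finitely supported unit vector $\beta^{(l)}\in\ell_2$, supported on even indices $2s$ beyond that threshold, with $\Vert\bigoplus_s\beta^{(l)}_sd_{2s}\Vert<\delta_l$. I choose these with successive supports, define $i'$ so that $\operatorname{supp}\beta^{(l)}$ lies in $[i'_{2l},i'_{2l+1})$ with the odd rows $i'_{2k+1}$ placed after it, and set $\eta'_l=\sum_s\beta^{(l)}_s\eta_{i_{2s}}$. By consistency, $T_2(\sum_k\alpha_ke_{\xi_{i'_{2k+1}},\eta'_l})$ has norm $\Vert\bigoplus_s\beta^{(l)}_sd_{2s}\Vert\,\Vert\alpha\Vert_2<\delta_l\Vert\alpha\Vert_{\mathcal C_E}$. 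This is the required small perturbation of $0$ on the Schauder decomposition $\{[e_{\xi_{i'_{2k+1}},\eta'_l}]_{k\ge l}\}_l$.

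\emph{Main obstacle.} I expect the hardest step to be the norm identity in Step 1. The difficulty is checking carefully that the $x$-parts and $y$-parts of different columns really sit in disjoint blocks, so that the singular values split as a disjoint union. I also need to track the small-print issues: the index shift $k\mapsto 2k+1$ versus $l\mapsto 2l$, and the fact that the $\beta$'s in case (2) must avoid the rows used. If the exact identity fails, I would fall back to two-sided estimates with constants $4^{1/r}$, using \eqref{diag-proj} on finitely many blocks.
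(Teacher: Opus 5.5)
Your proposal is correct and follows essentially the same route as the paper. Your map $\beta\mapsto\bigoplus_l\beta_l d_{2l}$ is the paper's operator $Q$, the dichotomy is whether $Q$ is an isomorphic embedding on some tail $[e_l]_{l\ge N}$, and alternative (2) is obtained exactly as you describe, by building $\eta'_l$ from successively supported unit vectors on which $Q$ is small.

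One correction is needed. The norm identity of Step 1 should be claimed only for odd rows and even columns, or for a single row. On the full triangle $2\le l<k$ the $x$- and $y$-pieces need not have disjoint left and right supports: for example, $x_{_{(3,2),2}}$ and $y_{_{3,(3,4)}}$ both have left support in the block $K'_3$. This is precisely why the paper passes to the vectors $e_{\xi_{i_{2k+1}},\eta_{i_{2l}}}$ and uses Proposition \ref{ie}. Since Step 2 only uses the identity in that restricted form, your argument stands.
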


\begin{proof}
Let
\[
c_l=\left(
\begin{smallmatrix}
0&\tilde{c}_l\\\hat{c}_l&0
\end{smallmatrix}\right)
\]
for every $l\geq2$. By Remark \ref{rem} (2), for any finitely non-zero sequence of scalars $\{\alpha_l\}_{l=1}^n$, we have 
\begin{eqnarray*}
\left\Vert\sum_{l=1}^n\alpha_lT_2(e_{\xi_{i_{2n+1}},\eta_{i_{2l}}})\right\Vert_{\mathcal C_E}&\stackrel{\eqref{T2},\eqref{hatc},\eqref{tildec}}{=}&\left\Vert\sum_{l=1}^n\alpha_l\big(\hat{c}_{2l}\otimes(\cdot|e_{2l})e_{2l+1}+\tilde{c}_{2l}\otimes(\cdot|e_{2l+1})e_{2l}\big)\right\Vert_{\mathcal C_E}\\
&\stackrel{{\rm Prop}\;\ref{ie}}{=}&\left\Vert\sum_{l=1}^n\alpha_l\big(c_{2l}\otimes(\cdot|e_l)e_l\big)\right\Vert_{\mathcal C_E}.
\end{eqnarray*}
It follows that
\[
\left\Vert\sum_{l=1}^n\alpha_l\big(c_{2l}\otimes(\cdot|e_l)e_l\big)\right\Vert_{\mathcal C_E}\leq\Vert T_2\Vert\bigg(\sum_{l=1}^n\vert\alpha_l\vert^2\bigg)^{1/2}.
\]
Therefore, the operator $Q$ defined by 
\[
Q:\{\alpha_l\}_{l=1}^\infty\in\ell_2\mapsto\sum_{l=1}^\infty\alpha_l\big(c_{2l}\otimes(\cdot|e_l)e_l\big)
\]
is bounded.

For any positive integer $N$ and any finite sequence of scalars $\{\alpha_{k,l}\}_{N\leq l\leq k\leq n}$, we have 

\begin{eqnarray*}
&&\left\Vert\sum_{N\leq l\leq k\leq n}\alpha_{k,l}T_2(e_{\xi_{i_{2k+1}},\eta_{i_{2l}}})\right\Vert_{\mathcal C_E}\\
&\stackrel{\eqref{T2}}{=}&
\left\Vert\sum_{N\leq l\leq k\leq n}\alpha_{k,l}\big(x_{_{(2k+1,2l),2l}}+y_{_{2l,(2l,2k+1)}}\big)\right\Vert_{\mathcal C_E}\\
&\stackrel{\eqref{hatc},\eqref{tildec}}{=}&\left\Vert\sum_{N\leq l\leq k\leq n}\alpha_{k,l}\big(\hat{c}_{2l}\otimes(\cdot|e_{2^{2l}})e_{2^{2l}(2k+1)}+\tilde{c}_{2l}(\cdot|e_{2^{2l}(2k+1)})e_{2^{2l}}\big)\right\Vert_{\mathcal C_E}\\
&=&\left\Vert\sum_{l=N}^n\sum_{k=l} ^n\alpha_{k,l}\big(\hat{c}_{2l}\otimes(\cdot|e_{2^{2l}})e_{2^{2l}(2k+1)}+\tilde{c}_{2l}(\cdot|e_{2^{2l}(2k+1)})e_{2^{2l}}\big)\right\Vert_{\mathcal C_E}\\
&\stackrel{{\rm Prop}\;\ref{ie}}{=}&\left\Vert\sum_{l=N}^n\bigg(c_{2l}\otimes\bigg((\cdot|e_{2^{2l}})\sum_{k=l}^n\alpha_{k,l}e_{2^{2l}(2k+1)}\bigg)\bigg)\right\Vert_{\mathcal C_E}\\
&=&\left\Vert\sum_{l=2}^n\bigg(\sum_{k=l+1}^n\vert\alpha_{k,l}\vert^2\bigg)^{1/2}c_{2l}\otimes(\cdot|e_l)e_l\right\Vert_{\mathcal C_E}   \\
&=&\left\Vert Q\left(\sum_{l=N}^n\bigg(\sum_{k=l}^n\vert\alpha_{k,l}\vert^2\bigg)^{1/2}e_l\right)\right\Vert_{\mathcal C_E}.
\end{eqnarray*}
\begin{itemize}
\item [Case I:]
There exists a positive integer $N$ such that $Q|_{[e_i]_{i=N}^\infty}$ is an isomorphic embedding. In particular, we have  $\big\{T_2(e_{\xi_{i_{2k+1}},\eta_{i_{2l}}})\big\}_{N\leq l\leq k <\infty }\sim\big\{e^H_{(k,l)}\big\}_{N\leq l<k<\infty}$.
\item [Case II:]
Otherwise, there exists  an increasing sequence of positive integers $\{l_t\}_{t=1}^\infty$ and $\alpha^{^{_{(t)}}}=\sum_{l_t\leq l<l_{t+1}}\alpha^{^{_{(t)}}}_le_l\in[e_l]_{l_t\leq l<l_{t+1}}$ with $\Vert\alpha^{^{_{(t)}}}\Vert_2=1$ for every $t$ such that (here, $\{\varepsilon_t \}_{t\ge 1}$ is a sequence of positive numbers which are small enough)
\[
\big\Vert Q\big(\alpha^{^{_{(t)}}}\big)\big\Vert_{\mathcal C_E}\leq\varepsilon_t.
\]
Set 
\[
\eta'_{t}=\sum_{l_t\leq l<l_{t+1}}\alpha^{^{_{(t)}}}_l\eta_{i_{2l}}.
\]
Note that $\norm{\eta_t'}_H =1$ for all $t\ge 1$. Fix each $t\ge1$, 
for any finite sequence of scalars $\{\beta_s\}_{s=t+1}^n$, we have

\begin{align*}
\left\Vert T_2\Big(\sum_{s=t+1}^n\beta_se_{\xi_{i_{2l_s+1}},\eta'_t}\Big)\right\Vert_{\mathcal C_E}&=\left\Vert\sum_{l_t\leq l<l_{t+1}}\sum_{s=t+1}^n\beta_s\alpha^{^{_{(t)}}}_lT_2(e_{\xi_{i_{2l_s+1}},\eta_{i_{2l}}})\right\Vert_{\mathcal C_E}\\
&=\left\Vert Q\left(\sum_{l_t\leq l<l_{t+1}}\bigg(\sum_{s=t+1}^n\vert\beta_s\vert^2\bigg)^{1/2}\alpha^{^{_{(t)}}}_le_l\right)\right\Vert_{\mathcal C_E}\\
&=\bigg(\sum_{s=t+1}^n\vert\beta_s\vert^2\bigg)^{1/2}\big\Vert Q\big(\alpha^{^{_{(t)}}}\big)\big\Vert_{\mathcal C_E}\\
&\leq\varepsilon_t\left\Vert\sum_{s=t+1}^n\beta_se_{\xi_{i_{2l_s+1}},\eta'_t}\right\Vert_{\mathcal C_E}.
\end{align*}
By Theorem \ref{2d}, the proof is complete. 
\end{itemize}
\end{proof}

\begin{remark}\label{t2+t3-rem}
Combining Lemma \ref{t2t3} and Lemma \ref{t2}, we obtain that either
\[
\big\{(T_2+T_3)(e_{\xi_{i_{2k+1}},\eta_{i_{2l}}})\big\}_{N\leq l\leq k<\infty }\sim\big\{e^H_{(k,l)}\big\}_{N\leq l\leq k<\infty},
\] 
or
$(T_2+T_3)|_{[e_{\xi_{i'_{2k+1}},\eta'_l}]_{1\leq l\leq k<\infty}}$ is a small perturbation of the zero operator associated with the Schauder decomposition $\big\{[e_{\xi_{i'_{2k+1}},\eta'_l}]_{k=l}^\infty\big\}_{l=1}^\infty$. 
\end{remark}

Combining Lemma \ref{t4} and Remark \ref{t2+t3-rem}, we obtain the following theorem.
\begin{theorem}\label{thma'}
   Under the assumption of Theorem \ref{thma}, there is an increasing sequence $\{i'_k\}_{k=1}^\infty$ of positive integers, a sequence $\{A_k\}_{k=1}^\infty$ of mutually disjoint subsets of $\mathbb N$, a normalized sequence $\{\eta'_k\}_{k=1}^\infty$ with $\eta'_k\in[\eta_i]_{i'_{2k}\leq i<i'_{2k+1}}$, and three operators $S_1,S_2,S_3\in\mathcal B\big(\mathcal T_{E,\{\xi_{i'_{2k+1}},\eta'_k\}_{k=1}^\infty},\mathcal C_E\big)$ such that $S_1+S_2+S_3$ is a small perturbation of $T|_{\mathcal T_{E,\{\xi_{i'_{2k+1}},\eta'_k\}_{k=1}^\infty}}$ associated with the Schauder decomposition $\big\{[e_{\xi_{i'_{2k+1}},\eta'_l}]_{k=l}^\infty\big\}_{l=1}^\infty$, where
\begin{itemize}
    \item [(1)]
$S_1(e_{\xi_{i'_{2k+1}},\eta'_l})=p_{_{K_{A_{2k+1}}}}T(e_{\xi_{i'_{2k+1}},\eta'_l})p_{_{L_{A_{2k+1}}}}$ for every $1\leq l\leq k<\infty$,
\item [(2)]
$\big\{S_2(e_{\xi_{i'_{2k+1}},\eta'_l})\big\}_{1\leq l\leq k<\infty }\sim\big\{e^H_{(k,l)}\big\}_{1\leq l\leq k<\infty}$ or $S_2=0$, and
\item [(3)] $S_3(e_{\xi_{i'_{2k+1}},\eta'_l})=x_{2k+1,2l}+y_{2l,2k+1}$ for every $1\leq k\leq l<\infty$, where 
\[
\{x_{2k+1,2l}\}_{1\leq l\leq k<\infty}\stackrel{\mbox{{\rm\tiny b.s}}}{\boxplus}\{K_{A_{2k+1}}\}_{k=1}^\infty\otimes\{L_{A_{2l}}\}_{l=1}^\infty\curvearrowleft\;
\]
and
\[
\{y_{2l,2k+1}\}_{1\leq l\leq k<\infty}\stackrel{\mbox{{\rm\tiny b.s}}}{\boxplus}\{K_{A_{2l}}\}_{l=1}^\infty\otimes\{L_{A_{2k+1}}\}_{k=1}^\infty\curvearrowleft\;.
\]
\end{itemize}
\end{theorem}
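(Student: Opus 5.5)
We start from the decomposition of Theorem \ref{thma}: after passing to $\{i_k\}$ we have $T_1+T_2+T_3+T_4$, a small perturbation of $T$ on $\mathcal T_{E,\{\xi_{i_{k+1}},\eta_{i_k}\}_{k=2}^\infty}$ with respect to the column Schauder decomposition. We then restrict everything to the ``odd rows / even columns'' subspace $[e_{\xi_{i_{2k+1}},\eta_{i_{2l}}}]_{1\le l\le k}$. The three operators of Theorem \ref{thma'} will be assembled as follows:
\begin{itemize}
\item $S_1$ comes from $T_1$;
\item $S_2$ comes from $T_2+T_3$;
\item $S_3$ comes from $T_4$ through the operator $\tilde R_4$ of Lemma \ref{t4}.
\end{itemize}

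First we apply Lemma \ref{t4}, which replaces $T_4$ on this subspace by $\tilde R_4$ up to a small perturbation. Here $\tilde R_4(e_{\xi_{i_{2k+1}},\eta_{i_{2l}}})=x_{2k+1,2l}+y_{2l,2k+1}$ is block-generated with respect to $\{K_{A_{2k+1}}\}\otimes\{L_{A_{2l}}\}$ and $\{K_{A_{2l}}\}\otimes\{L_{A_{2k+1}}\}$.

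Next we invoke the dichotomy of Remark \ref{t2+t3-rem}, which combines Lemma \ref{t2t3} and Lemma \ref{t2}.
\begin{itemize}
\item In the first case, $\{(T_2+T_3)(e_{\xi_{i_{2k+1}},\eta_{i_{2l}}})\}_{N\le l\le k}$ is equivalent to the $\ell_2$ basis. We discard the first $N$ indices, relabel $i'_k$ and take $\eta'_l=\eta_{i_{2l}}$. We set $S_2=(T_2+T_3)|$, which satisfies (2).
\item In the second case, Lemma \ref{t2} yields a subsequence $\{i'_l\}$ and normalized vectors $\eta'_l\in[\eta_{i_{2s}}]_{i'_{2l}\le i_{2s}<i'_{2l+1}}$ on which $T_2$ is a small perturbation of $0$. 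The same holds for $T_3$: we will need to check that the argument of Lemma \ref{t2} applies verbatim to $T_2+T_3$, or else use the $\ell_2$-structure of $T_3$ from Lemma \ref{t2t3} (Theorem \ref{hi}) together with the fact that $\|Q(\alpha^{(t)})\|$ can be made small. We then set $S_2=0$.
\end{itemize}

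In both cases we must pass the other pieces to the new subspace $[e_{\xi_{i'_{2k+1}},\eta'_l}]$. Since each $\eta'_l$ is a finite combination of $\eta_{i_{2s}}$ with $s$ in a block interval, Theorem \ref{3a'} and Remark \ref{rembl} show two things:
\begin{itemize}
\item the images under $\tilde R_4$ remain block sequences generated by a single operator (namely $\hat a\otimes(\cdot|e_1)e_1$ up to the coefficient normalization $\|\alpha^{(t)}\|_2=1$), with the blocks $A_k$ regrouped accordingly;
\item $T_1$ restricted to the new subspace still has the form $p_{K_{A_{2k+1}}}T(\cdot)p_{L_{A_{2k+1}}}$, because $T_1$ maps row $2k+1$ into the diagonal block $K'_{2k+1}\otimes L'_{2k+1}\subset K_{A_{2k+1}}\otimes L_{A_{2k+1}}$ and acts linearly in the column variable.
\end{itemize}
To obtain exactly the formula in (1) we must also ensure that the other pieces $T_2,T_3,T_4$, together with the perturbation, contribute nothing (or only a small perturbation) to the diagonal block $p_{K_{A_{2k+1}}}(\cdot)p_{L_{A_{2k+1}}}$. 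Thus we redefine $S_1$ directly by the formula in (1) and absorb the difference into the small perturbation via Theorem \ref{2d}. For that we will choose the $F_{(k,l)}$, $G_{(l,k)}$ and $A_k$ disjoint from the supports used in the diagonal blocks. This is allowed by Notation \ref{notationAk}, since $\mathbb N\setminus\bigcup B_k$ is infinite.

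Finally we sum the perturbation estimates with $r$-subadditivity (Lemma \ref{sdper}). We choose $\{\varepsilon_l\}$ so small that the accumulated errors, over the Schauder decomposition $\{[e_{\xi_{i'_{2k+1}},\eta'_l}]_{k\ge l}\}_l$, remain small. The main obstacle I expect is the bookkeeping in the second case: showing that simultaneous regrouping of columns into the $\eta'_l$ preserves the block-generated form of the $T_4$-part and does not destroy the diagonal form of $S_1$. It also requires verifying that the projection defining $S_1$ is bounded in the quasi-Banach setting. For the latter I would argue as in Lemma \ref{thma1}, writing $S_1=T|-S_2-S_3-(\text{perturbation})$ rather than appealing to a diagonal projection.
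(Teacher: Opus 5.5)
Your plan is the paper's own argument. Theorem~\ref{thma'} is stated there as the direct combination of Lemma~\ref{t4} (giving $S_3$) with the dichotomy of Remark~\ref{t2+t3-rem} (giving $S_2$), with $S_1$ coming from $T_1$ and the passage to the columns $\eta'_l$ handled by Theorem~\ref{3a'} and Remark~\ref{rembl}, as you describe.

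The detours you anticipate are not needed. In the second alternative $T_3=0$ automatically: if $\hat b_1$ or $\tilde b_1$ were nonzero, Lemma~\ref{t2t3} would place you in the first alternative, and $T_3$ would then be bounded below on every column, so your fallback of making it small via $Q(\alpha^{(t)})$ could not work anyway. For $S_1$, do not redefine it; take it to be the restriction of $T_1$, which is already bounded by Lemma~\ref{thma1}. This agrees with formula (1) because every $T_2$-, $T_3$- and $\tilde R_4$-piece of row $2k+1$ lies in $p_{K_{A_{2k+1}}}\mathcal C_E\,p_{L_{A_m}}$ or $p_{K_{A_m}}\mathcal C_E\,p_{L_{A_{2k+1}}}$ with $m\neq 2k+1$; the agreement is exact under the paper's standing replacement of $T$ by its perturbation via Theorem~\ref{2d}. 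Consequently no choice of $A_k$ beyond Notation~\ref{notationAk} is required, and the $F_{(k,l)},G_{(l,k)}$ of Lemma~\ref{t3} play no role here.
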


\begin{remark}
Let $F$ be another separable quasi-Banach symmetric sequence space. If 
we consider a bounded operator $T:\mathcal T_{F,\{\xi_i,\eta_i\}_{i=1}^\infty}\to \mathcal C_E$ with $p_{_{[\bigcup_{i=1}^\infty K_i]}}T(\cdot)p_{_{[\bigcup_{i=1}^\infty L_i]}}=T$, 
  then, by Remark~\ref{tftoce}, a mutatis mutandis repetition of  the proofs of the previous lemmas yields that the conclusion of Theorem \ref{thma'} remains valid. Moreover,
\begin{itemize}
\item [(i)]
if $F\nsubseteq\ell_2$, then $S_2=0$ in the  above theorem. If $F\nsubseteq E$, then $S_3=0$  in the  above theorem.
\item [(ii)] 
if $\{K_i\}_{i=1}^\infty$ and $\{L_i\}_{i=1}^\infty$ are chosen so that $p_{K_i}({\rm im}(T))p_{L_j}=\{0\}$ for every $1\leq i<j<\infty$, then ``$y_{2l,2k+1}$" parts are $0$. In particular, $K_i=[\xi_i]$, $L_i=[\eta_i]$ and $T\in\mathcal B\big(\mathcal T_{F,\{\xi_i,\eta_i\}_{i=1}^\infty}\big)$, so we obtain $S_1,S_2,S_3\in \mathcal B\big(\mathcal T_{F,\{\xi_i,\eta_i\}_{i=1}^\infty}\big)$.
\end{itemize}
\end{remark}

\begin{lemma}\label{c0l2}
Let $T_1$ and $T_2$ be the bounded operators given in Theorem \ref{thma}. If, in addition, $\ell_2\not\hookrightarrow E$, then there is
\begin{itemize}
    \item an increasing subsequence $\{i'_k\}_{k=1}^\infty$ of $\{i_k\}_{k=2}^\infty$,
    \item a normalized sequence $\{\xi'_k\}_{k=1}^\infty$ with $\xi'_k\in[\xi_{i_{2s+1}}]_{i'_{2k+1}\leq i_{2s+1}<i'_{2k+2}}$, and
    \item  a normalized sequence $\{\eta'_k\}_{k=1}^\infty$ with $\eta'_k\in[\eta_{i_{2s}}]_{i'_{2k}\leq i_{2s}<i'_{2k+1}}$ 
\end{itemize}
  such that  both $T_1|_{[e_{\xi'_k,\eta'_l}]_{1\leq l\leq k<\infty}}$ and 
$T_2|_{[e_{\xi'_k,\eta'_l}]_{1\leq l\leq k<\infty}}$ are the small perturbations of the zero operator associated with the  Schauder decomposition $\big\{[e_{\xi'_k,\eta'_l}]_{k=l}^\infty\big\}_{l=1}^\infty$.

Consequently, if $T$ is an isomorphic embedding, then $\hat{a}_1$ or $\tilde{a}_1$ is nonzero (i.e. $T_4\neq0$).
\end{lemma}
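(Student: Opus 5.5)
The plan is to deal with $T_2$ and $T_1$ one after the other, each time passing to block vectors of the relevant basis. For $T_2$ we apply Lemma~\ref{t2}. In Case~I of that lemma, some tail $\{T_2(e_{\xi_{i_{2k+1}},\eta_{i_{2l}}})\}_{N\le l\le k}$ is equivalent to the $\ell_2$ basis. The operator $Q:\ell_2\to\mathcal C_E$ built in the proof of Lemma~\ref{t2} would then restrict to an isomorphic embedding of $\ell_2$ into $\mathcal C_E$. Since $Q$ takes values in the "diagonal" $\{c_{2l}\otimes e_{l,l}\}$, its image sits inside a space of the form $(\bigoplus_l \mathcal C_E(H_l))_E$. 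The hypothesis $\ell_2\not\hookrightarrow E$, together with the fact (Arazy) that $\mathcal C_E$ embeds into $\ell_2\oplus E$ on subsequences of basic sequences, rules this out. Concretely, a block subsequence of $\{c_{2l}\otimes e_{l,l}\}$ has a further subsequence equivalent to a subsequence embedding into $E$, hence to a sequence whose span does not contain $\ell_2$. This gives a contradiction. So Case~II holds. It produces a subsequence $\{i'_k\}$ and normalized blocks $\eta'_l\in[\eta_{i_{2s}}]_{i'_{2l}\le i_{2s}<i'_{2l+1}}$ on which $T_2$ is a small perturbation of $0$ relative to the column decomposition.

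For $T_1$ we keep these $\eta'_l$ and now block the rows. By \eqref{T1}, $T_1(e_{\xi_{i_k},\eta})$ is supported in $K'_k\otimes L'_k$. So for a fixed column $l$, the vectors $\{T_1(e_{\xi_{i_k},\eta'_l})\}_k$ are disjointly supported block diagonal pieces. For fixed $k$, the map $\eta\mapsto T_1(e_{\xi_{i_k},\eta})$ restricted to the first columns acts as an operator $\ell_2^{n}\to\mathcal C_E(K'_k,L'_k)$. Its norm is at most a constant times $\|T\|$; this uses boundedness of $T_1$ from Lemma~\ref{thma1}. We would show that within each group of rows $\{i_{2s+1}: i'_{2k+1}\le i_{2s+1}<i'_{2k+2}\}$ one can find an $\ell_2$-normalized combination $\xi'_k$ with $\|T_1(e_{\xi'_k,\eta'_l})\|$ as small as we like, uniformly for $l\le k$.

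The argument for this is again an $\ell_2$-versus-$E$ comparison. For a fixed finite set of columns, the vectors $T_1(e_{\xi_{i_m},\cdot})$, $m$ in a long block, are disjoint. Their sums therefore behave like $E$-sums, while the domain vectors $\sum_m\alpha_m e_{\xi_{i_m},\eta'_l}$ have $\ell_2$ norm. Suppose the block had no combination with small image. Then, by compactness of the finite-dimensional operator spaces and a Lemma~\ref{3b} type stabilization, we would get $\|\sum\alpha_m T_1(\cdot)\|\approx \|(\alpha_m)\|_E$. Boundedness of $T_1$ would then give $\|\cdot\|_E\lesssim\|\cdot\|_{\ell_2}$, and the lower estimate would yield an embedding of $\ell_2$ into $E$, which is impossible. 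So each long block contains a normalized $\xi'_k$ with $T_1$ tiny on the finitely many columns $e_{\xi'_k,\eta'_l}$, $l\le k$.

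Choosing the tolerances summably and applying Theorem~\ref{2d} along the decomposition $\{[e_{\xi'_k,\eta'_l}]_{k\ge l}\}_l$ then gives both small-perturbation statements. Passing to further blocks does not destroy the smallness of $T_2$, since that smallness is column-wise uniform in the row index.

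For the consequence, assume $T$ is an isomorphic embedding and $T_4=0$. By Lemma~\ref{t2t3}(2), $\hat b_1=\tilde b_1=0$ unless $E\subset\ell_2$. Combined with $\ell_2\not\hookrightarrow E$, Lemma~\ref{t2t3}(1) shows $T_3$ would produce an $\ell_2$ copy, so we argue $T_3$ is negligible. In that case $T|_{[e_{\xi'_k,\eta'_l}]}$ is a small perturbation of $0$, contradicting the lower bound of $T$. Hence $T_4\neq0$.

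The main obstacle I expect is the $T_1$ step: making precise that the lack of a small image in a block forces an $\ell_2$-lower $E$-estimate uniform in the columns $l\le k$.
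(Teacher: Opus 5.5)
Your treatment of $T_2$ is fine, though Case~I of Lemma~\ref{t2} fails for a simpler reason than you give. The vectors $Q(e_l)=c_{2l}\otimes(\cdot|e_l)e_l$ are disjointly supported, so their closed span is isometric to a subspace of $E$ (cf.\ \cite[Proposition 3.3]{HSS}), and Case~I would give $\ell_2\hookrightarrow E$ outright; Arazy's $\ell_2\oplus E$ theorem is not needed.

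The genuine gap is the $T_1$ step, as you suspected. Your contrapositive is set up on a single finite block. There, ``no combination has small image'' only yields a lower bound whose constant depends on the block, so no copy of $\ell_2$ can come out of it. The intermediate claim $\|\sum_m\alpha_mT_1(\cdot)\|\approx\|(\alpha_m)\|_E$ is also unfounded. $T_1(e_{\xi_{i_m},\eta'_l})$ is supported in $K'_m\otimes L'_m$, whose dimensions grow with $m$, so no compactness or Lemma~\ref{3b}-type stabilisation is available (that lemma needs a fixed finite-dimensional support on one side). In any case, a disjoint sequence in $\mathcal C_E$ is generally not equivalent to the unit basis of $E$.

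What you need is the same disjointness fact as for $T_2$, used on infinite tails and for several columns at once. For fixed columns $\eta''_1,\dots,\eta''_k$, consider the map
\[
\{\alpha_s\}\in\ell_2\longmapsto\Bigl(\sum_s\alpha_sT_1e_{\xi_{i''_{2(k+s)+1}},\eta''_t}\Bigr)_{t=1}^k\in\mathcal C_E\oplus\cdots\oplus\mathcal C_E .
\]
It is bounded, since a single column carries the $\ell_2$-norm. Its $t$-th coordinate lies in the span of a disjoint sequence, i.e.\ in a space isometric to a subspace of $E$. By Fact~\ref{lp}, $\ell_2$ does not embed into this finite sum, so the map is not bounded below on any tail. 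This gives a normalized, finitely supported row combination arbitrarily far out whose image is small on all $k$ columns simultaneously; that is the source of the uniformity in $l\le k$.

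You must also thin out the columns as you go. The paper takes $\eta'_s=\eta''_{k_{2s}}$ and builds $\xi'_s$ from rows $k_{2s+1}\le k<k_{2s+2}$, after securing smallness on $\eta''_{k_1},\dots,\eta''_{k_{2s}}$. With the columns from the $T_2$ step kept fixed, there is no room for long row blocks between consecutive columns. The pointwise bounds are then turned into column-wise operator bounds by $r$-subadditivity and H\"older.

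The consequence is also argued incorrectly. Lemma~\ref{t2t3}(2) only gives $E\subset\ell_2$ when $\hat b_1$ or $\tilde b_1$ is nonzero, and this is compatible with $\ell_2\not\hookrightarrow E$ (e.g.\ $E=\ell_1$, the case the paper targets). The $\ell_2$-behaviour of $T_3$ from Lemma~\ref{t2t3}(1) is not contradictory on its own either. So there is no reason for $T_3$ to be negligible, nor for $T$ to be close to $0$.

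The correct argument runs the other way. Suppose $T_4=0$. On a tail $[e_{\xi'_k,\eta'_l}]_{N\le l\le k}$, $T$ is then a small perturbation of $T_3$, because $T_1$ and $T_2$ are small there. Hence $T_3$ is bounded below on that tail. By \eqref{t3l2}, the $\mathcal C_E$-norm is then equivalent to the Hilbert--Schmidt norm on the tail. Restricting to the diagonal $[e_{\xi'_k,\eta'_k}]_{k\ge N}\cong E$ gives $E\approx\ell_2$, a contradiction. If $\hat b_1=\tilde b_1=0$, then $T_3=0$ and $T$ is small on the tail, which contradicts its lower bound directly.
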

\begin{proof}
The same argument used in  Lemma \ref{t2} yields that  there is an increasing subsequence $\{i''_l\}_{l=1}^\infty$ of $\{i_l\}_{l=2}^\infty$ and a normalized sequence $\{\eta''_l\}_{l=1}^\infty$ with $\eta''_l\in[\eta_{i_{2s}}]_{i''_{2l}\leq i_{2s}<i''_{2l+1}}$ such that $T_2|_{[e_{\xi_{i''_{2k+1}},\eta''_l}]_{1\leq l\leq k<\infty}}$ is a perturbation of the zero operator associated with the Schauder decomposition $\big\{[e_{\xi_{i''_{2k+1}},\eta''_l}]_{k=l}^\infty\big\}_{l=1}^\infty$ for $\{\varepsilon_l\}_{l=1}^\infty$.

Note that for each fixed $l$,   $[T_1e_{\xi_{i''_{2k+1}},\eta''_l}]_{k=l}^\infty$ is isometrically isomorphic to a subspace of $E$ (see e.g. \cite[Proposition 3.3]{HSS}). For each fixed $k\geq1$, consider the operator
\begin{align*}
\{\alpha_s\}_{s=0}^\infty\in\ell_2\longmapsto\left(\sum_{s=0}^\infty\alpha_sT_1e_{\xi_{i''_{2(k+s)+1}},\eta''_1},\dots,\sum_{s=0}^\infty\alpha_sT_1e_{\xi_{i''_{2(k+s)+1}},\eta''_k}\right)\in \underbrace{\left(   \mathcal C_E \oplus \cdots \oplus \mathcal C_E \right) }_k . 
\end{align*}
Obviously, it is bounded. Using the assumption $\ell_2\not\hookrightarrow E$ and Fact \ref{lp}, we  find an increasing sequence of positive integers $\{k_s\}_{s=1}^\infty$ and $\alpha^{^{_{(s)}}}=\big(\alpha^{^{_{(s)}}}_k\big)_{k_{2s+1}\leq k<k_{2s+2}}$ with $\Vert\alpha^{^{_{(s)}}}\Vert_2=1$ for every $s$ such that
\[
\left\Vert\sum_{k_{2s+1}\leq k< k_{2s+2}}\alpha^{^{_{(s)}}}_kT_1e_{\xi_{i''_{2k+1}},\eta''_{k_t}}\right\Vert_{\mathcal C_E}\leq\varepsilon_s\;\;\;\;\;\;t=1,\dots,2s
\]
Put
\[
\xi'_s=\sum_{k_{2s+1}\leq k< k_{2s+2}}\alpha^{^{_{(s)}}}_k\xi_{i''_{2k+1}}\;\;\;{\rm and}\;\;\;\eta'_s=\eta''_{k_{2s}}\;\;\;\;\;\;s=1,2\cdots.
\]
For each fixed $t$, for any sequence of scalars $\{\beta_s\}_{s=t}^\infty$ with with $\norm{\{\beta_s\}_{s=t}^\infty}_2<\infty$, we have
\begin{align*}
\left\Vert T_1\bigg(\sum_{s=t}^\infty \beta_se_{\xi'_s,\eta'_t}\bigg)\right\Vert_{\mathcal C_E}\leq4^{\frac{1}{r}}\bigg(\sum_{s=t}^\infty\big(\varepsilon_s\vert\beta_s\vert\big)^r\bigg)^{\frac{1}{r}}& \leq4^{\frac{1}{r}}\bigg(\sum_{s=t}^\infty{\varepsilon_s}^q\bigg)^{\frac{1}{q}}\bigg(\sum_{s=t}^\infty\vert\beta_s\vert^2\bigg)^{\frac{1}{2}}\\
&\leq4^{\frac{1}{r}}\bigg(\sum_{s=t}^\infty{\varepsilon_s}^q\bigg)^{\frac{1}{q}}\bigg\Vert\sum_{s=t}^\infty \beta_se_{\xi'_s,\eta'_t}\bigg\Vert_{\mathcal C_E}.
\end{align*}
where $\frac{1}{r}=\frac{1}{q}+\frac{1}{2}$. Therefore, $T_1|_{[e_{\xi'_s,\eta'_t}]_{1\leq t\leq s<\infty}}$ is a perturbation of the zero operator associated with the Schauder decomposition $\big\{[e_{\xi'_s,\eta'_t}]_{s=t}^\infty\big\}_{t=1}^\infty$ for $\big\{4^{\frac{1}{r}}\big(\sum_{s=t}^\infty{\varepsilon_s}^q\big)^{\frac{1}{q}}\big\}_{t=1}^\infty$ (Without loss of generality, we may assume that the sequence $\big\{\big(\sum_{s=t}^\infty{\varepsilon_s}^q\big)^{\frac{1}{q}}\big\}_{t=1}^\infty$ of positive numbers is small enough). Since $[e_{\xi'_s,\eta'_t}]_{s=t}^\infty\subset[e_{\xi_{i''_{2k+1}},\eta''_{k_{2t}}}]_{k=k_{2t}}^\infty$ for every $t$, it follows  that $T_2|_{[e_{\xi'_s,\eta'_t}]_{1\leq t\leq s<\infty}}$ is a small perturbation of the zero operator associated with the Schauder decomposition $\big\{[e_{\xi'_s,\eta'_t}]_{s=t}^\infty\big\}_{t=1}^\infty$ for $\{\varepsilon_{k_{2t}}\}_{t=1}^\infty$. The first assertion of the lemma follows by setting $i'_{2s}=i''_{2k_{2s}}$ and $i'_{2s+1}=i''_{2k_{2s+1}+1}$ for all $s\geq1$.

Assume that $T$ is an isomorphic embedding. 
If $\hat{a}_1$ and $\tilde{a}_1$ are both zero (i.e. $T_4=0$), then $T_3|_{[e_{\xi'_k,\eta'_l}]_{N\leq l\leq k<\infty}}$ is an isomorphic embedding for some sufficiently large positive integer $N$. By Lemma \ref{t2t3}, $[e_{\xi'_k,\eta'_l}]_{N\leq l\leq k<\infty}$ is isomorphic to $\ell_2$, and thus,
\[
E\cong[e_{\xi'_k,\eta'_k}]_{k=N}^\infty\approx \ell_2.
\]
This contradicts the hypothesis. In other words, $\hat{a}_1$ or $\tilde{a}_1$ is nonzero.
\end{proof}

Now, combining Lemmas \ref{t3}, \ref{t4} and \ref{c0l2}, and using Remark \ref{rembl}, we obtain the following theorem.

\begin{theorem}\label{thmb}
Suppose that $E$ is a separable quasi-Banach symmetric sequence space such that   $c_0,\ell_2\not\hookrightarrow E$ and $T\in\mathcal B(\mathcal T_{E,\{\xi_i,\eta_i\}_{i=1}^\infty},\mathcal C_E)$ with $p_{_{[\bigcup_{i=1}^\infty K_i]}}T(\cdot)p_{_{[\bigcup_{i=1}^\infty L_i]}}=T$.
Let $\{\varepsilon_l\}_{l=1}^\infty$ be  a sequence  of positive numbers. 

There exist an increasing sequence $\{i_k\}_{k=1}^\infty$ of positive integers, two orthonormal sequences $\{\xi'_k\}_{k=1}^\infty$ with $\xi'_k\in[\xi_i]_{i_{2k+1}\leq i<i_{2k+2}}$ and $\{\eta'_k\}_{k=1}^\infty$ with $\eta'_k\in[\eta_i]_{i_{2k}\leq i<i_{2k+1}}$, and two operators $\tilde{R}_1,\tilde{R}_2\in\mathcal B\big([e_{\xi'_k,\eta'_l}]_{k,l=1}^\infty,\mathcal C_E\big)$ having the forms
\begin{equation}\label{tilder1}
\tilde{R}_1(e_{\xi'_k,\eta'_l})=\left\{
\begin{array}{rcl}
x_{_{(2k+1,2l)}}+y_{_{(2l,2k+1)}}, &  & {1\leq l\leq k<\infty,}\\
0\;\;\;\;\;\;\;\;\;\;\;\;\;, &  & \text{otherwise,}\\
\end{array}
\right.
\end{equation}
\begin{equation}\label{R2tilde}
\tilde{R}_2(e_{\xi'_k,\eta'_l})=x_{2k+1,2l}+y_{2l,2k+1}\;\;\;\;\;\;\;\;\mbox{for every}\;k,l\geq1,
\end{equation}
such that $\big(\tilde{R}_1+\tilde{R}_2\big)|_{\mathcal T_{E,\{\xi'_k,\eta'_k\}_{k=1}^\infty}}$ is a perturbation of $T|_{\mathcal T_{E,\{\xi'_k,\eta'_k\}_{k=1}^\infty}}$ associated with the Schauder decomposition $\big\{[e_{\xi'_k,\eta'_l}]_{k=l}^\infty\big\}_{l=1}^\infty$ for $\{\varepsilon_l\}_{l=1}^\infty$, where
\begin{itemize}
\item [(1)]
$\{A_k\}_{k=1}^\infty$ is a sequence of mutually disjoint subsets of $\mathbb N$, and $\{F_{(2k+1,2l)}\}_{1\leq l\leq k<\infty}$ and $\{G_{(2l,2k+1)}\}_{1\leq l\leq k<\infty}$ are two sequences of mutually orthogonal closed subspaces of $H$ such that
\[
F_{(2k+1,2l)}\subset K_{A_{2k+1}}\;\;\;\;\mbox{and}\;\;\;\;G_{(2l,2k+1)}\subset L_{A_{2k+1}}\;\;\;\;\mbox{for every}\;1\leq l\leq k<\infty,
\]
\item [(2)]
\[
\{x_{_{(2k+1,2l)}}\}_{1\leq l\leq k<\infty}\stackrel{\mbox{{\rm\tiny b.s}}}{\boxplus}\{F_{(2k+1,2l)}\}_{1\leq l\leq k<\infty}\otimes L\curvearrowleft\hat{b},
\]
\[
\{y_{_{(2l,2k+1)}}\}_{1\leq l\leq k<\infty}\stackrel{\mbox{{\rm\tiny b.s}}}{\boxplus} K\otimes\{G_{(2l,2k+1)}\}_{1\leq l\leq k<\infty}\curvearrowleft\tilde{b},
\]
and $\left\Vert\left(
\begin{smallmatrix}
0&\tilde{b}\\ \hat{b}&0
\end{smallmatrix}\right)
\right\Vert_{\mathcal C_E} \left\Vert\cdot \right\Vert_{\ell_2}\leq 16^{\frac{1}{r}-1}\left\Vert T \right\Vert\cdot \left\Vert\cdot\right\Vert_E$,
\item [(3)]
\[
\{x_{2k+1,2l}\}_{k,l=1}^\infty\stackrel{\mbox{{\rm\tiny b.s}}}{\boxplus}\{K_{A_{2k+1}}\}_{k=1}^\infty\otimes\{L_{A_{2l}}\}_{l=1}^\infty\curvearrowleft\;
\]
and
\[
\{y_{2l,2k+1}\}_{k,l=1}^\infty\stackrel{\mbox{{\rm\tiny b.s}}}{\boxplus}\{K_{A_{2l}}\}_{l=1}^\infty\otimes\{L_{A_{2k+1}}\}_{k=1}^\infty\curvearrowleft\;.
\]
\end{itemize}

Moreover, we have $\lim_{N\to\infty}\big\Vert P_{[\bigcup_{k=N}^\infty K_k],[\bigcup_{k=N}^\infty L_k]}\tilde{R}_1\big\Vert=0$;  if $T$ is an isomorphic embedding, then $\tilde{R}_2$ is an isomorphic embedding.
\end{theorem}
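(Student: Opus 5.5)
We follow the sentence preceding the statement and assemble the theorem from Theorem \ref{thma}, Lemmas \ref{t3}, \ref{t4} and \ref{c0l2}, together with Remark \ref{rembl}.

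\textbf{Step 1: decomposition and killing $T_1$, $T_2$.} Apply Theorem \ref{thma} with a rapidly decreasing sequence $\{\delta_l\}$, chosen much smaller than $\{\varepsilon_l\}$. This gives $T_1+T_2+T_3+T_4$ as a $\{\delta_l\}$-perturbation of $T$ on the lower triangular part indexed by $\{i_k\}$. Since $c_0,\ell_2\not\hookrightarrow E$, Lemma \ref{c0l2} produces a subsequence $\{i'_k\}$ and normalized block vectors $\xi'_k\in[\xi_{i_{2s+1}}]$ and $\eta'_k\in[\eta_{i_{2s}}]$, supported in alternating blocks of the subsequence. On $[e_{\xi'_k,\eta'_l}]_{l\le k}$ both $T_1$ and $T_2$ are then small perturbations of $0$. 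Relabel $i'$ as $i$; this gives the interlacing $\xi'_k\in[\xi_i]_{i_{2k+1}\le i<i_{2k+2}}$, $\eta'_k\in[\eta_i]_{i_{2k}\le i<i_{2k+1}}$ required in the statement. Orthonormality of $\{\xi'_k\}$ and $\{\eta'_k\}$ is automatic from the disjointness of the blocks.

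\textbf{Step 2: transporting $T_3$ and $T_4$.} Each $e_{\xi'_k,\eta'_l}$ is a rank-one combination $\sum_{s,t}\alpha^{(k)}_s\beta^{(l)}_t e_{\xi_{i_{2s+1}},\eta_{i_{2t}}}$ with unit $\ell_2$ coefficient vectors. Remark \ref{rembl} (via Theorem \ref{3a'}) then shows that images of block-generated sequences remain block-generated by the same operators tensored with the rank-one projection $(\cdot|e_1)e_1$.
\begin{itemize}
\item For $T_3$, we use Lemma \ref{t3}: its extension $\tilde R_3$ is given by $\hat b,\tilde b$ on the blocks $F_{(k,l)}\otimes L$ and $K\otimes G_{(l,k)}$. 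Composing with the block change, and using Remark \ref{rembl} to merge subspaces into $F_{(2k+1,2l)}\subset K_{A_{2k+1}}$ and $G_{(2l,2k+1)}\subset L_{A_{2k+1}}$, defines $\tilde R_1$ of the form \eqref{tilder1}. The norm bound involving $16^{1/r-1}\|T\|$ is inherited from Lemma \ref{t2t3}(2), and the tail limit $\|P_{[\bigcup_{k\ge N}K_k],[\bigcup_{k\ge N}L_k]}\tilde R_1\|\to0$ from the last assertion of Lemma \ref{t3}.
\item For $T_4$, we apply Lemma \ref{t4} on the odd/even indices and restrict $\tilde R_4$ to the span of the $e_{\xi'_k,\eta'_l}$. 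Again by Remark \ref{rembl}, the restricted images are block sequences generated by $\left(\begin{smallmatrix}0&\tilde a\\\hat a&0\end{smallmatrix}\right)\otimes(\cdot|e_1)e_1$, which yields $\tilde R_2$ of the form \eqref{R2tilde} on the full square $k,l\ge1$.
\end{itemize}

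\textbf{Step 3: the perturbation estimate.} We need the total error on each column $[e_{\xi'_k,\eta'_l}]_{k\ge l}$ to be at most $\varepsilon_l$. The error accumulates from four sources: Theorem \ref{thma}, the Lemma \ref{c0l2} smallness for $T_1$ and $T_2$, and the Lemma \ref{t3} and Lemma \ref{t4} perturbations. Each column of the new system lies inside a column block of the old one, so we combine the $r$-subadditivity bound \eqref{qt-eq} with Lemma \ref{sdper} and Theorem \ref{2d}, and take $\delta_l$ small enough that $4^{1/r}$ times the $\ell_r$-sum of the four errors is at most $\varepsilon_l$.

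\textbf{Step 4: the embedding clause, and where the difficulty lies.} If $T$ is an isomorphic embedding, Lemma \ref{c0l2} gives $\hat a_1\ne0$ or $\tilde a_1\ne0$, hence $\hat a\ne0$ or $\tilde a\ne0$. The final lower estimate in Lemma \ref{t4}, $\|\tilde R_2x\|\ge\|\hat a\|_\infty\|x\|$ (or the analogue with $\tilde a$), then makes $\tilde R_2$ an isomorphic embedding.

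The main obstacle is bookkeeping in Step 2. The vectors $\xi'_k$ and $\eta'_l$ mix several old indices, so the old blocks $K_{A_{2s+1}}$ must be regrouped consistently into the new $A$-sets and $F,G$ subspaces, keeping them mutually orthogonal while preserving the generating operators. Remark \ref{rembl} handles the generating operators. The remaining care is in choosing the new index sets as unions $A'_{2k+1}=\bigcup_{s}A_{2s+1}$ over the block of $\xi'_k$, and similarly for even indices, with the passage to subsequences done before the blocks are formed.
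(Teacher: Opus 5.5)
Your proposal is correct and follows the paper's own route. The paper proves Theorem~\ref{thmb} in one sentence, by combining Theorem~\ref{thma} with Lemmas~\ref{t3}, \ref{t4}, \ref{c0l2} and Remark~\ref{rembl}. Your Steps 1--4 make explicit the bookkeeping the paper leaves implicit: doing the subsequence passages of Notation~\ref{notationAk} before forming the blocks of Lemma~\ref{c0l2}, regrouping the $A_k$, $F$, $G$ into unions over blocks, and summing the columnwise errors via $r$-subadditivity.
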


\begin{remark}\label{tftoce'}
Let $F$ be another separable quasi-Banach symmetric sequence space. If 
we consider a bounded operator $T:\mathcal T_{F,\{\xi_i,\eta_i\}_{i=1}^\infty}\to \mathcal C_E$ with $p_{_{[\bigcup_{i=1}^\infty K_i]}}T(\cdot)p_{_{[\bigcup_{i=1}^\infty L_i]}}=T$, 
  then, by remark \ref{tftoce}, a mutatis mutandis repetition of  the proofs of the previous lemmas yields that all  conclusions of Theorem \ref{thmb} remain valid, except  that 
one needs an additional condition that  $\ell_2\not\hookrightarrow F$ to guarantee that $\tilde{R}_2$ is an isomorphic embedding provided that $T$ is an isomorphic embedding (see \cite{GHSY}), which recovers \cite[Corollary 5.3]{Arazy4}. Moreover,
\begin{itemize}
\item [(i)]
if $F\nsubseteq\ell_2$, then $\tilde{R}_1=0$ in the above theorem. If $F\nsubseteq E$, then $\tilde{R}_2=0$  in the above theorem.
\item [(ii)] 
if $\{K_i\}_{i=1}^\infty$ and $\{L_i\}_{i=1}^\infty$ are chosen such that $p_{K_i}({\rm im}(T))p_{L_j}=\{0\}$ for every $1\leq i<j<\infty$ (for example, $K_i=[\xi_i]$, $L_i=[\eta_i]$ and im$(T)\subset[e_{\xi_i,\eta_j}]_{1\leq j\leq i<\infty}$), then those ``$y_{(2l,2k+1)}$" and ``$y_{2l,2k+1}$" parts are $0$.
\end{itemize}
\end{remark}

\subsection{Applications}

Recall that for a separable Banach symmetric sequence space $E$, we have  $\mathcal T_E\not\approx\mathcal C_E$ if and only if the triangular projection $T^{E,\{\xi_i,\eta_i\}_{i=1}^\infty}$ (see \eqref{tria-proj} for the definition) is unbounded (Theorem \ref{tria}). 
The necessity of this fact holds in the setting of  quasi-Banach spaces, see Lemma \ref{qb-tenotce} below.

The following proposition for the case of Banach spaces $E$ is  due to Arazy \cite[Proposition 4.8]{Arazy1}, whose proof  relies on the boundedness of the diagonal projection (see \eqref{bs-diag-proj}). However, for general quasi-Banach symmetric sequence spaces $E$, one cannot guarantee the boundedness of the diagonal projection on $\cC_E$ (see \eqref{diag-proj-unb}). Below, we     present a different proof that circumvents the usage of diagonal projections.

\begin{proposition}\label{te=te2}
For every separable quasi-Banach symmetric sequence space $E$, we have 
\[
\mathcal C_E\approx\mathcal C_E\oplus\mathcal C_E\;\;\;\;\mbox{and}\;\;\;\;\mathcal T_E\approx\mathcal T_E\oplus\mathcal T_E.
\]
\end{proposition}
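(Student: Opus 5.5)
We build explicit isometries by re-indexing matrix units, using only the fact that elements with mutually orthogonal left supports and mutually orthogonal right supports have singular values equal to the decreasing rearrangement of the union of their singular values. This is the same mechanism used in Proposition~\ref{ie}, and it requires no diagonal projection.

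For $\mathcal C_E\approx\mathcal C_E\oplus\mathcal C_E$, we first put a quasi-norm on the direct sum. The natural choice is
\[
\|(x,y)\|:=\Big\|\begin{pmatrix}x&0\\0&y\end{pmatrix}\Big\|_{\mathcal C_E(H\oplus H)},
\]
which is equivalent to $\max(\|x\|,\|y\|)$. Indeed, $\|\mathrm{diag}(x,y)\|\le\kappa(\|x\|+\|y\|)$ by the quasi-triangle inequality, and $\|\mathrm{diag}(x,y)\|\ge\|x\|$ because $\mu(s(x))\le \mu(s(x)\oplus s(y))$ pointwise. Next, fix a unitary $U:H\oplus H\to H$, for instance sending the two copies of the basis onto the even and odd basis vectors of $H$. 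The map $(x,y)\mapsto U\,\mathrm{diag}(x,y)\,U^\ast$ is then an isometric bijection onto $\mathcal C_E(H)$, since unitary conjugation preserves singular values. So this case is essentially immediate.

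For $\mathcal T_E\approx\mathcal T_E\oplus\mathcal T_E$, conjugation by an arbitrary unitary would destroy triangularity, so the splitting of indices has to be chosen compatibly with the order. We partition $\mathbb N$ into two infinite sets $N_1=\{n_1<n_2<\cdots\}$ and $N_2=\{m_1<m_2<\cdots\}$. The plan is to find a complemented copy of $\mathcal T_E\oplus\mathcal T_E$ in $\mathcal T_E$ and a complemented copy of $\mathcal T_E$ in $\mathcal T_E\oplus\mathcal T_E$, and then close with a Pełczyński-type decomposition.

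The first copy is the subspace
\[
[e_{\xi_{n_i},\eta_{n_j}}]_{j\le i}\oplus[e_{\xi_{m_i},\eta_{m_j}}]_{j\le i}\subset\mathcal T_E .
\]
Each summand is isometric to $\mathcal T_E$. The two summands have orthogonal left supports and orthogonal right supports, so by the singular-value rule above their sum is isometric to the direct sum with the quasi-norm defined earlier.

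To see it is complemented, consider $x\mapsto p_1xp_1+p_2xp_2$, where $p_1,p_2$ are the projections onto $[\xi_n]_{n\in N_1}$ and $[\eta_n]_{n\in N_1}$ (respectively $N_2$). This map preserves lower-triangular support, and it is bounded by the finite-$n$ estimate \eqref{diag-proj} with $n=2$, giving a bound $\kappa^2$. So this is exactly the point where a two-block diagonal projection is harmless, even in the quasi-Banach case.

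For the reverse direction, the upper-left corner block $[e_{\xi_i,\eta_j}]_{j\le i\le N}$ together with a trivial complement gives $\mathcal T_E$ complemented in $\mathcal T_E\oplus\mathcal T_E$, since $\mathcal T_E\oplus\{0\}$ is complemented. We then apply Pełczyński's decomposition, which needs $X\approx X\oplus X$ for one of the spaces. We obtain that from the fact that $\mathcal T_E\oplus\mathcal T_E\oplus\mathcal T_E\oplus\cdots$ fails in quasi-Banach $c_0$/$\ell_p$ sums.

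That route is fragile in the quasi-Banach setting. The cleaner alternative, which we would try first, is a direct isomorphism. The idea is to embed $\mathcal T_E\oplus\mathcal T_E$ as the two blocks above, and recover $\mathcal T_E$ as the span of those two blocks together with the off-diagonal block $[e_{\xi_{m_i},\eta_{n_j}}]$, suitably ordered. This requires interleaving $N_1$ and $N_2$ so that the off-diagonal block is itself triangular-shaped; it then becomes a third complemented copy, and we iterate by a Schröder–Bernstein style argument. The main obstacle is precisely handling this off-diagonal triangular block without a bounded triangular projection. The approach is to show the off-diagonal block, with the chosen interleaving (e.g.\ $N_1$ the odd and $N_2$ the even integers), again lies in $\mathcal T_E$ and is complemented via a finite block compression, so that all projections used are finite sums of $p_K\,\cdot\,p_L$ controlled by \eqref{diag-proj}.
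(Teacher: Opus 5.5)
Your proposal has two genuine gaps.

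\emph{The $\mathcal C_E$ case.} The map $(x,y)\mapsto U\,\mathrm{diag}(x,y)\,U^\ast$ is an isometric embedding, but it is not onto $\mathcal C_E(H)$. Its range consists only of operators that are block diagonal with respect to $H=U(H\oplus 0)\oplus U(0\oplus H)$, so every operator with a nonzero off-diagonal corner is missed. All you have shown is that $\mathcal C_E\oplus\mathcal C_E$ is isomorphic to a complemented subspace of $\mathcal C_E$, and that does not close the argument (see below). The fix, which is the paper's argument, is to split by rows instead of by diagonal blocks. Take $K$ with $\dim K=\dim K^\perp=\infty$. Then $x\mapsto(p_Kx,\,p_{K^\perp}x)$ is bounded because $s(p_Kx)\le s(x)$, and its inverse $(y,z)\mapsto y+z$ is bounded by the quasi-triangle inequality. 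Moreover $p_K\mathcal C_E$ is isometric to $\mathcal C_E$ via $x\mapsto Vx$, where $V$ is an isometry of $H$ onto $K$.

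\emph{The $\mathcal T_E$ case.} The step you flag as the main obstacle is not one. Compressions $x\mapsto p\,x\,q$ by coordinate projections are contractive on $\mathcal C_E$ and map $\mathcal T_E$ into itself. With $N_1,N_2$ the odd and even integers, all four compressions of $\mathcal T_E$ are isometric to $\mathcal T_E$; one of the off-diagonal blocks is strictly lower triangular and becomes $\mathcal T_E$ after an index shift. So you get directly that $\mathcal T_E$ is isomorphic to the direct sum of four copies of itself, $\mathcal T_E\approx\mathcal T_E^{4}$, with no triangular projection needed.

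The real missing idea is how to pass from this, or from the mutual complemented embeddability of $\mathcal T_E$ and $\mathcal T_E\oplus\mathcal T_E$, to $\mathcal T_E\approx\mathcal T_E\oplus\mathcal T_E$. Pe{\l}czy\'nski's decomposition method needs $X\approx X\oplus X$ or $X\approx(\sum X)_p$ as input, which is exactly what you are trying to prove. No Schr\"oder--Bernstein iteration can work abstractly either. Gowers constructed a Banach space $X$ with $X\approx X\oplus X\oplus X$ but $X\not\approx X\oplus X$, so $X$ and $X\oplus X$ are each complemented in the other without being isomorphic. The Gowers--Maurey examples likewise show that $X\approx X^{4}$ alone does not force $X\approx X^{2}$.

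The paper supplies the extra input with a second splitting of the indices modulo $3$. All nine resulting blocks are again lower or strictly lower triangular, which gives $\mathcal T_E\approx\mathcal T_E^{9}$. Combining the two decompositions,
\[
\mathcal T_E\oplus\mathcal T_E\approx\mathcal T_E\oplus\mathcal T_E^{9}=\mathcal T_E^{2}\oplus\mathcal T_E^{4}\oplus\mathcal T_E^{4}\approx\mathcal T_E^{2}\oplus\mathcal T_E\oplus\mathcal T_E=\mathcal T_E^{4}\approx\mathcal T_E .
\]
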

\begin{proof}
{\bf The case for  $\mathcal C_E$}. We may choose an infinite dimensional closed subspace $K$ of $H$ such that $K^{\perp}$ is infinite dimensional. It follows that
\[
\mathcal C_E\approx p_{_K}\mathcal C_E\oplus p_{_{K^\perp}}\mathcal C_E\approx\mathcal C_E\oplus\mathcal C_E.
\]

{\bf The case for  $\mathcal T_E$}. We have $\mathcal T_E\cong[e_{\xi_i,\eta_j}]_{1\leq j\leq i<\infty}$, and thus
\begin{align*}
\mathcal T_E&\cong\sum_{0\leq s,t\leq1}p_{_{[\xi_{2k-s}]_{k=1}^\infty}}[e_{\xi_i,\eta_j}]_{1\leq j\leq i<\infty}p_{_{[\eta_{2k-t}]_{k=1}^\infty}}\\
&\approx[e_{\xi_{2k-1},\eta_{2l-1}}]_{l\leq k}\oplus [e_{\xi_{2k},\eta_{2l-1}}]_{l\leq k}\oplus[e_{\xi_{2k-1},\eta_{2l}}]_{l<k}\oplus[e_{\xi_{2k},\eta_{2l}}]_{l\leq k}\approx \sum_{i=1}^4\oplus\mathcal T_E,
\end{align*}
and
\begin{align*}
\mathcal T_E&\cong\sum_{0\leq s,t\leq2}p_{_{[\xi_{3k-s}]_{k=1}^\infty}}[e_{\xi_i,\eta_j}]_{1\leq j\leq i<\infty}p_{_{[\eta_{3k-t}]_{k=1}^\infty}}\\
&\approx\bigg(\sum_{0\leq t\leq s\leq2}\oplus[e_{\xi_{3k-s},\eta_{3l-t}}]_{l\leq k}\bigg)\oplus\bigg(\sum_{0\leq s<t\leq2}\oplus[e_{\xi_{3k-s},\eta_{3l-t}}]_{l<k}\bigg)\approx \sum_{i=1}^9\oplus\mathcal T_E.
\end{align*}
Consequently,
\[
\mathcal T_E\oplus\mathcal T_E\approx\mathcal T_E\oplus\bigg(\sum_{i=1}^9\oplus\mathcal T_E\bigg)\approx\mathcal T_E\oplus\mathcal T_E\oplus\bigg(\sum_{i=1}^4\oplus\mathcal T_E\bigg)^2\approx\sum_{i=1}^4\oplus\mathcal T_E\approx\mathcal T_E.
\]
\end{proof}

\begin{lemma}\label{qb-tenotce}
Let $E$ be a separable quasi-Banach symmetric sequence space. If $\mathcal T_E\not\approx\mathcal C_E$, then $T^{E,\{\xi_i,\eta_i\}_{i=1}^\infty}$ is unbounded.
\end{lemma}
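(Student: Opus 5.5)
We argue by contraposition: assuming the triangular projection $T^{E,\{\xi_i,\eta_i\}_{i=1}^\infty}$ is bounded on ${\rm span}(\{e_{\xi_i,\eta_j}\}_{i,j=1}^\infty)$, we show $\mathcal T_E\approx\mathcal C_E$ by a Pe{\l}czy\'nski decomposition argument. The only tools needed are Proposition \ref{te=te2}, which gives $\mathcal C_E\approx\mathcal C_E\oplus\mathcal C_E$ and $\mathcal T_E\approx\mathcal T_E\oplus\mathcal T_E$, and the fact that the decomposition method uses only square-type stabilizations $X\approx X\oplus X$. That argument goes through in quasi-Banach spaces, since it relies only on complementation and direct sums and never on convexity.

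First, we extend $T^{E}:=T^{E,\{\xi_i,\eta_i\}_{i=1}^\infty}$ by continuity to a bounded projection $P$ on $Z:=[e_{\xi_i,\eta_j}]_{i,j=1}^\infty$. Since $E$ is separable, the finitely supported matrices are dense in $Z$, so this extension is legitimate; the possible discontinuity of the quasi-norm is harmless here, because we only need boundedness, which follows from \eqref{lim-norm} up to the constant $\kappa$. The range of $P$ is $\mathcal T_{E,\{\xi_i,\eta_i\}}$. Moreover, $Z$ equals $p\,\mathcal C_E\,q$, where $p,q$ are the projections onto $[\xi_i]$ and $[\eta_i]$, and $Z$ is isometric to $\mathcal C_E$. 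Hence $\mathcal T_E$ is complemented in $\mathcal C_E$; write $\mathcal C_E\approx\mathcal T_E\oplus Y$.

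Second, we show that $\mathcal C_E$ is complemented in $\mathcal T_E$. We choose disjoint infinite index sets $I=\{2k\}$ and $J=\{2k-1\}$, or better, interleave them so that each $i\in I$ exceeds a fresh block of $J$. Concretely, we take subsequences $\{i_k\}$ and $\{j_l\}$ with $i_k>j_l$ for all $k,l$ \emph{in the sense of indices}. This is impossible for infinite sequences, so instead we cut a staircase: we pick $\xi'_k=\xi_{m_k}$ and $\eta'_l=\eta_{n_l}$ and use the block $[e_{\xi_{i},\eta_j}]$ with $i\in\{m_k\}$, $j\in\{n_l\}$. This still contains upper entries, so it does not work directly. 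The cleaner route is through the hypothesis itself. The transpose-and-reflection map sends lower triangular matrices onto upper triangular ones isometrically (taking adjoints preserves singular values), so the upper triangular part $\mathcal U_E$ is isometric to $\mathcal T_E$. Every $x\in Z$ then splits as $x=Px+(I-P)x$, and $(I-P)x$ lies in the strictly upper part, which is isomorphic to a complemented subspace of $\mathcal T_E$. Therefore $\mathcal C_E\approx Z\hookrightarrow\mathcal T_E\oplus\mathcal T_E\approx\mathcal T_E$ complementably, with $x\mapsto(Px,(I-P)x)$ the embedding and addition the left inverse. Write $\mathcal T_E\approx\mathcal C_E\oplus W$.

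Finally, we apply the decomposition method:
\[
\mathcal C_E\approx\mathcal T_E\oplus Y\approx\mathcal T_E\oplus\mathcal T_E\oplus Y\approx\mathcal T_E\oplus\mathcal C_E\approx\mathcal C_E\oplus W\oplus\mathcal C_E\approx\mathcal C_E\oplus W\approx\mathcal T_E .
\]
This uses only Proposition \ref{te=te2}. It contradicts $\mathcal T_E\not\approx\mathcal C_E$, so $T^E$ must be unbounded.

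The main point needing care is that the strictly upper part is complemented in $\mathcal T_E$ (isomorphically) in the quasi-Banach setting. For this, we use the index shift $e_{\xi_i,\eta_j}\mapsto e_{\xi_{j},\eta_{i+1}}$ composed with the adjoint, which is an isometry onto a subspace of $\mathcal T_E$ that is the range of a contractive compression $p'(\cdot)q'$. We check this directly via singular values, as in Proposition \ref{ie}, without appealing to diagonal projections.
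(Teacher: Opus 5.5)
Your argument is correct, and its core is the paper's: a bounded triangular projection splits $[e_{\xi_i,\eta_j}]_{i,j=1}^\infty\cong\mathcal C_E$ into the lower part $\mathcal T_{E,\{\xi_i,\eta_i\}_{i=1}^\infty}$ and the strictly upper part, a shifted transposition handles the upper part, and Proposition~\ref{te=te2} finishes. Your proof is longer than necessary, however, and the extra steps can be dropped.

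You only use that the strictly upper part is isomorphic to a \emph{complemented subspace} of $\mathcal T_E$. That forces you to prove mutual complementation and then run the Pe{\l}czy\'nski decomposition, which also needs $\mathcal C_E\approx\mathcal C_E\oplus\mathcal C_E$.

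The paper instead uses that the strictly upper part is itself a full copy of $\mathcal T_E$. Your own map already shows this. For $1\le i<j$ write $k=j-1$ and $l=i$, so $1\le l\le k$. Then $e_{\xi_i,\eta_j}\mapsto e_{\xi_j,\eta_{i+1}}=e_{\xi_{k+1},\eta_{l+1}}$ carries $[e_{\xi_i,\eta_j}]_{1\le i<j<\infty}$ onto exactly $\mathcal T_{E,\{\xi_{k+1},\eta_{k+1}\}_{k=1}^\infty}$, not merely onto a complemented piece of it. Hence $\mathcal C_E\cong[e_{\xi_i,\eta_j}]_{i,j=1}^\infty\approx\mathcal T_E\oplus\mathcal T_E\approx\mathcal T_E$ in one line, and your Steps 1 and 3 become redundant.

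Your detour would still work if the upper part were only complementably embedded in $\mathcal T_E$. Here that extra robustness buys nothing.

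When you write it up, fix two things:
\begin{itemize}
\item Delete the abandoned attempts at the start of Step 2 (the disjoint index sets and the staircase).
\item Call the map a transpose, not ``index shift composed with the adjoint''. The map you wrote is already the linear transpose, and in the complex case the adjoint is conjugate-linear. The transpose preserves singular values because $A^{T}=\overline{A^{*}}$.
\end{itemize}
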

\begin{proof}
If $T^{E,\{\xi_i,\eta_i\}_{i=1}^\infty}$ is bounded, then
\[
\mathcal C_E\cong[e_{\xi_i,\eta_j}]_{i,j=1}^\infty\approx[e_{\xi_i,\eta_j}]_{1\leq j\leq i<\infty}\oplus[e_{\xi_i,\eta_j}]_{1\leq i<j<\infty}\approx\mathcal T_E\oplus\mathcal T_E\approx\mathcal T_E,
\]
contradicting the hypothesis.
\end{proof}
\begin{lemma}
Let $E$ be a separable quasi-Banach symmetric sequence space. If  $T^{E,\{\xi_i,\eta_i\}_{i=1}^\infty}$ is bounded, then $P_{\{[\xi_k],[\eta_k]\}_{k=1}^\infty}$ is bounded on $\mathcal C_E$.
\end{lemma}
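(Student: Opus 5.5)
Write $e_{i,j}=e_{\xi_i,\eta_j}$, $T=T^{E,\{\xi_i,\eta_i\}_{i=1}^\infty}$, and let $T^*$ be the strictly upper triangular projection. The plan is to realize the diagonal projection as a composition of bounded maps built from $T$. Put $D:=P_{\{[\xi_k],[\eta_k]\}_{k=1}^\infty}$. On $\mathrm{span}\{e_{i,j}\}$ we have $D=T-T'$, where $T'$ is the strictly lower triangular projection. So it suffices to show that $T'$ is bounded.

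First, $T^*=I-T$ is bounded, and by symmetry ($x\mapsto x^*$ composed with swapping the bases) the upper triangular projection $U:e_{i,j}\mapsto e_{i,j}$ for $i\le j$ is bounded with $\|U\|=\|T\|$. Next, shift the columns. Let $w$ be the isometry $\eta_j\mapsto\eta_{j+1}$ on $[\eta_j]$. Right multiplication $x\mapsto xw^*$ sends $e_{i,j}$ to $e_{i,j+1}$, and $x\mapsto xw$ sends $e_{i,j+1}$ back to $e_{i,j}$ and kills the column $j=1$. Both maps are contractions on $\mathcal C_E$, since $s(xw)\le s(x)$.

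Then
\[
T'=R_w\circ T\circ R_{w^*},
\]
where $R_w(x)=xw$ and $R_{w^*}(x)=xw^*$. To check this, note $R_{w^*}e_{i,j}=e_{i,j+1}$. Applying $T$ keeps it exactly when $j+1\le i$, that is, when $j<i$. Applying $R_w$ then returns $e_{i,j}$. Hence this composition maps $e_{i,j}$ to $e_{i,j}$ if $j<i$ and to $0$ otherwise, which is $T'$. Therefore $\|T'\|\le\|T\|$.

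Consequently
\[
\|D\|\le\kappa(\|T\|+\|T'\|)\le 2\kappa\|T\|
\]
on $\mathrm{span}\{e_{i,j}\}$. Extend $D$ by density to $[e_{i,j}]_{i,j}$. To get $D$ on all of $\mathcal C_E$, write $p,q$ for the projections onto $[\xi_k]$ and $[\eta_k]$. Then $D(x)=D(pxq)$, and $x\mapsto pxq$ is contractive, so $D$ is bounded on $\mathcal C_E$.

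The only delicate point is this extension step. The quasi-norm need not be continuous, so the extension should be done via an equivalent $r$-subadditive quasi-norm (Theorem~\ref{qbs-norm}), for which a bounded operator on a dense subspace extends continuously. Since $E$ is separable, finitely supported matrices are dense in $p\,\mathcal C_E\,q$, which completes the argument.
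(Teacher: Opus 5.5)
Your argument is correct, but it takes a different route from the paper.

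\textbf{The paper's route.} For $x=\sum\alpha_{i,j}e_{\xi_i,\eta_j}$, the paper writes the diagonal of $x$ as the triangular projection applied to the upper triangular part $\sum_{i\le j}\alpha_{i,j}e_{\xi_i,\eta_j}$. It then bounds that upper part by passing to the transpose $\sum\alpha_{i,j}e_{\xi_j,\eta_i}$. This is an isometry, since transposition preserves singular values, and under it the upper part becomes a lower triangular part. The result is $\|D\|\le\|T\|^2$, with no use of the quasi-triangle inequality.

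\textbf{Your route.} You write $D=T-T'$ and realize the strictly lower triangular projection as $T'=R_w\circ T\circ R_{w^*}$ via the one-step column shift. This needs only right multiplication by a partial isometry, which is a contraction since $s(xw)\le s(x)$, and no transposition. You pay one application of the quasi-triangle inequality and get a bound $2\kappa\|T\|$, linear in $\|T\|$.

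\textbf{What your version adds.} You also make explicit what the paper leaves implicit. First, you extend from finitely supported matrices to $[e_{\xi_i,\eta_j}]$ through an equivalent $r$-subadditive quasi-norm. Second, you pass to all of $\mathcal C_E$ through $x\mapsto pxq$. To be fully complete, note that the extension agrees with $x\mapsto\sum_k p_{[\xi_k]}xp_{[\eta_k]}$. This holds because convergence in $\mathcal C_E$ implies operator-norm convergence, and the diagonal map is contractive in operator norm.

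\textbf{Minor points.} Your opening remark about $U$ and $I-T$ is never used and can be dropped. Denoting the strictly upper triangular projection by $T^*$ also clashes with adjoint notation.
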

\begin{proof}
    For any $x=\sum_{i,j}\alpha_{i,j}e_{\xi_i,\eta_j}\in{\rm span}\big(\{e_{\xi_i,\eta_j}\}_{i,j=1}^\infty\big)$,
\begin{align*}
    \big\Vert P_{\{[\xi_k],[\eta_k]\}_{k=1}^\infty}(x)\big\Vert_{\mathcal C_E}&=\bigg\Vert\sum_i\alpha_{i,i}e_{\xi_i,\eta_i}\bigg\Vert_{\mathcal C_E}\leq\big\Vert T^{E,\{\xi_k,\eta_k\}_{k=1}^\infty}\big\Vert\bigg\Vert\sum_{i\leq j}\alpha_{i,j}e_{\xi_i,\eta_j}\bigg\Vert_{\mathcal C_E}\\
    &=\big\Vert T^{E,\{\xi_k,\eta_k\}_{k=1}^\infty}\big\Vert\bigg\Vert\sum_{i\leq j}\alpha_{i,j}e_{\xi_j,\eta_i}\bigg\Vert_{\mathcal C_E}\\
    &\leq\big\Vert T^{E,\{\xi_k,\eta_k\}_{k=1}^\infty}\big\Vert^2\bigg\Vert\sum_{i,j}\alpha_{i,j}e_{\xi_j,\eta_i}\bigg\Vert_{\mathcal C_E}\\ &=\big\Vert T^{E,\{\xi_k,\eta_k\}_{k=1}^\infty}\big\Vert^2\bigg\Vert\sum_{i,j}\alpha_{i,j}e_{\xi_i,\eta_j}\bigg\Vert_{\mathcal C_E}.
\end{align*}
 This yields that $P_{\{[\xi_k],[\eta_k]\}_{k=1}^\infty}$ is bounded on $\mathcal C_E$.
\end{proof}

\begin{example}\label{exmple}
    By \eqref{diag-proj-unb}, it follows immediately that $T^{\ell_p,\{\xi_k,\eta_k\}_{k=1}^\infty}$ is unbounded for each $0<p\leq1$ (for the case when $p=1$,  see \cite{KwapienPelczynski}).
\end{example}

When $\mathcal C_E\approx\mathcal T_E$, 
it is convenient to study operators on $\mathcal T_E$ instead of $\mathcal C_E$. 
However, if $\mathcal T_E\not\approx \mathcal C_E$, 
then the situation changes dramatically (e.g., the triangular projection  $T^{E,\{\xi_i,\eta_i\}_{i=1}^\infty}$ is unbounded).
In particular, $\mathcal C_E$ does not have an unconditional finite-dimensional Schauder decomposition when $E$ is separable Banach space (e.g. see \cite{Arazy1} and \cite{KwapienPelczynski}), which may cause  technical difficulties. 

In Theorem \ref{thmc} below, we establish an analogue of Theorem \ref{thmb} in the setting of operators on the whole `square', i.e.,  operators from $B\big([e_{\xi_i,\eta_j}]_{i,j=1}^\infty,\mathcal C_E\big)$. 
Perturbations associated with  a  Schauder decomposition play   key roles, which allow  the application of Lemma \ref{2d''}.
Before proceeding to the proof of Theorem~\ref{thmc}, we need the following auxiliary lemma.

\begin{lemma}\label{per=}
Suppose that $E$ is a separable quasi-Banach symmetric sequence space such that    $c_0,\ell_2\not\hookrightarrow E$, and $\mathcal T_E\not\approx \mathcal C_E$. 
Assume that $T,S\in\mathcal B\big([e_{\xi_i,\eta_j}]_{i,j=1}^\infty,\mathcal C_E\big)$ satisfy $p_{_{[\bigcup_{i=1}^\infty K_i]}}T(\cdot)p_{_{[\bigcup_{i=1}^\infty L_i]}}=T$ and $p_{_{[\bigcup_{i=1}^\infty K_i]}}S(\cdot)p_{_{[\bigcup_{i=1}^\infty L_i]}}=S$,  and assume that   $S|_{[e_{\xi_i,\eta_j}]_{1\leq i<j<\infty}}$ is a perturbation of $T|_{[e_{\xi_i,\eta_j}]_{1\leq i<j<\infty}}$  associated with the  Schauder decomposition $\big\{[e_{\xi_i,\eta_j}]_{j=i+1}^\infty\big\}_{i=1}^\infty$,

Then there exist an increasing sequence of positive integers $\{i_k\}_{k=1}^\infty$, two orthonormal sequences $\{\xi'_k\}_{k=1}^\infty$ with $\xi'_k\in[\xi_i]_{i_{2k+1}\leq i<i_{2k+2}}$ and $\{\eta'_k\}_{l=1}^\infty$ with $\eta'_k\in[\eta_i]_{i_{2k}\leq i<i_{2k+1}}$, and a bounded operator $R:[e_{\xi'_k,\eta'_l}]_{k,l=1}^\infty\to\mathcal C_E$ having the form
\[
R(e_{\xi'_k,\eta'_l})=\left\{
\begin{array}{rcl}
x_{_{(k,l)}}+y_{_{(l,k)}}, &  & {1\leq l\leq k<\infty,}\\
0\;\;\;\;\;\;\;\;, &  & \text{otherwise,}\\
\end{array}
\right.
\]
such that $S|_{[e_{\xi'_k,\eta'_l}]_{k,l=1}^\infty}+R$ is a small perturbation of $T|_{[e_{\xi'_k,\eta'_l}]_{k,l=1}^\infty}$ associated with the Schauder decomposition $\big\{[e_{\xi'_k,\eta'_l}]_{\min\{k,l\}=s}^\infty\big\}_{s=1}^\infty$, where
\begin{itemize}
\item [(1)]
$\{F_{(k,l)}\}_{1\leq l\leq k<\infty}$ is a sequences of mutually orthgononal closed subspaces of $K$, and $\{G_{(l,k)}\}_{1\leq l\leq k<\infty}$ is a sequence of mutually orthogonal closed subspaces of $L$,
\item [(2)]
\[
\{x_{_{(k,l)}}\}_{1\leq l\leq k<\infty}\stackrel{\mbox{{\rm\tiny b.s}}}{\boxplus}\{F_{(k,l)}\}_{1\leq l\leq k <\infty}\otimes L\curvearrowleft\hat{b},
\]
\[
\{y_{_{(l,k)}}\}_{1\leq l\leq k<\infty}\stackrel{\mbox{{\rm\tiny b.s}}}{\boxplus}K\otimes\{G_{(l,k)}\}_{1\leq l\leq k<\infty}\curvearrowleft\tilde{b}.
\]
\end{itemize}
Moreover,
\[
\lim_{N\to\infty}\big\Vert P_{[\bigcup_{i=N}^\infty K_i],[\bigcup_{i=N}^\infty L_i]}R\big\Vert=0.
\]
\end{lemma}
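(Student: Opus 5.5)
We apply the machinery of Theorem~\ref{thma} and Theorem~\ref{thmb} to the difference $D:=T-S$ restricted to the lower triangular part $[e_{\xi_i,\eta_j}]_{1\le j\le i<\infty}$. The upper triangle is already handled by the hypothesis, since there $S$ is a perturbation of $T$. Note that $D$ is bounded on the whole square and satisfies $p_{_{K}}D(\cdot)p_{_{L}}=D$. Theorem~\ref{thmb} therefore applies to $D|_{\mathcal T_{E,\{\xi_i,\eta_i\}}}$ and produces the following data:
\begin{itemize}
\item an increasing sequence $\{i_k\}$;
\item orthonormal blocks $\xi'_k\in[\xi_i]_{i_{2k+1}\le i<i_{2k+2}}$ and $\eta'_k\in[\eta_i]_{i_{2k}\le i<i_{2k+1}}$;
\item operators $\tilde R_1,\tilde R_2$ such that $(\tilde R_1+\tilde R_2)|_{\mathcal T}$ is a small perturbation of $D|_{\mathcal T}$.
\end{itemize}
Here $\tilde R_1$ has exactly the required form $x_{(k,l)}+y_{(l,k)}$, generated by $\hat b$ and $\tilde b$, and $\|P_{[\bigcup_{i\ge N}K_i],[\bigcup_{i\ge N}L_i]}\tilde R_1\|\to0$. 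We set $R:=\tilde R_1$, extended by zero above the diagonal (this extension is bounded by Theorem~\ref{thmb}). The required mutually orthogonal subspaces $F_{(k,l)}\subset K$ and $G_{(l,k)}\subset L$ are then the ones from Theorem~\ref{thmb} (renamed).

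The key step is to show that $\tilde R_2=0$, i.e.\ that $\hat a=\tilde a=0$. Suppose instead that $T_4\ne0$. By Lemma~\ref{t4}, $\tilde R_2$ is an isomorphic embedding of the full square $[e_{\xi'_k,\eta'_l}]_{k,l}$, with $\tilde R_2(e_{\xi'_k,\eta'_l})=x_{2k+1,2l}+y_{2l,2k+1}$. Moreover, $\tilde R_2$ vanishes-up-to-perturbation as a representative of $D$ on the lower triangle. We then exploit that $D$ is small (a perturbation of $0$) on the upper triangle and bounded on the whole square. Compose $D$ with the block projection
\[
P_{\{K_{A_{2k+1}},L_{A_{2l}}\}}+P_{\{K_{A_{2l}},L_{A_{2k+1}}\}}.
\]
This projection is bounded on elements whose off-diagonal block structure is as in Theorem~\ref{hi} and Proposition~\ref{ie}, which is where we avoid general diagonal projections. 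The composition recovers the entries $x_{2k+1,2l}+y_{2l,2k+1}$ for $l\le k$, while above the diagonal it is only a small perturbation. Consequently, on $\operatorname{span}\{e_{\xi'_k,\eta'_l}\}$, the triangular truncation can be written as this block compression of $D$ minus a small error, followed by the inverse of the embedding $\tilde R_2$, which is an isomorphism onto its image by Proposition~\ref{ie}. This makes the triangular projection $T^{E,\{\xi'_k,\eta'_k\}}$ bounded. By Lemma~\ref{qb-tenotce}, a bounded triangular projection forces $\mathcal T_E\approx\mathcal C_E$, a contradiction. Hence $\hat a_1=\tilde a_1=0$, and $D|_{\mathcal T}$ is a small perturbation of $\tilde R_1$ alone.

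It remains to assemble the two halves. On the lower triangle, $T-S-R$ is small by the previous step; on the strict upper triangle, $T-S$ is small by hypothesis and $R=0$ there. Both decompositions must be refined to the common subsequence $\{\xi'_k\},\{\eta'_l\}$, and we then pass to the "L-shaped" Schauder decomposition $\{[e_{\xi'_k,\eta'_l}]_{\min\{k,l\}=s}\}_s$. Each L-piece is the sum of a column piece $\{k\ge s,\ l=s\}$ from the lower decomposition and a row piece $\{k=s,\ l>s\}$ from the upper one. Since $\xi'_k$ and $\eta'_l$ are finite block combinations, the perturbation hypothesis on $\{[e_{\xi_i,\eta_j}]_{j>i}\}$ transfers to the blocks with errors summable in the $r$-th power. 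Lemma~\ref{sdper} and $r$-subadditivity then give smallness on each L-piece, with errors $\varepsilon_s$ that we may choose summable.

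We expect the contradiction argument that kills $\tilde R_2$ to be the main obstacle. It requires bounding a diagonal-type compression in a quasi-Banach $\mathcal C_E$ using only the tensor structure of Proposition~\ref{ie}, with every error kept under control through the relevant Schauder decompositions.
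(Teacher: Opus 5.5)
Your overall strategy is the paper's: apply Theorem~\ref{thmb} to $T-S$ on the lower triangle, show $\tilde R_2=0$ by producing a bounded triangular projection and invoking Lemma~\ref{qb-tenotce}, and take $R=\tilde R_1$. The gap is in the mechanism you propose for the key step.

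\textbf{The compression does not isolate $\tilde R_2$.} Write $K_{\mathrm{odd}}=[\bigcup_k K_{A_{2k+1}}]$ and $L_{\mathrm{even}}=[\bigcup_l L_{A_{2l}}]$, and similarly for the other parities. Summed over all $k,l$, your block projection is $p_{K_{\mathrm{odd}}}(\cdot)p_{L_{\mathrm{even}}}+p_{K_{\mathrm{even}}}(\cdot)p_{L_{\mathrm{odd}}}$. It is bounded for trivial reasons, so boundedness is not the issue; the issue is that it does not remove $\tilde R_1$. By Theorem~\ref{thmb}, $x_{(2k+1,2l)}$ has left support in $F_{(2k+1,2l)}\subset K_{A_{2k+1}}$ but right support anywhere in $L$, and symmetrically $y_{(2l,2k+1)}$ has left support anywhere in $K$. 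Hence, up to the error, the compression of $D(e_{\xi'_k,\eta'_l})$ for $l\le k$ is
$x_{2k+1,2l}+y_{2l,2k+1}+x_{(2k+1,2l)}p_{L_{\mathrm{even}}}+p_{K_{\mathrm{even}}}y_{(2l,2k+1)}$.
In general this is not in the range of $\tilde R_2$. So the claim that ``the composition recovers the entries $x_{2k+1,2l}+y_{2l,2k+1}$'' fails, and $\tilde R_2^{-1}$ cannot be applied.

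\textbf{The repair, and the step you are still missing.} Subtract $\tilde R_1$ instead of compressing; this is legitimate because Theorem~\ref{thmb} gives $\tilde R_1$ bounded on the full square and zero above the diagonal. You then need an operator that is exactly $\tilde R_1+\tilde R_2$ below the diagonal, exactly $0$ above it, and bounded on the whole square. Equivalently, $D$ minus this operator must be bounded on the square. Your ``small error'' is only controlled separately on column pieces and on row pieces, which is not enough.

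\textbf{How the paper does it.} It uses Lemma~\ref{2d''} twice.
\begin{enumerate}
\item First, it replaces $T-S$ by a bounded operator $\tilde R$ on the square, equal to $T-S$ below the diagonal and $0$ above.
\item After applying Theorem~\ref{thmb} to $\tilde R$, it obtains a bounded $\tilde Q$ equal to $\tilde R_1+\tilde R_2$ below and $0$ above, which is a small perturbation of $\tilde R$ on the pieces $\{\min\{k,l\}=s\}$.
\end{enumerate}
Then $(\tilde Q-\tilde R_1)x=\tilde R_2\big(T^{E,\{\xi'_k,\eta'_k\}}x\big)$ on the span. The lower bound $\|\tilde R_2 z\|_{\mathcal C_E}\ge\|\hat a\|_\infty\|z\|_{\mathcal C_E}$ from Lemma~\ref{t4} then bounds the triangular projection, giving the contradiction.

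Your argument with the L-shaped pieces can do the same job. Define the target on the span and estimate $D$ minus it piece by piece, as in Theorem~\ref{2d} and Lemma~\ref{sdper}. But this has to be carried out inside the contradiction argument, not only in the final assembly.
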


\begin{proof}
Without loss of generality, we may assume  that $(T-S)|_{[e_{\xi_i,\eta_j}]_{1\leq i<j<\infty}}$ is a small perturbation of the zero operator associated with the Schauder decomposition $\big\{[e_{\xi_i,\eta_j}]_{j=i+1}^\infty\big\}_{i=1}^\infty$. By Lemma~\ref{2d''}, there exists an operator $\tilde{R}\in\mathcal B\big([e_{\xi_i,\eta_j}]_{i,j=1}^\infty,\mathcal C_E\big)$ satisfying that 
\[
\tilde{R}|_{[e_{\xi_i,\eta_j}]_{1\leq j\leq i<\infty}}=(T-S)|_{[e_{\xi_i,\eta_j}]_{1\leq j\leq i<\infty}}\;\;\;\;{\rm and}\;\;\;\;\tilde{R}|_{[e_{\xi_i,\eta_j}]_{1\leq i<j<\infty}}=0,
\]
and $\tilde{R}$ is a small perturbation of $T-S$ associated with the Schauder decomposition $\big\{[e_{\xi_i,\eta_j}]_{\min\{i,j\}=s}^\infty\big\}_{s=1}^\infty$.

Applying Theorem \ref{thmb} to $\tilde{R}|_{[e_{\xi_i,\eta_j}]_{1\leq j\leq i<\infty}}$, we obtain two operators  $\tilde{R}_1,\tilde{R}_2\in\mathcal B\big([e_{\xi'_k,\eta'_l}]_{k,l=1}^\infty,\mathcal C_E\big)$ satisfying that the assertions in Theorem \ref{thmb} such that  $\big(\tilde{R}_1+\tilde{R}_2\big)|_{[e_{\xi'_k,\eta'_l}]_{1\leq l\leq k<\infty}}$ is a small perturbation of $\tilde{R}|_{[e_{\xi'_k,\eta'_l}]_{1\leq l\leq k<\infty}}$ associated with the Schauder decomposition $\big\{[e_{\xi'_k,\eta'_l}]_{k=l}^\infty\big\}_{l=1}^\infty$. Using Lemma \ref{2d''}, there exists an operator $\tilde{Q}\in\mathcal B\big([e_{\xi'_k,\eta'_l}]_{k,l=1}^\infty,\mathcal C_E\big)$ with $p_{_{[\bigcup_{i=1}^\infty K_i]}}\tilde{Q}(\cdot)p_{_{[\bigcup_{i=1}^\infty L_i]}}=\tilde{Q}$ satisfying
\begin{equation}\label{tildeT}
\tilde{Q}|_{[e_{\xi'_k,\eta'_l}]_{1\leq l\leq k<\infty}}=(\tilde{R}_1+\tilde{R}_2)|_{[e_{\xi'_k,\eta'_l}]_{1\leq l\leq k<\infty}}\;\;\;\;{\rm and}\;\;\;\;\tilde{Q}|_{[e_{\xi'_k,\eta'_l}]_{1\leq k<l<\infty}}=0,
\end{equation}
and such that $\tilde{Q}$ is a small perturbation of $\tilde{R}|_{[e_{\xi'_k,\eta'_l}]_{k,l=1}^\infty}$ associated with the Schauder decomposition $\big\{[e_{\xi'_k,\eta'_l}]_{\min\{k,l\}=s}^\infty\big\}_{s=1}^\infty$.

Suppose that
\[
\{x_{2k+1,2l}\}_{k,l=1}^\infty\stackrel{\mbox{{\rm\tiny b.s}}}{\boxplus}\{K_{A_{2k+1}}\}_{k=1}^\infty\otimes\{L_{A_{2l}}\}_{l=1}^\infty\curvearrowleft \hat{a}
\]
and
\[
\{y_{2l,2k+1}\}_{k,l=1}^\infty\stackrel{\mbox{{\rm\tiny b.s}}}{\boxplus}\{K_{A_{2l}}\}_{l=1}^\infty\otimes\{L_{A_{2k+1}}\}_{k=1}^\infty\curvearrowleft \tilde{a}.
\]
We claim that 
 $\tilde{R}_2=0$. Indeed, otherwise,  $\hat{a}$ or $\tilde{a}$ is nonzero (see \eqref{R2tilde}). Without loss of generality, we assume that $\hat{a}\neq0$.
For any finitely non-zero sequence $\{\alpha_{k,l}\}_{k,l=1}^\infty$ of scalars, we have
\begin{eqnarray*}
&& \Vert\hat{a}\Vert_\infty\bigg\Vert\sum_{1\leq l\leq k<\infty}\alpha_{k,l}e_{\xi'_k,\eta'_l}\bigg\Vert_{\mathcal C_E}\\
&\leq&\bigg\Vert\sum_{1\leq l\leq k<\infty}\alpha_{k,l}\hat{a}\otimes(\cdot|e_{2l})e_{2k+1}\bigg\Vert_{\mathcal C_E}\\
&\leq&\bigg\Vert\sum_{1\leq l\leq k<\infty}\alpha_{k,l}\big(\hat{a}\otimes(\cdot|e_{2l})e_{2k+1}+\tilde{a}\otimes(\cdot|e_{2k+1})e_{2l})\bigg\Vert_{\mathcal C_E}\\
&\stackrel{{\rm Rem.}\;\ref{rem}}{=}&\bigg\Vert\sum_{1\leq l\leq k<\infty}\alpha_{k,l}\big(\tilde{Q}-\tilde{R}_1\big)(e_{\xi'_k,\eta'_l})\bigg\Vert_{\mathcal C_E}\\
&\stackrel{\eqref{tilder1},\eqref{tildeT}}{=}&\bigg\Vert\sum_{k,l=1}^\infty\alpha_{k,l}\big(\tilde{Q}-\tilde{R}_1\big)(e_{\xi'_k,\eta'_l})\bigg\Vert_{\mathcal C_E}\\
&\leq&\big\Vert\tilde{Q}-\tilde{R}_1\big\Vert\bigg\Vert\sum_{k,l=1}^\infty\alpha_{k,l}e_{\xi'_k,\eta'_l}\bigg\Vert_{\mathcal C_E}.
\end{eqnarray*}
This yields that the triangular projection $T^{E,\{\xi'_k,\eta'_k\}_{k=1}^\infty}$ is bounded, which contradicts  the conclusion of Lemma \ref{qb-tenotce}, and hence $\tilde{R}_2=0$. We set $R=\tilde{R}_1$. It follows that $R$ is a small perturbation of $(T-S)|_{[e_{\xi'_k,\eta'_l}]_{k,l=1}^\infty}$ associated with the Schauder decomposition $\big\{[e_{\xi'_k,\eta'_l}]_{\min\{k,l\}=s}^\infty\big\}_{s=1}^\infty$, and thus $S|_{[e_{\xi'_k,\eta'_l}]_{k,l=1}^\infty}+R$ is a small perturbation of $T|_{[e_{\xi'_k,\eta'_l}]_{k,l=1}^\infty}$ associated with the Schauder decomposition $\big\{[e_{\xi'_k,\eta'_l}]_{\min\{k,l\}=s}^\infty\big\}_{s=1}^\infty$.
\end{proof}

\begin{theorem}\label{thmc}
Suppose that $E$ is a 
separable quasi-Banach symmetric sequence space such that    $c_0,\ell_2\not\hookrightarrow E$,  and $\mathcal T_E\not\approx\mathcal  C_E$. 
Let $T\in\mathcal B\big([e_{\xi_i,\eta_j}]_{i,j=1}^\infty,\mathcal C_E\big)$ with $p_{_{[\bigcup_{i=1}^\infty K_i]}}T(\cdot)p_{_{[\bigcup_{i=1}^\infty L_i]}}=T$, and let  $\{\varepsilon_l\}_{l=1}^\infty$ be a sequence  of positive numbers. 

Then there exist an increasing sequence $\{i_k\}_{k=1}^\infty$of positive integers, two orthonormal sequences $\{\xi'_k\}_{k=1}^\infty$ with $\xi'_k\in[\xi_i]_{i_{2k+1}\leq i<i_{2k+2}}$ and $\{\eta'_k\}_{k=1}^\infty$ with $\eta'_k\in[\eta_i]_{i_{2k}\leq i<i_{2k+1}}$, and two operators $R_1,R_2\in\mathcal B\big([e_{\xi'_k,\eta'_l}]_{k,l=1}^\infty\mathcal C_E\big)$ having the forms
\begin{equation}
R_1(e_{\xi'_k,\eta'_l})=x_{_{(k,l)}}+y_{_{(l,k)}}\;\;\;\;\mbox{for every}\;k,l\geq1,
\end{equation}
\begin{equation}\label{thmcr2}
R_2(e_{\xi'_k,\eta'_l})=x_{2k+1,2l}+y_{2l,2k+1}\;\;\;\;\;\;\;\;\mbox{for every}\;k,l\geq1,
\end{equation}
such that $R_1+R_2$ is a perturbation of $T|_{[e_{\xi'_k,\eta'_l}]_{k,l=1}^\infty}$ associated with the Schauder decomposition $\big\{[e_{\xi'_k,\eta'_l}]_{\min{k,l}=s}^\infty\big\}_{s=1}^\infty$ for $\{\varepsilon_s\}_{s=1}^\infty$, where
\begin{itemize}
\item [(1)]
there are  two sequences $\{F_{(k,l)}\}_{1\leq l\leq k<\infty}$ and $\{F_{(k,l)}\}_{1\leq k<l<\infty}$ of mutually orthogonal subspaces of $K$, and  two sequences $\{G_{(l,k)}\}_{1\leq l\leq k<\infty}$ and $\{G_{(l,k)}\}_{1\leq k<l<\infty}$ of mutually orthogonal subspaces of $L$ such that
\[
\{x_{_{(k,l)}}\}_{1\leq l\leq k<\infty}\stackrel{\mbox{{\rm\tiny b.s}}}{\boxplus}\{F_{(k,l)}\}_{1\leq l\leq k<\infty}\otimes L\curvearrowleft\hat{b},
\]
\[
\{y_{_{(l,k)}}\}_{1\leq l\leq k<\infty}\stackrel{\mbox{{\rm\tiny b.s}}}{\boxplus}K\otimes\{G_{(l,k)}\}_{1\leq l\leq k<\infty}\curvearrowleft\tilde{b},
\]
and
\[
\{x_{_{(k,l)}}\}_{1\leq k<l<\infty}\stackrel{\mbox{{\rm\tiny b.s}}}{\boxplus}\{F_{(k,l)}\}_{1\leq k<l<\infty}\otimes L\curvearrowleft\tilde{b'},
\]
\[
\{y_{_{(l,k)}}\}_{1\leq k<l<\infty}\stackrel{\mbox{{\rm\tiny b.s}}}{\boxplus}K\otimes\{G_{(l,k)}\}_{1\leq k<l<\infty}\curvearrowleft\hat{b'},
\]
and $\left\Vert\left(
\begin{smallmatrix}
0&\tilde{b}\\ \hat{b}&0
\end{smallmatrix}\right)
\right\Vert_{\mathcal C_E}
\left\Vert\cdot\right\Vert_{\ell_2},\left\Vert\left(
\begin{smallmatrix}
0&\hat{b'}\\ \tilde{b'}&0
\end{smallmatrix}\right)
\right\Vert_{\mathcal C_E} \left \Vert\cdot\right\Vert_{\ell_2}\leq16^{\frac{1}{r}-1}\Vert T\Vert\cdot \left\Vert\cdot\right\Vert_E$;
\item [(2)]
there is a sequence $\{A_k\}_{k=1}^\infty$ of mutually disjoint subsets of\;$\mathbb N$ such that
\[
\{x_{2k+1,2l}\}_{k,l=1}^\infty\stackrel{\mbox{{\rm\tiny b.s}}}{\boxplus}\{K_{A_{2k+1}}\}_{k=1}^\infty\otimes\{L_{A_{2l}}\}_{l=1}^\infty\curvearrowleft\;
\]
and
\[
\{y_{2l,2k+1}\}_{k,l=1}^\infty\stackrel{\mbox{{\rm\tiny b.s}}}{\boxplus}\{K_{A_{2l}}\}_{l=1}^\infty\otimes\{L_{A_{2k+1}}\}_{k=1}^\infty\curvearrowleft\;.
\]
\end{itemize}

Moreover, we have $\lim_{N\to\infty}\big\Vert P_{[\bigcup_{i=N}^\infty K_i],[\bigcup_{i=N}^\infty L_i]}R_1\big\Vert=0$; if $T|_{[e_{\xi_i,\eta_j}]_{1\leq j\leq i<\infty}}$ is an isomorphic embedding, then $R_2$ is an isomorphic embedding.
\end{theorem}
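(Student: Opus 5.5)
The plan is to split $T$ into its lower-triangular and strictly upper-triangular parts and treat each with the machinery already built. First, apply Theorem \ref{thmb} to the restriction $T|_{[e_{\xi_i,\eta_j}]_{1\leq j\leq i<\infty}}$, with a sequence of very small perturbation constants. This gives subsequences $\{i_k\}$, orthonormal sequences $\{\xi'_k\}$ and $\{\eta'_k\}$, and operators $\tilde R_1,\tilde R_2$ on the full square $[e_{\xi'_k,\eta'_l}]_{k,l=1}^\infty$. Here $\tilde R_1$ has the lower-triangular $\hat b,\tilde b$ form and vanishes above the diagonal, and $\tilde R_2$ has the global form $x_{2k+1,2l}+y_{2l,2k+1}$ on all $k,l$. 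The sum $\tilde R_1+\tilde R_2$ restricted to the lower part is a small perturbation of $T$ there.

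Put $S:=\tilde R_1+\tilde R_2$, viewed on the square. Then $(T-S)|_{\text{lower part}}$ is small. I would rather work with the transposed roles: $S$ is a perturbation of $T$ on the lower part, and I want to correct the upper part.

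The key step is to apply Lemma \ref{per=} with the roles of rows and columns interchanged. Its hypothesis is stated for the strictly upper part, so the symmetric version (swap $\xi\leftrightarrow\eta$, i.e. use adjoints/transposition, as in Remark \ref{3b'}) is needed, with the lower part replaced by the upper part. Applied to $T|_{[e_{\xi'_k,\eta'_l}]}$ and $S$, it yields further subsequences and a bounded $R$. This $R$ vanishes on the lower part, has the form $x_{(k,l)}+y_{(l,k)}$ on the strictly upper part generated by some $\tilde{b'},\hat{b'}$, and makes $S+R$ a small perturbation of $T$ with respect to the decomposition $\{[e_{\xi'_k,\eta'_l}]_{\min\{k,l\}=s}\}_s$. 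Inside that lemma, the would-be $R_2$-part for the upper triangle is killed using $\mathcal T_E\not\approx\mathcal C_E$ via Lemma \ref{qb-tenotce}.

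Then set $R_1:=\tilde R_1+R$ and $R_2:=\tilde R_2$. The block-sequence descriptions of $R_1$ are simply those of $\tilde R_1$ on $l\le k$ and of $R$ on $k<l$. The norm inequalities come from Theorem \ref{thmb}(2) and from the bound inside Lemma \ref{per=}, which is obtained as in Lemma \ref{t2t3}(2). The tail condition $\lim_N\|P_{[\cup_{i\ge N}K_i],[\cup_{i\ge N}L_i]}R_1\|=0$ follows from the corresponding statements for $\tilde R_1$ and $R$ and the quasi-triangle inequality. If $T$ is an isomorphic embedding on the lower triangle, the final clause of Theorem \ref{thmb} makes $\tilde R_2=R_2$ an embedding on the square.

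The main obstacle is bookkeeping. Passing to the further subsequences needed in the second step must preserve the structure already obtained for $\tilde R_1,\tilde R_2$. The block-sequence forms are stable under passing to subsequences of the $K_{A_k},L_{A_k}$ and of the index sets $F,G$ (Remark \ref{rem}(3)), and $R_2$ keeps its form after relabelling $2k+1,2l$. Still, one has to check that the $\xi'$, $\eta'$ chosen as normalized blocks in Lemma \ref{per=} again lie in the prescribed spans $[\xi_i]_{i_{2k+1}\le i<i_{2k+2}}$, and that $\tilde R_2$ restricted to such block vectors keeps its tensor form. I would handle the latter via Remark \ref{rembl}, since the blocks are $\ell_2$-normalized and generate $a\otimes(\cdot|e_1)e_1$. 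Finally, the perturbation constants combine through Theorem \ref{2d} and Lemma \ref{2d''}, and the initial choice of $\{\varepsilon_l\}$ can be taken small enough that the total is below the prescribed $\{\varepsilon_s\}$.
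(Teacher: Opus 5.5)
Your skeleton is the paper's: Theorem \ref{thmb} on the lower triangle, then the transposed Lemma \ref{per=} applied to $T$ and $S=\tilde R_1+\tilde R_2$, then $R_1=\tilde R_1+R$ and $R_2=\tilde R_2$. Two steps, however, need repair.

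First, the bookkeeping you flag is off by one.
\begin{itemize}
\item The transposed Lemma \ref{per=} gives an $R$ that vanishes only \emph{strictly} below the diagonal.
\item It returns $\xi''_s\in[\xi'_l]_{l_{2s}\le l<l_{2s+1}}$ and $\eta''_s\in[\eta'_l]_{l_{2s+1}\le l<l_{2s+2}}$, i.e.\ the interlacing is reversed.
\item Consequently $e_{\xi''_s,\eta''_t}$ lies strictly above the diagonal of the $(\xi',\eta')$-square whenever $t\ge s$. So $\tilde R_1$ vanishes on the new diagonal, and the diagonal is carried by $R$. This is the opposite of your description ``$\tilde R_1$ on $l\le k$, $R$ on $k<l$''.
\end{itemize}
The paper fixes both defects with one shift: $\xi'''_s:=\xi''_{s+1}$, $\eta'''_s:=\eta''_s$, $i'_s:=i_{2l_{s+1}}$. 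Then $\xi'''_s\in[\xi_i]_{i'_{2s+1}\le i<i'_{2s+2}}$ and $\eta'''_s\in[\eta_i]_{i'_{2s}\le i<i'_{2s+1}}$. Moreover $R(e_{\xi'''_s,\eta'''_t})=0$ for $t\le s$ and $\tilde R_1(e_{\xi'''_s,\eta'''_t})=0$ for $t>s$. By Remark \ref{rembl}, $\tilde R_1$ and $\tilde R_2$ keep their block forms and generators on the new vectors.

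Second, and more substantively, the source you cite for the estimate on $\tilde{b'},\hat{b'}$ is not available. Lemma \ref{per=} states no norm bound. The bound its proof gives for free comes from Theorem \ref{thmb} applied to an operator agreeing with $T-S$ on the upper part of the square, where $S$ is essentially $\tilde R_2$. That bound is in terms of this norm, which can exceed $\|T\|$, so it does not give $16^{\frac1r-1}\|T\|$. Nor can a Lemma \ref{t2t3}(2)-type argument run inside Lemma \ref{per=} do better: for a general $S$, nothing prevents $S$ from contributing to the corner blocks that isolate $\tilde{b'},\hat{b'}$.

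The missing idea is to run that argument \emph{after} the construction, using the specific $S$. Test on the super-diagonal vectors $e_{\xi'''_{n+m},\eta'''_{n+m+1}}$, $m\ge1$, and apply $P_{[\bigcup_{i>N_1}K_i],[\bigcup_{i\le N_1}L_i]}+P_{[\bigcup_{i\le N_1}K_i],[\bigcup_{i>N_1}L_i]}$. On these vectors three things hold:
\begin{itemize}
\item $\tilde R_1$ vanishes;
\item the corner projections annihilate the values of $\tilde R_2$ once $n$ is large, since these values lie in $K_{A_j}$, $L_{A_j}$ with $A_j\cap\{1,\dots,N_1\}=\emptyset$;
\item $\|(R+\tilde R_1+\tilde R_2-T)|_{[e_{\xi'''_{n+m},\eta'''_{n+m+1}}]_{m=1}^\infty}\|\to0$ by Lemma \ref{sdper}.
\end{itemize}
So the projected $R$ is compared with the projected $T$ itself. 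This gives $\bigl(2^{1-\frac1r}\bigl\|\bigl(\begin{smallmatrix}0&\hat{b'}\\ \tilde{b'}&0\end{smallmatrix}\bigr)\bigr\|_{\mathcal C_E}-\varepsilon\bigr)\|\alpha\|_{\ell_2}\le 8^{\frac1r-1}\|T\|\,\|\alpha\|_E$, and hence the stated constant $16^{\frac1r-1}\|T\|$.

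With these two repairs, the tail condition and the embedding property of $R_2$ go through as you describe.
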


\begin{proof}

Applying Theorem \ref{thmb} to $T|_{[e_{\xi_i,\eta_j}]_{1\leq j\leq i<\infty}}$, we obtain two operators  $\tilde{R}_1,\tilde{R}_2\in\mathcal B\big([e_{\xi'_k,\eta'_l}]_{k,l=1}^\infty,\mathcal C_E\big)$ satisfying that the assertions in Theorem \ref{thmb}, such that  $\big(\tilde{R}_1+\tilde{R}_2\big)|_{[e_{\xi'_k,\eta'_l}]_{1\leq l\leq k<\infty}}$ is a small perturbation of $T|_{[e_{\xi'_k,\eta'_l}]_{1\leq l\leq k<\infty}}$ associated with the Schauder decomposition $\big\{[e_{\xi'_k,\eta'_l}]_{k=l}^\infty\big\}_{l=1}^\infty$, where $\{i_k\}_{k=1}^\infty$ is  an increasing sequence of positive integers, and two orthonormal sequences $\{\xi'_k\}_{k=1}^\infty$ with $\xi'_k\in[\xi_i]_{i_{2k+1}\leq i<i_{2k+2}}$ and $\{\eta'_k\}_{k=1}^\infty$ with $\eta'_k\in[\eta_i]_{i_{2k}\leq i<i_{2k+1}}$. 

 Applying  Lemma \ref{per=}  to (the transpositions of)  operators $T|_{[e_{\xi'_k,\eta'_l}]_{k,l=1}^{\infty}}$ and $\big(\tilde{R}_1+\tilde{R}_2\big)|_{[e_{\xi'_k,\eta'_l}]_{k,l=1}^{\infty}}$, we  obtain an operator $R\in\mathcal B([e_{\xi''_s,\eta''_t}]_{s,t=1}^\infty,\mathcal C_E)$ such that $\big(\tilde{R}_1+\tilde{R}_2\big)|_{[e_{\xi''_s,\eta''_t}]_{s,t=1}^\infty}+R$ is a small perturbation of $T|_{[e_{\xi''_s,\eta''_t}]_{s,t=1}^\infty}$ associated with the Schauder decomposition $\big\{[e_{\xi''_s,\eta''_t}]_{\min\{s,t\}=m}^\infty\big\}_{m=1}^\infty$, where there is an increasing sequence $\{l_s\}_{s=1}^\infty$ of positive integers, and two orthonormal sequences $\{\eta''_s\}_{s=1}^\infty$ with $\eta''_s\in[\eta'_l]_{l_{2s+1}\leq l<l_{2s+2}}$ and $\xi''_s\in[\xi'_l]_{l_{2s}\leq l<l_{2s+1}}$. 
Moreover, we may assume that there are  $x_{_{(s,t)}}$ and $y_{_{(t,s)}}$ from $\mathcal C_E$ such that
\[
R(e_{\xi''_{s+1},\eta''_t})=\left\{
\begin{array}{rcl}
0\;\;\;\;\;\;\;\;\;, &  & {t\leq s,}\\
x_{_{(s,t)}}+y_{_{(t,s)}}, &  & {t> s,}\\
\end{array}
\right.
\]
with the same property of (1) in Lemma \ref{per=}.

Note that
\[
\xi''_s\in[\xi'_l]_{l_{2s}\leq l<l_{2s+1}}=[\xi'_l]_{l_{2s}\leq l\leq l_{2s+1}-1}\subset[\xi_i]_{i_{2l_{2s}+1}\leq i<i_{2(l_{2s+1}-1)+2}}\subset[\xi_i]_{i_{2l_{2s}}\leq i<i_{2l_{2s+1}}}
\]
and
\[
\eta''_s\in[\eta'_l]_{l_{2s+1}\leq l<l_{2s+2}}=[\eta'_l]_{l_{2s+1}\leq l\leq l_{2s+2}-1}\subset[\eta_i]_{i_{2l_{2s+1}}\leq i<i_{2(l_{2s+2}-1)+1}}\subset[\eta_i]_{i_{2l_{2s+1}}\leq i<i_{2l_{2s+2}}}.
\]
Denote by $i'_s=i_{2l_{s+1}}$. 
Then, $\{i'_s\}_{s=1}^\infty$ is an increasing sequence of positive integers, and put
\[
\xi'''_s:=\xi''_{s+1}\in[\xi'_l]_{l_{2s+2}\leq l<l_{2s+3}}\subset [\xi_i]_{i'_{2s+1}\leq i<i'_{2s+2}}
\]
and
\[
\eta'''_s:=\eta''_s\in[\eta'_l]_{l_{2s+1}\leq l<l_{2s+2}}\subset[\eta_i]_{i'_{2s}\leq i<i'_{2s+1}},
\]
whose $\ell_2$-norms are $1$.
By Remark \ref{rembl},  operators $\tilde{R}_1|_{[e_{\xi'''_s,\eta'''_t}]_{s,t=1}^\infty}$ and $\tilde{R}_2|_{[e_{\xi'''_s,\eta'''_t}]_{s,t=1}^\infty}$ also  possess the properties of the conclusion stated in Theorem~\ref{thmb}. Without loss of generality, we may assume that there are  $x_{_{(s,t)}}$ and $y_{_{(t,s)}}$ from $\mathcal C_E$ such that 
\[
\tilde{R}_1(e_{\xi'''_s,\eta'''_t})=\left\{
\begin{array}{rcl}
x_{_{(s,t)}}+y_{_{(t,s)}}, &  & {t\leq s,}\\
0\;\;\;\;\;\;\;\;, &  & {t>s,}\\
\end{array}
\right.
\]

Let
\[
R_1=\tilde{R}_1|_{[e_{\xi'''_s,\eta'''_t}]_{s,t=1}^\infty}+R\;\;\;\;\;\;{\rm and}\;\;\;\;\;\;R_2=\tilde{R}_2|_{[e_{\xi'''_s,\eta'''_t}]_{s,t=1}^\infty}.
\]

We get the desired operators. Without loss of generality, we may assume that
\[
\{x_{_{(k,l)}}\}_{1\leq l\leq k<\infty}\stackrel{\mbox{{\rm\tiny b.s}}}{\boxplus}\{F_{(k,l)}\}_{1\leq l\leq k<\infty}\otimes L\curvearrowleft\hat{b},
\]
\[
\{y_{_{(l,k)}}\}_{1\leq l\leq k<\infty}\stackrel{\mbox{{\rm\tiny b.s}}}{\boxplus}K\otimes\{G_{(l,k)}\}_{1\leq l\leq k<\infty}\curvearrowleft\tilde{b},
\]
and
\[
\{x_{_{(k,l)}}\}_{1\leq k<l<\infty}\stackrel{\mbox{{\rm\tiny b.s}}}{\boxplus}\{F_{(k,l)}\}_{1\leq k<l<\infty}\otimes L\curvearrowleft\tilde{b'},
\]
\[
\{y_{_{(l,k)}}\}_{1\leq k<l<\infty}\stackrel{\mbox{{\rm\tiny b.s}}}{\boxplus}K\otimes\{G_{(l,k)}\}_{1\leq k<l<\infty}\curvearrowleft\hat{b'},
\]
where both $\{F_{(k,l)}\}_{1\leq l\leq k<\infty}$ and $\{F_{(k,l)}\}_{1\leq k<l<\infty}$ are  sequences of  mutually orthogonal subspaces of $K$, and both $\{G_{(l,)k}\}_{1\leq l\leq k<\infty}$ and $\{G_{(l,k)}\}_{1\leq k<l<\infty}$ are sequences  of mutually orthogonal subspaces of $L$. Note that (by Theorem \ref{thmb} and Remark~\ref{rembl}) the operators $\tilde{b}$ and $\hat{b}$ satisfy 
\[
\left\Vert\left(
\begin{smallmatrix}
0&\tilde{b}\\ \hat{b}&0
\end{smallmatrix}\right)
\right\Vert_{\mathcal C_E}  \left\Vert\cdot \right\Vert_{\ell_2}\leq16^{\frac{1}{r}-1}\Big\Vert T|_{{[e_{\xi_i,\eta_j}]_{1\leq j\leq i<\infty}}}\Big\Vert\cdot\left\Vert\cdot\right\Vert_E.
\]
Arguing similarly as the proof of Lemma \ref{t2t3} (2), we obtain that for any $\varepsilon>0$, there are two positive integers $N_1$ and $N_2$ such that for any sequence $\{\alpha_s\}_{s=1}^\infty\in E$ and any  positive integer $n>N_2$ such that
\begin{align*}
&\left(2^{1-\frac{1}{r}}\left\Vert\left(
\begin{smallmatrix}
0&\tilde{b'}\\ \hat{b'}&0
\end{smallmatrix}\right)
\right\Vert_{\mathcal C_E}-\varepsilon\right)\left\Vert\sum_{m=1}^\infty\alpha_me_m
\right\Vert_{\ell_2}\\
\leq&\bigg\Vert\bigg( P_{[\bigcup_{j=N_1+1}^\infty K_i],[\bigcup_{i=1}^{N_1}L_i]}+P_{[\bigcup_{j=1}^{N_1}K_i],[\bigcup_{i=N_1+1}^\infty L_i]}\bigg)R\left(\sum_{m=1}^\infty\alpha_me_{\xi'''_{n+m},\eta'''_{n+m+1}}\right)\bigg\Vert_{\mathcal C_E}\\
\leq&\bigg\Vert \bigg( P_{[\bigcup_{j=N_1+1}^\infty K_i],[\bigcup_{i=1}^{N_1}L_i]}+P_{[\bigcup_{j=1}^{N_1}K_i],[\bigcup_{i=N_1+1}^\infty L_i]}\bigg)R|_{[e_{\xi'''_{n+m},\eta'''_{n+m+1}}]_{m=1}^\infty}\bigg\Vert\cdot\left\Vert\sum_{m=1}^\infty\alpha_me_{\xi'''_{n+m},\eta'''_{n+m+1}}\right\Vert_{\mathcal C_E}\\
=& \bigg\Vert \bigg( P_{[\bigcup_{j=N_1+1}^\infty K_i],[\bigcup_{i=1}^{N_1}L_i]}+P_{[\bigcup_{j=1}^{N_1}K_i],[\bigcup_{i=N_1+1}^\infty L_i]}\bigg)R|_{[e_{\xi'''_{n+m},\eta'''_{n+m+1}}]_{m=1}^\infty}\bigg\Vert\left\Vert\sum_{m=1}^\infty\alpha_me_m\right\Vert_E
\end{align*}
Recall that $\big(\tilde{R}_1+\tilde{R}_2\big)|_{[e_{\xi''_s,\eta''_t}]_{s,t=1}^\infty}+R$ is a small perturbation of $T|_{[e_{\xi''_s,\eta''_t}]_{s,t=1}^\infty}$ associated with the Schauder decomposition $\big\{[e_{\xi''_s,\eta''_t}]_{\min\{s,t\}=m}^\infty\big\}_{m=1}^\infty$.  
By Lemma \ref{sdper}, we have 
\[
\big\Vert(R+\tilde{R}_1+\tilde{R}_2-T)|_{[e_{\xi'''_{n+m},\eta'''_{n+m+1}}]_{m=1}^\infty}\big\Vert\to0\;\;\;\;{\rm as}\;n\to\infty.
\]
Moreover, we have 
\[
\tilde{R}_1|_{[e_{\xi'''_{n+m},\eta'''_{n+m+1}}]_{m=1}^\infty}=0,
\]
and
\[
\bigg( P_{[\bigcup_{j=N_1+1}^\infty K_i],[\bigcup_{i=1}^{N_1}L_i]}+P_{[\bigcup_{j=1}^{N_1}K_i],[\bigcup_{i=N_1+1}^\infty L_i]}\bigg)\tilde{R}_2|_{[e_{\xi'''_{n+m},\eta'''_{n+m+1}}]_{m=1}^\infty}=0
\]
when $n$ is sufficiently large. Arguing similarly as \eqref{proj+proj}, and letting $n\to\infty$, we obtain 
\[
\left(2^{1-\frac{1}{r}}\left\Vert\left(
\begin{smallmatrix}
0&\tilde{b'}\\ \hat{b'}&0
\end{smallmatrix}\right)
\right\Vert_{\mathcal C_E}-\varepsilon\right)\left\Vert\sum_{m=1}^\infty\alpha_me_m
\right\Vert_{\ell_2}\leq8^{\frac{1}{r}-1}\Vert T\Vert\left\Vert\sum_{m=1}^\infty\alpha_me_m\right\Vert_E
\]
Since $\varepsilon>0$ is arbitrarily taken, it follows that 
\[
\left\Vert\left(
\begin{smallmatrix}
0&\tilde{b'}\\ \hat{b'}&0
\end{smallmatrix}\right)
\right\Vert_{\mathcal C_E}\left\Vert\cdot\right\Vert_{\ell_2}\leq16^{\frac{1}{r}-1}\Vert T\Vert\cdot \left\Vert\cdot\right \Vert_E.
\]
The proof is complete.
\end{proof}

\begin{remark}\label{thmc-rem}
\noindent
\begin{itemize}
    \item [(1)]
      Theorem \ref{thmc}  remains valid if  the condition $\mathcal T_E\not\approx\mathcal C_E$ is  replaced by  the condition that $T^{E,\{\xi_i,\eta_i\}_{i=1}^\infty}$ is unbounded.
\item [(2)]
   Since $\lim_{N\to\infty}\big\Vert(T-(R_1+R_2))|_{[e_{\xi'_k,\eta'_l}]_{k,l=N}^\infty}\big\Vert=0$ and $\lim_{N\to\infty}\big\Vert P_{[\bigcup_{i=N}^\infty K_i],[\bigcup_{i=N}^\infty L_i]}R_1\big\Vert=0$, it follows that   
\begin{eqnarray*}
  &&\lim_{N\to\infty}\big\Vert P_{[\bigcup_{i=N}^\infty K_{A_i}],[\bigcup_{i=N}^\infty L_{A_i}]}T|_{[e_{\xi'_k,\eta'_l}]_{N\leq l\leq k<\infty}}\big\Vert\\
  &\leq&\lim_{N\to\infty}2^{\frac{1}{r}-1}\big\Vert R_2|_{[e_{\xi'_k,\eta'_l}]_{N\leq l\leq k <\infty}}\big\Vert\\
  &\stackrel{{\rm Th}\;\ref{thmc}\;(2)}{=}&2^{\frac{1}{r}-1}\big\Vert R_2|_{[e_{\xi'_k,\eta'_l}]_{1\leq l\leq k<\infty}}\big\Vert\\
  &\stackrel{{\rm Th}\;\ref{thmc}\;(2)}{=}&\lim_{N\to\infty}2^{\frac{1}{r}-1}\big\Vert R_2|_{[e_{\xi'_k,\eta'_l}]_{N\leq k< l<\infty}}\big\Vert\\
  &\leq&4^{\frac{1}{r}-1}\lim_{N\to\infty}\big\Vert P_{[\bigcup_{i=N}^\infty K_{A_i}],[\bigcup_{i=N}^\infty L_{A_i}]}T|_{[e_{\xi'_k,\eta'_l}]_{N\leq k< l<\infty}}\big\Vert
   \end{eqnarray*}
   
   Let $E$ be a Lorentz sequence space (i.e., $E:=\{ x\in c_0: \norm{x}_E:= \sum_{n=1}^\infty \mu(n;x) \phi_n <\infty  \}$, where $\{\phi_n\}_{n\ge 1}$ is a positive sequence decreasing  to $0$) such that $E\supset \ell_2$ (i.e., $\norm{\cdot}_E \le \norm{\cdot }_{\ell_2}$) and $E$ has trivial Boyd indices. For example, let $\phi_n  = \frac1n $. In particular, $E$ is a $KB$-space and contains exactly two mutually non-equivalent symmetric basic sequences\cite[Theorem 4.e.5]{LT}.  
Now, consider  an operator $T:[e_{\xi_i,\eta_j}]_{i,j=1}^\infty\subset\mathcal C_1\to \mathcal C_E$ given by
\[
T:\sum_{i,j}\alpha_{i,j}e_{\xi_i,\eta_j}\longmapsto\sum_{j\leq i}\alpha_{i,j}e_{\xi_i,\eta_j}.
\]
Since
\[
\bigg\Vert \sum_{j\leq i}\alpha_{i,j}e_{\xi_i,\eta_j}\bigg\Vert_{\mathcal C_E}\le \bigg\Vert \sum_{j\leq i}\alpha_{i,j}e_{\xi_i,\eta_j}\bigg\Vert_2\leq \bigg\Vert \sum_{i,j}\alpha_{i,j}e_{\xi_i,\eta_j}\bigg\Vert_2\leq\bigg\Vert \sum_{i,j}\alpha_{i,j}e_{\xi_i,\eta_j}\bigg\Vert_1,
\]
it follows that $T$ is a bounded operator. Moreover, $c_0,\ell_2\not\hookrightarrow\ell_1,E$, and $\mathcal T_1\not\approx\mathcal C_1$ and $\mathcal T_E\not\approx\mathcal C_E$. Note that
\[
\lim_{N\to\infty}\big\Vert P_{[\bigcup_{i=N}^\infty K_{A_i}],[\bigcup_{i=N}^\infty L_{A_i}]}T|_{[e_{\xi'_k,\eta'_l}]_{N\leq l\leq k<\infty}}\big\Vert=1 
\]
and
\[
\lim_{N\to\infty}\big\Vert P_{[\bigcup_{i=N}^\infty K_{A_i}],[\bigcup_{i=N}^\infty L_{A_i}]}T|_{[e_{\xi'_k,\eta'_l}]_{N\leq k<l<\infty}}\big\Vert=0.
\]
This shows that the conclusion of Theorem \ref{thmc} does not hold  for  bounded operators $T$   between two distinct symmetric operator ideals $\mathcal C_F$ and $\mathcal C_E$, even when $c_0,\ell_2\not\hookrightarrow E,F$, and $\mathcal T_E\not\approx\mathcal C_E$ and $\mathcal T_F\not\approx\mathcal C_F$.
\end{itemize}

\end{remark}

\begin{theorem}\label{emb1}
Suppose that $E$ is a separable quasi-Banach symmetric sequence space such that    $c_0,\ell_2\not\hookrightarrow E$, and $\mathcal T_E\not\approx\mathcal C_E$. If $T\in\mathcal B\big([e_{\xi_i,\eta_j}]_{i,j=1}^\infty,\mathcal C_E\big)$ satisfies that  $T|_{[e_{\xi_i,\eta_j}]_{1\leq j\leq i<\infty}}$ is an isomorphic embedding, then there is an increasing sequence of positive integers $\{i_k\}_{k=1}^\infty$, and two orthonormal sequences $\{\xi'_k\}_{k=1}^\infty$ with $\xi'_k\in[\xi_i]_{i_{2k+1}\leq i<i_{2k+2}}$ and $\{\eta'_k\}_{k=1}^\infty$ with $\eta'_k\in[\eta_i]_{i_{2k}\leq i<i_{2k+1}}$ such that $T|_{[e_{\xi'_k,\eta'_l}]_{k,l=1}^\infty}$ is an isomorphic embedding  and $T([e_{\xi'_k,\eta'_l}]_{k,l=1}^\infty)$ is complemented in $\mathcal C_E$.
\end{theorem}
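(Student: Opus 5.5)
We apply Theorem \ref{thmc} to $T$, taking $K_i=L_i$ to be suitable finite-dimensional subspaces. We may assume $p_{_{[\bigcup K_i]}}T(\cdot)p_{_{[\bigcup L_i]}}=T$ up to a small perturbation. This holds because $\{T(e_{\xi_i,\eta_j})\}$ are elements of the separable ideal $\mathcal C_E$, so after a standard gliding-hump choice they are approximated by finite blocks; alternatively take $K=L=H$ decomposed into finite blocks. Theorem \ref{thmc} then produces $\{i_k\}$, orthonormal $\{\xi'_k\}$, $\{\eta'_k\}$ in the stated positions, and operators $R_1,R_2$ such that $R_1+R_2$ is an $\{\varepsilon_s\}$-perturbation of $T|_{[e_{\xi'_k,\eta'_l}]_{k,l=1}^\infty}$ with respect to the Schauder decomposition $\{[e_{\xi'_k,\eta'_l}]_{\min\{k,l\}=s}\}_s$. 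Since $T|_{[e_{\xi_i,\eta_j}]_{j\le i}}$ is an isomorphic embedding, the last clause of Theorem \ref{thmc} shows that $R_2$ is an isomorphic embedding of the full square $[e_{\xi'_k,\eta'_l}]_{k,l}$.

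The key step is to show that $R_2$ has complemented range. By \eqref{thmcr2} and Remark \ref{rem-t4}, $\{R_2(e_{\xi'_k,\eta'_l})\}\simeq\{a\otimes e_{k,l}\}$ with $a=\left(\begin{smallmatrix}0&\tilde a\\ \hat a&0\end{smallmatrix}\right)\neq0$, realised through the isometries $\wideparen u,\wideparen v$ built from $\{K_{A_{2k+1}}\},\{L_{A_{2l}}\}$, $\{K_{A_{2l}}\},\{L_{A_{2k+1}}\}$. Choose rank-one vectors $f,g$ with $(ag|f)=\lambda\ne0$, and define $P(x)$ by compressing $x$ to the blocks $p_{K_{A_{2k+1}}}\cdot p_{L_{A_{2l}}}$, pulling back by $u_{2k+1},v_{2l}$, pairing with $f,g$ to obtain the scalar matrix $(\alpha_{k,l})$, and then mapping to $\lambda^{-1}\sum\alpha_{k,l}R_2(e_{\xi'_k,\eta'_l})$. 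The map $x\mapsto(\alpha_{k,l})$ is a compression $x\mapsto W^*xV$ by partial isometries $W,V$, namely $W=\sum_k e_{\,u_{2k+1}^*(f\otimes e_m),\,e_k}$ and similarly $V$. It is therefore contractive from $\mathcal C_E$ to $\mathcal C_E\cong[e_{\xi'_k,\eta'_l}]$ without any diagonal projection, which is essential in the quasi-Banach case. (Using only the $\hat a$-part suffices when $\hat a\ne0$; otherwise use the $\tilde a$-part.) Hence $R_2(\,[e_{\xi'_k,\eta'_l}]\,)$ is complemented via $R_2\circ(\lambda^{-1}W^*\cdot V)$.

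Finally, we pass from $R_2$ to $T$. On the square, $T-R_2=R_1+(\text{perturbation})$, where the perturbation part is small by Lemma \ref{sdper}. For $R_1$ we use $\lim_N\|P_{[\bigcup_{i\ge N}K_i],[\bigcup_{i\ge N}L_i]}R_1\|=0$. After passing to a tail $k,l\ge N$ (relabelling), $R_1$ lives essentially off the blocks $K_{A_{2k+1}}\otimes L_{A_{2l}}$, so $QR_1$ is small, where $Q$ denotes the projection just built. Also, by construction, $R_1$ takes values in $F_{(k,l)}\otimes L$ and $K\otimes G_{(l,k)}$, which may be chosen orthogonal to the support of $W,V$. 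Hence $\|QT-QR_2\|$ can be made small on the tail. Setting $J=R_2^{-1}QT$ (with $R_2^{-1}$ defined on the range), $J$ is a small perturbation of the identity, hence invertible. Therefore $T|_{\text{tail}}$ is an isomorphic embedding and $T J^{-1}R_2^{-1}Q$ is a bounded projection onto $T(\text{tail})$; Lemma \ref{comp} gives the same conclusion directly. The tail is again of the required form after renaming the sequences.

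The main obstacle I expect is the third step: controlling the cross term $QR_1$. This requires checking that the $F_{(k,l)},G_{(l,k)}$ supports of $R_1$ do not meet the compression used for $Q$, or, failing that, that their contribution vanishes on tails. For this I would first try to exploit the $\ell_2$-type bound $\|(\cdot)\|_{\ell_2}\lesssim\|\cdot\|_E$ on $R_1$ together with $\ell_2\not\hookrightarrow E$, which should make $QR_1$ strictly singular and hence negligible after passing to a further block subsequence.
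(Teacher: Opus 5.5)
Your argument is correct and is essentially the paper's proof: apply Theorem~\ref{thmc}, compress onto vectors $\xi''_k\in K_{A_{2k+1}}$ and $\eta''_l\in L_{A_{2l}}$ obtained by pulling back a pair $f,g$ with $(\hat a g|f)\neq 0$, and dispose of $R_1$ on a tail using $\lim_N\|P_{[\bigcup_{i\ge N}K_i],[\bigcup_{i\ge N}L_i]}R_1\|=0$. The compression is contractive, so no diagonal projection is needed; moreover, since the $A_k$ are mutually disjoint, only finitely many of them meet $\{1,\dots,N_0\}$, so for $k,l\ge N$ your compression factors through $P_{[\bigcup_{i>N_0}K_i],[\bigcup_{i>N_0}L_i]}$, and this is exactly what resolves the ``main obstacle'' you flag. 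You should therefore drop three side remarks: the claim that the supports of $R_1$ can be chosen orthogonal to those of $W,V$ (not justified, since in Theorem~\ref{thmb} the supports $F_{(2k+1,2l)}$ lie inside the same blocks $K_{A_{2k+1}}$ that carry $R_2$); the strict-singularity fallback; and the remark that Lemma~\ref{comp} applies directly (it does not, because on the tail only $Q(T-R_2)$, not $T-R_2$ itself, is small).
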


\begin{proof}
Without loss of generality, we assume that $\{K_i\}_{i=1}^\infty$ and $\{L_i\}_{i=1}^\infty$ are two sequences of mutually orthogonal finite dimensional subspaces of $H$ with $[\bigcup_{i=1}^\infty K_i]=[\bigcup_{i=1}^\infty L_i]=H$.

Applying Theorem \ref{thmc} to $T$, we obtain two operators  $R_1,R_2\in\mathcal B\big([e_{\xi'_k,\eta'_l}]_{k,l=1}^\infty,\mathcal C_E\big)$ satisfying that the assertions in Theorem \ref{thmc} with $R_2$ is an isomorphic embedding since  $T|_{[e_{\xi_i,\eta_j}]_{1\leq j\leq i<\infty}}$ is an isomorphic embedding. 
In particular, there is a positive number $C$ such that
\begin{equation}\label{r2emb}
\Vert R_2(x)\Vert_{\mathcal C_E}\geq C\Vert x\Vert_{\mathcal C_E}\;\;\;\;\;{\rm for\;all}\;x\in[e_{\xi'_k,\eta'_l}]_{k,l=1}^\infty.
\end{equation}
Note that (see Theorem \ref{thmc} above)
\[
\big\Vert(T-(R_1+R_2))|_{[e_{\xi'_k,\eta'_l}]_{k,l=n}^\infty}\big\Vert \to0\;\;\;\;{\rm and}\;\;\;\;\big\Vert P_{[\bigcup_{k=n}^\infty K_k],[\bigcup_{k=n}^\infty L_k]}R_1\big\Vert\to0\;\;\;\;{\rm as}\;n\to\infty.
\]
Therefore, for a given  positive number $(1+2^{1-\frac{1}{r}})\delta<4^{1-\frac{1}{r}}C$ which is small enough, there is a positive integer $N_0$ such that 
\begin{align}\label{T-R1-R2}
\left\Vert(T-(R_1+R_2))|_{[e_{\xi'_k,\eta'_l}]_{k,l=N_0}^\infty}\right\Vert,\left\Vert P_{[\bigcup_{k=N_0}^\infty K_k],[\bigcup_{k=N_0}^\infty L_k]}R_1\right\Vert\leq\delta.
\end{align}
We now choose a positive integer $N>N_0$ such that $\{1,2,\dots,N_0\}\cap\bigcup_{k=2N}^\infty A_k=\emptyset$ (see Notation \ref{notationAk}). 
In particular, we have $R_2|_{[e_{\xi'_k,\eta'_l}]_{k,l=N}^\infty}\stackrel{\eqref{thmcr2}}{=} P_{[\bigcup_{k=N}^\infty K_k],[\bigcup_{k=N}^\infty L_k]}R_2|_{[e_{\xi'_k,\eta'_l}]_{k,l=N}^\infty}$. 
Then, for any $x\in[e_{\xi'_k,\eta'_l}]_{l,k=N}^\infty$, we have
\begin{eqnarray*}
\Vert T(x)\Vert_{\mathcal C_E}&\geq&\big\Vert P_{[\bigcup_{k=N}^\infty K_k],[\bigcup_{k=N}^\infty L_k]}T(x)\big\Vert_{\mathcal C_E}\\
&\stackrel{\eqref{T-R1-R2}}{\geq}&2^{1-\frac{1}{r}}\big\Vert P_{[\bigcup_{k=N}^\infty K_k],[\bigcup_{k=N}^\infty L_k]}(R_1+R_2)(x)\big\Vert_{\mathcal C_E}-\delta\Vert x\Vert_{\mathcal C_E}\\
&\geq&4^{1-\frac{1}{r}}\Vert R_2(x)\Vert_{\mathcal C_E}-2^{1-\frac{1}{r}}\big\Vert P_{[\bigcup_{k=N}^\infty K_k],[\bigcup_{k=N}^\infty L_k]}R_1(x)\big\Vert_{\mathcal C_E}-\delta\Vert x\Vert_{\mathcal C_E}\\
&\geq&(4^{1-\frac{1}{r}}C-2^{1-\frac{1}{r}}\delta-\delta)\Vert x\Vert_{\mathcal C_E}.
\end{eqnarray*}
This shows that $T|_{[e_{\xi'_k,\eta'_l}]_{l,k=N}^\infty}$ is an isomorphic embedding. Next, we will show that $T([e_{\xi'_k,\eta'_l}]_{k,l=N}^\infty)$ is complemented in $\mathcal C_E$ for sufficiently large $N$.

Without loss of generality, we may assume that 
\[
\{x_{2k+1,2l}\}_{k,l=1}^\infty\stackrel{\mbox{{\rm\tiny b.s}}}{\boxplus}\{K_{A_{2k+1}}\}_{k=1}^\infty\otimes\{L_{A_{2l}}\}_{l=1}^\infty\curvearrowleft\hat{a}\in\mathcal B(\hat{H})
\]
and $\hat{a}\neq0$. Hence, there exist $\xi,\eta\in\hat{H}$ such that $\alpha:=(\hat{a}\eta|\xi)\neq0$. We may choose two orthonormal sequences $\{\xi''_k\}_{k=1}^\infty$ and $\{\eta''_k\}_{k=1}^\infty$ in $H$ such that $\xi''_k\in K_{A_{2k+1}}$, $\eta''_k\in L_{A_{2k}}$ for every $k\geq1$,
\[
\{p_{_{[\xi''_k]}}x_{2k+1,2l}p_{_{[\eta''_l]}}\}_{k,l=1}^\infty\stackrel{\mbox{{\rm\tiny b.s}}}{\boxplus}\{K_{A_{2k+1}}\}_{k=1}^\infty\otimes\{L_{A_{2l}}\}_{l=1}^\infty\curvearrowleft\alpha(\cdot|\eta)\xi.
\]
Hence, we obtain
\[
P_{[\xi''_k]_{k=N}^\infty,[\eta''_k]_{k=N}^\infty}(x_{2k'+1,2l'}+y_{2l',2k'+1})=p_{_{[\xi''_{k'}]}}x_{2k'+1,2l'}p_{_{[\eta''_{l'}]}}=\alpha(\cdot|\eta''_{l'})\xi''_{k'}=\alpha e_{\xi''_{k'},\eta''_{l'}} 
\]
 for all $k',l'\geq N$.
Setting  $W:=P_{[\xi''_k]_{k=N}^\infty,[\eta''_k]_{k=N}^\infty}R_2|_{[e_{\xi'_k,\eta'_l}]_{k,l=N}^\infty}$, we have
\begin{equation}\label{W}
W(e_{\xi'_{k'},\eta'_{l'}})=P_{[\xi''_k]_{k=N}^\infty,[\eta''_k]_{k=N}^\infty}R_2(e_{\xi'_{k'},\eta'_{l'}})=\alpha e_{\xi''_{k'},\eta''_{l'}}\;\;\;\;\;\;{\rm for\;all}\;k',l'\geq N.
\end{equation}
It follows that
\begin{eqnarray*}
& &\Big\Vert P_{[\xi''_k]_{k=N}^\infty,[\eta''_k]_{k=N}^\infty}TW^{-1}-I_{[e_{\xi''_k,\eta''_l}]_{k,l=N}^\infty}\Big\Vert\\
&\leq&\Big\Vert P_{[\xi''_k]_{k=N}^\infty,[\eta''_k]_{k=N}^\infty}T|_{[e_{\xi'_k,\eta'_l}]_{k,l=N}^\infty}-W\Big\Vert\Vert W^{-1}\Vert\\
&=&\alpha^{-1}\big\Vert P_{[\xi''_k]_{k=N}^\infty,[\eta''_k]_{k=N}^\infty}(T-R_2)|_{[e_{\xi'_k,\eta'_l}]_{k,l=N}^\infty}\big\Vert\\
&\leq&\frac{2^{\frac{1}{r}-1}}{\alpha}\left\Vert(T-(R_1+R_2))|_{[e_{\xi'_k,\eta'_l}]_{k,l=N_0}^\infty}\right\Vert+\frac{2^{\frac{1}{r}-1}}{\alpha}\left\Vert P_{[\bigcup_{k=N_0}^\infty K_k],[\bigcup_{k=N_0}^\infty L_k]}R_1\right\Vert\\
&\stackrel{\eqref{T-R1-R2}}{\leq}&\frac{2^{\frac{1}{r}}\delta}{\alpha}.
\end{eqnarray*}
If we take $\delta<\alpha/2^{\frac{1}{r}}$, then it follows that $P_{[\xi''_k]_{k=N}^\infty,[\eta''_k]_{k=N}^\infty}TW^{-1}$ is an isomorphism from $[e_{\xi''_k,\eta''_l}]_{k,l=N}^\infty$ onto $[e_{\xi''_k,\eta''_l}]_{k,l=N}^\infty$. Hence, we have that
\begin{align*}
{\rm im}(P_{[\xi''_k]_{k=N}^\infty,[\eta''_k]_{k=N}^\infty})=[e_{\xi''_k,\eta''_l}]_{k,l=N}^\infty&~=~{\rm im}(P_{[\xi''_k]_{k=N}^\infty,[\eta''_k]_{k=N}^\infty}TW^{-1})\\
&\stackrel{\eqref{W}}{=}P_{[\xi''_k]_{k=N}^\infty,[\eta''_k]_{k=N}^\infty}\big(T([e_{\xi''_k,\eta''_l}]_{k,l=N}^\infty)\big),
\end{align*}
and $P_{[\xi''_k]_{k=N}^\infty,[\eta''_k]_{k=N}^\infty}|_{T([e_{\xi''_k,\eta''_l}]_{k,l=N}^\infty)}$ is an isomorphism onto its image $[e_{\xi''_k,\eta''_l}]_{k,l=N}^\infty$. 
The mapping given by 

\[
\big(P_{[\xi''_k]_{k=N}^\infty,[\eta''_k]_{k=N}^\infty}|_{T([e_{\xi''_k,\eta''_l}]_{k,l=N}^\infty)}\big)^{-1}P_{[\xi''_k]_{k=N}^\infty,[\eta''_k]_{k=N}^\infty}
\]
is  a bounded  projection from $\mathcal C_E$ onto $T([e_{\xi''_k,\eta''_l}]_{k,l=N}^\infty)$. 
\end{proof}

\begin{remark}\label{te-emb-te}
Assume that $T\in\mathcal B(\mathcal T_{E,\{\xi_i,\eta_i\}_{i=1}^\infty})$ is a isomorphic embedding when $c_0,\ell_2\not\hookrightarrow E$. 
Let $K_i=[\xi_i]$ and $L_i=[\eta_i]$ for every $i$. 
By Remark \ref{tftoce} (2) and  Theorem \ref{thmb}, 
we obtain two operators  $\tilde{R}_1,\tilde{R}_2\in\mathcal B\big([e_{\xi'_k,\eta'_l}]_{k,l=1}^\infty,\mathcal C_E\big)$ satisfying  the assertions in Theorem \ref{thmb} and 
\[
\tilde{R}_2(e_{\xi'_k,\eta'_l})=x_{2k+1,2l}\;\;\;\;\;\;{\rm for\;every}\;k,l\geq1,
\]
(i.e. $y_{2l,2k+1}=0$). Using the same method as in Theorem \ref{emb1}, one  can   show that there is a sufficiently large  positive integer $N$   such that $T([e_{\xi'_k,\eta'_l}]_{N\leq k<l<\infty})$ is complemented in $\mathcal T_{E,\{\xi_i,\eta_i\}_{i=1}^\infty}$. 
Note, however, that   we cannot 
guarantee $\tilde{R}_2([e_{\xi'_k,\eta'_l}]_{1\leq k<l<\infty})\subset\mathcal T_{E,\{\xi_i,\eta_i\}_{i=1}^\infty}$. Nevertheless, it can be seen from the proof of Theorem \ref{emb1} that 
it suffices to find a projection $P_{[\xi''_k]_{k=N}^\infty,[\eta''_k]_{k=N}^\infty}$ such that $P_{[\xi''_k]_{k=N}^\infty,[\eta''_k]_{k=N}^\infty}(x_{2k'+1,2l'})\in\mathcal T_{E,\{\xi_i,\eta_i\}_{i=1}^\infty}$ for every $1\leq l'\leq k'<\infty$. This can be achieved by  the construction of $\tilde{R}_2$ (see Lemma \ref{t4}) and \eqref{hata}.
\end{remark}

Combining Theorem \ref{emb1} and Remark \ref{te-emb-te}, we extend  \cite[Theorem 5.4]{Arazy4} to the setting of quasi-Banach spaces.

\begin{theorem}\label{ce-hc}
Suppose that $E$ is a separable quasi-Banach symmetric sequence space such that    $c_0,\ell_2\not\hookrightarrow E$. If $X$ is a subspace of $\mathcal C_E$ which is isomorphic to $\mathcal C_E$, then there exists a subspace $Y$ of $X$ that  is both isomorphic to $\mathcal C_E$ and complemented in $\mathcal C_E$.
\end{theorem}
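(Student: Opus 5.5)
Let $V:\mathcal C_E\to X$ be an isomorphism onto $X$. Fix orthonormal bases $\{\xi_i\}_{i=1}^\infty$ and $\{\eta_i\}_{i=1}^\infty$ of $H$, so that $\mathcal C_E=[e_{\xi_i,\eta_j}]_{i,j=1}^\infty$. We regard $T:=\iota_{X\hookrightarrow\mathcal C_E}\circ V\in\mathcal B\big([e_{\xi_i,\eta_j}]_{i,j=1}^\infty,\mathcal C_E\big)$ as an isomorphic embedding. Choose mutually orthogonal finite-dimensional subspaces $\{K_i\}$ and $\{L_i\}$ with $[\bigcup K_i]=[\bigcup L_i]=H$, so that the standing hypothesis $p_{_{K}}T(\cdot)p_{_{L}}=T$ holds trivially. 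The argument splits according to whether $\mathcal T_E\approx\mathcal C_E$.

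\emph{Case $\mathcal T_E\not\approx\mathcal C_E$.} The restriction $T|_{[e_{\xi_i,\eta_j}]_{1\le j\le i<\infty}}$ is an isomorphic embedding because $T$ is. Theorem \ref{emb1} then supplies orthonormal sequences $\{\xi'_k\}$, $\{\eta'_l\}$ such that $T|_{[e_{\xi'_k,\eta'_l}]_{k,l=1}^\infty}$ is an isomorphic embedding and its image is complemented in $\mathcal C_E$. Put $Y:=T([e_{\xi'_k,\eta'_l}]_{k,l\ge N})$, with $N$ as in that proof. Then $Y\subset X$, $Y$ is complemented in $\mathcal C_E$, and $Y\approx[e_{\xi'_k,\eta'_l}]_{k,l\ge N}\cong\mathcal C_E$, because $\{\xi'_k\}$ and $\{\eta'_l\}$ are orthonormal and $\mathcal C_E(H_0)\cong\mathcal C_E$ for any infinite-dimensional subspace $H_0$.

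\emph{Case $\mathcal T_E\approx\mathcal C_E$.} Since $\mathcal C_E\approx\mathcal T_E$, it suffices to work with $\mathcal T_E$. Let $U:\mathcal T_{E,\{\xi_i,\eta_i\}}\to\mathcal C_E$ be an isomorphism, set $X':=V(U(\mathcal T_E))\subset\mathcal C_E$, and consider the embedding $S:=VU:\mathcal T_{E,\{\xi_i,\eta_i\}}\to\mathcal C_E$. Now apply Theorem \ref{thmb}. Because $S$ is an isomorphic embedding, the component $\tilde R_2$ is an isomorphic embedding, and $\lim_N\|P_{[\bigcup_{k\ge N}K_k],[\bigcup_{k\ge N}L_k]}\tilde R_1\|=0$. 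From here I would follow the proof of Theorem \ref{emb1} verbatim, now restricted to the triangle $[e_{\xi'_k,\eta'_l}]_{N\le l\le k}$.
\begin{enumerate}
\item Lower-bound $\|S(x)\|$ by cutting with $P_{[\bigcup_{k\ge N}K_k],[\bigcup_{k\ge N}L_k]}$.
\item Pick vectors $\xi''_k\in K_{A_{2k+1}}$ and $\eta''_l\in L_{A_{2l}}$ with $(\hat a\eta|\xi)=\alpha\ne0$. Then $W:=P_{[\xi''_k],[\eta''_k]}\tilde R_2$ maps $e_{\xi'_k,\eta'_l}$ to $\alpha e_{\xi''_k,\eta''_l}$.
\item Use Lemma \ref{comp}-style inversion to show that $P_{[\xi''_k],[\eta''_k]}$ restricted to $S([e_{\xi'_k,\eta'_l}]_{N\le l\le k})$ is an isomorphism onto $[e_{\xi''_k,\eta''_l}]_{N\le l\le k}$.
\end{enumerate}
This gives a subspace $Y\subset X'\subset X$ with $Y\approx\mathcal T_E\approx\mathcal C_E$ that is complemented in the space $[e_{\xi''_k,\eta''_l}]_{k,l}$-compression, composed with a projection onto the triangle. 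This is also exactly the content of Remark \ref{te-emb-te}.

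The main obstacle is the last step of this second case. The projection $P_{[\xi''_k],[\eta''_k]}$ lands in the full square $[e_{\xi''_k,\eta''_l}]_{k,l}$, while the image of the triangle under $W$ is only the triangle $[e_{\xi''_k,\eta''_l}]_{l\le k}$. To project $\mathcal C_E$ onto $Y$, I would compose with the triangular projection, which is bounded here because $\mathcal T_E\approx\mathcal C_E$ (Theorem \ref{tria} in the Banach case). In the quasi-Banach case I would instead avoid triangular truncation altogether by using $\mathcal C_E\approx\mathcal T_E\oplus\mathcal T_E$ (Proposition \ref{te=te2}, Lemma \ref{qb-tenotce}). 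That is, I would apply the above to the embedding of the whole of $\mathcal C_E\approx\mathcal T_E$, viewed as $\mathcal T_{E}$, so that the required projection stays inside the triangle, as arranged in Remark \ref{te-emb-te} via \eqref{hata}. Checking that the perturbation estimates still yield an isomorphism after this composition is the delicate point.
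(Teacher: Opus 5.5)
Your case $\mathcal T_E\not\approx\mathcal C_E$ is exactly the paper's argument (Theorem \ref{emb1}). In the case $\mathcal T_E\approx\mathcal C_E$ you eventually point to the paper's route (Remark \ref{te-emb-te}), but the argument you actually write for that case has a gap.

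\textbf{The gap.} Your ambient space is $\mathcal C_E$, and $S=VU$ maps into $\mathcal C_E$ with nonzero $y$-parts. So step (3) is false as stated, as you yourself notice afterwards. $P_{[\xi''_k],[\eta''_k]}S$ sends the triangle $[e_{\xi'_k,\eta'_l}]_{N\le l\le k}$ into the full square $[e_{\xi''_k,\eta''_l}]_{k,l\ge N}$, not onto the triangle $[e_{\xi''_k,\eta''_l}]_{N\le l\le k}$. The operator $P_{[\xi''_k],[\eta''_k]}SW^{-1}$ is only a small perturbation of the \emph{inclusion} of the triangle into the square. To complement its image (Lemma \ref{comp}) you need the triangle to be complemented in the square, i.e.\ a bounded triangular projection.
\begin{itemize}
\item For Banach $E$, Theorem \ref{tria} supplies this, and your route closes.
\item For quasi-Banach $E$ your justification fails. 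Lemma \ref{qb-tenotce} is the converse of what you need: it says a bounded triangular projection forces $\mathcal T_E\approx\mathcal C_E$. Proposition \ref{te=te2} says nothing about the triangular projection.
\item The implication you need is Theorem \ref{teicce}, (d)$\Rightarrow$(a). Its proof uses only Theorem \ref{thmc} (through Remark \ref{thmc-rem}(1)) and the structure of $R_2$, not Theorem \ref{ce-hc}. So citing it is not circular, and with it your route goes through.
\end{itemize}

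\textbf{The paper's route.} It needs no triangular projection, and the ``delicate point'' you flag disappears.
\begin{itemize}
\item Complementable homogeneity is invariant under isomorphism, so it suffices to prove it for $\mathcal T_E$. Take an isomorphic embedding $T:\mathcal T_{E,\{\xi_i,\eta_i\}}\to\mathcal T_{E,\{\xi_i,\eta_i\}}$ with $K_i=[\xi_i]$, $L_i=[\eta_i]$.
\item By Remark \ref{tftoce}(2) all $y$-parts vanish, so $\hat a\neq0$. Take $\xi''_k\in K'_{2k+1}$ and $\eta''_l\in L'_{2l}$ from the $m=1$ component in \eqref{hata}; these are consecutive blocks.
\item With this choice, $P:=P_{[\xi''_k]_{k\ge N},[\eta''_k]_{k\ge N}}$ maps $\mathcal T_{E,\{\xi_i,\eta_i\}}$ onto the triangle $[e_{\xi''_k,\eta''_l}]_{N\le l\le k}$, and that triangle lies inside $\mathcal T_{E,\{\xi_i,\eta_i\}}$.
\item Hence $PTW^{-1}$ maps this triangle into itself and is a small perturbation of its identity. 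It is invertible by the same Neumann-series estimate as in Theorem \ref{emb1}.
\item With $Y_0=[e_{\xi'_k,\eta'_l}]_{N\le l\le k}$, the map $(P|_{T(Y_0)})^{-1}P$ is a bounded projection of $\mathcal T_E$ onto $T(Y_0)$.
\item Transporting back through $\mathcal C_E\approx\mathcal T_E$ gives the required $Y\subset X$.
\end{itemize}
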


\begin{remark}\label{chE}
    For general (quasi-)Banach spaces, the property described above is referred to as complementable homogeneity, see Definition \ref{ch}.
      A natural question is whether general symmetric sequence spaces are complementably homogeneous. 
      It is well-known that  $c_0$ and $\ell_p$, $1\leq p<\infty$ satisfy this property (see \cite[Theorem 2.a.3]{LT}).
      Moreover,  two observations from Johnson are worth noting:
\begin{itemize}
    \item Weighted Lorentz sequence spaces are complementably homogeneous. This can be obtained by using \cite[Theorem 4.e.3]{LT} and a standard perturbation argument.
    \item If  $X$ is a subspace of $L_p(0,1)$, $1<p<2$, that has a symmetric basis and contains an isomorphic copy of $L_p(0,1)$, then $X$ is not complementably homogeneous. Indeed, the existence of such a space can be found in  \cite[Example 10.4]{Johnson Maurey}. Since $L_p(0,1)$ is complementably homogeneous (see \cite[Theorem 9.1]{Johnson Maurey}), it follows that there exists a complemented subspace of $X$ which is an isomorphic copy of $L_p(0,1)$. If $X$ is complementably homogeneous, this yields that $X$ is isomorphic to a complemented subspace of $L_p
    (0,1)$. By the classical Pe{\l}czy\'nski decomposition technique, we have $X\approx L_p(0,1)$. This contradicts the fact that $L_p(0,1)$ has no symmetric basis. 
\end{itemize}
Indeed,  \cite[Theorem 9.1]{Johnson Maurey} shows that  a broad class of rearrangement-invariant function spaces are complementably homogeneous. However,  a similar result for  symmetric sequence spaces  is still unknown.
\end{remark}

\begin{definition}\label{stri-sing}
Suppose that $X$, $Y$ and $Z$ are (quasi-)Banach spaces. 
Recall that a bounded operator $T:X\to Y$ is called \emph{$Z$-strictly singular} (e.g. see \cite[page 25]{PR}) provided that for every subspace of  $X$  that is isomorphic to $Z$, the 
 restriction of $T$ to that subspace  fails to be   an isomorphic embedding.
\end{definition}

We shall prove a somewhat unexpected result, Theorem \ref{cs=ts}, which states that in the setting of $\mathcal B(\mathcal C_E)$, the $\mathcal C_E$-strictly singularity is equivalent to the $\mathcal T_E$-strictly singularity. 
Before proceeding to the proof of Theorem \ref{cs=ts}, we  need the following lemma.

\begin{lemma}\label{te}
Suppose that $E$ is a separable quasi-Banach symmetric sequence space such that    $c_0,\ell_2\not\hookrightarrow E$. Let $X$ be a subspace of $\mathcal C_E$, which is  isomorphic to $\mathcal T_E$. Then there is a sequence $\{x_{i,j}\}_{i,j=1}^\infty$ in $\mathcal C_E$ such that 
\[
x_{i,j}\in\left\{
\begin{array}{rcl}
{X\;}, &  & {1\leq j\leq i<\infty,}\\
{\mathcal C_E}, &  & {1\leq i<j<\infty,
}\\
\end{array}
\right.
\]
 and $\{x_{i,j}\}_{i,j=1}^\infty\sim\{e_{\xi_i,\eta_j}\}_{i,j=1}^\infty$.
\end{lemma}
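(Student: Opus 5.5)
Let $J:\mathcal T_{E,\{\xi_i,\eta_i\}_{i=1}^\infty}\to X\subset\mathcal C_E$ be an isomorphism onto $X$, viewed as an isomorphic embedding into $\mathcal C_E$. Choose mutually orthogonal finite dimensional subspaces $\{K_i\}$, $\{L_i\}$ with $[\bigcup K_i]=[\bigcup L_i]=H$, so that $p_K J(\cdot)p_L=J$ holds automatically. Theorem \ref{thmb} applies because $c_0,\ell_2\not\hookrightarrow E$. It produces an increasing sequence $\{i_k\}$, orthonormal sequences $\{\xi'_k\}$, $\{\eta'_k\}$ (blocks of the original bases), and operators $\tilde R_1,\tilde R_2$ on the full square $[e_{\xi'_k,\eta'_l}]_{k,l=1}^\infty$. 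These satisfy three properties: $(\tilde R_1+\tilde R_2)|_{\mathcal T_{E,\{\xi'_k,\eta'_k\}}}$ is a small perturbation of $J$ there, $\tilde R_2$ is an isomorphic embedding (since $J$ is), and $\|P_{[\bigcup_{k\ge N}K_k],[\bigcup_{k\ge N}L_k]}\tilde R_1\|\to0$. Since $\{\xi'_k\}$ and $\{\eta'_k\}$ are block vectors, $\mathcal T_{E,\{\xi'_k,\eta'_k\}}\subset\mathcal T_{E,\{\xi_i,\eta_i\}}$. The lower-triangular vectors $J(e_{\xi'_k,\eta'_l})$ therefore lie in $X$.

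Next I would choose $x_{k,l}$. For $l\le k$, set $x_{k,l}:=J(e_{\xi'_k,\eta'_l})\in X$. For $k<l$, set $x_{k,l}:=\tilde R_2(e_{\xi'_k,\eta'_l})\in\mathcal C_E$, which is $x_{2k+1,2l}+y_{2l,2k+1}$. I would restrict to indices $k,l\ge N$ and relabel. The claim is that $\{x_{k,l}\}_{k,l\ge N}\sim\{e_{\xi'_k,\eta'_l}\}_{k,l\ge N}\simeq\{e_{\xi_i,\eta_j}\}$. Put $V:=\tilde R_2$ on the square and define the operator $S:e_{\xi'_k,\eta'_l}\mapsto x_{k,l}$. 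On the lower triangle, $S-V=J-\tilde R_2=\tilde R_1+(\text{small perturbation})$; on the upper triangle, $S-V=0$. If this difference extends to a bounded operator on the square whose norm is small compared with the lower bound of $V$, then $S$ is an isomorphic embedding by the $r$-subadditive estimate used in Lemma \ref{comp}.

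Two issues must be handled. First, the perturbation part, $J-(\tilde R_1+\tilde R_2)$ on the lower triangle, is controlled only through the Schauder decomposition $\{[e_{\xi'_k,\eta'_l}]_{k\ge l}\}_l$. Lemma \ref{2d''} and Theorem \ref{2d} extend it to a bounded operator on the square that vanishes on the upper triangle, with norm at most $K_c(\sum_{l\ge N}\varepsilon_l^r)^{1/r}$, which is small for large $N$. Second, $\tilde R_1$ is already defined on the whole square and is $0$ above the diagonal, so no extension is needed for it. Its norm is not small, however. To handle this, I would compose with the compression $P_N:=P_{[\bigcup_{k\ge N}K_k],[\bigcup_{k\ge N}L_k]}$, exactly as in the proof of Theorem \ref{emb1}. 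I would choose $N$ so that $\{1,\dots,N_0\}\cap\bigcup_{k\ge 2N}A_k=\emptyset$, which gives $P_NV=V$ on the tail square. Then $\|P_NSx\|\ge 4^{1-1/r}\|Vx\|-2^{1-1/r}\|P_N\tilde R_1x\|-\delta\|x\|\ge c\|x\|$. The upper estimate $\|Sx\|\lesssim\|x\|$ holds because $V$, $\tilde R_1$ and the extended perturbation are all bounded on the square.

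The main obstacle is this lower bound. It requires that $P_N\tilde R_1$ be uniformly small on the whole tail square and not merely on the triangle. It also requires that the small perturbation be extended boundedly off the triangle, which is necessary because the triangular projection is unbounded when $\mathcal T_E\not\approx\mathcal C_E$. Both are supplied by the estimate $\lim_N\|P_N\tilde R_1\|=0$ in Theorem \ref{thmb} and by Lemma \ref{2d''}. The last step is to re-index $\{x_{k,l}\}_{k,l\ge N}$ as $\{x_{i,j}\}_{i,j\ge1}$, which preserves the triangular membership pattern ($x_{i,j}\in X$ for $j\le i$).
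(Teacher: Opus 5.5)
Your proposal is correct and is essentially the paper's proof. Both arguments apply Theorem \ref{thmb} to the embedding and use Lemma \ref{2d''} to build $S$ on the full square, equal to $J$ on the lower triangle and to $\tilde R_2=\tilde R_1+\tilde R_2$ above the diagonal (since $\tilde R_1$ vanishes there); they then rerun the compression argument from the first part of Theorem \ref{emb1}, and extending the difference $J-(\tilde R_1+\tilde R_2)$ by zero, as you do, gives exactly the same $S$. The only point to tidy, inherited from Theorem \ref{emb1}, is the compression level: with $N$ chosen so that $\{1,\dots,N_0\}\cap\bigcup_{k\ge 2N}A_k=\emptyset$, the compression that fixes $\tilde R_2$ on the tail square is $P_{[\bigcup_{k>N_0}K_k],[\bigcup_{k>N_0}L_k]}$, not the one at level $N$, and this operator still satisfies $\|P_{[\bigcup_{k>N_0}K_k],[\bigcup_{k>N_0}L_k]}\tilde R_1\|\le\delta$.
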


\begin{proof}
We may assume that $\{K_i\}_{i=1}^\infty$ and $\{L_i\}_{i=1}^\infty$ are the two sequences of mutually orthogonal finite dimensional subspaces of $H$ with $[\bigcup_{i=1}^\infty K_i]=[\bigcup_{i=1}^\infty L_i]=H$. Since $X\approx \mathcal T_E$, it follows that there exists  an isomorphism $T$ from $\{e_{\xi_i,\eta_j}\}_{1\leq j\leq i<\infty}$ onto $X$.

Applying Theorem \ref{thmb} to $T$, we obtain two operators  $\tilde{R}_1,\tilde{R}_2\in\mathcal B\big([e_{\xi'_k,\eta'_l}]_{k,l=1}^\infty,\mathcal C_E\big)$ satisfying that the assertions in Theorem \ref{thmb}. In particular, since $T$ is an isomorphic embedding,  $\tilde{R}_2$ is also an isomorphic embedding; hence,  there is a positive number $C$ such that
\[
\Vert\tilde{R}_2(x)\Vert_{\mathcal C_E}\geq C\Vert x\Vert_{\mathcal C_E}\;\;\;\;\;{\rm for\;all}\;x\in[e_{\xi'_k,\eta_l}]_{l,k=1}^\infty.
\]
Using Lemma \ref{2d''}, 
there exists a bounded operator $S:[e_{\xi'_k,\eta'_l}]_{k,l=1}^\infty\to\mathcal C_E$ such that
\[
S(e_{\xi'_k,\eta'_l})=\left\{
\begin{array}{rcl}
{T(e_{\xi'_k,\eta'_l})}\;\;\;\;\;\;, &  & {1\leq l\leq k<\infty,}\\
{(\tilde{R}_1+\tilde{R}_2)(e_{\xi'_k,\eta'_l})}, &  & {1\leq k<l<\infty.}\\
\end{array}
\right.
\]
and
\[
\big\Vert(S-(\tilde{R}_1+\tilde{R}_2))|_{[e_{\xi'_k,\eta'_l}]_{l,k=n}^\infty}\big\Vert\to0\;\;\;\;{\rm as
}\;n\to\infty.
\]
Repeating the proof of the first assertion of  Theorem \ref{emb1}  mutatis mutandis, we find
   a positive number $N$ such that $\big\{S(e_{\xi'_k,\eta'_l})\big\}_{k,l=N}^\infty\sim\{e_{\xi'_k,\eta'_l}\}_{k,l=N}^\infty$ and $S(e_{\xi'_k,\eta'_l})\in X$ for all $N\leq l\leq k<\infty$.
\end{proof}

\begin{theorem}\label{cs=ts}
Suppose that $E$ is a (quasi-)Banach symmetric sequence space such that    $c_0,\ell_2\not\hookrightarrow E$. Then,  an operator $T\in\mathcal B(\mathcal C_E)$ is $\mathcal C_E$-strictly singular if and only if it is $\mathcal T_E$-strictly singular.
\end{theorem}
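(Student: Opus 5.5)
One direction is immediate. Suppose $T$ is $\mathcal T_E$-strictly singular, and let $X\subset\mathcal C_E$ be isomorphic to $\mathcal C_E$ with $T|_X$ an isomorphic embedding. The space $\mathcal C_E\cong[e_{\xi_i,\eta_j}]_{i,j=1}^\infty$ contains $\mathcal T_{E,\{\xi_i,\eta_i\}_{i=1}^\infty}$, so $X$ contains a subspace $X_0$ isomorphic to $\mathcal T_E$. The restriction $T|_{X_0}$ is then still an isomorphic embedding, which is a contradiction. Hence $\mathcal T_E$-strict singularity implies $\mathcal C_E$-strict singularity. This step uses no hypothesis on $E$.

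For the converse, assume $T$ is $\mathcal C_E$-strictly singular. Suppose, towards a contradiction, that some $X\subset\mathcal C_E$ with $X\approx\mathcal T_E$ has $T|_X$ an isomorphic embedding. I would split into two cases.

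\emph{Case $\mathcal T_E\approx\mathcal C_E$.} Then $X$ is itself isomorphic to $\mathcal C_E$, and this contradicts $\mathcal C_E$-strict singularity directly.

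\emph{Case $\mathcal T_E\not\approx\mathcal C_E$.} First apply Lemma~\ref{te} to $X$. It produces $\{x_{i,j}\}_{i,j=1}^\infty\sim\{e_{\xi_i,\eta_j}\}_{i,j=1}^\infty$ with $x_{i,j}\in X$ for $j\le i$. Let $V:[e_{\xi_i,\eta_j}]_{i,j=1}^\infty\to\mathcal C_E$ be the isomorphic embedding $e_{\xi_i,\eta_j}\mapsto x_{i,j}$, and set $T':=TV\in\mathcal B([e_{\xi_i,\eta_j}]_{i,j=1}^\infty,\mathcal C_E)$. On the lower triangle, $V$ maps into $X$, where $T$ is an isomorphic embedding. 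Therefore $T'|_{[e_{\xi_i,\eta_j}]_{1\le j\le i<\infty}}$ is an isomorphic embedding.

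Since $c_0,\ell_2\not\hookrightarrow E$ and $\mathcal T_E\not\approx\mathcal C_E$, Theorem~\ref{emb1} applies to $T'$, with $K_i,L_i$ finite-dimensional and exhausting $H$. It gives orthonormal sequences $\{\xi'_k\},\{\eta'_l\}$ such that $T'|_{[e_{\xi'_k,\eta'_l}]_{k,l=1}^\infty}$ is an isomorphic embedding. Set $Y:=V([e_{\xi'_k,\eta'_l}]_{k,l=1}^\infty)$. Then $Y\approx[e_{\xi'_k,\eta'_l}]_{k,l}\cong\mathcal C_E$, and $T|_Y$ is an isomorphic embedding because $T'=TV$ is one on that subspace. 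This contradicts $\mathcal C_E$-strict singularity.

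The main obstacle is this second case. The lower triangle of the full square must be made to lie inside $X$ while the upper triangle is free, and Lemma~\ref{te} together with Theorem~\ref{emb1} is exactly what bridges the two triangles. The things to check are routine: that the hypothesis $p_K T(\cdot)p_L=T$ holds trivially for exhausting $K,L$, and that separability of $E$ follows from $c_0\not\hookrightarrow E$, so that the decomposition theorems apply.
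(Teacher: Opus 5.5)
Your proposal is correct and follows the paper's proof: you dispose of the case $\mathcal T_E\approx\mathcal C_E$ trivially, then use Lemma~\ref{te} to place the lower triangle of an equivalent copy of the matrix units inside $X$, and finish with Theorem~\ref{emb1}. Your explicit composition $T'=TV$ just spells out what the paper means by applying Theorem~\ref{emb1} to $T|_{[x_{i,j}]_{i,j=1}^\infty}$.
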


\begin{proof}
The proof is trivial when  $\mathcal T_E\approx\mathcal C_E$.

We now assume $\mathcal T_E\not\approx\mathcal C_E$. Let $T\in\mathcal B(\mathcal C_E)$. If $T$ is $\mathcal T_E$-strictly singular, then it is obvious that $T$ is $\mathcal C_E$-strictly singular.

Conversely, If $T$ is not $\mathcal T_E$-strictly singular, then there is a subspace $X$ of $\mathcal C_E$, isomorphic to $\mathcal T_E$, such that $T|_X$ is an isomorphic embedding. By Lemma~\ref{te}, there is a sequence $\{x_{i,j}\}_{i,j=1}^\infty$ in $\mathcal C_E$ such that $x_{i,j}\in X$ for all $j\leq i$ and $\{x_{i,j}\}_{i,j=1}^\infty\sim\{e_{\xi_i,\eta_j}\}_{i,j=1}^\infty$, where $\{\xi_i\}_{i=1}^\infty$ and $\{\eta_i\}_{i=1}^\infty$ are two orthonormal sequences. Let us consider the operator $T|_{[x_{i,j}]_{i,j=1}^\infty}$;   the proof is completed by applying Theorem~\ref{emb1}.
\end{proof}

When $E$ is separable Banach symmetric sequence space, several  equivalent conditions for the $\mathcal T_{E,\{\xi_i,\eta_i\}_{i=1}^\infty}$ (see \eqref{ts}) to be complemented in $C_E$ have been 
systematically studied in \cite{KwapienPelczynski} and \cite{Arazy1}. 
We end this section by applying the preceding conclusions to show that 
if $E$ is a separable (quasi-)Banach symmetric sequence
space such that
$c_0,\ell_2\not\hookrightarrow E$,  then 
  $\mathcal T_E$ is  isomorphic to a complemented subspace of $\mathcal C_E$ if and only if   $\mathcal T_{E,\{\xi_i,\eta_i\}_{i=1}^\infty}$ is  complemented in $\mathcal C_E$.  
This strengthens the classical result in  \cite{KwapienPelczynski} and \cite{Arazy1} (see Theorem \ref{tria} above).
In particular, $\mathcal T_1$ is not isomorphic to any complemented subspace of $\mathcal C_1$, which will be   used to prove that $\mathcal T_1\oplus \mathcal C_1$ is a Wintner space, i.e., Corollary \ref{t1+c1}.

\begin{theorem}\label{teicce}
Suppose that $E$ is a separable quasi-Banach symmetric sequence space such that $c_0,\ell_2\not\hookrightarrow E$. The followings are equivalent:
\begin{itemize}
\item [(a)] The triangular projection 
$T^{E,\{\xi_i,\eta_i\}_{i=1}^\infty}$ is bounded on the span$\big(\{e_{\xi_i,\eta_j}\}_{i,j=1}^\infty\big)$;
\item [(b)]
$\mathcal T_{E,\{\xi_i,\eta_i\}_{i=1}^\infty}$ is complemented in $\mathcal C_E$;
\item [(c)]
$\mathcal T_E$ is isomorphic to a complemented subspace of $\mathcal C_E$;
\item [(d)]
$\mathcal T_E\approx\mathcal C_E$.
\end{itemize}
\end{theorem}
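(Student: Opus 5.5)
We prove the chain (a)$\Rightarrow$(b)$\Rightarrow$(c)$\Rightarrow$(d)$\Rightarrow$(a), with only (c)$\Rightarrow$(d) needing real work.

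\emph{The easy implications.}
\begin{itemize}
\item (a)$\Rightarrow$(b): $\mathcal T_{E,\{\xi_i,\eta_i\}}$ is complemented in $[e_{\xi_i,\eta_j}]_{i,j=1}^\infty$ via the triangular projection. The latter is complemented in $\mathcal C_E$ via $x\mapsto p\,x\,q$, where $p,q$ are the projections onto $[\xi_i]$ and $[\eta_i]$; this map is a contraction even in the quasi-Banach case.
\item (b)$\Rightarrow$(c): trivial.
\item (d)$\Rightarrow$(a): this is the contrapositive of Lemma \ref{qb-tenotce}, since $[e_{\xi_i,\eta_j}]_{i,j}\cong\mathcal C_E$.
\end{itemize}

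\emph{The implication (c)$\Rightarrow$(d).} Suppose (d) fails, i.e.\ $\mathcal T_E\not\approx\mathcal C_E$. Assume there is an isomorphic embedding $J:\mathcal T_{E,\{\xi_i,\eta_i\}}\to\mathcal C_E$ and a bounded projection $P$ of $\mathcal C_E$ onto $X=J(\mathcal T_E)$. We aim for a contradiction by showing the triangular projection is bounded on some copy of the square $[e_{\xi'_k,\eta'_l}]_{k,l}$, which contradicts Lemma \ref{qb-tenotce} applied to the sequences $\{\xi'_k\},\{\eta'_k\}$.

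First apply Lemma \ref{te}. This yields $\{x_{i,j}\}_{i,j}\sim\{e_{\xi_i,\eta_j}\}_{i,j}$ with $x_{i,j}\in X$ for $j\le i$. Let $V:[e_{\xi_i,\eta_j}]_{i,j}\to\mathcal C_E$ be the embedding $e_{\xi_i,\eta_j}\mapsto x_{i,j}$. Consider the bounded operator
\[
U:=J^{-1}PV:[e_{\xi_i,\eta_j}]_{i,j}\to\mathcal T_E\subset\mathcal C_E .
\]
Its restriction to the lower triangle is the isomorphic embedding $J^{-1}\circ V|_{\text{lower}}$.

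By Theorem \ref{emb1} (its hypotheses $c_0,\ell_2\not\hookrightarrow E$ and $\mathcal T_E\not\approx\mathcal C_E$ hold), there are orthonormal $\{\xi'_k\},\{\eta'_k\}$ such that $U|_{[e_{\xi'_k,\eta'_l}]_{k,l}}$ is an isomorphic embedding. Hence the image $U([e_{\xi'_k,\eta'_l}]_{k,l})\cong\mathcal C_E$ sits inside $\mathcal T_E$.

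\emph{Main obstacle.} We must upgrade ``$\mathcal C_E$ embeds in $\mathcal T_E$'' to a contradiction. Embedding alone does not contradict $\mathcal T_E\not\approx\mathcal C_E$, so we need the fine structure. The plan is to use Remark \ref{tftoce}(2) and Remark \ref{tftoce'}(ii), taking $K_i=[\xi_i]$, $L_i=[\eta_i]$ with the image in the lower triangle. Then the representation of $U$ from Theorem \ref{thmc} has all $y$-parts equal to zero. So $R_2(e_{\xi'_k,\eta'_l})=x_{2k+1,2l}$ is generated by a single $\hat a\neq0$ on blocks $K_{A_{2k+1}}\otimes L_{A_{2l}}$, and this holds for all $k,l$, including $k<l$.

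As in the proof of Theorem \ref{emb1}, compress by $P_{[\xi''_k],[\eta''_k]}$ with $\xi''_k\in K_{A_{2k+1}}$ and $\eta''_l\in L_{A_{2l}}$. Since the image lies in $\mathcal T_E$ with respect to the ordered bases $\{\xi_i\},\{\eta_i\}$, the entries with $k<l$ should be forced into upper-triangular positions of that basis (because $A_{2k+1}$ precedes $A_{2l}$ in that case), where every element of $\mathcal T_E$ vanishes. Then $\hat a$ must vanish on those blocks, giving $R_2$ zero on the upper part.

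Combined with the lower bound $\|R_2x\|\ge\|\hat a\|_\infty\|x\|$ from Lemma \ref{t4}, this contradicts $R_2$ being an embedding on the full square, or equivalently it yields a bounded triangular projection as in Lemma \ref{per=}. Checking that the index ordering really places the $k<l$ blocks strictly above the diagonal is the delicate step. I would try first to arrange $\max A_{2k+1}<\min A_{2l}$ whenever $k<l$, by passing to subsequences in Notation \ref{notationAk}.
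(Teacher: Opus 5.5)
Your step (d)$\Rightarrow$(a) uses Lemma \ref{qb-tenotce} in the wrong direction. That lemma says that $\mathcal T_E\not\approx\mathcal C_E$ implies $T^{E,\{\xi_i,\eta_i\}_{i=1}^\infty}$ is unbounded, so its contrapositive is (a)$\Rightarrow$(d), whereas you need the converse. Since (d)$\Rightarrow$(c) is trivial, your cycle as written only gives (a)$\Rightarrow$(b)$\Rightarrow$(c)$\Leftrightarrow$(d). The missing implication, that an abstract isomorphism $\mathcal T_E\approx\mathcal C_E$ forces the triangular projection to be bounded, is the deep Arazy-type part of the theorem; even for Banach $E$ it is the content of Theorem \ref{tria}.

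The paper proves it as follows. Take an isomorphism $T$ of $[e_{\xi_i,\eta_j}]_{i,j=1}^\infty$ onto $\mathcal T_{E,\{\xi_i,\eta_i\}_{i=1}^\infty}$, assume the triangular projection is unbounded, and apply Theorem \ref{thmc} (via Remark \ref{thmc-rem}(1)) with $K_i=[\xi_i]$, $L_i=[\eta_i]$. By Remark \ref{tftoce}(2) all $y$-parts vanish, so $R_2(e_{\xi'_k,\eta'_l})=x_{2k+1,2l}$. One then picks finite blocks $K'_{2k+1}$ and $L'_{2l}$ (indices in $(m_{2k+1},m_{2k+2}]$ and $(m_{2l},m_{2l+1}]$) on which these compress to a block sequence generated by some $\hat a_0\neq0$. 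The compression $P_{K'_{2N+1},L'_{2N+2}}$ sits strictly above the diagonal, so it annihilates $\mathcal T_{E,\{\xi_i,\eta_i\}_{i=1}^\infty}={\rm im}(T)$. On the other hand $\Vert P_{K'_{2N+1},L'_{2N+2}}R_2(e_{\xi'_N,\eta'_{N+1}})\Vert_{\mathcal C_E}=\Vert\hat a_0\Vert_{\mathcal C_E}$ for every $N$. This contradicts $\Vert P_{K'_{2N+1},L'_{2N+2}}(T-R_2)|_{[e_{\xi'_k,\eta'_l}]_{k,l\geq N}}\Vert\to0$.

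This is exactly the fine-structure argument you sketch under ``Main obstacle'', so that material belongs in (d)$\Rightarrow$(a). Alternatively, run it on your $U$ under the assumption that the triangular projection is unbounded; this gives (c)$\Rightarrow$(a) directly. Your device ``$\max A_{2k+1}<\min A_{2l}$'' cannot work, because the $A_k$ of Notation \ref{notationAk} are infinite. You must pass to finite sub-blocks carrying a nonzero generator, as above.

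Conversely, (c)$\Rightarrow$(d) needs no fine structure. Theorem \ref{emb1} gives not only that $U|_{[e_{\xi'_k,\eta'_l}]_{k,l=1}^\infty}$ is an isomorphic embedding, but also that its image is complemented in $\mathcal C_E$. Since that image lies inside $\mathcal T_{E,\{\xi_i,\eta_i\}_{i=1}^\infty}$, $\mathcal C_E$ is isomorphic to a complemented subspace of $\mathcal T_E$. Together with (c), Proposition \ref{te=te2} and the Pe{\l}czy\'nski decomposition method then give $\mathcal T_E\approx\mathcal C_E$. This is essentially the paper's route, which phrases it through Theorems \ref{cs=ts} and \ref{ce-hc}, whose proofs rest on Lemma \ref{te} and Theorem \ref{emb1}.
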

\begin{proof} 
(a)$\Longrightarrow$(b)$\Longrightarrow(c)$: This is obvious. 

(c)$\Longrightarrow$(d): Let $Q$ be the projection on $\mathcal C_E$ such that im$(Q)\approx\mathcal T_E$. 
In particular,  $Q$ is not $\mathcal T_E$-strictly singular. By Theorem \ref{cs=ts}, $Q$ is not $\mathcal C_E$-strictly singular. This yields that $\mathcal C_E$ is isomorphic to a subspace $Z$ of $\mathcal T_E$. 
By  Theorem~\ref{ce-hc}, $Z$ contains a subspace isomorphic to $\mathcal \cC_E$ which is complemented in $\cC_E$. 
Since $\mathcal{T}_E$ can be viewed as a subspace of $\cC_E$, 
  it follows that $\cC_E$ is isomorphic to a complemented subspace of  $\mathcal{T}_E$. 
 Next, we employ the classical Pe{\l}czy\'nski decomposition technique to show $\mathcal T_E\approx\mathcal C_E$. There are two subspaces $X$ and $Y$ of $\mathcal C_E$ such that 
\[
\mathcal C_E\approx\mathcal T_E\oplus X\;\;\;\;{\rm and}\;\;\;\;\mathcal T_E\approx\mathcal C_E\oplus Y.
\]
By Proposition \ref{te=te2},
\[
\mathcal C_E\approx\mathcal T_E\oplus X\approx\mathcal T_E\oplus\mathcal T_E\oplus X\approx\mathcal T_E\oplus\mathcal C_E,
\]
and similarly, $\mathcal T_E\approx\mathcal T_E\oplus\mathcal C_E$. Consequently, $\mathcal T_E\approx\mathcal C_E$.

(d)$\Longrightarrow$(a): Now, we may let $K_i=[\xi_i]$ and $L_i=[\eta_i]$ for every $i$, and $T$ be an isomorphism from $[e_{\xi_i,\eta_j}]_{i,j=1}^\infty$ onto $\mathcal T_{E,\{\xi_i,\eta_i\}_{i=1}^\infty}$. Suppose, by contrary, that $T^{E,\{\xi_i,\eta_i\}_{i=1}^\infty}$ is unbounded.

By Remark \ref{thmc-rem} (1), applying Theorem \ref{thmc} to $T$, we obtain two operators $R_1, R_2\in\mathcal B\big([e_{\xi'_k,\eta'_l}]_{k,l=1}^\infty,\mathcal C_E\big)$, where $\{i_k\}_{k=1}^\infty$ is an increasing sequence of positive integers, and $\{\xi'_k\}_{k=1}^\infty$ and $\{\eta'_k\}_{k=1}^\infty$ are two orthonormal sequences with $\xi'_k\in[\xi_i]_{i_{2k+1}\leq i<i_{2k+2}}$ and  $\eta'_k\in[\eta_i]_{i_{2k}\leq i<i_{2k+1}}$, satisfying that the assertions in Theorem \ref{thmc}, such that  $R_2$ is an isomorphic embedding and
\begin{align}\label{TR1R2}
\big\Vert(T-(R_1+R_2))|_{[e_{\xi'_k,\eta'_l}]_{k,l=N}^\infty}\big\Vert \to0
\end{align}
and
\begin{align}\label{PR1 to 0}
\big\Vert P_{[\bigcup_{k=N}^\infty K_k],[\bigcup_{k=N}^\infty L_k]}R_1\big\Vert\to0\;\;\;\;{\rm as}\;N\to\infty.
\end{align}
Moreover, by Remark \ref{tftoce} (2) and the construction of $R_2$ (see proof of Theorem~\ref{thmc} and Lemma \ref{t4}), we know that
\[
R_2(e_{\xi'_k,\eta'_l})=x_{2k+1,2l}\;\;\;\;\;\;\;\;{\rm for\;every}\;k,l\geq1,
\]
and we may choose an increasing sequence $\{m_i\}_{i=2}^\infty$ of positive integers such that
\begin{equation}\label{pxp-bs}
\big\{p_{_{K'_{2k+1}}}x_{2k+1,2l}p_{_{L'_{2l}}}\big\}_{k,l=1}^\infty\stackrel{\mbox{{\rm\tiny b.s}}}{\boxplus}\{K'_{2k+1}\}_{k=1}^\infty\otimes\{L'_{2l}\}_{l=1}^\infty\curvearrowleft\hat{a}_0\in\mathcal B(\hat{H})
\end{equation}
and $\hat{a}_0\neq0$, where $K'_{2k+1}=[\bigcup_{i=m_{2k+1}+1}^{m_{2k+2}}K_i]$ and $L'_{2k}=[\bigcup_{i=m_{2k}+1}^{m_{2k+1}}L_i]$ for every $k\geq1$.

By \eqref{TR1R2} and \eqref{PR1 to 0}, we have
\[
\big\Vert P_{K'_{2N+1},L'_{2N+2}}(T-R_2)|_{[e_{\xi'_k,\eta'_l}]_{k,l=N}^\infty}\big\Vert\to0\;\;\;\;{\rm as}\;N\to\infty.
\]
Note that $P_{K'_{2N+1},L'_{2N+2}}\big(\mathcal T_{E,\{\xi_i,\eta_i\}_{i=1}^\infty}\big)=\{0\}$, and thus $P_{K'_{2N+1},L'_{2N+2}}T=0$. This yields
\[
\big\Vert P_{K'_{2N+1},L'_{2N+2}}R_2|_{[e_{\xi'_k,\eta'_l}]_{k,l=N}^\infty}\big\Vert\to0\;\;\;\;{\rm as}\;N\to\infty.
\]
However, we have 
\[
\big\Vert P_{K'_{2N+1},L'_{2N+2}}R_2(e_{\xi'_{N},\eta'_{N+1}})\big\Vert_{\mathcal C_E}=\Vert\hat{a}_0\Vert_{\mathcal C_E}\;\;\;\;\;\;\;{\rm for\;all}\;N,
\]
which is a contradiction.
\end{proof}

\section{Arazy-type decomposition theorem for operators on $\mathcal{C}_p$ and its applications}
\label{Sec:Arazy's decomposition for cp}

\subsection{Decompositions  of  operators on $\mathcal{C}_p$}
When $E=\ell_p$, $0<p<\infty$, we write $\mathcal C_{\ell_p}$ as $\mathcal C_p$, $\mathcal T_{\ell_p}$ as $\mathcal T_p$, $\mathcal T_{\ell_p,\{\xi_i,\eta_i\}_{i=1}^\infty}$ as $\mathcal T_{p,\{\xi_i,\eta_i\}_{i=1}^\infty}$ and $\left\Vert\cdot \right\Vert_{\ell_p}$ as $\left\Vert\cdot \right\Vert_p$ for short. Note that $\left\Vert\cdot \right\Vert_p$ is $\min\{p,1\}$-subadditive (e.g. see \cite{McCarthy}), so that  the modulus of concavity of $\left\Vert\cdot \right \Vert_p$ is $2^{\max\{\frac{1}{p},1\}-1}$.

In this section, we always assume that  $\{\xi_i\}_{i=1}^\infty$ and $\{\eta_i\}_{i=1}^\infty$ are  two sequences of orthonormal vectors of $H$, and $\{K_i\}_{i=1}^\infty$ and $\{L_i\}_{i=1}^\infty$ are  two sequences of mutually orthogonal finite dimensional subspaces of $H$, unless otherwise stated.
Recall that for a subset $A\subset \mathbb{N}$,  we  denote 
\begin{equation}
K_A=\left[{\bigcup}_{i\in A}K_i\right],\;\;\;\;\;\;\;L_A=\left[{\bigcup}_{i\in A}L_i\right],
\end{equation}
and 
 \begin{equation}
K=\left[{\bigcup}_{i=1}^\infty K_i\right]\;\;\;\;{\rm and}\;\;\;\;L=\left[{\bigcup}_{i=1}^\infty L_i\right].
\end{equation}

Assume that 
$\{x_k\}_{k=1}^\infty$ is a normalized sequence such that $x_k=p_{_{K_k}}x_k p_{_{L_k}}$ in $\mathcal C_p$ for every $k$. 
Then $\{x_k\}_{k=1}^\infty$ is isometrically equivalent to the unit basis $\big\{e_k^{\ell_p}\big\}_{k=1}^\infty$ of $\ell_p$. 
Moreover, for $0<p< 2$, we have $\big\{e_k^{\ell_p}\big\}_{k=1}^\infty\sim\big\{ e_k^{\ell_2}\oplus e_k^{\ell_p}\big\}_{k=1}^\infty$.
As a consequence of  Theorem \ref{3e}, Corollary \ref{3e'} and Remark \ref{3a'}, we obtain the following  variant of \cite[Lemma 2.4]{Arazy2}.

\begin{theorem}\label{cpl2}
Let $0<p<2$. Suppose that $\{x_k\}_{k=2}^\infty$ is a bounded sequence in $\mathcal C_p$, having  no subsequence equivalent to the unit basis of $\ell_p$, with the form 
\[
x_k=p_{_{[\bigcup_{l=1}^kK_l]}}x_kp_{_{[\bigcup_{l=1}^kL_l]}}-p_{_{[\bigcup_{l=1}^{k-1}K_l]}}x_kp_{_{[\bigcup_{l=1}^{k-1}L_l]}}\;\;\;\;\;\;k=2,3,\dots.
\]
For a given  sequence of positive numbers $\{\varepsilon_i\}_{i=2}^\infty$,  
there exist an increasing subsequences $\{k_i\}_{i=2}^\infty$ of positive integers, and a sequence $\{y_i\}_{i=2}^\infty$ in $\mathcal C_p$ having the form 
\[
y_i=\sum_{j=1}^{i-1}\big(a_{i,j}+a_{j,i}\big)\;\;\;\;\;\;i=2,3,\dots,
\]
such that $\Vert x_{k_i}-y_i\Vert_p\leq\varepsilon_i$ for every $i\geq2$, where
\begin{itemize}
\item [(1)]
$K'_i=[\bigcup_{l=k_{i-1}+1}^{k_i}K_l]$ and $L'_i=[\bigcup_{l=k_{i-1}+1}^{k_i}L_l]$ for every $i\geq1$ (put $n_0=0$),
\item [(2)]
$\{a_{i,j}\}_{1\leq i\neq j<\infty}\stackrel{\mbox{{\rm\tiny b.s}}}{\boxplus}\{K'_i\}_{i=1}^\infty\otimes\{L'_j\}_{j=1}^\infty$,
\item [(3)]
$\{a_{i,j}\}_{i=j+1}^\infty\stackrel{\mbox{{\rm\tiny b.s}}}{\boxplus}\{K'_i\}_{i=2}^\infty\otimes\{L'_j\}_{j=1}^\infty\curvearrowleft d_j$ are consistent for all $j\geq1$, and \[
\Vert d_j\Vert_p\leq\varepsilon_j\Vert d_1\Vert_p\;\;\;\;\;\;\mbox{for every}\; j\geq 2,
\]
\item [(4)]
$\{a_{j,i}\}_{i=j+1}^\infty\stackrel{\mbox{{\rm\tiny b.s}}}{\boxplus}\{K'_j\}_{j=1}^\infty\otimes\{L'_i\}_{i=2}^\infty\curvearrowleft d'_j$ are consistent for all $j\geq 1$, and
\[
\Vert d'_j\Vert_p\leq\varepsilon_j\Vert d'_1\Vert_p\;\;\;\;\;\;\mbox{for every}\; j\geq 2.
\]
\end{itemize}
\end{theorem}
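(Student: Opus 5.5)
We follow the scheme used for the sequences $\{a_t\},\{b_t\}$ in the proof of Lemma \ref{thma2}. Two ingredients are taken from the appendix: Theorem \ref{3e} and Corollary \ref{3e'}, which are our recast versions of Arazy's decomposition of block sequences. First use the block form of $x_k$ to write each $x_k$ as a sum of three pieces: the corner piece $p_{_{K_k}}x_kp_{_{[\bigcup_{l<k}L_l]}}$, the corner piece $p_{_{[\bigcup_{l<k}K_l]}}x_kp_{_{L_k}}$, and the diagonal block $p_{_{K_k}}x_kp_{_{L_k}}$. Boundedness of these pieces is not automatic when $p<1$, since diagonal projections need not be bounded on $\mathcal C_p$. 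However, for each fixed $k$ only finitely many blocks are involved, so \eqref{diag-proj} gives a bound of $\kappa^3\|x_k\|_p$, uniformly in $k$. Next apply the procedure of Theorem \ref{3e}/Corollary \ref{3e'}. After passing to a subsequence $\{k_i\}$ and regrouping consecutive blocks into $K'_i,L'_i$, each $x_{k_i}$ is approximated within $\varepsilon_i/2$ by an element of the form
\[
\sum_{j<i}\big(a_{i,j}+a_{j,i}\big)+a_{i,i}.
\]
Here the off-diagonal parts are generated consistently by operators $d_j,d'_j$ with $\|d_j\|_p\le\varepsilon_j\|d_1\|_p$ and $\|d'_j\|_p\le\varepsilon_j\|d'_1\|_p$, exactly as in conditions (3)--(4). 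The block structure (2) comes from Remark \ref{3a'} together with Theorem \ref{3a'}, applied to pass to further subsequences so that the generating operators remain consistent.

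The remaining and main step is to remove the diagonal term $a_{i,i}$, which sits in $K'_i\otimes L'_i$. We argue by contradiction. Suppose $\limsup_i\|a_{i,i}\|_p>0$; passing to a subsequence, we may assume $\|a_{i,i}\|_p\ge\delta>0$. The $a_{i,i}$ have mutually orthogonal left and right supports, so $\{a_{i,i}\}$ is equivalent to the unit basis of $\ell_p$. The off-diagonal part $\{\sum_{j<i}(a_{i,j}+a_{j,i})\}_i$ is a small perturbation of a sequence isometrically equivalent to the unit basis of $\ell_2$; this follows from Theorem \ref{hi} and the summable decay $\sum_j\varepsilon_j^r<\infty$, as in the estimate \eqref{hil}. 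Moreover, this part is supported on $K'\otimes L'$ blocks disjoint from those of the diagonal part. Singular values of block-disjoint sums split as in Proposition \ref{ie}. Hence, for $0<p<2$, the sequence $\{y_i+a_{i,i}\}$ is equivalent to $\{e^{\ell_2}_i\oplus e^{\ell_p}_i\}\sim\{e^{\ell_p}_i\}$, using the remark preceding the statement. Since $x_{k_i}$ is within $\varepsilon_i$ of $y_i+a_{i,i}$ and the $\varepsilon_i$ are small, a standard perturbation argument (Theorem \ref{2d}) yields a subsequence of $\{x_k\}$ equivalent to the unit basis of $\ell_p$. This contradicts the hypothesis. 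Therefore $\|a_{i,i}\|_p\to0$, and after a further subsequence we have $\|a_{i,i}\|_p\le\varepsilon_i/2$. Absorbing $a_{i,i}$ into the error via the quasi-triangle inequality (adjusting the $\varepsilon_i$ by the constant $2^{1/r-1}$) gives $\|x_{k_i}-y_i\|_p\le\varepsilon_i$.

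The delicate point is the lower $\ell_p$-estimate: we need equivalence to the unit basis of $\ell_p$, not merely an upper bound. This is where $p<2$ is used. The diagonal part already dominates from below in the $\ell_p$ quasi-norm, and the $\ell_2$ part is dominated by the $\ell_p$ quasi-norm, since $\|\cdot\|_{\ell_2}\le\|\cdot\|_{\ell_p}$. We will check both directions carefully using the disjoint-support singular value identity of Proposition \ref{ie}.
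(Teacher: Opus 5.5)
Your strategy is the one the paper intends. The paper only records the statement as a consequence of Theorem~\ref{3e}, Corollary~\ref{3e'} and Theorem~\ref{3a'}, after remarking that disjoint blocks span $\ell_p$ and that $\{e^{\ell_p}_k\}\sim\{e^{\ell_2}_k\oplus e^{\ell_p}_k\}$ for $p<2$. So the plan is: decompose via Theorem~\ref{3e}, show the diagonal blocks $a_{i,i}$ must tend to $0$ since otherwise a subsequence of $\{x_k\}$ is an $\ell_p$-basis, then discard them.

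\textbf{The flawed step.} The step you single out as delicate is justified by a tool that does not apply. Proposition~\ref{ie} needs the two summands to have mutually orthogonal left supports \emph{and} mutually orthogonal right supports. Here $\sum_i t_ia_{i,i}$ and $\sum_i t_ia_{i,1}$ both have left support in $[\bigcup_{i\ge2}K'_i]$. Likewise $\sum_i t_ia_{i,i}$ and $\sum_i t_ia_{1,i}$ share right supports. So the singular values do not split. For unit vectors $f\in K'_2$, $g_1\in L'_1$, $g_2\in L'_2$, the operator $e_{f,g_1}+e_{f,g_2}$ has the single singular value $\sqrt2$, not $\{1,1\}$. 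Hence the approximants $\sum_{j<i}(a_{i,j}+a_{j,i})+a_{i,i}$ are not isometrically a direct sum of an $\ell_2$- and an $\ell_p$-sequence. The lower $\ell_p$-estimate does not come for free, and for $0<p<1$ you also cannot use the diagonal projection.

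\textbf{How to fix it.} The equivalence does hold up to constants.
\begin{itemize}
\item For $1\le p<2$, apply the contractive diagonal projection \eqref{bs-diag-proj} for $\{K'_i,L'_i\}$. It sends each approximant to $a_{i,i}$.
\item For $0<p<1$, suppose $\|a_{i_m,i_m}\|_p\ge\delta$ along a subsequence, and assume $\sum_j\varepsilon_j^p<\infty$. Choose $J$ with
$\tau_J:=\sum_{j\ge J}\varepsilon_j^p(\|d_1\|_p^p+\|d'_1\|_p^p)+\sup_{j\ge J}\varepsilon_j^p<\delta^p/2$.
Keep only $i_m\ge J$ and compress by $p_{[\bigcup_{i\ge J}K'_i]}(\cdot)\,p_{[\bigcup_{i\ge J}L'_i]}$. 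This is contractive on $\mathcal C_p$, since $s_n(axb)\le\|a\|_\infty\|b\|_\infty s_n(x)$.
\item The compression kills every $a_{i,j},a_{j,i}$ with $j<J$. Apply Theorem~\ref{hi} for each $j\ge J$ and use $(\sum|t_m|^2)^{1/2}\le(\sum|t_m|^p)^{1/p}$. Then the surviving off-diagonal terms, together with the approximation errors, add at most $\tau_J\sum_m|t_m|^p$ to $\|\cdot\|_p^p$.
\item By $p$-subadditivity, $\|\sum_m t_mx_{k_{i_m}}\|_p^p\ge(\delta^p/2)\sum_m|t_m|^p$.
\item The upper estimate is automatic for $p\le1$ from boundedness and $p$-subadditivity. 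For $1<p<2$ it follows from the $\ell_2$ bound on the off-diagonal part, as you say.
\end{itemize}

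\textbf{Re-blocking.} The final step, passing to a further subsequence so that $\|a_{i,i}\|_p\le\varepsilon_i/2$, is not a mere relabelling. The blocks $K'_i,L'_i$ are determined by consecutive $k_i$, so passing to $\{i_m\}$ merges them.
\begin{itemize}
\item The new diagonal block of $x_{k_{i_m}}$ becomes $a_{i_m,i_m}+\sum_{i_{m-1}<j<i_m}(a_{i_m,j}+a_{j,i_m})$.
\item The new off-diagonal blocks are sums $\sum_{i_{n-1}<j\le i_n}a_{i_m,j}$ and their transposed counterparts. These are consistently generated by operators such as $\sum_{i_{n-1}<j\le i_n}d_j\otimes(\cdot|e_j)e_1$, the same device used for $\hat b$ in Lemma~\ref{t3}.
\end{itemize}
You need both the smallness of the new diagonal blocks and the decay condition $\|D_n\|_p\le\varepsilon_n\|D_1\|_p$ for these new generators. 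Both follow if you run Theorem~\ref{3e} at the outset with a sufficiently faster-decaying sequence in place of $\{\varepsilon_i\}$; this bookkeeping should be made explicit.
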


By standard perturbation arguments (see, e.g.,  Corollary \ref{3e'}), we obtain  the following corollary (cf.  \cite[Proposition 2.3]{Arazy2}, \cite[Proposition 4]{Arazy Lindenstrauss}, and \cite{Friedman} for  similar results).

\begin{corollary}\label{cpl2'}
Let $0<p<2$. Suppose that $\{x_k\}_{k=1}^\infty$ is a semi-normalized $\sigma$-weakly null sequence in $\mathcal C_p$ such that  $p_{_{[\bigcup_{i=1}^\infty K_i]}}x_kp_{_{[\bigcup_{j=1}^\infty L_i]}}=x_k$ for every $k$, which  contains  no subsequence equivalent to the unit basis of $\ell_p$. For a given $\varepsilon>0$,   there exist two increasing subsequences $\{k_i\}_{i=2}^\infty$ and $\{n_i\}_{i=1}^\infty$ of positive integers, and a sequence $\{y_i\}_{i=2}^\infty$ in $\mathcal C_p$ having the form
\[
y_i=a_i+b_i\;\;\;\;\;\;i=2,3,\dots,
\]
such that for every finitely nonzero sequence of scalars $\{t_i\}_{i=2}^\infty$, we have 
\[
\bigg\Vert\sum_{i=2}^\infty t_i(x_{k_i}-y_i)\bigg\Vert_p\leq\varepsilon\bigg\Vert\sum_{i=2}^\infty t_ix_{k_i}\bigg\Vert_p,
\]
where 
\begin{itemize}
\item [(1)]
$K'_i=[\bigcup_{l=n_{i-1}+1}^{n_i}K_l]$ and $L'_i=[\bigcup_{l=n_{i-1}+1}^{n_i}L_l]$ for every $i\geq1$ (put $n_0=0$),
\item [(2)]
$\{a_i\}_{i=2}^\infty\stackrel{\mbox{{\rm\tiny b.s}}}{\boxplus}\{K'_i\}_{i=2}^\infty\otimes L'_1\curvearrowleft\;$,
\item [(3)]
$\{b_i\}_{i=2}^\infty\stackrel{\mbox{{\rm\tiny b.s}}}{\boxplus}K'_1\otimes\{L'_i\}_{i=2}^\infty\curvearrowleft\;$.
\end{itemize}

\end{corollary}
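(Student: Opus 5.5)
The plan is to deduce Corollary \ref{cpl2'} from Theorem \ref{cpl2} in three stages: a blocking perturbation, an application of the theorem, and a truncation of the resulting tails.

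First I would reduce to the block form required by Theorem \ref{cpl2}. Since $\{x_k\}$ is $\sigma$-weakly null and $p_{_K}x_kp_{_L}=x_k$, each fixed finite corner $p_{_{[\bigcup_{l\le n}K_l]}}x_kp_{_{[\bigcup_{l\le n}L_l]}}$ tends to $0$ in $\mathcal C_p$ as $k\to\infty$, because the $K_l,L_l$ are finite dimensional. Conversely, for fixed $k$ the tail $x_k$ minus its $n$-th corner tends to $0$ as $n\to\infty$, since $\ell_p$ is separable. Choosing $k_i$ and $n_i$ alternately, I obtain differences
\[
x'_i=p_{_{[\bigcup_{l\le n_i}K_l]}}x_{k_i}p_{_{[\bigcup_{l\le n_i}L_l]}}-p_{_{[\bigcup_{l\le n_{i-1}}K_l]}}x_{k_i}p_{_{[\bigcup_{l\le n_{i-1}}L_l]}}
\]
with $\|x_{k_i}-x'_i\|_p\le\delta_i$, where $\delta_i$ is as small as needed. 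Semi-normalization gives $\|\sum t_ix_{k_i}\|_p\ge c\sup|t_i|$, and since $\|\cdot\|_p$ is $\min\{p,1\}$-subadditive, $\|\sum t_i(x_{k_i}-x'_i)\|_p\le(\sum\delta_i^r)^{1/r}\sup|t_i|$. This estimate is relative to $\|\sum t_ix_{k_i}\|_p$, as required. The sequence $\{x'_i\}$ still has no subsequence equivalent to the $\ell_p$ basis, because a sufficiently small perturbation of a basic sequence is equivalent to it.

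Next I regroup $K_l,L_l$ into the blocks indexed by $i$ and apply Theorem \ref{cpl2}. This yields a further subsequence, again denoted $k_i$, and approximants $y'_i=\sum_{j<i}(a_{i,j}+a_{j,i})$ with $\|x'_{k_i}-y'_i\|_p\le\varepsilon_i$. Here $\{a_{i,j}\}_i$ is generated by $d_j$, $\{a_{j,i}\}_i$ is generated by $d'_j$, these are consistent in $j$, and $\|d_j\|_p\le\varepsilon_j\|d_1\|_p$ and $\|d'_j\|_p\le\varepsilon_j\|d'_1\|_p$.

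Finally I set $a_i=a_{i,1}$ and $b_i=a_{1,i}$. By the theorem, $\{a_i\}$ is generated by the single operator $d_1$ over $\{K'_i\}\otimes L'_1$, and $\{b_i\}$ by $d'_1$ over $K'_1\otimes\{L'_i\}$; these are exactly items (2) and (3) of the corollary. The remaining error is $\sum_i t_i\sum_{2\le j<i}(a_{i,j}+a_{j,i})$. Split it by $j$: for fixed $j$, Remark \ref{rem}(2) gives $\|\sum_i t_ia_{i,j}\|_p=\|d_j\|_p(\sum|t_i|^2)^{1/2}$, and similarly for $a_{j,i}$. Summing these $r$-th powers over $j\ge2$ bounds the error by $C(\sum_{j\ge2}\varepsilon_j^r)^{1/r}(\|d_1\|_p+\|d'_1\|_p)\|t\|_{\ell_2}$.

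The main obstacle is the last step: converting this $\ell_2$-type bound into one relative to $\|\sum t_ix_{k_i}\|_p$. I would handle it with Theorem \ref{hi}. Compressing $\sum t_iy'_i$ to the blocks $(\bigcup_{i\ge2}K'_i)\times L'_1$ and $K'_1\times(\bigcup_{i\ge2}L'_i)$ yields exactly $\sum t_i(a_i+b_i)$, whose norm is $\|\left(\begin{smallmatrix}0&d'_1\\ d_1&0\end{smallmatrix}\right)\|_p\,\|t\|_{\ell_2}$. The compression is a sum of two block projections, bounded by \eqref{diag-proj} with constant $\kappa^2$. Hence $\|d_1\|_p\|t\|_{\ell_2}$ and $\|d'_1\|_p\|t\|_{\ell_2}$ are both at most $C'\|\sum t_iy'_i\|_p$, and $\|\sum t_iy'_i\|_p\le C''\|\sum t_ix_{k_i}\|_p$ after the previous perturbations. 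If $d_1=d'_1=0$ the error vanishes trivially. Choosing the $\varepsilon_j$ and $\delta_i$ small enough then makes the total relative error at most $\varepsilon$, and reindexing $n_i$ gives the stated form.
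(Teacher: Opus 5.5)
Your proposal is correct and follows essentially the paper's route: the paper obtains Corollary~\ref{cpl2'} from Theorem~\ref{cpl2} by the same perturbation scheme as Corollary~\ref{3e'}. That is, it blocks the $\sigma$-weakly null sequence, applies the theorem, and discards the terms with $j\ge 2$ by controlling $\Vert d_1\Vert_p$ and $\Vert d_1'\Vert_p$ through Theorem~\ref{hi} and the two-block compression. The one step to make explicit is that $\sup_i\vert t_i\vert\lesssim\Vert\sum_i t_ix_{k_i}\Vert_p$ does not follow from semi-normalization alone; it comes from the near-block structure, by extracting $t_ix'_i$ with the contractive compressions $x\mapsto p\,x\,q$ and then absorbing the small perturbation.
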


The next proposition is a consequence of \cite[Theorem 2.2]{Arazy Friedman}. For the sake of completeness, we present the proof below.

\begin{proposition}\label{cpc}
Let $1\leq p<\infty$, and $\{F_i\}_{i=2}^\infty$ and $\{G_i\}_{i=2}^\infty$ be two sequences of mutually orthogonal closed subspaces of $H$. Suppose that
\[
\{x_{k,l}\}_{k,l=1}^\infty\stackrel{\mbox{{\rm\tiny b.s}}}{\boxplus}\{F_{2k+1}\}_{k=1}^\infty\otimes \{G_{2l}\}_{l=1}^\infty\curvearrowleft x,
\]
\[
\{y_{l,k}\}_{k,l=1}^\infty\stackrel{\mbox{{\rm\tiny b.s}}}{\boxplus}\{F_{2l}\}_{l=1}^\infty\otimes\{G_{2k+1}\}_{k=1}^\infty\curvearrowleft y,
\]
and $\left\Vert\left(
\begin{smallmatrix}
0&y \\ x&0
\end{smallmatrix}\right)
\right\Vert_p=1$. Then $\{x_{k,l}+y_{l,k}\}_{k,l=1}^\infty\simeq\{e_{\xi_k,\eta_l}\}_{k,l=1}^\infty$, and $[x_{k,l}+y_{l,k}]_{k,l=1}^\infty$ is $1$-complemented in $\mathcal C_p$ (the projection operator is given in  \eqref{cpproj} below).
\end{proposition}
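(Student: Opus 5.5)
The isometric equivalence is a direct application of Proposition \ref{ie}, after unwinding the block-sequence notation. By Remark \ref{rem} (2), fix isometries $u_{2k+1}:F_{2k+1}\to H'$, $v_{2l}:G_{2l}\to H'$ with $u_{2k+1}x_{k,l}v_{2l}^\ast=x$, and $u'_{2l}:F_{2l}\to H''$, $v'_{2k+1}:G_{2k+1}\to H''$ with $u'_{2l}y_{l,k}{v'_{2k+1}}^\ast=y$. Assemble them into a single isometry $U$ on $[\bigcup_i F_i]$ and a single isometry $V$ on $[\bigcup_i G_i]$ into $(H'\oplus H'')\otimes_2\ell_2$; the odd/even indices send the two families into disjoint slots. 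Under $z\mapsto UzV^\ast$, which preserves singular values, the element $\sum\alpha_{k,l}(x_{k,l}+y_{l,k})$ becomes $\sum\alpha_{k,l}\big(x\otimes e_{2k+1,2l}+y\otimes e_{2l,2k+1}\big)$. Proposition \ref{ie} identifies this isometrically with $\left(\begin{smallmatrix}0&y\\x&0\end{smallmatrix}\right)\otimes\sum\alpha_{k,l}e_{k,l}$. Its $\mathcal C_p$-norm is $\|\left(\begin{smallmatrix}0&y\\x&0\end{smallmatrix}\right)\|_p\,\|\sum\alpha_{k,l}e_{k,l}\|_p=\|\sum\alpha_{k,l}e_{\xi_k,\eta_l}\|_p$, by multiplicativity of the $\ell_p$ norm on tensor products of singular value sequences. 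Since $p\ge1$, the norm is continuous, so the isometry extends to the closed spans.

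For the $1$-complementation, the plan is to write an explicit norm-one projection as a composition of three pieces. The first is the diagonal compression $z\mapsto p_{F_{2k+1}}zp_{G_{2l}}+p_{F_{2l}}zp_{G_{2k+1}}$ summed over all $k,l$. This is a block-diagonal projection $P_{\{K_m,L_m\}}$ for suitable mutually orthogonal pairs, so it is contractive by \eqref{bs-diag-proj}, since $p\ge1$. After applying $U(\cdot)V^\ast$, the second piece reduces matters to the subspace of $\mathcal C_p((H'\oplus H'')\otimes\ell_2)$ spanned by operators with $(2k+1,2l)$ block in $\mathcal C_p(H')$ and $(2l,2k+1)$ block in $\mathcal C_p(H'')$. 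Via Proposition \ref{ie}, this subspace is isometric to $\mathcal C_p(\hat H\otimes\ell_2)$ with $\hat H=H'\oplus H''$, and the target becomes $\{a\otimes z:z\in\mathcal C_p(\ell_2)\}$ with $a=\left(\begin{smallmatrix}0&y\\x&0\end{smallmatrix}\right)$ and $\|a\|_p=1$.

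The third piece, and the step I expect to be the main obstacle, is a contractive projection of $\mathcal C_p(\hat H\otimes\ell_2)$ onto $a\otimes\mathcal C_p(\ell_2)$. Following Arazy--Friedman, I would take the conditional-expectation-type map
\[
Q(w)=a\otimes(\phi\otimes\mathrm{id})(w),\qquad \phi(b)=\mathrm{Tr}(b\,b_0),
\]
where $b_0=|a|^{p-1}u^\ast$ is built from the polar decomposition $a=u|a|$. Then $\phi(a)=\mathrm{Tr}|a|^p=1$, so $Q$ is idempotent with the correct range. For $p=1$, $b_0=u^\ast$ has operator norm $1$, and contractivity follows because $(\phi\otimes\mathrm{id})$ is a slice map contracting $\mathcal C_1(\hat H\otimes\ell_2)\to\mathcal C_1(\ell_2)$.

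For $1<p<\infty$, $b_0$ lies in $\mathcal C_{p'}$ with $\|b_0\|_{p'}=1$, and I need $\|(\phi\otimes\mathrm{id})(w)\|_p\le\|w\|_p$. This is the noncommutative Hölder/duality estimate for slice maps. I would verify it by interpolating between the $\mathcal C_1$ and $\mathcal C_\infty$ slice-map bounds, or more concretely by dualising against $z'\in\mathcal C_{p'}(\ell_2)$ and applying Hölder to $\mathrm{Tr}\big(w(b_0\otimes z')\big)$ with $\|b_0\otimes z'\|_{p'}=\|z'\|_{p'}$. Composing the three contractions gives the norm-one projection \eqref{cpproj}.
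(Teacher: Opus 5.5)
\textbf{There is a gap in the middle piece of your complementation argument.} The other parts are correct. Your proof of the isometric equivalence is the paper's argument. Your first piece is fine: grouped correctly, it is the two-block projection $z\mapsto p_{F_{\mathrm{odd}}}zp_{G_{\mathrm{even}}}+p_{F_{\mathrm{even}}}zp_{G_{\mathrm{odd}}}$, with $F_{\mathrm{odd}}=[\bigcup_k F_{2k+1}]$ and so on, hence contractive. Your third piece is also fine: the slice map $\phi\otimes\mathrm{id}$ with $\|b_0\|_{p'}=1$ is contractive by your duality argument.

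The failing step is the claim that Proposition~\ref{ie} makes the block subspace $W$ isometric to $\mathcal C_p(\hat H\otimes\ell_2)$, with the target going to $a\otimes\mathcal C_p(\ell_2)$. Here $W$ consists of operators with blocks $w^{(1)}_{k,l}$ at $(2k+1,2l)$ and $w^{(2)}_{l,k}$ at $(2l,2k+1)$. Proposition~\ref{ie} only covers elements whose blocks are scalar multiples of the fixed operators $x,y$. In that case the index swap acts on a scalar matrix, which is harmless. For general $w\in W$, the identification compatible with $a=\left(\begin{smallmatrix}0&y\\ x&0\end{smallmatrix}\right)$ moves $w^{(2)}_{l,k}$ to the upper-right entry at position $(k,l)$. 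That is a partial transpose in the $\ell_2$ factor, and it is not even bounded on $\mathcal C_p$ for $p\neq 2$. On $\mathbb C^n\otimes\mathbb C^n$, the flip $\sum_{k,l}e_{k,l}\otimes e_{l,k}$ is unitary with $\mathcal C_p$-norm $n^{2/p}$. Its partial transpose $\sum_{k,l}e_{k,l}\otimes e_{k,l}$ is $n$ times a rank-one projection, with norm $n$. Elements of this kind lie in $W$. So the contractivity of your composite does not follow from ``composing three contractions''.

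\textbf{How to repair it.} Slice before undoing the swap, so the transpose only ever acts on scalar matrices, where it is isometric. Then recombine the two halves with H\"older. Write $b_0=\left(\begin{smallmatrix}0&b_x\\ b_y&0\end{smallmatrix}\right)$ with $b_x=|x|^{p-1}u_x^\ast$ and $b_y=|y|^{p-1}u_y^\ast$. Put $c_{k,l}=\mathrm{Tr}(w^{(1)}_{k,l}b_x)+\mathrm{Tr}(w^{(2)}_{l,k}b_y)$, $W_1=\sum w^{(1)}_{k,l}\otimes e_{2k+1,2l}$ and $W_2=\sum w^{(2)}_{l,k}\otimes e_{2l,2k+1}$. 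Apply your slice estimate to each half separately, and use $\|[\gamma_{k,l}]\|_p=\|[\gamma_{l,k}]\|_p$ for scalar matrices. This gives
\[
\Big\|\sum_{k,l}c_{k,l}e_{k,l}\Big\|_p\le\|b_x\|_{p'}\|W_1\|_p+\|b_y\|_{p'}\|W_2\|_p\le\big(\|x\|_p^p+\|y\|_p^p\big)^{1/p'}\big(\|W_1\|_p^p+\|W_2\|_p^p\big)^{1/p}=\|w\|_p ,
\]
reading $1/p'=0$ when $p=1$. By your first part, $w\mapsto\sum c_{k,l}(x_{k,l}+y_{l,k})$, pulled back through $U,V$ and the compression, is then the norm-one projection.

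This is exactly the paper's proof written in Schmidt coordinates. There $b_x,b_y$ are diagonal with entries $\mu_i=\lambda_i^{p-1}$, and the compressions onto $p_{D_i}(\cdot)p_{E_i}$ and $p_{D_i'}(\cdot)p_{E_i'}$ perform the slicing. The scalar transpose enters when $e_{\xi_{2l,i},\eta_{2k+1,i}}$ is rewritten as $(\cdot|e_l)e_k$, and H\"older with $\|\mu\|_{p'}=1$ recombines the halves.

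Alternatively, apply the full transpose, which is isometric, to the $y$-half. This places $W$ isometrically in the off-diagonal corner of $\mathcal C_p(\hat H\otimes\ell_2)$, with $a$ replaced by $\left(\begin{smallmatrix}0&y^{T}\\ x&0\end{smallmatrix}\right)$. Your slice map then works verbatim.
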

\begin{proof}
Note that $\{x_{k,l}+y_{l,k}\}_{k,l=1}^\infty\simeq\{x\otimes(\cdot|e_{2l})e_{2k+1}+y\otimes(\cdot|e_{2k+1})e_{2l}\}_{k,l=1}^\infty$. By Proposition \ref{ie}, we have  $\{x_{k,l}+y_{l,k}\}_{k,l}^\infty\simeq\{e_{\xi_k,\eta_l}\}_{k,l}^\infty$.

Let  $\lambda_{2i-1} := s_i(x)=$ and $\lambda_{2i}:=s_i(y)$ for each $i$. 
We have  $(\sum_{i=1}^\infty\vert\lambda_i\vert^p)^{1/p}=1$. 
This implies the existence of a sequence  $\{\mu_i\}_{i=1}^\infty\in\ell_{p'}$ of norm $1$ such that $\sum_{i=1}^\infty\lambda_i\mu_i=1$. 
Moreover,  there are two sequences $\{\xi_{k,i}\}_i$ and $\{\eta_{k,i}\}_i$ of orthonormal vectors of $H$ with $\xi_{k,i}\in F_k$ and $\eta_{k,i}\in G_k$ for every $k,i$ such that
\[
x_{k,l}=\sum_i\lambda_{2i-1}e_{\xi_{2k+1,i},\eta_{2l,i}}\;\;\;\;\;\;{\rm and}\;\;\;\;\;\;y_{l,k}=\sum_{i}\lambda_{2i}e_{\xi_{2l,i},\eta_{2k+1,i}}.
\]
Define
\[
D_i=[\xi_{2k+1,i}]_{k=1}^\infty\;\;\;\;\;\;{\rm and}\;\;\;\;\;\;\;E_i=[\eta_{2l,i}]_{l=1}^\infty,
\]
\[
D'_i=[\xi_{2l,i}]_{l=1}^\infty\;\;\;\;\;\;{\rm and}\;\;\;\;\;\;\;E_i'=[\eta_{2k+1,i}]_{k=1}^\infty
\]
for every $i$. For any $a\in\mathcal C_p$, we have 
\begin{align*}
\Vert a\Vert_p\geq &\bigg(\sum_i\big(\Vert p_{_{D_i}}ap_{_{E_i}}\Vert_p^p+\Vert p_{_{D'_i}}ap_{_{E'_i}}\Vert_p^p\big)\bigg)^{1/p}\\
=&\left(\sum_i\left(\left\Vert{\sum}_{k,l}(a\eta_{2l,i}|\xi_{2k+1,i})e_{\xi_{2k+1,i},\eta_{2l,i}}\right\Vert_p^p+\left\Vert{\sum}_{k,l}(a\eta_{2k+1,i}|\xi_{2l,i})e_{\xi_{2l,i},\eta_{2k+1,i}}\right\Vert_p^p\right)\right)^{1/p}\\
=&\left(\sum_i\left(\left\Vert{\sum}_{k,l}(a\eta_{2l,i}|\xi_{2k+1,i})(\cdot |e_l)e_k\right\Vert_p^p+\left\Vert{\sum}_{k,l}(a\eta_{2k+1,i}|\xi_{2l,i})(\cdot |e_l)e_k\right\Vert_p^p\right)\right)^{1/p}\\
\geq&\sum_i\left(\mu_{2i-1}\left\Vert{\sum}_{k,l}(a\eta_{2l,i}|\xi_{2k+1,i})(\cdot |e_l)e_k\right\Vert_p+\mu_{2i}\left\Vert{\sum}_{k,l}(a\eta_{2k+1,i}|\xi_{2l,i})(\cdot |e_l)e_k\right\Vert_p\right)\\
\geq&\left\Vert\sum_{k,l}\sum_i\Big(\mu_{2i-1}(a\eta_{2l,i}|\xi_{2k+1,i})+\mu_{2i}(a\eta_{2k+1,i}|\xi_{2l,i})\Big)(\cdot |e_l)e_k\right\Vert_p\\
=&\left\Vert\sum_{k,l}\sum_i\Big(\mu_{2i-1}(a\eta_{2l,i}|\xi_{2k+1,i})+\mu_{2i}(a\eta_{2k+1,i}|\xi_{2l,i})\Big)(x_{k,l}+y_{l,k})\right\Vert_p.\\
\end{align*}
Hence, there exists a contractive operator on $\mathcal C_p$ defined  by
\begin{equation}\label{cpproj}
P:a\in\mathcal C_p\longmapsto \sum_{k,l}\sum_i\Big(\mu_{2i-1}(a\eta_{2l,i}|\xi_{2k+1,i})+\mu_{2i}(a\eta_{2k+1,i}|\xi_{2l,i})\Big)(x_{k,l}+y_{l,k}).
\end{equation}
Note that
\begin{align*}
&P(x_{k',l'}+y_{l',k'})\\
=&\sum_{k,l}\sum_i\Big(\mu_{2i-1}\big((x_{k',l'}+y_{l',k'})\eta_{2l',i}|\xi_{2k'+1,i}\big)+\mu_{2i}\big((x_{k',l'}+y_{l',k'})\eta_{2k'+1,i}|\xi_{2l',i}\big)\Big)(x_{k,l}+y_{l,k})\\
=&\sum_i\big(\mu_{2i-1}\lambda_{2i-1}+\mu_{2i}\lambda_{2i}\big)(x_{k',l'}+y_{l',k'})\\
=&x_{k',l'}+y_{l',k'},
\end{align*}
for every $k'$ and $l'$. Hence, we have $P|_{[x_{k,l}+y_{l,k}]_{k,l=1}^\infty}={\rm id}_{[x_{k,l}+y_{l,k}]_{k,l=1}^\infty}$. Furthermore, ${\rm im}(P)\subset[x_{k,l}+y_{l,k}]_{k,l=1}^\infty$; consequently,    $P^2=P$.  Thus, $P$ is a contractive projection from $\mathcal C_p$ onto $[x_{k,l}+y_{l,k}]_{k,l=1}^\infty$.
\end{proof}

When  $E=\ell_p$, $1\leq p<2$, if we replace Theorem \ref{3e} (resp. Corollary \ref{3e'}) in the proof of Theorem~\ref{thma} by Theorem \ref{cpl2} (resp. Corollary \ref{cpl2'}), then  we obtain $T_1,T_2=0$ in Theorem~\ref{thma}. For the case when  $  E=\ell_p$, $2<p<\infty$,    Lemma \ref{t2t3} implies that  $T_3=0$ in Theorem \ref{thma}.  

\begin{theorem}\label{thmalp}
Let  $ E=\ell_p$, $0<p\neq2<\infty $. 
\begin{itemize}
\item [(1)]
If $0<p<2$, then $T_1=T_2=0$ in Theorem \ref{thma}.
\item [(2)]
If $2<p<\infty$, then $T_3=0$ in Theorem \ref{thma}.
\end{itemize}
\end{theorem}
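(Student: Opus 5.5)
Part (2) is the simpler one, so we treat it first. Take $2<p<\infty$ and suppose $T_3\neq 0$. Then $\hat b_1$ or $\tilde b_1$ is nonzero; otherwise \eqref{cmc1}, \eqref{dmd1} force all $\hat b_m,\tilde b_m$ to vanish and $T_3=0$ by \eqref{T3}. Now Lemma \ref{t2t3}(2) gives the estimate
\[
\left\Vert\left(\begin{smallmatrix}0&\tilde b\\ \hat b&0\end{smallmatrix}\right)\right\Vert_{\mathcal C_p}\,\Vert\cdot\Vert_{\ell_2}\le 16^{\frac1r-1}\Vert T\Vert\,\Vert\cdot\Vert_{\ell_p},
\]
and here the left-hand coefficient is nonzero. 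That would mean $\ell_p\subset\ell_2$ with a continuous inclusion. This fails for $p>2$: test against $\sum_{k=1}^n e_k$, which has $\ell_2$-norm $n^{1/2}$ but $\ell_p$-norm $n^{1/p}$, and $n^{1/2-1/p}\to\infty$. Hence $T_3=0$. This argument uses nothing about $p$ except $\ell_p\not\subset\ell_2$.

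For part (1), take $0<p<2$. The plan is to rerun the proof of Theorem \ref{thma} (Lemmas \ref{thma1}--\ref{thma3}) with one change. Wherever Theorem \ref{3e} or Corollary \ref{3e'} was used to represent a weakly null sequence up to perturbation, we use instead Corollary \ref{cpl2'} (and Theorem \ref{cpl2} for the decomposition of $a_t,b_t$ in Lemma \ref{thma2}).

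In $\mathcal C_p$ with $p<2$, the block-diagonal part $z$ of a sequence carries the $\ell_p$-behaviour. In Lemma \ref{thma1} we look at the column sequences $\{T(e_{i_{k_s},i_{k_2}})\}_s$. Each of these is $\ell_2$-bounded, since $\{e_{\xi_i,\eta_j}\}_{i\ge j}$ is isometrically $\ell_2$. Because $p<2$, no subsequence of such a column is equivalent to the $\ell_p$-basis. So Corollary \ref{cpl2'} applies to each column, with no diagonal term, and we may take $z^{(j)}_s=0$ in \eqref{Rj=}, i.e. $T_1=0$.

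Similarly, in Lemma \ref{thma2} the sequences $\{a_t\}$ and $\{b_t\}$ are dominated by the $\ell_2$-basis, by the estimate proved there. Passing to a subsequence, they are again free of $\ell_p$-subsequences, and Theorem \ref{cpl2} yields the representation \eqref{a'lb'l} with $a'_{l,l}=b'_{l,l}=0$. By \eqref{xuv}, \eqref{yuv} this forces $x_{(k,l),l}=0$ and $y_{l,(l,k)}=0$, hence $T_2=0$.

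The main obstacle is justifying that these sequences have no subsequence equivalent to the $\ell_p$-basis; semi-normalized subsequences are where the argument could break. For the columns, if $\{T(e_{i_{k_s},j})\}_s$ had an $\ell_p$-subsequence, then the upper $\ell_p$-estimate of that subsequence would be dominated by $\Vert T\Vert$ times the $\ell_2$ norm of the coefficients. For $p<2$ this is impossible, by the same test vectors $\sum_{s\le n}e_s$, now giving $n^{1/p}\lesssim n^{1/2}$.

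Null subsequences are handled as in the proof of Lemma \ref{thma1}: they may be made to tend to $0$ as fast as needed and absorbed into the perturbation via Theorem \ref{2d}. For the quasi-Banach range $p<1$, we use the $p$-subadditive version of all estimates.
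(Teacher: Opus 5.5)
Your proposal is correct and follows essentially the same route as the paper. For $p>2$, Lemma \ref{t2t3}(2) would force $\ell_p\subset\ell_2$ unless $\hat b_1=\tilde b_1=0$. For $p<2$, you substitute Theorem \ref{cpl2}/Corollary \ref{cpl2'} for Theorem \ref{3e}/Corollary \ref{3e'} in the proof of Theorem \ref{thma}. Your explicit check supplies a hypothesis the paper leaves implicit: the column sequences and the sequences $\{a_t\},\{b_t\}$ are dominated by the $\ell_2$-basis, so they have no subsequence equivalent to the $\ell_p$-basis.
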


Therefore,   Lemma \ref{c0l2} is not needed for the proof of 
 Theorem \ref{thmb} when  $\mathcal C_E=\mathcal C_p$, $0<p<2$, leading  the following result.

\begin{theorem}\label{1p2}
Suppose that $0<p<2$ and $T\in\mathcal B(\mathcal T_{p,\{\xi_i,\eta_i\}_{i=1}^\infty},\mathcal C_p)$ satisfies $p_{_{[\bigcup_{i=1}^\infty K_i]}}T(\cdot)p_{_{[\bigcup_{i=1}^\infty L_i]}}=T$. Given a sequence $\{\varepsilon_l\}_{l=1}^\infty$ of positive numbers, there exist an increasing sequence $\{i_k\}_{k=1}^\infty$ of positive integers, and two operators $\tilde{R}_1,\tilde{R}_2\in\mathcal B\big([e_{\xi_{i_{2k+1}},\eta_{i_{2l}}}]_{k,l=1}^\infty,\mathcal C_p\big)$
having the forms
\[
\tilde{R}_1(e_{\xi_{i_{2k+1}},\eta_{i_{2l}}})=\left\{
\begin{array}{rcl}
x_{_{(2k+1,2l)}}+y_{_{(2l,2k+1)}}, &  & {1\leq l\leq k<\infty,}\\
0\;\;\;\;\;\;\;\;\;\;\;\;\;, &  & \text{otherwise,}\\
\end{array}
\right.
\]
\[
\tilde{R}_2(e_{\xi_{i_{2k+1}},\eta_{i_{2l}}})=x_{2k+1,2l}+y_{2l,2k+1}\;\;\;\;\;\;\;\;\mbox{for every}\;k,l\geq1,
\]
such that $\big(\tilde{R}_1+\tilde{R}_2\big)|_{\mathcal T_{p,\{\xi_{i_{2k+1}},\eta_{i_{2k}}\}_{k=1}^\infty}}$ is a perturbation of $T|_{\mathcal T_{p,\{\xi_{i_{2k+1}},\eta_{i_{2k}}\}_{k=1}^\infty}}$ associated with the  Schauder decomposition $\big\{[e_{\xi_{i_{2k+1}},\eta_{i_{2l}}}]_{k=l}^\infty\big\}_{l=1}^\infty$ for $\{\varepsilon_l\}_{l=1}^\infty$, where

\begin{itemize}
\item [(1)]
$\{A_k\}_{k=1}^\infty$ is a sequence of mutually disjoint subsets of\;$\mathbb N$, and $\{F_{(2k+1,2l)}\}_{1\leq l\leq k<\infty}$ and $\{G_{(2l,2k+1)}\}_{1\leq l\leq k<\infty}$ are two sequences of mutually orthogonal closed subspaces of $H$ such that
\[
F_{(2k+1,2l)}\subset K_{A_{2k+1}}\;\;\;\;{\rm and}\;\;\;\;G_{(2l,2k+1)}\subset L_{A_{2k+1}}\;\;\;\;\mbox{for every}\;1\leq l\leq k<\infty.
\]
\item [(2)]
\[
\{x_{_{(2k+1,2l)}}\}_{1\leq l\leq k<\infty}\stackrel{\mbox{{\rm\tiny b.s}}}{\boxplus}\{F_{(2k+1,2l)}\}_{1\leq l\leq k<\infty}\otimes L\curvearrowleft\hat{b},
\]
\[
\{y_{_{(2l,2k+1)}}\}_{1\leq l\leq k<\infty}\stackrel{\mbox{{\rm\tiny b.s}}}{\boxplus} K\otimes\{G_{(2l,2k+1)}\}_{1\leq l\leq k<\infty}\curvearrowleft\tilde{b},
\]
and $\left\Vert\left(
\begin{smallmatrix}
0&\tilde{b}\\ \hat{b}&0
\end{smallmatrix}\right)
\right\Vert_p\leq4^{\max\{\frac{1}{p},1\}-1}\Vert T\Vert$,
\item [(3)] 
\[
\{x_{2k+1,2l}\}_{k,l=1}^\infty\stackrel{\mbox{{\rm\tiny b.s}}}{\boxplus}\{K_{A_{2k+1}}\}_{k=1}^\infty\otimes\{L_{A_{2l}}\}_{l=1}^\infty\curvearrowleft\;
\]
and
\[
\{y_{2l,2k+1}\}_{k,l=1}^\infty\stackrel{\mbox{{\rm\tiny b.s}}}{\boxplus}\{K_{A_{2l}}\}_{l=1}^\infty\otimes\{L_{A_{2k+1}}\}_{k=1}^\infty\curvearrowleft\;.
\]
\end{itemize}
\end{theorem}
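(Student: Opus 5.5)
The plan is to repeat the proof of Theorem \ref{thmb} for $E=\ell_p$, $0<p<2$, using Theorem \ref{thmalp}(1) in place of Lemma \ref{c0l2}. First apply Theorem \ref{thma} to $T$, with Theorem \ref{cpl2} and Corollary \ref{cpl2'} replacing Theorem \ref{3e} and Corollary \ref{3e'} in the proof of Lemma \ref{thma1}. Since $\ell_p$ satisfies $c_0\not\hookrightarrow\ell_p$, the standing hypotheses of Section \ref{sec:decomposition CE operator} hold with $r=\min\{p,1\}$. By Theorem \ref{thmalp}(1) we get $T_1=T_2=0$. Hence $T_3+T_4$ is a perturbation of $T|_{\mathcal T_{p,\{\xi_{i_{k+1}},\eta_{i_k}\}}}$ associated with $\{[e_{\xi_{i_k},\eta_{i_l}}]_{k=l+1}^\infty\}_l$ for the prescribed $\{\varepsilon_l\}$.

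The point of $T_1=T_2=0$ is that no passage to block vectors $\xi'_k,\eta'_l$ is needed. In Theorem \ref{thmb} such a passage is forced only by Lemma \ref{c0l2}, which kills $T_1$ and $T_2$. Here we may take $\xi'_k=\xi_{i_{2k+1}}$ and $\eta'_l=\eta_{i_{2l}}$, which is exactly the form asserted in the statement.

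Next, apply Lemma \ref{t3} to $T_3$ to obtain $\tilde R_1:=\tilde R_3$. It is defined on the full square $[e_{\xi_{i_k},\eta_{i_l}}]_{k,l\ge2}$ and restricted to the even/odd indices, with $x_{(k,l)}$ and $y_{(l,k)}$ generated by $\hat b$ and $\tilde b$ over $F_{(k,l)}$ and $G_{(l,k)}$ as in Notation \ref{notationAk}. Apply Lemma \ref{t4} to $T_4$ to obtain $\tilde R_2:=\tilde R_4$ on $[e_{\xi_{i_{2k+1}},\eta_{i_{2l}}}]_{k,l\ge1}$. Both restrictions are small perturbations of $T_3$ and $T_4$ with respect to the Schauder decomposition $\{[e_{\xi_{i_{2k+1}},\eta_{i_{2l}}}]_{k=l}^\infty\}_l$. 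We then combine them via Lemma \ref{sdper} and Theorem \ref{2d}, shrinking the $\varepsilon$'s beforehand since the $r$-th powers add. Remark \ref{rembl} guarantees that passing to the subsequence $\{i_{2k+1}\},\{i_{2l}\}$ preserves the block-generated structures (1)–(3).

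It remains to verify the norm bound $\bigl\|\bigl(\begin{smallmatrix}0&\tilde b\\ \hat b&0\end{smallmatrix}\bigr)\bigr\|_p\le 4^{\max\{1/p,1\}-1}\|T\|$. Lemma \ref{t2t3}(2) gives $\|(\cdot)\|\,\|\cdot\|_{\ell_2}\le 16^{1/r-1}\|T\|\,\|\cdot\|_{\ell_p}$, but the constant must be sharpened. I would redo the estimate \eqref{proj+proj} with $\alpha=e_1$ (a single unit vector) and $N\to\infty$, so that the $\ell_2$ and $\ell_p$ norms coincide. The relevant projection is $P_{A,B}+P_{C,D}$ with two corners, which by \eqref{diag-proj} costs $\kappa=2^{1/r-1}$. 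The passage from $T_1+\dots+T_4$ to $T$ costs one more $\kappa$. This should give $4^{1/r-1}$, and checking this constant is the only delicate bookkeeping step. The main obstacle is in fact upstream: confirming that the proof of Lemma \ref{thma1} with Theorem \ref{cpl2} really yields $z_{s,t}$-free and $a_{l,l}$-free pieces, i.e.\ the content of Theorem \ref{thmalp}(1), which I take as given.
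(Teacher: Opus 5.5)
Your proposal follows the paper's route: the paper obtains Theorem \ref{1p2} by running Theorem \ref{thma} with Theorem \ref{cpl2} and Corollary \ref{cpl2'} in place of Theorem \ref{3e} and Corollary \ref{3e'}, so that $T_1=T_2=0$ by Theorem \ref{thmalp}(1). It then applies Lemmas \ref{t3} and \ref{t4} directly to the subsequences $\{\xi_{i_{2k+1}}\}$, $\{\eta_{i_{2l}}\}$, which is possible because Lemma \ref{c0l2}, with its passage to block vectors, is no longer needed.

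One correction to your constant bookkeeping. Taken literally, \eqref{diag-proj} with $n=2$ gives $\kappa^2$, not $\kappa$. The single factor $\kappa$ for the two-corner projection comes instead from writing $p_Axp_B+p_Cxp_D=\frac12\big((p_A+p_C)x(p_B+p_D)+(p_A-p_C)x(p_B-p_D)\big)$. Moreover, since $\Vert\cdot\Vert_p$ is $\min\{p,1\}$-subadditive, the passage from $T_3+T_4$ to $T$ costs nothing as $N\to\infty$. The stated bound $4^{\max\{1/p,1\}-1}\Vert T\Vert$ therefore follows, and in fact even $2^{\max\{1/p,1\}-1}\Vert T\Vert$ does.
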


\begin{remark}
    Compared to Theorem \ref{thmb},  Theorem \ref{1p2} above allows us to   select a subsequence $\{\xi_{i_{2k+1}}\}_{k=1}^\infty$ of $\{\xi_i\}_{i=1}^\infty$ and a subsequence $\{\eta_{i_{2k}}\}_{k=1}^\infty$ of $\{\eta_i\}_{i=1}^\infty$ such that $\big(\tilde{R}_1+\tilde{R}_2\big)|_{\mathcal T_{p,\{\xi_{i_{2k+1}},\eta_{i_{2k}}\}_{k=1}^\infty}}$ is a perturbation of $T|_{\mathcal T_{p,\{\xi_{i_{2k+1}},\eta_{i_{2k}}\}_{k=1}^\infty}}$, whereas   Theorem \ref{thmb} requires  new sequences  $\{\xi'_k\}_{k=1}^\infty$ and $\{\eta'_k\}_{k=1}^\infty$. 
\end{remark}

If $\mathcal C_E=\mathcal C_p$, $2<p<\infty$, then we have  $\tilde{R}_1=0$
in  the proof of Theorem \ref{thmb}.

\begin{theorem}\label{2<p}
Suppose that $2<p<\infty$ and $T\in\mathcal B(\mathcal T_{p,\{\xi_i,\eta_i\}_{i=1}^\infty},\mathcal C_p)$ satisfies  $p_{_{[\bigcup_{i=1}^\infty K_i]}}T(\cdot)p_{_{[\bigcup_{i=1}^\infty L_i]}}=T$. Given a sequence $\{\varepsilon_l\}_{l=1}^\infty$ of positive numbers, there exist an increasing sequence $\{i_k\}_{k=1}^\infty$of positive integers, two orthonormal sequences $\{\xi'_k\}_{k=1}^\infty$ with $\xi'_k\in[\xi_i]_{i_{2k+1}\leq i<i_{2k+2}}$ and $\{\eta'_k\}_{k=1}^\infty$ with $\eta'_k\in[\eta_i]_{i_{2k}\leq i<i_{2k+1}}$, and an operator $\tilde{R}\in\mathcal B\big([e_{\xi'_k,\eta'_l}]_{k,l=1}^\infty,\mathcal C_p\big)$ having the form
\[
\tilde{R}(e_{\xi'_k,\eta'_l})=x_{2k+1,2l}+y_{2l,2k+1}\;\;\;\;\;\;\;\;\mbox{for every}\;k,l\geq1,
\]
where
\[
\{x_{2k+1,2l}\}_{k,l=1}^\infty\stackrel{\mbox{{\rm\tiny b.s}}}{\boxplus}\{K_{A_{2k+1}}\}_{k=1}^\infty\otimes\{L_{A_{2l}}\}_{l=1}^\infty\curvearrowleft\;
\]
\[
\{y_{2l,2k+1}\}_{k,l=1}^\infty\stackrel{\mbox{{\rm\tiny b.s}}}{\boxplus}\{K_{A_{2l}}\}_{l=1}^\infty\otimes\{L_{A_{2k+1}}\}_{k=1}^\infty\curvearrowleft\;,
\]
and $\{A_k\}_{k=1}^\infty$ is a sequence of mutually disjoint subsets of $\mathbb N$, such that $\tilde{R}|_{\mathcal T_{p,\{\xi'_k,\eta'_k\}_{k=1}^\infty}}$ is a perturbation of $T|_{\mathcal T_{p,\{\xi'_k,\eta'_k\}_{k=1}^\infty}}$ associated with  the Schauder decomposition $\big\{[e_{\xi'_k,\eta'_l}]_{k=l}^\infty\big\}_{l=1}^\infty$ for $\{\varepsilon_l\}_{l=1}^\infty$.
\end{theorem}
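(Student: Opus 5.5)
The plan is to run the proof of Theorem~\ref{thmb} with $E=\ell_p$, $2<p<\infty$, and check that the $\ell_2$-type component $\tilde{R}_1$ vanishes identically. Note first that $c_0,\ell_2\not\hookrightarrow\ell_p$ for $p\neq 2$, and that $\|\cdot\|_p$ is a norm for $p>2$ ($r=1$). So Theorem~\ref{thma}, Lemmas~\ref{t3}, \ref{t4}, \ref{c0l2} and Theorem~\ref{thmb} all apply to $T$.

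\textbf{Step 1.} Apply Theorem~\ref{thma} to $T$ with a sufficiently small sequence $\{\varepsilon_l\}$. This yields $T_1,T_2,T_3,T_4$ whose sum is a perturbation of $T$ on $\mathcal T_{p,\{\xi_{i_{k+1}},\eta_{i_k}\}}$.

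\textbf{Step 2.} Show that $T_3=0$; this is the content of Theorem~\ref{thmalp}(2), which I will justify directly. Suppose $\hat b_1$ or $\tilde b_1$ is nonzero. Then Lemma~\ref{t2t3}(2) gives $\|\cdot\|_{\ell_2}\le C\|\cdot\|_{\ell_p}$ on finitely supported sequences, i.e.\ $\ell_p\subset\ell_2$. This is false for $p>2$: take $\alpha=\sum_{k\le n}e_k$, so that $n^{1/2}\le Cn^{1/p}$, which fails for large $n$. Hence $\hat b_1=\tilde b_1=0$. By the bounds $\|\hat b_m\|\le\varepsilon_m\|\hat b_1\|$ and $\|\tilde b_m\|\le\varepsilon_m\|\tilde b_1\|$, all $\hat b_m,\tilde b_m$ vanish. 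Therefore $T_3=0$, and consequently $\hat b=\tilde b=0$ in Lemma~\ref{t3}, so the operator $\tilde R_3$ constructed there is $0$.

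\textbf{Step 3.} Eliminate $T_1$ and $T_2$ by Lemma~\ref{c0l2}, which requires only $\ell_2\not\hookrightarrow\ell_p$. It gives orthonormal sequences $\xi'_k\in[\xi_{i_{2s+1}}]$ and $\eta'_k\in[\eta_{i_{2s}}]$ in the required blocks such that $T_1$ and $T_2$, restricted to $[e_{\xi'_k,\eta'_l}]_{l\le k}$, are small perturbations of $0$ with respect to the Schauder decomposition $\{[e_{\xi'_k,\eta'_l}]_{k=l}^\infty\}_l$.

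For $T_4$, Lemma~\ref{t4} gives $\tilde R_4(e_{\xi_{i_{2k+1}},\eta_{i_{2l}}})=x_{2k+1,2l}+y_{2l,2k+1}$, with the stated block-structure over $\{K_{A_{2k+1}}\}\otimes\{L_{A_{2l}}\}$ and $\{K_{A_{2l}}\}\otimes\{L_{A_{2k+1}}\}$. I transfer this to the new vectors $\xi'_k,\eta'_l$ using Remark~\ref{rembl}. Since $\xi'_k$ and $\eta'_l$ are normalized block combinations, Theorem~\ref{3a'} shows that the images of $e_{\xi'_k,\eta'_l}$ are again block sequences generated by a single operator ($\hat a\otimes e_{1,1}$, respectively $\tilde a\otimes e_{1,1}$), supported on unions of the same $K_{A_i},L_{A_j}$. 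After relabelling the $A_i$ as in Notation~\ref{notationAk}, this gives the form in the statement. Boundedness on the full square $[e_{\xi'_k,\eta'_l}]_{k,l}$ follows, exactly as in Lemma~\ref{t4}, from Proposition~\ref{ie} and the singular-value computation there.

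\textbf{Step 4.} Combine the pieces using Theorem~\ref{2d} and the $r$-subadditivity estimate \eqref{sum-infty} (with $r=1$). The sum of the perturbation errors from $T_1$, $T_2$, $T_4\to\tilde R_4$ and from Theorem~\ref{thma} is controlled termwise, so after shrinking the initial $\varepsilon$'s the result is a perturbation for the prescribed $\{\varepsilon_l\}$.

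I expect Step~3 to be the main obstacle: checking that after the block-change of basis in Lemma~\ref{c0l2}, the restriction of $\tilde R_4$ keeps exactly the tensor form $x_{2k+1,2l}+y_{2l,2k+1}$ with disjoint index sets $A_k$, while the perturbation control survives when the Schauder decomposition is coarsened. I would handle this by mimicking the proof of Theorem~\ref{thmb}, which is just Lemmas~\ref{t3}, \ref{t4}, \ref{c0l2} together with Remark~\ref{rembl}, with $\tilde R_1=\tilde R_3=0$ inserted.
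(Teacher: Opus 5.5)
Your proposal is correct and follows the paper's route. The paper runs Theorem~\ref{thmb} with $E=\ell_p$ (legitimate since $c_0,\ell_2\not\hookrightarrow\ell_p$) and observes, via Lemma~\ref{t2t3}(2) and the failure of $\ell_p\subset\ell_2$ for $p>2$ (this is Theorem~\ref{thmalp}(2)), that $\hat b=\tilde b=0$; hence $T_3=0$ and $\tilde R_1=0$, leaving $\tilde R=\tilde R_2$. The Step~3 obstacle you anticipate is already handled inside Theorem~\ref{thmb}, so you can cite that theorem directly rather than re-deriving it.
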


It is well-known that     $\mathcal C_p$  has the follow property: 
  for any sequence $\{z_{i,j}\}_{i,j=1}^\infty$ in $\mathcal C_p$ satisfying 
\[
\{z_{i,j}\}_{i,j=1}^\infty\stackrel{\mbox{\tiny b.s}}{\boxplus}\{F_i\}_{i=1}^\infty\otimes\{G_j\}_{j=1}^\infty\curvearrowleft c,
\]
where $\{F_i\}_{i=1}^\infty$ and $\{G_i\}_{i=1}^\infty$ are sequences of mutually orthogonal closed subspaces of $H$, and $\Vert c\Vert_p=1$,  we have 
$$\{z_{i,j}\}_{i,j=1}^\infty\simeq\{e_{\xi_i,\eta_j}\}_{i,j=1}^\infty.$$ This property enables us to avoid using Theorems \ref{t3} and \ref{t4} by employing  a simpler 
argument:    set 
\[
x_{(k,l)}=x_{_{(k,l),1}},\;\;\;\;y_{(l,k)}=y_{_{1,(l,k)},},\;\;\;\;x_{k,l}=x_{_{(k,1),l}},\;\;\;\;y_{l,k}=y_{_{l,(1,k)}}
\]
(with $x_{_{(k,l),1}}$, $y_{_{1,(l,k)},}$, $x_{_{(k,1),l}}$ and $y_{_{l,(1,k)}}$ given in Theorem \ref{thma}, and note that their supports are all finite-dimensional projections). Then we obtain  operator-perturbation versions of Theorems \ref{1p2} and \ref{2<p}.

\begin{theorem}\label{1p2'}
Suppose that $0<p<2$ and $T\in\mathcal B(\mathcal T_{p,\{\xi_i,\eta_i\}_{i=1}^\infty},\mathcal C_p)$ with $p_{_{[\bigcup_{i=1}^\infty K_i]}}T(\cdot)p_{_{[\bigcup_{i=1}^\infty L_i]}}=T$. Given a positive number $\varepsilon$, there exist two increasing sequences $\{i_k\}_{k=1}^\infty$ and $\{n_k\}_{k=1}^\infty$ of positive integers, and two operators $\tilde{R}_1,\tilde{R}_2\in\mathcal B\big([e_{\xi_{i_{2k+1}},\eta_{i_{2l}}}]_{k,l=1}^\infty,\mathcal C_p\big)$
of the form
\[
\tilde{R}_1(e_{\xi_{i_{2k+1}},\eta_{i_{2l}}})=\left\{
\begin{array}{rcl}
x_{_{(2k+1,2l)}}+y_{_{(2l,2k+1)}}, &  & {1\leq l\leq k<\infty,}\\
0\;\;\;\;\;\;\;\;\;\;\;\;\;, &  & \text{otherwise,}\\
\end{array}
\right.
\]
\[
\tilde{R}_2(e_{\xi_{i_{2k+1}},\eta_{i_{2l}}})=x_{2k+1,2l}+y_{2l,2k+1}\;\;\;\;\;\;\;\;\mbox{for every}\;k,l\geq1,
\]
such that $\big\Vert T|_{\mathcal T_{p,\{\xi_{i_{2k+1}},\eta_{i_{2k}}\}_{k=1}^\infty}}-(R_1+R_2)|_{\mathcal T_{p,\{\xi_{i_{2k+1}},\eta_{i_{2k}}\}_{k=1}^\infty}}\big\Vert\leq\varepsilon$, where

\begin{itemize}
\item [(1)]
$K'_k=[\bigcup_{i=n_{k-1}+1}^{n_k}K_i]$ and $L'_k=[\bigcup_{i=n_{k-1}+1}^{n_k}L_i]$ for every $k\geq1$ (put $n_0=0$),
\item [(2)]
$\{F_{(2k+1,2l)}\}_{1\leq l\leq k<\infty}$ and $\{G_{(2l,2k+1)}\}_{1\leq l\leq k<\infty}$ are two sequences of mutually orthogonal closed subspaces of $H$ such that
\[
F_{(2k+1,2l)}\subset K'_{2k+1}\;\;\;\;{\rm and}\;\;\;\;G_{(2l,2k+1)}\subset L'_{2k+1}\;\;\;\;{\rm for\;every}\;1\leq l\leq k<\infty.
\]
\item [(3)]
\[
\{x_{_{(2k+1,2l)}}\}_{1\leq l\leq k<\infty}\stackrel{\mbox{{\rm\tiny b.s}}}{\boxplus}\{F_{(2k+1,2l)}\}_{1\leq l\leq k<\infty}\otimes L'_1\curvearrowleft\hat{b}_1,
\]
\[
\{y_{_{(2l,2k+1)}}\}_{1\leq l\leq k<\infty}\stackrel{\mbox{{\rm\tiny b.s}}}{\boxplus} K'_1\otimes\{G_{(2l,2k+1)}\}_{1\leq l\leq k<\infty}\curvearrowleft\tilde{b}_1,
\]
and $\left\Vert\left(
\begin{smallmatrix}
0&\tilde{b}_1\\ \hat{b}_1&0
\end{smallmatrix}\right)
\right\Vert_p\leq4^{\max\{\frac{1}{p},1\}-1}\Vert T\Vert$,
\item [(4)] 
\[
\{x_{2k+1,2l}\}_{k,l=1}^\infty\stackrel{\mbox{{\rm\tiny b.s}}}{\boxplus}\{K'_{2k+1}\}_{k=1}^\infty\otimes\{L'_{2l}\}_{l=1}^\infty\curvearrowleft\;
\]
and
\[
\{y_{2l,2k+1}\}_{k,l=1}^\infty\stackrel{\mbox{{\rm\tiny b.s}}}{\boxplus}\{K'_{2l}\}_{l=1}^\infty\otimes\{L'_{2k+1}\}_{k=1}^\infty\curvearrowleft\;.
\]
\end{itemize}
\end{theorem}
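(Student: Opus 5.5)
The plan is to run the proof of Theorem~\ref{thma} again, this time with the $\mathcal C_p$-specific tools, and to stop before the limiting constructions of Lemmas~\ref{t3} and \ref{t4}. In their place we use the fact just noted: in $\mathcal C_p$, every block sequence generated by a single operator $c$ with $\Vert c\Vert_p=1$ is isometrically equivalent to the matrix units. Concretely, we first apply the initial blocking step of the proof of Theorem~\ref{thma}. This gives increasing sequences $\{i_k\}$ and $\{n_k\}$, with $K'_k,L'_k$ as in (1), such that $T(e_{\xi_{i_{k+1}},\eta_j})$ lives on the ``L-shaped'' region between levels $n_k$ and $n_{k+1}$, up to summable errors $\delta_{k,j}$. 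Because $0<p<2$, Theorem~\ref{thmalp}(1) applies: in Lemma~\ref{thma1} we invoke Theorem~\ref{cpl2} and Corollary~\ref{cpl2'} in place of Theorem~\ref{3e} and Corollary~\ref{3e'}. This kills the diagonal part $T_1$ and the part $T_2$, so that up to perturbation $T=T_3+T_4$ on $\mathcal T_{p,\{\xi_{i_{k+1}},\eta_{i_k}\}}$.

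Next we take the representations \eqref{hatb}, \eqref{tildeb}, \eqref{hata}, \eqref{tildea}, which say $\Vert\hat b_m\Vert_p,\Vert\tilde b_m\Vert_p,\Vert\hat a_m\Vert_p,\Vert\tilde a_m\Vert_p\le\varepsilon_m\times(\text{the }m=1\text{ term})$, and we discard every term with $m\ge2$. That is, we set $x_{(k,l)}=x_{(k,l),1}$, $y_{(l,k)}=y_{1,(l,k)}$, $x_{k,l}=x_{(k,1),l}$ and $y_{l,k}=y_{l,(1,k)}$, and we pass to the odd/even indices $2k+1,2l$. The block sequence $\{x_{(k,1),l}\}$ is generated by $\hat a_1$ over $\{F_{k,1}\}\otimes\{L'_l\}$; the $F_{k,1}\subset K'_k$ are mutually orthogonal, and restricting to $(2k+1,2l)$ separates the supports from those of the $y$'s. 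Applying the $\mathcal C_p$ property together with Proposition~\ref{ie} to $\left(\begin{smallmatrix}0&\tilde a_1\\ \hat a_1&0\end{smallmatrix}\right)$ shows that $\tilde R_2$ extends boundedly to the full square $[e_{\xi_{i_{2k+1}},\eta_{i_{2l}}}]_{k,l}$. In the same way, $\{x_{(k,l),1}+y_{1,(l,k)}\}$ is isometrically $\ell_2$-like by Theorem~\ref{hi}, so $\tilde R_1$, extended by $0$ above the diagonal, is bounded via $\Vert\cdot\Vert_{\mathcal C_2}\le\Vert\cdot\Vert_{\mathcal C_p}$. The norm bound $\left\Vert\left(\begin{smallmatrix}0&\tilde b_1\\ \hat b_1&0\end{smallmatrix}\right)\right\Vert_p\le 4^{\max\{1/p,1\}-1}\Vert T\Vert$ comes from the same compression estimate as in \eqref{ineq tri S3}, using \eqref{diag-proj} with two blocks.

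It remains to estimate the error. On each column space $[e_{\xi_{i_{2k+1}},\eta_{i_{2l}}}]_{k\ge l}$, the discarded terms contribute at most $4^{1/r}\big(\sum_{m\ge2}\varepsilon_m^r\big)^{1/r}$ times $\Vert\hat b_1\Vert_p$ (and the analogous quantities), exactly as in \eqref{hil} and \eqref{t4s}. Here we take $r=\min\{p,1\}$, and the bound is multiplied by the $\ell_2$-norm of the coefficients, which is dominated by the $\mathcal C_p$ norm. Since these errors are uniform in the column, we do not need Theorem~\ref{2d}. Instead we sum directly: by Lemma~\ref{sdper}, $\Vert T-(\tilde R_1+\tilde R_2)\Vert\le K_c\big(\sum_l \text{err}_l^r\big)^{1/r}$, and choosing $\{\delta_{k,j}\}$ and $\{\varepsilon_m\}$ small enough in advance makes this at most $\varepsilon$.

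The main obstacle is the uniformity of the errors. We need $K_c$ for the column decomposition of $\mathcal T_p$ to be finite (it is, since column projections are bounded on the triangle), and we need the column errors, not just the individual entries, to be summable. For the $T_4$ terms, the truncation $\sum_{m=1}^{l-1}$ versus the single term $m=1$ involves $l-2$ discarded pieces per column. We would control these by the weight $\sum_m m\,\delta_m^r<\infty$, as in the proof of Lemma~\ref{thma3}.
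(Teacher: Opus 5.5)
Your architecture is the paper's. You rerun Theorem~\ref{thma} with Theorem~\ref{cpl2} and Corollary~\ref{cpl2'}, so that $T_1=T_2=0$ by Theorem~\ref{thmalp}. You then keep only the $m=1$ pieces $x_{(k,l),1}$, $y_{1,(l,k)}$, $x_{(k,1),l}$, $y_{l,(1,k)}$ on the indices $(2k+1,2l)$. Finally you get boundedness of $\tilde R_1,\tilde R_2$ on the full square from Theorem~\ref{hi}, Proposition~\ref{ie} and the isometric behaviour of generated block sequences in $\mathcal C_p$. For $l>k$ you should say explicitly that $x_{2k+1,2l}:=\hat u_{2k+1,1}^{\ast}\hat a_1\hat v_{2l}$, and similarly for $y_{2l,2k+1}$, using the consistent isometries behind \eqref{hata} and \eqref{tildea}; this is a harmless omission.

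The step that fails is your final error estimate. Dropping the pieces with $m\ge2$ does not give summable column errors. The piece with index $m$ occurs in \emph{every} column $l>m$. On column $l\ge3$, the $m=2$ piece can be isolated by a two-block compression, and by Theorem~\ref{hi} it alone has norm $\Vert\left(\begin{smallmatrix}0&\tilde b_2\\ \hat b_2&0\end{smallmatrix}\right)\Vert_p\Vert\alpha\Vert_2$. So $\mathrm{err}_l$ is bounded below uniformly in $l$. A bound that is ``uniform in the column'' is exactly what cannot be summed over infinitely many columns: $\sum_l\mathrm{err}_l^r=\infty$, and Lemma~\ref{sdper} gives nothing. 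The weight $\sum_m m\,\delta_m^r$ from Lemma~\ref{thma3} does not rescue this. There the estimated terms had $m\ge l$, so each index $m$ was counted in at most $m$ columns; here each $m$ is counted in infinitely many.

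The remedy is to estimate the discarded terms globally, index by index, and only then sum over $m$; this is what \eqref{hil} actually does.

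For fixed $m\ge2$, the family $\{x_{(k,l),m}+y_{m,(l,k)}\}_{m<l<k}$ over the whole triangle is generated by $\hat b_m$ on $\{F_{k,l}\}\otimes L'_m$ and by $\tilde b_m$ on $K'_m\otimes\{G_{l,k}\}$. By Theorem~\ref{hi} it is isometrically an $\ell_2$-basis with constant $\Vert\left(\begin{smallmatrix}0&\tilde b_m\\ \hat b_m&0\end{smallmatrix}\right)\Vert_p$. Then $\Vert\cdot\Vert_{2}\le\Vert\cdot\Vert_p$ bounds its contribution by that constant times the $\mathcal C_p$-norm of the whole argument.

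For the $T_4$ pieces, $\{x_{(k,m),l}\}_{m<l<k}$ is generated by $\hat a_m$ on $\{F_{k,m}\}_k\otimes\{L'_l\}_l$. By Remark~\ref{rem}(2) and multiplicativity of $\Vert\cdot\Vert_p$ on tensor products,
$\Vert\sum_{m<l<k}\alpha_{k,l}x_{(k,m),l}\Vert_p=\Vert\hat a_m\Vert_p\Vert\sum_{m<l<k}\alpha_{k,l}e_{\xi_{i_k},\eta_{i_l}}\Vert_p\le\Vert\hat a_m\Vert_p\Vert\sum_{l<k}\alpha_{k,l}e_{\xi_{i_k},\eta_{i_l}}\Vert_p$.
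The inequality holds because right multiplication by the projection onto $[\eta_{i_l}]_{l>m}$ does not increase singular values. The $\tilde a_m$ pieces are handled the same way after transposition.

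Summing over $m\ge2$ with $r$-subadditivity bounds the whole discarded part, in operator norm on the triangle, by a constant times $\big(\sum_{m\ge2}\varepsilon_m^r\big)^{1/r}\max\{\Vert\hat a_1\Vert_p,\Vert\tilde a_1\Vert_p,\Vert\hat b_1\Vert_p,\Vert\tilde b_1\Vert_p\}$. This is small, since these four norms are bounded by a constant multiple of $\Vert T\Vert$. Lemma~\ref{sdper} is then needed only for the perturbation produced by Theorem~\ref{thma}, whose column errors $\varepsilon_l$ are summable by construction.
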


\begin{theorem}\label{2<p'}
Suppose that $2<p<\infty$ and $T\in\mathcal B(\mathcal T_{p,\{\xi_i,\eta_i\}_{i=1}^\infty},\mathcal C_p)$ with $p_{_{[\bigcup_{i=1}^\infty K_i]}}T(\cdot)p_{_{[\bigcup_{i=1}^\infty L_i]}}=T$. Given a sequence $\{\varepsilon_l\}_{l=1}^\infty$ of positive numbers, there are two increasing sequences $\{i_k\}_{k=1}^\infty$ and $\{n_k\}_{k=1}^\infty$ of positive integers, and an operator $\tilde{R}\in\mathcal B\big([e_{\xi_{i_{2k+1}},\eta_{i_{2l}}}]_{k,l=1}^\infty,\mathcal C_p\big)$
having the forms
\[
\tilde{R}(e_{\xi'_k,\eta'_l})=x_{2k+1,2l}+y_{2l,2k+1}\;\;\;\;\;\;\;\;\mbox{for every}\;k,l\geq1,
\]
where
\[
\{x_{2k+1,2l}\}_{k,l=1}^\infty\stackrel{\mbox{{\rm\tiny b.s}}}{\boxplus}\{K'_{2k+1}\}_{k=1}^\infty\otimes\{L'_{2l}\}_{l=1}^\infty\curvearrowleft\;
\]
\[
\{y_{2l,2k+1}\}_{k,l=1}^\infty\stackrel{\mbox{{\rm\tiny b.s}}}{\boxplus}\{K'_{2l}\}_{l=1}^\infty\otimes\{L'_{2k+1}\}_{k=1}^\infty\curvearrowleft\;,
\]
and $K'_k=[\bigcup_{i=n_{k-1}+1}^{n_k}K_i]$ and $L'_k=[\bigcup_{i=n_{k-1}+1}^{n_k}L_i]$ for every $k\geq1$ (put $n_0=0$), such that $\big\Vert T|_{\mathcal T_{p,\{\xi_{i_{2k+1}},\eta_{i_{2k}}\}_{k=1}^\infty}}-\tilde{R}|_{\mathcal T_{p,\{\xi_{i_{2k+1}},\eta_{i_{2k}}\}_{k=1}^\infty}}\big\Vert\leq\varepsilon$.
\end{theorem}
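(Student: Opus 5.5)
The plan is to rerun the proof of Theorem \ref{thma}, but with the finite-dimensional-support structure of $\mathcal C_p$ for $2<p<\infty$ used to shortcut Lemmas \ref{t3} and \ref{t4}, in the same way as was just indicated for Theorem \ref{1p2'}.

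First, I apply Theorem \ref{thma} to $T$ with a rapidly decreasing sequence $\{\varepsilon_l\}$. This produces $\{i_k\}$, $\{n_k\}$, the blocks $K'_k,L'_k$, and the operators $T_1,\dots,T_4$. Their sum $T_1+T_2+T_3+T_4$ is a perturbation of $T|_{\mathcal T_{p,\{\xi_{i_{k+1}},\eta_{i_k}\}}}$ associated with the decomposition $\{[e_{\xi_{i_k},\eta_{i_l}}]_{k=l+1}^\infty\}_l$.

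By Theorem \ref{thmalp}(2), $T_3=0$, because Lemma \ref{t2t3}(2) would otherwise force $\ell_p\subset\ell_2$. Since $\ell_2\not\hookrightarrow\ell_p$ for $p\neq2$, Lemma \ref{c0l2} lets me pass to normalized block sequences $\{\xi'_k\}$, $\{\eta'_k\}$ on which $T_1$ and $T_2$ are small perturbations of $0$. I then pass to the odd/even indices $\xi'_{k}\leftrightarrow 2k+1$ and $\eta'_l\leftrightarrow 2l$ as in Lemma \ref{t4}. Remark \ref{rembl} guarantees that the block structures (\ref{hata}) and (\ref{tildea}) survive this reblocking, with generating operators multiplied by a rank-one factor.

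What remains is $T_4$. I set $x_{2k+1,2l}:=x_{_{(2k+1,1),2l}}$ and $y_{2l,2k+1}:=y_{_{2l,(1,2k+1)}}$, which are block sequences generated by $\hat a_1$ and $\tilde a_1$ respectively. The discarded terms with $m\ge2$ are then controlled exactly as in (\ref{t4s}): their total contribution on the $l$-th column is at most $4^{1/r}(\|\hat a_1\|^r+\|\tilde a_1\|^r)^{1/r}(\sum_{m\ge2}\varepsilon_m^r)^{1/r}$ times the norm. By Theorem \ref{2d}, this yields an operator on the triangle with the stated values that is a perturbation of $T$ for the required $\{\varepsilon_l\}$.

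To extend this to the full square $[e_{\xi'_k,\eta'_l}]_{k,l}$, I use the $\mathcal C_p$ property quoted just before Theorem \ref{1p2'}. Together with Proposition \ref{ie}, it shows that
\[
\{x_{2k+1,2l}+y_{2l,2k+1}\}_{k,l}\simeq\Big\{\left(\begin{smallmatrix}0&\tilde a_1\\ \hat a_1&0\end{smallmatrix}\right)\otimes e_{k,l}\Big\}_{k,l},
\]
so the map $e_{\xi'_k,\eta'_l}\mapsto x_{2k+1,2l}+y_{2l,2k+1}$ is bounded with norm $\|\left(\begin{smallmatrix}0&\tilde a_1\\ \hat a_1&0\end{smallmatrix}\right)\|_p$. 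Because the supports $K'_{2k+1}$ and $L'_{2l}$ are finite-dimensional blocks $K'_k=[\bigcup_{i=n_{k-1}+1}^{n_k}K_i]$, no enlargement to the spaces $K_{A_k}$ is needed.

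The main obstacle is bookkeeping: the successive passages to subsequences (Lemma \ref{c0l2}, then the odd/even selection) must keep the perturbation constants summable. Since each passage preserves the block/consistency structure (Remark \ref{rembl}), I would fix $\varepsilon_l$ small enough in advance so that the final error sequence is dominated by the prescribed one.
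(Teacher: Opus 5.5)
Your outline matches the one the paper intends: Theorem \ref{thma}; $T_3=0$ by Theorem \ref{thmalp}(2); removal of $T_1,T_2$ on block vectors via Lemma \ref{c0l2}; keeping only the layer $x_{(2k+1,1),2l}$, $y_{2l,(1,2k+1)}$ of $T_4$, whose finite-dimensional supports make the passage to $K_{A_k}$ unnecessary; and extension to the square by the $\mathcal C_p$ property with Proposition \ref{ie}. Reading the statement with block vectors $\xi'_k,\eta'_l$, rather than genuine subsequences, is also forced. For $p\ge 2$ the map $e_{\xi_i,\eta_j}\mapsto e_{\zeta_{i,j},\omega_i}$ is bounded by row pinching, where $\{\zeta_{i,j}\}_{j\le i}$ is orthonormal in $K_i$ and $\omega_i\in L_i$ is a unit vector. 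Its values are fixed by the compression $a\mapsto p_{K_i}ap_{L_i}$, and that compression kills every $x_{2k+1,2l}+y_{2l,2k+1}$. So $\|T-\tilde R\|\ge 1$ on every subsequence triangle.

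The gap is in how you control the discarded layers $m\ge 2$ of $T_4$. Your bound on the $l$-th column does not depend on $l$, so Theorem \ref{2d} does not apply: it needs $K_c(\sum_l\|T|_{X_l}-R_l\|^r)^{1/r}$ to be small, and this series diverges. A sharper column estimate cannot fix this. On the $l$-th column the discarded part is isometric to $\bigl(\sum_{m=2}^{l-1}\hat a_m\otimes e_{m,1}\bigr)\otimes(\cdot)$ plus the corresponding $\tilde a_m$-term, and its norm is nondecreasing in $l$. So the truncation to $m=1$ is never a perturbation for a null sequence $\{\varepsilon_l\}$ unless $\hat a_m=\tilde a_m=0$ for all $m\ge 2$. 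In \eqref{t4s} the error is a tail $m\ge 2l$ only because Lemma \ref{t4} keeps every layer inside $\hat a=\sum_m\hat a_m\otimes e_{m,1}$.

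What you need is a global estimate, and this is where the $\mathcal C_p$ property really enters. For fixed $m$, the family $\{x_{(k,m),l}\}_{m<l<k}$ is a full-matrix block sequence generated by $\hat a_m$. By Remark \ref{rem}(2) and $\|a\otimes c\|_p=\|a\|_p\|c\|_p$,
\[
\Bigl\|\sum_{m<l<k}\alpha_{k,l}\,x_{(k,m),l}\Bigr\|_p=\|\hat a_m\|_p\Bigl\|\sum_{m<l<k}\alpha_{k,l}\,e_{k,l}\Bigr\|_p\le\|\hat a_m\|_p\Bigl\|\sum_{l<k}\alpha_{k,l}\,e_{\xi_{i_k},\eta_{i_l}}\Bigr\|_p ,
\]
where $e_{k,l}=(\cdot|e_l)e_k$; the inequality holds because deleting the columns $l\le m$ is right multiplication by a projection. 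The $y$-layer is the transposed picture with $\tilde a_m$. Set $R^{(1)}(e_{\xi_{i_k},\eta_{i_l}}):=x_{(k,1),l}+y_{l,(1,k)}$. Summing over $m\ge 2$ gives $\|T_4-R^{(1)}\|\le(\|\hat a_1\|_p+\|\tilde a_1\|_p)\sum_{m\ge 2}\varepsilon_m$ on the whole triangle, hence on your block triangle. Combine this with Lemma \ref{sdper} applied to $T-(T_1+T_2+T_4)$ and to $T_1,T_2$. The result is the operator-norm bound $\le\varepsilon$, which is all the statement asserts. It is not a perturbation for a prescribed sequence $\{\varepsilon_l\}$, which is what you were aiming for.
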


\begin{remark}\label{remtptotp}
If $\{K_i\}_{i=1}^\infty$ and $\{L_i\}_{i=1}^\infty$ are chosen to be $K_i=[\xi_i]$, $L_i=[\eta_i]$ and $T\in\mathcal B(\mathcal T_{p,\{\xi_i,\eta_i\}_{i=1}^\infty},\mathcal C_p\big)$ with im$(T)\subset\mathcal T_{p,\{\xi_i,\eta_i\}_{i=1}^\infty}$, then   those ``$y_{(2l,2k+1)}$" and ``$y_{2l,2k+1}$" parts in Theorems \ref{1p2'} and \ref{2<p'} are $0$. In this case,
one advantage to use 
 Theorems \ref{1p2'} and \ref{2<p'} (rather than Theorems \ref{1p2} and \ref{2<p}) is that they guarantee that the  operators $R_1$, $R_2$ and $R$   have their ranges contained in $\mathcal T_{p,\{\xi_i,\eta_i\}_{i=1}^\infty}$, in contrast to Theorems \ref{1p2} and \ref{2<p},
 which are useful tools in the study of algebra $\mathcal B(\mathcal T_p)$.
\end{remark}

\begin{theorem}\label{pneq2}
Suppose that $0<p\neq2<\infty$ and $T\in\mathcal B(\mathcal T_{p,\{\xi_i,\eta_i\}_{i=1}^\infty},\mathcal C_p)$ with $p_{_{[\bigcup_{i=1}^\infty K_i]}}T(\cdot)p_{_{[\bigcup_{i=1}^\infty L_i]}}=T$. Given a positive number $\varepsilon$, there are two increasing sequences $\{i_k\}_{k=1}^\infty$ and $\{n_k\}_{k=1}^\infty$ of positive integers, a sequence
\[
\{a_{k,l}\}_{k,l=1}^\infty\stackrel{\mbox{\tiny b.s}}{\boxplus}\big\{[\xi_i]_{i_{2k+1}\leq i<i_{2k+2}}\big\}_{k=1}^\infty\otimes \big\{[\eta_i]_{i_{2l}\leq i<i_{2l+1}}\big\}_{l=1}^\infty\curvearrowleft a
\]
with $\Vert a\Vert_p=1$, and an operator $\tilde{R}\in\mathcal B\big([a_{k,l}]_{k,l=1}^\infty,\mathcal C_p\big)$ having the form
\[
\tilde{R}(a_{k,l})=x_{2k+1,2l}+y_{2l,2k+1}\;\;\;\;\;\;\;\;\mbox{for every}\;k,l\geq1,
\]
where
\[
\{x_{2k+1,2l}\}_{k,l=1}^\infty\stackrel{\mbox{{\rm\tiny b.s}}}{\boxplus}\{K'_{2k+1}\}_{k=1}^\infty\otimes\{L'_{2l}\}_{l=1}^\infty\curvearrowleft\;
\]
\[
\{y_{2l,2k+1}\}_{k,l=1}^\infty\stackrel{\mbox{{\rm\tiny b.s}}}{\boxplus}\{K'_{2l}\}_{l=1}^\infty\otimes\{L'_{2k+1}\}_{k=1}^\infty\curvearrowleft\;,
\]
and $K'_k=[\bigcup_{i=n_{k-1}+1}^{n_k}K_i]$ and $L'_k=[\bigcup_{i=n_{k-1}+1}^{n_k}L_i]$ for every $k\geq1$ (put $n_0=0$), such that $\big\Vert T|_{[a_{k,l}]_{1\leq l\leq k<\infty}}-\tilde{R}|_{[a_{k,l}]_{1\leq l\leq k<\infty}}\big\Vert\leq\varepsilon$.
\end{theorem}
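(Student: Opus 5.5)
We reduce Theorem \ref{pneq2} to Theorems \ref{1p2'} and \ref{2<p'} and absorb the extra pieces into the generating operator $a$. In both regimes the local perturbation results give increasing sequences $\{i_k\}$, $\{n_k\}$ and an operator on $[e_{\xi_{i_{2k+1}},\eta_{i_{2l}}}]_{k,l}$ that is $\varepsilon$-close to $T$ on the corresponding lower-triangular piece. For $2<p<\infty$ this operator is already $\tilde R$ with the required form, so we may take $a=(\cdot|e_1)e_1$, i.e.\ $a_{k,l}=e_{\xi_{i_{2k+1}},\eta_{i_{2l}}}$, and we are done.

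The case $0<p<2$ needs more work, because Theorem \ref{1p2'} also produces $\tilde R_1$, which carries the $\ell_2$-type part generated by $\hat b_1,\tilde b_1$. Our plan is to kill $\tilde R_1$ by passing to a block subsequence, which is exactly why $a_{k,l}$ is a block sequence rather than a matrix unit.

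1. For fixed $l$, the images $\tilde R_1(e_{\xi_{i_{2k+1}},\eta_{i_{2l}}})$, $k\ge l$, form an isometric copy of the $\ell_2$ basis. Their norms are governed by $\|\bigl(\begin{smallmatrix}0&\tilde b_1\\ \hat b_1&0\end{smallmatrix}\bigr)\|_p$ via Theorem \ref{hi}.
2. Choose blocks $I_k$ of consecutive even/odd indices (inside $[i_{2k+1},i_{2k+2})$ for rows and $[i_{2l},i_{2l+1})$ for columns) and unit vectors with flat coefficients, $\alpha^{(k)}_{i'}=|I_k|^{-1/2}$ and similarly $\beta^{(l)}_{j'}$.
3. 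Set $a_{k,l}=\sum_{i'\in I_k,\,j'\in J_l}\alpha^{(k)}_{i'}\beta^{(l)}_{j'}e_{\xi_{i'},\eta_{j'}}$. By Remark \ref{rembl} these are generated by the rank-one operator $a=(\cdot|e_1)e_1$, so $\|a\|_p=1$.
4. Apply Remark \ref{rembl} again to the images. The $\tilde R_2$-part stays of the required form $x+y$, with generator tensored by a rank-one operator.
5. In the $\tilde R_1$-part, the sum over the row block is orthogonal in the $\ell_2$ direction, so its norm is controlled by $\|\alpha^{(k)}\|_2$. Along the columns, however, we pick up factors that we can make small. With $|J_l|=m$, the column-direction sum of $m$ pieces, each of size $m^{-1/2}$, distributed over mutually orthogonal $G$-spaces, behaves like $\ell_2$ and contributes $\|\beta\|_2=1$. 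The same holds on the $x$ side.

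So flat averaging alone does not obviously shrink $\tilde R_1$. Step 5 is the main obstacle, and we would handle it by using asymmetric blocks instead. Take a single column, $|J_l|=1$, but long rows $I_k$. The $x_{(2k+1,2l)}$ terms then sum in an $\ell_2$ fashion, with norm $\|\alpha\|_2=1$, while the averaged $e$-side is an $\ell_p$-type row element. The key comparison is the normalization: $\|a_{k,l}\|_p=1$ holds as a rank-one operator. Any single $k$-row already comes out in an $\ell_2$ fashion.

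If the $\ell_2$ versus $\ell_p$ discrepancy does not help here, the fallback is to argue via Case II of Lemma \ref{t2}. Find normalized blocks $\alpha^{(t)}$ on which $\tilde R_1$ is small, using that the map $\alpha\mapsto\tilde R_1(\sum\alpha e)$ factors through $\ell_2\to\mathcal C_p$ for $p<2$. Then choose the block coefficients to be those $\alpha^{(t)}$ and rescale so that $\|a\|_p=1$. The remaining perturbation is summed via Lemma \ref{sdper}, which turns the small blockwise errors into an operator-norm bound $\le\varepsilon$.
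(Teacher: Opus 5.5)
Your case $2<p<\infty$ is correct and is what the paper does: apply Theorem \ref{2<p'} and take $a=(\cdot|e_1)e_1$. For $0<p<2$, however, the proposal contains no construction that works. Every block you write down is rank one, with coefficient matrix $\alpha^{(k)}\otimes\beta^{(l)}$, $\Vert\alpha^{(k)}\Vert_2=\Vert\beta^{(l)}\Vert_2=1$, so its generator satisfies $\Vert a\Vert_p=\Vert a\Vert_2=1$. On the lower-triangular part, though, $\tilde R_1$ is a constant multiple of an $\ell_2$-isometry. Its images are $x$'s with mutually orthogonal left supports and a common right space, plus $y$'s with the roles reversed, so Theorem \ref{hi} gives $\Vert\tilde R_1(\sum_{t\le s}\gamma_{s,t}e_{\xi_{i_{2s+1}},\eta_{i_{2t}}})\Vert_p=\Vert\bigl(\begin{smallmatrix}0&\tilde b_1\\ \hat b_1&0\end{smallmatrix}\bigr)\Vert_p(\sum|\gamma_{s,t}|^2)^{1/2}$. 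Consequently, on any rank-one block (flat, asymmetric, or anything else) $\tilde R_1$ is exactly as large as before, and rescaling changes nothing. Your fallback through Case II of Lemma \ref{t2} also fails. That mechanism needs the $\ell_2$-map to fail to be bounded below, whereas here it is bounded below by $\Vert\bigl(\begin{smallmatrix}0&\tilde b_1\\ \hat b_1&0\end{smallmatrix}\bigr)\Vert_p>0$ unless $\tilde R_1=0$. So there are no $\ell_2$-normalized blocks on which $\tilde R_1$ is small.

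The missing idea is to use a generator of large rank with equal singular values, so that its $\mathcal C_p$-norm is $1$ while its Hilbert--Schmidt norm is small. The paper fixes $N$ and sets $a_{k,l}=N^{-1/p}\sum_{m=1}^N e_{\xi_{i_{2(Nk+m)+1}},\eta_{i_{2(Nl+m)}}}$. By Theorem \ref{3a'}, this family is generated by $a=N^{-1/p}\sum_{m=1}^N(\cdot|e_m)e_m$, which has $\Vert a\Vert_p=1$ and $\Vert a\Vert_2=N^{1/2-1/p}$. The $\ell_2$-structure of $\tilde R_1$, together with $\Vert\cdot\Vert_2\le\Vert\cdot\Vert_p$ for $p\le2$, then gives
\[
\Big\Vert\tilde R_1\Big(\sum_{k,l}\alpha_{k,l}a_{k,l}\Big)\Big\Vert_p\le C_p\Vert T\Vert N^{1/2-1/p}\Big(\sum_{k,l}|\alpha_{k,l}|^2\Big)^{1/2}\le C_p\Vert T\Vert N^{1/2-1/p}\Big\Vert\sum_{k,l}\alpha_{k,l}a_{k,l}\Big\Vert_p ,
\]
which is as small as needed for large $N$ because $p<2$. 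One then takes $\tilde R:=\tilde R_2|_{[a_{k,l}]}$. It keeps the required $x+y$ block form by Theorem \ref{3a'}; Remark \ref{rembl} only handles rank-one coefficient matrices, so it does not suffice here. The quasi-triangle inequality then yields the $\varepsilon$-estimate. Your remark that $\tilde R_1$ factors through $\ell_2$ is half of the argument. The other half is missing: the gain comes from the ratio $\Vert a\Vert_2/\Vert a\Vert_p=N^{1/2-1/p}\to0$, and this forces a high-rank generator.
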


\begin{proof}
If $2<p<\infty$, then the assertions follow from Theorem \ref{2<p'}.

Assume that $0<p<2$. Using Theorem \ref{1p2'}, we obtain operators  $\tilde{R}_1$ and $\tilde{R}_2$ satisfying those statements in Theorem \ref{1p2'} and $\big\Vert T|_{[e_{\xi_{i_{2k+1}},\eta_{i_{2l}}}]_{1\leq l\leq k<\infty}}-(R_1+R_2)|_{[e_{\xi_{i_{2k+1}},\eta_{i_{2l}}}]_{1\leq l\leq k<\infty}}\big\Vert\leq\frac{\varepsilon}{2^{\max\{\frac{1}{p},1\}}}$. We choose a positive integer $N$ such that $N^{1/2-1/p}<\frac{\varepsilon}{2^{\max\{\frac{1}{p},1\}+1}\big(1+8^{\max\{\frac{1}{p},1\}-1}\Vert T\Vert\big)}$. Put
\begin{equation}
a_{k,l}=\frac{1}{N^{1/p}}\sum_{m=1}^Ne_{\xi'_{2(Nk+m)+1},\eta'_{2(Nl+m)}}\;\;\;\;\;\;\;\;{\rm for\;every}\;k,l\geq1.
\end{equation}\label{akl}
By Theorem \ref{3a'}, we know that 
\[
\{a_{k,l}\}_{k,l=1}^\infty\stackrel{\mbox{{\rm\tiny b.s}}}{\boxplus}\big\{[\xi'_{2(Nk+m)+1}]_{m=1}^N\big\}_{k=1}^\infty\otimes\big\{[\eta'_{2(Nl+m)}]_{m=1}^N\big\}_{l=1}^\infty\curvearrowleft\frac{1}{N^{1/p}}\sum_{m=1}^N(\cdot|e_m)e_m.
\]
For any finitely non-zero sequence $\{\alpha_{k,l}\}_{k,l=1}^\infty$ of scalars, we have

\begin{eqnarray*}
&&\bigg\Vert \tilde{R}_1\bigg(\sum_{k,l=1}^\infty\alpha_{k,l}a_{k,l}\bigg)\bigg\Vert_p\\
&=&\frac{1}{N^{1/p}}\bigg\Vert \sum_{k,l=1}^\infty\alpha_{k,l}\sum_{m=1}^N\tilde{R}_1\bigg(e_{\xi'_{2(Nk+m)+1},\eta'_{2(Nl+m)}}\bigg)\bigg\Vert_p\\
&=&\frac{1}{N^{1/p}}\bigg\Vert \sum_{k,l=1}^\infty\sum_{m=1}^N\alpha_{k,l}\big(x_{_{(2(Nk+m)+1,2(Nl+m))}}+y_{_{(2(Nl+m),2(Nk+m)+1)}}\big)\bigg\Vert_p\\
&\leq&\frac{2^{\max\{\frac{1}{p},1\}-1}}{N^{1/p}}\Bigg(\bigg\Vert \sum_{1\leq l\leq k<\infty}\sum_{m=1}^N\alpha_{k,l}x_{_{(2(Nk+m)+1,2(Nl+m))}}\bigg\Vert_p\\
&&\;\;\;\;\;\;\;\;\;\;\;\;+\bigg\Vert \sum_{1\leq l\leq k<\infty}\sum_{m=1}^N\alpha_{k,l}y_{_{(2(Nl+m),2(Nk+m)+1)}}\bigg\Vert_p\Bigg)\\
&\stackrel{{\rm Th}\;\ref{1p2'}\;(3)}{\leq}&\frac{8^{\max\{\frac{1}{p},1\}-1}}{N^{1/p}}\Bigg(\Vert T\Vert\bigg( N\sum_{1\leq l\leq k<\infty}\vert\alpha_{k,l}\vert^2\bigg)^{1/2}+\Vert T\Vert\bigg( N\sum_{1\leq l\leq k<\infty}\vert\alpha_{k,l}\vert^2\bigg)^{1/2}\Bigg)\\
&=&2\cdot8^{\max\{\frac{1}{p},1\}-1}N^{1/2-1/p}\Vert T\Vert\bigg(\sum_{1\leq l\leq k<\infty}\vert\alpha_{k,l}\vert^2\bigg)^{1/2}\\
&\stackrel{(\Vert\cdot\Vert_2\leq\Vert\cdot\Vert_p)}{\leq}&\frac{\varepsilon}{2^{\max\{\frac{1}{p},1\}}}\bigg\Vert\sum_{k,l=1}^\infty\alpha_{k,l}e_{\xi_k,\eta_l}\bigg\Vert_p=\frac{\varepsilon}{2^{\max\{\frac{1}{p},1\}}}\bigg\Vert\sum_{k,l=1}^\infty\alpha_{k,l}a_{k,l}\bigg\Vert_p.
\end{eqnarray*}
Let $\tilde{R}=\tilde{R}_2|_{[a_{k,l}]_{k,l=1}^\infty}$. We have
\begin{align*}
&\big\Vert T|_{[a_{k,l}]_{1\leq l\leq k<\infty}}-\tilde{R}|_{[a_{k,l}]_{1\leq l\leq k<\infty}}\big\Vert\\
\leq&2^{\max\{\frac{1}{p},1\}-1}\Big(\big\Vert\tilde{R}_1|_{[a_{k,l}]_{1\leq l\leq k<\infty}}\big\Vert+\big\Vert T|_{[a_{k,l}]_{1\leq l\leq k<\infty}}-(\tilde{R}_1+\tilde{R}_2)|_{[a_{k,l}]_{1\leq l\leq k<\infty}}\big\Vert\Big)\leq\varepsilon,
\end{align*}
and by Theorem \ref{3a'}, $\tilde{R}$ is the desired operator.
\end{proof}

It is known that $T^{p,\{\xi_k,\eta_k\}_{k=1}^\infty}$ is unbounded for $0<p\leq1$ (see Example \ref{exmple}).
Now,
arguing in a way similar to the proofs of Theorem \ref{1p2} and Theorem \ref{thmc}, 
we obtain a variant of Theorem \ref{thmc} for $\cC_p$ when $0<p\leq1$.

\begin{theorem}\label{p=1}
Let $0<p\leq1$. Suppose that $T\in\mathcal B\big([e_{\xi_i,\eta_j}]_{i,j=1}^\infty,\mathcal C_p\big)$ with $p_{_{[\bigcup_{i=1}^\infty K_i]}}T(\cdot)p_{_{[\bigcup_{i=1}^\infty L_i]}}=T$.
For a given sequence $\{\varepsilon_l\}_{l=1}^\infty$ of positive numbers,  there exist an increasing sequence $\{i_k\}_{k=1}^\infty$of positive integers, and two operators $R_1,R_2\in\mathcal B\big([e_{\xi_{i_{2k}},\eta_{i_{2l}}}]_{k,l=1}^\infty,\mathcal C_p\big)$ of the form
\[
R_1(e_{\xi_{i_{2k+1}},\eta_{i_{2l}}})=x_{_{(k,l)}}+y_{_{(l,k)}}\;\;\;\;\;\;\;\;\mbox{for\;every}\;k,l\geq1,
\]
\[
R_2(e_{\xi_{i_{2k+1}},\eta_{i_{2l}}})=x_{2k+1,2l}+y_{2l,2k+1}\;\;\;\;\;\;\;\;\mbox{for every}\;k,l\geq1.
\]
such that $R_1+R_2$ is a perturbation of $T|_{[e_{\xi_{i_{2k+1}},\eta_{i_{2l}}}]_{k,l=1}^\infty}$ associated with the Schauder decomposition $\big\{[e_{\xi_{i_{2k+1}},\eta_{i_{2l}}}]_{\min{k,l}=s}^\infty\big\}_{s=1}^\infty$ for $\{\varepsilon_s\}_{s=1}^\infty$, where
\begin{itemize}
\item [(1)]
there are  two sequences $\{F_{(k,l)}\}_{1\leq l\leq k<\infty}$ and $\{F_{(k,l)}\}_{1\leq k<l<\infty}$ of mutually orthogonal subspaces of $K$, and  two sequences $\{G_{(l,k)}\}_{1\leq l\leq k<\infty}$ and $\{G_{(l,k)}\}_{1\leq k<l<\infty}$ of mutually orthogonal subspaces of $L$ such that
\[
\{x_{_{(k,l)}}\}_{1\leq l\leq k<\infty}\stackrel{\mbox{{\rm\tiny b.s}}}{\boxplus}\{F_{(k,l)}\}_{1\leq l\leq k<\infty}\otimes L\curvearrowleft\hat{b},
\]
\[
\{y_{_{(l,k)}}\}_{1\leq l\leq k<\infty}\stackrel{\mbox{{\rm\tiny b.s}}}{\boxplus}K\otimes\{G_{(l,k)}\}_{1\leq l\leq k<\infty}\curvearrowleft\tilde{b},
\]
and
\[
\{x_{_{(k,l)}}\}_{1\leq k<l<\infty}\stackrel{\mbox{{\rm\tiny b.s}}}{\boxplus}\{F_{(k,l)}\}_{1\leq k<l<\infty}\otimes L\curvearrowleft\tilde{b'},
\]
\[
\{y_{_{(l,k)}}\}_{1\leq k<l<\infty}\stackrel{\mbox{{\rm\tiny b.s}}}{\boxplus}K\otimes\{G_{(l,k)}\}_{1\leq k<l<\infty}\curvearrowleft\hat{b'},
\]
and $\left\Vert\left(
\begin{smallmatrix}
0&\tilde{b}\\ \hat{b}&0
\end{smallmatrix}\right)
\right\Vert_p,\left\Vert\left(
\begin{smallmatrix}
0&\hat{b'}\\ \tilde{b'}&0
\end{smallmatrix}\right)
\right\Vert_p\leq4^{\frac{1}{p}-1}\Vert T\Vert$,
\item [(4)]
there is a sequence $\{A_k\}_{k=1}^\infty$ of mutually disjoint subsets of\;$\mathbb N$ such that
\[
\{x_{2k+1,2l}\}_{k,l=1}^\infty\stackrel{\mbox{{\rm\tiny b.s}}}{\boxplus}\{K_{A_{2k+1}}\}_{k=1}^\infty\otimes\{L_{A_{2l}}\}_{l=1}^\infty\curvearrowleft
\]
and
\[
\{y_{2l,2k+1}\}_{k,l=1}^\infty\stackrel{\mbox{{\rm\tiny b.s}}}{\boxplus}\{K_{A_{2l}}\}_{l=1}^\infty\otimes\{L_{A_{2k+1}}\}_{k=1}^\infty\curvearrowleft\;.
\]
\end{itemize}
\end{theorem}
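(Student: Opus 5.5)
The plan is to repeat the proof of Theorem \ref{thmc}, with two changes. First, wherever Theorem \ref{3e} or Corollary \ref{3e'} was used we invoke Theorem \ref{cpl2} and Corollary \ref{cpl2'}. Second, the unboundedness of the triangular projection $T^{p,\{\xi_k,\eta_k\}_{k=1}^\infty}$ for $0<p\le 1$ (Example \ref{exmple}) replaces the hypothesis $\mathcal T_E\not\approx\mathcal C_E$; this is legitimate by Remark \ref{thmc-rem}(1). The new feature compared with Theorem \ref{thmc} is that we want subsequences $\{\xi_{i_{2k+1}}\}$ and $\{\eta_{i_{2l}}\}$ of the original bases rather than block vectors $\xi'_k,\eta'_l$. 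In Theorem \ref{thmc} the blocks arose only from Lemma \ref{c0l2}, which kills $T_1,T_2$ via $\ell_2\not\hookrightarrow E$. For $0<p<2$, Theorem \ref{thmalp}(1) gives $T_1=T_2=0$ directly in Theorem \ref{thma}, so Lemma \ref{c0l2} is not needed. We can therefore work along subsequences, exactly as in Theorem \ref{1p2}.

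\textbf{Step 1 (lower triangle).} Apply Theorem \ref{1p2} to $T|_{[e_{\xi_i,\eta_j}]_{1\le j\le i<\infty}}$. This produces $\tilde R_1,\tilde R_2$ on $[e_{\xi_{i_{2k+1}},\eta_{i_{2l}}}]_{k,l}$ of the stated forms with $\|(\begin{smallmatrix}0&\tilde b\\ \hat b&0\end{smallmatrix})\|_p\le 4^{\frac1p-1}\|T\|$. Their sum perturbs $T$ on the lower triangle with respect to the decomposition $\{[e_{\xi_{i_{2k+1}},\eta_{i_{2l}}}]_{k=l}^\infty\}_l$. Taking $\varepsilon_l$ summable in the $p$-th power, Lemma \ref{sdper} applies (the quasi-norm $\|\cdot\|_p$ is $p$-subadditive).

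\textbf{Step 2 (upper triangle).} Mimic Lemma \ref{per=}, applied to the transposes of $T$ and $\tilde R_1+\tilde R_2$. Via Lemma \ref{2d''}, extend $(T-\tilde R_1-\tilde R_2)$ restricted to the strict upper triangle to an operator $\tilde R$ that vanishes on the lower triangle. Apply Step 1 (Theorem \ref{1p2}, transposed) to $\tilde R$ along a further subsequence, obtaining a $b$-part and an $a$-part $\tilde R_2'$. The argument of Lemma \ref{per=} then shows $\tilde R_2'=0$: otherwise $\|\hat a\|_\infty$ times the triangular part is dominated by $\|\tilde Q-\tilde R_1'\|$ times the full norm, which would make the triangular projection bounded and contradict Example \ref{exmple}. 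Hence only $b$-type terms $x_{(k,l)}+y_{(l,k)}$ with $k<l$ survive, and they are generated by $\tilde{b'},\hat{b'}$. Define $R_1$ as the sum of the two $b$-parts and $R_2=\tilde R_2$.

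\textbf{Step 3 (bookkeeping).} Passing to subsequences keeps the earlier forms; this follows from Remark \ref{rembl}, since subsequences are a special case of blocks with a single coefficient. We then relabel a common subsequence $\{i_k\}$. The bound on $\|(\begin{smallmatrix}0&\hat{b'}\\ \tilde{b'}&0\end{smallmatrix})\|_p$ follows the end of the proof of Theorem \ref{thmc}: compress $R$ by $P_{[\bigcup_{j>N_1}K_j],[\bigcup_{j\le N_1}L_j]}+P_{[\bigcup_{j\le N_1}K_j],[\bigcup_{j>N_1}L_j]}$ along a superdiagonal, use (\ref{diag-proj}) with $n=2$, and let $n\to\infty$. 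Since $E=\ell_p$, the $\ell_2$-versus-$E$ inequality collapses to a plain norm bound, giving the constant $4^{\frac1p-1}\|T\|$.

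The main obstacle is Step 2. There we need the transposed application to stay on subsequences of the original bases rather than block bases. We also need the vanishing argument for $\tilde R_2'$ to hold without diagonal projections, which are unbounded when $p<1$. I would handle the latter only through two-block compressions, whose norm is controlled by (\ref{diag-proj}), together with the triangular domination inequality from Lemma \ref{per=}, which uses no projections at all.
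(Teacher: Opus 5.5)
Your proposal is correct and follows the same route as the paper. The paper's proof of this theorem is only the remark that one argues as in Theorems \ref{1p2} and \ref{thmc}, and your outline fills in exactly those steps:
\begin{itemize}
\item apply Theorem \ref{1p2} on the lower triangle, where $T_1=T_2=0$ by Theorem \ref{thmalp}, so no block vectors are needed;
\item run the argument of Lemma \ref{per=} on the transposed upper triangle, where unboundedness of the triangular projection for $0<p\le1$ (Example \ref{exmple}) forces the second $a$-part to vanish;
\item carry out the subsequence re-indexing from the proof of Theorem \ref{thmc};
\item obtain the constant $4^{\frac1p-1}\|T\|$ from $p$-subadditivity together with the two-block estimate \eqref{diag-proj}.
\end{itemize}
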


\begin{remark}
    Compared to Theorem \ref{thmc},  Theorem \ref{p=1} allows us to  select a subsequence $\{\xi_{i_{2k+1}}\}_{k=1}^\infty$ of $\{\xi_i\}_{i=1}^\infty$ and a subsequence $\{\eta_{i_{2k}}\}_{k=1}^\infty$ of $\{\eta_i\}_{i=1}^\infty$ such that  $R_1+R_2$ is a perturbation of $T|_{[e_{\xi_{i_{2k+1}},\eta_{i_{2l}}}]_{k,l=1}^\infty}$, whereas Theorem \ref{thmc} only provides   new sequences.
\end{remark}

Arguing mutatis mutandis as in the proof of Theorem \ref{pneq2}, we obtain the following theorem.

\begin{theorem}\label{p=1'}
Let $0<p\leq1$. 
Suppose that $T\in\mathcal B\big([e_{\xi_i,\eta_j}]_{i,j=1}^\infty,\mathcal C_p\big)$ with $p_{_{[\bigcup_{i=1}^\infty K_i]}}T(\cdot)p_{_{[\bigcup_{i=1}^\infty L_i]}}=T$. Given a positive number $\varepsilon$. Then there is an increasing sequence $\{i'_k\}_{k=1}^\infty$ of positive integers,  a sequence
\[
\{a_{k,l}\}_{k,l=1}^\infty\stackrel{\mbox{{\rm\tiny b.s}}}{\boxplus}\big\{[\xi_i]_{i'_{2k+1}\leq i<i'_{2k+2}}\big\}_{k=1}^\infty\otimes \big\{[\eta_i]_{i'_{2l}\leq i<i'_{2l+1}}\big\}_{l=1}^\infty\curvearrowleft a
\]
with $\Vert a\Vert_p=1$, and an operator $R\in\mathcal B\big([a_{k,l}]_{k,l=1}^\infty,\mathcal C_p\big)$ having the form
\[
R(a_{k,l})=x_{2k+1,2l}+y_{2l,2k+1}\;\;\;\;\;\;\;\;\mbox{for every}\;k,l\geq1,
\]
where
\[
\{x_{2k+1,2l}\}_{k,l=1}^\infty\stackrel{\mbox{{\rm\tiny b.s}}}{\boxplus}\{K_{A_{2k+1}}\}_{k=1}^\infty\otimes\{L_{A_{2l}}\}_{l=1}^\infty\curvearrowleft\;,
\]
\[
\{y_{2l,2k+1}\}_{k,l=1}^\infty\stackrel{\mbox{{\rm\tiny b.s}}}{\boxplus}\{K_{A_{2l}}\}_{l=1}^\infty\otimes\{L_{A_{2k+1}}\}_{k=1}^\infty\curvearrowleft\;,
\]
and $\{A_k\}_{k=1}^\infty$ is a sequence of mutually disjoint subsets of\;$\mathbb N$, such that $\big\Vert T|_{[a_{k,l}]_{k,l=1}^\infty}-R\big\Vert\leq\varepsilon$.
\end{theorem}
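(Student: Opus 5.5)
We will follow the proof of Theorem \ref{pneq2}, starting from Theorem \ref{p=1} instead of Theorem \ref{1p2'}. Fix $\varepsilon>0$ and choose the sequence $\{\varepsilon_s\}_{s=1}^\infty$ so small that, by Lemma \ref{sdper} (with $r=p$), the perturbation in Theorem \ref{p=1} gives
\[
\big\Vert T|_{[e_{\xi_{i_{2k+1}},\eta_{i_{2l}}}]_{k,l=1}^\infty}-(R_1+R_2)\big\Vert\leq\varepsilon/2^{1/p}.
\]
Theorem \ref{p=1} supplies $R_1$, of ``$\ell_2$-type'', with $x_{(k,l)}, y_{(l,k)}$ generated by $\hat b,\tilde b,\hat b',\tilde b'$ on mutually orthogonal $F_{(k,l)}$, $G_{(l,k)}$, and $R_2$, of ``$\mathcal C_p$-type''. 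Since it works on the whole square $[e_{\xi_{i_{2k+1}},\eta_{i_{2l}}}]_{k,l}$ rather than only the triangle, the final estimate will hold on all of $[a_{k,l}]_{k,l}$.

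We then kill $R_1$ by averaging. Fix a large $N$ and, after relabelling $\xi'_k:=\xi_{i_{2k+1}}$, $\eta'_l:=\eta_{i_{2l}}$, set
\[
a_{k,l}=N^{-1/p}\sum_{m=1}^N e_{\xi'_{Nk+m},\eta'_{Nl+m}}.
\]
By Theorem \ref{3a'} these form a block sequence generated by $a=N^{-1/p}\sum_{m=1}^N(\cdot|e_m)e_m$, so $\Vert a\Vert_p=1$ and $\{a_{k,l}\}\simeq\{e_{\xi_k,\eta_l}\}$. Relabelling the index sequence $i'_k$ places these supports inside the required intervals $[\xi_i]_{i'_{2k+1}\le i<i'_{2k+2}}$ and $[\eta_i]_{i'_{2l}\le i<i'_{2l+1}}$.

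For $\sum\alpha_{k,l}a_{k,l}$ we split $R_1$ into its lower ($l\le k$) and upper ($k<l$) parts and use $p$-subadditivity. Each part, for example $\sum_{k,l}\sum_m\alpha_{k,l}x_{(Nk+m,Nl+m)}$, is generated by a single operator on orthogonal blocks, so its norm equals $\Vert\hat b\Vert_p\,(N\sum|\alpha_{k,l}|^2)^{1/2}$. Using the bound $\Vert\hat b\Vert_p,\dots\le 4^{1/p-1}\Vert T\Vert$, we obtain
\[
\Big\Vert R_1\Big(\sum\alpha_{k,l}a_{k,l}\Big)\Big\Vert_p\le C_p\Vert T\Vert\, N^{1/2-1/p}\,\Vert(\alpha_{k,l})\Vert_2\le C_p\Vert T\Vert\, N^{1/2-1/p}\,\Big\Vert\sum\alpha_{k,l}a_{k,l}\Big\Vert_p ,
\]
where the last step uses $\Vert\cdot\Vert_2\le\Vert\cdot\Vert_p$. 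Since $p\le1<2$, we have $N^{1/2-1/p}\to0$, so choosing $N$ large makes $\Vert R_1|_{[a_{k,l}]}\Vert\le\varepsilon/2^{1/p}$.

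Finally put $R=R_2|_{[a_{k,l}]}$. We have $R(a_{k,l})=N^{-1/p}\sum_m(x_{2(Nk+m)+1,2(Nl+m)}+y_{\cdots})$, and by Theorem \ref{3a'} (with the blocks $A$ regrouped into unions over $m$) this again has the form $x_{2k+1,2l}+y_{2l,2k+1}$ with the stated block-sequence structure. The quasi-triangle inequality then gives $\Vert T|_{[a_{k,l}]}-R\Vert\le\varepsilon$.

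The main obstacle is the $R_1$ estimate. In Theorem \ref{pneq2} only the lower-triangular $x_{(k,l)}$ occur, whereas here the upper part carries different generators and supports, and $P$-type diagonal projections are unbounded for $p<1$. We therefore must verify the norm identity purely through the consistent block-sequence representation, via Remark \ref{rem}(2), treating the four families separately and combining them by $p$-subadditivity.
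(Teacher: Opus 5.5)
Your argument is essentially the paper's. Theorem~\ref{p=1'} is obtained by running the proof of Theorem~\ref{pneq2} from Theorem~\ref{p=1}: the $N^{-1/p}$-averaging kills the $\ell_2$-type part $R_1$, whose upper-triangular families you correctly handle through the common-isometry representation of Remark~\ref{rem}(2) and $p$-subadditivity. One then takes $R=R_2|_{[a_{k,l}]}$, regrouped via Theorem~\ref{3a'}.

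There is one bookkeeping slip, and the same choice of $a_{k,l}$ appears in the paper's proof of Theorem~\ref{pneq2}. For $N\ge2$, the $\eta$-indices $i_{2(Nk+m)}$ and the $\xi$-indices $i_{2(Nk+m)+1}$ ($1\le m\le N$) of the $k$-th blocks interleave. So no relabelling can place them in the alternating intervals $[i'_{2k},i'_{2k+1})$ and $[i'_{2k+1},i'_{2k+2})$. Taking instead $a_{k,l}=N^{-1/p}\sum_{m=1}^N e_{\xi_{i_{2(2Nk+N+m)+1}},\eta_{i_{2(2Nl+m)}}}$ separates the index ranges and leaves every estimate in your argument unchanged.
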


\subsubsection{$\mathcal{C}_p$-strictly singular operators}

\begin{theorem}\label{c1s}
Let $0<p\leq1$, and $T\in\mathcal B(\mathcal C_p)$. The following are equivalent:
\begin{itemize}
\item [(1)]
$T$ is $\mathcal C_p$-strictly singular.
\item [(2)]
For every subspace $X$ of $\mathcal C_p$ with $X\approx\mathcal C_p$ and for any $\varepsilon>0$, there exists a subspace $Y\subset X$ such that $Y\approx \mathcal C_p$ and $\Vert T|_Y\Vert<\varepsilon$.
\item [(3)]
$T$ is $\mathcal T_p$-strictly singular.
\end{itemize}
\end{theorem}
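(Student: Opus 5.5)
We will prove the cycle (1)$\Rightarrow$(2)$\Rightarrow$(3)$\Rightarrow$(1), with (3)$\Leftrightarrow$(1) essentially supplied by the earlier work.

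\emph{(3)$\Leftrightarrow$(1).} Since $\ell_p$ ($0<p\le1$) contains neither $c_0$ nor $\ell_2$, Theorem~\ref{cs=ts} applies directly to $E=\ell_p$ and gives (1)$\Leftrightarrow$(3). In particular (2)$\Rightarrow$(1) suffices to close the cycle. This implication is immediate: if $T|_X$ were an isomorphic embedding for some $X\approx\mathcal C_p$ with $\|T x\|_p\ge c\|x\|_p$, then every $Y\subset X$ with $Y\approx\mathcal C_p$ satisfies $\|T|_Y\|\ge c$. Taking $\varepsilon<c$ contradicts (2).

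\emph{(1)$\Rightarrow$(2).} This is the main step. Fix $X\approx\mathcal C_p$ and $\varepsilon>0$, and let $J:[e_{\xi_i,\eta_j}]_{i,j=1}^\infty\to X$ be an isomorphism onto $X$. Choose finite-dimensional blocks $\{K_i\},\{L_i\}$ exhausting $H$ and apply Theorem~\ref{p=1'} to the operator $TJ$ with a small tolerance $\delta$. This produces a block sequence $\{a_{k,l}\}$ generated by some $a$ with $\|a\|_p=1$ and an operator $R$ satisfying $R(a_{k,l})=x_{2k+1,2l}+y_{2l,2k+1}$ and $\|TJ|_{[a_{k,l}]}-R\|\le\delta$.

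By Remark~\ref{rem}, $[a_{k,l}]\approx\mathcal C_p$ isometrically. As in Remark~\ref{rem-t4} and the proof of Lemma~\ref{t4}, $\{R(a_{k,l})\}$ is isometrically equivalent to $\{\left(\begin{smallmatrix}0&\tilde a\\ \hat a&0\end{smallmatrix}\right)\otimes e_{k,l}\}$. There are two cases.

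\emph{Case $(\hat a,\tilde a)\ne0$.} Here $R$ is an isomorphism with lower bound $\max(\|\hat a\|_\infty,\|\tilde a\|_\infty)>0$. We need this bound to be independent of $\delta$, since then $TJ$ restricted to $[a_{k,l}]$ is an embedding and $T|_{J[a_{k,l}]}$ is an embedding on a copy of $\mathcal C_p$, contradicting (1). This uniformity is the obstacle. We plan to handle it by first fixing a single application of Theorem~\ref{p=1'} with $\delta$ small relative to the quantity $\|\left(\begin{smallmatrix}0&\tilde a\\ \hat a&0\end{smallmatrix}\right)\|_p$ obtained there. If that quantity is too small compared with $\delta$, we rerun the construction inside $[a_{k,l}]$ with a tolerance adapted to it. Alternatively, we can compare $\|R(z)\|_p=\|\left(\begin{smallmatrix}0&\tilde a\\ \hat a&0\end{smallmatrix}\right)\|_p\,\|z\|_p$ in the $p$-norm. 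This identity holds for $\mathcal C_p$ because the singular values of a tensor product multiply, so $R$ is a scalar multiple of an isometry. The dichotomy is then: either this scalar is $\ge 2\delta$, in which case $TJ$ embeds and we contradict (1); or it is $<2\delta$.

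\emph{Case scalar $<2\delta$.} Then $\|TJ|_{[a_{k,l}]}\|\lesssim\delta$ (up to the concavity constant $2^{1/p-1}$). Setting $Y=J([a_{k,l}])\subset X$, we have $Y\approx\mathcal C_p$ and $\|T|_Y\|\le\|TJ|_{[a_{k,l}]}\|\,\|J^{-1}\|\lesssim\delta\|J^{-1}\|<\varepsilon$, provided $\delta$ is chosen small relative to $\varepsilon/\|J^{-1}\|$.

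Thus the only delicate point is the isometric scaling of $R$, which makes the dichotomy clean. It follows from the $\mathcal C_p$ tensor identity in Remark~\ref{rem}(2).
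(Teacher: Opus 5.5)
Your proposal is correct and follows the paper's proof. You apply Theorem~\ref{p=1'} to $TJ$, note that $R$ is a scalar $\beta$ times an isometry on $Z=[a_{k,l}]\cong\mathcal C_p$, use $\mathcal C_p$-strict singularity of $T$ to force $\beta$ to be small relative to the fixed tolerance, take $Y=J(Z)$, and get (1)$\Leftrightarrow$(3) from Theorem~\ref{cs=ts}. The ``uniformity'' worry and the idea of rerunning the construction are unnecessary: the tolerance is fixed before Theorem~\ref{p=1'} is invoked, so the dichotomy on $\beta$ settles everything, exactly as in the paper (with $p$-subadditivity of $\|\cdot\|_p$ giving the lower bound when $p<1$).
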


\begin{proof}
For convenience, we only prove the case $p=1$. The case for $p<1$ follows from a similar argument.

(2)$\Longrightarrow$(1): This follows directly from the definition of a $\mathcal C_1$-strictly singular operator.

(1)$\Longrightarrow$(2): Suppose that $T$ is a strictly singular operator on $\mathcal C_1$. Consider any subspace $X\subset\mathcal C_1$ with $X\approx\mathcal C_1$ and  any $\varepsilon>0$. Let $S: \mathcal C_1\rightarrow X$ be a linear isomorphism.
By Theorem \ref{p=1'}, there is a subspace $Z$ of $\mathcal C_1$ with $Z\cong\mathcal C_1$, and an operator $R\in\mathcal B(Z,\mathcal C_1)$ such that there exists a constant  $\beta\ge 0$ such that
\[
\Vert R(z)\Vert_1=\beta\Vert z\Vert_1\;\;\;\;\;\;{\rm for\;all\;}z\in Z,
\]
and $\Vert TS|_Z-R\Vert<\varepsilon/(2\Vert S^{-1}\Vert)$. Hence, for any $z\in Z$, we have 
\[
\Vert TS(z)\Vert _1\geq\Vert R(z)\Vert_1 -\Vert(TS-R)(z)\Vert_1 \geq\big(\beta-\varepsilon/(2\Vert S^{-1}\Vert)\big)\Vert z\Vert_1.
\]
Since $T$ is $\mathcal C_1$-strictly singular, it follows that  $\beta\leq\varepsilon/(2\Vert S^{-1}\Vert)$. Setting $Y=S(Z)$, we have 
\[
\Vert T|_Y\Vert\leq\Vert TS|_Z\Vert\cdot\Vert S^{-1}\Vert\leq\big(\Vert TS|_Z-R\Vert+\Vert R\Vert\big)\Vert S^{-1}\Vert<\big(\varepsilon/(2\Vert S^{-1}\Vert)+\beta\big)\Vert S^{-1}\Vert<\varepsilon.
\]

The equivalence   (1)$\iff$(3) follows from  Theorem \ref{cs=ts}.
\end{proof}

\begin{remark}\label{c1srema}
In Theorem \ref{c1s} (2), we do not claim that the restriction  $T|_Y$ of $T$  is compact, even if
  $T$ is  $\mathcal C_1$-strictly singular  on $\mathcal C_1$ such that $T|_{Z}$ is an  isomorphic embedding for every subspace $Z$ of $\mathcal C_1$ with $Z\approx\ell_2$ (note that such a $Y$ necessarily contains a  subspace  isomorphic to $\ell_2$). For example, let 
$\{\xi_i\}_{i=1}^\infty$ be an orthonormal basis of $H$, and let $T\in\mathcal B(\mathcal C_1)$ be such that 
 \[
Te_{\xi_{i},\xi_{j}}=e_{\xi_{2^i(2j+1)},\xi_{1}}\;\;\;\;\;\;{\rm for\;every\;}1\leq i,j<\infty.
 \]
Note that $[e_{\xi_{2^i(2j+1)},\xi_{1}}]_{i,j\ge 1}$ is isomorphic to $\ell_2$ and $T$ is a $\cC_1$-strictly singular. For any subspace $Z$ of $\mathcal C_1$ with $Z\approx\ell_2$, by  \cite[Proposition 2.2 (i)]{Arazy2} (or Lemma \ref{l2cp}), there exists a positive integer $N$ such that $(I-P_{[\xi_i]_{i=N}^\infty,[\xi_i]_{i=N}^\infty})|_Z$ is an isomorphic embedding. Note that $T$ is an  isomorphic embedding  on im$(I-P_{[\xi_i]_{i=N}^\infty,[\xi_i]_{i=N}^\infty})=[e_{\xi_i,\xi_j}]_{\min\{i,j\}\leq N}$, and
\begin{align*}
&\big\Vert T(I-P_{[\xi_i]_{i=N}^\infty,[\xi_i]_{i=N}^\infty})(z)\big\Vert_1\\
\leq&\left(\big\Vert TP_{[\xi_i]_{i=N}^\infty,[\xi_i]_{i=N}^\infty}(z)\big\Vert_1^2+\big\Vert T(I-P_{[\xi_i]_{i=N}^\infty,[\xi_i]_{i=N}^\infty})(z)\big\Vert_1^2\right)^{1/2}=\left\Vert Tz\right\Vert_1
\end{align*}
for every $z\in Z$. Therefore, $T|_Z$ is an isomorphic embedding (therefore, $T|_Y$ is  not compact for any $Y\subset X$ with $Y\approx \cC_1$). 
\end{remark}

\begin{theorem}\label{tps}
Let $0<p<\infty$. Then $T\in\mathcal B(\mathcal T_p)$ is $\mathcal T_p$-strictly singular if and only if for every subspace $X$ of $\mathcal T_p$ with $X\approx\mathcal T_p$ and for any $\varepsilon>0$, there exists a subspace $Y\subset X$ such that $Y\approx \mathcal T_p$ and $\Vert T|_Y\Vert<\varepsilon$.
\end{theorem}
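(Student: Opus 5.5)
The easy direction is immediate: if the stated property holds, then for every $X\subset\mathcal T_p$ with $X\approx\mathcal T_p$ we obtain $Y\subset X$, $Y\approx\mathcal T_p$, with $\Vert T|_Y\Vert<\varepsilon$. If $T|_X$ were an isomorphic embedding with $\Vert Tx\Vert\ge c\Vert x\Vert$, taking $\varepsilon<c$ gives a contradiction. So $T$ is $\mathcal T_p$-strictly singular.

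For the converse I will mimic the proof of (1)$\Rightarrow$(2) in Theorem \ref{c1s}, with $\mathcal T_p$ in place of $\mathcal C_p$. Fix $X\subset\mathcal T_p$ with $X\approx\mathcal T_p$ and $\varepsilon>0$, and let $S:\mathcal T_{p,\{\xi_i,\eta_i\}_{i=1}^\infty}\to X$ be an isomorphism. Consider $TS\in\mathcal B(\mathcal T_{p,\{\xi_i,\eta_i\}},\mathcal T_{p,\{\xi_i,\eta_i\}})\subset\mathcal B(\mathcal T_{p,\{\xi_i,\eta_i\}},\mathcal C_p)$, and take $K_i=[\xi_i]$, $L_i=[\eta_i]$. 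By Remark \ref{remtptotp} the ``$y$'' parts vanish.

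\begin{itemize}
\item For $p\neq2$, Theorem \ref{pneq2} (with the $y$'s zero) yields a block sequence $\{a_{k,l}\}$ generated by some $a$ with $\Vert a\Vert_p=1$, so that $\{a_{k,l}\}_{k,l}\simeq\{e_{\xi_k,\eta_l}\}$. Put $Z=[a_{k,l}]_{1\le l\le k}$, which is isometric to $\mathcal T_p$ and lies in the domain. The same theorem gives $\tilde R$ with $\tilde R(a_{k,l})=x_{2k+1,2l}$, where $\{x_{2k+1,2l}\}$ is a block sequence generated by a single operator $\hat a$. By Remark \ref{rem}(2), $\Vert\tilde R z\Vert_p=\Vert\hat a\Vert_p\Vert z\Vert_p=:\beta\Vert z\Vert_p$ on $Z$, and $\Vert TS|_Z-\tilde R|_Z\Vert\le\varepsilon/(2\Vert S^{-1}\Vert)$.
\item For $p=2$, $\mathcal T_2\cong\ell_2$. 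Here $\mathcal T_2$-strict singularity is ordinary strict singularity on a Hilbert space, which equals compactness. So I instead pass to a subspace of $X$ of finite codimension on which $T$ has norm $<\varepsilon$; such a subspace is still isomorphic to $\ell_2$.
\end{itemize}

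In the case $p\neq2$, we have $\Vert TSz\Vert\ge(\beta-\varepsilon/(2\Vert S^{-1}\Vert))\Vert z\Vert$ on $Z$. Then $S(Z)\approx\mathcal T_p$ is a subspace of $X$, and $T$ restricted to it would be an embedding unless $\beta\le\varepsilon/(2\Vert S^{-1}\Vert)$. By $\mathcal T_p$-strict singularity this inequality holds. Setting $Y=S(Z)$ then gives $\Vert T|_Y\Vert\le(\Vert TS|_Z-\tilde R|_Z\Vert+\beta)\Vert S^{-1}\Vert<\varepsilon$, using the quasi-triangle constant (absorbed into the choice of tolerances when $p<1$).

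The main obstacle is making sure that the isometric identity $\Vert\tilde Rz\Vert=\beta\Vert z\Vert$ holds on the triangular part. This requires two facts. First, the $\tilde R_1$ (i.e. $\ell_2$-type) contribution must have been absorbed into the perturbation: for $p<2$ this is done by the averaging in Theorem \ref{pneq2}, and for $p>2$ by Theorem \ref{thmalp}. Second, the range must stay inside $\mathcal T_p$, so that the $y$-parts genuinely vanish. This is exactly what Remark \ref{remtptotp} provides when $K_i=[\xi_i]$ and $L_i=[\eta_i]$.
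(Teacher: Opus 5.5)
Your proposal is correct and follows the paper's proof. For $p\neq 2$ it is the argument of Theorem \ref{c1s} with Theorem \ref{pneq2} in place of Theorem \ref{p=1'}, and for $p=2$ it is the elementary Hilbert-space fact, which the paper phrases via block sequences and you phrase via compactness. One inaccuracy: the $y$-parts need not vanish for $\Vert\tilde Rz\Vert_p=\beta\Vert z\Vert_p$, since Proposition \ref{ie} gives this identity anyway with $\beta=\bigl\Vert\left(\begin{smallmatrix}0&\tilde a\\ \hat a&0\end{smallmatrix}\right)\bigr\Vert_p$. So invoking Remark \ref{remtptotp} is harmless but not needed.
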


\begin{proof}
If $p=2$, then $\mathcal T_2\cong\ell_2$. The statement follows from  the fact that the normalized block basic sequence in $\ell_2$ is isometrically isomorphic to the unit basis of $\ell_2$.

If $p\neq2$, then the equivalence is proved by   arguing mutatis mutandis as in the proof of  the equivalence  $(1)\iff(2)$ in Theorem~\ref{c1s}, using  replacing Theorem~\ref{p=1'} in place of  Theorem~\ref{pneq2}.
\end{proof}

The fact that   $\mathcal C_p\approx\mathcal T_p$ for any $1<p<\infty$ together with Theorem \ref{tps} and Theorem \ref{c1s} yields   the
following corollary.
\begin{corollary}\label{cps0<p<}
Let $0<p<\infty$. Then $T\in\mathcal B(\mathcal C_p)$ is $\mathcal C_p$-strictly singular if and only if for every subspace $X$ of $\mathcal C_p$ with $X\approx\mathcal C_p$ and for any $\varepsilon>0$, there exists a subspace $Y\subset X$ such that $Y\approx \mathcal C_p$ and $\Vert T|_Y\Vert<\varepsilon$.    
\end{corollary}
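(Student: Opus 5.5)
The plan is to split into the two ranges $0<p\le1$ and $1<p<\infty$ and reduce each to a result already proved. The direction ``small restrictions $\Rightarrow$ strictly singular'' is immediate in all cases. Suppose $T$ were not $\mathcal C_p$-strictly singular. Then there is $X\approx\mathcal C_p$ with $\Vert Tx\Vert\ge c\Vert x\Vert$ on $X$. Taking $\varepsilon<c$, any $Y\subset X$ with $Y\approx\mathcal C_p$ still satisfies $\Vert T|_Y\Vert\ge c>\varepsilon$, which is a contradiction. So only the forward implication needs work.

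\textbf{Case $0<p\le1$.} Here the statement is exactly the equivalence (1)$\iff$(2) of Theorem \ref{c1s}, so there is nothing new to prove.

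\textbf{Case $p=2$.} Now $\mathcal C_2\cong\ell_2$. A strictly singular $T$ on $\ell_2$ is compact, and every infinite-dimensional subspace $X$ of $\ell_2$ contains a block-type subspace $Y\approx\ell_2$ on which $\Vert T|_Y\Vert<\varepsilon$. One obtains $Y$ by choosing normalized vectors of $X$ that are almost orthogonal to the finite-dimensional span capturing most of $T$. This is the same remark used in the proof of Theorem \ref{tps}.

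\textbf{Case $1<p<\infty$, $p\neq2$.} Use the isomorphism $J:\mathcal C_p\to\mathcal T_p$ of Arazy (Theorem \ref{tria}; here the triangular projection is bounded). Set $\tilde T=JTJ^{-1}\in\mathcal B(\mathcal T_p)$. Strict singularity transfers: if $\tilde T$ were an isomorphism on some $W\approx\mathcal T_p$, then $T$ would be an isomorphism on $J^{-1}W\approx\mathcal T_p\approx\mathcal C_p$. Hence $\tilde T$ is $\mathcal T_p$-strictly singular.

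Now take $X\subset\mathcal C_p$ with $X\approx\mathcal C_p$. Then $JX\subset\mathcal T_p$ and $JX\approx\mathcal T_p$. By Theorem \ref{tps}, for $\varepsilon'=\varepsilon/(\Vert J\Vert\Vert J^{-1}\Vert)$ there is $Y'\subset JX$ with $Y'\approx\mathcal T_p$ and $\Vert\tilde T|_{Y'}\Vert<\varepsilon'$. Put $Y=J^{-1}Y'\subset X$. Then $Y\approx\mathcal C_p$, and
\[
\Vert T|_Y\Vert=\Vert J^{-1}\tilde T J|_Y\Vert\le\Vert J^{-1}\Vert\,\varepsilon'\,\Vert J\Vert=\varepsilon .
\]

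The only point needing care is this bookkeeping of isomorphism constants, together with checking that strict singularity is invariant under conjugation by $J$. Both are routine, so the corollary should follow directly from Theorems \ref{c1s} and \ref{tps}.
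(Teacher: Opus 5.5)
Your proposal is correct and follows the paper's own route. For $0<p\le1$ the statement is exactly (1)$\iff$(2) of Theorem \ref{c1s}, and for $1<p<\infty$ it is transferred from Theorem \ref{tps} through an isomorphism $\mathcal C_p\approx\mathcal T_p$; your separate treatment of $p=2$ is harmless, since Theorem \ref{tps} already covers that case. One cosmetic fix: keep the strict inequality $\Vert\tilde T|_{Y'}\Vert<\varepsilon'$ in your final estimate, so that you obtain $\Vert T|_Y\Vert<\varepsilon$ rather than $\le\varepsilon$.
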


\begin{remark}\label{add-opincp}
Let $B$ be a quasi-Banach space. An immediate consequence of Theorems \ref{c1s} and \ref{tps} is that the set of  all $\mathcal C_p$-strictly singular operators in $\cB(B,\mathcal C_p)$, $0<p\leq1$, is closed under addition.
In particular,   all $\mathcal C_p$-strictly singular operators on $\mathcal C_p$ are  closed under addition and therefore,  form an ideal of $\mathcal B(\mathcal C_p)$;  and the same holds for $\mathcal T_p$, $0< p<\infty$. More general results are given in  Section \ref{Sec:largest proper ideal}.
\end{remark}

\subsection{$\mathcal{C}_1$ is $1^+$-complementably homogeneous}\label{C1homogeneous}

The following property is a widely studied in the Banach spaces theory, here we adopt the terminology from  \cite{Chen Johnson Bentuo Zheng}.

\begin{definition}\label{ch}\cite{Chen Johnson Bentuo Zheng}
A (quasi-)Banach space $X$ is said to be \emph{complementably homogeneous} if every subspace of $X$ isomorphic to $X$ must contain a smaller subspace  isomorphic to $X$ and is complemented in $X$.
\end{definition}

\begin{definition}\label{uch}
A (quasi-)Banach space $X$ is said to be \emph{$\lambda$-complementably homogeneous}, where $1\leq\lambda<\infty$, if every subspace of $X$ isomorphic to $X$ must contain a smaller subspace that is $\lambda$-isomorphic to $X$ and is $\lambda$-complemented in $X$.
\end{definition}

\begin{definition}\label{+ch}
A (quasi-)Banach space $X$ is said to be \emph{$\lambda^+$-complementably homogeneous}, where $1\leq\lambda<\infty$, if $X$ is $k$-complementably homogeneous for every constant $k>\lambda$.
\end{definition}

Arazy \cite{Arazy2} proved that $\mathcal T_p$, $1\leq p<\infty$, is $2^+$-complementably homogeneous, and it is shown in  \cite[Theorem 5.4]{Arazy4} (see also \cite{Arazy83b}) that $\mathcal C_1$ is complementably homogeneous. Below, we show that  they are $1^+$-complementably homogeneous.

\begin{theorem}\label{c1ch}
$\mathcal C_1$ is $1^+$-complementably homogeneous.
\end{theorem}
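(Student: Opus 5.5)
Fix $\lambda>1$ and a subspace $X\subset\mathcal C_1$ with an isomorphism $S:\mathcal C_1\to X$. The plan is to find $Y\subset X$ that is $\lambda$-isomorphic to $\mathcal C_1$ and $\lambda$-complemented in $\mathcal C_1$. We will do this by combining the local representation of Theorem \ref{p=1'} with the $1$-complementation of Proposition \ref{cpc}, and then correcting the small error with Lemma \ref{comp}.

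\textbf{Step 1 (reduction to an exact block sequence).} Apply Theorem \ref{p=1'} to $S$ restricted to $[e_{\xi_i,\eta_j}]_{i,j=1}^\infty$, with a small $\varepsilon>0$ to be chosen. Take $K_i,L_i$ finite-dimensional with $[\bigcup K_i]=[\bigcup L_i]=H$. This gives a block sequence $\{a_{k,l}\}$ generated by some $a$ with $\Vert a\Vert_1=1$, so $\{a_{k,l}\}\simeq\{e_{\xi_k,\eta_l}\}$ and $Z:=[a_{k,l}]$ is isometric to $\mathcal C_1$. It also gives an operator $R$ on $Z$ with
\[
R(a_{k,l})=x_{2k+1,2l}+y_{2l,2k+1},\qquad \Vert S|_Z-R\Vert\le\varepsilon .
\]
Here the $x$'s and the $y$'s are block sequences generated by operators $\hat a$ and $\tilde a$ respectively, associated with the disjoint families $\{K_{A_{2k+1}}\}\otimes\{L_{A_{2l}}\}$ and $\{K_{A_{2l}}\}\otimes\{L_{A_{2k+1}}\}$. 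Put $\beta:=\left\Vert\left(\begin{smallmatrix}0&\tilde a\\ \hat a&0\end{smallmatrix}\right)\right\Vert_1$. By Proposition \ref{ie} (or Proposition \ref{cpc}), $\beta^{-1}R$ is an isometry from $Z$ onto $W:=R(Z)$, and Proposition \ref{cpc} shows that $W$ is $1$-complemented in $\mathcal C_1$.

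\textbf{Step 2 (lower bound on $\beta$).} We need $\beta$ bounded below independently of $\varepsilon$. Since $S$ is an isomorphism,
\[
\beta\Vert z\Vert=\Vert Rz\Vert\ge\Vert Sz\Vert-\varepsilon\Vert z\Vert\ge(\Vert S^{-1}\Vert^{-1}-\varepsilon)\Vert z\Vert ,
\]
so $\beta\ge \Vert S^{-1}\Vert^{-1}/2$ once $\varepsilon$ is small. Consequently the operator $V:=\beta^{-1}S|_Z\circ(\beta^{-1}R)^{-1}:W\to\mathcal C_1$ satisfies
\[
\Vert V-\iota_{W\hookrightarrow\mathcal C_1}\Vert\le\beta^{-1}\varepsilon\le 2\Vert S^{-1}\Vert\varepsilon=:\delta .
\]

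\textbf{Step 3 (perturbation).} Apply Lemma \ref{comp} with $r=1$ and $\lambda=1$ to $V$. It shows that $Y:=V(W)=S(Z)\subset X$ is $(1+\delta)(1-\delta)^{-1}$-complemented in $\mathcal C_1$. Moreover $V$ is a $(1+\delta)/(1-\delta)$-isomorphism onto $Y$, and $W\cong\mathcal C_1$ isometrically. Hence $Y$ is $(1+\delta)/(1-\delta)$-isomorphic to $\mathcal C_1$. Choosing $\varepsilon$ so that $(1+\delta)/(1-\delta)<\lambda$ finishes the proof.

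The main obstacle is Step 1: making sure Theorem \ref{p=1'} really delivers the estimate in operator norm on the whole square $[a_{k,l}]_{k,l}$, and not just on the triangular part. For $p=1$ the triangular projection is unbounded, so a restriction to the triangle would not suffice. One also has to check that $W$ has exactly the disjoint block structure that Proposition \ref{cpc} requires, which relies on the families $\{K_{A_{2k+1}}\}$, $\{K_{A_{2l}}\}$ being mutually orthogonal. If the estimate were only available on the triangle, I would first pass through Theorem \ref{p=1} (the full-square version), killing the $R_1$-part by the averaging trick used in the proof of Theorem \ref{pneq2}, and only then apply the argument above.
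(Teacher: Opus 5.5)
Your proposal is correct and follows the paper's proof of Theorem \ref{c1ch} step for step. Theorem \ref{p=1'} already gives $\Vert S|_{[a_{k,l}]_{k,l=1}^\infty}-R\Vert\le\varepsilon$ on the whole square, so the fallback to Theorem \ref{p=1} is unnecessary; then, exactly as in the paper, $R$ is $\beta$ times an isometry onto the $1$-complemented copy of $\mathcal C_1$ from Proposition \ref{cpc}, $\beta\ge\Vert S^{-1}\Vert^{-1}-\varepsilon$, and Lemma \ref{comp} applied to $SR^{-1}$ (your $V$, the paper's $TR^{-1}$) finishes the argument.
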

\begin{proof}
Let $\mathcal C_1=[e_{\xi_i,\eta_j}]_{i,j=1}^\infty$, $K_i=[\xi_i]$ and $L_i=[\eta_i]$. Let $X$ be subspace of $\mathcal C_1$ with $X\approx\mathcal C_1$, and hence there is an isomorphic embedding $T\in\mathcal B([e_{\xi_i,\eta_j}]_{i,j=1}^\infty)$ with im$(T)=X$. There is a positive number $c$ such that $\Vert T(y)\Vert_1\geq c\Vert y\Vert_1$ for any $y\in [e_{\xi_i,\eta_j}]_{i,j=1}^\infty$. 

Let $\delta$ $(<c)$ be a  positive number which is small enough. By Theorem~\ref{p=1'}, there exist a sequence $\{a_{k,l}\}_{k,l=1}^\infty$,  which is generated by an operator of norm $1$, and an operator $R\in\mathcal B\big([a_{k,l}]_{k,l=1}^\infty,\mathcal C_1\big)$ satisfying:
\begin{itemize}
\item [(1)]
There is a sequence $\{A_k\}_{k=1}^\infty$ of mutually disjoint subsets of\;$\mathbb N$, and a sequence
\[
\{x_{2k+1,2l}\}_{k,l=1}^\infty\stackrel{\mbox{\tiny b.s}}{\boxplus}\{K_{A_{2k+1}}\}_{k=1}^\infty\otimes\{L_{A_{2l}}\}_{l=1}^\infty\curvearrowleft,
\]
\[
\{y_{2l,2k+1}\}_{k,l=1}^\infty\stackrel{\mbox{{\rm\tiny b.s}}}{\boxplus}\{K_{A_{2l}}\}_{l=1}^\infty\otimes\{L_{A_{2k+1}}\}_{k=1}^\infty\curvearrowleft\;,
\]
such that
\[
R(a_{k,l})=x_{2k+1,2l}+y_{2l,2k+1}\;\;\;\;{\rm for\;every}\;k,l\geq1.
\]
\item [(2)]
$\big\Vert T|_{[a_{k,l}]_{k,l=1}^\infty}-R\big\Vert\leq\delta$.
\end{itemize}
Obviously, $[x_{2k+1,2l}+y_{2l,2k+1}]_{k,l=1}^\infty\cong\mathcal C_1$. By Theorem \ref{cpc}, $[x_{2k+1,2l}+y_{2l,2k+1}]_{k,l=1}^\infty$ is $1$-complemented in $[e_{\xi_i,\eta_j}]_{i,j=1}^\infty$.
On the other hand, there exists  a non-negative number $\beta$ such that
\[
\Vert R(y)\Vert_1=\beta\Vert y\Vert_1\;\;\;\;\;\;\;\;{\rm for\;every\;}y\in[a_{k,l}]_{k,l=1}^\infty.
\]
Then,  $\beta\Vert y\Vert_1\geq\Vert T(y)\Vert_1-\Vert (R-T)(y)\Vert_1\geq(c-\delta)\Vert y\Vert_1$. Hence,  $\beta\geq c-\delta$. Note that $TR^{-1}$ is an isomorphic embedding and
\begin{align*}
\left\Vert TR^{-1}-\iota_{[x_{2k+1,2l}+y_{2l,2k+1}]_{k,l=1}^\infty\hookrightarrow [e_{\xi_i,\eta_j}]_{i,j=1}^\infty}\right\Vert&\leq\Vert T|_{[a_{k,l}]_{k,l=1}^\infty}-R\Vert\cdot\Vert R^{-1}\Vert\\
&\leq\delta(c-\delta)^{-1}<1.
\end{align*}
Therefore, $T([a_{k,l}]_{k,l=1}^\infty)$ is $c(c-2\delta)^{-1}$-isomorphic to $[x_{2k+1,2l}+y_{2l,2k+1}]_{k,l=1}^\infty\cong\mathcal C_1$.
Since $[x_{2k+1,2l}+y_{2l,2k+1}]_{k,l=1}^\infty$ is $1$-complement in $[e_{\xi_i,\eta_j}]_{i,j=1}^\infty$, 
it follows from  Lemma~\ref{comp} that $T([a_{k,l}]_{k,l=1}^\infty)$ is $c(c-2\delta)^{-1}$-complemented. 
Since  $\delta$ can be chosen arbitrarily, it follows  that $\mathcal C_1$ is $1^+$-complementably homogeneous.
\end{proof}

The following result improves a result by Arazy 
 \cite{Arazy2}, who showed that 
 lower triangular part $\mathcal{T}_p$  of  the Schatten $p$-class is $2^+$-complementably homogeneous.

\begin{theorem}\label{tpch}
Let $1\leq p<\infty$. Then, $\mathcal T_p$ is $1^+$-complementably homogeneous.
\end{theorem}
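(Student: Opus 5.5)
I would mimic the proof of Theorem \ref{c1ch}, working inside $\mathcal T_{p,\{\xi_i,\eta_i\}_{i=1}^\infty}$ with $K_i=[\xi_i]$, $L_i=[\eta_i]$. Let $X\subset\mathcal T_p$ with $X\approx\mathcal T_p$, and fix an isomorphic embedding $T\in\mathcal B(\mathcal T_{p,\{\xi_i,\eta_i\}_{i=1}^\infty})$ with $\operatorname{im}(T)=X$ and $\Vert T(y)\Vert_p\ge c\Vert y\Vert_p$. Since $\operatorname{im}(T)\subset\mathcal T_{p,\{\xi_i,\eta_i\}}$, Remark \ref{remtptotp} applies. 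I would therefore use Theorem \ref{pneq2} (for $p\ne2$), in its version from Theorems \ref{1p2'} and \ref{2<p'}, in which the ``$y$'' parts vanish. For $p=2$ the statement is trivial, since $\mathcal T_2\cong\ell_2$ and every subspace of a Hilbert space is $1$-complemented.

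For $p\neq2$ and small $\delta>0$, this yields a block sequence $\{a_{k,l}\}\stackrel{\mbox{\tiny b.s}}{\boxplus}\cdots\curvearrowleft a$ with $\Vert a\Vert_p=1$, and an operator $R$ with $R(a_{k,l})=x_{2k+1,2l}$ such that $\Vert T|_{[a_{k,l}]_{1\le l\le k}}-R|_{[a_{k,l}]_{1\le l\le k}}\Vert\le\delta$. Here $\{x_{2k+1,2l}\}$ is generated by a single operator $\hat a$ with respect to $\{K'_{2k+1}\}\otimes\{L'_{2l}\}$. Two isometry facts are needed. First, $[a_{k,l}]_{1\le l\le k}\cong\mathcal T_p$, because $\{a_{k,l}\}\simeq\{e_{\xi_k,\eta_l}\}$ by Remark \ref{rem}(2). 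Second, $R$ restricted to this triangle is $\beta$ times an isometry onto $[x_{2k+1,2l}]_{1\le l\le k}\cong\mathcal T_p$, with $\beta=\Vert\hat a\Vert_p$. Comparing with $T$ gives $\beta\ge c-\delta$.

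The key point, and in my view the main obstacle, is that $[x_{2k+1,2l}]_{1\le l\le k}$ must be $1$-complemented in $\mathcal T_p$, not only in $\mathcal C_p$. I would take the contractive projection $P$ of Proposition \ref{cpc} (with $y=0$), built from the Schmidt vectors $\xi_{2k+1,i}\in K'_{2k+1}$ and $\eta_{2l,i}\in L'_{2l}$, and restrict it to the triangle. I would then check that $P$ maps $\mathcal T_p$ into $[x_{2k+1,2l}]_{1\le l\le k}$. The coefficient of $x_{k,l}$ in $P(a)$ is $\sum_i\mu_i(a\eta_{2l,i}\mid\xi_{2k+1,i})$. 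For $a\in\mathcal T_p$ and $k<l$, the support $K'_{2k+1}$ lies in indices below those of $L'_{2l}$, because the blocks are ordered $\cdots<L'_{2l}<K'_{2l+1}<\cdots$; hence this coefficient vanishes, and only pairs with $l\le k$ survive. So $P$ restricted to $\mathcal T_p$ is a contractive projection onto the triangle. This requires the block ordering from Theorems \ref{1p2'} and \ref{2<p'}: there $K'_{2k+1}$ sits after $L'_{2k}$, and this ordering is exactly what must be verified.

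Finally, $\Vert TR^{-1}-\iota\Vert\le\delta/(c-\delta)$. By Lemma \ref{comp} (with $r=1$ and $\lambda=1$), $T([a_{k,l}]_{1\le l\le k})\subset X$ is then $(1+\varepsilon')/(1-\varepsilon')$-complemented in $\mathcal T_p$, where $\varepsilon'=\delta/(c-\delta)$. It is also $c(c-2\delta)^{-1}$-isomorphic to $\mathcal T_p$. Letting $\delta\to0$ gives $1^+$-complementable homogeneity.
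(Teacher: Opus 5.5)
Your proposal is correct and follows the paper's proof essentially step for step. Both arguments apply Theorem~\ref{pneq2} together with Remark~\ref{remtptotp}, so that the $y$-parts vanish, and both restrict the contractive projection of Proposition~\ref{cpc} to the lower triangle, then conclude via Lemma~\ref{comp} exactly as in Theorem~\ref{c1ch}. Your explicit check, using the block ordering, that the coefficients $\sum_i\mu_i(a\eta_{2l,i}|\xi_{2k+1,i})$ vanish for $k<l$ is precisely what the paper compresses into ``by the construction of $P$''.
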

\begin{proof}
If $p=2$, then $\mathcal T_2\cong\ell_2$, and the proof for this case is trivial. 

Assume that  $p\neq2$. 
Let $\{\xi_i\}_{i=1}^\infty$ and $\{\eta_i\}_{i=1}^\infty$ be two orthonormal bases of $H$, $K_i=[\xi_i]$ and $L_i=[\eta_i]$, and $\mathcal T_p=[e_{\xi_i,\eta_j}]_{1\leq j\leq i<\infty}$. Let $X$ be a subspace of $\mathcal T_p$ with $X\approx\mathcal T_p$. 
In particular, there is an isomorphic embedding $T\in\mathcal B([e_{\xi_i,\eta_j}]_{1\leq j\leq i<\infty},\mathcal C_p)$ with im$(T)=X$. There is a positive number $c$ such that $\Vert T(y)\Vert_p\geq c\Vert y\Vert_p$ for any $y\in [e_{\xi_i,\eta_j}]_{i,j=1}^\infty$.

Let $\delta$ be a  positive number chosen sufficiently small. By Remark \ref{remtptotp} and Theorem \ref{pneq2}, there exist a sequence $\{a_{k,l}\}_{k,l=1}^\infty$ which is generated by an operator of norm $1$, and an operator $\tilde{R}\in\mathcal B\big([a_{k,l}]_{k,l=1}^\infty,\mathcal C_p\big)$ satisfying:
\begin{itemize}
\item [(1)]
There exists an increasing sequence $\{n_k\}_{k=1}^\infty$ of positive integers, and a sequence
\begin{equation}\label{tpcbl}
\{x_{2k+1,2l}\}_{k,l=1}^\infty\stackrel{\mbox{{\rm\tiny b.s}}}{\boxplus}\{K'_{2k+1}\}_{k=1}^\infty\otimes\{L'_{2l}\}_{l=1}^\infty\curvearrowleft\;
\end{equation}
such that
\[
\tilde{R}(a_{k,l})=x_{2k+1,2l}+y_{2l,2k+1}\;\;\;\;\;\;\;\;{\rm for\;every}\;k,l\geq1,
\]
where $K'_k=[\bigcup_{i=n_{k-1}+1}^{n_k}K_i]$ and $L'_k=[\bigcup_{i=n_{k-1}+1}^{n_k}L_i]$ for every $k\geq1$ (with $n_0=0$),
\item [(2)]
$\big\Vert T|_{[a_{k,l}]_{1\leq l\leq k<\infty}}-\tilde{R}|_{[a_{k,l}]_{1\leq l\leq k<\infty}}\big\Vert\leq\delta$.
\end{itemize}
Note that  $[x_{2k+1,2l}]_{1\leq l\leq k<\infty}\cong\mathcal T_p$, and $[x_{2k+1,2l}]_{1\leq l\leq k<\infty}\subset[e_{\xi_i,\eta_j}]_{1\leq j\leq i<\infty}$. By Theorem \ref{cpc}, there exists  a contractive projection $P$ from $[e_{\xi_i,\eta_j}]_{i,j=1}^\infty$ onto $[x_{2k+1,2l}]_{k,l=1}^\infty$ given by \eqref{cpproj}. By \eqref{tpcbl} and the construction of $P$ in Theorem \ref{cpc},  $P|_{[e_{\xi_i,\eta_j}]_{1\leq j\leq i<\infty}}$ is a contractive projection from $[e_{\xi_i,\eta_j}]_{1\leq j\leq i<\infty}$ onto $[x_{2k+1,2l}]_{1\leq l\leq k<\infty}$. The rest of the proof is the same as that  of Theorem \ref{c1ch}. 
\end{proof}

\section{The largest proper ideal in $\mathcal B(\mathcal C_E)$}\label{Sec:largest proper ideal}

In this section, we always assume that $E$ is a separable Banach symmetric sequence space, and we show that  $\mathcal B(\mathcal C_E)$ has a (unique) largest proper ideal. Note that most of the results in this section  can be extended to quasi-Banach spaces. 
However, for the purpose of proving the main result in  Section \ref{section:C1}, we only consider   Banach spaces in this section.

Let $X$ be a Banach space. Recall that an operator $S\in\mathcal B(X)$ is said to \emph{factor through $T\in\mathcal B(X)$} provided there exist $A,B\in\mathcal B(X)$ such that $S=ATB$. The following useful notation  is given  by Dosev and Johnson \cite{Dosev Johnson},
\begin{equation}\label{mx}
\mathcal{M}_X=\{T\in\mathcal B(X):I_X~{\rm does~not ~factor~through}~T\}.
\end{equation}
Obviously, $\mathcal{M}_X$ is close under left and right multiplication by operators in $\mathcal B(X)$. Therefore,
to show that 
$\mathcal{M}_X$ is an ideal, it suffices to show that   $\mathcal{M}_X$ is closed under addition. Moreover, if $\mathcal{M}_X$ is an ideal, 
then it is automatically the largest proper ideal (hence closed\footnote{Since d$(I_X,\mathcal M_X)\geq1$, it follows that $\overline{\mathcal M_X}\neq \mathcal B(X)$, and thus $\mathcal M_{X}=\overline{\mathcal M_X}$.}) in $\mathcal B(X)$.

The following result is certainly known to experts, which   underlies the ideas used in several papers (see e.g. \cite{Chen Johnson Bentuo Zheng,Dosev Johnson Schechtman,Bentuo Zheng}). For completeness, we include a short proof below.   

\begin{lemma}\label{mxeq}
Let $X$ be a Banach space and $T$ be a bounded operator on $X$. The followings are equivalent:
\begin{itemize}
\item [(1)]
$T\notin\mathcal M_X$.
\item [(2)]
There exists a subspace $Y\subset X$ with $Y\approx X$ such that $T|_Y$ is an isomorphic embedding, and $TY$ is complemented in $X$.
\item [(3)]
There exists a complemented subspace $Y\subset X$ with $Y\approx X$ such that $T|_Y$ is an isomorphic embedding, and $TY$ is complemented in $X$.
\end{itemize}
\end{lemma}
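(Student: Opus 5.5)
I will prove the cycle (3)$\Rightarrow$(2)$\Rightarrow$(1)$\Rightarrow$(3). The two forward implications are short, and all the work is in extracting the subspace $Y$ from a factorization of $I_X$.

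(3)$\Rightarrow$(2) is trivial, since (3) asks for more than (2).

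For (2)$\Rightarrow$(1), let $Y\subset X$, $S:X\to Y$ an isomorphism onto $Y$, and assume $T|_Y$ is an isomorphic embedding whose range $TY$ is complemented by a projection $Q$. Then $(T|_Y)^{-1}:TY\to Y$ is bounded. Put
\[
A:=S^{-1}(T|_Y)^{-1}Q\in\mathcal B(X),\qquad B:=S,
\]
where $S$ is regarded as an element of $\mathcal B(X)$. For $x\in X$ we have $TSx\in TY$, so $QTSx=TSx$ and hence $ATBx=S^{-1}(T|_Y)^{-1}TSx=x$. Thus $I_X=ATB$, and $T\notin\mathcal M_X$.

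For (1)$\Rightarrow$(3), suppose $I_X=ATB$ with $A,B\in\mathcal B(X)$.
\begin{enumerate}
\item From $ATB=I_X$, the operator $B$ is bounded below, since $\|x\|\le\|AT\|\,\|Bx\|$. So $Y:=B(X)$ is closed and $B:X\to Y$ is an isomorphism, giving $Y\approx X$.
\item The operator $P:=BAT$ satisfies $P^2=B(ATB)AT=BAT=P$. Its range is all of $Y$, because $P(Bx)=Bx$. Hence $P$ is a bounded projection of $X$ onto $Y$, and $Y$ is complemented.
\item On $Y$, the map $T$ is bounded below: for $y=Bx$ we have $\|y\|=\|Py\|\le\|BA\|\,\|Ty\|$. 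So $T|_Y$ is an isomorphic embedding.
\item The range $TY$ is complemented via $R:=TBA$. Indeed $R^2=TB(ATB)A=TBA=R$, and $\operatorname{im}R=TB(X)=TY$, where the second equality uses that $A$ is surjective ($ATB=I_X$).
\end{enumerate}

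No step is a genuine obstacle: everything is bookkeeping with the identity $ATB=I_X$. The one point needing care is the range of $R$. The inclusion $\operatorname{im}R\subseteq TB(X)$ is immediate, and the reverse inclusion holds precisely because $A$ is onto.
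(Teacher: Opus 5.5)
Your proof is correct and essentially the paper's. For (2)$\Rightarrow$(1) you use the same factorization $A=(TB)^{-1}P$. For the converse you use the same choices $Y={\rm im}(B)$ and the projection $TBA$ onto $TY$. The only difference is organizational. You get complementation of $Y$ directly from the projection $BAT$, and this is exactly the paper's projection $(T|_Y)^{-1}PT$ from its (2)$\Rightarrow$(3) step with $P=TBA$; that lets you close the cycle without proving (2)$\Rightarrow$(3) separately.
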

\begin{proof}
(1)$\Longrightarrow$(2): If $T\notin\mathcal M_X$, then there exist two operators $A,B\in\mathcal B(X)$ such that $ATB=I_X$. It follows that  $A$ is surjective, $B$ and $T|_{{\rm im}(B)}$ are both isomorphic embeddings, and $TBA$ is projection operator. The space    $Y:={\rm im}(B)$ is a subspace of $X$ with  desired properties.

(2)$\Longrightarrow$(1): By assumption, there is an isomorphic embedding operator $B\in\mathcal B(X)$ with ${\rm im}(B)=Y$, and a projection $P$ from $X$ onto $TY$. Hence, we may denote an operator $A$ on $X$ by $A(x)=(TB)^{-1}P(x)$. It follows that $ATB(x)= (TB)^{-1}P TB(x)= (TB)^{-1}  TB(x) =x $, i.e., $ATB=I_X$. Hence, $T\not\in \cM_X$. 

(3)$\Longrightarrow$(2): This is trivial. 

(2)$\Longrightarrow$(3): Let  $P$ be the projection from $X$ onto $TY$. Then  $(T|_Y)^{-1}PT$ is 
 a    projection from $X$ onto $Y$.
\end{proof}

Theorem \ref{c1s} characterizes the operators  in $\cB (\cC_1)$ which are $\mathcal  C_1$-strictly singular. 
Below, for more general Banach spaces, 
we give a  characterization in terms of the set  given in \eqref{mx} above (however, it does not  directly yield the norm estimate in Theorem \ref{c1s} directly). 

\begin{corollary}\label{chmx}
Let $X$ be a complementably homogeneous Banach space. Then, the set $\mathcal{M}_X$ coincides with  the set of all $X$-strictly singular operators on $X$.
\end{corollary}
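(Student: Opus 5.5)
The claim is that for a complementably homogeneous Banach space $X$, an operator $T\in\mathcal B(X)$ lies in $\mathcal M_X$ exactly when it is $X$-strictly singular. The approach is to prove both inclusions separately, using Lemma~\ref{mxeq} as the bridge between factorization of $I_X$ and restrictions of $T$.

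\emph{First inclusion: every $X$-strictly singular operator belongs to $\mathcal M_X$.} Suppose $T\notin\mathcal M_X$. By Lemma~\ref{mxeq}\,(2) there is a subspace $Y\subset X$ with $Y\approx X$ on which $T|_Y$ is an isomorphic embedding. By Definition~\ref{stri-sing}, this means $T$ is not $X$-strictly singular. This direction does not use complementable homogeneity.

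\emph{Second inclusion: every operator in $\mathcal M_X$ is $X$-strictly singular.} Suppose $T$ is not $X$-strictly singular. Then there is a subspace $Z\subset X$ with $Z\approx X$ such that $T|_Z$ is an isomorphic embedding. The plan proceeds in three steps.

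\begin{enumerate}
\item The image $W:=T(Z)$ is isomorphic to $Z$, hence to $X$. Since $X$ is complementably homogeneous (Definition~\ref{ch}), $W$ contains a subspace $W_0$ with $W_0\approx X$ that is complemented in $X$.
\item Put $Y:=(T|_Z)^{-1}(W_0)\subset Z$. This is a closed subspace, and $T|_Y\colon Y\to W_0$ is an isomorphism onto $W_0$. Hence $Y\approx W_0\approx X$, $T|_Y$ is an isomorphic embedding, and $TY=W_0$ is complemented in $X$.
\item Lemma~\ref{mxeq}\,((2)$\Rightarrow$(1)) then gives $T\notin\mathcal M_X$. Explicitly, if $B$ is an isomorphism of $X$ onto $Y$ and $P$ is a projection onto $W_0$, then $A:=(TB)^{-1}P$ satisfies $ATB=I_X$.
\end{enumerate}

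The only point needing care is in step 2: the pullback $Y$ must be closed and isomorphic to $X$. Both follow because $T|_Z$ is an isomorphism of $Z$ onto the closed subspace $T(Z)$, so pulling back the closed subspace $W_0\subset T(Z)$ gives a closed subspace of $Z$ isomorphic to $W_0$. The remaining steps are routine.
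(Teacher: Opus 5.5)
Your proof is correct and matches the intended argument: the paper states this corollary without proof, as an immediate consequence of Lemma~\ref{mxeq}. Complementable homogeneity is used exactly as you use it, to shrink $T(Z)$ to a complemented copy $W_0\approx X$ and pull it back to $Y=(T|_Z)^{-1}(W_0)$, after which (2)$\Rightarrow$(1) of Lemma~\ref{mxeq} applies.
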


\begin{remark}
It is easy to see that Lemma \ref{mxeq} and Corollary \ref{chmx} also hold for quasi-Banach spaces. Hence, for $0<p<1$, the set of all $\mathcal C_p$-strictly singular operators on $\mathcal C_p$ coincides with $\mathcal{M}_{\mathcal C_p}$ by Theorem \ref{ce-hc} and Remark \ref{add-opincp}, and thus it is the largest proper ideal in $\mathcal B(\mathcal C_p)$.
\end{remark}

The following proposition comes from \cite[Proposition 5.1]{Dosev Johnson}.

\begin{proposition}\label{dj5.1}
Let $X$ be a Banach space. Then $\mathcal M_X$ is the largest proper ideal in $\mathcal B(X)$ if and only if for every $T\in\mathcal B(X)$ we have $T\notin\mathcal M_X$ or $I-T\notin\mathcal M_X$.
\end{proposition}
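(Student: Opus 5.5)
Both implications follow formally from the facts that $\mathcal M_X$ is closed under left and right multiplication by operators in $\mathcal B(X)$, that $I_X\notin\mathcal M_X$, and that a proper ideal can never contain an operator through which $I_X$ factors.

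\emph{Necessity.} Suppose $\mathcal M_X$ is the largest proper ideal of $\mathcal B(X)$, and fix $T\in\mathcal B(X)$. If both $T$ and $I-T$ lay in $\mathcal M_X$, then, since $\mathcal M_X$ is an ideal and hence closed under addition, we would get
\[
I_X=T+(I-T)\in\mathcal M_X .
\]
This is absurd, because $I_X=I_X\,I_X\,I_X$, so $I_X$ factors through itself. Hence $T\notin\mathcal M_X$ or $I-T\notin\mathcal M_X$.

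\emph{Sufficiency.} Assume that for every $T\in\mathcal B(X)$ we have $T\notin\mathcal M_X$ or $I-T\notin\mathcal M_X$. As remarked after \eqref{mx}, the only thing left to show is that $\mathcal M_X$ is closed under addition; maximality is then automatic. Indeed, if $\mathcal J$ is any proper ideal and $S\in\mathcal J$ with $ASB=I_X$, then $I_X\in\mathcal J$, contradicting properness. So every proper ideal is contained in $\mathcal M_X$.

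Take $T_1,T_2\in\mathcal M_X$ and suppose, for contradiction, that $T_1+T_2\notin\mathcal M_X$. Choose $A,B$ with $A(T_1+T_2)B=I_X$. Set $S=AT_1B$, so that
\[
I-S=AT_2B .
\]
Since $\mathcal M_X$ is invariant under multiplication on either side, $S\in\mathcal M_X$. For the same reason $I-S\in\mathcal M_X$. This contradicts the hypothesis applied to $S$. Therefore $T_1+T_2\in\mathcal M_X$, and $\mathcal M_X$ is an ideal.

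The one point needing care is the invariance of $\mathcal M_X$ under multiplication. If $AT_1B\notin\mathcal M_X$, say $C(AT_1B)D=I_X$, then $(CA)T_1(BD)=I_X$, so $T_1\notin\mathcal M_X$; hence $T_1\in\mathcal M_X$ forces $AT_1B\in\mathcal M_X$, and likewise for $T_2$. Beyond this, the argument has no real obstacle: it is purely algebraic and uses no structure of $X$ at all.
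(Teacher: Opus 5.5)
Your proof is correct and complete. Necessity holds because $I_X=T+(I-T)$ cannot lie in a proper ideal, and sufficiency follows from setting $S=AT_1B$, so that $I-S=AT_2B$, together with the two-sided invariance of $\mathcal M_X$ and the fact that every proper ideal is contained in $\mathcal M_X$. The paper gives no proof of its own, only citing \cite[Proposition 5.1]{Dosev Johnson}; after \eqref{mx} it records the same two reductions you use, and your argument is the standard one from that source.
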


The following lemma provides a sufficient condition for $\cM_X$ to be the largest ideal in $\cB(X)$. 
The idea underlying this lemma is useful in  the study of the primarity of Banach spaces, i.e., a Banach space $X$ is said to be  primary if  
for any decomposition of $X$ into the direct sum of two Banach spaces, at least one summand  is  isomorphic to $X$. 
\begin{lemma}\label{mxsuf}
Let $X$ be a Banach space. Assume  that for any $T\in\mathcal B(X)$, there exist a scalar $\alpha$, a Banach space $Y$ isomorphic to $X$, and $J\in\mathcal B(Y,X)$, $K\in\mathcal B(X,Y)$ with $KJ=I_Y$ such that
\begin{equation}\label{alpha JK}
\Vert KTJ-\alpha I_Y\Vert<1/2.
\end{equation}
Then $\mathcal M_X$ is the largest  proper ideal in $\mathcal B(X)$.
\end{lemma}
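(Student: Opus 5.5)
By Proposition~\ref{dj5.1}, it suffices to show that for every $T\in\mathcal B(X)$ either $T\notin\mathcal M_X$ or $I_X-T\notin\mathcal M_X$. Fix $T$, and take $\alpha$, $Y\approx X$, and $J,K$ with $KJ=I_Y$ and $\|KTJ-\alpha I_Y\|<1/2$ as in the hypothesis. The plan is to split according to whether $|\alpha|\ge 1/2$ or $|\alpha|<1/2$. In the second case we replace $T$ by $I_X-T$ and $\alpha$ by $1-\alpha$, using
\[
K(I_X-T)J-(1-\alpha)I_Y=-(KTJ-\alpha I_Y).
\]
So in all cases we obtain an operator $S\in\{T,\,I_X-T\}$ and a scalar $\beta$ with $|\beta|\ge 1/2$ and $\|KSJ-\beta I_Y\|<1/2$.

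Next we show that $KSJ$ is invertible on $Y$. Write
\[
KSJ=\beta\bigl(I_Y+\beta^{-1}(KSJ-\beta I_Y)\bigr).
\]
The perturbation has norm less than $\tfrac{1}{2}\cdot 2=1$, so a Neumann series gives invertibility. This is the one point where care is needed: the strict inequalities in \eqref{alpha JK} are exactly what make the boundary case $|\beta|=1/2$ work. Let $W:=(KSJ)^{-1}\in\mathcal B(Y)$. Then $WKSJ=I_Y$.

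Finally we transport this to $X$. Let $U:X\to Y$ be an isomorphism, and set $A:=U^{-1}WK\in\mathcal B(X)$ and $B:=JU\in\mathcal B(X)$. Then
\[
ASB=U^{-1}WKSJU=U^{-1}U=I_X,
\]
so $I_X$ factors through $S$, that is, $S\notin\mathcal M_X$. Hence $T\notin\mathcal M_X$ or $I_X-T\notin\mathcal M_X$, and Proposition~\ref{dj5.1} shows that $\mathcal M_X$ is the largest proper ideal in $\mathcal B(X)$.
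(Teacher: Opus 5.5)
Your proposal is correct and follows the paper's proof essentially step for step. It uses the same split into $|\alpha|\ge 1/2$ versus $|\alpha|<1/2$, passing to $I_X-T$ and $1-\alpha$ via $K(I_X-T)J=I_Y-KTJ$, which is where $KJ=I_Y$ enters. It then uses the same invertibility of $KSJ$ and the same factorization $I_X=\bigl(U^{-1}(KSJ)^{-1}K\bigr)\,S\,(JU)$, and concludes with Proposition~\ref{dj5.1}.
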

\begin{proof}
For any $T\in\mathcal B(X)$, 
we choose
a scalar $\alpha$, a Banach space $Y$ which is isomorphic to $X$, and $J\in\mathcal B(Y,X)$, $K\in\mathcal B(X,Y)$ with $KJ=I_Y$ satisfying \eqref{alpha JK}.

If $\vert\alpha\vert\geq1/2$, then since \eqref{alpha JK},
it follows that $KTJ$ is an invertible operator on $Y$.   Let $Q$ be an isomorphism from $X$ onto $Y$. We have 
\[
(Q^{-1}(KTJ)^{-1}K)T(JQ)=Q^{-1}(KTJ)^{-1}(KTJ)Q=I_X,
\]
and $Q^{-1}(KTJ)^{-1}K,JQ\in\mathcal B(X)$. 
Hence, $I_X$ factors through $T$, and thus $T\notin\mathcal M_X$ (see \eqref{mx} above).


If $\vert\alpha\vert<1/2$, then $\vert1-\alpha\vert>1/2$, and
\[
\Vert K(I_X-T)J-(1-\alpha)I_Y\Vert=\Vert KTJ-\alpha I_Y\Vert<1/2.
\] The same argument yields that  $I_X-T\notin\mathcal M_X$.

By Proposition \ref{dj5.1}, the proof is complete.
\end{proof}

Arazy \cite[Lemma 2.1]{Arazy3} shows that $\mathcal T_E$ satisfies the assumption in Lemma \ref{mxsuf}. Hence, 
we   obtain the following theorem. 

\begin{theorem}\label{tela} 
Let $E$ be a separable Banach symmetric sequence space. 
Then, $\mathcal M_{\mathcal T_E}$ is the largest proper ideal in $\mathcal B(\mathcal T_E)$.
\end{theorem}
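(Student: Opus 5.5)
The plan is to verify the hypothesis of Lemma~\ref{mxsuf} for $X=\mathcal T_E$ and then read off the conclusion from that lemma, which itself rests on Proposition~\ref{dj5.1}. Thus, for an arbitrary $T\in\mathcal B(\mathcal T_E)$, we need a scalar $\alpha$, a space $Y\approx\mathcal T_E$, and operators $J\in\mathcal B(Y,\mathcal T_E)$, $K\in\mathcal B(\mathcal T_E,Y)$ with $KJ=I_Y$ and $\Vert KTJ-\alpha I_Y\Vert<1/2$. This is precisely the content of Arazy's \cite[Lemma 2.1]{Arazy3}, and the proof consists of invoking it and checking that its formulation matches \eqref{alpha JK}.

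To make the step transparent, the construction we have in mind runs as follows. Realize $\mathcal T_E=\mathcal T_{E,\{\xi_i,\eta_i\}_{i=1}^\infty}$ and choose $Y$ of the form $[e_{\xi_{i_k},\eta_{i_l}}]_{1\le l\le k<\infty}$, where $\{i_k\}$ is a sparse increasing sequence. This $Y$ is isometric to $\mathcal T_E$ by Notation~\ref{tenotation}, and it is contractively complemented in $\mathcal T_E$ by the compression $x\mapsto p_{[\xi_{i_k}]}\,x\,p_{[\eta_{i_l}]}$, which is bounded by \eqref{projection inequality} since $E$ is Banach. We take $J$ to be the inclusion and $K$ this compression, so that $KJ=I_Y$ holds automatically.

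Next, we use a gliding-hump/diagonal argument, in the style of Lemma~\ref{3b} and Corollary~\ref{3a}, to choose the $i_k$ so that the off-diagonal matrix coefficients $(T e_{\xi_{i_k},\eta_{i_l}}\,\eta_{i_{l'}}\mid \xi_{i_{k'}})$ with $(k',l')\neq(k,l)$ are summably small. Passing to a further subsequence, the diagonal coefficients $(Te_{\xi_{i_k},\eta_{i_l}}\eta_{i_l}\mid\xi_{i_k})$ should stabilize near a common limit $\alpha$; Arazy obtains this via a Ramsey-type stabilization of the bounded scalar array. The operator $KTJ-\alpha I_Y$ then has small coefficients everywhere, and by Theorem~\ref{2d}/Lemma~\ref{sdper} its norm is below $1/2$.

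The main obstacle is the stabilization of the diagonal entries to a single scalar uniformly over the whole triangle. The entries are indexed by pairs $(k,l)$, and making them converge simultaneously along both indices while preserving the triangular structure requires an iterated subsequence extraction. Since this is carried out in \cite[Lemma 2.1]{Arazy3}, we cite it. Once it is in hand, Lemma~\ref{mxsuf} gives that $\mathcal M_{\mathcal T_E}$ is the largest proper ideal in $\mathcal B(\mathcal T_E)$.
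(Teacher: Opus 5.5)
Your overall route, verifying the hypothesis of Lemma~\ref{mxsuf} for $X=\mathcal T_E$ by invoking \cite[Lemma 2.1]{Arazy3} and then concluding, is exactly the paper's route, so as a citation-based proof it stands.

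However, the construction you give ``to make the step transparent'' is wrong as written and would not verify \eqref{alpha JK}. You take rows and columns from the same subsequence, $Y=[e_{\xi_{i_k},\eta_{i_l}}]_{1\le l\le k<\infty}$, with $J$ the inclusion. Test this on the diagonal projection $D=P_{\{[\xi_k],[\eta_k]\}_{k=1}^\infty}$, which is bounded on $\mathcal T_E$ since $E$ is Banach. Because $D$ maps $Y$ into $Y$, you get $KDJ=D|_Y$ for every left inverse $K$ of $J$. Evaluating at $e_{\xi_{i_1},\eta_{i_1}}$ and $e_{\xi_{i_2},\eta_{i_1}}$ then gives $\Vert KDJ-\alpha I_Y\Vert\ge\max\{|1-\alpha|,|\alpha|\}\ge 1/2$ for every scalar $\alpha$. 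In your terms, the ``diagonal coefficients'' equal $\delta_{kl}$ and cannot stabilize to one scalar over the whole triangle. Passing to a further subsequence does not help, because the diagonal of $Y$ is part of the original diagonal.

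The missing idea is the interleaving used in Arazy's lemma and in its variant, Lemma~\ref{tei}.

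\begin{itemize}
\item Take $Y=[e_{\xi_{i_{2k+1}},\eta_{i_{2l}}}]_{1\le l\le k<\infty}$. It lies strictly below the original diagonal, so $D|_Y=0$.
\item Take $K=P_{[\xi_{i_{2k+1}}]_{k=1}^\infty,[\eta_{i_{2k}}]_{k=1}^\infty}|_{\mathcal T_E}$, which maps $\mathcal T_E$ onto $Y$.
\item After the gliding-hump reduction, $T(e_{\xi_{i_{2k+1}},\eta_j})$ sits in the L-shaped block between $i_{2k}$ and $i_{2k+2}$. The column arm of that L contains no selected column. Hence $KT(e_{\xi_{i_{2k+1}},\eta_{i_{2l}}})$ lives in the single row $\xi_{i_{2k+1}}$, on the columns $\eta_{i_{2s}}$ with $s\le k$.
\item Choosing $i_{2k+1}$ after $i_{2k}$ makes these entries close to those of the coordinatewise limit $a^{(l)}\in\ell_2$.
\item A second extraction then uses two facts: $\{\sum_{s\le l}\alpha^{(l)}_{i_{2s}}e_{i_{2s}}\}_l$ is weakly null in $\ell_2$, and $\alpha^{(l)}_{i_{2l}}\to\alpha$. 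This makes $KTJ$ a perturbation of $\alpha I_Y$.
\item Lemma~\ref{sdper} then gives $\Vert KTJ-\alpha I_Y\Vert<1/2$.
\end{itemize}
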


In order to prove the main result of Section \ref{section:C1}, we need some auxiliary results, which are inspired by an idea used in Arazy's paper\cite[Lemma 2.1 and Theorem 3.1]{Arazy3}. Lemma \ref{tei} below can be viewed as a variant of \cite[Lemma 2.1]{Arazy3}.  

Now, let $\{\xi_i\}_{i=1}^\infty$ and $\{\eta_i\}_{i=1}^\infty$ be two orthonormal bases of $H$, and put
\[
\mathcal T_E=[e_{\xi_i,\eta_j}]_{1\leq j\leq i<\infty}.
\]

\begin{lemma}\label{tei}
For any operator $T\in \mathcal B(\mathcal T_E,\mathcal C_E)$ and sequence $\{\varepsilon_l\}_{l=1}^\infty$ of positive numbers,   there exist an increasing sequence $\{i_k\}_{k=1}^\infty$ of positive integers and a scalar $\alpha$ such that
\[
P_{[\xi_{i_{2k+1}}]_{k=1}^\infty,[\eta_{i_{2k}}]_{k=1}^\infty}T|_{[e_{\xi_{i_{2k+1}},\eta_{i_{2l}}}]_{1\leq l\leq k<\infty}}
\]
is a perturbation of $\alpha I_{[e_{\xi_{i_{2k+1}},\eta_{i_{2l}}}]_{1\leq l\leq k<\infty}}$ associated with  the Schauder decomposition $\big\{[e_{\xi_{i_{2k+1}},\eta_{i_{2l}}}]_{k=l}^\infty\big\}_{l=1}^\infty$ for $\{\varepsilon_l\}_{l=1}^\infty$.
\end{lemma}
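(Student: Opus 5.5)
We apply the decomposition of Theorem \ref{thma} (in the form of Theorem \ref{thma'}) to $T$, with $K_i=[\xi_i]$ and $L_i=[\eta_i]$. Then we compute what each of the pieces $T_1,\dots,T_4$ contributes after compressing to the ``diagonal'' positions $P_{[\xi_{i_{2k+1}}]_{k},[\eta_{i_{2k}}]_{k}}$.

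Since $E$ is Banach here, diagonal projections are contractive by \eqref{bs-diag-proj}. So it suffices to analyse each piece separately and recombine the errors with Lemma \ref{sdper}.

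The argument goes as follows. First we pass to a subsequence so that \eqref{s2} holds and $T(e_{\xi_{i_k},\eta_{i_l}})$ is (up to $\delta_{k,l}$) supported in $K'_k\times L'_k$ together with the off-diagonal blocks. The diagonal compression of $T_1(e_{\xi_{i_k},\eta_{i_l}})=p_{K'_k}T(\cdot)p_{L'_k}$ picks out $(T e_{\xi_{i_k},\eta_{i_l}}\,\eta_{j}|\xi_{j'})$ with $\xi_{j'}\in K'_k$, $\eta_j\in L'_k$.

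We choose the index sets so that $i_{2k+1}\in B_{2k+1}$ while $i_{2l}\in B_{2l}$ for $l\neq k$. Then only the coefficient $\lambda_{k,l}:=(T(e_{\xi_{i_{2k+1}},\eta_{i_{2l}}})\eta_{i_{2l}}|\xi_{i_{2k+1}})$ survives, and for $l<k$ the row and column lie in different blocks. We therefore interleave: take $\xi_{i_{2k+1}}\in K'_{2k+1}$ and $\eta_{i_{2l}}\in L'_{2l}$, so that the compressed image of $e_{\xi_{i_{2k+1}},\eta_{i_{2l}}}$ sits at position $(2k+1,2l)$. This reduces everything to the scalar array $\lambda_{k,l}$.

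The contributions are then as follows:
\begin{itemize}
\item \textbf{$T_1$.} It has block-diagonal support $K'_{2k+1}\times L'_{2k+1}$, which misses $\eta_{i_{2l}}\in L'_{2l}$, so its compression vanishes.
\item \textbf{$T_2,T_3$.} These pieces are supported in blocks $F_{k,l}\times L'_m$ or $K'_m\times G_{l,k}$ with $m\le l$. After compression they give at most one scalar per pair. Using the consistency relations (\ref{hatc}, \ref{hatb}), the compressed values depend only on $l$ (and $m$). By a diagonal-subsequence argument (Lemma \ref{3b}) we can arrange these numbers to be constant in $k$.
\item \textbf{$T_4$.} It is generated by a single operator $\hat a_1$ via consistency \eqref{hata}. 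Its compression at $(2k+1,2l)$ is $(\hat a\,v\eta_{i_{2l}}|u\xi_{i_{2k+1}})$. Passing to further subsequences, and using the structure $\tilde R_4$ from Lemma \ref{t4}, this becomes a constant $\alpha$ independent of $(k,l)$, up to errors $\varepsilon_{l}$.
\end{itemize}

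The main obstacle is showing that the compressed $T_2+T_3$ part either converges to a constant multiple of the identity or is small. Our plan is as follows. The compressed $T_2+T_3$ sends the column $\{e_{\xi_{i_{2k+1}},\eta_{i_{2l}}}\}_k$ to $\{\beta_l e_{\xi_{i_{2k+1}},\eta_{i_{2l}}}\}_k$ for scalars $\beta_l$. Boundedness of $T$ gives $\sup|\beta_l|<\infty$, so we pass to a subsequence where $\beta_l\to\beta$ fast enough that $\sum_l|\beta_l-\beta|$ is small relative to $\varepsilon_l$.

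Combining the pieces, we set $\alpha$ to be the sum of the limits, and $P T|_{\cdots}-\alpha I$ restricted to the $l$-th column has norm at most $\varepsilon_l$. The perturbation claim then follows from Theorem \ref{2d} and Lemma \ref{sdper}.
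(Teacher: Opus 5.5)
The step you flag as the main obstacle is where the argument breaks, and Theorem~\ref{thma} does not supply what is needed there.

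With your interleaving ($\xi_{i_{2k+1}}\in K'_{2k+1}$, $\eta_{i_{2s}}\in L'_{2s}$), the $T_3$-terms $x_{_{(2k+1,2l),m}}$ are supported in $F_{2k+1,2l}\times L'_m$. For even $m=2s<2l$ the block $L'_{2s}$ contains the selected column vector $\eta_{i_{2s}}$. Put $P=P_{[\xi_{i_{2k+1}}]_{k=1}^\infty,[\eta_{i_{2k}}]_{k=1}^\infty}$. Then $PT_3(e_{\xi_{i_{2k+1}},\eta_{i_{2l}}})$ has entries at the off-diagonal positions $(\xi_{i_{2k+1}},\eta_{i_{2s}})$, $s<l$, and these need not vanish. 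So the compressed $T_2+T_3$ is not $\beta_l$ times the identity on the $l$-th column.

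Consistency in \eqref{hatb} only says that the generating operators $\hat b_m$ do not depend on $k$. The bounds $\Vert\hat b_m\Vert_{\mathcal C_E}\le\varepsilon_m\Vert\hat b_1\Vert_{\mathcal C_E}$ only bound the resulting column error by something of order $\sum_{m\ge2}\varepsilon_m\Vert\hat b_1\Vert_{\mathcal C_E}$, which does not depend on $l$. You need an error $\le\varepsilon_l$ for an arbitrary sequence $\{\varepsilon_l\}$.

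The missing ingredient is boundedness of $T$ on \emph{rows}. The paper's proof uses no decomposition at all:
\begin{itemize}
\item After the gliding-hump step, $PT(e_{\xi_{i_{2k+1}},\eta_{i_{2l}}})$ lies in the single row $\xi_{i_{2k+1}}$.
\item A diagonal argument makes its coefficients converge as $k\to\infty$. Then the $l$-th column $\sum_k t_ke_{\xi_{i_{2k+1}},\eta_{i_{2l}}}$ is mapped, up to a small error, to a rank-one operator $(\cdot|w_l)u$. Here $u=\sum_kt_k\xi_{i_{2k+1}}$ and $w_l\in[\eta_{i_{2s}}]_{s\le l}$ is the limiting row vector, so the norm is $\Vert t\Vert_2\Vert w_l\Vert$.
\item Applying $T$ to a single row shows that $(t_l)\mapsto\sum_lt_lw_l$ is bounded on $\ell_2$, so $(w_l)$ is weakly null.
\item One then passes to columns $n_l$ along which the diagonal coordinates converge to $\alpha$. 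Along the way one ensures the coordinates of $w_{n_l}$ on the previously chosen indices have norm at most $\varepsilon_l/\sqrt2$.
\end{itemize}
This last step is what kills the earlier-column entries. Your plan only re-selects columns to make $\beta_l\to\beta$ and never controls them.

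Two further points.
\begin{itemize}
\item Theorem~\ref{thma} is proved under the standing hypothesis $c_0\not\hookrightarrow E$ of Section~\ref{sec:decomposition CE operator}; it rests on Theorem~\ref{3e} and hence on Lemma~\ref{3d}. Lemma~\ref{tei}, however, is stated in Section~\ref{Sec:largest proper ideal} for every separable Banach symmetric $E$, and is used in that generality via Lemma~\ref{cei} in Theorem~\ref{cela}. Your route therefore excludes, e.g., $E=c_0$, while the row/column argument above works in full generality.
\item The interleaving $\xi_{i_{2k+1}}\in K'_{2k+1}$, $\eta_{i_{2l}}\in L'_{2l}$ is not part of the conclusion of Theorem~\ref{thma}; you would have to re-enter its proof to secure it. Theorem~\ref{thma'} replaces the $\eta_i$ by normalized blocks $\eta'_k$, which is not admissible here, since the lemma compresses onto the basis vectors $\eta_{i_{2k}}$ themselves.
\end{itemize}
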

\begin{proof}
Note that $\{T(e_{\xi_i,\eta_j})\}_{i=j}^\infty$ is weakly null for every $j$. By a standard perturbation argument, we may assume that there is an increasing sequence $\{i_k\}_{k=1}^\infty$ of positive integers such that
\[
T(e_{\xi_{i_{2k+1}},\eta_j})=p_{[\xi_s]_{s=1}^{i_{2k+2}-1}}T(e_{\xi_{i_{2k+1}},\eta_j})p_{[\eta_s]_{s=1}^{i_{2k+2}-1}}-p_{[\xi_s]_{s=1}^{i_{2k}}}T(e_{\xi_{i_{2k+1}},\eta_j})p_{[\eta_s]_{s=1}^{i_{2k}}}
\]
for every $k\geq1$ and $1\leq j\leq i_{2k}$. For  every $1\leq l\leq k<\infty$, we may write 
\begin{align}\label{xii2s1}
p_{_{[\xi_{i_{2s+1}}]_{s=1}^\infty}}T(e_{\xi_{i_{2k+1}},\eta_{i_{2l}}})=\sum_{j=1}^{i_{2k+2}-1}\alpha_j^{_{(k,l)}}e_{\xi_{i_{2k+1}},\eta_j},
\end{align}
where $\alpha_j^{(k,l)}$ is a  scalar. By a routine diagonal process of passing to a subsequence of $\{i_k\}_{k=1}^\infty$, we may  assume that there is a sequence $\big\{a^{_{(l)}}:  = (\alpha_j^{_{(l)}})_{j=1}^\infty\big\}_{l=1}^\infty$ in $\mathbb C^{\mathbb N}$ ($\mathbb R^{\mathbb N}$) such that
\[
\big(\alpha_1^{_{(k,l)}},\alpha_2^{_{(k,l)}},\dots,\alpha_{i_{2k+2}}^{_{(k,l)}},0,0,\dots\big)\longrightarrow a^{_{(l)}}\;\;\;\;{\rm as}\;k\to\infty\;\;\;\;\;({\rm by\;product\;topology}).
\]
It is easy to see that $\norm{ a^{_{(l)}}}_{\ell_2 }\le \Vert T  \Vert $ for every $l$. Choosing  a sequence $\{\delta_l\}_{l=1}^\infty$ of sufficiently small  positive numbers. After passing to a further subsequence of $\{i_k\}_{k=1}^\infty$, we may assume that (here, $i_{2k+2}$ is sufficiently greater than $i_{2k}$)
\begin{align}\label{alphai2sl}
\sum_{s=l+1}^\infty\big\vert\alpha_{i_{2s}}^{_{(l)}}\big\vert\leq\delta_l,
\end{align}
and (here, $i_{2k+1}$ is sufficiently greater than $i_{2k}$)
\begin{align}\label{alphai2skl}
\sum_{s=1}^k\big\vert\alpha_{i_{2s}}^{_{(k,l)}}-\alpha_{i_{2s}}^{_{(l)}}\big\vert\leq\delta_k\;\;\;\;\;\;{\rm for\;every\;}1\leq l\leq k<\infty.
\end{align}
 For each fixed $l\geq1$, and  for any finitely non-zero sequence $\{t_k\}_{k=l}^\infty$ of scalars, we have 
\begin{eqnarray*}
& &\left\Vert p_{_{[\xi_{i_{2s+1}}]_{s=1}^\infty}}T\left({\sum}_{k=l}^\infty t_ke_{\xi_{i_{2k+1}},\eta_{i_{2l}}}\right)p_{_{[\eta_{i_{2s}}]_{s=1}^\infty}}-{\sum}_{k=l}^\infty t_k{\sum}_{s=1}^l\alpha_{i_{2s}}^{_{(l)}}e_{\xi_{i_{2k+1}},\eta_{i_{2s}}}\right\Vert_{\mathcal C_E}\\
&\stackrel{\eqref{xii2s1}}{=}&\left\Vert{\sum}_{k=l}^\infty t_k\left({\sum}_{s=1}^k\alpha_{i_{2s}}^{_{(k,l)}}e_{\xi_{i_{2k+1}},\eta_{i_{2s}}}-{\sum}_{s=1}^l\alpha_{i_{2s}}^{_{(l)}}e_{\xi_{i_{2k+1}},\eta_{i_{2s}}}\right)\right\Vert_{\mathcal C_E}\\
&=&\left\Vert{\sum}_{k=l}^\infty t_k{\sum}_{s=1}^k(\alpha_{i_{2s}}^{_{(k,l)}}-\alpha_{i_{2s}}^{_{(l)}})e_{\xi_{i_{2k+1}},\eta_{i_{2s}}}-{\sum}_{k=l+1}^\infty t_k{\sum}_{s=l+1}^k\alpha_{i_{2s}}^{_{(l)}}e_{\xi_{i_{2k+1}},\eta_{i_{2s}}}\right\Vert_{\mathcal C_E}\\
&\stackrel{\eqref{alphai2skl}}{\leq}&{\sum}_{k=l}^\infty\vert t_k\vert\delta_k+\left\Vert{\sum}_{k=l+1}^\infty{\sum}_{s=l+1}^k t_k\alpha_{i_{2s}}^{_{(l)}}e_{\xi_{i_{2k+1}},\eta_{i_{2s}}}\right\Vert_{\mathcal C_E}\\
&\leq&\left({\sum}_{k=l}^\infty\vert\delta_k\vert^2\right)^{1/2}\left({\sum}_{k=l}^\infty\vert t_k\vert^2\right)^{1/2}+{\sum}_{s=l+1}^\infty\left\Vert{\sum}_{k=s}^\infty t_k\alpha_{i_{2s}}^{_{(l)}}e_{\xi_{i_{2k+1}},\eta_{i_{2s}}}\right\Vert_{\mathcal C_E}\\
&\leq&\left({\sum}_{k=l}^\infty\vert\delta_k\vert^2\right)^{1/2}\left({\sum}_{k=l}^\infty\vert t_k\vert^2\right)^{1/2}+{\sum}_{s=l+1}^\infty\big\vert \alpha_{i_{2s}}^{_{(l)}}\big\vert\left({\sum}_{k=l}^\infty\vert t_k\vert^2\right)^{1/2}\\
&\stackrel{\eqref{alphai2sl}}{\leq}&\left(\delta_l+\left({\sum}_{k=l}^\infty\vert\delta_k\vert^2\right)^{1/2}\right)\left\Vert{\sum}_{k=l}^\infty t_ke_{\xi_{i_{2k+1}},\eta_{i_{2l}}}\right\Vert_{\mathcal C_E}.
\end{eqnarray*}
By Theorem \ref{2d}, without loss of generality,  we may assume that
\begin{align}\label{PTPper}
p_{_{[\xi_{i_{2s+1}}]_{s=1}^\infty}}T(e_{\xi_{i_{2k+1}},\eta_{i_{2l}}})p_{_{[\eta_{i_{2s}}]_{s=1}^\infty}}=\sum_{s=1}^l\alpha_{i_{2s}}^{_{(l)}}e_{\xi_{i_{2k+1}},\eta_{i_{2s}}}\;\;\;\;\;\;\;\;{\rm for\;every\;}1\leq l\leq k<\infty.
\end{align}
For any finitely sequence $\{t_l\}_{l=1}^k$ of scalars, we have 
\begin{align*}
\left\Vert {\sum}_{l=1}^k t_l{\sum}_{s=1}^l\alpha_{i_{2s}}^{_{(l)}}e_{i_{2s}}\right\Vert_{\ell_2}&=\left\Vert {\sum}_{l=1}^k t_l\Big(\cdot\Big|{\sum}_{s=1}^l\alpha_{i_{2s}}^{_{(l)}}e_{i_{2s}}\Big)e_{i_{2k+1}}\right\Vert_{\mathcal C_E}\\
&=\left\Vert{\sum}_{l=1}^k t_l{\sum}_{s=1}^l\alpha_{i_{2s}}^{_{(l)}}e_{\xi_{i_{2k+1}},\eta_{i_{2s}}}\right\Vert_{\mathcal C_E}\leq\Vert T\Vert\left({\sum}_{l=1}^k\vert t_l\vert^2\right)^{1/2}.
\end{align*}
This implies that $\big\{\sum_{s=1}^l\alpha_{i_{2s}}^{_{(l)}}e_{i_{2s}}\big\}_{l=1}^\infty$ is a weakly null sequence in $\ell_2$.

Since $\big\{\alpha_{i_{2l}}^{_{(l)}}\big\}_{l=1}^\infty$ is a bounded sequence of scalars, without loss of generality, we may assume that $\big\{\alpha_{i_{2l}}^{_{(l)}}\big\}_{l=1}^\infty$ converges to a scalar $\alpha$. Next, we  inductively construct  an increasing sequence $\{n_l\}_{l=1}^\infty$ of positive integers. To start the induction, we  pick $n_1$ such that $\vert\alpha_{i_{2n_1}}^{_{(n_1)}}-\alpha\vert\leq\frac{\varepsilon_1}{\sqrt{2}}$. If $n_1,\dots,n_{l-1}$ ($l\geq2$) have been choose, then,  noting  that
\[
\big(\alpha_{i_{2n_1}}^{_{(m)}},\dots,\alpha_{i_{2n_{l-1}}}^{_{(m)}}\big)\stackrel{\Vert\cdot\Vert_2}{\longrightarrow} 0 \;\;\;\;{\rm as}\;m\to\infty,
\]
we may choose a sufficiently large integer  $n_l>n_{l-1}$ such that
\[
\left({\sum}_{s=1}^{l-1}\big\vert\alpha_{i_{2n_s}}^{_{(n_l)}}\big\vert^2\right)^{1/2}\leq\frac{\varepsilon_l}{\sqrt{2}}
\]
and
\[
\vert\alpha_{i_{2n_l}}^{_{(n_l)}}-\alpha\vert\leq\frac{\varepsilon_l}{\sqrt{2}}.
\]
This completes the induction. 
For simplicity, we  still denote   $i_{2l}:=i_{2n_l}$ and $i_{2l+1}:=i_{2n_l+1}$ for every $l$, and thus
\[
\left({\sum}_{s=1}^{l-1}\big\vert\alpha_{i_{2s}}^{_{(l)}}\big\vert^2\right)^{1/2}\;\;{\rm and }\;\;\;\big\vert\alpha_{i_{2l}}^{_{(l)}}-\alpha\big\vert\leq\frac{\varepsilon_l}{\sqrt{2}}\;\;\;\;\;\;\;\;{\rm for\;every\;}l\geq2.
\]
It follows that 
\begin{align}\label{ei2lper}
\left\Vert\alpha e_{i_{2l}}-{\sum}_{s=1}^l\alpha_{i_{2s}}^{_{(l)}}e_{i_{2s}}\right\Vert_{\ell_2}\leq\varepsilon_l\;\;\;\;\;\;\;\;{\rm for\;every\;}l\geq1.
\end{align}
For each fixed $l\geq1$, for any finitely non-zero sequence $\{t_k\}_{k=l}^\infty$ of scalars, we have 
\begin{eqnarray*}
&&\left\Vert p_{_{[\xi_{i_{2s+1}}]_{s=1}^\infty}}T\left({\sum}_{k=l}^\infty t_ke_{\xi_{i_{2k+1}},\eta_{i_{2l}}}\right)p_{_{[\eta_{i_{2s}}]_{s=1}^\infty}}-{\sum}_{k=l}^\infty t_k\alpha e_{\xi_{i_{2k+1}},\eta_{i_{2l}}}\right\Vert_{\mathcal C_E}\\
&\stackrel{\eqref{PTPper}}{=}&\left\Vert {\sum}_{k=l}^\infty t_k\Big(\Big({\sum}_{s=1}^l\alpha_{i_{2s}}^{_{(l)}}e_{\xi_{i_{2k+1}},\eta_{i_{2s}}}\Big)-\alpha e_{\xi_{i_{2k+1}},\eta_{i_{2l}}}\Big)\right\Vert_{\mathcal C_E}\\
&=& \left\Vert {\sum}_{k=l}^\infty t_k\Big(\cdot\Big|\alpha e_{i_{2l}}-{\sum}_{s=1}^l\alpha_{i_{2s}}^{_{(l)}}e_{i_{2s}}\Big)e_{i_{2k+1}}\right\Vert_{\mathcal C_E}\\
&\stackrel{\eqref{ei2lper}}{\leq}&\varepsilon_l\left({\sum}_{k=l}^\infty\vert t_k\vert^2\right)^{1/2}.
\end{eqnarray*}
It follows that
\[
P_{[\xi_{i_{2k+1}}]_{k=1}^\infty,[\eta_{i_{2k}}]_{k=1}^\infty}T|_{[e_{\xi_{i_{2k+1}},\eta_{i_{2l}}}]_{1\leq l\leq k<\infty}}
\]
is a perturbation of $\alpha I_{[e_{\xi_{i_{2k+1}},\eta_{i_{2l}}}]_{1\leq l\leq k<\infty}}$ associated with the Schauder decomposition $\big\{[e_{\xi_{i_{2k+1}},\eta_{i_{2l}}}]_{k=l}^\infty\big\}_{l=1}^\infty$ for $\{\varepsilon_l\}_{l=1}^\infty$.
\end{proof}

\begin{lemma}\label{cei}
Suppose that $\mathcal T_E\not\approx\mathcal C_E$. For any operator $T\in\mathcal B(\mathcal C_E)$ and sequence $\{\varepsilon_l\}_{l=1}^\infty$ of positive numbers,  there exist an increasing sequence $\{i_k\}_{k=1}^\infty$ of positive integers and a scalar $\alpha$ such that
\[
P_{[\xi_{i_{2k+1}}]_{k=1}^\infty,[\eta_{i_{2k}}]_{k=1}^\infty}T|_{[e_{\xi_{i_{2k+1}},\eta_{i_{2l}}}]_{k,l=1}^\infty}
\]
is a perturbation of $\alpha I_{[e_{\xi_{i_{2k+1}},\eta_{i_{2l}}}]_{k,l=1}^\infty}$ associated with the  Schauder decomposition $\big\{[e_{\xi_{i_{2k+1}},\eta_{i_{2l}}}]_{\min\{k,l\}=s}\big\}_{s=1}^\infty$ for $\{\varepsilon_l\}_{l=1}^\infty$.
\end{lemma}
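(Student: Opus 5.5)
The plan is to reduce Lemma \ref{cei} to Lemma \ref{tei} on the lower triangle, handle the upper triangle by the same lemma applied to the transposed operator, and then use the unboundedness of the triangular projection to force the two scalars to coincide.

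\textbf{Step 1 (lower triangle).} Apply Lemma \ref{tei} to $T|_{[e_{\xi_i,\eta_j}]_{1\le j\le i<\infty}}\in\mathcal B(\mathcal T_E,\mathcal C_E)$. This gives a subsequence $\{i_k\}$ and a scalar $\alpha$ such that $P_{[\xi_{i_{2k+1}}],[\eta_{i_{2k}}]}T$ is an $\{\varepsilon_l\}$-perturbation of $\alpha I$ on the lower part $[e_{\xi_{i_{2k+1}},\eta_{i_{2l}}}]_{l\le k}$.

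\textbf{Step 2 (upper triangle).} Apply Lemma \ref{tei} to the transpose. Define $\tau(x)=\sum\overline{\cdots}$, i.e.\ the map $e_{\xi_i,\eta_j}\mapsto e_{\xi_j,\eta_i}$. Since $s(\tau x)=s(x)$, this map is an isometry of $\mathcal C_E$, so $\tau T\tau$ is bounded. Applying Lemma \ref{tei} to $\tau T\tau$ restricted to the lower triangle built on the already-chosen subsequence yields a further subsequence and a scalar $\beta$. Some index bookkeeping is needed here: the odd/even roles of $\xi$ and $\eta$ are swapped, so I would re-index by taking every other term so that both parities survive. The result is that the compression of $T$ to the upper part $[e_{\xi_{i_{2k+1}},\eta_{i_{2l}}}]_{k<l}$ is a perturbation of $\beta I$. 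Passing to a subsequence preserves the perturbation property from Step 1, by the remark following Theorem \ref{2d}.

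\textbf{Step 3 (gluing).} Let $P$ denote the compression $P_{[\xi_{i_{2k+1}}],[\eta_{i_{2k}}]}$ and write $Z$ for the full span $[e_{\xi_{i_{2k+1}},\eta_{i_{2l}}}]_{k,l}$. The operator $PT|_Z$ is bounded. By the two previous steps, on each block of the Schauder decomposition $\{[e]_{\min\{k,l\}=s}\}$ it is close to
\[
D:=\alpha\,(\text{lower projection})+\beta\,(\text{upper projection}).
\]
Lemma \ref{sdper} then shows that $\|PT|_Z-D\|\le C\big(\sum_s\varepsilon_s^r\big)^{1/r}$ is finite and small. Hence $D$ is bounded on $Z$, and therefore so is
\[
D-\beta I=(\alpha-\beta)\,T^{E,\{\xi_{i_{2k+1}},\eta_{i_{2k}}\}}.
\]
Because $\mathcal T_E\not\approx\mathcal C_E$, Lemma \ref{qb-tenotce} (the Banach case, Theorem \ref{tria}) says the triangular projection is unbounded. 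This forces $\alpha=\beta$. Then $D=\alpha I$, and the blockwise estimates from Steps 1 and 2 give exactly the claimed perturbation.

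\textbf{Main obstacle.} The hardest part will be Step 2's bookkeeping. The per-block error for the decomposition $\{\min\{k,l\}=s\}$ combines the column piece ($k\ge s$, fixed $l=s$) with the row piece ($l>s$, fixed $k=s$). Bounding their sum requires the triangle inequality in $\mathcal C_E$, so I expect the blocks to come out with errors like $2\varepsilon_s$, which is harmless after rescaling $\varepsilon_s$. I would also need to check that the $\varepsilon$ in Step 3 can be taken small enough that the $\alpha=\beta$ argument is valid. It is valid for any summable $\varepsilon$, since boundedness, not smallness, is what matters there.
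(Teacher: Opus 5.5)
This is essentially the paper's argument. It applies Lemma~\ref{tei} on the lower triangle, then the same lemma on the upper triangle (the paper only says ``arguing similarly'', and your transpose $e_{\xi_i,\eta_j}\mapsto e_{\xi_j,\eta_i}$, which preserves singular values, is the natural way to make that precise). Unboundedness of the triangular projection (Theorem~\ref{tria}) then forces $\alpha=\beta$.

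One step needs repair, at the start of Step~3. Lemma~\ref{sdper} assumes both operators are already bounded, so it cannot be what proves $D$ bounded. Instead, apply Theorem~\ref{2d} to $PT|_Z$ with $R_s=D|_{X_s}$ on the $s$-th block $X_s$. Each $R_s$ is bounded, because the two parts of a block are $x\mapsto xp_{[\eta_{i_{2s}}]}$ and $x\mapsto x(1-p_{[\eta_{i_{2s}}]})$. This produces a bounded operator agreeing with $D$ on the finitely supported span, which is exactly the role Lemma~\ref{2d''} plays in the paper's proof.
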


\begin{proof}
Applying  Lemma \ref{tei} for $T|_{[e_{\xi_i,\eta_j}]_{1\leq j\leq i<\infty}}$ and Lemma \ref{2d''},
there exists an increasing sequence $\{i_k\}_{k=1}^\infty$ of positive integers, a scalar $\alpha$, and an operator $S\in\mathcal B([e_{\xi_{2k+1},\eta_{2l}}]_{k,l=1}^\infty,\mathcal C_E)$ such that
\[
S|_{[e_{\xi_{i_{2k+1}},\eta_{i_{2l}}}]_{1\leq l\leq k<\infty}}=\alpha I_{[e_{\xi_{i_{2k+1}},\eta_{i_{2l}}}]_{1\leq l\leq k<\infty}}, \]
\[S|_{[e_{\xi_{i_{2k+1}},\eta_{i_{2l}}}]_{1\leq k<l<\infty}}=P_{[\xi_{i_{2k+1}}]_{k=1}^\infty,[\eta_{i_{2k}}]_{k=1}^\infty}T|_{[e_{\xi_{i_{2k+1}},\eta_{i_{2l}}}]_{1\leq k<l<\infty}},
\]
and
\[
P_{[\xi_{i_{2k+1}}]_{k=1}^\infty,[\eta_{i_{2k}}]_{k=1}^\infty}T|_{[e_{\xi_{i_{2k+1}},\eta_{i_{2l}}}]_{k,l=1}^\infty}
\]
is a perturbation of $S$ associated with the Schauder decomposition $\big\{[e_{\xi_{i_{2k+1}},\eta_{i_{2l}}}]_{\min\{k,l\}=s}\big\}_{s=1}^\infty$ for $\{\varepsilon_l\}_{l=1}^\infty$. 
Arguing similarly for    the upper triangular part  of $[e_{\xi_{i_{2k+1}},\eta_{i_{2l}}}]_{k,l=1}^\infty$,  
Without loss of generality, we may  assume that there exists a scalar $\alpha'$ such that
\[
S|_{[e_{\xi_{i_{2k+1}},\eta_{i_{2l}}}]_{1\leq k<l<\infty}}=\alpha' I_{[e_{\xi_{i_{2k+1}},\eta_{i_{2l}}}]_{1\leq k<l<\infty}}.
\]
Note that
\[
(\alpha-\alpha')T^{E,\{\xi_{2k+1},\eta_{2k}\}_{k=1}^\infty}=S|_{{\rm span}(\{e_{\xi_{i_{2k+1}},\eta_{i_{2l}}}\}_{k,l=1}^\infty)}-\alpha'I_{{\rm span}(\{e_{\xi_{i_{2k+1}},\eta_{i_{2l}}}\}_{k,l=1}^\infty)}.
\]
If $\alpha\neq\alpha'$, then it yields that the  triangular projection $T^{E,\{\xi_{2k+1},\eta_{2k}\}_{k=1}^\infty}$ (see \eqref{tria-proj} for the definition) is bounded,  which contradicts  Theorem \ref{tria}. 
Hence, $S=\alpha I_{[e_{\xi_{i_{2k+1}},\eta_{i_{2l}}}]_{k,l=1}^\infty}$, this completes the proof.
\end{proof}

\begin{theorem}\label{cela}
Let $E$ be a separable Banach symmetric space. The set 
$\mathcal M_{\mathcal C_E}$ is the largest proper ideal in $\mathcal B(\mathcal C_E)$.
\end{theorem}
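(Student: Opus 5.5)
The plan is to verify the hypothesis of Lemma \ref{mxsuf} for $X=\mathcal C_E$. The claim then follows at once from that lemma together with Proposition \ref{dj5.1}. We split into two cases according to whether $\mathcal T_E\approx\mathcal C_E$.

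\emph{Case $\mathcal T_E\approx\mathcal C_E$.} Fix an isomorphism $V:\mathcal C_E\to\mathcal T_E$. Then $T\mapsto VTV^{-1}$ is an algebra isomorphism of $\mathcal B(\mathcal C_E)$ onto $\mathcal B(\mathcal T_E)$, and it maps $\mathcal M_{\mathcal C_E}$ onto $\mathcal M_{\mathcal T_E}$: indeed $I_{\mathcal C_E}=ATB$ if and only if $I_{\mathcal T_E}=(VAV^{-1})(VTV^{-1})(VBV^{-1})$. Hence the statement is transferred from Theorem \ref{tela}.

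\emph{Case $\mathcal T_E\not\approx\mathcal C_E$.} Let $T\in\mathcal B(\mathcal C_E)$, and fix small $\varepsilon_l>0$ with $(\sum_l\varepsilon_l)K_c<1/2$, where $K_c$ is the decomposition constant. Apply Lemma \ref{cei} to obtain $\{i_k\}$ and a scalar $\alpha$. Set
\[
Y=[e_{\xi_{i_{2k+1}},\eta_{i_{2l}}}]_{k,l=1}^\infty ,
\]
which is isometric to $\mathcal C_E$. Let $J=\iota_{Y\hookrightarrow\mathcal C_E}$ and $K=P_{[\xi_{i_{2k+1}}]_{k=1}^\infty,[\eta_{i_{2k}}]_{k=1}^\infty}$, viewed as a map $\mathcal C_E\to Y$. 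Since $E$ is a Banach space, $K$ is a contractive projection by \eqref{projection inequality}/\eqref{bs-diag-proj}, and its range is exactly $Y$, because it compresses to the block $p_{[\xi_{i_{2k+1}}]}\,\cdot\,p_{[\eta_{i_{2k}}]}$, whose range is spanned by the $e_{\xi_{i_{2k+1}},\eta_{i_{2l}}}$. Thus $KJ=I_Y$.

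By Lemma \ref{cei}, $KTJ$ is a perturbation of $\alpha I_Y$ associated with the Schauder decomposition $\{[e_{\xi_{i_{2k+1}},\eta_{i_{2l}}}]_{\min\{k,l\}=s}\}_{s}$ for $\{\varepsilon_s\}$. Lemma \ref{sdper} with $r=1$ then gives $\Vert KTJ-\alpha I_Y\Vert\le K_c\sum_s\varepsilon_s<1/2$. This verifies \eqref{alpha JK}, and Lemma \ref{mxsuf} concludes.

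The main point needing care is that $K_c$ does not depend on the choice of $\{\varepsilon_l\}$. The decomposition $\{[e_{\xi_{i_{2k+1}},\eta_{i_{2l}}}]_{\min\{k,l\}=s}\}$ is isometric to the standard ``L-shaped'' decomposition of $\mathcal C_E$, whatever the subsequence. Its projections are differences of the corner compressions $p\,x\,p$, each of norm $\le1$, so $K_c\le2$ uniformly. It therefore suffices to choose $\sum\varepsilon_l<1/4$ in advance, before invoking Lemma \ref{cei}.
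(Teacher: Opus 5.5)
Your proposal is correct and is the paper's argument: the case $\mathcal T_E\approx\mathcal C_E$ is transferred from Theorem~\ref{tela}, and otherwise Lemma~\ref{cei} supplies the compression $K=P_{[\xi_{i_{2k+1}}]_{k=1}^\infty,[\eta_{i_{2k}}]_{k=1}^\infty}$ and the inclusion $J$ needed to apply Lemma~\ref{mxsuf}. You also make explicit what the paper's two-line proof leaves implicit: the uniform bound $K_c\le 2$ for the L-shaped decomposition, which lets you fix $\sum_l\varepsilon_l<1/4$ in advance so that Lemma~\ref{sdper} gives $\Vert KTJ-\alpha I_Y\Vert<1/2$.
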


\begin{proof}
If $\mathcal C_E\approx\mathcal T_E$, then the assertion  follows from Theorem \ref{tela}. If $\mathcal C_E\not\approx\mathcal T_E$, then the assertion  follows from Lemmas \ref{cei} and \ref{mxsuf}.
\end{proof}

Now, we  consider  the case when $\mathcal C_E=\mathcal C_p$, $1\leq p<2$.
We end this section by proving  Proposition \ref{cpi} below,    which will be useful in the next section. 
Before proceeding to the proof of Proposition \ref{cpi}, we need the following lemma. 
\begin{lemma}\label{l2cp}
Let $1\leq p<2$. For any bounded operator $T:\ell_2\to\mathcal C_p$, we have 
\begin{equation}
\Vert P_{[\xi_i]_{i=N}^\infty,[\eta_i]_{i=N}^\infty}T\Vert\to0\;\;\;\;{\rm as}\;N\to\infty.
\end{equation}
\end{lemma}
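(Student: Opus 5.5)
We argue by contradiction, using the classical fact that for $1\le p<2$, bounded sequences in $\mathcal C_p$ supported in corners $p_{F_k}(\cdot)p_{G_k}$ with mutually orthogonal $F_k$ and mutually orthogonal $G_k$ behave like the $\ell_p$-basis, which is incompatible with the $\ell_2$-domain.

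Suppose the conclusion fails. The sequence $\Vert P_{[\xi_i]_{i=N}^\infty,[\eta_i]_{i=N}^\infty}T\Vert$ is nonincreasing in $N$, because $P_{N'}=P_{N'}P_N$ for $N'\ge N$ and $P_N$ is contractive by \eqref{projection inequality}. So there is $\delta>0$ with $\Vert P_NT\Vert>\delta$ for all $N$, where we abbreviate $P_N:=P_{[\xi_i]_{i=N}^\infty,[\eta_i]_{i=N}^\infty}$.

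We now construct inductively unit vectors $u_k\in\ell_2$ and integers $N_1<M_1<N_2<M_2<\cdots$ as follows.
\begin{itemize}
\item Choose $u_1$ with $\Vert P_{N_1}Tu_1\Vert_p>\delta$, where $N_1=1$.
\item Since $Tu_1\in\mathcal C_p$ and finite-rank truncations converge in $\mathcal C_p$, pick $M_1$ such that $P_{N_1}Tu_1$ is $\varepsilon_1$-close to $p_{[\xi_i]_{N_1}^{M_1}}Tu_1\,p_{[\eta_i]_{N_1}^{M_1}}$.
\item To keep the new vector nearly orthogonal to what came before, note that $\ell_2$ is reflexive. If $Tv$ with $v$ in a finite-codimensional subspace could not keep large norm on $P_N$, we would get a contradiction with $\Vert P_NT\Vert>\delta$ restricted to finite-codimensional pieces. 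So at step $k$ we choose $u_k\perp u_1,\dots,u_{k-1}$ and also nearly in the joint kernel of the finitely many compressions $p_{[\xi_i]_{i<M_{k-1}}}T(\cdot)$ and $T(\cdot)p_{[\eta_i]_{i<M_{k-1}}}$. These compressions have finite rank into $\mathcal C_p$, so their joint kernel has finite codimension.
\item Then set $N_k=M_{k-1}$ and pick $M_k$ as before.
\end{itemize}
The main obstacle is guaranteeing that $\Vert P_{N_k}T\Vert$ stays larger than $\delta/2$ on this finite-codimensional subspace. We handle it as follows: $P_{N}T$ restricted to a finite-codimensional subspace $W$ differs from $P_NT$ by an operator with finite-dimensional domain piece. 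Choosing $N$ even larger and using $P_NTx\to0$ for each fixed $x$ (the convergence $P_N y\to0$ for $y\in\mathcal C_p$, valid by separability), the norm on $W$ still exceeds $\delta/2$ for large $N$.

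With this construction, $Tu_k$ is, up to $\varepsilon_k$ with $\sum\varepsilon_k$ small, equal to a block $x_k=p_{F_k}x_kp_{G_k}$ plus a part $z_k$ lying in the ``L-shaped'' region $\bigl(I-P_{N_k}\bigr)$. Moreover $\Vert x_k\Vert_p\ge\delta/2$, where $F_k=[\xi_i]_{N_k}^{M_k}$ and $G_k=[\eta_i]_{N_k}^{M_k}$. We then apply the block-diagonal projection $P_{\{F_k,G_k\}}$, which is contractive by \eqref{bs-diag-proj}. It kills $z_k$ (since $z_k$ lives outside the diagonal blocks) and leaves $x_k$. For scalars $t_k$ this gives
\[
\Big(\sum_k|t_k|^p\Big)^{1/p}\frac{\delta}{2}\lesssim\Big\Vert\sum_k t_kx_k\Big\Vert_p\lesssim \Big\Vert T\Big(\sum_k t_ku_k\Big)\Big\Vert_p\le\Vert T\Vert\Big(\sum_k|t_k|^2\Big)^{1/2}.
\]
Taking $t_k=1$ for $k\le n$ yields $n^{1/p}\lesssim n^{1/2}$, which is impossible for $p<2$. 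This contradiction proves the lemma.
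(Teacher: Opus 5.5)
Your overall strategy is exactly the paper's proof. You take a positive lower bound $\delta$ for the nonincreasing sequence $\Vert P_NT\Vert$, pick orthonormal $u_k$ by a gliding hump in finite-codimensional subspaces, apply the contractive block-diagonal projection, and play the lower $\ell_p$-estimate against the upper $\ell_2$-estimate.

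However, the step that produces the finite-codimensional subspaces is false as written. The one-sided compressions $x\mapsto p_{[\xi_i]_{i<M}}T(x)$ and $x\mapsto T(x)p_{[\eta_i]_{i<M}}$ need not have finite rank. The reason is that $p_F\mathcal C_p$ is infinite-dimensional even when $\dim F<\infty$: it contains $e_{\xi,\eta}$ for every $\xi\in F$ and every $\eta\in H$. For example, $Te_j=e_{\xi_1,\eta_j}$ defines an isometry $\ell_2\to\mathcal C_p$ with $p_{[\xi_1]}T=T$ injective, so the joint kernel can be $\{0\}$. Your lower bound $\Vert P_NT|_W\Vert>\delta/2$ needs $W^\perp$ to be finite-dimensional, so the construction breaks at this point.

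Relatedly, the claim that $P_{\{F_k,G_k\}}$ kills $z_k$ ``since $z_k$ lives outside the diagonal blocks'' is wrong for the full L-shaped region $(I-P_{N_k})\mathcal C_p$. That region contains the earlier blocks $F_j\times G_j$, $j<k$.

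The fix is to ask for less. Take $u_k\perp u_1,\dots,u_{k-1}$ in the kernel of the two-sided corner compression $x\mapsto p_{[\xi_i]_{i<N_k}}T(x)p_{[\eta_i]_{i<N_k}}$. Its range lies in the finite-dimensional space $p_{[\xi_i]_{i<N_k}}\mathcal C_p\,p_{[\eta_i]_{i<N_k}}$, so $W_k$ is genuinely finite-codimensional and your $\delta/2$ argument applies. With this choice:
\begin{itemize}
\item $Tu_k$ has no component in the blocks $F_j\times G_j$, $j<k$.
\item Its two ``arms'' $p_{[\xi_i]_{i<N_k}}Tu_k\,p_{[\eta_i]_{i\ge N_k}}$ and $p_{[\xi_i]_{i\ge N_k}}Tu_k\,p_{[\eta_i]_{i<N_k}}$ meet no diagonal block.
\item Its contribution to the blocks $m>k$ equals $P_{\{F_m,G_m\}_{m>k}}\bigl(P_{N_k}Tu_k-p_{F_k}Tu_kp_{G_k}\bigr)$, which has norm at most $\varepsilon_k$ by your choice of $M_k$.
\end{itemize}
Also take $N_{k+1}=M_k+1$ rather than $M_k$, so that the $F_k$ are mutually orthogonal. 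With these changes your chain of inequalities holds up to an additive error $\sum_k|t_k|\varepsilon_k$, and the argument coincides with the paper's.
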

\begin{proof}
Note that $\Vert P_{[\xi_i]_{i=N}^\infty,[\eta_i]_{i=N}^\infty}T\Vert$ is decreasing  as  $N$  increases.
Let   $c:= \lim_{N\to\infty}\Vert P_{[\xi_i]_{i=N}^\infty,[\eta_i]_{i=N}^\infty}T\Vert$.
Now suppose that $c>0$. Since $\lim_{N\to\infty}\Vert P_{[\xi_i]_{i=N}^\infty,[\eta_i]_{i=N}^\infty}T|_{[e_i]_{i=1}^n}\Vert=0$ for all $n\in\mathbb N^+$, it follows that for every $N$ and $n$,
\begin{align*}
\Vert P_{[\xi_i]_{i=N}^\infty,[\eta_i]_{i=N}^\infty}T|_{[e_i]_{i=n+1}^\infty}\Vert\geq&\lim_{N\to\infty}\Vert P_{[\xi_i]_{i=N}^\infty,[\eta_i]_{i=N}^\infty}T|_{[e_i]_{i=n+1}^\infty}\Vert\\
\geq&\lim_{N\to\infty}\Vert P_{[\xi_i]_{i=N}^\infty,[\eta_i]_{i=N}^\infty}T|_{[e_i]_{i=1}^\infty}\Vert-\lim_{N\to\infty}\Vert P_{[\xi_i]_{i=N}^\infty,[\eta_i]_{i=N}^\infty}T|_{[e_i]_{i=1}^n}\Vert\\
\geq& c.
\end{align*}
By standard perturbation arguments, there exist two increasing sequences $\{n_k\}_{k=1}^\infty$ and $\{m_k\}_{k=1}^\infty$ of positive integers, and a normalized disjoint  sequence $\{f_k\}_{k=1}^\infty$ in  $\ell_2$ with $f_k\in[e_i]_{i=n_k+1}^{n_{k+1}}$ for every $k$, such that
\[
\Big\Vert P_{[\xi_i]_{i=m_k+1}^{m_{k+1}},[\eta_i]_{i=m_k+1}^{m_{k+1}}}T(f_k)\Big\Vert_p \geq c/2\;\;\;\;\;\;{\rm for\;every}\;k.
\]
Since the operator $P_{\{[\xi_i]_{i=m_k+1}^{m_{k+1}},[\eta_i]_{i=m_k+1}^{m_{k+1}}\}_{k=1}^\infty}T|_{[f_k]_{k=1}^\infty}$ is bounded from $\ell_2$ into $\cC_p$, it follows that $(c/2)\left\Vert\cdot\right\Vert_{\ell_p}\leq \left\Vert\cdot\right\Vert_{\ell_2}$, which is a contradiction for $p<2$.
\end{proof}

\begin{proposition}\label{cpi}

For any operator $T\in\mathcal B(\mathcal C_1)$ and sequence $\{\varepsilon_l\}_{l=1}^\infty$ of positive numbers,   there exist an increasing sequence $\{i_k\}_{k=1}^\infty$ of positive integers and a scalar $\alpha$ such that
\[
P_{[\xi_{i_{2k+1}}]_{k=1}^\infty,[\eta_{i_{2k}}]_{k=1}^\infty}T|_{[e_{\xi_{i_{2k+1}},\eta_{i_{2l}}}]_{k,l=1}^\infty}
\]
is a perturbation of $\alpha I_{[e_{\xi_{i_{2k+1}},\eta_{i_{2l}}}]_{k,l=1}^\infty}$ associated with the Schauder decomposition $\big\{[e_{\xi_{i_{2k+1}},\eta_{i_{2l}}}]_{\min\{k,l\}=s}\big\}_{s=1}^\infty$ for $\{\varepsilon_l\}_{l=1}^\infty$, and
\[
\left\Vert P_{[\xi_{i_{2k+1}}]_{k=N}^\infty,[\eta_{i_{2k}}]_{k=N}^\infty}(T-\alpha I)|_{[e_{\xi_{i_{2k+1}},\eta_{i_{2l}}}]_{k,l=1}^\infty}\right\Vert\to0\;\;\;\;{\rm as}\;N\to\infty.
\]
\end{proposition}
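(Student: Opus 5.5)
The first assertion is Lemma \ref{cei} applied with $E=\ell_1$. Indeed, $\mathcal T_1\not\approx\mathcal C_1$ by Example \ref{exmple} together with Theorem \ref{tria}. So Lemma \ref{cei} gives an increasing sequence $\{i_k\}$ and a scalar $\alpha$ such that $P_{[\xi_{i_{2k+1}}]_{k=1}^\infty,[\eta_{i_{2k}}]_{k=1}^\infty}T$, restricted to $[e_{\xi_{i_{2k+1}},\eta_{i_{2l}}}]_{k,l=1}^\infty$, is a perturbation of $\alpha I$ for any prescribed $\{\varepsilon_l\}$. The remaining work is the tail estimate. I intend to obtain it by passing to a further subsequence, with the $\varepsilon_l$ shrunk so that both conclusions hold at once. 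Since every subsequence of the one from Lemma \ref{cei} still yields a perturbation of $\alpha I$ for the same (or smaller) $\varepsilon_l$, I may refine freely. Put $S=T-\alpha I$ and $X=[e_{\xi_{i_{2k+1}},\eta_{i_{2l}}}]_{k,l=1}^\infty$.

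I will split $x\in X$ along the Schauder decomposition $X_s=[e_{\xi_{i_{2k+1}},\eta_{i_{2l}}}]_{\min\{k,l\}=s}$, so $x=\sum_s x_s$. Write $P_N$ for $P_{[\xi_{i_{2k+1}}]_{k=N}^\infty,[\eta_{i_{2k}}]_{k=N}^\infty}$. Because $P_N$ is contractive and $P_N=P_NP_1$ on the range, the perturbation bound gives
\[
\Big\Vert P_N S\big({\textstyle\sum_{s\geq M}} x_s\big)\Big\Vert_1\leq K_c\Big({\textstyle\sum_{s\geq M}}\varepsilon_s\Big)\Vert x\Vert_1,
\]
which is small once $M$ is large. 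The head $\sum_{s<M}x_s$ lies in the finitely many rows and columns indexed by $k<M$ or $l<M$, i.e. in $p_{[\xi_{i_{2k+1}}]_{k<M}}\mathcal C_1+\mathcal C_1p_{[\eta_{i_{2l}}]_{l<M}}$. Each single row $[e_{\xi_{i_{2k+1}},\eta_{i_{2l}}}]_{l}$ and each column is isometric to $\ell_2$. So the restriction of $S$ to each of these $2M$ copies of $\ell_2$ is a bounded operator $\ell_2\to\mathcal C_1$.

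Here I apply Lemma \ref{l2cp} with $p=1$ and the orthonormal sequences $\{\xi_{i_{2k+1}}\}$, $\{\eta_{i_{2k}}\}$. It gives $\Vert P_N S|_{\text{row/col}}\Vert\to0$ as $N\to\infty$ for each fixed row or column. The projections $f\mapsto p_{[\xi_{i_{2k+1}}]}f$ and $f\mapsto fp_{[\eta_{i_{2l}}]}$ onto a row or column are contractive on $\mathcal C_1$. Hence, for fixed $M$, the head contributes at most $\sum_{\text{$2M$ pieces}}\Vert P_NS|_{\text{piece}}\Vert\,\Vert x\Vert_1$, after rewriting the head via the finite inclusion–exclusion decomposition
\[
x\mapsto \sum_{k<M}p_{\xi_{i_{2k+1}}}x+\sum_{l<M}x\,p_{\eta_{i_{2l}}}-\sum_{k,l<M}p_{\xi_{i_{2k+1}}}x\,p_{\eta_{i_{2l}}}.
\]
That is a finite sum of contractive maps. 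The conclusion then follows: given $\delta$, choose $M$ to make the tail small, then $N$ to make the head small.

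The main obstacle is making the head/tail split legitimate. The head $\sum_{s<M}x_s$ is not literally $x$ minus the tail block obtained by compressing to $k,l\geq M$. Rather, the tail equals $P_{[\xi_{i_{2k+1}}]_{k\geq M},[\eta_{i_{2l}}]_{l\geq M}}x$, which is contractive in $\mathcal C_1$, and the head equals $x$ minus this. I will use that identity, instead of $K_c$, to control both pieces by $\Vert x\Vert_1$. This makes the estimate uniform in $x$, so the argument needs no further diagonalization.
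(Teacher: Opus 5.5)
Your proof is correct and follows the paper's argument. You use Lemma~\ref{cei} (valid because $\mathcal T_1\not\approx\mathcal C_1$) for the perturbation, split $[e_{\xi_{i_{2k+1}},\eta_{i_{2l}}}]_{k,l=1}^\infty$ into a tail bounded by a constant times $\sum_{s\geq M}\varepsilon_s$, and handle the finite-rank head with Lemma~\ref{l2cp}. The only differences are cosmetic: the paper applies Lemma~\ref{l2cp} once to the whole head $[e_{\xi_{i_{2k+1}},\eta_{i_{2l}}}]_{\min\{k,l\}\leq M}\approx\ell_2$ rather than row by row and column by column, and your opening remark about passing to a further subsequence is unnecessary, as your own final sentence acknowledges.
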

\begin{proof}
By Lemma \ref{cei}, it suffices to prove that
\[
\lim_{N\to\infty}\left\Vert P_{[\xi_{i_{2k+1}}]_{k=N}^\infty,[\eta_{i_{2k}}]_{k=N}^\infty}(T-\alpha I)|_{[e_{\xi_{i_{2k+1}},\eta_{i_{2l}}}]_{k,l=1}^\infty}\right\Vert=0.
\]
For any positive integer $n$, we have 
\begin{eqnarray*}
&&\left\Vert P_{[\xi_{i_{2k+1}}]_{k=N}^\infty,[\eta_{i_{2k}}]_{k=N}^\infty}(T-\alpha I)|_{[e_{\xi_{i_{2k+1}},\eta_{i_{2l}}}]_{k,l=1}^\infty}\right\Vert\\
&\leq&2\left\Vert P_{[\xi_{i_{2k+1}}]_{k=N}^\infty,[\eta_{i_{2k}}]_{k=N}^\infty}(T-\alpha I)|_{[e_{\xi_{i_{2k+1}},\eta_{i_{2l}}}]_{\min\{k,l\}\leq n}}\right\Vert\\
& &\qquad \qquad +\left\Vert P_{[\xi_{i_{2k+1}}]_{k=N}^\infty,[\eta_{i_{2k}}]_{k=N}^\infty}(T-\alpha I)|_{[e_{\xi_{i_{2k+1}},\eta_{i_{2l}}}]_{k,l=n+1}^\infty}\right\Vert\\
&\stackrel{{\rm Lem}\;\ref{cei}}{\leq}&2\left\Vert P_{[\xi_{i_{2k+1}}]_{k=N}^\infty,[\eta_{i_{2k}}]_{k=N}^\infty}(T-\alpha I)|_{[e_{\xi_{i_{2k+1}},\eta_{i_{2l}}}]_{\min\{k,l\}\leq n}}\right\Vert+2\sum_{l=n+1}^\infty\varepsilon_l.
\end{eqnarray*}
Noting  that $[e_{\xi_{i_{2k+1}},\eta_{i_{2l}}}]_{\min\{k,l\}\leq n}\approx\ell_2$, by Lemma \ref{l2cp}, 
we have  $\left\Vert P_{[\xi_{i_{2k+1}}]_{k=N}^\infty,[\eta_{i_{2k}}]_{k=N}^\infty}(T-\alpha I)|_{[e_{\xi_{i_{2k+1}},\eta_{i_{2l}}}]_{\min\{k,l\}\leq n}}\right\Vert\to 0$ as $N \to \infty $, and therefore, 
\[
\varlimsup_{N\to\infty}\left\Vert P_{[\xi_{i_{2k+1}}]_{k=N}^\infty,[\eta_{i_{2k}}]_{k=N}^\infty}(T-\alpha I)|_{[e_{\xi_{i_{2k+1}},\eta_{i_{2l}}}]_{k,l=1}^\infty}\right\Vert\leq2\sum_{l=n+1}^\infty\varepsilon_l.
\]
Without loss of generality, we may assume that  $\varepsilon_l$'s are  small enough such that $\sum_{l=1}^\infty\varepsilon_l<\infty $.
Since $n$ is arbitrarily taken and $\sum_{l=n+1}^\infty\varepsilon_l\to0$ as $n\to0$, it follows that 
\[
\varlimsup_{N\to\infty}\left\Vert P_{[\xi_{i_{2k+1}}]_{k=N}^\infty,[\eta_{i_{2k}}]_{k=N}^\infty}(T-\alpha I)|_{[e_{\xi_{i_{2k+1}},\eta_{i_{2l}}}]_{k,l=1}^\infty}\right\Vert =0 .
\]
The proof is complete. 
\end{proof}

\begin{remark}
A result analogous to  Proposition \ref{cpi} holds  for $\mathcal T_p$, $1\leq p<2$.
\end{remark}

\section{The structure of commutator on $\mathcal C_1$}\label{section:C1}

In this section, we  apply results obtained in   previous sections to obtain a characterization of the commutators on $\mathcal C_1$.

Now, let $\{\xi_i\}_{i=1}^\infty$ and $\{\eta_i\}_{i=1}^\infty$ be two orthonormal bases of $H$.

\subsection{Every $T\in\mathcal M_{\mathcal C_1}$ is a commutator}

\begin{definition}\label{4.1}
	 A sequence  $\{X_i\}_{i=0}^\infty$   of closed subspaces of a Banach space ${X}$ is said to be an \emph{$\ell_p$-decomposition} of ${X}$ for $1\leq p<\infty$, or, $p=0$, if  the following two conditions are satisfied.
\begin{itemize}
	\item [(1)]  $\{X_i\}_{i=0}^\infty$ is a Schauder decomposition of ${X}$ for which there exists a positive constant $K$, such that for every convergent series $\sum_{i=0}^\infty x_i\in{X}$ with $x_i\in X_i$,
\[
\frac{1}{K}
\norm{  (\Vert x_i\Vert )_{i=1}^\infty   }_p 
\leq\bigg\Vert\sum_{i=0}^\infty x_i\bigg\Vert_X \leq K 
\norm{  (\Vert x_i\Vert )_{i=1}^\infty   }_p;
\]
      \item [(2)] The spaces $X_i$  ($i=0,1,2,....$) are uniformly  linear isomorphic to ${X}$.
\end{itemize}
\end{definition}

The following lemma is a combination of several  results due to  Dosev \cite[Lemma 5, Corollary 7 \& Theorem 8]{Dosev_l1}, whose proof can be found in  \cite[Lemma 4.7]{CY}. 
\begin{lemma}\label{y}
Let $\mathcal{D}=\{X_i\}_{i=0}^\infty$ be an $\ell_p$-decomposition of a Banach space ${X}$ and  $\widetilde{P}_n=\sum_{i=0}^n P_{\mathcal{D},i}$, where $P_{\mathcal{D},i}$ is  the natural projection from ${X}=\sum_{i=0}^\infty X_i$ onto $X_i$. Suppose that $T\in\mathcal{B}({X})$ satisfies
$$
\lim_{n\to\infty}\big\Vert(I-\widetilde{P}_n)T(I-P_{\mathcal{D},0})\big\Vert=\lim_{n\to\infty}\big\Vert T(I-\widetilde{P}_n)\big\Vert=0.
$$
Then $T$ is a commutator in $\mathcal B(X)$.
\end{lemma}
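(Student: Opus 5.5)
This is Lemma~\ref{y}, which combines Dosev's results \cite[Lemma 5, Corollary 7 \& Theorem 8]{Dosev_l1}, so I would follow Dosev's scheme. The main tool is the classical fact that on an $\ell_p$-decomposition the shift-type operators are available. Write $X\approx(\sum X_i)_p$ and, using (2) of Definition~\ref{4.1}, fix uniformly bounded isomorphisms $J_i:X_0\to X_i$.

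First I would handle the easy part. By hypothesis $\Vert T(I-\widetilde P_n)\Vert\to0$ and $\Vert(I-\widetilde P_n)T(I-P_{\mathcal D,0})\Vert\to0$. So I pass to a sparse sequence $n_1<n_2<\cdots$ along which these norms are summable. I then regroup the decomposition into blocks $Y_0=X_0$ and $Y_k=\sum_{n_{k-1}<i\le n_k}X_i$. This is again an $\ell_p$-decomposition, because the blocks are uniformly isomorphic to $X$ via $X\approx X\oplus X$. Relative to the blocks, $T$ is then a small perturbation of an operator $T'$ whose matrix $(T'_{jk})$ satisfies two conditions. First, $T'_{jk}=0$ unless $j\le k+1$, or $k=0$ with arbitrary $j$; that is, apart from column $0$, $T'$ is block upper-Hessenberg. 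Second, the column norms $\sup_j\Vert T'_{jk}\Vert$ are summable in $k$.

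The key step is to write such an operator as $AB-BA$, with $B$ a weighted backward shift on the blocks, $B(\oplus y_k)=\oplus y_{k+1}$ after identifying every block with $Y_0$. This $B$ is bounded on $(\sum Y_k)_p$. I then solve $[A,B]=T'$ recursively for the entries of $A$, using that $B$ has a right inverse, the forward shift $S$ with $BS=I$. The recursion $A=\sum_{m\ge0}S^{m}T'B^{m}$-type (telescoping) gives $AB-BA$ equal to $T'$ modulo terms controlled by the decay of the columns. Boundedness of $A$ comes from the summable column norms. The upper-Hessenberg shape makes the series converge in operator norm.

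The column $0$ is the obstruction, since the hypothesis says nothing about $(I-\widetilde P_n)TP_{\mathcal D,0}$. Here I would use Dosev's trick: the identity's failure to be a commutator is the only obstruction, and one circumvents column $0$ by first conjugating so that $X_0$ is "spread" over all blocks. Concretely, I would pick an isomorphism of $X$ with $X_0\oplus X$ and absorb $TP_{\mathcal D,0}$ as a compact-like rank-one-block perturbation. Then I would show that an operator of the form $x\mapsto \oplus_j R_j x_0$ with $\sum\Vert R_j\Vert$ bounded (in $\ell_p$ sense) is itself a commutator, via $R=[U,V]$ with $U=$ shift and $V$ built from $R$.

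I expect this last part, together with the fact that the sum of two commutators is not automatically a commutator, to be the main difficulty. I would therefore treat both parts simultaneously in one $A$ via the $\ell_p$-sum structure, exactly as in \cite[Theorem 8]{Dosev_l1}.
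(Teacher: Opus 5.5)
The paper does not prove this lemma itself (it is quoted from Dosev and Cheng--Yu), so your sketch must stand on its own, and at the decisive points it does not.

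First, you replace $T$ by a block upper-Hessenberg perturbation $T'$ and then get $[A,B]=T'$ only ``modulo terms controlled by the decay''. This proves nothing, because being a commutator is not stable under small perturbations; you need an exact identity for $T$ itself. No perturbation is needed. Reblock, keeping $Y_0=X_0$, so that $\sum_k (k+1)\big(\Vert (I-\widetilde P_{n_k})T(I-P_{\mathcal D,0})\Vert+\Vert T(I-\widetilde P_{n_k})\Vert\big)<\infty$. Then write $T(I-P_{\mathcal D,0})=\sum_k D_k$, where $D_k$ is supported on the L-shaped set of block positions with $\max(i,j)=k$; this uses the row decay exactly rather than discarding it. One checks $\Vert\sum_m S^mD_kB^m\Vert\lesssim (k+1)\Vert D_k\Vert$. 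The term $\sum_m S^mP_{\mathcal D,0}TP_{\mathcal D,0}B^m$ is block diagonal, hence bounded. So $A:=\sum_{m\ge0}S^mT_2B^m$ is bounded for $T_2:=P_{\mathcal D,0}TP_{\mathcal D,0}+T(I-P_{\mathcal D,0})$. Telescoping gives $A-SAB=T_2$, i.e.\ $T_2=[AB,S]$. Note that with $B$ the backward shift and $BS=I$, the identity you obtain is $[AB,S]$, not $[A,B]$.

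Second, and more seriously, your treatment of column $0$ is not a proof. The operator $T_1:=(I-P_{\mathcal D,0})TP_{\mathcal D,0}$ need not be compact-like in any sense: an isomorphism of $X_0$ onto $[\bigcup_{i\ge1}X_i]$ satisfies both hypotheses. Conjugating so that $X_0$ is spread over all blocks destroys the column decay, since $T$ need not be small on any piece of $X_0$. Folding $T_1$ into the series above produces the block Toeplitz operator $\sum_m S^mT_1B^m$. This is bounded for $p=1$ but can be unbounded for $p>1$ (e.g.\ column entries of size $1/m$ when $p=2$), and the lemma is stated for all $p$. The obstacle you correctly name, that a sum of two commutators need not be a commutator, is exactly what you defer to ``as in Dosev''. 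The missing idea is to use the same shift $S$ for both pieces. Since $P_{\mathcal D,0}S=0$ and $SB=I-P_{\mathcal D,0}$, one has $T_1S=0$ and $SBT_1=T_1$, hence
\[ T_1=[S,BT_1],\qquad T=T_1+T_2=[AB-BT_1,\,S]. \]
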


For convenience,   we state the  classical Pe{\l}czy\'nski decomposition technique below, see e.g. \cite[Theorem 2.2.3]{Albiac Kalton}.
\begin{lemma}\label{pelc}
Suppose that  $1\leq p<\infty$. Let $X$ be a Banach space satisfying  ${X}\approx(\sum{X})_p$, and $Y$ be a complemented subspace of ${X}$. If $Y$ contains a complemented subspace which is isomorphic to $X$, then $Y\approx{X}$.
\end{lemma}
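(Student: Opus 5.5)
This is the classical Pe{\l}czy\'nski decomposition argument, so the plan is to produce two complementation statements and then run the standard cancellation computation.

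Let $Y$ be complemented in $X$ with $X\approx(\sum X)_p$, and suppose $Y$ contains a subspace $Z\approx X$ which is complemented in $Y$. Write $X=Y\oplus W$.

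First, $Z$ is complemented in $Y$, so $Y\approx X\oplus V$ for some Banach space $V$ (take $V$ to be the complement of $Z$ in $Y$). Second, the hypothesis on $X$ gives $X\oplus X\approx X$, because $(\sum X)_p\oplus(\sum X)_p\approx(\sum X)_p$ by splitting the index set into evens and odds.

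The key step is to show $X\oplus Y\approx X$. Using $X\approx(\sum X)_p$ and $X\approx Y\oplus W$, we get
\[
X\approx\Big(\sum (Y\oplus W)\Big)_p\approx\Big(\sum Y\Big)_p\oplus\Big(\sum W\Big)_p .
\]
Adding one extra copy of $Y$ to the first sum does not change its isomorphism class, since $Y\oplus(\sum Y)_p\approx(\sum Y)_p$ by an index shift. Hence $Y\oplus X\approx X$.

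Combining the pieces,
\[
Y\approx X\oplus V\approx (X\oplus X)\oplus V\approx X\oplus(X\oplus V)\approx X\oplus Y\approx X .
\]

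The only point needing care is the identification $(\sum(Y\oplus W))_p\approx(\sum Y)_p\oplus(\sum W)_p$. This holds because a direct sum of two Banach spaces is isomorphic to the $\ell_p$-sum of the pair, with constants depending only on the projection norm, and these constants are uniform across the copies. I expect no real obstacle; the result is standard (see \cite[Theorem 2.2.3]{Albiac Kalton}), so it may simply be cited.
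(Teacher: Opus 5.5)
Your proposal is correct and is exactly the classical Pe{\l}czy\'nski decomposition argument; the paper gives no proof of its own and simply cites \cite[Theorem 2.2.3]{Albiac Kalton}, whose proof has the same two steps: first $X\oplus Y\approx X$ via $(\sum(Y\oplus W))_p\approx(\sum Y)_p\oplus(\sum W)_p$, then $Y\approx X\oplus V\approx X\oplus X\oplus V\approx X\oplus Y\approx X$. (If the hypothesis is read as the copy of $X$ being complemented only in $X$, restricting that projection to $Y$ makes it complemented in $Y$, so your argument covers either reading.)
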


By Theorem \ref{c1ch}, Lemma \ref{chmx} and Lemma \ref{cela},  
\begin{equation}\label{MC1}
\mathcal M_{\mathcal C_1}=\left\{
T\in\mathcal B(\mathcal C_1):T\;{\rm is}\;\mathcal C_1\mbox{-strictly~singular}
\right\}
\end{equation}
is the largest proper ideal (hence norm-closed) in $\mathcal B(\mathcal C_1)$.

\begin{lemma}\label{P(T-aI)P}
    For any operator $T\in\mathcal B(\mathcal C_1)$, there exist an increasing sequence $\{i_k\}_{k=1}^\infty$ of positive integers and a scalar $\alpha$ such that
    \[
\left\Vert P_{[\xi_{i_{2k+1}}]_{k=N}^\infty,[\eta_{i_{2k}}]_{k=N}^\infty}(T-\alpha I)P\right\Vert\to0\;\;\;\;{\rm as}\;N\to\infty,
\]
and
    \[
    P(T-\alpha I)P\in\mathcal M_{\mathcal C_1},
    \]
    where $P:=P_{[\xi_{i_{2k+1}}]_{k=1}^\infty,[\eta_{i_{2k}}]_{k=1}^\infty}$.
\end{lemma}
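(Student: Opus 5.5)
The plan is to take the sequence $\{i_k\}_{k=1}^\infty$ and the scalar $\alpha$ produced by Proposition~\ref{cpi} (for $T$ and a summable sequence $\{\varepsilon_l\}_{l=1}^\infty$ of small positive numbers), and to write $X_0:=[e_{\xi_{i_{2k+1}},\eta_{i_{2l}}}]_{k,l=1}^\infty$, which is precisely the range of $P:=P_{[\xi_{i_{2k+1}}]_{k=1}^\infty,[\eta_{i_{2k}}]_{k=1}^\infty}$. Since $P$ is a contractive projection onto $X_0$, the first assertion is immediate: $P_{[\xi_{i_{2k+1}}]_{k=N}^\infty,[\eta_{i_{2k}}]_{k=N}^\infty}(T-\alpha I)P=\big(P_{[\xi_{i_{2k+1}}]_{k=N}^\infty,[\eta_{i_{2k}}]_{k=N}^\infty}(T-\alpha I)|_{X_0}\big)P$, whose norm is at most the quantity that Proposition~\ref{cpi} shows tends to $0$.

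For the second assertion I would use \eqref{MC1} and prove that $S:=P(T-\alpha I)P$ is $\mathcal C_1$-strictly singular. Write $Q_N:=P_{[\xi_{i_{2k+1}}]_{k=N}^\infty,[\eta_{i_{2k}}]_{k=N}^\infty}$ and split
\[
S=Q_N(T-\alpha I)P+(P-Q_N)(T-\alpha I)P .
\]
The first term has norm $\to0$ as $N\to\infty$ by the first part. For the second term, note that $P-Q_N$ maps $\mathcal C_1$ into $[e_{\xi_{i_{2k+1}},\eta_{i_{2l}}}]_{\min\{k,l\}<N}$, a space isomorphic to $\ell_2$ (finitely many rows and columns in $\mathcal C_1$). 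An operator factoring through $\ell_2$ is $\mathcal C_1$-strictly singular, since otherwise $\mathcal C_1$ (hence $\ell_1$) would embed into $\ell_2$. Thus $(P-Q_N)(T-\alpha I)P\in\mathcal M_{\mathcal C_1}$ for every $N$. As $\mathcal M_{\mathcal C_1}$ is a norm-closed ideal, being the largest proper ideal, $S$ is a norm limit of elements of $\mathcal M_{\mathcal C_1}$, and therefore $S\in\mathcal M_{\mathcal C_1}$.

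The main obstacle is confirming that $(P-Q_N)$ really lands in an $\ell_2$-isomorph. One has $P-Q_N=P_{A,B_{\ge1}}+P_{A_{\ge N},B_{<N}}$, where $A,B$ denote the $\xi$- and $\eta$-spans, with a finite-rank piece possibly added. Each of these compresses onto finitely many rows or finitely many columns of $\mathcal C_1$, and each such space is isomorphic to $\ell_2$ (a finite sum of Hilbert spaces). Boundedness of these compressions is clear, since each is a product of a left and a right projection. If one prefers to avoid \eqref{MC1}, a direct alternative is available: any subspace $Y\approx\mathcal C_1$ on which $S$ is an isomorphic embedding would, for large $N$, make $(P-Q_N)(T-\alpha I)P|_Y$ an isomorphic embedding by a perturbation argument, giving $\ell_1\hookrightarrow\ell_2$, which is a contradiction.
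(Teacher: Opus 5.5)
Your proof is correct and follows the paper's argument essentially verbatim: the first assertion comes from Proposition~\ref{cpi} after composing with the contraction $P$, and the second from the splitting $P(T-\alpha I)P=Q_N(T-\alpha I)P+(P-Q_N)(T-\alpha I)P$. There the second term lies in $\mathcal M_{\mathcal C_1}$ because $P-Q_N$ has range isomorphic to $\ell_2$ (via \eqref{MC1}), and one concludes using that $\mathcal M_{\mathcal C_1}$ is a norm-closed ideal. Two small remarks: the decomposition should read $P-Q_N=P_{A_{<N},B}+P_{A_{\ge N},B_{<N}}$ in your notation, and your closing perturbation variant is a nice touch, since it only needs the trivial inclusion of $\mathcal C_1$-strictly singular operators into $\mathcal M_{\mathcal C_1}$ and not that $\mathcal M_{\mathcal C_1}$ is a closed ideal.
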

\begin{proof}
By Lemma \ref{cpi}, there exist  an increasing sequence $\{i_k\}_{k=1}^\infty$ of positive integers and a scalar $\alpha$ such that
\[
\left\Vert P_{[\xi_{i_{2k+1}}]_{k=N}^\infty,[\eta_{i_{2k}}]_{k=N}^\infty}(T-\alpha I)|_{[e_{\xi_{i_{2k+1}},\eta_{i_{2l}}}]_{k,l=1}^\infty}\right\Vert\to0\;\;\;\;{\rm as}\;N\to\infty.
\]
Set  $P:=P_{[\xi_{i_{2k+1}}]_{k=1}^\infty,[\eta_{i_{2k}}]_{k=1}^\infty}$. Then, we have
\begin{align*}
&\left\Vert P_{[\xi_{i_{2k+1}}]_{k=N}^\infty,[\eta_{i_{2k}}]_{k=N}^\infty}(T-\alpha I)P\right\Vert\\
=&\left\Vert P_{[\xi_{i_{2k+1}}]_{k=N}^\infty,[\eta_{i_{2k}}]_{k=N}^\infty}(T-\alpha I)|_{[e_{\xi_{i_{2k+1}},\eta_{i_{2l}}}]_{k,l=1}^\infty}P\right\Vert\to0\;\;\;\;{\rm as}\;N\to\infty.
\end{align*}
Observe the following decomposition 
\[
P(T-\alpha I)P=\big(P-P_{[\xi_{i_{2k+1}}]_{k=N}^\infty,[\eta_{i_{2k}}]_{k=N}^\infty}\big)(T-\alpha I)P+P_{[\xi_{i_{2k+1}}]_{k=N}^\infty,[\eta_{i_{2k}}]_{k=N}^\infty}(T-\alpha I)P.
\]
We have 
\[
\big(P-P_{[\xi_{i_{2k+1}}]_{k=N}^\infty,[\eta_{i_{2k}}]_{k=N}^\infty}\big)(T-\alpha I)P\to P(T-\alpha I)P\;\;\;\;{\rm as}\;N\to\infty.
\]
Since im$\big(P-P_{[\xi_{i_{2k+1}}]_{k=N}^\infty,[\eta_{i_{2k}}]_{k=N}^\infty}\big)\approx\ell_2$ when $N>1$, it follows from \eqref{MC1} that $P-P_{[\xi_{i_{2k+1}}]_{k=N}^\infty,[\eta_{i_{2k}}]_{k=N}^\infty}\in\mathcal M_{\mathcal C_1}$. 
Moreover, since $\mathcal M_{\mathcal C_1}$ is a norm-closed ideal, it follows that  
\[
P(T-\alpha I)P\in\mathcal M_{\mathcal C_1}.
\]
This completes the proof. 
\end{proof}

Below, we prove the main result of this section. 
\begin{theorem}\label{mc1c}
Every $T\in\mathcal{M}_{\mathcal C_1}$ is a commutator.
\end{theorem}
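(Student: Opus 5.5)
We will reduce the statement to Lemma \ref{y}, following the strategy of Dosev, Johnson and Schechtman. Let $T\in\mathcal M_{\mathcal C_1}$. Apply Lemma \ref{P(T-aI)P} to $T$. This gives an increasing sequence $\{i_k\}$, a scalar $\alpha$, and the projection $P=P_{[\xi_{i_{2k+1}}]_{k=1}^\infty,[\eta_{i_{2k}}]_{k=1}^\infty}$ with
\[
\big\Vert P_{[\xi_{i_{2k+1}}]_{k=N}^\infty,[\eta_{i_{2k}}]_{k=N}^\infty}(T-\alpha I)P\big\Vert\to0 .
\]
We first show $\alpha=0$. The operator $PTP$ is in $\mathcal M_{\mathcal C_1}$, and $P(T-\alpha I)P\in\mathcal M_{\mathcal C_1}$. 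Hence $\alpha P=PTP-P(T-\alpha I)P\in\mathcal M_{\mathcal C_1}$. But $P$ is a projection onto a copy of $\mathcal C_1$, so $P\notin\mathcal M_{\mathcal C_1}$, and therefore $\alpha=0$. Thus $\Vert P_N T P\Vert\to0$, where $P_N$ denotes the tail projection.

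Next we build an $\ell_1$-decomposition adapted to $T$. We split $\{i_{2k+1}\}$ and $\{i_{2k}\}$ into infinitely many infinite blocks $B_1,B_2,\dots$. Let $X_j=P_{[\xi_{i_{2k+1}}]_{k\in B_j},[\eta_{i_{2k}}]_{k\in B_j}}(\mathcal C_1)\cong\mathcal C_1$. These are block-diagonal pieces, so in $\mathcal C_1$ they form an $\ell_1$-sum, 1-complemented via the diagonal projection \eqref{projection inequality}. Set $X_0$ to be the complement $(I-\sum_{j\ge1}P_{X_j})\mathcal C_1$. Then $X_0\supseteq$ a copy of $\mathcal C_1$ complemented in it. By Lemma \ref{pelc} and $\mathcal C_1\approx(\sum\mathcal C_1)_1$, we get $X_0\approx\mathcal C_1$. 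Hence $\mathcal D=\{X_i\}_{i\ge0}$ is an $\ell_1$-decomposition of $\mathcal C_1$ (in fact $\mathcal C_1\approx X_0\oplus(\sum_{j\ge1}X_j)_1$).

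The hypotheses of Lemma \ref{y} are $\Vert(I-\widetilde P_n)T(I-P_{\mathcal D,0})\Vert\to0$ and $\Vert T(I-\widetilde P_n)\Vert\to0$. These are not immediate for $T$ itself, and this is the main obstacle. The plan is to replace $T$ by a similar operator. Since the property of being a commutator is invariant under similarity and under adding commutators, we use an isomorphism $V:\mathcal C_1\to\mathcal C_1$ mapping $\mathcal C_1$ onto $\operatorname{im}P$ (pushing everything into the corner). Then $T$ is conjugated to an operator $S$ living essentially on $\operatorname{im}P$, where the tail estimate $\Vert P_NTP\Vert\to0$ controls $(I-\widetilde P_n)S(I-P_{\mathcal D,0})$.

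Concretely, we first write $T=T(I-P)+TP$. We handle the piece $TP$ by the corner argument above. For $T(I-P)$, we repeat Lemma \ref{P(T-aI)P} inside $\operatorname{im}(I-P)\approx\mathcal C_1$ after a similar conjugation. The right tail condition $\Vert S(I-\widetilde P_n)\Vert\to0$ is obtained by choosing the blocks $B_j$ rapidly growing along a diagonal argument. We use Lemma \ref{l2cp} for the $\ell_2$-type off-diagonal leftovers, and the $\mathcal C_1$-strict singularity of $T$ (Theorem \ref{c1s}(2), small norm on subcopies of $\mathcal C_1$) to choose $X_j$ on which $\Vert T|_{X_j}\Vert\le 2^{-j}$.

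The hardest step is arranging both tail conditions simultaneously. The first attempt will be to pick $X_j$ inductively: each time, use Theorem \ref{c1s}(2) to make $T|_{X_j}$ tiny, and use Lemma \ref{P(T-aI)P} (with $\alpha=0$) to make $T(X_j)$ nearly supported away from later blocks. The leftovers are then summable in the $\ell_1$-decomposition, so Lemma \ref{y} applies after a small perturbation. Finally, we absorb that perturbation, which is itself in $\mathcal M_{\mathcal C_1}$, by noting that commutators of the required form are stable under the small-perturbation scheme of Theorem \ref{2d}.
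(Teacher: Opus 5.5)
Your first step matches the paper: Lemma \ref{P(T-aI)P}, then $\alpha P=PTP-P(T-\alpha I)P\in\mathcal M_{\mathcal C_1}$ forces $\alpha=0$, so $\Vert P_{[\xi_{i_{2m+1}}]_{m\ge N},[\eta_{i_{2m}}]_{m\ge N}}TP\Vert\to0$. After that there is a genuine gap, because the argument does not close and the devices it relies on are not valid.

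\begin{itemize}
\item Commutators are not closed under addition. By Brown--Pearcy, the projections onto two complementary infinite-dimensional subspaces of $\ell_2$ are commutators, but their sum $I$ is not (Theorem \ref{win}). So handling $TP$ and $T(I-P)$ separately says nothing about $T$.
\item An isomorphism $V$ of $\mathcal C_1$ onto the proper subspace ${\rm im}P$ does not induce a similarity in $\mathcal B(\mathcal C_1)$.
\item The final absorption of a small perturbation lying in $\mathcal M_{\mathcal C_1}$ is circular. Nothing available says that a commutator plus a small (or $\mathcal C_1$-strictly singular) operator is again a commutator. Theorem \ref{2d} is about extending operators along a Schauder decomposition, not about commutators.
\item Theorem \ref{c1s}(2) alone gives subspaces $Y\approx\mathcal C_1$ with no uniform control on isomorphism or complementation constants. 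Definition \ref{4.1} needs such control; the paper obtains it from Theorem \ref{c1ch}.
\end{itemize}

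The missing observation is that no perturbation, induction or similarity is needed once the small-norm copies sit in pairwise disjoint corners of ${\rm im}P$. Take $A_k=\{i_{2^k(2l+1)+1}\}_{l\ge1}$ and $B_k=\{i_{2^k(2l+1)}\}_{l\ge1}$. Then the $P_{A_k,B_k}P$ are contractive projections onto mutually disjoint corners $[e_{\xi_i,\eta_j}]_{i\in A_k,j\in B_k}\cong\mathcal C_1$ of ${\rm im}P$.

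In the $k$-th corner, use Theorems \ref{c1s} and \ref{c1ch} to choose $X_k$ that is $2$-isomorphic to $\mathcal C_1$, $2$-complemented there by some $Q_k$, and satisfies $\Vert T|_{X_k}\Vert\le\delta_k$, with $\sum_k\delta_k<\infty$. Then $\sum_kQ_kP_{A_k,B_k}P$ projects onto $[\bigcup_kX_k]\cong(\sum X_k)_1$. Set $X_0:=\ker(\sum_kQ_kP_{A_k,B_k}P)$; it contains the complemented copy $\ker P\approx\mathcal C_1$, so $X_0\approx\mathcal C_1$ by Lemma \ref{pelc}.

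Both hypotheses of Lemma \ref{y} now hold for $T$ itself, with no perturbation:
\begin{itemize}
\item $\Vert T(I-\widetilde P_n)\Vert\le2\sum_{k>n}\delta_k\to0$, by the choice of the $X_k$.
\item $I-P_{\mathcal D,0}=P(I-P_{\mathcal D,0})$, and $I-\widetilde P_n=\sum_{k>n}Q_kP_{A_k,B_k}P$ factors through $P_{[\xi_{i_{2m+1}}]_{m\ge2^n},[\eta_{i_{2m}}]_{m\ge2^n}}$. Hence $\Vert(I-\widetilde P_n)T(I-P_{\mathcal D,0})\Vert\le4\Vert P_{[\xi_{i_{2m+1}}]_{m\ge2^n},[\eta_{i_{2m}}]_{m\ge2^n}}TP\Vert\to0$, using $\alpha=0$.
\end{itemize}
So the two tail conditions you flagged as the main obstacle decouple: the right one comes from the choice of the $X_k$, and the left one from $\alpha=0$.
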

\begin{proof}
By Lemma \ref{P(T-aI)P}, there exist  an increasing sequence $\{i_k\}_{k=1}^\infty$ of positive integers and a scalar $\alpha$ such that
\[
\left\Vert P_{[\xi_{i_{2k+1}}]_{k=N}^\infty,[\eta_{i_{2k}}]_{k=N}^\infty}(T-\alpha I)P\right\Vert\to0\;\;\;\;{\rm as}\;N\to\infty,
\]
and
\[
P(T-\alpha I)P\in\mathcal M_{\mathcal C_1},
\]
where $P:=P_{[\xi_{i_{2k+1}}]_{k=1}^\infty,[\eta_{i_{2k}}]_{k=1}^\infty}$.

Noting that $\alpha P=PTP-P(T-\alpha I)P$ and $PTP\in\mathcal M_{\mathcal C_1}$,  we have $\alpha P\in\mathcal{M}_{\mathcal C_1}$. Since $P\notin\mathcal{M}_{\mathcal C_1}$ (see \eqref{MC1}), it follows that $\alpha=0$. Consequently, we have 
\[
\left\Vert P_{[\xi_{i_{2k+1}}]_{k=N}^\infty,[\eta_{i_{2k}}]_{k=N}^\infty}TP\right\Vert\to0\;\;\;\;{\rm as}\;N\to\infty.
\]

By Lemma \ref{pelc},  
we have $\ker(P)=(I-P)\mathcal C_1\approx \mathcal C_1$. For each $k\in\mathbb N^+$, define
\[
A_k:=\{i_{2^k(2l+1)+1}\}_{l=1}^\infty\;\;\;\;{\rm and}\;\;\;\;B_k:=\{i_{2^k(2l+1)}\}_{\mu=1}^\infty,
\]
and
\[
P_{A_k,B_k}:=P_{[\xi_i]_{i\in A_k},[\eta_i]_{i\in B_k}}. 
\]
Then, $P_{A_k,B_k}P$ is a projection with
\[
P_{A_k,B_k}P(\mathcal C_1)=[e_{\xi_i,\eta_j}]_{i\in A_k,j\in B_k}\cong\mathcal C_1
\]
for every $k\in\mathbb N^+$. Choose a sequence  $\{\delta_k\}_{k=1}^\infty$ of positive numbers with $\sum_{k=1}^\infty\delta_k<\infty$.
For every $k\in\mathbb N^+$, by Theorems \ref{c1s} and \ref{c1ch}, we can choose a closed subspace $X_k$ of $[e_{\xi_i,\eta_j}]_{i\in A_k,j\in B_k}$ such that $X_k$ is $2$-isomorphic to $\mathcal C_1$ and $2$-complemented in $[e_{\xi_i,\eta_j}]_{i\in A_k,j\in B_k}$, and $\Vert T|_{X_k}\Vert\leq\delta_k$. For each $k$,  let $Q_k$ be a projection from $[e_{\xi_i,\eta_j}]_{i\in A_k,j\in B_k}$ onto $X_k$ with $\Vert Q_k\Vert\leq 2$. Observe that 
\[
\left[{\bigcup}_{k=1}^\infty X_k\right]\cong\left(\sum_{k=1}^\infty\oplus\;X_k\right)_1\approx\left(\sum_{v=1}^\infty\oplus\;\mathcal C_1\right)_1,
\]
and the series  $\sum_{k=1}^\infty Q_kP_{A_k,B_k}P$ is strongly convergent, which  induces a projection from $\mathcal C_1$ onto $[\bigcup_{k=1}^\infty X_k]$. Let
$$
X_0=\ker\left(\sum_{k=1}^\infty Q_kP_{A_k,B_k}P\right)\supset\ker(P)\approx \cC_1 .
$$
Observe that $\ker(P)$ is a complemented subspace of $\cC_1$ and therefore, is a complemented subspace of $\ker\left(\sum_{k=1}^\infty Q_kP_{A_k,B_k}P\right)$. 
By Lemma \ref{pelc}, $X_0$ is isomorphic to $\mathcal C_1$. Thus,
$\mathcal{D}=\{X_k\}_{k=0}^\infty$ is an $\ell_1$-decomposition of $\mathcal C_1$ and satisfies that
\[
P_{\mathcal{D},0}=I-\sum_{k=1}^\infty Q_kP_{A_k,B_k}P,\;\;\;\;{\rm and}\;\;\;\;P_{\mathcal{D},k}= Q_kP_{A_k,B_k}P\;\;\;\;{\rm for\;all\;}k\in\mathbb N,
\]
where $P_{\mathcal{D},k}$ stands for the natural projection from ${X}=\sum_{k=0}^\infty X_k$ onto $X_k$. 

Denote $\widetilde{P}_n:=\sum_{k=0}^nP_{\mathcal{D},k}$. Then, by the triangular inequality, we have 
\[
\big\Vert T(I-\widetilde{P}_n)\big\Vert\leq\sum_{k=n+1}^\infty\Vert TP_{\mathcal{D},k}\Vert\leq2\sum_{k=n+1}^\infty\delta_k.
\]
Therefore,
\begin{equation}\label{d1}
\lim_{n\to\infty}\big\Vert T(I-\widetilde{P}_n)\big\Vert=0.
\end{equation}
On the other hand, it follows from $\norm{Q_k}\le 2$ and 
\begin{align*}
(I-\widetilde{P}_n)T(I-P_{\mathcal{D},0})&=(I-\widetilde{P}_n)TP(I-P_{\mathcal{D},0})\\
&=\left(\sum_{k=n+1}^\infty Q_kP_{A_k,B_k}P\right)TP\left(\sum_{k=1}^\infty Q_kP_{A_k,B_k}P\right)
\end{align*}
that
\[
\big\Vert(I-\widetilde{P}_n)T(I-P_{\mathcal{D},0})\big\Vert\leq 4\left\Vert P_{[\xi_{i_{2k+1}}]_{k=2^n}^\infty,[\eta_{i_{2k}}]_{k=2^n}^\infty}TP\right\Vert. 
\]
Therefore,
\begin{equation}\label{d2}
\lim_{n\to\infty}\big\Vert(I-\widetilde{P}_n)T(I-P_{\mathcal{D},0})\big\Vert=0.
\end{equation}
By \eqref{d1}, \eqref{d2}, and Lemma \ref{y}, $T$ is a commutator.
\end{proof}

\subsection{A characterization of commutators on $\mathcal C_1$}

For $X$ and $Y$ be two (closed) subspaces of a Banach space ${Z}$, let
\[
{\rm d}(S_X,Y)=\inf\{\Vert x-y\Vert:x\in S_X,~y\in Y\},
\]
where $S_X=\{x\in X:\norm{x}=1\}$ is the unit sphere of $X$.

The following two theorems are important tools in the study of commutator on Banach spaces,  see \cite[Lemmas 2.13, 2.14 \& Theorem. 2.15]{Chen Johnson Bentuo Zheng}. The ideas underlying  their proofs can be traced back to \cite{Apostol_lp}, \cite{Dosev_l1} and \cite{Dosev Johnson}.

\begin{theorem}\label{4}
Let $p\in[1,\infty]\cup\{0\}$, and $X$ be a complementably homogeneous Banach space isomorphic to $(\sum{X})_p$.
If  $T$ is a bounded linear operator on $X$ for which there is a subspace $Y$ of $X$ isomorphic to $X$ such that $T|_Y$ is an isomorphism and $d(S_Y,TY)>0$, then $T$ is a commutator.
\end{theorem}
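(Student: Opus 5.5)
This is the standard Apostol--Dosev--Johnson--Zheng argument; the plan is to build an explicit isomorphism $X \approx (\sum X)_p$ adapted to $T$ and then write $T$ as a commutator.

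\emph{Step 1 (complemented copy with separated image).} Starting from the subspace $Y$, use complementable homogeneity to pass to a subspace $Y_1\subset Y$ with $Y_1\approx X$ and $Y_1$ complemented. Because $T|_Y$ is an isomorphism, $TY_1\approx X$, so we may shrink once more inside $TY_1$. This gives $Y_2\subset Y_1$ with $Y_2\approx X$ and both $Y_2$ and $TY_2$ complemented. The condition $d(S_{Y},TY)>0$ passes to subspaces, so $Y_2+TY_2$ is a closed direct sum. The first real task is to show that $Y_2\oplus TY_2$ is complemented, with a projection $Q$ onto $Y_2$ that kills $TY_2$. I would try to obtain this from the two separate projections, using the positive gap to control the angle, possibly after one further application of homogeneity inside $Y_2$. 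With this in hand we get $X=Y_2\oplus TY_2\oplus W$.

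\emph{Step 2 (the similarity form).} Relative to this decomposition, with $A:=T|_{Y_2}:Y_2\to TY_2$ an isomorphism, the operator $T$ has a matrix whose first column is $(0,A,0)^t$. Conjugating by the isomorphism $\mathrm{diag}(I,A^{-1},I)$, and using Lemma~\ref{pelc} to identify $TY_2\oplus W\approx X$ together with $X\approx(\sum X)_p$, we reduce to operators on $Z\oplus Z$ of the form $\begin{pmatrix}0&B\\ I&C\end{pmatrix}$. The next reduction is to make $C$ have a left-shift structure. For this, decompose $Z\approx(\sum Z)_p$ and iterate the same argument inside the second summand, producing an $\ell_p$-decomposition $Z_0,Z_1,\dots$ on which $T$ acts like a weighted forward shift $Z_i\to Z_{i+1}$ modulo the remaining coordinates. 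This is exactly the situation of \cite[Lemma 2.13, 2.14]{Chen Johnson Bentuo Zheng}.

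\emph{Step 3 (commutator).} An operator of the form $S+R$, with $S$ an isomorphic forward shift on $(\sum Z)_p$, is a commutator by the Apostol trick: solve $T=[L,S']$ with $S'$ the shift and $L$ defined by a convergent series $L=\sum_k S'^{*k}(\cdot)$, using the $\ell_p$-sum to bound the series. I expect the main obstacle to be Step 1: obtaining the complemented direct sum $Y_2\oplus TY_2$ uniformly, which in the iterated form of Step 2 must hold with uniform constants at every stage. I would handle this by invoking homogeneity with fixed constants at each stage and then summing $\ell_p$-wise.
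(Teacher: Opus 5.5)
There is a genuine gap in Step 1, which is exactly where complementable homogeneity has to do its work. Obtaining $Y_2\subset Y$ with $Y_2\approx X$ and both $Y_2$ and $TY_2$ complemented is fine. However, two complemented subspaces at positive angle need not have a complemented sum. So the two projections and the gap alone cannot give a complemented $Y_2\oplus TY_2$, and a further shrinking inside $Y_2$ just reproduces the same situation.

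For example, let $E=c_0\oplus\ell_\infty$ and $F=\{0\}\oplus c_0$, which is not complemented in $E$ by Phillips' theorem. Let $J(0,x)=(x,0)$. In $E\oplus E$ the subspaces $M_1=E\oplus\{0\}$ and $M_2=\{(Jf,f):f\in F\}$ are both complemented; for $M_2$ use $(u,v)\mapsto(J\rho u,\rho u)$ with $\rho(a,b)=(0,a)$. Moreover $d(S_{M_2},M_1)>0$, yet $M_1+M_2=E\oplus F$ is not complemented.

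The missing idea is to apply homogeneity not to $TY_1$ but to its compression into $\ker P_1$, where $P_1$ is a projection onto $Y_1$. Put $\delta=d(S_Y,TY)$. For $y\in Y_1$ we have $P_1Ty\in Y$, hence $\|(I-P_1)Ty\|\geq\frac{\delta}{1+\delta}\|Ty\|$. So $U:=(I-P_1)T|_{Y_1}$ is an isomorphic embedding into $\ker P_1$. Take $V\subset U(Y_1)$ with $V\approx X$ and a projection $Q$ onto $V$. Put $Z:=U^{-1}(V)$, $P_Z:=U^{-1}QUP_1$ and $G:=U^{-1}Q(I-P_1)\colon X\to Z$. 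For $z,z'\in Z$ we have $(I-P_1)z=0$ and $(I-P_1)Tz'=Uz'\in V$, so $G(z+Tz')=z'$. Consequently $\Pi:=P_Z(I-TG)+TG$ is a bounded projection of $X$ onto $Z\oplus TZ$, with $Z\approx X$ and $d(S_Z,TZ)\geq\delta$.

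Steps 2 and 3 do not go through as written either. With respect to $X=Y_2\oplus TY_2\oplus W$, the corner of $T$ from $Y_2$ into the second summand $TY_2\oplus W$ is an isomorphism onto the proper subspace $TY_2$. Knowing abstractly that $TY_2\oplus W\approx X$ does not make this corner invertible, so you do not reach the form $\left(\begin{smallmatrix}0&B\\ I&C\end{smallmatrix}\right)$.

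Iterating ``the same argument inside the second summand'' would need the compression $C$ to act isomorphically, with positive gap, on some copy of $X$, and nothing provides this. The assertion that $S+R$ is a commutator whenever $S$ is a forward shift cannot hold without strong restrictions on $R$: take $R=I-S$ and apply Theorem~\ref{win}.

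After repairing Step 1 as above, what you actually need is the criterion of Dosev and Johnson \cite{Dosev Johnson}. It says: if $X\approx(\sum X)_p$ and there is $\tilde X\approx X$ with $T|_{\tilde X}$ an isomorphism, $\tilde X\oplus T\tilde X$ complemented and $d(S_{\tilde X},T\tilde X)>0$, then $T$ is a commutator. Its proof builds an $\ell_p$-decomposition adapted to $T$ in Apostol's manner, and it is the real content behind your Steps 2--3; you must either invoke it or carry it out. For the record, the paper gives no proof of this statement and simply quotes \cite[Lemmas 2.13, 2.14 and Theorem 2.15]{Chen Johnson Bentuo Zheng}.
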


\begin{theorem}\label{5}
Let $p\in[1,\infty]\cup\{0\}$, and $X$ be a complementably homogeneous Banach space isomorphic to $(\sum X)_p$. Suppose that the set of all $X$-strictly singular operators on $X$ 
is  an ideal in $\mathcal B(X)$.
If $T\in \cB(\cC_1)$ satisfies that 

\begin{enumerate}
    \item $T-\lambda'I$ is not $X$-strictly singular for each scalar $\lambda'$;
    \item there is a scalar $\lambda$ and a subspace $Y$ of $X$ isomorphic to $X$   such that $(T-\lambda I)|_Y$ is $X$-strictly singular,
\end{enumerate}  then $T$ is a commutator.
\end{theorem}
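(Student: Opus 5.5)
The plan is to reduce the statement to Theorem~\ref{4}. We will produce a subspace $Z\subset X$ with $Z\approx X$, $T|_Z$ an isomorphism, and $d(S_Z,TZ)>0$. Set $S:=T-\lambda I$, with $\lambda$ and $Y$ as in hypothesis (2).

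Two facts come first. By hypothesis, the $X$-strictly singular operators form an ideal. By complementable homogeneity and Corollary~\ref{chmx}, this ideal equals $\mathcal M_X$. Since $S$ is not $X$-strictly singular (hypothesis (1) with $\lambda'=\lambda$), Lemma~\ref{mxeq} gives a complemented subspace $W\approx X$ such that $S|_W$ is an isomorphism and $SW$ is complemented in $X$. Next, $S|_Y$ is $X$-strictly singular and $Y\approx X$. Using complementable homogeneity and the ideal property, in the way Theorem~\ref{c1s} produced small restrictions, we shrink $Y$ to a complemented $Y'\subset Y$ with $Y'\approx X$ and $\Vert S|_{Y'}\Vert<\delta$, where $\delta$ is small.

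The core step is a disjointification. We want isomorphisms $J:X\to W_0\subset W$ and $K:X\to Y''\subset Y'$, each with image $\approx X$, together with a bounded projection $Q$ on $X$ such that
\[
Q(SJx)=SJx,\qquad QJx=0,\qquad QKx=0 \quad\text{for all } x\in X .
\]
That is, $Q$ isolates the range $SW_0$ and annihilates both $W_0$ and $Y''$. To build this I would first try the following. Write $X\approx(\sum X)_p$. Using complementable homogeneity again, pass to subspaces of $W$ and of $Y'$ that are isomorphic to $X$ and essentially supported on disjoint far-out blocks of the decomposition. Then a gliding-hump / small-perturbation argument (Lemma~\ref{comp}, Theorem~\ref{2d}) separates these from $SW_0$. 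If $SW_0$ cannot be separated from $W_0$ itself, I would instead replace $S$ by $T-\lambda' I$ for a suitable $\lambda'$, which is allowed by hypothesis (1). I expect this separation to be the main obstacle.

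Given this configuration, set $Z=\{Jx+Kx:x\in X\}$. It is isomorphic to $X$ because $J\oplus K$ is an isomorphism onto a complemented direct sum. For $z=Jx+Kx$ we have
\[
Tz=\lambda z+SJx+SKx, \qquad \Vert SKx\Vert\le\delta\Vert K\Vert\,\Vert x\Vert .
\]
To bound $d(S_Z,TZ)$ from below, take $z\in S_Z$ and $u=Ju'+Ku'$, and apply $Q$ to $z-Tu$:
\[
Q(z-Tu)=-SJu'-QSKu'.
\]
This gives $\Vert z-Tu\Vert\gtrsim\Vert u'\Vert-O(\delta)\Vert u'\Vert$. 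So either $\Vert z-Tu\Vert$ is bounded below by a fixed constant, or $\Vert u'\Vert$ is small. In the second case $\Vert Tu\Vert$ is small, and $\Vert z-Tu\Vert\ge 1-\Vert Tu\Vert$. Either way $d(S_Z,TZ)>0$ once $\delta$ is small.

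The same $Q$-estimate, $\Vert QTz\Vert\gtrsim\Vert x\Vert$, shows that $T|_Z$ is an isomorphism. Theorem~\ref{4} then shows that $T$ is a commutator.
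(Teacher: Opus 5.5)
Your proposal has a genuine gap: the decisive step, producing $J$, $K$ and a bounded projection $Q$ with $QSJ=SJ$, $QJ=0$, $QK=0$, is not proved, and the route you sketch cannot work for a general $X$. A subspace of $X\approx(\sum X)_p$ that is isomorphic to $X$ may lie entirely inside one summand. Then it has no subspace isomorphic to $X$ that is essentially supported on far-out blocks, and arbitrary copies of $X$ carry no weak-nullness to run a gliding hump.

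Worse, this configuration already contains the whole theorem. If $QJ=0$ and $QSJ=SJ$, then for $z\in S_{W_0}$ and $u\in W_0$ you get $Q(z-Tu)=-Su$. Hence $d(S_{W_0},TW_0)>0$ and $T|_{W_0}$ is an isomorphism, and Theorem~\ref{4} finishes without $Y''$, $K$ or hypothesis (2). So the unproved step is exactly the transversality the theorem is about, and hypothesis (2) never does any work. Your fallback of passing to $T-\lambda'I$ also comes with no mechanism guaranteeing that some $\lambda'$ succeeds.

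The smallness claim $\Vert S|_{Y'}\Vert<\delta$ is unjustified as well. Theorem~\ref{c1s} obtains it from the $\mathcal C_p$-specific representation of Theorem~\ref{p=1'}. For a general $X$, strict singularity, complementable homogeneity and the ideal property give no norm control on copies of $X$.

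The paper only quotes this statement from \cite{Chen Johnson Bentuo Zheng}, but the following route works and uses the ideal hypothesis algebraically rather than geometrically. Let $S=T-\lambda I$, and let $P$ be a projection onto a complemented copy $Y_1\subset Y$ of $X$. Then $SP\in\mathcal M$, so by (1) with $\lambda'=\lambda$, either $PS(I-P)\notin\mathcal M$ or $(I-P)S(I-P)\notin\mathcal M$.

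In the first case, choose $Z\subset\ker P$ with $Z\approx X$ and $PS|_Z$ an isomorphism. Since $PZ=0$ and $PTu=PSu$ on $Z$, the estimate $\Vert z-Tu\Vert\ge\Vert P\Vert^{-1}\Vert PSu\Vert$ gives $d(S_Z,TZ)>0$.

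Otherwise $PS(I-P)\in\mathcal M$. Take $W\subset\ker P$ with $(I-P)S|_W$ an isomorphism onto a complemented subspace, and isomorphisms $J$ of $X$ onto $Y_1$ and $K$ of $X$ onto $W$. For $z=(J+K)x$ and $u=(J+K)x'$,
\[
z-Tu=\Phi_0(x,x')-Nx',\qquad \Phi_0(x,x')=(J+K)(x-\lambda x')-(I-P)SKx',\qquad N=SJ+PSK\in\mathcal M .
\]
Here $\Phi_0$ is an isomorphism of $X\oplus X$ onto a complemented subspace. With a left inverse $A$ and $AN=(N_1,N_2)$, one gets $A(z-Tu)=(x-N_1x',\,(I-N_2)x')$, where $N_2\in\mathcal M$. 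Since $I-N_2\notin\mathcal M$, it is bounded below on some $U\approx X$. The subspace $Z=(J+K)U$ then has $T|_Z$ an isomorphism and $d(S_Z,TZ)>0$, so Theorem~\ref{4} applies. Neither smallness nor disjointification is needed.
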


\begin{theorem}\label{6}
Let $T\in\mathcal B(\mathcal C_1)$ be such that $T-\lambda I$ is not $\mathcal C_1$-strictly singular for all $\lambda\in\mathbb C$. Then $T$ is a commutator.
\end{theorem}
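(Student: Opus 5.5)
We reduce to Theorems \ref{4} and \ref{5}, applied with $X=\mathcal C_1$ and $p=1$. Their hypotheses are available from earlier in the paper:
\begin{itemize}
\item $\mathcal C_1\approx(\sum\mathcal C_1)_1$ (Arazy);
\item $\mathcal C_1$ is complementably homogeneous (Theorem \ref{c1ch});
\item the $\mathcal C_1$-strictly singular operators form an ideal, namely $\mathcal M_{\mathcal C_1}$ by \eqref{MC1}.
\end{itemize}
Fix $T$ such that $T-\lambda I$ is not $\mathcal C_1$-strictly singular for every scalar $\lambda$. The proof splits into two cases.

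\textbf{Case 1:} there exist a scalar $\lambda$ and a subspace $Y\approx\mathcal C_1$ such that $(T-\lambda I)|_Y$ is $\mathcal C_1$-strictly singular. Then Theorem \ref{5} applies directly, and $T$ is a commutator.

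\textbf{Case 2:} for every $\lambda$ and every $Y\approx\mathcal C_1$, the restriction $(T-\lambda I)|_Y$ is not $\mathcal C_1$-strictly singular. Here we aim to produce $Y\approx\mathcal C_1$ on which $T|_Y$ is an isomorphism with $d(S_Y,TY)>0$, and then invoke Theorem \ref{4}. We use the localization of Proposition \ref{cpi} (Lemma \ref{P(T-aI)P}). It gives $\{i_k\}$ and a scalar $\alpha$ such that, with $P=P_{[\xi_{i_{2k+1}}],[\eta_{i_{2k}}]}$, the compression $P(T-\alpha I)P$ lies in $\mathcal M_{\mathcal C_1}$ and $\|P_{N}(T-\alpha I)P\|\to0$ as $N\to\infty$, where $P_N$ denotes the tail corner. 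Set $Z=P\mathcal C_1\cong\mathcal C_1$.

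By the Case 2 assumption, $(T-\alpha I)|_Z$ is not $\mathcal C_1$-strictly singular. So there is $W\subset Z$ with $W\approx\mathcal C_1$ on which $T-\alpha I$ is an isomorphic embedding, and by complementable homogeneity we may take $W$ complemented. Since $P(T-\alpha I)P$ is strictly singular, Theorem \ref{c1s}(2) lets us pass to $W'\subset W$, $W'\approx\mathcal C_1$, with $\|P(T-\alpha I)|_{W'}\|<\varepsilon$. Consequently $(I-P)(T-\alpha I)$ is bounded below on $W'$. This means $(T-\alpha I)W'$ is essentially carried by $\ker P$, while $W'\subset\operatorname{im}P$.

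Now take $Y=W'$. Then $TY$ is close to $\{\alpha y+u_y\}$ with $u_y\in\ker P$ and $\|u_y\|\ge c\|y\|$. Since $P$ is a contractive projection, for $y,y'\in S_Y$ we get
\[
\|y'-Ty\|\ \ge\ \|(I-P)(y'-Ty)\|\ \approx\ \|u_y\|\ \ge\ c\|y\| ,
\]
which bounds $d(S_Y,TY)$ away from zero, at least once $\|Ty\|$ is comparable to $\|y\|$. Isomorphy of $T|_Y$ also follows: $\|Ty\|\ge\|(I-P)Ty\|\ge c\|y\|-\varepsilon\|y\|$. This holds whether or not $\alpha=0$, because the $\ker P$ component controls the norm. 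Theorem \ref{4} then finishes the proof.

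The main obstacle is the perturbation bookkeeping in Case 2. We need $\varepsilon$ small relative to the lower bound $c$, but $c$ depends on $W$, which is chosen before $W'$. This ordering is fine, since $W'$ is chosen after $W$ with $\varepsilon$ as small as required. The delicate point is the estimate $\|y'-Ty\|\ge\|(I-P)(y'-Ty)\|$, which uses that $P$ is a corner projection of norm $\le1$ on $\mathcal C_1$; this is true by \eqref{projection inequality}. A fallback, if something breaks in Case 2, is to instead verify the hypotheses of Theorem \ref{5} using the $\alpha$ produced by Lemma \ref{P(T-aI)P}.
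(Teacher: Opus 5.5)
Your proposal is correct and is essentially the paper's argument. The dichotomy on whether $T-\alpha I$ restricted to $P\mathcal C_1$ is $\mathcal C_1$-strictly singular is the paper's case split, since $P(T-\alpha I)P\in\mathcal M_{\mathcal C_1}$ makes this equivalent to asking whether $(I-P)TP$ is; Theorem \ref{5} handles one case and Theorem \ref{4}, applied to a subspace $Y\subset P\mathcal C_1$ on which $(I-P)T$ is bounded below, handles the other.

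One slip: the step $\|y'-Ty\|\ge\|(I-P)(y'-Ty)\|$ would need $\|I-P\|\le 1$, which fails on $\mathcal C_1$ (\eqref{projection inequality} only gives $\|P\|\le1$). You should divide by $\|I-P\|\le 2$ instead, and for small $z\in TY$ use $\|y'-z\|\ge 1-\|z\|$; this is how the paper's estimate $d(S_Y,TY)\ge c/(c+\|I-P\|)$ works.
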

\begin{proof}
Recall that\cite[p.300]{Arazy2}  $$\mathcal C_1\approx \left(\sum\mathcal C_1\right)_1 . $$

By Lemma \ref{P(T-aI)P}, there exists an increasing sequence $\{i_k\}_{k=1}^\infty$ of positive integers and a scalar $\alpha$ such that
\[
\left\Vert P_{[\xi_{i_{2k+1}}]_{k=N}^\infty,[\eta_{i_{2k}}]_{k=N}^\infty}(T-\alpha I)P\right\Vert\to0\;\;\;\;{\rm as}\;N\to\infty,
\]
and
\[
P(T-\alpha I)P\in\mathcal M_{\mathcal C_1}
\]
where $P:=P_{[\xi_{i_{2k+1}}]_{k=1}^\infty,[\eta_{i_{2k}}]_{k=1}^\infty}$.

(1) If $(I-P)TP$ is not $\mathcal C_1$-strictly singular, then there is a subspace $X$ of $\mathcal C_1$ isomorphic to $\mathcal C_1$ such that $(I-P)TP$ is an isomorphism on $X$. Denote by $Y=PX$. Then $Y\approx \mathcal C_1$. Since $(I-P)|_{TY}$ is an isomorphic embedding, it follows that there is a positive number $c$ such that
\[
\Vert (I-P)(y')\Vert_1\geq c\Vert y'\Vert_1\;\;\;\;\;\;{\rm for\;every}\;y'\in TY.
\]
For any $y\in S_Y=S_{PX}$ and $y'\in TY$, we have $\Vert (I-P)(y-y')\Vert_1=\Vert (1-P)(y')\Vert_1\geq c\Vert y'\Vert_1$. Thus, 
\begin{align*}
\Vert y-y'\Vert_1&=\frac{c\Vert y-y'\Vert_1+\Vert I-P\Vert\cdot\Vert y-y'\Vert_1}{c+\Vert I-P\Vert}\\
&\geq\frac{c\big(\Vert y-y'\Vert_1+\Vert y'\Vert_1\big)}{c+\Vert I-P\Vert}\geq\frac{c}{c+\Vert I-P\Vert}.
\end{align*}
Therefore, $d(S_Y,TY)>0$. By Theorem \ref{4}, $T$ is a commutator.

(2) if  $(I-P)TP$ is $\mathcal C_1$-strictly singular, then, from the identity
\[
(T-\alpha I)P=(I-P)TP+P(T-\alpha I)P
\]
and $P(T-\alpha I)P\in\mathcal M_{\mathcal C_1}$, it follows that   
$(T-\alpha I)|_{[e_{i_{2k+1},i_{2l}}]_{k,l=1}^\infty}$ is $\mathcal C_1$-strictly singular. By 
Theorem~\ref{5},  $T$ is a commutator.
\end{proof}

\begin{theorem}\label{thmd}
An operator $T\in\mathcal B(\mathcal C_1)$ is a commutator if and only if $T-\lambda I$ is not $\mathcal C_1$-strictly singular for all $\lambda\neq 0$.  Consequently, $\mathcal{C}_1$ is a Wintner space. 
\end{theorem}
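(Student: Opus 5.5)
The plan is to split the statement into its two directions and to read off the Wintner-space consequence from \eqref{MC1}.

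For necessity (``only if''), we argue by contradiction. Suppose $T$ is a commutator and $T-\lambda I$ is $\mathcal C_1$-strictly singular for some $\lambda\neq0$. By \eqref{MC1} (which combines Theorem~\ref{c1ch}, Corollary~\ref{chmx} and Theorem~\ref{cela}), the $\mathcal C_1$-strictly singular operators form the ideal $\mathcal M_{\mathcal C_1}$, which is the largest proper ideal of $\mathcal B(\mathcal C_1)$ and is norm closed. Pass to the quotient Banach algebra $\mathcal B(\mathcal C_1)/\mathcal M_{\mathcal C_1}$, which is unital because $\mathcal M_{\mathcal C_1}$ is proper. The image of $T$ is $\lambda\cdot 1$, and the image of a commutator is a commutator. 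Hence $1=\lambda^{-1}[\,\cdot,\cdot\,]$ would be a commutator in a unital Banach algebra, contradicting Theorem~\ref{win}.

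For sufficiency (``if''), assume $T-\lambda I\notin\mathcal M_{\mathcal C_1}$ for every $\lambda\neq0$, and split into two cases according to $\lambda=0$.
\begin{itemize}
\item If $T\in\mathcal M_{\mathcal C_1}$, then Theorem~\ref{mc1c} gives directly that $T$ is a commutator.
\item If $T\notin\mathcal M_{\mathcal C_1}$, then $T-\lambda I$ fails to be $\mathcal C_1$-strictly singular for every scalar $\lambda$, including $\lambda=0$. Theorem~\ref{6} then shows that $T$ is a commutator.
\end{itemize}

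The main work, namely Theorems~\ref{mc1c} and \ref{6}, has already been carried out, so the only delicate point is checking that the quotient argument is legitimate. Two facts are needed: the set of $\mathcal C_1$-strictly singular operators is indeed a closed proper ideal, which is exactly \eqref{MC1} together with the footnote following \eqref{mx}; and $\lambda\neq0$ allows us to rescale to the identity.

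Finally, the Wintner-space consequence is immediate. Every non-commutator has the form $\lambda I+K$ with $\lambda\neq0$ and $K\in\mathcal M_{\mathcal C_1}$, and $\mathcal M_{\mathcal C_1}$ is a proper ideal. This also confirms Conjecture~\ref{conjecture} for $\mathcal C_1$, using $\mathcal C_1\approx(\sum\mathcal C_1)_1$.
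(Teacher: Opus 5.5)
Your proposal is correct and follows essentially the same route as the paper: you obtain necessity from Wintner's theorem applied in the quotient Banach algebra $\mathcal B(\mathcal C_1)/\mathcal M_{\mathcal C_1}$, using \eqref{MC1}. For sufficiency you split into $T\in\mathcal M_{\mathcal C_1}$, handled by Theorem~\ref{mc1c}, and $T\notin\mathcal M_{\mathcal C_1}$, handled by Theorem~\ref{6}, exactly as the paper does.
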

\begin{proof}
The necessity  follows immediately from Wintner's theorem (i.e., Theorem \ref{win}) for quotient Banach algebra $\mathcal B(\mathcal C_1)/\mathcal M_{\mathcal C_1}$.

Sufficiency.  Suppose that $T-\lambda I\notin\mathcal M_{\mathcal C_1}$ for all $\lambda\neq 0$. If $T\in\mathcal{M}_{\mathcal C_1}$, it follows from  Theorem~\ref{mc1c} that  $T$ is a commutator  on $\cC_1$. If $T\notin\mathcal{M}_{\mathcal C_1}$, then $T-\lambda I\notin\mathcal{M}_{\mathcal C_1}$ for all scalars $\lambda$. 
By Theorem~\ref{6}, the operator $T$ is a commutator on $\cC_1$.
\end{proof}
The following corollary is an immediate consequence of Theorems \ref{cs=ts} and \ref{thmd}.
\begin{corollary}
An operator $T\in\mathcal B(\mathcal C_1)$ is a commutator if and only if $T-\lambda I$ is not $\mathcal T_1$-strictly singular for all $\lambda\neq 0$.
\end{corollary}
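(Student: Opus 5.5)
The corollary follows by combining Theorem \ref{thmd} with Theorem \ref{cs=ts} applied to $E=\ell_1$.

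First, I check the hypotheses of Theorem \ref{cs=ts} for $E=\ell_1$. The space $\ell_1$ is a separable Banach symmetric sequence space. It does not contain $c_0$, since $\ell_1$ is weakly sequentially complete. It does not contain $\ell_2$, since $\ell_1$ has the Schur property, so no infinite-dimensional reflexive subspace embeds in it. Theorem \ref{cs=ts} therefore gives, for every $S\in\mathcal B(\mathcal C_1)$, that $S$ is $\mathcal C_1$-strictly singular if and only if $S$ is $\mathcal T_1$-strictly singular.

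Next, fix $T\in\mathcal B(\mathcal C_1)$ and take $S=T-\lambda I$ for each $\lambda\neq 0$. By the equivalence above, the condition ``$T-\lambda I$ is not $\mathcal C_1$-strictly singular for all $\lambda\neq0$'' is the same as the condition ``$T-\lambda I$ is not $\mathcal T_1$-strictly singular for all $\lambda\neq0$''. Theorem \ref{thmd} says $T$ is a commutator exactly when the first condition holds, so $T$ is a commutator exactly when the second holds. This is the statement of the corollary.

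The only point needing care is that the hypotheses $c_0,\ell_2\not\hookrightarrow \ell_1$ of Theorem \ref{cs=ts} are verified, which was done in the first step; the rest is substitution.
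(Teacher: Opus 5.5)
Your proof is correct and matches the paper's, which states that the corollary is an immediate consequence of Theorems \ref{cs=ts} and \ref{thmd}. Your explicit check that $c_0,\ell_2\not\hookrightarrow\ell_1$ (via weak sequential completeness and the Schur property) supplies the hypothesis verification that the paper leaves implicit.
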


\begin{remark}Using results obtained  
in Sections \ref{Sec:Arazy's decomposition for cp} and \ref{Sec:largest proper ideal}
and  arguing mutatis mutandis as the above proof, one can show that,  for any   $1\leq p<\infty$, an operator $T\in\mathcal B(\mathcal T_p)$ is a commutator if and only if $T-\lambda I$ is not $\mathcal T_p$-strictly singular for all $\lambda\neq0$.
\end{remark}

\begin{corollary}\label{t1+c1}
    $\mathcal T_1\oplus\mathcal C_1$ is a Wintner space.
\end{corollary}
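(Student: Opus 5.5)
We will show that $\mathcal B(\mathcal T_1\oplus\mathcal C_1)$ has a largest proper ideal $\mathcal M:=\mathcal M_{\mathcal T_1\oplus\mathcal C_1}$. We will also show that every $T\notin\{\lambda I+K:\lambda\neq0,K\in\mathcal M\}$ is a commutator.

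Write operators on $X:=\mathcal T_1\oplus\mathcal C_1$ as $2\times2$ matrices $\begin{pmatrix}A&B\\C&D\end{pmatrix}$ with $A\in\mathcal B(\mathcal T_1)$, $B\in\mathcal B(\mathcal C_1,\mathcal T_1)$, $C\in\mathcal B(\mathcal T_1,\mathcal C_1)$ and $D\in\mathcal B(\mathcal C_1)$. The key structural input is Theorem~\ref{teicce}. Since $\mathcal T_1\not\approx\mathcal C_1$ (Example~\ref{exmple}), $\mathcal T_1$ is not isomorphic to a complemented subspace of $\mathcal C_1$.

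\textbf{Step 1.} Every $C\in\mathcal B(\mathcal T_1,\mathcal C_1)$ is such that $I_{\mathcal T_1}$ does not factor through $C$. Indeed, if $I=ACB$, then $CB(\mathcal T_1)$ would be a complemented copy of $\mathcal T_1$ in $\mathcal C_1$, which Theorem~\ref{teicce} rules out.

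\textbf{Step 2.} Symmetrically, $I_{\mathcal C_1}$ does not factor through any $B\in\mathcal B(\mathcal C_1,\mathcal T_1)$. Otherwise $\mathcal C_1$ would be complemented in $\mathcal T_1\subset\mathcal C_1$. Combined with the Pe{\l}czy\'nski argument in the proof of Theorem~\ref{teicce}(c)$\Rightarrow$(d), and using Proposition~\ref{te=te2}, this would give $\mathcal T_1\approx\mathcal T_1\oplus\mathcal C_1$. We then need to see that this is impossible. One way is to apply the decomposition Theorem~\ref{thmb} to the embedding $\mathcal C_1\to\mathcal T_1$. Its $\tilde R_2$ part would force a complemented copy of $\mathcal C_1$ with lower-triangular support, giving a bounded triangular projection and contradicting Lemma~\ref{qb-tenotce}.

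Given Steps 1 and 2, the candidate ideal is
\[
\mathcal M=\Big\{\begin{pmatrix}A&B\\C&D\end{pmatrix}: A\in\mathcal M_{\mathcal T_1},\ D\in\mathcal M_{\mathcal C_1}\Big\}.
\]
It is an ideal because $\mathcal M_{\mathcal T_1}$ and $\mathcal M_{\mathcal C_1}$ are ideals (Theorems~\ref{tela} and~\ref{cela}). The key check is that a product $BC$ or $CB$ of off-diagonal entries lands in the corresponding $\mathcal M$. This follows from Steps 1 and 2, since $I_{\mathcal T_1}=U(BC)V$ would factor $I_{\mathcal T_1}$ through $C$. The ideal is proper, and it is largest: if $T\notin\mathcal M$, say $A\notin\mathcal M_{\mathcal T_1}$, then compressing to the first summand lets us factor $I_{\mathcal T_1}$, and hence the identity of $X$ via $X\approx\mathcal T_1$ (note $\mathcal T_1\approx\mathcal T_1\oplus\mathcal C_1$ fails, so we must use instead that the quotient $\mathcal B(X)/\mathcal M\cong\mathcal B(\mathcal T_1)/\mathcal M_{\mathcal T_1}\oplus\mathcal B(\mathcal C_1)/\mathcal M_{\mathcal C_1}$). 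Here ``largest'' should be read as: $\mathcal M$ contains every ideal not containing the two diagonal projections. So the natural statement is that the non-commutators are exactly the operators that are scalar modulo $\mathcal M$.

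\textbf{Step 3 (commutators).} Necessity follows from Wintner's theorem in the quotient algebra. For sufficiency, suppose $T$ is not of the form $\lambda I+K$ with $\lambda\ne0$ and $K\in\mathcal M$. Modulo $\mathcal M$, $T$ equals $\mathrm{diag}(A,D)$. If $A-\lambda I\notin\mathcal M_{\mathcal T_1}$ for all $\lambda\neq0$ and $D-\mu I\notin\mathcal M_{\mathcal C_1}$ for all $\mu\neq0$, then $A$ and $D$ are commutators by the remark after Theorem~\ref{thmd} and by Theorem~\ref{thmd}. We must then absorb the off-diagonal and $\mathcal M$-perturbations. This is the main obstacle: $\mathcal M$-perturbations of commutators need not be commutators. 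We plan to redo the proofs of Theorems~\ref{mc1c} and~\ref{6} directly on $X$, which is complementably homogeneous in the appropriate summand-wise sense and satisfies $X\approx(\sum X)_1$. We build an $\ell_1$-decomposition inside the $\mathcal C_1$ summand (or the $\mathcal T_1$ summand) as in Theorem~\ref{mc1c}, and then apply Lemma~\ref{y} and Theorems~\ref{4} and~\ref{5}.

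The remaining case is $A\equiv\lambda I$, $D\equiv\mu I$ modulo ideals with $\lambda\neq\mu$. For this we use Theorem~\ref{4} with $Y$ a copy of $X$ chosen so that $Y\cap$ the $\mathcal C_1$-summand carries $D$ and the $\mathcal T_1$ part carries $A$. Because the eigenvalues differ, we get $d(S_Y,TY)>0$, and so $T$ is a commutator.
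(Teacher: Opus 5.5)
Your Steps~1 and~2 are correct in substance, and they are exactly the two facts the paper verifies: operators on $\mathcal T_1$ factoring through $\mathcal C_1$ lie in $\mathcal M_{\mathcal T_1}$, and operators on $\mathcal C_1$ factoring through $\mathcal T_1$ lie in $\mathcal M_{\mathcal C_1}$. For Step~2 the paper quotes Arazy: $\mathcal T_1$ has an unconditional finite-dimensional decomposition, so $\mathcal C_1\not\hookrightarrow\mathcal T_1$. Your Theorem~\ref{thmb}/Pe{\l}czy\'nski sketch is vague, but it can be repaired. The argument in the proof of Theorem~\ref{teicce}, (d)$\Rightarrow$(a), never uses surjectivity, so it already excludes any embedding $\mathcal C_1\hookrightarrow\mathcal T_1$.

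The trouble starts afterwards. Steps~1 and~2 show that $\mathcal J_1=\{T: A\in\mathcal M_{\mathcal T_1}\}$ and $\mathcal J_2=\{T: D\in\mathcal M_{\mathcal C_1}\}$ are two proper closed ideals of $\mathcal B(X)$, with $P_{\mathcal C_1}\in\mathcal J_1$ and $P_{\mathcal T_1}\in\mathcal J_2$. Hence there is no largest proper ideal. Moreover, your characterization ``non-commutators are the operators that are scalar modulo $\mathcal M=\mathcal J_1\cap\mathcal J_2$'' is false. Indeed $P_{\mathcal T_1}=I-P_{\mathcal C_1}\in I+\mathcal J_1$ is not a commutator, by Wintner's theorem in $\mathcal B(X)/\mathcal J_1$. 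Yet it is not scalar modulo $\mathcal M$.

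This is precisely your ``remaining case'' with $\lambda=1\neq\mu=0$, which you assert is a commutator. In fact every operator in that case is a non-commutator. So is every $T$ with one diagonal entry a nonzero scalar modulo its ideal and the other not a scalar; your case split omits that case entirely.

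Theorem~\ref{4} could not help here anyway. First, $P_{\mathcal T_1}$ is an isomorphism on no copy of $X$. More fundamentally, $X=\mathcal T_1\oplus\mathcal C_1$ is not complementably homogeneous. By Proposition~\ref{te=te2}, $0\oplus\mathcal C_1$ contains a copy of $X$. Any copy of $X$ inside it that is complemented in $X$ would yield a complemented copy of $\mathcal T_1$ in $\mathcal C_1$, contradicting Theorem~\ref{teicce}. So your plan to run Theorems~\ref{4}, \ref{5} and Lemma~\ref{y} on $X$ has no footing.

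The correct target is this: $T$ is a non-commutator if and only if $A-\lambda I\in\mathcal M_{\mathcal T_1}$ or $D-\lambda I\in\mathcal M_{\mathcal C_1}$ for some $\lambda\neq0$. Equivalently, $T\in\lambda I+\mathcal J_1$ or $T\in\lambda I+\mathcal J_2$. This suffices, because a Wintner space only requires $K$ to lie in some proper ideal.

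Necessity is Wintner's theorem. Sufficiency is the real content: if $A$ and $D$ are both of commutator type, then $T$ is a commutator in spite of the off-diagonal entries $B$, $C$. Your proposal contains no argument for this. It offers only the plan to redo Theorems~\ref{mc1c} and~\ref{6} on $X$, which rests on the false premises above.

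The paper does not prove this step by hand either. It invokes \cite[Corollary 5.4]{Dosev Johnson}. Its hypotheses are exactly two things. First, $\mathcal T_1$ and $\mathcal C_1$ are Wintner spaces with largest ideals $\mathcal M_{\mathcal T_1}$, $\mathcal M_{\mathcal C_1}$ (\cite[Theorem 5.4]{CY}, Theorem~\ref{tela}, Theorem~\ref{thmd}). Second, your Steps~1 and~2 hold. That direct-sum criterion, or a genuine proof of it, is the missing ingredient.
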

\begin{proof}
   Since both $\mathcal T_1$ and $\mathcal C_1$ are   Wintner spaces (see \cite[Theorem 5.4]{CY} and  Theorem \ref{thmd} above), it follows from   \cite[Corollary 5.4]{Dosev Johnson} that
   in order to prove the statement, 
   it suffices to  verify  the following two facts:
\begin{itemize}
    \item  All operators $T:\mathcal T_1\to\mathcal T_1$ that factor through $\mathcal C_1$ (i.e, there exist two operators $A:\mathcal T_1\to\mathcal C_1$ and $B:\mathcal C_1\to\mathcal T_1$ such that $T=BA$) are in the largest proper ideal $\mathcal M_{\mathcal T_1}$ in $\mathcal B(\mathcal T_1$) (Theorem~\ref{tela}).
    \item 
    All operators $T:\mathcal C_1\to\mathcal C_1$ that factor through $\mathcal T_1$ are in the largest proper ideal $\mathcal M_{\mathcal C_1}$ in $\mathcal B(\mathcal C_1)$.
\end{itemize}

The proof of fact 1:
If $T\notin\mathcal M_{\mathcal T_1}$, then there are two operators $U,V\in\mathcal B(\mathcal T_1)$ such that $UTV=I_{\mathcal T_1}$ (see \eqref{mx} above). Since $T$ factors through $\mathcal C_1$, i.e., $T=AB$, where $A:\mathcal T_1\to\mathcal C_1$ and $B:\mathcal C_1\to\mathcal T_1$, 
it follows that $\mathcal T_1$ is isomorphic to a complemented subspace of $\mathcal C_1$\footnote{Recall the following well-known fact: 
Let $X_0$ and $ X_1$ be two Banach spaces and let 
 $T_0: X_0\to X_1$ and $T_1:X_1\to X_0$ be 
 two bounded linear  operators. 
 Assume that $T_1\circ T_0 ={\rm id}_{X_0}$. Then,  $X_0  $ is isomorphic to a  complemented subspace of $X_1$.  
 A short proof of this fact can be found in Lemma 3.1 of \cite{HSZ25a}. }. By Theorem \ref{teicce}, we have $\mathcal T_1\approx\mathcal C_1$, a contradiction.

The proof of fact 2: 
Assume that there exist $A:\cC_1\to \mathcal{T}_1 $ and $B:\mathcal{T}_1 \to \cC_1$ such that $T=AB$. 
Since $\mathcal T_1$ has an  unconditional finite dimensional decomposition, it follows from \cite[Lemma 4.5 and Theorem 4.7]{Arazy1} that $\mathcal C_1$ is not isomorphic to any subspace of $\mathcal T_1$.
Hence, $A$ is $\cC_1$-strictly singular, and therefore, $T=AB$ is also $\cC_1$-strictly singular.
Hence, $T\in\mathcal M_{\mathcal C_1}$.
\end{proof}

The proof of Corollary \ref{t1+c1} together with the following facts:
\begin{itemize}
\item 
\cite[Corollary 3.2]{Arazy4}: 
$\ell_p \not\hookrightarrow \cC_q$ when $p\ne 2$ and $1\le p\ne q<\infty $. In particular, $\mathcal{T}_p , \cC_p\not\hookrightarrow \cC_q$. 
\item
$\mathcal C_p\oplus\mathcal C_2\approx\mathcal C_p\oplus\ell_2\approx\mathcal C_p$ for any $1\leq p<\infty$ and $\mathcal T_1\oplus\mathcal C_2\approx\mathcal T_1\oplus\ell_2\approx\mathcal T_1$.
\item 
$\mathcal C_p\approx\mathcal T_p$ for any $1<p<\infty$.
\end{itemize}
yields the following consequence.  

\begin{corollary}
For any $X_1,\dots,X_n\in\{\mathcal C_p:1\leq p<\infty\}\cup\{\mathcal T_p:1\leq p<\infty\}$,  $X_1\oplus\cdots\oplus X_n$ is a Wintner space.
\end{corollary}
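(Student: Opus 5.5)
The statement to be proved is the final corollary: for any $X_1,\dots,X_n$ in $\{\mathcal C_p\}\cup\{\mathcal T_p\}$, the direct sum $X_1\oplus\cdots\oplus X_n$ is a Wintner space. The plan is to mirror the proof of Corollary~\ref{t1+c1}, using \cite[Corollary 5.4]{Dosev Johnson}.

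\emph{Step 1: reduce to distinct pairwise non-isomorphic summands.} First simplify the list.
\begin{itemize}
\item For $1<p<\infty$ we have $\mathcal T_p\approx\mathcal C_p$, so every $\mathcal T_p$ with $p>1$ can be replaced by $\mathcal C_p$.
\item By Proposition~\ref{te=te2}, repeated summands collapse: $\mathcal C_p\oplus\mathcal C_p\approx\mathcal C_p$ and $\mathcal T_1\oplus\mathcal T_1\approx\mathcal T_1$.
\item Any $\mathcal C_2\cong\ell_2$ summand is absorbed whenever another summand is present, since $\mathcal C_p\oplus\ell_2\approx\mathcal C_p$ and $\mathcal T_1\oplus\ell_2\approx\mathcal T_1$.
\end{itemize}
If the list reduces to $\mathcal C_2$ alone, the sum is a Hilbert space, and Brown--Pearcy applies. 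Otherwise we may assume
\[
X=Z_1\oplus\cdots\oplus Z_m,
\]
where the $Z_j$ are distinct elements of $\{\mathcal C_p:p\ne2\}\cup\{\mathcal T_1\}$. Both $\mathcal C_1$ and $\mathcal T_1$ may occur.

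\emph{Step 2: check the hypotheses of Dosev--Johnson.} Each $Z_j$ is a Wintner space:
\begin{itemize}
\item $\mathcal C_1$ by Theorem~\ref{thmd};
\item $\mathcal T_1$, and $\mathcal C_p$ for $1<p<\infty$, by \cite{CY}.
\end{itemize}
Each $\mathcal B(Z_j)$ has largest ideal $\mathcal M_{Z_j}$, by Theorems~\ref{tela} and~\ref{cela}. Iterating \cite[Corollary 5.4]{Dosev Johnson}, adding one summand at a time, it then suffices to show the following: for $i\ne j$, every operator $Z_i\to Z_i$ that factors through $Z_j$ lies in $\mathcal M_{Z_i}$.

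When iterating we must also check this for factorization through the partial sum $Z_{j_1}\oplus\cdots$. That case reduces to the single-summand case because $\mathcal M_{Z_i}$ is an ideal. Indeed, $BA=\sum_k B_kA_k$, where $A_k$ and $B_k$ are the components of $A$ and $B$ into and out of the $k$-th summand.

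\emph{Step 3: the factorization claim.} Suppose $T=BA\notin\mathcal M_{Z_i}$, with $A:Z_i\to Z_j$ and $B:Z_j\to Z_i$. Then $UBAV=I_{Z_i}$ for some $U,V$, so $Z_i$ is isomorphic to a complemented subspace of $Z_j$; in particular $Z_i\hookrightarrow Z_j$. We rule this out in cases.
\begin{itemize}
\item $Z_i=\mathcal C_p$ or $\mathcal T_p$, and $Z_j=\mathcal C_q$ with $p\ne q$ (including $q=1$). Both spaces contain $\ell_p$ with $p\ne2$, and by \cite[Corollary 3.2]{Arazy4} $\ell_p\not\hookrightarrow\mathcal C_q$. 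This is a contradiction.
\item $Z_i=\mathcal C_p$ with $p\ne1,2$, and $Z_j=\mathcal T_1\subset\mathcal C_1$. Then $\ell_p\hookrightarrow\mathcal C_1$, again contradicting \cite{Arazy4}.
\item $Z_i=\mathcal T_1$, $Z_j=\mathcal C_1$, or the reverse. This is exactly the content of the two facts in the proof of Corollary~\ref{t1+c1}:
  \begin{itemize}
  \item $\mathcal T_1$ is not isomorphic to a complemented subspace of $\mathcal C_1$, by Theorem~\ref{teicce}.
  \item $\mathcal C_1\not\hookrightarrow\mathcal T_1$, by \cite[Theorem 4.7]{Arazy1}.
  \end{itemize}
\end{itemize}

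\emph{Main obstacle.} The delicate point is the iteration of \cite[Corollary 5.4]{Dosev Johnson}. At each stage we need not only that the partial sum is Wintner, but also a description of its largest ideal, so that the hypothesis for the next step can be verified. I expect the largest ideal of $Z_1\oplus\cdots\oplus Z_k$ to be the set of block matrices whose diagonal entries lie in $\mathcal M_{Z_j}$, since off-diagonal entries are automatically ``small'' by Step 3. If that description is not directly available from \cite{Dosev Johnson}, I would instead apply the multi-summand form of their corollary in one go, with the same pairwise hypotheses.
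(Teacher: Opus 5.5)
Your reduction (replace $\mathcal T_p$ by $\mathcal C_p$ for $p>1$, collapse repeated summands via Proposition~\ref{te=te2}, absorb $\ell_2$) and your pairwise verification via \cite[Corollary 3.2]{Arazy4}, Theorem~\ref{teicce} and \cite{Arazy1} are what the paper's short argument intends. Your reduction of factorization through a partial sum to single summands, via $BA=\sum_k B_kA_k$, is also correct. So the route is the paper's.

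The one step that fails as written is the iteration in your last paragraph. For $k\ge 2$ pairwise incomparable summands, $\mathcal B(Z_1\oplus\cdots\oplus Z_k)$ has no largest proper ideal:
\begin{itemize}
\item For each $i$, the set $\mathcal J_i=\{T: T_{ii}\in\mathcal M_{Z_i}\}$ is a proper ideal. This follows from $(ST)_{ii}=S_{ii}T_{ii}+\sum_{l\ne i}S_{il}T_{li}$ together with your pairwise hypothesis.
\item The coordinate projection $P_j$ lies in $\mathcal J_i$ for every $i\ne j$.
\item Yet $\sum_j P_j=I$.
\end{itemize}
So the set you describe, block matrices with every diagonal entry in $\mathcal M_{Z_j}$, is only $\bigcap_i\mathcal J_i$. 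It is not even maximal. Moreover, the non-commutators of the partial sum are $\lambda I+K$ with $K$ in \emph{some} $\mathcal J_i$, not in one fixed ideal. For instance, both $P_1=I-(I-P_1)$ and $P_2=I-(I-P_2)$ are non-commutators, with $I-P_1\in\mathcal J_1$ and $I-P_2\in\mathcal J_2$.

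Hence a two-summand criterion whose hypotheses are phrased via the largest ideal of each summand, as \cite[Corollary 5.4]{Dosev Johnson} is used in Corollary~\ref{t1+c1}, cannot be fed the partial sum. You must use your fallback: apply the Dosev--Johnson criterion to the $m$-fold sum in one go, with only the pairwise hypotheses. Then the maximal ideals are the $\mathcal J_i$. Each map $T\mapsto T_{ii}+\mathcal M_{Z_i}$ is a unital homomorphism onto $\mathcal B(Z_i)/\mathcal M_{Z_i}$, which gives necessity via Wintner's theorem. This multi-summand form is also what the paper's phrase ``the proof of Corollary~\ref{t1+c1} together with the following facts'' tacitly relies on.
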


\section*{Acknowledgment}
Jinghao Huang was supported by the NNSF of China (No. 12031004, 12301160 and 12471134). Fedor Sukochev was supported by the Australian Research Council (No. DP230100434). Zhizheng Yu would like to thank Professor Quanhua Xu for his generous support and encouragement. Our sincere thanks go to Professor William B. Johnson for generously sharing his ideas (see  Remark~\ref{chE}).

\appendix
\section{$\sigma$-weakly null sequences in  $\mathcal C_E$}\label{appendix}

In this section, we consider 
sequences in 
 a separable quasi-Banach operator ideal $\mathcal C_E$ which  converge to $0$ with respect to the $
\sigma$-weak operator topology of  $\cB(H)$,  and extend results established in \cite[Section 2]{Arazy4}. 
For convenience of the reader, we present
a complete version of our argument in terms of block sequences.

Note that  any weakly null sequence in a  quasi-Banach symmetric ideal $\mathcal C_E$ (i.e., sequences converging to $0$ with  respect to the $\sigma(\mathcal C_E,\mathcal C_E^*)$-topology)  are necessarily $\sigma$-weakly null.

The proofs for results in this section 
in the setting  when  $E$ is a general separable  quasi-Banach symmetric sequence space  are very similar to those  for  Banach spaces. 
 Throughout this section, we always
 \begin{itemize}
 \item 
 assume that the modulus of concavity of $\left\Vert\cdot\right\Vert_{\mathcal C_E}$ is $\kappa$, and
 \item 
 denote by  $\{K_i\}_{i=1}^\infty$ and $\{L_i\}_{i=1}^\infty$  two sequences of mutually orthogonal finite dimensional subspaces of $H$.
\end{itemize}

The following theorem was  proved in  \cite[Lemma 2.5]{Arazy3} in terms of matrix representations of operators on a Hilbert space. 
Below, we provide a more transparent and more self-contained proof.

\begin{theorem}\label{3c}
Let $E$ be a separable quasi-Banach symmetric space.
Suppose that $\{x_k\}_{k=2}^\infty$ is a bounded sequence in $\mathcal C_E$ with
\[
x_k=p_{_{[\bigcup_{l=1}^kK_l]}}x_kp_{_{[\bigcup_{l=1}^kL_l]}}-p_{_{[\bigcup_{l=1}^{k-1}K_l]}}x_kp_{_{[\bigcup_{l=1}^{k-1}L_l]}}, \;\;\;\;\;\;k=2,3,\dots.
\]
For a given sequence $\{\varepsilon_i\}_{i=2}^\infty$ of positive numbers,  there exist an increasing sequence $\{k_i\}_{i=1}^\infty$ of positive integers and a sequence $\{y_i\}_{i=2}^\infty$ in $\mathcal C_E$ having  the form
\[
y_i=\sum_{j=1}^{i-1}\big(a_{i,j}+a_{j,i}\big)+a_{i,i}\;\;\;\;\;\;i=2,3,\dots,
\]
such that $\Vert x_{k_i}-y_i\Vert_{\mathcal C_E}\leq\varepsilon_i$ for every $i\geq2$, 
where
\begin{itemize}
\item [(1)]
$K'_i=[\bigcup_{l=k_{i-1}+1}^{k_i}K_l]$ and $L'_i=[\bigcup_{l=k_{i-1}+1}^{k_i}L_l]$ for every $i\geq1$ (put $k_0=0$),
\item [(2)]
$\{a_{i,j}\}_{(i,j)\neq (1,1)}\stackrel{\mbox{{\rm\tiny b.s}}}{\boxplus}\{K'_i\}_{i=1}^\infty\otimes\{L'_j\}_{j=1}^\infty$,
\item [(3)]
$a_{i,i}=p_{_{K'_i}}x_{k_i}p_{_{L'_i}}$ for every $i\geq2$,
\item [(4)]
 $\{a_{i,j}\}_{i=j+1}^\infty\stackrel{\mbox{{\rm\tiny b.s}}}{\boxplus}\{K'_i\}_{i=2}^\infty\otimes\{L'_j\}_{j=1}^\infty\curvearrowleft\;$ are consistent\footnote{See Definition \ref{consistent}, i.e. there is a sequence of isometries $\big\{u_i:K'_i\to H'\big\}_{i=2}^\infty$, a sequence of isometries $\big\{v_j:L'_j\to H'\big\}_{j=1}^\infty$, and a sequence $\{d_j\}_{j=1}^\infty$ of in $\mathcal C_E(H')$ such that
\[
u_ia_{i,j}{v_j}^\ast=d_j\;\;\;\;\;\;\;\;{\rm for\;every}\;1\leq i<j<\infty.
\]
} for all $j\geq1$,
\item [(5)] 
$\{a_{j,i}\}_{i=j+1}^\infty\stackrel{\mbox{{\rm\tiny b.s}}}{\boxplus}\{K'_j\}_{j=1}^\infty\otimes\{L'_i\}_{i=2}^\infty\curvearrowleft\;$ are consistent for all $j\geq1$.
\end{itemize}
\end{theorem}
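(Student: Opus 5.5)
The idea is to reduce the statement to the compactness/diagonal argument of Lemma \ref{3b}, applied separately to the ``row'' and ``column'' pieces of each $x_k$.

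First I decompose each $x_k$. Put $P_{k-1}=p_{[\bigcup_{l=1}^{k-1}K_l]}$ and $Q_{k-1}=p_{[\bigcup_{l=1}^{k-1}L_l]}$. The assumed form of $x_k$ is an ``L-shaped'' block, so
\[
x_k=(1-P_{k-1})x_k Q_{k-1}+P_{k-1}x_k(1-Q_{k-1})+(1-P_{k-1})x_k(1-Q_{k-1}),
\]
and each piece is supported in $[\bigcup_{l\le k}K_l]$, $[\bigcup_{l\le k}L_l]$. Every piece is bounded by $\kappa^2\|x_k\|$ by \eqref{diag-proj} with $n=2$, so no diagonal projection on an infinite family is ever needed.

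Next comes the inductive construction. I choose $k_1<k_2<\cdots$ and refine the first piece further. For $i\ge2$ and $j<i$, set $L'_j=[\bigcup_{l=k_{j-1}+1}^{k_j}L_l]$ and $K'_j$ similarly, and let
\[
a_{i,j}=p_{K'_i}x_{k_i}p_{L'_j},\qquad a_{j,i}=p_{K'_j}x_{k_i}p_{L'_i},\qquad a_{i,i}=p_{K'_i}x_{k_i}p_{L'_i}.
\]
With these choices, $y_i$ differs from $x_{k_i}$ only by terms of the form $p_{[\bigcup_{l\le k_{i-1}}K_l]^\perp\ominus K'_i}\cdots$. Such terms vanish once $k_i$ is the index itself, since the support of $x_{k_i}$ lies in $[\bigcup_{l\le k_i}K_l]$ and the part with both indices $\le k_{i-1}$ is zero. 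The exception is the cross blocks with $K_l$ for $k_{i-1}<l\le k_i$ against $L_m$ for $k_{j-1}<m\le k_j$, which are already accounted for. So $y_i=x_{k_i}$ exactly before any perturbation, and conditions (1)–(3) are automatic.

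The real content is consistency, conditions (4) and (5), and only a perturbation achieves it. For fixed $j$, the sequence $\{p_{K'_i}x_{k_i}p_{L'_j}\}_i$ has right support in a fixed finite-dimensional space and left supports mutually orthogonal. That is exactly the setting of Lemma \ref{3b}, and more precisely of Corollary \ref{3a} with a diagonal argument over $j$. To use it, I will arrange the induction so that the rows $[\bigcup_{l\le k_j}L_l]$ are finite-dimensional at the moment $k_i$ is chosen. I then apply Lemma \ref{3b} to the lower-triangular family $\{(1-P_{k_{j}})x_{k}Q_{k_j}\}$, and Remark \ref{3b'} to the upper family $\{P_{k_j}x_k(1-Q_{k_j})\}$. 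This yields a subsequence and approximants $a_{i,j}$, $a_{j,i}$ that are generated by common isometry systems, with errors $\varepsilon_{i,j}$. Once these are in hand, the consistency in the orthogonal-$L_j$ case of Lemma \ref{3b} gives (4), and symmetrically (5).

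The main obstacle is synchronising the two diagonal extractions (rows and columns) and the block boundaries $k_i$ into one subsequence, while keeping the total error below $\varepsilon_i$ in a quasi-norm. For the error, I pick $\varepsilon_{i,j}$ with $\bigl(\sum_{j<i}(\varepsilon_{i,j}^r+\varepsilon_{j,i}^r)\bigr)^{1/r}4^{1/r}\le\varepsilon_i$ and use \eqref{qt-eq} on the finitely many blocks. For the synchronisation, I will first take the subsequence from Lemma \ref{3b} for the lower part, then pass to a further subsequence via Remark \ref{3b'} for the upper part; since consistency passes to subsequences, the earlier property is preserved.
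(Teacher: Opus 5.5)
Your skeleton matches the paper's. You keep the diagonal block $a_{i,i}=p_{K'_i}x_{k_i}p_{L'_i}$ exact and obtain the off-diagonal blocks from the compactness argument of Lemma~\ref{3b} and Remark~\ref{3b'}. You are also right that the unperturbed blocks sum exactly to $x_{k_i}$, since $P_{k-1}x_kQ_{k-1}=0$.

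\textbf{The gap is the consistency step (4)--(5).}

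The spaces $K'_i,L'_j$ are defined through the subsequence $\{k_i\}$, and that subsequence is exactly what Lemma~\ref{3b} outputs. So you cannot apply Lemma~\ref{3b} to $\{p_{K'_i}x_{k_i}p_{L'_j}\}$ or to $\{(1-P_{k_j})x_kQ_{k_j}\}$: the input would depend on the output. Lemma~\ref{3b} only thins out the row index and keeps the column spaces of its fixed input. Hence it never produces consistency with respect to \emph{merged} column blocks.

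For the same reason, your claim that consistency passes to subsequences fails at the grouped level. Passing to a further subsequence merges consecutive $K'_i$'s and $L'_j$'s, and you give no argument that a sum of consistent blocks is again consistent. Besides, the spaces $[\bigcup_{l\le k_j}L_l]$ are nested. So only part (2) of Lemma~\ref{3b}, with the single column space $L$, would apply, not the orthogonal case you invoke.

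\textbf{The missing idea is to approximate at the level of the original blocks and regroup afterwards.}

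Apply Lemma~\ref{3b} and Remark~\ref{3b'} to the fixed arrays $c_{k,l}=p_{K_k}x_kp_{L_l}$ and $c_{l,k}=p_{K_l}x_kp_{L_k}$, for $l<k$. Their column (resp.\ row) spaces $L_l$ (resp.\ $K_l$) are fixed, finite-dimensional and mutually orthogonal. Use tolerances $\delta_i\delta_l$ chosen in advance.

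This gives approximants with $u_{k_i}b_{k_i,l}v_l^*=d_l$ for all $i$, and the analogue for $b_{l,k_i}$. These per-$l$ properties do survive passing to a further subsequence, which settles your synchronisation issue.

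Then set $a_{i,j}=\sum_{l=k_{j-1}+1}^{k_j}b_{k_i,l}$ and $a_{j,i}=\sum_{l=k_{j-1}+1}^{k_j}b_{l,k_i}$. Conditions (4)--(5) follow from Theorem~\ref{3a'} and Remark~\ref{rembl}. Take $v'_j:L'_j\to H'\otimes_2\ell_2$, $\sum_l\eta_l\mapsto\sum_l v_l(\eta_l)\otimes e_l$. Take $u'_i$ to be an isometry on $K'_i$ extending $\xi\mapsto u_{k_i}(\xi)\otimes e_1$ on $K_{k_i}$. Then $u'_ia_{i,j}{v'_j}^*=\sum_{l=k_{j-1}+1}^{k_j}d_l\otimes(\cdot|e_l)e_1$, which is independent of $i$.

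Finally, $x_{k_i}-y_i$ now consists of $2k_{i-1}$ fine error terms. So the tolerances must be products $\delta_i\delta_l$ with $\sum_l\kappa^{l+1}\delta_l$ bounded, as in the paper. Numbers indexed by the grouped $j<i$ are not enough.
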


\begin{proof}
Choose an decreasing sequence $\{\delta_i\}_{i=1}^\infty$ of positive numbers such that $\delta_i\leq\varepsilon_i/2$ and $\sum_{i=1}^\infty\kappa^{i+1}\delta_i\leq1$.
We   write
\[
x_k=\sum_{l=1}^{k-1}\big(c_{k,l}+c_{l,k}\big)+c_{k,k}\;\;\;\;\;\;k=2,3,\dots,
\]
where $c_{k,l}=p_{_{K_k}}x_{{\max\{k,l\}}}p_{_{L_l}}$ for every $(k,l)\neq(1,1)$. Applying  Lemma \ref{3b} 
and  Remark \ref{3b'}
to  the lower triangle sequence $\{c_{k,l}\}_{1\le j\le i<\infty}$  and upper triangle sequence of $\{c_{k,l}\}_{1\le i\leq j <\infty}$,  we obtain that 
there exist an increasing sequence $\{k_i\}_{i=1}^\infty$ of positive integers and a sequence $\{b_{k_i,l}\}_{1\leq l\leq k_{i-1}}\cup\{b_{l,k_i}\}_{1\leq l\leq k_{i-1}}$ such that 
\begin{itemize}
\item [(1)]
$\Vert c_{k_i,l}-b_{k_i,l}\Vert_{\mathcal C_E} ,\Vert c_{l.k_i}-b_{l,k_i}\Vert_{\mathcal C_E}\leq\delta_i\delta_l$ for every $i\geq2$ and $1\leq l\leq k_{i-1}$,
\item [(2)]
$\{b_{k_i,l}\}_{l\leq k_{i-1}<\infty}\stackrel{\mbox{{\rm\tiny b.s}}}{\boxplus}\{K_{k_i}\}_{i=1}^\infty\otimes \{L_l\}_{l=1}^\infty\curvearrowleft\;$ are consistent for all $l$,
\item [(3)]
$\{b_{l,k_i}\}_{l\leq k_{i-1}<\infty}\stackrel{\mbox{{\rm\tiny b.s}}}{\boxplus}\{K_l\}_{l=1}^\infty\otimes \{L_{k_i}\}_{i=1}^\infty\curvearrowleft\;$ are consistent for all $l$.
\end{itemize}
Next, for all $1\leq j<i<\infty$, let
\begin{equation}\label{bl1}
a_{i,j}:=\sum_{l=k_{j-1}+1}^{k_j} b_{k_i,l},
\end{equation}
\begin{equation}\label{bl2}
a_{j,i}:=\sum_{l=k_{j-1}+1}^{k_j} b_{l,k_i},
\end{equation}
and
\begin{equation}\label{bl3}
a_{i,i}=p_{_{K'_i}}x_{k_i}p_{_{L'_i}},
\end{equation}
where $K'_i=[\bigcup_{l=k_{i-1}+1}^{k_i}K_l]$ and $L'_i=[\bigcup_{l=k_{i-1}+1}^{k_i}L_l]$ for every $i\geq1$.
The positions of $a_{i,j}$'s are demonstrated in the following picture. 
\tikzset{every picture/.style={line width=0.75pt}} 
\begin{center}
\scalebox{0.8}{ 
 \begin{tikzpicture}[x=0.75pt,y=0.75pt,yscale=-1,xscale=1] 

\draw [line width=1.2] (32.8,71.86) -- (521.34,72.4);
\draw [line width=1.2] (32.8,71.86) -- (33.06,560.94);
\draw [line width=1.2] (32.8,128.22) -- (88.85,128.41);
\draw [line width=1.2] (88.26,71.68) -- (88.85,128.41);
\draw [line width=1.2] (32.8,104.14) -- (64.8,104.14);
\draw [line width=1.2] (56.8,104) -- (56.59,127.82);
\draw [line width=1.2] (64.8,72) -- (64.8,104.14);
\draw [line width=1.2] (112.54,71.6) -- (112.54,151.6);
\draw [line width=1.2] (64.54,95.86) -- (88.54,95.6);
\draw [line width=1.2] (56.8,152) -- (57.06,192.4);
\draw [line width=1.2] [dash pattern={on 4.5pt off 3.6pt}] (152.76,96.04) -- (152.83,71.66);
\draw [line width=1.2] (33.34,151.6) -- (112.54,151.6);
\draw [line width=1.2] [dash pattern={on 4.5pt off 3.6pt}] (32.83,192.46) -- (57.06,192.4);
\draw [line width=1.2] (88.8,152) -- (89.06,192.4);
\draw [line width=1.2] (32.8,176) -- (56.66,176.2);
\draw [line width=1.2] (57.06,192.4) -- (153.06,192.4);
\draw [color={rgb, 255:red, 0; green, 0; blue, 0 }, draw opacity=1][line width=1.2] (152.76,96.04) -- (153.06,192.4);
\draw [line width=1.2] (136.8,72) -- (136.83,96.46);
\draw [line width=1.2] (112.8,128.8) -- (152.65,128.71);
\draw [line width=1.2] (112.8,96) -- (152.76,96.04);
\draw [line width=1.2] (33.6,232) -- (193.19,231.9);
\draw [line width=1.2] (88.8,232) -- (88.8,267.9) -- (88.8,312.4);
\draw [line width=1.2] (57.6,232.4) -- (57.6,272.4);
\draw [line width=1.2] (193.17,72) -- (193.19,231.9);
\draw [line width=1.2] (153.38,231.69) -- (153.38,312.09);
\draw [line width=1.2] (88.8,312.4) -- (272.26,311.95);
\draw [line width=1.2] (273.06,128.14) -- (272.26,311.95);
\draw [line width=1.2] (192.8,96.8) -- (240.5,96.76);
\draw [line width=1.2] [dash pattern={on 4.5pt off 3.6pt}] (88.8,312.4) -- (33.34,312.4);
\draw [line width=1.2] (240.5,96.76) -- (240.5,127.96);
\draw [line width=1.2] (57.6,272.4) -- (88.54,272.4);
\draw [line width=1.2] (33.6,256.8) -- (57.6,256.66);
\draw [line width=1.2] (192.8,128) -- (273.06,128.14);
\draw [line width=1.2] (193.04,191.56) -- (272.32,191.91);
\draw [line width=1.2] (216.8,72) -- (216.66,96.78);
\draw [line width=1.2] [dash pattern={on 4.5pt off 3.6pt}] (272.8,72) -- (273.06,128.14);
\draw [line width=1.2] (337.16,71.61) -- (337.6,376);
\draw [line width=1.2] (32.8,376) -- (337.6,376);
\draw [line width=1.2] (88.8,376) -- (89.06,456.14);
\draw [line width=1.2] (273.06,376.14) -- (273.06,528.14);
\draw [line width=1.2] (153.06,376.14) -- (153.06,528.14);
\draw [line width=1.2] (89.06,456.14) -- (153.06,456.14);
\draw [line width=1.2] (56.8,376) -- (56.6,416.13);
\draw [line width=1.2] (56.6,416.13) -- (89.06,416.14);
\draw [line width=1.2] (32.8,400) -- (57.06,400.14);
\draw [line width=1.2] (153.06,528.14) -- (488.8,528.8);
\draw [line width=1.2] (489.46,192.19) -- (488.8,528.8);
\draw [line width=1.2] (337.6,312) -- (489.28,312.13);
\draw [line width=1.2] [dash pattern={on 4.5pt off 3.6pt}] (33.06,528.14) -- (153.06,528.14);
\draw [line width=1.2] [dash pattern={on 4.5pt off 3.6pt}] (32.54,95.86) -- (64.54,95.86);
\draw [line width=1.2] [dash pattern={on 4.5pt off 3.6pt}] (56.8,72) -- (56.8,104);
\draw [line width=1.2] (337.6,192) -- (489.46,192.19);
\draw [line width=1.2] (337.6,128) -- (417.06,128.4);
\draw [line width=1.2] (417.06,128.4) -- (417.06,192.4);
\draw [line width=1.2] (336.8,96) -- (377.34,95.86);
\draw [line width=1.2] (377.34,95.86) -- (377.34,128.2);
\draw [line width=1.2] (361.34,71.86) -- (361.34,95.86);
\draw [line width=1.2] [dash pattern={on 4.5pt off 3.6pt}] (488.8,72) -- (489.46,192.19);

\draw [line width=1.2] (32.8,72.14) .. controls (29.5,72.14) and (27.86,73.78) .. (27.86,77.07) -- (27.86,77.07) .. controls (27.86,81.78) and (26.21,84.14) .. (22.92,84.14) .. controls (26.21,84.14) and (27.86,86.49) .. (27.86,91.19)(27.86,89.07) -- (27.86,91.19) .. controls (27.86,94.49) and (29.5,96.14) .. (32.8,96.14);
\draw [line width=1.2] (32.8,96) .. controls (29.06,96.01) and (27.21,97.88) .. (27.22,101.62) -- (27.31,136.35) .. controls (27.33,141.69) and (25.47,144.36) .. (21.74,144.37) .. controls (25.47,144.36) and (27.34,147.02) .. (27.36,152.35)(27.35,149.95) -- (27.46,187.08) .. controls (27.46,190.82) and (29.34,192.67) .. (33.07,192.66);
\draw [line width=1.2] (32.78,192.3) .. controls (29.05,192.3) and (27.18,194.17) .. (27.18,197.9) -- (27.18,244.3) .. controls (27.18,249.64) and (25.32,252.3) .. (21.58,252.3) .. controls (25.32,252.3) and (27.18,254.97) .. (27.18,260.3)(27.18,257.9) -- (27.18,306.7) .. controls (27.18,310.44) and (29.05,312.3) .. (32.78,312.3);
\draw [line width=1.2] (32.8,312.8) .. controls (29.06,312.8) and (27.2,314.66) .. (27.2,318.4) -- (27.2,412.42) .. controls (27.2,417.76) and (25.34,420.42) .. (21.6,420.42) .. controls (25.34,420.42) and (27.2,423.09) .. (27.2,428.42)(27.2,426.02) -- (27.2,522.44) .. controls (27.2,526.18) and (29.06,528.04) .. (32.8,528.04);
\draw [line width=1.2] (56.8,72.08) .. controls (56.83,68.82) and (55.22,67.18) .. (51.96,67.15) -- (51.96,67.15) .. controls (47.3,67.11) and (44.99,65.46) .. (45.02,62.2) .. controls (44.99,65.46) and (42.65,67.06) .. (38,67.02)(40.09,67.05) -- (38,67.02) .. controls (34.74,66.99) and (33.1,68.61) .. (33.06,71.86);
\draw [line width=1.2] (152.91,72.06) .. controls (152.9,68.32) and (151.04,66.46) .. (147.3,66.46) -- (112.86,66.47) .. controls (107.52,66.47) and (104.86,64.61) .. (104.85,60.87) .. controls (104.86,64.61) and (102.19,66.47) .. (96.86,66.48)(99.26,66.47) -- (62.4,66.49) .. controls (58.66,66.49) and (56.8,68.35) .. (56.8,72.09);
\draw [line width=1.2] (272.83,71.69) .. controls (272.82,67.95) and (270.95,65.1) .. (267.22,65.1) -- (220.8,65.22) .. controls (215.46,65.23) and (212.79,63.38) .. (212.78,59.64) .. controls (212.79,63.38) and (210.14,65.25) .. (204.8,65.26)(207.2,65.26) -- (158.38,65.38) .. controls (154.65,65.39) and (152.79,67.26) .. (152.8,71);
\draw [line width=1.2] (488.54,72.66) .. controls (488.54,68.93) and (486.69,66.06) .. (482.95,66.05) -- (388.75,65.74) .. controls (383.42,65.72) and (380.76,63.85) .. (380.78,60.11) .. controls (380.76,63.85) and (378.09,65.7) .. (372.75,65.68)(375.15,65.69) -- (278.56,65.37) .. controls (274.82,65.36) and (272.95,67.22) .. (272.94,70.95);

\draw (5.6,75.52) node [anchor=north west][inner sep=0.6pt] [font=\Large] {$K_{1}^{'}$};
\draw (4.8,136.32) node [anchor=north west][inner sep=0.6pt] [font=\Large] {$K_{2}^{'}$};
\draw (4.8,243.52) node [anchor=north west][inner sep=0.6pt] [font=\Large] {$K_{3}^{'}$};
\draw (4.8,411.52) node [anchor=north west][inner sep=0.6pt] [font=\Large] {$K_{4}^{'}$};
\draw (38.15,45.16) node [anchor=north west][inner sep=0.6pt] [font=\Large] {$L_{1}^{'}$};
\draw (373.6,44.32) node [anchor=north west][inner sep=0.6pt] [font=\Large] {$L_{4}^{'}$};
\draw (98.4,43.52) node [anchor=north west][inner sep=0.6pt] [font=\Large] {$L_{2}^{'}$};
\draw (205.6,43.52) node [anchor=north west][inner sep=0.6pt] [font=\Large] {$L_{3}^{'}$};
\draw (45.62,116.94) node [font=\Large] {$a_{2,1}$};
\draw (120.82,271.2) node [font=\Large] {$a_{4,3}$};
\draw (74.42,253.6) node [font=\Large] {$a_{4,2}$};
\draw (117.6,163.52) node [anchor=north west][inner sep=0.6pt] [font=\Large] {$a_{3,3}$};
\draw (34.94,157.52) node [anchor=north west][inner sep=0.6pt] [font=\Large] {$a_{3,1}$};
\draw (61.6,167.52) node [anchor=north west][inner sep=0.6pt] [font=\Large] {$a_{3,2}$};
\draw (205.18,85.59) node [font=\Large] {$a_{1,4}$};
\draw (124.55,84.4) node [font=\Large] {$a_{1,3}$};
\draw (34.4,382.72) node [anchor=north west][inner sep=0.6pt] [font=\Large] {$a_{5,1}$};
\draw (349.18,86.81) node [font=\Large] {$a_{1,5}$};
\draw (33.6,239.52) node [anchor=north west][inner sep=0.6pt] [font=\Large] {$a_{4,1}$};
\draw (121.6,106.72) node [anchor=north west][inner sep=0.6pt] [font=\Large] {$a_{2,3}$};
\draw (76.02,116.94) node [font=\Large] {$a_{2,2}$};
\draw (76.02,84.8) node [font=\Large] {$a_{1,2}$};
\draw (368.8,154.72) node [anchor=north west][inner sep=0.6pt] [font=\Large] {$a_{3,5}$};
\draw (401.6,241.92) node [anchor=north west][inner sep=0.6pt] [font=\Large] {$a_{4,5}$};
\draw (231.22,270.4) node [font=\Large] {$a_{4,4}$};
\draw (357.62,112) node [font=\Large] {$a_{2,5}$};
\draw (221.6,152.32) node [anchor=north west][inner sep=0.6pt] [font=\Large] {$a_{3,4}$};
\draw (216.82,112.8) node [font=\Large] {$a_{2,4}$};
\draw (111.2,408.32) node [anchor=north west][inner sep=0.6pt] [font=\Large] {$a_{5,3}$};
\draw (400,439.52) node [anchor=north west][inner sep=0.6pt] [font=\Large] {$a_{5,5}$};
\draw (204.8,441.92) node [anchor=north west][inner sep=0.6pt] [font=\Large] {$a_{5,4}$};
\draw (60.8,391.52) node [anchor=north west][inner sep=0.6pt] [font=\Large] {$a_{5,2}$};
\draw (498.1,535.05) node [anchor=north west][inner sep=0.6pt] [font=\LARGE, rotate=-45] {$\cdots$};

\end{tikzpicture}

}
\end{center}

Setting
\[
y_i=\sum_{j=1}^{i-1}\big(a_{i,j}+a_{j,i}\big)+a_{i,i},\;\;\;\;\;\;i=2,3,\dots,
\]
we have
\begin{align*}
\Vert x_{k_i}-y_i\Vert_{\mathcal C_E}&= \norm{\sum_{l=1}^{k_i-1}\big(c_{k,l}+c_{l,k}\big)+c_{k_i,k_i}  -\left(\sum_{j=1}^{i-1}\big(a_{i,j}+a_{j,i}\big)+a_{i,i} \right)}_{\mathcal C_E}\\
&= \norm{\sum_{l=1}^{k_{i-1} }\big(c_{k,l}+c_{l,k}\big)-\big(a_{i,j}+a_{j,i}\big)  +\left( \sum_{l=k_{i-1}+1}^{k_{i} -1 }\big(c_{k,l}+c_{l,k}\big)  +c_{k_i,k_i}- a_{i,i} \right)}_{\mathcal C_E}\\
&\leq\sum_{l=1}^{k_{i-1}}
\kappa^{l+1}\big(\Vert c_{k_i,l}-b_{k_i,l}\Vert_{\mathcal C_E}+\Vert c_{l.k_i}-b_{l,k_i}\Vert_{\mathcal C_E}\big)\\
&\leq\sum_{l=1}^{k_{i-1}}2\kappa^{l+1}\delta_i\delta_l \leq\varepsilon_i
\end{align*}
for every $i\geq 2$. By Remark \ref{rembl}, this completes the proof. 
\end{proof}

By the standard perturbation argument, we can obtain the following corollary.

\begin{corollary}
Let $E$ be a separable quasi-Banach symmetric space. Suppose that $\{x_k\}_{k=1}^\infty$ is a bounded $\sigma$-weakly  null  sequence   in $\mathcal C_E$ with $p_{[\bigcup_{i=1}^\infty K_i]}x_kp_{[\bigcup_{i=1}^\infty L_i]}=x_k$ for every $k$. For a given sequence $\{\varepsilon_i\}_{i=2}^\infty$ of positive numbers, 
there exist two increasing sequences $\{k_i\}_{i=2}^\infty$ and $\{n_i\}_{i=1}^\infty$ of positive integers, and a sequence $\{y_i\}_{i=2}^\infty$ in $\mathcal C_E$ having the form
\[
y_i=\sum_{j=1}^{i-1}\big(a_{i,j}+a_{j,i}\big)+a_{i,i}\;\;\;\;\;\;i=2,3,\dots,
\]
such that $\Vert x_{k_i}-y_i\Vert\leq\varepsilon_i$ for every $i\geq2$,
where
\begin{itemize}
\item [(1)]
$K'_i=[\bigcup_{l=n_{i-1}+1}^{n_i}K_l]$ and $L'_i=[\bigcup_{l=n_{i-1}+1}^{n_i}L_l]$ for every $i\geq1$ (put $n_0=0$),
\item [(2)]
$\{a_{i,j}\}_{(i,j)\neq (1,1)}\stackrel{\mbox{{\rm\tiny b.s}}}{\boxplus}\{K'_i\}_{i=1}^\infty\otimes\{L'_j\}_{j=1}^\infty$,
\item [(3)]
$a_{i,i}=p_{_{K'_i}}x_{k_i}p_{_{L'_i}}$ for every $i\geq2$,
\item [(4)]
$\{a_{i,j}\}_{i=j+1}^\infty\stackrel{\mbox{{\rm\tiny b.s}}}{\boxplus}\{K'_i\}_{i=2}^\infty\otimes\{L'_j\}_{j=1}^\infty\curvearrowleft\;$ are consistent for all $j\geq1$,
\item [(5)]
$\{a_{j,i}\}_{i=j+1}^\infty\stackrel{\mbox{{\rm\tiny b.s}}}{\boxplus}\{K'_j\}_{j=1}^\infty\otimes\{L'_i\}_{i=2}^\infty\curvearrowleft\;$ are consistent for all $j\geq1$.
\end{itemize}
\end{corollary}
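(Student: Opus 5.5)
This is the standard ``standard perturbation argument'' that turns a $\sigma$-weakly null sequence into an essentially blocked one; after that, Theorem \ref{3c} does the work. The plan has three steps.

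First, we replace $\{x_k\}$ by a subsequence that is almost supported on successive ``L-shaped'' blocks. Put $P_n=p_{[\bigcup_{l\le n}K_l]}$ and $Q_n=p_{[\bigcup_{l\le n}L_l]}$; these are finite-rank projections. Fix small numbers $\delta_i>0$, to be chosen in terms of $\varepsilon_i$ and the modulus $\kappa$. We construct inductively increasing integers $k_2<k_3<\cdots$ and $n_1<n_2<\cdots$ such that
\[
\big\Vert x_{k_i}-(P_{n_i}x_{k_i}Q_{n_i}-P_{n_{i-1}}x_{k_i}Q_{n_{i-1}})\big\Vert_{\mathcal C_E}\le\delta_i .
\]
Two facts drive each step.
\begin{itemize}
\item Since $x_k\to0$ $\sigma$-weakly and $P_{n_{i-1}},Q_{n_{i-1}}$ have finite rank, the three corner pieces $P_{n_{i-1}}x_kQ_{n_{i-1}}$, $P_{n_{i-1}}x_k(I-Q_{n_{i-1}})$ and $(I-P_{n_{i-1}})x_kQ_{n_{i-1}}$ tend to $0$ in norm. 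For the first this is immediate, since it lives in a fixed finite-dimensional space. The other two are rows and columns of finite height. Here I would invoke the Appendix convention used by Arazy: for $\sigma$-weakly null bounded sequences in separable $\mathcal C_E$, these finite-rank compressions go to $0$ in quasi-norm, after passing to a subsequence.
\item For fixed $k$, separability of $E$ (order continuity) gives $\Vert x_k-P_nx_kQ_n\Vert_{\mathcal C_E}\to0$ as $n\to\infty$. This uses $p_Kx_kp_L=x_k$ and $P_n\uparrow p_K$, $Q_n\uparrow p_L$ strongly.
\end{itemize}
So, given $n_{i-1}$, we first choose $k_i$ so that the corner pieces at level $n_{i-1}$ are below $\delta_i/(2\kappa^2)$. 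We then choose $n_i$ so that the tail $x_{k_i}-P_{n_i}x_{k_i}Q_{n_i}$ is below $\delta_i/(2\kappa^2)$. The quasi-triangle inequality then yields the displayed estimate.

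Second, we apply Theorem \ref{3c}. Set $x'_i:=P_{n_i}x_{k_i}Q_{n_i}-P_{n_{i-1}}x_{k_i}Q_{n_{i-1}}$. This is exactly of the form required by Theorem \ref{3c}, with respect to the grouped subspaces $[\bigcup_{l=n_{i-1}+1}^{n_i}K_l]$ and $[\bigcup_{l=n_{i-1}+1}^{n_i}L_l]$, and the sequence $\{x'_i\}$ is bounded. Theorem \ref{3c} with tolerances $\varepsilon_i/(2\kappa)$ produces a further subsequence, coarser groupings (these give the final $n_i$), and elements $y_i$ with properties (1)--(5). The diagonal term in Theorem \ref{3c} is $p_{K'_i}x'_ip_{L'_i}$, and this equals $p_{K'_i}x_{k_i}p_{L'_i}$ for the coarser blocks. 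The reason is that $x'_i$ and $x_{k_i}$ agree on the diagonal block $K'_i\times L'_i$: only the $P_{n_{i-1}}$ and tail corrections differ, and those vanish on that block once the regrouped blocks are unions of the original ones. This gives (3) exactly. One final quasi-triangle inequality gives $\Vert x_{k_i}-y_i\Vert\le\kappa(\delta_i+\varepsilon_i/(2\kappa))\le\varepsilon_i$.

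The main obstacle I expect is the first bullet of the first step. In the quasi-Banach setting we cannot use duality to deduce norm smallness of finite-rank row compressions from $\sigma$-weak nullity. I would handle it by noting that $P_{n}x_k$ has rank at most $\dim$ of the range of $P_n$, so its $\mathcal C_E$-quasinorm is comparable to its operator norm, up to a constant depending on $n$. The operator norm of $P_nx_k(I-Q_n)$ need not tend to $0$, though, so I would pass to a subsequence using compactness of the unit ball of rank-$\le m$ operators in the weak topology combined with a diagonal argument. If that fails, I would restrict to the statement's intended use, where the sequences arise as in Lemma \ref{thma1} and rows and columns are already controlled.
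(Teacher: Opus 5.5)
Your overall route (reduce to exactly L-shaped blocks by controlling a finite corner and a tail, then apply Theorem~\ref{3c}) is the intended ``standard perturbation argument''. However, two steps are wrong as written.

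\textbf{The row and column strips do not tend to zero, and they need not.} Your first bullet asserts that $P_{n}x_k(I-Q_{n})$ and $(I-P_{n})x_kQ_{n}$ tend to $0$, at least along a subsequence. This is false, and neither of your repairs can rescue it (compactness plus a diagonal argument, or restricting to special sequences). Take unit vectors $\xi\in K_1$, $\zeta_k\in L_k$ and set $x_k=(\cdot|\zeta_k)\xi$.
\begin{itemize}
\item This sequence is bounded, satisfies $p_Kx_kp_L=x_k$, and is $\sigma$-weakly null, since $\mathrm{Tr}(x_ky)=(y\xi|\zeta_k)\to0$ for every trace-class $y$.
\item Yet $P_nx_k(I-Q_n)=x_k$ has norm $1$ for all $k>n$.
\end{itemize}
Such strips are exactly what become the terms $a_{1,i}$ and $a_{i,1}$ in Theorem~\ref{3c}, so they must survive. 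You never need them small. Since $x=p_Kxp_L$,
\[
x_{k}-\big(P_{n_i}x_{k}Q_{n_i}-P_{n_{i-1}}x_{k}Q_{n_{i-1}}\big)=\big(x_{k}-P_{n_i}x_{k}Q_{n_i}\big)+P_{n_{i-1}}x_{k}Q_{n_{i-1}},
\]
and the strips sit inside the L-shaped block. Only two things must be small: the finite corner $P_{n_{i-1}}x_kQ_{n_{i-1}}$, which tends to $0$ by finite-dimensionality as you note, and the tail, which is handled by order continuity. So your ``main obstacle'' disappears once you delete that claim.

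\textbf{Your justification of (3) is incorrect.} Theorem~\ref{3c} is applied to $x'_i=P_{n_i}x_{k_i}Q_{n_i}-P_{n_{i-1}}x_{k_i}Q_{n_{i-1}}$. It passes to a subsequence $m_1<m_2<\cdots$ and merges blocks into $K'_j=[\bigcup_{l=n_{m_{j-1}}+1}^{n_{m_j}}K_l]$, and similarly $L'_j$. Its diagonal term is $p_{K'_j}x'_{m_j}p_{L'_j}$. Whenever $m_j-1>m_{j-1}$, the block $K'_j$ contains the original $K_l$ with $n_{m_{j-1}}<l\le n_{m_j-1}$. So the subtracted corner does \emph{not} vanish on $K'_j\times L'_j$:
\[
p_{K'_j}x_{k_{m_j}}p_{L'_j}-p_{K'_j}x'_{m_j}p_{L'_j}=p_{K'_j}P_{n_{m_j-1}}x_{k_{m_j}}Q_{n_{m_j-1}}p_{L'_j},
\]
which is nonzero in general. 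The repair is easy.
\begin{itemize}
\item The difference is a compression of the small corner, so its quasi-norm is at most $\|P_{n_{m_j-1}}x_{k_{m_j}}Q_{n_{m_j-1}}\|_{\mathcal C_E}$.
\item Replace $a_{j,j}$ by $p_{K'_j}x_{k_{m_j}}p_{L'_j}$. This leaves (2), (4) and (5) untouched.
\item Absorb the error with one more quasi-triangle inequality.
\item To handle the index shift $m_j\ge j$ in all these estimates, choose $\delta_i$ decreasing with $\delta_i$ small compared with $\min_{l\le i}\varepsilon_l$. Your final bound $\kappa(\delta_i+\varepsilon_i/(2\kappa))\le\varepsilon_i$ currently mixes the old index $i=m_j$ with the new index $j$.
\end{itemize}
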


Let $E$ be a quasi-Banach symmetric sequence space. 
The quasi-norm $\norm{\cdot}_E$ on $E$ is called order continuous if $\norm{x_\alpha}_E\downarrow_\alpha 0$ whenever $\{x_\alpha \}_\alpha$ is a downward directed net in $E$ satisfying $x_\alpha \downarrow 0$, equivalently, $E$ is separable\cite[Theorem 3.17]{NP}. 
 If for every upward directed sequence $\{a_\beta \}$ in $E^+$, satisfying $\sup_\beta \norm{a_\beta }_E <\infty $, there exists $a\in E^+$ such that $a_\beta \uparrow a$ in $E$ and $\norm{a}_E=\sup_\beta\norm{a_\beta }_E$, then $E$ is said to have the Fatou property \cite[Definition 5.3.7]{DPS}. 
 
A  Banach  symmetric sequence space $E$ is called a  Kantorovich--Banach space (KB-space for short) if it is separable  and has the Fatou property. 
 It is well-known that  
a symmetric sequence space $E$ is a KB-space if and only if 
 $c_0$ is not isomorphic to a subspace of $E$ (see e.g. \cite[Theorem 2.2]{BVL} or \cite[Theorem 5.9.6]{DPS}). 
Below, we incorporate a proof for the case of  quasi-Banach symmetric sequence spaces.
\begin{proposition}\label{KB c0}
Let $E$ be a quasi-Banach symmetric sequence space. 
Then, 
$E$ has the Fatou property and order continuous norm if and only if $c_0$ is not isomorphic to a subspace of $E$.
\end{proposition}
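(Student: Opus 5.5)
We prove the two implications separately. Throughout we use the standard facts that $E\subset c_0$ when $E$ is separable, and that a lattice quasi-norm is monotone.

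\emph{Necessity} ($E$ order continuous with the Fatou property $\Rightarrow$ $c_0\not\hookrightarrow E$). We argue by contradiction and assume $T:c_0\to E$ is an isomorphic embedding. The first task is to reduce to a disjoint situation. By order continuity, every $x\in E$ has tails $x\chi_{[n,\infty)}$ that tend to $0$ in quasi-norm. Since $\{Te_k\}$ is weakly null in the sense of coordinatewise convergence (each coordinate functional is continuous), a gliding hump argument yields a block subsequence of $\{Te_k\}$ that is a small perturbation of a seminormalized sequence $\{u_j\}$ of disjointly supported elements. By a principle of small perturbations, valid for quasi-Banach spaces with $r$-subadditive renorming as in Lemma~\ref{comp}, $\{u_j\}$ is still equivalent to the unit vector basis of $c_0$. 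Consequently the partial sums $s_n=\sum_{j\le n}|u_j|$ satisfy $\sup_n\|s_n\|_E<\infty$.

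The sequence $s_n$ is upward directed. By the Fatou property, $s_n\uparrow s$ for some $s\in E$. We have $s=\sum_j|u_j|$ coordinatewise, and $s$ is disjointly the sum of the $|u_j|$. Order continuity applied to the decreasing net $s-s_n\downarrow 0$ gives $\|s-s_n\|_E\to 0$. But $s-s_n\ge |u_{n+1}|$, so $\|u_{n+1}\|_E\to0$. This contradicts the fact that $\{u_j\}$ is seminormalized.

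\emph{Sufficiency} ($c_0\not\hookrightarrow E$ $\Rightarrow$ order continuity and the Fatou property). We treat the two properties separately.

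\textbf{Order continuity.} Suppose it fails. For a symmetric sequence space this means $E\not\subset c_0$ or $E$ is not separable. Any $x\in E\setminus c_0$ dominates, after rearrangement, a constant multiple of $\chi_A$ with $A$ infinite. The closed span of disjoint finite blocks of $\chi_A$ then contains a copy of $c_0$ (indeed of $\ell_\infty$), because $\big\|\sum t_j\chi_{A_j}\big\|_E$ is comparable to $\sup_j|t_j|$: it is bounded above by $\sup_j|t_j|\,\|\chi_A\|_E$ and below by the $\ell_\infty$ norm via $\|e_k\|_E=1$.

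If instead $E\subset c_0$ but the quasi-norm is not order continuous, there is some $x\ge0$ in $E$ with $\inf_n\|x\chi_{[n,\infty)}\|_E=\delta>0$. Choose successive blocks $x_j=x\chi_{[n_j,n_{j+1})}$ with $\|x_j\|_E\ge\delta/2$; this uses the Fatou-type limit $\|x\chi_{[n,m)}\|\to\|x\chi_{[n,\infty)}\|$ up to the factor $\kappa$ from \eqref{lim-norm}. Then
\[
\delta/2\le\Big\|\sum t_jx_j\Big\|_E\le \sup_j|t_j|\,\|x\|_E,
\]
where the lower bound follows from disjointness and monotonicity. Hence $\{x_j\}\sim c_0$, again a contradiction.

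\textbf{Fatou property.} Now assume order continuity holds. Let $0\le a_\beta\uparrow$ with $\sup_\beta\|a_\beta\|_E=M<\infty$, and let $a=\sup_\beta a_\beta\in\ell_\infty$ coordinatewise. We need $a\in E$. If $a\notin E$, then the truncations $a\chi_{[1,n]}$ are not Cauchy in $E$; otherwise they would converge by completeness, the limit would equal $a$ coordinatewise, and $a$ would lie in $E$. Moreover $\|a\chi_{[1,n]}\|\le \kappa M$ via the finite-support approximations $a_\beta\chi_{[1,n]}$. From the non-Cauchy truncations we extract disjoint blocks $b_j=a\chi_{[m_j,m_{j+1})}$ with $\|b_j\|_E\ge\varepsilon$ and $\big\|\sum_{j\le N}b_j\big\|_E\le\kappa M$ for all $N$. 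For a lattice quasi-norm, bounded partial sums of positive disjoint elements give
\[
\Big\|\sum t_jb_j\Big\|_E\le C\sup_j|t_j|;
\]
for the signed case we split by sign using the modulus $\kappa$. Disjointness gives the matching lower bound, so $\{b_j\}\sim c_0$, a contradiction. Finally, the norm equality $\|a\|_E=\sup_\beta\|a_\beta\|_E$ follows from order continuity and monotonicity.

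\emph{Main obstacle.} The main obstacle is the lack of a triangle inequality. Wherever the Banach proof uses $\|\sum t_jb_j\|\le\sup|t_j|\,\|\sum b_j\|$ for signed $t_j$, we will instead first pass to an equivalent $r$-subadditive quasi-norm (Theorem~\ref{qbs-norm}), which keeps the lattice monotonicity, and accept constants depending on $\kappa$.
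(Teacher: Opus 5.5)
Your necessity argument is correct, and a bit more direct than the paper's. You apply the Fatou property and order continuity to $s=\sum_j|u_j|$, whereas the paper uses the Fatou property to embed $\ell_\infty$ and contradicts separability. Your existence argument in the Fatou part is also fine; using disjoint truncation blocks $b_j=a\chi_{[m_j,m_{j+1})}$ makes it more careful than the paper, whose differences $x_{N_{n+1}}-x_{N_n}$ are not disjoint.

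The gap is in the order-continuity step. You select blocks $x\chi_{[n_j,n_{j+1})}$ with $\|x_j\|_E\ge\delta/2$ by appealing to \eqref{lim-norm}. That inequality needs $x\chi_{[n,m)}\to x\chi_{[n,\infty)}$ in quasi-norm, i.e.\ $\|x\chi_{[m,\infty)}\|_E\to0$, which is exactly what your hypothesis $\inf_n\|x\chi_{[n,\infty)}\|_E=\delta>0$ denies. No Fatou-type lower semicontinuity is available here either, since that is part of what you are proving; in general, finite truncations of a non-order-continuous element need not recover any fixed fraction of $\delta$.

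What is true, and enough, is that $\gamma:=\lim_n\sup_{m>n}\|x\chi_{[n,m)}\|_E>0$. Suppose $\gamma=0$. Choose $n_1<n_2<\cdots$ with $\sup_m\|x\chi_{[n_j,m)}\|_E\le\epsilon_j$, with $\sum_j\epsilon_j^r$ so small that, by Theorem~\ref{qbs-norm}, the series $\sum_j x\chi_{[n_j,n_{j+1})}$ converges in $E$ to an element of quasi-norm $<\delta$. Continuity of the coordinate functionals identifies this sum as $x\chi_{[n_1,\infty)}$, contradicting $\|x\chi_{[n_1,\infty)}\|_E\ge\delta$. Blocks of norm $\ge\gamma/2$ then make your sandwich estimate work. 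The paper instead takes disjoint pieces $p_nx$ with $\|p_nx\|_E\ge\delta$ directly.

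Your final sentence also fails for quasi-norms. Order continuity gives $\|a-a_\beta\|_E\to0$, but without continuity of the quasi-norm, \eqref{lim-norm} only yields $\sup_\beta\|a_\beta\|_E\le\|a\|_E\le\kappa\sup_\beta\|a_\beta\|_E$. For a counterexample, take on $\ell_1$ the quasi-norm equal to $\|x\|_{\ell_1}$ for finitely supported $x$ and to $2\|x\|_{\ell_1}$ otherwise. It is symmetric and order continuous, and $c_0\not\hookrightarrow\ell_1$. Yet $a_n=\sum_{k\le n}2^{-k}e_k\uparrow a$ with $\sup_n\|a_n\|=1<2=\|a\|$. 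So the isometric clause of the Fatou property cannot be proved in this generality; the paper's proof, too, only shows $a\in E$. You should either state the equality only up to the constant $\kappa$, or first pass to an equivalent continuous lattice quasi-norm. The $r$-subadditive quasi-norm of Theorem~\ref{qbs-norm} remains monotone on a sequence lattice and serves, and for it your argument does give equality.
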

\begin{proof}
``The  implication $(\Rightarrow)$''.  
Since $E$ is separable, it follows that $E$ does not coincide with $\ell_\infty$, i.e., $E\subset c_0$. 
Assume by contradiction that there exists an isomorphism $T$ from  $c_0$ into  $E$. 
By \cite[Lemma 2.1]{KR07}, the standard basis of $c_0$ is equivalent to a block basis $(y_i)_{i=1}^\infty $ of the standard basis of $E$ (for convenience, we denote by $y_i=T(e^{c_0}_i)$). 
By the Fatou property of $E$, it follows that the unit ball of $E$ is closed in $\ell_\infty$ with respect to the pointwise convergence (see e.g. \cite[p.5]{HSZ25} or \cite[Theorem 5.1.10]{DPS}). 
Hence,  for any $\alpha=(\alpha_i)_{i=1}^\infty \in \ell_\infty $,
we have 
\begin{eqnarray*}\norm{T^{-1}} \norm{\alpha}_{\ell_\infty }&=&\sup_{1\le n <\infty } \norm{T^{-1}} 
\norm{ \sum_{i=1}^n \alpha_i  e_i^{c_0}  }_{c_0}  \le \sup_{1\le n <\infty }
\norm{ \sum_{i=1}^n \alpha_i T(e_i^{c_0}) }_E \\
&=&  \sup_{1\le n <\infty }
\norm{ \sum_{i=1}^n \alpha_i y_i }_E\stackrel{\tiny\rm Fatou~property}{=} \norm{ \sum_{i=1}^\infty \alpha_i y_i }_E  \\
&\stackrel{\tiny\rm Fatou~property}{=}& \sup_{1\le n <\infty }
\norm{ \sum_{i=1}^n \alpha_i y_i }_E = \sup_{1\le n <\infty }
\norm{ \sum_{i=1}^n \alpha_i T(e_i^{c_0}) }_E\\
&\le &\norm{T} \sup_{1\le n <\infty }
\norm{ \sum_{i=1}^n \alpha_i  e_i^{c_0}  }_{c_0} = \norm{T}  \sup_{1\le n <\infty }
\norm{ \alpha  }_{\ell_\infty },
\end{eqnarray*}
which implies that $\ell_\infty $ embeds into $E$. This contradicts the separability of $E$.

``The implication $(\Leftarrow)$''.   
Assume that $E$ is not separable, i.e., there exists an element $0\le  x \in E$ which is not order continuous\cite[Theorem 3.17]{NP}. In other words, there exists a positive number $\delta$ and  a sequence of mutually orthogonal projections $\{p_n\}_{n=1}^\infty $ in $\cB(H)$ such that $$\norm{p_n x }_E \ge \delta$$
for all $n\ge 1$.
For any $\alpha=(\alpha_1,\alpha_2,\cdots)\in c_0$, we have 
$$\norm{\alpha}_{c_0}  \le \frac{1}{\delta }\norm{ \sum_{n=1}^\infty \alpha_n p_n x }_E\le \frac{1}{\delta } \norm{ \sup_{n\ge 1}|\alpha_n|  x   }_E = \frac{\norm{\alpha}_{c_0}}{\delta } \norm{    x   }_E ,  $$
which is a contradiction. Hence, $E$ is separable. 
Now, it suffices to prove that for any increasing sequence $\{x_n\}_{n\ge 1}$ of positive elements in $E$ with $\norm{x_n}_E=1$, there exists $x\in E$ with $x_n \uparrow x$. 
Assume by contradiction that $x\not\in E$. 
There exists $\delta>0$ such that for any $N>0$, $\sup_{m\ge 1}\norm{ x_{N+m} -x_N  }_E >\delta $ (otherwise, for any $n$, there exists a number $N_n>0$ such that $\sup_{m\ge 1}\norm{ x_{N_n+m} -x_{N_n}  }_E \le \frac{1}{2^n} $, which implies that $\{x_n\}_{n\ge 1}$ converges to $x$ in norm). 
Hence, there exists a sequence $\{N_n\}_{n\ge 1}$ such that 
$$
\norm{ x_{N_{n+1}} -x_N  }_E >\delta.$$
Hence, for any finite sequence $\alpha=(\alpha_1,\alpha_2,\cdots \alpha_k )$, 
we have 
\begin{align*}
\norm{\alpha}_{c_0}\delta
&\le 
\sup _{1\le n \le k} \alpha_n \norm{ x_{N_{n+1}} -x_{N_n}  }_E\\
&\le 
\norm{ \sum_{n=1}^k  \alpha_n (x_{N_{n+1}} -x_{N_n} ) }_E \le \norm{ \sum_{n=1}^k 
\norm{\alpha}_{c_0} (x_{N_{n+1}} -x_{N_n} ) 
}_E \\
&= \norm{\alpha}_{c_0} \norm{ \sum_{n=1}^k  (x_{N_{n+1}} -x_{N_n} ) 
}_E
\le \norm{\alpha}_{c_0}\sup_{1\le n\le  k } \norm{x_{N_n} 
}_E=\norm{\alpha}_{c_0},
\end{align*}
which contradicts the assumption that $c_0$ does not embed into $E$.
\end{proof}

The following lemma was stated in  \cite[p. 310, Remark]{Arazy4}. Below, we include a short proof for completeness. 
\begin{lemma}\label{3d}
Suppose that $E$ is a quasi-Banach symmetric sequence space with $c_0\not\hookrightarrow E$. Assume that $\{a_k\}_{k=1}^\infty$ is a sequence in $\mathcal C_E$ with disjoint right supports (or  disjoint left supports) and
\[
\sup_{n\geq1}\bigg\Vert\sum_{k=1}^na_k\bigg\Vert_{\mathcal C_E}<\infty.
\]
Then, we have 
\[
\lim_{m\to\infty}\sup_{n\geq0}\bigg\Vert\sum_{k=m}^{m+n}a_k\bigg\Vert_{\mathcal C_E}=0
\]
\end{lemma}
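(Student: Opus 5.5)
The plan is to argue by contradiction and build an isomorphic copy of $c_0$ inside $\mathcal C_E$ (hence inside $E$, since by the proof of Proposition \ref{KB c0} the relevant structure transfers; alternatively one can work directly in $\mathcal C_E$ with the Fatou property and separability supplied by Proposition \ref{KB c0}). We treat the case of disjoint right supports; the left case follows by taking adjoints, since $s(a^\ast)=s(a)$. Put $M:=\sup_n\Vert\sum_{k=1}^na_k\Vert_{\mathcal C_E}$. Suppose the conclusion fails. Then there are $\delta>0$ and, inductively, blocks $[m_j,m_j+n_j]$ with $m_{j+1}>m_j+n_j$ such that $b_j:=\sum_{k=m_j}^{m_j+n_j}a_k$ satisfies $\Vert b_j\Vert_{\mathcal C_E}\ge\delta$. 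The $b_j$ still have mutually disjoint right supports $q_j$ (sums of the right supports of the $a_k$).

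The key step is an upper estimate: for any finite sequence of scalars $\alpha_1,\dots,\alpha_n$ we want
\[
\Big\Vert\sum_{j=1}^n\alpha_jb_j\Big\Vert_{\mathcal C_E}\le C\max_j|\alpha_j|.
\]
Write $\sum_j\alpha_jb_j=x\,d$, where $x:=\sum_{k}a_k$ over the union of the blocks (a partial sum of a subfamily of the $a_k$) and $d:=\sum_j\alpha_jq_j$ is a diagonal operator commuting with the right supports. Then $s(xd)\le\Vert d\Vert_\infty s(x)=\max_j|\alpha_j|\,s(x)$, so it suffices to bound $\Vert x\Vert_{\mathcal C_E}$ for sums over arbitrary subsets of indices. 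For this, note that if $q$ is the sum of the right supports of a subfamily $S\subset\{1,\dots,N\}$, then $\sum_{k\in S}a_k=(\sum_{k=1}^Na_k)q$, whence $s(\sum_{k\in S}a_k)\le s(\sum_{k=1}^Na_k)$ and the norm is at most $M$. Thus $C=M$ works.

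For the lower estimate, use disjointness of right supports again: $b_j=(\sum_i\alpha_ib_i)q_j/\alpha_j$, giving $|\alpha_j|\delta\le\Vert\sum_i\alpha_ib_i\Vert_{\mathcal C_E}$. Hence $\{b_j\}$ is equivalent to the unit vector basis of $c_0$, so $c_0\hookrightarrow\mathcal C_E$. Finally, to contradict $c_0\not\hookrightarrow E$, I would pass to $E$: either invoke the standard fact that $c_0\hookrightarrow\mathcal C_E$ iff $c_0\hookrightarrow E$ for symmetric ideals, or, more self-containedly, rerun the argument of Proposition \ref{KB c0}: the $c_0$ upper estimate plus the Fatou property gives that $\sum_j\alpha_jb_j$ converges for all bounded $(\alpha_j)$, embedding $\ell_\infty$ into the separable space $\mathcal C_E$, a contradiction. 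The main obstacle is this last transfer step; the Fatou-property route (pointwise/$\sigma$-weak closedness of the unit ball of $\mathcal C_E$) is what I would try first, since it only uses properties already established via Proposition \ref{KB c0}.
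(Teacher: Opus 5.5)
Your proposal is correct, and it reaches the contradiction by a genuinely different route from the paper.

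\textbf{How the two routes differ.} The paper first extracts blocks $b_n$ with $\|b_n\|_{\mathcal C_E}>\delta$ and then cuts them down from the left. It picks finite-dimensional $G_n$ with $\|p_{G_n}b_n\|_{\mathcal C_E}>\kappa^{-1}\delta$. It then uses Corollary \ref{3a} to get $\|p_Gb_n\|_{\mathcal C_E}\to0$ for each finite-dimensional $G$: otherwise a subsequence of $\{p_Gb_n\}$ would be an $\ell_2$-sequence, incompatible with bounded partial sums. This produces $c_m=p_{G_{n_{m-1}}^\perp\cap G_{n_m}}b_{n_m}$ with disjoint left \emph{and} right supports. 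Such a block-diagonal sequence is unconditional with bounded partial sums, hence a $c_0$-sequence, and its span embeds isometrically into $E$ \cite[Proposition 3.3]{HSS}. So the paper contradicts $c_0\not\hookrightarrow E$ directly.

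You obtain both $c_0$-estimates for the uncut blocks at once from right-support disjointness, via $\sum_j\alpha_jb_j=xd$ and $s(yq)\le s(y)$. This removes Corollary \ref{3a} and the cut-down entirely. The price is that $\{b_j\}$ is not block diagonal, so its span is not visibly a subspace of $E$. The hypothesis must therefore be used through its reformulation in Proposition \ref{KB c0} (Fatou property plus order continuity).

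\textbf{The transfer step.} Drop your first option. The equivalence $c_0\hookrightarrow\mathcal C_E\iff c_0\hookrightarrow E$ is recorded in the paper only for Banach $E$ (via \cite[Corollary 3.2]{Arazy4}), so it cannot be quoted in the quasi-Banach setting of the lemma.

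Your second option works, but state it precisely, since $\sum_j\alpha_jb_j$ does not converge in norm.
\begin{itemize}
\item Put $y_N=\sum_{j\le N}\alpha_jb_j$ and $Q_K=\sum_{j\le K}q_j$. Then $\|y_N\|_\infty\le\|y_N\|_{\mathcal C_E}\le M\|\alpha\|_\infty$, and $y_NQ_K=y_K$ for $N\ge K$.
\item Each $y_N$ vanishes on $(\sum_jq_j)^\perp$. Together with the previous point, this makes $(y_N)$ converge in the strong operator topology to some $y_\alpha$ with $y_\alpha q_j=\alpha_jb_j$.
\item Use $s_k(y)=\sup_{\dim V=k}\min_{\xi\in V,\|\xi\|=1}\|y\xi\|$. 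Strong convergence is norm convergence on each finite-dimensional $V$, so $s_k(y_\alpha)\le\liminf_Ns_k(y_N)$.
\item The increasing sequence $z_m:=\inf_{N\ge m}s(y_N)\le s(y_m)$ is bounded in $E$. The Fatou property then gives $y_\alpha\in\mathcal C_E$.
\item For distinct $\alpha,\beta\in\{0,1\}^{\mathbb N}$ you get $\|y_\alpha-y_\beta\|_{\mathcal C_E}\ge\delta$. This contradicts separability of $\mathcal C_E$.
\end{itemize}

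Note that this argument never uses the $c_0$-equivalence of $\{b_j\}$ as such. The upper bound on partial sums and the lower bound via $q_j$ are all you need.
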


\begin{proof}
Assume the contrary. There exist a positive number $\delta$ and a sequence $\{k_n\}_{n=0}^\infty$ of positive integers such that
\[
\Bigg\Vert\sum_{j=k_{n-1}+1}^{k_n}a_j\Bigg\Vert_{\mathcal C_E}>\delta\;\;\;\;\;\;n=1,2,\dots,
\]
Define  $b_n:= \sum_{j=k_{n-1}+1}^{k_n}a_j$ for every $n$.
There exists  an increasing sequence $\{G_n\}_{n=1}^\infty$ of finite-dimensional subspaces such that $\Vert p_{_{G_n}}b_n\Vert>\kappa^{-1}\delta$ for every $n$ (see e.g. \cite[Theorem 5.5.12]{DPS}). Using Corollary \ref{3a}, we deduce that $\lim_{n\to\infty}\Vert p_{_G}b_n\Vert=0$ 
for each finite-dimensional subspace $G$ of $H$. Indeed, 
otherwise, there exists a subsequence $\{p_Gb_{n_k}\}_{k=1}^\infty$ of $\{p_Gb_n\}_{n=1}^\infty$, which is isomorphic to the unit basis of $\ell_2$. This contradicts the $\sup_{m\geq1}\Vert\sum_{k=1}^mb_k\Vert_{\mathcal C_E}<\infty$.
Therefore,  there exists  an increasing sequence $\{n_m\}_{m=0}^\infty$ of positive integers such that  
\[
\big\Vert p_{_{{G_{n_{m-1}}}^\perp\cap G_{n_m}}}b_{n_m}\big\Vert_{\mathcal C_E}>\kappa^{-2}\delta.
\]
Put $c_m:=p_{_{{G_{n_{m-1}}}^\perp\cap G_{n_m}}}b_{n_m}$. Note that $\{c_m\}_{m=1}^\infty$ is a seminormalized unconditional basic sequence with $\sup_{N\geq1}\Vert\sum_{m=1}^N c_m\Vert_{\mathcal C_E}<\infty$, and thus $\{c_m\}_{m=1}^\infty$ is equivalent to the unit vector basis of $c_0$. 
However, 
the space 
$[c_m]_{m=1}^\infty$ is isomorphic to a subspace of $E$ (see e.g. \cite[Proposition 3.3]{HSS}), which is a contradiction. 
\end{proof}

\begin{remark}
Suppose that $E$ is a separable Banach symmetric sequence space and $1\leq p\neq2<\infty$. Note that $\mathcal C_E$ does not contain a subspace isomorphic to $\ell_p$ (resp. $c_0$) if and only if $E$ does not contain a subspace isomorphic to $\ell_p$ (resp. $c_0$)\cite[Corollary 3.2.]{Arazy4}.
\end{remark}
From now on, we only 
 consider  separable Banach symmetric sequence spaces $E$  for the sake of simplicity, 
although most of the results in this section remain valid for  quasi-Banach symmetric sequence spaces.

The following theorem is a generalization of   \cite[Corollary 2.8.]{Arazy4}.
\begin{theorem}\label{3e}
Suppose that $E$ is a  quasi-Banach symmetric sequence space with $c_0\not\hookrightarrow E$, and $\{x_k\}_{k=2}^\infty$ be a bounded sequence in $\mathcal C_E$ with
\[
x_k=p_{_{[\bigcup_{l=1}^kK_l]}}x_kp_{_{[\bigcup_{l=1}^kL_l]}}-p_{_{[\bigcup_{l=1}^{k-1}K_l]}}x_kp_{_{[\bigcup_{l=1}^{k-1}L_l]}}\;\;\;\;\;\;k=2,3,\cdots.
\]
For an arbitrary  sequence of positive numbers $\{\varepsilon_i\}_{i=2}^\infty$,  there exist an increasing sequence $\{k_i\}_{i=1}^\infty$ of positive integers and a sequence $\{y_i\}_{i=2}^\infty$ in $\mathcal C_E$ having the form
\[
y_i=\sum_{j=1}^{i-1}\big(a_{i,j}+a_{j,i}\big)+a_{i,i}, \;\;\;\;\;\;i=2,3,\dots,
\]
such that $\Vert x_{k_i}-y_i\Vert_{\mathcal C_E}\leq\varepsilon_i$ for every $i\geq2$, where
\begin{itemize}
\item [(1)]
$K'_i=[\bigcup_{l=k_{i-1}+1}^{k_i}K_l]$ and $L'_i=[\bigcup_{l=k_{i-1}+1}^{k_i}L_l]$ for every $i\geq1$ (put $k_0=0$),
\item [(2)]
$\{a_{i,j}\}_{(i,j)\neq (1,1)}\stackrel{\mbox{{\rm\tiny b.s}}}{\boxplus}\{K'_i\}_{i=1}^\infty\otimes\{L'_j\}_{j=1}^\infty$,
\item [(3)]
$a_{i,i}=p_{_{K'_i}}x_{k_i}p_{_{L'_i}}$ for every $i\geq2$,
\item [(4)]
$\{a_{i,j}\}_{i=j+1}^\infty\stackrel{\mbox{{\rm\tiny b.s}}}{\boxplus}\{K'_i\}_{i=2}^\infty\otimes\{L'_j\}_{j=1}^\infty\curvearrowleft d_j$ are consistent for all $j\geq1$, and \[
\Vert d_j\Vert_{\mathcal C_E}\leq\varepsilon_j\Vert d_1\Vert_{\mathcal C_E}\;\;\;\;\;\;\mbox{for every}\; j\geq 2,
\]
\item [(5)]
$\{a_{j,i}\}_{i=j+1}^\infty\stackrel{\mbox{{\rm\tiny b.s}}}{\boxplus}\{K'_j\}_{j=1}^\infty\otimes\{L'_i\}_{i=2}^\infty\curvearrowleft d'_j$ are consistent for all $j\geq 1$, and
\[
\Vert d'_j\Vert_{\mathcal C_E}\leq\varepsilon_j\Vert d'_1\Vert_{\mathcal C_E}\;\;\;\;\;\;\mbox{for every}\; j\geq 2.
\]
\end{itemize}
\end{theorem}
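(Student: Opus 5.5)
The plan is to start from Theorem \ref{3c}, which already produces the block decomposition $y_i=\sum_{j<i}(a_{i,j}+a_{j,i})+a_{i,i}$ with properties (1)–(3). It also gives consistent generation: $\{a_{i,j}\}_{i>j}$ is generated by operators $d_j$ and $\{a_{j,i}\}_{i>j}$ by operators $d'_j$, with common isometry systems. What is missing is the decay estimates $\Vert d_j\Vert\le\varepsilon_j\Vert d_1\Vert$ and $\Vert d'_j\Vert\le\varepsilon_j\Vert d'_1\Vert$. To get them, I will pass to a further subsequence of blocks and regroup.

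First, I will show that the sequence $\{d_j\}_j$ is ``summable'' in the sense of Lemma \ref{3d}. For a fixed large $i$, the column part $\sum_{j<i}a_{i,j}$ equals $p_{K'_i}x_{k_i}p_{[\bigcup_{j<i}L'_j]}$, up to the perturbation errors. Its quasi-norm is therefore bounded by $C\sup_k\Vert x_k\Vert$, with a constant depending only on the quasi-norm. Here one only needs a single cut by two projections, which costs a factor $\kappa^2$ by \eqref{diag-proj} with $n=2$, so no unbounded diagonal projection is involved. By consistency, $u_i\big(\sum_{j<i}a_{i,j}\big)\wideparen v^{\ast}$ is isometric to $\sum_{j<i}d_j\otimes(\cdot|e_j)e_1$, and the operators $d_j\otimes(\cdot|e_j)e_1$ have disjoint right supports. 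Hence $\sup_n\Vert\sum_{j\le n}d_j\otimes e_{1,j}\Vert<\infty$, and Lemma \ref{3d} (this is where $c_0\not\hookrightarrow E$ enters) gives $\sup_n\Vert\sum_{j=m}^{m+n}d_j\otimes e_{1,j}\Vert\to0$ as $m\to\infty$. The same argument applies to the $d'_j$, using disjoint left supports.

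Next comes the regrouping. Using the tail estimate, I choose inductively an increasing sequence $1=j_1<j_2<\cdots$ such that the tail block $\sum_{j\in[j_s,j_{s+1})}d_j\otimes e_{1,j}$ for $s\ge2$ has norm at most $\varepsilon_s\Vert D_1\Vert$. Here $D_1$ denotes the first block $\sum_{j<j_2}d_j\otimes e_{1,j}$, and similarly for the $d'$. I then merge the blocks: $K''_s=[\bigcup_{j\in[j_s,j_{s+1})}K'_j]$, and likewise for $L''_s$. I also restrict to the subsequence $i=j_s$, that is, I use the new indices $k_{j_s}$. By Theorem \ref{3a'} and Remark \ref{rembl}, the merged pieces $\sum_{j\in[j_s,j_{s+1})}a_{i,j}$ remain consistently generated, now by $D_s\simeq\sum_{j\in\text{block}}d_j\otimes e_{1,j}$. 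This yields $\Vert D_s\Vert\le\varepsilon_s\Vert D_1\Vert$. The diagonal term becomes $p_{K''_s}x p_{L''_s}$, which consists of the old diagonal term together with cross terms inside the block. By shifting the index so that the merged block contains a full interval ending at the new row, property (3) is preserved.

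I expect the main obstacle to be the degenerate case $d_1=0$ (and $D_1=0$). In that case I will instead require every $D_s$ to be $0$ up to perturbation. If $\Vert\sum_j d_j\otimes e_{1,j}\Vert$ is tiny, I absorb all the $a_{i,j}$ into the error $\varepsilon_i$. Otherwise I choose $j_2$ large enough that $D_1\ne0$. A second subtle point is that the intervals must be chosen before they are used. Since the tail bound is uniform, I can pick $j_{s+1}$ depending only on $\varepsilon_s$ and $\Vert D_1\Vert$. The perturbations coming from Theorem \ref{3c} have to be summable, so I apply Theorem \ref{3c} with errors $\varepsilon_i/2$ and use \eqref{qt-eq} to control the errors from the regrouping.
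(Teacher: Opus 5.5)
Your proposal is correct and is essentially the paper's argument: the row cut of $y_i$ bounds the partial sums of the disjointly right-supported generating operators, Lemma \ref{3d} makes their tails uniformly small, and blocks are then merged so that the merged generating operators $D_s=\sum_{j\in[j_s,j_{s+1})}d_j\otimes(\cdot|e_j)e_1$ decay relative to the first one. The paper does the same thing one level down, on the fine operators $b_l$ from Lemma \ref{3b}, choosing $\{k_i\}$ in a single pass rather than re-coarsening the output of Theorem \ref{3c}; the one point you should make explicit is that $j_2$ itself must be taken large, not merely large enough that $D_1\neq0$, which is possible because the uniform tails tend to $0$ while $\Vert D_1\Vert\geq\Vert d_{j_0}\Vert>0$ for any fixed $j_0<j_2$ with $d_{j_0}\neq0$.
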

\begin{proof}

Let $\{\delta_i\}_{i=1}^\infty$ be a sequence of positive numbers which are sufficiently small, and let  $\{k_i\}_{i=1}^\infty$, $\{y_i\}_{i=2}^\infty$ in $\mathcal C_E$ and $\{a_{i,j}\}_{(i,j)\neq(1,1)}$  be as in the proof of  Theorem~\ref{3c}. The  argument used in the proof  of Theorem~\ref{3c}  shows that 
\begin{enumerate}
    \item there exists a sequence of isometries $\big\{w_i:K'_i\to\ell_2\big\}_{i=2}^\infty$, a sequence of isometries $\big\{\omega_l:L_l\to\ell_2\big\}_{l=1}^\infty$, and a sequence $\{b_l\}_{l=1}^\infty$ of in $\mathcal C_E(\ell_2)$ such that
\[
w_ib_{k_i,l}{\omega_l}^\ast=b_l
\]
for every $1\leq l\leq k_{i-1}<\infty$;
    \item there exists a sequence of isometries $\big\{w'_l:K_l\to\ell_2\big\}_{l=1}^\infty$, a sequence of isometries $\big\{\{\omega'_i:L'_i\to\ell_2\big\}_{i=2}^\infty$, and a sequence $\{b'_l\}_{l=1}^\infty$ of in $\mathcal C_E(\ell_2)$ such that
\[
w'_lb_{l,k_i}{\omega'_i}^\ast=b'_l
\]
for every $1\leq l\leq k_{i-1}<\infty$. 
\end{enumerate}
  Note that
\[
\sup_{n\geq1}\bigg\Vert\sum_{l=1}^nb_l\otimes(\cdot|e_l)e_1\bigg\Vert_{\mathcal C_E}=\sup_{n\geq1}\bigg\Vert\sum_{l=1}^n b_{k_{n+1},l}\bigg\Vert_{\mathcal C_E}\leq\sup_{n\geq1}\Vert y_n\Vert_{\mathcal C_E}<\infty
\]
and
\[
\sup_{n\geq1}\bigg\Vert\sum_{l=1}^nb'_l\otimes(\cdot|e_1)e_l\bigg\Vert_{\mathcal C_E}=\sup_{n\geq1}\bigg\Vert\sum_{l=1}^n b_{l,k_{n+1}}\bigg\Vert_{\mathcal C_E}\leq\sup_{n\geq1}\Vert y_n\Vert_{\mathcal C_E}<\infty. 
\]
By Lemma \ref{3d},  passing to a subsequence of $\{k_i\}_{i=1}^\infty$ if necessary, we may assume that
\[
\bigg\Vert\sum_{l=k_{i-1}+1}^{k_i}b_l\otimes(\cdot|e_l)e_1\bigg\Vert_{\mathcal C_E}\leq\varepsilon_i\bigg\Vert\sum_{l=1}^{k_1}b_l\otimes(\cdot|e_l)e_1\bigg\Vert_{\mathcal C_E}
\]
and
\[
\bigg\Vert\sum_{l=k_{i-1}+1}^{k_i}b'_l\otimes(\cdot|e_1)e_l\bigg\Vert_{\mathcal C_E}\leq\varepsilon_i\bigg\Vert\sum_{l=1}^{k_1}b'_l\otimes(\cdot|e_1)e_l\bigg\Vert_{\mathcal C_E}.
\]

Recall that $\{a_{i,j}\}_{(i,j)\neq(1,1)}$ are defined by  \eqref{bl1}, \eqref{bl2} and \eqref{bl3}. It suffices to define
\[
d_j=\sum_{l=k_{j-1}+1}^{k_j}b_l\otimes(\cdot|e_l)e_1\;\;\;\;\;\;\;\;{\rm for\;every}\;j\geq 1,
\]
and two sequences of isometries
\[
u_i:\xi\in K'_i\longmapsto w_i(\xi)\otimes e_1\in\ell_2\otimes_{_2}\ell_2\;\;\;\;\;\;\;\;{\rm for\;every}\;i\geq 2.
\]
and
\[
v_j:\sum_{l=k_{j-1}+1}^{k_j}\xi_l\in L'_j=\sum_{l=k_{j-1}+1}^{k_j}L_l\longmapsto\sum_{l=k_{j-1}+1}^{k_j}\omega_l(\xi_l)\otimes e_l\in\ell_2\otimes_{_2}\ell_2.
\]
for every $i\geq 2$. Consequently, by  Remark \ref{rem} (2), 
\[
u_ia_{i,j}{v_j}^\ast=d_j\;\;\;\;\;\;\;\;{\rm for\;every}\;1\leq i<j<\infty.
\]
Similarly, we can define   $\{d'_i\}_{i=1}^\infty$ $\{u'_i\}_{i=1}^\infty$ and $\{v'_j\}_{j=2}^\infty$. This completes the proof.
\end{proof}

\begin{corollary}\label{3e'}
Let  $\varepsilon>0$.   
Under the assumptions and conclusions of Theorem \ref{3e}, and further assume $\{x_k\}_{k=2}^\infty$ is semi-normalized in $\mathcal C_E$, and choose $\{\varepsilon_i\}_{i=2}^\infty$ with $\varepsilon_i$'s small enough. Let $a_i=a_{i,1}$, $b_i=a_{1,i}$, $c_i=a_{i,i}$, and
\[
z_i=a_i+b_i+c_i\;\;\;\;\;\;\;\;{\rm for\;every}\;i\geq 2.
\]
  We have 
\begin{equation}\label{st1}
\bigg\Vert\sum_{i=2}^\infty t_i(x_{k_i}-z_i)\bigg\Vert_{\mathcal C_E}\leq  \varepsilon\bigg\Vert\sum_{i=2}^\infty t_ix_{k_i}\bigg\Vert_{\mathcal C_E}
\end{equation}
 for every finitely nonzero sequence of scalars $\{t_i\}_{i=2}^\infty$. 
\end{corollary}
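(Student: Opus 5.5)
The plan is to decompose $y_i-z_i$ into the parts that are being discarded, namely $\sum_{j=2}^{i-1}a_{i,j}$ and $\sum_{j=2}^{i-1}a_{j,i}$. Writing $x_{k_i}-z_i=(x_{k_i}-y_i)+(y_i-z_i)$, the first difference is controlled termwise by Theorem~\ref{3e}: $\Vert x_{k_i}-y_i\Vert_{\mathcal C_E}\le\varepsilon_i$. Since $\{x_k\}$ is semi-normalized, the sequence $\{x_{k_i}\}$ is a block sequence with respect to the Schauder decomposition determined by $\{K'_i,L'_i\}$, so it is a basic sequence with some basis constant $K_c$. Hence $|t_i|\le C\Vert\sum t_ix_{k_i}\Vert_{\mathcal C_E}$, where $C$ depends only on $K_c$ and $\inf_k\Vert x_k\Vert$. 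Using the $r$-subadditive renorming (Theorem~\ref{qbs-norm}, \eqref{sum-infty}), this gives
\[
\Big\Vert\sum_i t_i(x_{k_i}-y_i)\Big\Vert_{\mathcal C_E}\le 4^{1/r}C\Big(\sum_i\varepsilon_i^r\Big)^{1/r}\Big\Vert\sum_it_ix_{k_i}\Big\Vert_{\mathcal C_E},
\]
which is small once the $\varepsilon_i$ are chosen small.

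For the second difference, I would group by $j$ rather than by $i$. For fixed $j\ge2$, consistency in Theorem~\ref{3e}(4) gives $\{a_{i,j}\}_{i>j}\stackrel{\mbox{\tiny b.s}}{\boxplus}\{K'_i\}\otimes\{L'_j\}\curvearrowleft d_j$, with disjoint left supports and common right support $L'_j$. By Remark~\ref{rem}(2) this yields
\[
\Big\Vert\sum_{i>j}t_ia_{i,j}\Big\Vert_{\mathcal C_E}=\Vert d_j\Vert_{\mathcal C_E}\Big(\sum_{i>j}|t_i|^2\Big)^{1/2},
\]
and the analogous identity holds for $\{a_{j,i}\}_{i>j}$ with $d'_j$. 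The bounds $\Vert d_j\Vert\le\varepsilon_j\Vert d_1\Vert$ and $\Vert d'_j\Vert\le\varepsilon_j\Vert d'_1\Vert$ then give
\[
\Big\Vert\sum_it_i(y_i-z_i)\Big\Vert^r_{\mathcal C_E}\le 4\sum_{j\ge2}\varepsilon_j^r\big(\Vert d_1\Vert^r+\Vert d'_1\Vert^r\big)\Vert t\Vert_{\ell_2}^r.
\]

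The main obstacle is to dominate $\Vert t\Vert_{\ell_2}$ by $\Vert\sum t_ix_{k_i}\Vert_{\mathcal C_E}$, because $E$ need not embed continuously into $\ell_2$. I would avoid this by dominating $\Vert d_1\Vert\,\Vert t\Vert_{\ell_2}$ directly, as follows. The elements $\sum_i t_ia_{i,1}$ and $\sum_it_ia_{1,i}$ are obtained from $\sum_it_iy_i$ by cutting with the projections $P_{[\bigcup_{i\ge2}K'_i],L'_1}$ and $P_{K'_1,[\bigcup_{i\ge2}L'_i]}$. These are bounded by \eqref{diag-proj} with constant at most $\kappa$, and the same cut applied to $\sum t_ix_{k_i}$ is a genuine compression. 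So $\Vert d_1\Vert\,\Vert t\Vert_{\ell_2}\le\kappa^{c}\Vert\sum t_iy_i\Vert$, and $\Vert\sum t_iy_i\Vert$ is in turn at most $\kappa\Vert\sum t_ix_{k_i}\Vert$ plus the already controlled first error. Alternatively, one can apply Theorem~\ref{hi} to $a_{i,1}+a_{1,i}$.

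Finally, I would choose the $\varepsilon_i$ small enough, depending on $K_c$, $\kappa$ and the semi-normalization constants but not on $t$. Combining the two estimates by quasi-triangularity then yields \eqref{st1}.
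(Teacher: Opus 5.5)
Your proposal is correct and follows essentially the same route as the paper: split $x_{k_i}-z_i$ through $y_i$, bound $\sum_i t_i(x_{k_i}-y_i)$ by $\sup_i|t_i|$ using the uniformly bounded block projections and semi-normalization, bound $\sum_i t_i(y_i-z_i)$ by grouping over $j$ with $\|d_j\|\le\varepsilon_j\|d_1\|$ and $\|d'_j\|\le\varepsilon_j\|d'_1\|$, and finally absorb $\|t\|_{\ell_2}$ by cutting with $P_{[\bigcup_{i\ge2}K'_i],L'_1}$ and $P_{K'_1,[\bigcup_{i\ge2}L'_i]}$. The differences are cosmetic. You cut $\sum_i t_iy_i$, where single compressions are in fact contractions. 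The paper instead cuts $\sum_i t_iz_i$, applies Theorem~\ref{hi}, and then uses $(1-\varepsilon')\|Z\|\le\|Y\|\le(1+\varepsilon')\|X\|$. You also track quasi-norm constants through $r$-subadditivity rather than reducing to $\kappa=1$.
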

\begin{proof}
Since the proofs for Banach spaces and quasi-Banach spaces differ only slightly, we may assume without loss of generality that $\kappa=1$.

Fix a positive number $\delta$ such that $\Vert x_k\Vert_{\mathcal C_E}\geq\delta>0 $ for every $k\geq 2$.
For every finitely nonzero sequence of scalars $\{t_i\}_{i=2}^\infty$, we have   
\[
\bigg\Vert\sum_{i=2}^\infty t_i(x_{k_i}-y_i)\bigg\Vert_{\mathcal C_E}\leq\sum_{i=2}^\infty\varepsilon_i\vert t_i\vert\leq\sum_{i=2}^\infty\varepsilon_i\cdot\sup_{i\geq2}\vert t_i\vert\leq\bigg(\sum_{i=2}^\infty\varepsilon_i\bigg)\frac{2}{\delta}\bigg\Vert\sum_{i=2}^\infty t_ix_{k_i}\bigg\Vert_{\mathcal C_E}.
\]
On the other hand, since 
\begin{eqnarray*}
\bigg\Vert\sum_{i=2}^\infty t_iz_i\bigg\Vert_{\mathcal C_E}&\stackrel{\eqref{projection inequality}}{\geq}&\bigg\Vert (P_{[\cup_{i=2}^\infty K'_i],L'_1}+P_{K'_1,[\cup_{i=2}^\infty L'_i]})\sum_{i=2}^\infty t_iz_i\bigg\Vert_{\mathcal C_E}\\&=&\bigg\Vert\sum_{i=2}^\infty t_i(a_i+c_i)\bigg\Vert_{\mathcal C_E}\\
&=&\bigg\Vert\sum_{i=2}^\infty t_i(a_{i,1}+a_{1,i})\bigg\Vert_{\mathcal C_E}\\
&\stackrel{\tiny \mbox{Th \ref{hi}}}{=}&\left\Vert\left(
\begin{smallmatrix}
0&d'_1\\d_1&0
\end{smallmatrix}\right)
\right\Vert_{\mathcal C_E}\Big(\sum_{i=2}^\infty\vert t_i\vert^2\Big)^{1/2},
\end{eqnarray*}
it follows that 
\begin{align*}
\bigg\Vert\sum_{i=2}^\infty t_i(y_i-z_i)\bigg\Vert_{\mathcal C_E}&=\bigg\Vert\sum_{i=3}^\infty t_i\sum_{j=2}^{i-1}\big(a_{i,j}+a_{j,i}\big)\bigg\Vert_{\mathcal C_E}\\
&\leq\sum_{j=2}^\infty\bigg\Vert\sum_{i=j+1}^\infty t_i(a_{i,j}+a_{j,i})\bigg\Vert_{\mathcal C_E}\\
&\leq\sum_{j=2}^\infty\big(\Vert d_j\Vert_{C_E}+\Vert d'_j\Vert_{\mathcal C_E}\big)\Big(\sum_{i=2}^\infty\vert t_i\vert^2\Big)^{1/2}\\
&\leq\sum_{j=2}^\infty \varepsilon_j \big(\Vert d_1\Vert_{\mathcal C_E}+\Vert d'_1\Vert_{\mathcal C_E}\big)\Big(\sum_{i=2}^\infty\vert t_i\vert^2\Big)^{1/2}\\
&\leq2\bigg(\sum_{j=2}^\infty\varepsilon_j\bigg)\bigg\Vert\sum_{i=2}^\infty t_iz_i\bigg\Vert_{\mathcal C_E}.
\end{align*}

Let $\varepsilon>0$. 
For a suitable chosen sequence    $\{\varepsilon_i\}_{i=2}^\infty$,  we have \eqref{st1}. Indeed, 
let 
\[\varepsilon' := \max\left\{ 
\bigg(\sum_{i=2}^\infty\varepsilon_i\bigg)\frac{2}{\delta} ,~2\bigg(\sum_{j=2}^\infty\varepsilon_j\bigg) \right\}.
\]
letting
\[
X=\sum_{i=2}^\infty t_ix_{k_i},
~
Y=\sum_{i=2}^\infty t_iy_i,
~
 Z=\sum_{i=2}^\infty t_iz_i,
\]
we have
\[
\Vert X-Y\Vert\leq\varepsilon'\Vert X\Vert{\rm \;\;\;\;and\;\;\;\;}\Vert Y-Z\Vert\leq\varepsilon'\Vert Z\Vert.
\]
Then
\[
(1-\varepsilon')\Vert Z\Vert\leq\Vert Y\Vert\leq(1+\varepsilon')\Vert X\Vert,
\]
and finally
\[
\Vert X-Z\Vert\leq\Vert X-Y\Vert+\Vert Y-Z\Vert\leq\varepsilon'\Vert X\Vert+\varepsilon'\Vert Z\Vert\leq\varepsilon'(1+\frac{1+\varepsilon'}{1-\varepsilon'})\Vert X\Vert.
\]
If $\{\varepsilon_j\}$ is such that 
  $\varepsilon'(1+\frac{1+\varepsilon'}{1-\varepsilon'})<\varepsilon$, then the inequality   \eqref{st1} holds. 
\end{proof}

By a  standard perturbation argument, we   obtain the following two  corollaries.

\begin{corollary}
Suppose that $E$ is a quasi-Banach symmetric sequence space with $c_0\not\hookrightarrow E$, and $\{x_k\}_{k=2}^\infty$ is a bounded $\sigma$-weakly null sequence  (i.e. converges to $0$ in the $\sigma$-weak topology of $\cB(H)$) in $\mathcal C_E$ with $p_{[\bigcup_{i=1}^\infty K_i]}x_kp_{[\bigcup_{i=1}^\infty L_i]}=x_k$ for every $k$. For a given a sequence of positive numbers $\{\varepsilon_i\}_{i=2}^\infty$, there exist two increasing sequences $\{k_i\}_{i=2}^\infty$ and $\{n_i\}_{i=1}^\infty$ of positive integers, and a sequence $\{y_i\}_{i=2}^\infty$ in $\mathcal C_E$, having the form
\[
y_i=\sum_{j=1}^{i-1}\big(a_{i,j}+a_{j,i}\big)+a_{i,i}\;\;\;\;\;\;i=2,3,\dots,
\]
such that $\Vert x_{k_i}-y_i\Vert_{\mathcal C_E}\leq\varepsilon_i$ for every $i\geq2$, where 
\begin{itemize}
\item [(1)]
$K'_i=[\bigcup_{l=n_{i-1}+1}^{n_i}K_l]$ and $L'_i=[\bigcup_{l=n_{i-1}+1}^{n_i}L_l]$ for every $i\geq1$ (put $n_0=0$),
\item [(2)]
$\{a_{i,j}\}_{(i,j)\neq (1,1)}\stackrel{\mbox{{\rm\tiny b.s}}}{\boxplus}\{K'_i\}_{i=1}^\infty\otimes\{L'_j\}_{j=1}^\infty$,
\item [(3)]
$a_{i,i}=p_{_{K'_i}}x_{k_i}p_{_{L'_i}}$ for every $i\geq2$,
\item [(4)]
$\{a_{i,j}\}_{i=j+1}^\infty\stackrel{\mbox{{\rm\tiny b.s}}}{\boxplus}\{K'_i\}_{i=2}^\infty\otimes\{L'_j\}_{j=1}^\infty\curvearrowleft d_j$ are consistent for all $j\geq1$, and \[
\Vert d_j\Vert_{\mathcal C_E}\leq\varepsilon_j\Vert d_1\Vert_{C_E}\;\;\;\;\;\;\mbox{for every}\; j\geq 2,
\]
\item [(5)]
$\{a_{j,i}\}_{i=j+1}^\infty\stackrel{\mbox{{\rm\tiny b.s}}}{\boxplus}\{K'_j\}_{j=1}^\infty\otimes\{L'_i\}_{i=2}^\infty\curvearrowleft d'_j$ are consistent for all $j\geq 1$, and
\[
\Vert d'_j\Vert_{\mathcal C_E}\leq\varepsilon_j\Vert d'_1\Vert_{\mathcal C_E}\;\;\;\;\;\;\mbox{for every}\; j\geq 2.
\]
\end{itemize}
\end{corollary}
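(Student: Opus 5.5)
The plan is to reduce to Theorem \ref{3e} by first replacing $\{x_k\}$ with a block sequence of the form required there. The reduction is a gliding-hump perturbation that uses the $\sigma$-weak nullity; after that, Theorem \ref{3e} produces the structured approximants. The subsequence and blocking indices are then relabelled as $\{k_i\}$ and $\{n_i\}$.

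\textbf{Step 1 (gliding hump).} Fix very small $\delta_i>0$. Since $E$ is separable, hence order continuous, for each fixed $x\in\mathcal C_E$ we have
\[
\|x-p_{[\bigcup_{l\le n}K_l]}x\,p_{[\bigcup_{l\le n}L_l]}\|_{\mathcal C_E}\to0 \quad\text{as } n\to\infty,
\]
because $p_{[\bigcup K_l]}x\,p_{[\bigcup L_l]}=x$. Moreover, since $\{x_k\}$ is $\sigma$-weakly null and the spaces $[\bigcup_{l\le n}K_l]$, $[\bigcup_{l\le n}L_l]$ are finite dimensional, for fixed $n$ we get $\|p_{[\bigcup_{l\le n}K_l]}x_k\,p_{[\bigcup_{l\le n}L_l]}\|_{\mathcal C_E}\to0$ as $k\to\infty$: a finite-rank compression of a $\sigma$-weakly null bounded sequence tends to $0$ in norm. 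I therefore choose inductively $n_1<n_2<\cdots$ and $k'_2<k'_3<\cdots$ so that the compression of $x_{k'_i}$ to the first $n_{i-1}$ blocks has norm at most $\delta_i$, and the part of $x_{k'_i}$ outside the first $n_i$ blocks has norm at most $\delta_i$. Set
\[
\tilde x_i=p_{_{M_i}}x_{k'_i}p_{_{N_i}}-p_{_{M_{i-1}}}x_{k'_i}p_{_{N_{i-1}}},\qquad M_i=\Bigl[\bigcup_{l\le n_i}K_l\Bigr],\ N_i=\Bigl[\bigcup_{l\le n_i}L_l\Bigr].
\]
Using the modulus of concavity $\kappa$, this gives $\|x_{k'_i}-\tilde x_i\|\le 2\kappa^2\delta_i$.

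\textbf{Step 2 (apply Theorem \ref{3e}).} The sequence $\{\tilde x_i\}$ is bounded and has exactly the form required in Theorem \ref{3e}, relative to the coarser blocks $\tilde K_i=[\bigcup_{l=n_{i-1}+1}^{n_i}K_l]$ and $\tilde L_i=[\bigcup_{l=n_{i-1}+1}^{n_i}L_l]$. Applying Theorem \ref{3e} with tolerances $\varepsilon_i/(2\kappa)$ yields a subsequence $\{i_m\}$ and elements $y_m$ of the stated form. These are block sequences relative to $K'_m$, $L'_m$, each of which is a union of consecutive $\tilde K_i$ (respectively $\tilde L_i$), and hence a union of consecutive $K_l$ (respectively $L_l$). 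Properties (1) to (5) come directly from Theorem \ref{3e}. For (3), note that $p_{K'_m}\tilde x_{i_m}p_{L'_m}$ differs from $p_{K'_m}x_{k'_{i_m}}p_{L'_m}$ only by the small terms removed in Step 1. I would either absorb this difference into the estimate, or simply redefine $a_{m,m}:=p_{K'_m}x_{k_m}p_{L'_m}$, which changes $y_m$ by at most $\kappa\,\|x_{k_m}-\tilde x_{i_m}\|$.

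\textbf{Step 3 (errors).} With $k_m=k'_{i_m}$, the quasi-triangle inequality gives $\|x_{k_m}-y_m\|\le\kappa(2\kappa^2\delta_{i_m}+\varepsilon_m/(2\kappa))$. This is at most $\varepsilon_m$ once $\delta_i\le\varepsilon_i/(4\kappa^3)$, using $i_m\ge m$ and taking the $\varepsilon_i$ decreasing without loss of generality.

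The main obstacle is the bookkeeping in Step 1. One must check that the finite-dimensional compressions tend to $0$ in quasi-norm: $\sigma$-weak convergence gives entrywise convergence in a finite matrix, and all quasi-norms on a finite-dimensional space are equivalent. One must also check that the diagonal term in (3) can be stated exactly for $x_{k_m}$ rather than for $\tilde x_{i_m}$; the redefinition above handles this.
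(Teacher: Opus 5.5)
Your proposal is correct and follows the paper's own route. The paper derives this corollary from Theorem \ref{3e} by ``a standard perturbation argument'': a gliding-hump reduction to block form using $\sigma$-weak nullity and order continuity (the latter follows from $c_0\not\hookrightarrow E$ via Proposition \ref{KB c0}), then Theorem \ref{3e} applied to the coarser blocks, exactly as in your Steps 1--2. Redefining $a_{m,m}:=p_{K'_m}x_{k_m}p_{L'_m}$ is the right way to obtain (3) exactly; the resulting extra error term, of size at most $\|x_{k_m}-\tilde x_{i_m}\|$ since compressions are contractive in $\mathcal C_E$, is absorbed by taking the $\delta_i$ small.
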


\begin{corollary}
Suppose that $E$ is a quasi-Banach symmetric sequence space  with $c_0\not\hookrightarrow E$, and $\{x_k\}_{k=2}^\infty$ is a semi-normalized $\sigma$-weakly null sequence in $\mathcal C_E$ with $p_{[\bigcup_{i=1}^\infty K_i]}x_kp_{[\bigcup_{i=1}^\infty L_i]}=x_k$ for every $k$.
For any given $\varepsilon>0$, 
there exist two increasing sequences $\{k_i\}_{i=2}^\infty$ and $\{n_i\}_{i=1}^\infty$ of positive integers, and a sequence $\{y_i\}_{i=2}^\infty$ in $\mathcal C_E$ having the form
\[
y_i=a_i+b_i+c_i\;\;\;\;\;\;i=2,3,\dots,
\]
such that for every finitely nonzero sequence of scalars $\{t_i\}_{i=2}^\infty$, we have
\[
\bigg\Vert\sum_{i=2}^\infty t_i(x_{k_i}-y_i)\bigg\Vert_{\mathcal C_E}\leq\varepsilon\bigg\Vert\sum_{i=2}^\infty t_ix_{k_i}\bigg\Vert_{\mathcal C_E},
\]
where
\begin{itemize}
\item [(1)]
$K'_i=[\bigcup_{l=n_{i-1}+1}^{n_i}K_l]$ and $L'_i=[\bigcup_{l=n_{i-1}+1}^{n_i}L_l]$ for every $i\geq1$ (put $n_0=0$),
\item [(2)]
$\{a_i\}_{i=2}^\infty\stackrel{\mbox{{\rm\tiny b.s}}}{\boxplus}\{K'_i\}_{i=2}^\infty\otimes L'_1\curvearrowleft\;$,
\item [(3)]
$\{b_i\}_{i=2}^\infty\stackrel{\mbox{{\rm\tiny b.s}}}{\boxplus}K'_1\otimes\{L'_i\}_{i=2}^\infty\curvearrowleft\;$,
\item [(4)]
$c_i=p_{_{K'_i}}x_{k_i}p_{_{L'_i}}$ for every $i\geq2$.
\end{itemize}
\end{corollary}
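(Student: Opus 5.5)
Under the hypotheses of Theorem \ref{3e}, and with $\{x_k\}$ in addition semi-normalized and $\sigma$-weakly null, the plan is to first reduce to the block form required by Theorem \ref{3e} through a gliding-hump perturbation. After that we apply Theorem \ref{3e} and then Corollary \ref{3e'}. The choice of perturbation sizes at each stage has to allow the errors to be absorbed into a single constant $\varepsilon$.

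\emph{Step 1 (reduction to blocks).} Since $x_k$ is $\sigma$-weakly null and the $K_i,L_i$ are finite dimensional, for every fixed $n$ we have $\|p_{K_{[1,n]}}x_k\|_{\mathcal C_E}\to0$ and $\|x_kp_{L_{[1,n]}}\|_{\mathcal C_E}\to0$ as $k\to\infty$. Here $K_{[1,n]}=[\bigcup_{l\le n}K_l]$, and the convergence holds because these are finite-rank compressions of a $\sigma$-weakly null sequence. Separability of $E$ gives order continuity, so for each fixed $k$ we also have $\|x_k-p_{K_{[1,m]}}x_kp_{L_{[1,m]}}\|_{\mathcal C_E}\to0$ as $m\to\infty$, using $p_Kx_kp_L=x_k$. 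We choose $k_1<k_2<\cdots$ and $n_1<n_2<\cdots$ inductively. Each $x_{k_i}$ is then within $\delta_i$ of
\[
\tilde x_i:=p_{K_{[1,n_i]}}x_{k_i}p_{L_{[1,n_i]}}-p_{K_{[1,n_{i-1}]}}x_{k_i}p_{L_{[1,n_{i-1}]}},
\]
which is exactly of the form assumed in Theorem \ref{3e} relative to the grouped subspaces $\hat K_i=[\bigcup_{l=n_{i-1}+1}^{n_i}K_l]$ and $\hat L_i$ defined in the same way. The errors are controlled by the modulus of concavity $\kappa$; we use a $\kappa^{i}$-weighted choice of $\delta_i$, as in the proof of Theorem \ref{3c}.

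\emph{Step 2.} Semi-normalization passes to $\{\tilde x_i\}$ once the $\delta_i$ are small. We then apply Theorem \ref{3e} and Corollary \ref{3e'} to $\{\tilde x_i\}$ with the decompositions $\{\hat K_i\},\{\hat L_i\}$ and a tolerance $\varepsilon/2$. This yields a further subsequence together with elements $a_i,b_i,c_i$ having exactly the block structure (2)--(4), where the $K'_i,L'_i$ are unions of consecutive $\hat K$'s, hence of consecutive $K_l$'s. This is why new indices $n_i$ appear in the statement. Composing the subsequences gives $\{k_i\}$.

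\emph{Step 3 (the main obstacle).} We need to control $\|\sum t_i(x_{k_i}-\tilde x_i)\|$ by $\varepsilon\|\sum t_ix_{k_i}\|$. By the quasi-triangle inequality \eqref{qt-eq}, this sum is bounded by $C\bigl(\sum\delta_i^r\bigr)^{1/r}\sup|t_i|$. As in the first estimate of the proof of Corollary \ref{3e'}, $\sup|t_i|$ is in turn controlled by $\|\sum t_ix_{k_i}\|$. For this we need basic-sequence control: after Step 1, $\{\tilde x_i\}$ is a block sequence with respect to the Schauder decomposition described in Section 2, so its coefficient functionals are uniformly bounded by $K_c$. Combining this with semi-normalization gives
\[
|t_i|\lesssim \Bigl\|\sum t_j\tilde x_j\Bigr\|\lesssim \Bigl\|\sum t_jx_{k_j}\Bigr\|
\]
once the $\delta$'s are small. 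Finally, the same three-term $X,Y,Z$ argument as at the end of Corollary \ref{3e'} merges the two relative perturbations into a single $\varepsilon$.
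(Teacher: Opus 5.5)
Your route is the one the paper has in mind. The paper proves this corollary only by the words ``standard perturbation argument'', i.e.\ an L-shaped gliding hump as at the start of the proof of Theorem \ref{thma}, followed by Theorem \ref{3e} and Corollary \ref{3e'}, with the errors absorbed as in your Step 3.

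However, one assertion in your Step 1 is false: $\sigma$-weak nullity does \emph{not} give $\|p_{K_{[1,n]}}x_k\|_{\mathcal C_E}\to0$ or $\|x_kp_{L_{[1,n]}}\|_{\mathcal C_E}\to0$. Take unit vectors $\xi\in K_1$ and $\eta_k\in L_k$, and set $x_k=e_{\xi,\eta_k}$. Then ${\rm Tr}(x_ka)=(a\xi|\eta_k)\to0$ for every trace-class $a$, yet $p_{K_1}x_k=x_k$ has norm $1$ for all $k$. A one-sided compression has finite rank but infinite-dimensional domain, so its rows converge only weakly. Such pieces are exactly what the terms $b_i$ (and symmetrically $a_i$) in the conclusion record; if your claim were true, they could always be taken to be $0$.

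What your construction actually uses is a weaker statement, and that one is true. Since $\tilde x_i$ removes only the two-sided compression $p_{K_{[1,n_{i-1}]}}x_{k_i}p_{L_{[1,n_{i-1}]}}$, you only need $\|p_{K_{[1,n]}}x_kp_{L_{[1,n]}}\|_{\mathcal C_E}\to0$ for each fixed $n$. This holds because these operators lie in the finite-dimensional space $\mathcal B(L_{[1,n]},K_{[1,n]})$, where the quasi-norm is equivalent to any norm and the matrix entries tend to $0$. With that correction, Steps 1--3 go through.

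One bookkeeping point is also missing. Corollary \ref{3e'} applied to $\{\tilde x_i\}$ produces $c_j=p_{K'_j}\tilde x_{i}p_{L'_j}$, with $i=i_j$ the index it selects, whereas (4) asks for $p_{K'_j}x_{k_i}p_{L'_j}$. The two differ by $p_{K'_j}\bigl(p_{K_{[1,n_{i-1}]}}x_{k_i}p_{L_{[1,n_{i-1}]}}\bigr)p_{L'_j}$. Its quasi-norm is at most $\|p_{K_{[1,n_{i-1}]}}(x_{k_i}-\tilde x_i)p_{L_{[1,n_{i-1}]}}\|_{\mathcal C_E}\le\delta_i$. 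So it is absorbed by the same estimate as in Step 3, with the tolerances adjusted for the modulus of concavity $\kappa$.
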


\section{Two standard results  in quasi-Banach spaces}
The following is a  generalization of a  classical result for Banach spaces (see e.g. \cite[p.47]{LT}). The proof follows from  a similar approach with only minor modifications.
\begin{fact}\label{qbssd}
Let $X$ be a quasi-Banach space and $\{X_n\}_{n=1}^\infty$ be a Schauder decomposition of $X$.
Define partial-sum maps $S_n:X\to X$ by
\[
S_nx=\sum_{k=1}^nx_k,
\]
where $x=\sum_{k=1}^\infty x_k$ and $x_k\in X_k$ for each $k$.
Then
\[
\sup_{n\geq1}\Vert S_n\Vert<\infty.
\]
\end{fact}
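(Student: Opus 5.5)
The plan is to reduce the statement to the uniform boundedness principle, which holds in any complete metrizable topological vector space (Rudin, Theorem 2.5/2.6) and in particular in quasi-Banach spaces. I will renorm first. By Theorem~\ref{qbs-norm} we may replace $\Vert\cdot\Vert$ by an equivalent $r$-subadditive quasi-norm $\vert\vert\vert\cdot\vert\vert\vert$, which induces the same topology and is continuous. Boundedness of the $S_n$ in one quasi-norm is equivalent to boundedness in the other, up to the factor $4^{1/r}$. So from now on I work with an $r$-norm, and $d(x,y)=\vert\vert\vert x-y\vert\vert\vert^r$ is a complete invariant metric.

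The standard Banach-space argument renorms by $\vert\vert\vert x\vert\vert\vert_* := \sup_n \vert\vert\vert S_nx\vert\vert\vert$. This is finite for each $x$ because $S_nx\to x$, hence $\{S_nx\}$ is bounded. It is again an $r$-norm, since $\vert\vert\vert S_n(x+y)\vert\vert\vert^r\le \vert\vert\vert S_nx\vert\vert\vert^r+\vert\vert\vert S_ny\vert\vert\vert^r$ and the supremum is subadditive. Moreover $\vert\vert\vert x\vert\vert\vert\le\vert\vert\vert x\vert\vert\vert_*$. The key step is to show that $(X,\vert\vert\vert\cdot\vert\vert\vert_*)$ is complete. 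Take a $\ast$-Cauchy sequence $(x^{(m)})$. For each fixed $n$, the sequence $(S_nx^{(m)})_m$ is Cauchy uniformly in $n$. The $n$-th coordinates $x^{(m)}_n = S_nx^{(m)}-S_{n-1}x^{(m)}$ therefore converge to some $y_n$. Here I need each $X_n$ to be closed in $X$, which is part of the definition. Then $S_nx^{(m)}\to z_n:=\sum_{k\le n}y_k$ uniformly in $n$. A $3\varepsilon$-argument, using the $r$-triangle inequality, shows that $(z_n)$ is Cauchy in $X$, so it converges to some $x$. Uniqueness of the expansion gives $S_nx=z_n$, and $x^{(m)}\to x$ in $\vert\vert\vert\cdot\vert\vert\vert_*$.

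The identity map $(X,\ast)\to(X,\vert\vert\vert\cdot\vert\vert\vert)$ is then a continuous linear bijection between complete metrizable TVSs. By the open mapping theorem for $F$-spaces, its inverse is bounded: $\vert\vert\vert x\vert\vert\vert_*\le C\vert\vert\vert x\vert\vert\vert$. This is exactly $\sup_n\Vert S_n\Vert<\infty$ in the renormed quasi-norm, and the bound transfers back to the original quasi-norm. I expect the main obstacle to be the completeness verification. Specifically, one must make sure the limit $x$ has coordinates $y_n$ with $y_n\in X_n$, which uses that the $X_n$ are closed. The rest is the usual argument with the triangle inequality replaced by its $r$-power version.
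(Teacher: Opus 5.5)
Your argument is essentially the paper's: renorm by $\sup_n$ of the norms of the partial sums, prove completeness of the new quasi-norm, and apply the open mapping theorem for $F$-spaces. The only difference is that you first pass to an equivalent $r$-subadditive (hence continuous) quasi-norm via Theorem~\ref{qbs-norm}, whereas the paper keeps the original quasi-norm and handles its possible discontinuity with \eqref{lim-norm}. One small correction: your opening mention of the uniform boundedness principle is misplaced, since that principle needs the $S_n$ to be continuous, which is exactly what is being proved; the argument you actually carry out correctly uses the open mapping theorem instead.
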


\begin{proof}
Assume that the modulus of concavity of the quasi-norm of $X$ is not greater than $ \kappa$. Let us consider a new quasi-norm on $X$ denoted by the formula
\[
\vert\vert\vert x\vert\vert\vert=\sup_{n\geq1}\Vert S_nx\Vert,
\]
where the finiteness of $\sup_{n\geq1}\Vert S_nx\Vert$ is guaranteed by $S_nx\to x$ as $n\to\infty$. 
Therefore, we have $\norm{x}\le\kappa\vert\vert\vert x \vert\vert\vert$ for any $x\in X$. By the open mapping theorem\cite[2.12 Corollaries (b)]{Rudin3}, we only need to show $(X,\left\vert\left\vert\left\vert\cdot\right\vert\right\vert\right\vert)$ is complete.

Suppose that $(y_m)_{m=1}^\infty$ is a Cauchy sequence in $(X,\left\vert\left\vert\left\vert\cdot\right\vert\right\vert\right\vert)$. 
In particular,  $\{y_m\}_{m=1}^\infty$ converges  to some $y\in X$ in the original quasi-norm $\norm{\cdot}$, and for each fixed $k$ the sequence $\{P_ky_m\}_{m=1}^\infty$ is convergent to some $z_k\in X$ in the original norm\footnote{
Since $\Vert P_ky_m-P_ky_{m'}\Vert\leq\kappa(\Vert S_k(y_m-y_{m'})\Vert+\Vert S_{k-1}(y_m-y_{m'})\Vert)\leq2\kappa\vert\vert\vert y_m-y_{m'}\vert\vert\vert$, it follows that $\{P_ky_m\}_{m=1}^\infty$ is a Cauchy sequence in the original norm. 
}, where $P_k=S_k-S_{k-1}$ and $S_0=0$. For any $l,n\in\mathbb N^+$, we have
\begin{eqnarray*}
\bigg\Vert y-\sum_{k=1}^nz_k\bigg\Vert&\stackrel{\eqref{lim-norm}}{\leq}&\kappa\varlimsup_{m\to\infty}\Vert y_m-S_n y_m\Vert\\
&\leq&\kappa^2\varlimsup_{m\to\infty}\Vert y_m-y_l\Vert+\kappa^3\Vert y_l-S_ny_l\Vert+\kappa^3\varlimsup_{m\to\infty}\Vert S_ny_l-S_ny_m\Vert\\
&\leq& \kappa^3\Vert y_l-S_ny_l\Vert+2\kappa^3\varlimsup_{m\to\infty}\vert\vert\vert y_m-y_l\vert\vert\vert.
\end{eqnarray*}
Therefore,  by   $\Vert y_l-S_ny_l\Vert\to0$ as $n\to\infty$, we have 
\[
\varlimsup_{n\to\infty}\bigg\Vert y-\sum_{k=1}^nz_k\bigg\Vert\leq2\kappa^3\varlimsup_{m\to\infty}\vert\vert\vert y_m-y_l\vert\vert\vert.
\]
Moreover, since $ \varlimsup_{l,m\to\infty}\vert\vert\vert y_m-y_l\vert\vert\vert=0$ and $l$ is arbitrarily taken, it follows that $\varlimsup_{n\to\infty}\bigg\Vert y-\sum_{k=1}^nz_k\bigg\Vert=0$.
Thus,  
$\sum_{k=1}^\infty z_k=y$, by the uniqueness of the expansion of $y$ with respect to the decomposition,  we have $S_ny=\sum_{k=1}^nz_k$ for each $n$. Now,
\[
\vert\vert\vert y_m-y\vert\vert\vert=\sup_{n\geq1}\Vert S_ny_m-S_ny\Vert\leq\sup_{n\geq1}\varlimsup_{l\to\infty}\kappa\Vert S_ny_m-S_ny_l\Vert\leq\kappa\varlimsup_{l\to\infty}\vert\vert\vert y_m-y_l\vert\vert\vert.
\]
This implies that   $\lim_{m\to\infty}\vert\vert\vert y_m-y\vert\vert\vert=0$ and $(X,\vert\vert\vert\cdot\vert\vert\vert)$ is complete.
\end{proof}

The following fact is certainly known to experts in Banach space theory, which is   an immediate consequence of\cite[Proposition 2.a.2. and 2.c.5.(i)]{LT}. Indeed, this result also holds for quasi-Banach spaces, and the proof requires minor adjustments  only.
For the sake of completeness, we present the proof below.

\begin{fact}\label{lp}
Let $X_1,\dots,X_n$ be the quasi-Banach spaces. 
If $\ell_p$, $0< p<\infty$, (or $c_0$) is not isomorphic to a  subspace of any  of  $X_1,\dots,X_n$, then $\ell_p$ (or, $c_0$) is not isomorphic to a subspace of $X_1\oplus\cdots\oplus X_n$\footnote{
The direct sum here is equipped with the product topology. 
Since  $n<\infty $, it follows that  
all quasi-norms inducing   the product topology are equivalent. 
 Therefore, we do not  specify  the particular  choice of  a  quasi‑norm.

}.
\end{fact}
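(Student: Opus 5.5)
We argue by contradiction, reducing to the case $n=2$ by induction: $X_1\oplus\cdots\oplus X_n=(X_1\oplus\cdots\oplus X_{n-1})\oplus X_n$. The induction hypothesis says $\ell_p$ (resp. $c_0$) does not embed into $X_1\oplus\cdots\oplus X_{n-1}$. So it suffices to treat $X\oplus Y$ where neither $X$ nor $Y$ contains a copy of $Z$, with $Z=\ell_p$ or $c_0$. We equip $X\oplus Y$ with the quasi-norm $\max\{\Vert x\Vert,\Vert y\Vert\}$, which is legitimate since all quasi-norms inducing the product topology are equivalent. Suppose $J:Z\to X\oplus Y$ is an isomorphic embedding, and let $P_X,P_Y$ denote the coordinate projections.

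\emph{Step 1.} We show that $P_XJ$ is not an isomorphism on any infinite-dimensional block subspace spanned by a normalized block sequence of the unit basis $\{e_k\}$ of $Z$. The reason is that every normalized block basis of $\{e_k\}$ in $\ell_p$ (resp. $c_0$) is isometrically equivalent to $\{e_k\}$. Hence if $P_XJ$ were an isomorphism on such a block subspace, $Z$ would embed into $X$, contradicting the hypothesis.

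\emph{Step 2.} Since $P_XJ$ is not bounded below on $[e_k]_{k>m}$ for any $m$, we choose inductively normalized finitely supported blocks $u_1,u_2,\dots$ of $\{e_k\}$ with successive supports such that $\Vert P_XJu_j\Vert\le\varepsilon_j$. The $\varepsilon_j$ are chosen so small that $\sum_j\varepsilon_j^r$ is tiny, where $r$ is the subadditivity exponent of an equivalent $r$-norm on $X$ (Theorem \ref{qbs-norm}). The finite support is obtained by truncation: the tail of a vector with small image has small norm, so up to a perturbation a block can be taken finitely supported. We then apply the perturbation principle (Theorem \ref{2d} with one-dimensional decomposition pieces, or directly via \eqref{sum-infty}). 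Since $\{u_j\}$ is equivalent to $\{e_k\}$ with constant $1$, for finitely supported scalar sequences $(t_j)$ we obtain
\[
\Big\Vert P_XJ\Big(\sum_j t_ju_j\Big)\Big\Vert\le 4^{1/r}\Big(\sum_j\varepsilon_j^r\Big)^{1/r}\sup_j|t_j|\le \delta\,\Big\Vert\sum_j t_ju_j\Big\Vert_Z,
\]
where $\delta$ is as small as we like.

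\emph{Step 3.} For $x\in W:=[u_j]\cong Z$ we have $\Vert Jx\Vert\le C\max\{\Vert P_XJx\Vert,\Vert P_YJx\Vert\}$. With $\delta$ small compared with $\Vert J^{-1}\Vert^{-1}$, this gives $\Vert P_YJx\Vert\ge c\Vert x\Vert$ on $W$. Thus $P_YJ|_W$ is an isomorphic embedding of a copy of $Z$ into $Y$, which is a contradiction.

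The main obstacle is Step 2 in the quasi-Banach setting: obtaining uniform smallness of $P_XJ$ on the whole span $[u_j]$ from pointwise smallness on the $u_j$. The concavity constants compound here. We plan to handle this by passing to an $r$-subadditive equivalent quasi-norm and summing the errors with \eqref{qt-eq}, using $\sup_j|t_j|\le\Vert\sum t_ju_j\Vert_Z$.
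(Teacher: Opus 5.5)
Your proposal is correct and follows essentially the paper's argument: reduce to two summands, use that the first coordinate projection is not bounded below on tail subspaces to pick normalized finitely supported blocks with summably small images, and combine $r$-subadditivity with $\sup_j|t_j|\le\Vert\sum_j t_ju_j\Vert_Z$ to conclude that the second coordinate projection is bounded below on the span of the blocks. The only cosmetic difference is that you put the max quasi-norm on $X\oplus Y$, whereas the paper uses an $r$-subadditive quasi-norm on the sum and the inequality $\Vert (I-P)Ta\Vert^r\geq \Vert Ta\Vert^r-\Vert PTa\Vert^r$.
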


\begin{proof}
It suffices to prove  the case of $n=2$. By Theorem \ref{qbs-norm}, without loss of generality, we may assume that the quasi-norm of $X_1\oplus X_2$ is $r$-subadditive. Assume that $T$ is an isomorphic embedding mapping  from $\ell_p$ into $X_1\oplus X_2$. In particular,   there is a positive number $c$ such that
    \[
    \Vert T(a)\Vert \geq c^{1/r }\Vert a\Vert_p\;\;\;\;\;\;\;{\rm for\;any}\;a\in\ell_p.
    \]
Let $P$ be the projection from $X_1\oplus X_2$ onto $X_1$. Since $\ell_p\not\hookrightarrow X_1$, it follows that $PT|_{[e_{k}^{\ell_p}]_{k=n}^\infty}$ is not an isomorphic embedding for each $n$. This shows that there is an increasing sequence $\{n_i\}_{i=1}^\infty$ of positive integers, and a sequence $\{a_i\}_{i=1}^\infty\subset \ell_p$ satisfying $a_i\in [e_k^{\ell_p}]_{k=n_i+1}^{n_{i+1}}$ and $\Vert a_i\Vert_p=1$ for every $i$ such that
\[
\Vert PT(a_i)\Vert^r<\frac{c}{2^{i+1}}.
\]
Indeed, to start the induction pick $n_1=0$, and assume that  $n_1,\dots,n_i$ have been chosen. 
Since $PT|_{[e_{k}^{\ell_p}]_{k=n_i+1}^\infty}$ is not an isomorphic embedding, it follows that there is an operator $a'_i\in[e_{k}^{\ell_p}]_{k=n_i+1}^\infty$ with $\Vert a'_i\Vert_p =1 $ such that $\Vert PT(a'_i)\Vert^r<\frac{c}{2^{i+1}} \frac{1}{2^r }$.
Let $a_i'= \sum_{k=n_i+1}^\infty  a_i'(k)e^{\ell_p} _k$. 
There exists a positive integer $n_{i+1}>n_i$ such that 
$ a_i'':=  \sum_{k=n_i+1}^{n_{i+1}} a_i'(k)e^{\ell_p} _k$ satisfies that $\norm{a_i''}_{p}\ge \frac{1}{2}$ and 
 $\Vert PT(a_i'')\Vert^r<\frac{c}{2^{i+1}}  \frac{1}{2^r }$.
 Letting $a_i:=\frac{a_i''}{\norm{a_i''}_{p}}$, we have $\norm{a_i}_{p}=1$ and $\norm{PT(a_i)}^r <\frac{c}{2^{i+1}}$. 
This completes the induction.

Since $\{a_i\}_{i=1}^\infty\simeq\{e_i^{\ell_p}\}_{i=1}^\infty$, it follows that $\Vert PT|_{[a_i]_{i=1}^\infty}\Vert^r<c/2$. Consequently, we have 
\[
\Vert (I-P)T(a)\Vert^r\geq\Vert T(a)\Vert^r-\Vert PT(a)\Vert^r\geq c\Vert a\Vert_p^r-\frac{c}{2}\Vert a\Vert_p^r\geq\frac{c}{2}\Vert a\Vert_p^r
\]
for all $a\in[a_i]_{i=1}^\infty \cong \ell_p$. This implies that $\ell_p$ is isomorphic to a subspace of $X_2$, contradicting
our hypothesis.
Since  the proofs for $c_0$ and $\ell_p$ are identical, we omit the proof for the case of $c_0$.
\end{proof}

\bibliographystyle{amsalpha}

\begin{thebibliography}{99}


\bibitem{Albiac Kalton}
F.~Albiac, N.~Kalton, \emph{Topics in Banach Space Theory}, Graduate Texts in Mathematics, \textbf{233}, 2nd edn. Springer, New York (2016)

\bibitem{Apostol_lp}
C.~Apostol, \emph{Commutators on $\ell_p$ spaces}, Rev. Roum. Math.
  Appl. \textbf{17} (1972), 1513--1534.

\bibitem{Apostol_c0}
C.~Apostol, \emph{Commutators on $c_{0}$-spaces and on $\ell_1$ -spaces,}  Rev. Roum. Math. Pures Appl.  \textbf{18}  (1973), 1025--1032.

\bibitem{Arazy1}
J.~Arazy, \emph{Some remarks on interpolation theorems and the boundedness of the triangular projection in unitary matrix spaces}, Integral Equations Operator Theory. Vol. I (1978), 453--495.

\bibitem{Arazy2}
J.~Arazy, \emph{On large subspaces of the Schatten $p$-classes}, Compositio Math. \textbf{41} (1980), 297--336.

\bibitem{Arazy3}
J.~Arazy, \emph{A remark on complemented subspaces of unitary matrix spaces}, Proc. Amer. Math. Soc. \textbf{79} (1980), 601--608.

\bibitem{Arazy4}
J.~Arazy, \emph{Basic sequences, embedding, and the unique of the symmetric structure in unitary matrix spaces}, J. Funct. Anal. \textbf{40} (1981), 302--340.

\bibitem{Arazy81b}
J. Arazy, {\it   More on convergence in unitary matrix spaces},  Proc. Amer. Math. Soc. {\bf 83}  (1981), no. 1, 44--48.

\bibitem{Arazy83}
J. Arazy, {\it On stability of unitary matrix spaces},
Proc. Amer. Math. Soc. {\bf 87} (1983), no. 2, 317--321. 

\bibitem{Arazy83b}
J. Arazy, {\it 
Isomorphisms of unitary matrix spaces. Banach space theory and its applications (Bucharest, 1981),}
1--6, Lecture Notes in Math., 991, Springer, Berlin, 1983.


\bibitem{Arazy Friedman}
J.~Arazy, Y.~Friedman, \emph{The isometries of $C_p^{m,n}$ into $C_p$}, Israel J. Math. \textbf{26} (1977), 151--165.

\bibitem{Arazy Lindenstrauss}
J.~Arazy, J.~Lindenstrauss, \emph{Some linear topological properties of the space $C_p$ of operator on Hilbert space}, Compositio Math. \textbf{30} (1975), 151--165.
\bibitem{AHS}
S. Astashkin, J.  Huang, F. Sukochev, {\it Lack of isomorphic embeddings of symmetric function spaces into operator ideals},  J. Funct. Anal. {\bf 280}(5)  (2021),  Paper No. 108895, 34 pp

\bibitem{BKL2020}
K, Beanland, T. Kania, N.J. Laustsen, \emph{Closed ideals of operators on the Tsirelon and Schreier spaces}, J. Funct. Anal. \textbf{279} (2020), 108668, 28 pp.

\bibitem{BS21}
A. 
Bikchentaev, F.  Sukochev, {\it 
Inequalities for the block projection operators}, J. Funct. Anal. {\bf 280} (2021), no. 7, Paper No. 108851, 18 pp. 
\bibitem{Brown Pearcy}
A.~Brown, C.~Pearcy,  \emph{Structure of commutators of operators}, Ann. of Math. \textbf{82} (1965), 112--127.

\bibitem{BVL}
A. Bukhvalov, A. Veksler, G. Lozanovskii, {\it Banach lattices---some Banach aspects of their theory,}
Russian Math. Surveys, {\bf 34}(2) (1979), 159--212. 


\bibitem{Chen Johnson Bentuo Zheng}
D.~Chen, W.~B.~Johnson, B. Zheng, \emph{Commutators on $(\sum\ell_q)_p$}, Studia Math. \textbf{206}(2) (2011), 284--290.

\bibitem{CY}
L.~Cheng, Z. Yu, \emph{Representation of commutators on Schatten  $p$-classes}, J. Noncommut. Geom. \textbf{18} (2024), no. 2, 741--770.
\bibitem{CKS92}
V. Chilin, A. Krygin, F. Sukochev, {\it 
Extreme points of convex
fully symmetric sets of measurable operators,}
Integral Equations Operator Theory 15 (1992),
no. 2, 186--226. 


 
 \bibitem{DPS}
P. Dodds, B. de Pagter, F. Sukochev,
{\it Noncommutative Integration and Operator Theory, }
Progress in Mathematics, volume 349, Birkh\"auser, Cham, Switzerland, 2023. 


\bibitem{Dosev_l1}
D.~Dosev, \emph{Commutators on $\ell_1$}, J.   Funct. Anal. \textbf{256} (2009), 3490--3509.


\bibitem{Dosev Johnson}
D.~Dosev, W.~B.~Johnson, \emph{Commutators on $\ell_\infty$}, Bull. London Math. Soc. \textbf{42} (2010), 155--169.

\bibitem{Dosev Johnson Schechtman}
D.~Dosev, W.~B.~Johnson, G.~Schechtman, \emph{Commutators on $L_p$, $1\leq p<\infty$}, J. Amer. Math. Soc. \textbf{26} (2013), 101--127.


\bibitem{FGM1960}
I.A. Feldman I.C. Gohberg, A.S. Markus, \emph{Normally solvable operators and ideals associated with them}, Bul. Akad. \v{S}tiince RSS Moldoven. \textbf{10(76)} (1960), 51--70.



\bibitem{Friedman}
Y. Friedman, {\it  Subspace of $LC(H)$ and $C_p$}, Proc. Amer. Math. Soc. {\bf 53}(1) (1975),  117–-122.

\bibitem{GHSY}
C. Gao, J. Huang, F. Sukochev, Z. Yu,
{\it On isomorphic embeddings of noncommutative Lorentz spaces}, in preparation. 

\bibitem{Gohberg Krein}
I.~C.~Gohberg, M.~G.~Krein, \emph{Theory and applications of Volterra operators in Hilbert spaces (translated from Russian)}, Amer. Math. Soc. Translation \textbf{24}.


\bibitem{HST}
F. Hernandez, E. Semenov, P. Tradacete,
{\it Rearrangement invariant spaces with Kato property,}
Funct. Approx. Comment. Math.   50 (2014), no. 2, 215--232.

\bibitem{Hiai}
F. Hiai, {\it Log-majorizations and norm inequalities for exponential operators,}
Linear operators (Warsaw, 1994), Banach Center Publ., vol. 38, Polish Acad. Sci. Inst. Math., Warsaw,1997, pp. 119--181.

\bibitem{Holub}
J. R. Holub,  {\it On subspaces of norm ideals},
Bull. Amer. Math. Soc. {\bf 79}  (1973),  446--448.

\bibitem{HLS}
J. Huang, G. Levitina, F. Sukochev, {\it 
Completeness of symmetric $\delta$-normed spaces of $\tau$-measurable operators}, 
Studia Math. 237 (2017), no. 3, 201--219. 

\bibitem{HSZ25a}
J. Huang, F. Sukochev, D. Zanin,
{\it Isomorphisms between symmetric spaces over infinite and finite von Neumann algebras,}
submitted manuscript. 

\bibitem{HSZ25}
J. Huang, F. Sukochev, D. Zanin, {\it Lack of isomorphic embedding from Lorentz function spaces into Lorentz operator ideals}, to appear. 

\bibitem{HSS}
J. Huang, O. Sadovskaya, F. Sukochev, {\it On Arazy's problem concerning isomorphic embeddings of ideals of compact operators}, Adv. Math. {\bf 406}  (2022), Paper No. 108530, 21 pp. 

\bibitem{Johnson Maurey}
W.B. Johnson, B. Maurey, G. Schechtman, L. Tzafriri,  \emph{Symmetric structures in Banach spaces}, Mem. Amer. Math. Soc. \textbf{19} (1979), 239--264.

\bibitem{Kalton98}
N. J. Kalton, 
{\it 
Spectral characterization of sums of commutators.} I, 
J. Reine Angew. Math.504 (1998), 115--125,

\bibitem{KPR}
N.J. Kalton, N. T. Peck, J.W. Roberts
\textit{An F-Space Sampler}, London Math. Soc. Lecture Notes, vol. 89, Cambridge Univ. Press, 1984.

\bibitem{KS}
N. Kalton, F. Sukochev,
\textit{Symmetric norms and spaces of operators},
J. Reine Angew. Math.  621 (2008), 81--121.

\bibitem{Kania Laustsen1}
T.~Kania, N.~Laustsen, \emph{Uniqueness of the maximal ideal of the Banach algebra of bounded operators on $C([0,\omega_1])$}, J. Funct. Anal. \textbf{11} (2012), 4831--4850.

\bibitem{Kania Laustsen2}
T.~Kania, N.~Laustsen, \emph{Uniqueness of the maximal ideal of operators on the $\ell_p$-sum of $\ell_\infty^n$ $(n\in\mathbb{N})$ for $1<p<\infty$}, Math. Proc. Cambridge Philos. Soc. \textbf{160} (2016), 413--421.

\bibitem{Kato}
T. Kato, {\it 
Perturbation theory for nullity deficiency and other quantities of
linear operators,}
J. Analyse Math. 6 (1958), 273--322. 



\bibitem{KwapienPelczynski}
S. Kwapien, A. Pe{\l}czy\'nski, \emph{The main triangle projection in matrix spaces and its applications}, Studia Math. \textbf{34} (1970), 43--68.




\bibitem{KR07}
A. Kaminska, Y. Raynaud, {\it  Copies of $\ell^p$ and $c_0$ in general quasi-normed Orlicz--Lorentz sequence spaces,} in: Function Spaces, in: Contemp. Math., vol. 435, 2007, pp. 207--227.


\bibitem{Leung1}
D.~Leung, \emph{Ideals of operators on $(\oplus\ell^\infty(n))_{\ell_1}$}, Proc. Amer. Math. Soc. \textbf{143} (2015), 3047--3053.

\bibitem{Leung2}
D.~Leung, \emph{Maximal ideals in some spaces of bounded linear operators}, Proc. Edinb. Math. Soc. \textbf{61} (2018), 251--264.

\bibitem{LPSZ}
G. Levitina, A. Pietsch, F. Sukochev, D. Zanin, \emph{Completeness of quasi-normed operator ideals generated by s-numbers}, Indag. Math. \textbf{25} (2014), no. 1, 49--58.

\bibitem{Lin Sari Zheng}
P.-K. Lin, B.~Sari, B. Zheng, \emph{Norm closed ideals in the algebra of bounded linear operators on Orlicz sequence spaces}, Proc. Amer. Math. Soc. \textbf{142} (2014), 1669--1680.


\bibitem{LT}
J.~Lindenstrauss, L.~Tzafriri, \emph{Classical Banach spaces. I.} Ergebnisse der Mathematik und ihrer Grenzgebiete, Vol.\textbf{ 92}
 Springer-Verlag, Berlin-New York, (1977) xiii+188 pp



 \bibitem{LSZ}
S.~Lord, F.~Sukochev, D.~Zanin,
{\it Singular traces: Theory and applications,}
  De Gruyter Studies in Mathematics,
 46. De Gruyter, Berlin, 2013.


\bibitem{McCarthy}
C.~A.~McCarthy, \emph{$c_p$}, Israel J. Math. \textbf{5} (1967), 249--271.

\bibitem{NP}
A. Nekvinda, D. Pe\v{s}a,
{\it On the Properties of Quasi-Banach Function Spaces},
 J. Geom. Anal. 34 (2024), no. 8, Paper No. 231, 29 pp.


\bibitem{Pietsch}
A. Pietsch, {\it 
History of Banach spaces and linear operators},
Birkh\"auser Boston, Inc., Boston, MA, 2007. 

\bibitem{PR}
M. Popov, B. Randrianantoanina, {\it Narrow operators on function spaces and vector lattices}, De Gruyter Studies in Mathematics, \textbf{45}. Walter de Gruyter \& Co., Berlin, 2013.

\bibitem{Rudin3}
W. Rudin, {\it Functional Analysis}, Second Edition [M]. New York: The McGraw-Hill Companies, Inc, 1991.

\bibitem{Schatten}
R. Schatten, 
{\it 
Norm ideals of completely continuous operators},
Ergebn. Math. Grenzgeb. N. F. 27, Springer-Verlag, Berlin 1960. 


\bibitem{Shoda}
K. Shoda, \emph{Some theorems on matrices}, Jpn. J. Math. \textbf{13(3)} (1937), 361--366.

\bibitem{Simon}
B. Simon, {\it 
Trace ideals and their applications}. Second edition. Mathematical Surveys and Monographs, 120. American Mathematical Society, Providence, RI, 2005.

\bibitem{S14}
F. Sukochev, {\it 
Completeness of quasi-normed symmetric operator spaces},
Indag. Math. (N.S.) 25 (2014), no. 2, 376--388.


\bibitem{SZ}
F. Sukochev, D. Zanin,
{\it Symmetric Banach sequence spaces  respect  Weyl submajorization,}
Proc. Amer. Math. Soc. 151(7) (2023), 2907--2917. 

\bibitem{Tarbard}
M.~Tarbard, \emph{Hereditarily indecomposable, separable $\mathcal{L}_\infty$ spaces with $\ell_1$ dual having few operators, but not very few operators}, J. London Math. Soc. \textbf{85} (2012), 737--769.

\bibitem{Whitley}
R.J. Whitley, {\it 
Strictly singular operators and their conjugates,}
Trans. Amer. Math. Soc. 113 (1964), 252--261.

\bibitem{Wintner}
A.~Wintner, \emph{The unboundedness of quantum-mechanical matrices}, Phys. Rev
  \textbf{71} (1947), 738--739.



\bibitem{Bentuo Zheng}
B. Zheng, \emph{Commutators on $(\sum\ell_q)_{\ell_1}$}, J. Math. Anal. Appl. \textbf{413} (2014), 155--169.

\end{thebibliography}

\end{document}